\pdfoutput=1
\RequirePackage{ifpdf}
\ifpdf 
\documentclass[pdftex]{sigma}
\else
\documentclass{sigma}
\fi

\usepackage[all]{xy}

\numberwithin{equation}{section}

\newtheorem{Theorem}{Theorem}[section]
\newtheorem*{Theorem*}{Theorem}
\newtheorem{Corollary}[Theorem]{Corollary}
\newtheorem{Lemma}[Theorem]{Lemma}
\newtheorem{Proposition}[Theorem]{Proposition}
\newtheorem{Conjecture}[Theorem]{Conjecture}
 { \theoremstyle{definition}

\newtheorem{Example}[Theorem]{Example}
\newtheorem{Remark}[Theorem]{Remark} }

\newcommand{\BZ}{\mathbb{Z}}
\newcommand{\BR}{\mathbb{R}}
\newcommand{\BC}{\mathbb{C}}
\newcommand{\BH}{\mathbb{H}}
\newcommand{\BO}{\mathbb{O}}
\newcommand{\BF}{\mathbb{F}}
\newcommand{\bk}{\mathbf{k}}
\newcommand{\bl}{\mathbf{l}}
\newcommand{\bm}{\mathbf{m}}
\newcommand{\bn}{\mathbf{n}}
\newcommand{\cB}{\mathcal{B}}
\newcommand{\cF}{\mathcal{F}}
\newcommand{\cH}{\mathcal{H}}
\newcommand{\cO}{\mathcal{O}}
\newcommand{\cP}{\mathcal{P}}
\newcommand{\cU}{\mathcal{U}}
\newcommand{\fa}{\mathfrak{a}}
\newcommand{\fg}{\mathfrak{g}}
\newcommand{\fh}{\mathfrak{h}}
\newcommand{\fk}{\mathfrak{k}}
\newcommand{\fl}{\mathfrak{l}}
\newcommand{\fm}{\mathfrak{m}}
\newcommand{\fn}{\mathfrak{n}}
\newcommand{\fp}{\mathfrak{p}}
\newcommand{\rK}{\mathrm{K}}

\newcommand{\eqspace}{\mathrel{\phantom{=}}}
\newcommand{\ds}{\displaystyle}
\newcommand{\itinv}{{\mathit{-1}}}
\newcommand{\Hom}{\operatorname{Hom}}
\newcommand{\End}{\operatorname{End}}
\renewcommand{\Re}{\operatorname{Re}}

\renewcommand{\det}{\operatorname{det}}
\newcommand{\Det}{\operatorname{Det}}
\newcommand{\Tr}{\operatorname{Tr}}
\newcommand{\Proj}{\operatorname{Proj}}
\newcommand{\rank}{\operatorname{rank}}

\newcommand{\Sym}{\operatorname{Sym}}
\newcommand{\Alt}{\operatorname{Alt}}
\newcommand{\Herm}{\operatorname{Herm}}
\newcommand{\hsum}{\sideset{}{^\oplus}\sum}
\newcommand{\hboxtimes}{\mathbin{\hat{\boxtimes}}}
\newcommand{\hotimes}{\mathbin{\hat{\otimes}}}
\def\labelenumi{({\arabic{enumi}})}

\begin{document}
\allowdisplaybreaks

\newcommand{\arXivNumber}{2207.11663}

\renewcommand{\PaperNumber}{049}

\FirstPageHeading

\ShortArticleName{Computation of Weighted Bergman Inner Products}

\ArticleName{Computation of Weighted Bergman Inner Products\\ on Bounded Symmetric Domains and\\ Parseval--Plancherel-Type Formulas under Subgroups}

\Author{Ryosuke NAKAHAMA~$^{\rm ab}$}

\AuthorNameForHeading{R.~Nakahama}

\Address{$^{\rm a)}$~Institute of Mathematics for Industry, Kyushu University,\\
\hphantom{$^{\rm a)}$}~744 Motooka, Nishi-ku Fukuoka 819-0395, Japan}
\Address{$^{\rm b)}$~NTT Institute for Fundamental Mathematics, NTT Communication Science Laboratories,\\
\hphantom{$^{\rm b)}$}~Nippon Telegraph and Telephone Corporation,\\
\hphantom{$^{\rm b)}$}~3-9-11 Midori-cho, Musashino-shi, Tokyo 180-8585, Japan}
\EmailD{\href{mailto:ryosuke.nakahama@ntt.com}{ryosuke.nakahama@ntt.com}}

\ArticleDates{Received September 21, 2022, in final form June 26, 2023; Published online July 21, 2023}

\Abstract{Let $(G,G_1)=(G,(G^\sigma)_0)$ be a~symmetric pair of holomorphic type, and we consider a~pair of Hermitian symmetric spaces $D_1=G_1/K_1\subset D=G/K$, realized as bounded symmetric domains in complex vector spaces $\fp^+_1:=(\fp^+)^\sigma\subset\fp^+$ respectively. Then the universal covering group $\widetilde{G}$ of $G$ acts unitarily on the weighted Bergman space $\cH_\lambda(D)\subset\cO(D)=\cO_\lambda(D)$ on $D$ for sufficiently large $\lambda$. Its restriction to the subgroup~$\widetilde{G}_1$ decomposes discretely and multiplicity-freely, and its branching law is given explicitly by Hua--Kostant--Schmid--Kobayashi's formula in terms of the $\widetilde{K}_1$-decomposition of the space~\smash{$\cP\bigl(\fp^+_2\bigr)$} of polynomials on $\fp^+_2:=(\fp^+)^{-\sigma}\subset\fp^+$. The object of this article is to understand~the~decomposition of the restriction $\cH_\lambda(D)|_{\widetilde{G}_1}$ by studying the weighted Bergman inner product on each \smash{$\widetilde{K}_1$}-type in \smash{$\cP\bigl(\fp^+_2\bigr)\subset\cH_\lambda(D)$}. For example, by computing explicitly the norm~$\Vert f\Vert_\lambda$ for \smash{$f=f(x_2)\in\cP\bigl(\fp^+_2\bigr)$}, we can determine the Parseval--Plancherel-type formula for the decomposition of $\cH_\lambda(D)|_{\widetilde{G}_1}$. Also, by computing the poles of \smash{$\bigl\langle f(x_2),{\rm e}^{(x|\overline{z})_{\fp^+}}\bigr\rangle_{\lambda,x}$} for \smash{$f(x_2)\in\cP\bigl(\fp^+_2\bigr)$},$x=(x_1,x_2)$, \smash{$z\in\fp^+=\fp^+_1\oplus\fp^+_2$}, we can get some information on branching of $\cO_\lambda(D)|_{\widetilde{G}_1}$ also for $\lambda$ in non-unitary range. In this article we consider these problems for all $\widetilde{K}_1$-types in \smash{$\cP\bigl(\fp^+_2\bigr)$}.}

\Keywords{weighted Bergman spaces; holomorphic discrete series representations; branching laws; Parseval--Plancherel-type formulas; highest weight modules}

\Classification{22E45; 43A85; 17C30}

\tableofcontents

\section{Introduction}

The purpose of this article is to study the restriction of holomorphic discrete series representations to some subgroups,
e.g., to determine the Parseval--Plancherel-type formulas,
by computing the weighted Bergman inner products on bounded symmetric domains.
This article is a~continuation of the author's previous articles~\cite{N, N2}.

We consider a~Hermitian symmetric space $D\simeq G/K$, realized as a~bounded symmetric domain $D\subset\fp^+$ in a~complex vector space $\fp^+$ centered at the origin.
We assume that $G$ is connected, and $K$ is the isotropy subgroup at $0\in\fp^+$.
Let $\widetilde{G}$, $\widetilde{K}$ denote the universal covering groups of $G$, $K$, let $(\tau,V)$ be a~finite-dimensional representation of $\widetilde{K}$,
and we consider the homogeneous vector bundle $\widetilde{G}\times_{\widetilde{K}} V\to \widetilde{G}/\widetilde{K}\simeq D$.
Then we can trivialize this bundle, and the space of holomorphic sections is identified with the space $\cO(D,V)=\cO_\tau(D,V)$ of $V$-valued holomorphic functions on $D$,
on which $\widetilde{G}$ acts by
\[ (\hat{\tau}(g)f)(x):=\tau\bigl(\kappa\bigl(g^{-1},x\bigr)\bigr)^{-1}f\bigl(g^{-1}x\bigr), \qquad g\in\widetilde{G},\quad x\in D,\quad f\in\cO(D,V), \]
by using some function $\tau\colon \widetilde{G}\times D\to\widetilde{K}^\BC$.
According to the choice of the representation $(\tau,V)$ of $\widetilde{K}$, there may or may not exist a~Hilbert subspace $\cH_\tau(D,V)\subset\cO_\tau(D,V)$
on which $\widetilde{G}$ acts unitarily. The classification of $(\tau,V)$ such that $\cH_\tau(D,V)$ exists is given by~\cite{EHW, J}.
Especially, if the $\widetilde{G}$-invariant inner product of $\cH_\tau(D,V)$ is given by an integral on $D$,
then this is called a~\textit{weighted Bergman inner product} and $(\hat{\tau},\cH_\tau(D,V))$ is called a~\textit{holomorphic discrete series representation}.
For example, suppose $G$ is simple and $(\tau,V)=\bigl(\chi^{-\lambda},\BC\bigr)$ is one-dimensional, where $\chi$ is a~suitably normalized character of $K$.
Then for sufficiently large $\lambda\in\BR$, the weighted Bergman inner product is given by the integral of the form
\[ \langle f,g\rangle_\lambda:=C_\lambda\int_D f(x)\overline{g(x)}h(x,\overline{x})^{\lambda-p}\,{\rm d}x, \]
where $p\in\BZ_{>0}$, $h(x,\overline{y})$ is a~suitable polynomial on $\fp^+\times\overline{\fp^+}$,
and the corresponding reproducing kernel (\textit{weighted Bergman kernel}) is given by $h(x,\overline{y})^{-\lambda}$, if we choose the normalizing constant $C_\lambda$ suitably.
In this case we write $(\hat{\tau},\cH_\tau(D,V))=:(\tau_\lambda,\cH_\lambda(D))$, and call it a~holomorphic discrete series representation of \textit{scalar type}.

Let us give a~concrete example. We consider
\[ G={\rm U}(q,s):=\left\{g=\begin{pmatrix}a&b\\c&d\end{pmatrix}\in {\rm GL}(q+s,\BC)\ \middle|\ g^*\begin{pmatrix}I_q&\hphantom{-}0\\0&-I_s\end{pmatrix}g=\begin{pmatrix}I_q& \hphantom{-}0\\0&-I_s\end{pmatrix}\right\}. \]
Then $G$ acts transitively on
\[ D:=\{x\in {\rm M}(q,s;\BC)\mid I-xx^*\text{ is positive definite}\}\subset\fp^+:={\rm M}(q,s;\BC) \]
by the linear fractional transform
\[ \begin{pmatrix}a&b\\c&d\end{pmatrix}.x:=(ax+b)(cx+d)^{-1}=(a^*+xb^*)^{-1}(c^*+xd^*). \]
Next let $\lambda_1,\lambda_2\in\BC$. Then the universal covering group $\widetilde{G}$ acts on $\cO(D)$ by
\[ \left(\tau_{\lambda_1,\lambda_2}\left(\begin{pmatrix}a&b\\c&d\end{pmatrix}^{-1}\right)f\right)(x)
:=\det(a^*+xb^*)^{-\lambda_1}\det(cx+d)^{-\lambda_2}f\bigl((ax+b)(cx+d)^{-1}\bigr). \]
We note that $\det(a^*+xb^*)^{-\lambda_1}\det(cx+d)^{-\lambda_2}$ is not well-defined on $G\times D$ unless $\lambda_1,\lambda_2\in\BZ$,
but is well-defined on the universal covering space $\widetilde{G}\times D$.
If $\lambda_1,\lambda_2\in\BR$ satisfies $\lambda_1+\lambda_2>q+s-1$, then this preserves the weighted Bergman inner product
\[ \langle f,g\rangle_{\lambda_1+\lambda_2}:=C_\lambda\int_D f(x)\overline{g(x)}\det(I-xx^*)^{\lambda_1+\lambda_2-(q+s)}\,{\rm d}x, \]
and the corresponding Hilbert space $\cH_{\lambda_1+\lambda_2}(D)\subset\cO(D)$ gives a~holomorphic discrete series representation of scalar type.
The restriction of $\tau_{\lambda_1,\lambda_2}$ to the subgroup ${\rm SU}(q,s)$ depends only on the sum $\lambda_1+\lambda_2$.

Next we consider an involution $\sigma$ on $G$, and let $G_1:=(G^\sigma)_0$ be the identity component of the group of fixed points by $\sigma$.
Without loss of generality we may assume $\sigma$ stabilizes $K$, and let $K_1:=G_1\cap K$.
Then $D_1:=G_1.0\simeq G_1/K_1$ is either a~complex submanifold or a~totally real submanifold of $D\simeq G/K$.
The pair $(G,G_1)$ is called a~symmetric pair of \textit{holomorphic type} for the former case, and of \textit{anti-holomorphic type} for the latter case (see~\cite[Section 3.4]{Kmf1}).
In the following we assume $(G,G_1)$ is of holomorphic type. Then $\sigma$ induces a~holomorphic action on $\fp^+\simeq T_0(G/K)$,
and let $(\fp^+)^\sigma=:\fp^+_1$, $(\fp^+)^{-\sigma}=:\fp^+_2$, so that $\fp^+=\fp^+_1\oplus\fp^+_2$ holds.
Now we consider the restriction of a~holomorphic discrete series representation $\cH_\tau(D,V)$ of $\widetilde{G}$ to the subgroup $\widetilde{G}_1$.
Then the restriction $\cH_\tau(D,V)|_{\widetilde{G}_1}$ decomposes into a~Hilbert direct sum of irreducible representations of $\widetilde{G}_1$,
and this decomposition is given in terms of the decomposition of $\cP\bigl(\fp^+_2\bigr)\otimes (V|_{\widetilde{K}_1})$ under $\widetilde{K}_1$,
where $\cP\bigl(\fp^+_2\bigr)$ denotes the space of polynomials on $\fp^+_2$. That is, if $\cP\bigl(\fp^+_2\bigr)\otimes (V|_{\widetilde{K}_1})$ is decomposed under $\widetilde{K}_1$ as
\[
\cP\bigl(\fp^+_2\bigr)\otimes (V|_{\widetilde{K}_1})\simeq\bigoplus_{j} m(\tau,\rho_j)(\rho_j,W_j), \qquad m(\tau,\rho_j)\in\BZ_{\ge 0},
\]
then $\cH_\tau(D,V)|_{\widetilde{G}_1}$ is decomposed abstractly into the Hilbert direct sum
\[ \cH_\tau(D,V)|_{\widetilde{G}_1}\simeq\hsum_j m(\tau,\rho_j)\cH_{\rho_j}(D_1,W_j) \]
(see Kobayashi~\cite[Lemma 8.8]{Kmf1},~\cite[Section 8]{Kmf0-1}, for earlier results, see also~\cite{JV,Ma}).
We note that if the unitary subrepresentation $\cH_\tau(D,V)$ is not holomorphic discrete,
then its $\widetilde{K}$-finite part $\cH_\tau(D,V)_{\widetilde{K}}$ may be strictly smaller than $\cO_\tau(D,V)_{\widetilde{K}}=\cP(\fp^+)\otimes V$,
and the above decomposition may not hold in general.

Let us observe this decomposition when $G$ is simple and $\cH_\tau(D,V)=\cH_\lambda(D)$ is a~holomorphic discrete series representation of scalar type.
In general, $\fp^+$ has a~\textit{Jordan triple system} structure, and if $\fp^+$ is simple, then for an involution $\sigma$ on $\fp^+$,
$\fp^+_2=(\fp^+)^{-\sigma}$ is a~direct sum of at most~2 simple Jordan triple subsystems.
Let $\chi$, $\chi_1$ be suitable characters of $\widetilde{K}$, $\widetilde{K}_1$, respectively, and let $\chi|_{\widetilde{K}_1}=\chi_1^{\varepsilon_1}$, where $\varepsilon_1\in\{1,2\}$.
Then the decomposition of $\chi^{-\lambda}\otimes\cP\bigl(\fp^+_2\bigr)$ under $\widetilde{K}_1$,
\[ \bigl(\chi^{-\lambda}|_{\widetilde{K}_1}\bigr)\otimes\cP\bigl(\fp^+_2\bigr)\simeq \chi^{-\varepsilon_1\lambda}\otimes\bigoplus_{\tilde{\bk}}\cP_{\tilde{\bk}}\bigl(\fp^+_2\bigr) \]
is given by using the parameter set
\begin{alignat*}{3}
&\tilde{\bk}=\bk \in\BZ_{++}^{r_2}:=\{(k_1,\dots,k_{r_2})\in\BZ^{r_2}\mid k_1\ge\cdots\ge k_{r_2}\ge 0\}, \qquad && \fp^+_2\colon\text{simple} ,& \\
&\tilde{\bk}=(\bk,\bl) \in\BZ_{++}^{r'}\times\BZ_{++}^{r''}, && \fp^+_2\colon\text{non-simple} &
\end{alignat*}
for some $r_2,r',r''\in\BZ_{>0}$. According to this decomposition, $\cH_\lambda(D)$ is decomposed under $\widetilde{G}_1$ as
\begin{equation}\label{intro_decomp}
\cH_\lambda(D)|_{\widetilde{G}_1}\simeq \hsum_{\tilde{\bk}}\cH_{\varepsilon_1\lambda}\bigl(D_1,\cP_{\tilde{\bk}}\bigl(\fp^+_2\bigr)\bigr)
\end{equation}
(see Kobayashi~\cite[Theorem 8.3]{Kmf1}), where $\cH_{\varepsilon_1\lambda}\bigl(D_1,\cP_{\tilde{\bk}}\bigl(\fp^+_2\bigr)\bigr)$ denotes the holomorphic discrete series representation of $\widetilde{G}_1$
with the fiber $\chi_1^{-\varepsilon_1\lambda}\otimes\cP_{\tilde{\bk}}\bigl(\fp^+_2\bigr)$. Each $\cH_{\varepsilon_1\lambda}\bigl(D_1,\cP_{\tilde{\bk}}\bigl(\fp^+_2\bigr)\bigr)$-isotypic component in $\cH_\lambda(D)$
is generated by $\cP_{\tilde{\bk}}\bigl(\fp^+_2\bigr)\subset\cP(\fp^+)=\cH_\lambda(D)_{\widetilde{K}}$ as a~$\widetilde{G}_1$-module.
Our aim is to understand this decomposition concretely by studying the weighted Bergman inner product
\begin{equation}\label{intro_aim}
\bigl\langle f(x_2),{\rm e}^{(x|\overline{z})_{\fp^+}}\bigr\rangle_{\lambda,x}, \qquad x=(x_1,x_2), \quad z\in\fp^+=\fp^+_1\oplus\fp^+_2,\quad f(x_2)\in\cP_{\tilde{\bk}}\bigl(\fp^+_2\bigr) .
\end{equation}
Here the subscript $x$ stands for the variable of integration.

One of main problems on the restriction of representations is to determine the $\widetilde{G}_1$-intertwining operators
\begin{alignat*}{4}
&\cF_{\tau\rho_j}^\downarrow\colon \ &\cH_\tau(D,V)|_{\widetilde{G}_1}&\longrightarrow \cH_{\rho_j}(D_1,W_j) \quad &&\text{or} \quad
& \cO_\tau(D,V)|_{\widetilde{G}_1}&\longrightarrow \cO_{\rho_j}(D_1,W_j), \\
&\cF_{\tau\rho_j}^\uparrow\colon \ &\cH_{\rho_j}(D_1,W_j)&\longrightarrow \cH_\rho(D,V)|_{\widetilde{G}_1} \quad &&\text{or} \quad
& \cO_{\rho_j}(D_1,W_j)&\longrightarrow \cO_\rho(D,V)|_{\widetilde{G}_1}.
\end{alignat*}
$\cF_{\tau\rho_j}^\downarrow$ is called a~\textit{symmetry breaking operator} and $\cF_{\tau\rho_j}^\uparrow$ is called a~\textit{holographic operator},
according to the terminology introduced in~\cite{KP1, KP2} and~\cite{KP3} respectively.
Such a~problem is proposed by Kobayashi from the viewpoint of the representation theory (see~\cite{K1}),
and studied from various viewpoints, e.g., by~\cite{BCK, Cl, C, IKO, Ju, Kfmeth, KKP, KOSS, KP1, KP2, KP3, KS1, KS2, MO, N, N2, OR, P, PZ, R}
for holomorphic discrete series, principal series, and complementary series representations.
Especially, it is proved by~\cite{Kfmeth, KP1} that in the holomorphic setting, symmetry breaking operators $\cF_{\tau\rho_j}^\downarrow\colon\cO_\tau(D,V)|_{\widetilde{G}_1}\to\cO_{\rho_j}(D_1,W_j)$
are always given by differential operators, and their symbols are characterized as polynomial solutions of certain systems of differential equations (F-method).
Also, by~\cite{N} when $\cH_\tau(D,V)$ is holomorphic discrete, the differential operator
\[
\tilde{\cF}_{\tau\rho_j}^\downarrow\colon \ \cH_\tau(D,V)\longrightarrow\cH_{\rho_j}(D_1,W_j), \qquad
f(x)=f(x_1,x_2)\longmapsto \tilde{F}_{\tau\rho_j}^\downarrow\left(\frac{\partial}{\partial x}\right)f(x)\biggr|_{x_2=0}
\]
defined by using the operator-valued polynomial $\tilde{F}_{\tau\rho_j}^\downarrow(z)\in\cP(\fp^-)\otimes\Hom_\BC(V,W_j)$ on the dual space $\fp^-$ of $\fp^+$,
\[
\tilde{F}_{\tau\rho_j}^\downarrow(z)=\bigl\langle {\rm e}^{(x|z)_{\fp^+}},\rK(x_2)\bigr\rangle_{\cH_{\tau}(D,V),x},\qquad
\rK(x_2)\in \bigl(\cP\bigl(\fp^+_2\bigr)\otimes\Hom_\BC\bigl(\overline{V},\overline{W_j}\bigr)\bigr)^{\widetilde{K}_1}
\]
becomes a~symmetry breaking operator. Here $(\cdot|\cdot)_{\fp^+}$ is a~suitable non-degenerate pairing on $\fp^+\times\fp^-$.
Especially when $\cH_{\tau}(D,V)=\cH_\lambda(D)$ is of scalar type, the construction of symmetry breaking operators
\[
\cF_{\lambda,\tilde{\bk}}^\downarrow\colon \ \cH_\lambda(D)|_{\widetilde{G}_1}\longrightarrow \cH_{\varepsilon_1\lambda}\bigl(D_1,\cP_{\tilde{\bk}}\bigl(\fp^+_2\bigr)\bigr)
\]
is reduced to the computation of~(\ref{intro_aim}).
Since the restriction of a~unitary highest weight representation of scalar type $\cH_\lambda(D)|_{\widetilde{G}_1}$ decomposes multiplicity-freely by~\cite[Theorem A]{Kmf1},
symmetry breaking operators from $\cH_\lambda(D)|_{\widetilde{G}_1}$ are unique up to constant multiple.
On the other hand, if~$\lambda$ is not in the holomorphic discrete range, we do not know a~priori whether symmetry breaking operators from $\cO_\lambda(D)|_{\widetilde{G}_1}$ are unique or not.
Indeed, for tensor product case, the space of symmetry breaking operators from $\cO_\lambda(D)\hotimes\cO_\mu(D)$ may become greater than 1-dimensional for some $(\lambda,\mu)\in\BC$
(see~\cite[Section 9]{KP2}, or Section~\ref{section_tensor} of this paper).

In~\cite{N2}, when $\fp^+$ is simple and when both $\fp^+$, $\fp^+_2$ are of tube type, we explicitly computed the inner product~(\ref{intro_aim}) for $\tilde{\bk}$ of the form one of
$\tilde{\bk}=\bk=(k+l,k,\dots,k)$, $\tilde{\bk}=\bk=(k+1,\allowbreak\dots,k+1,k,\dots,k)$ or $\tilde{\bk}=(\bk,\bl)=((k,\dots,k),\bl)$ according to the choice of $\bigl(\fp^+,\fp^+_1,\fp^+_2\bigr)$,
and determined the symmetry breaking operators for these $\tilde{\bk}$.
Especially if $\tilde{\bk}=(k,\dots,k)$ or $\tilde{\bk}=((k,\dots,k),(l,\dots,l))$, then $\cH_{\varepsilon_1\lambda}\bigl(D_1,\cP_{\tilde{\bk}}\bigl(\fp^+_2\bigr)\bigr)$ becomes of scalar type,
and the inner product~(\ref{intro_aim}) and the symmetry breaking operator are given by using Heckman--Opdam's hypergeometric polynomials of type $BC$.
Also in~\cite{N}, we have constructed the holographic operators for some~$\tilde{\bk}$ containing the above cases, as infinite-order differential operators.

We continue to assume $\cH_\tau(D,V)=\cH_\lambda(D)$ is of scalar type. Then since the decomposition~(\ref{intro_decomp}) is a~Hilbert direct sum and is multiplicity-free,
under a~suitable normalization of $\cF_{\lambda,\tilde{\bk}}^\downarrow$, there exist constants $C\bigl(\lambda,\tilde{\bk}\bigr)>0$ such that
\[ \Vert f\Vert_\lambda^2=\sum_{\tilde{\bk}} C\bigl(\lambda,\tilde{\bk}\bigr)\big\Vert \cF_{\lambda,\tilde{\bk}}^\downarrow f\big\Vert_{\varepsilon_1\lambda,\tilde{\bk}}^2, \qquad f\in\cH_\lambda(D) \]
holds, where $\Vert\cdot\Vert_{\varepsilon_1\lambda,\tilde{\bk}}$ is a~$\widetilde{G}_1$-invariant norm on $\cH_{\varepsilon_1\lambda}\bigl(D_1,\cP_{\tilde{\bk}}\bigl(\fp^+_2\bigr)\bigr)$.
This formula is regarded as an analogue of the Parseval or the Plancherel theorems for the Fourier analysis.
Such Parseval--Plancherel-type formulas for symmetric pairs of holomorphic type are studied, e.g., by~\cite{BS1, BS2, BS3, HK1, HK2, KP3}
under different realizations of $\cH_\lambda(D)$, $\cH_{\varepsilon_1\lambda}\bigl(D_1,\cP_{\tilde{\bk}}\bigl(\fp^+_2\bigr)\bigr)$,
and those for anti-holomorphic type cases are studied, e.g., by~\cite{MS, Ne1, Ne2, Ne3, OZ, Sep1, Sep2, Sep3,vDP1, vDP2, Z1, Z2}.
In our setting, for~$(G,G_1)$ of holomorphic type, each $\cH_{\varepsilon_1\lambda}\bigl(D_1,\cP_{\tilde{\bk}}\bigl(\fp^+_2\bigr)\bigr)$-isotypic component in $\cH_\lambda(D)$ is generated by the minimal $\widetilde{K}_1$-type
$\cP_{\tilde{\bk}}\bigl(\fp^+_2\bigr)\subset\cP(\fp^+)=\cH_\lambda(D)_{\widetilde{K}}$, and we assume that $\cF_{\lambda,\tilde{\bk}}^\downarrow$ is normalized such that
$\bigl\Vert \cF_{\lambda,\tilde{\bk}}^\downarrow f\bigr\Vert_{\varepsilon_1\lambda,\tilde{\bk}}$ is independent of the parameter $\lambda$ for $f=f(x_2)\in\cP_{\tilde{\bk}}\bigl(\fp^+_2\bigr)$.
Under this setting we want to determine the $\lambda$-dependence of $C\bigl(\lambda,\tilde{\bk} \bigr)$ explicitly.
To do this, it is enough to compute $\Vert f\Vert_\lambda$ for $f=f(x_2)\in\cP_{\tilde{\bk}}\bigl(\fp^+_2\bigr)$ for each $\tilde{\bk}$,
and this is deduced from the top term, i.e., the value at $z_1=0$, of the inner product~(\ref{intro_aim}) (see Proposition~\ref{prop_Plancherel}).

Next we consider $\cO_\lambda(D)$ for $\lambda$ not in the holomorphic discrete range.
Following Faraut--Kor\'anyi~\cite{FK0, FK}, the weighted Bergman inner product $\langle\cdot,\cdot\rangle_\lambda$ is explicitly computed
on each $\widetilde{K}$-type in $\cP(\fp^+)=\cH_\lambda(D)_{\widetilde{K}}$ for sufficiently large $\lambda$,
and this is meromorphically continued for all $\lambda\in\BC$.
For smaller~$\lambda$, $\cO_\lambda(D)_{\widetilde{K}}$ becomes reducible as a~$\bigl(\fg,\widetilde{K}\bigr)$-module if $\lambda$ is a~pole of~$\langle\cdot,\cdot\rangle_\lambda$,
and for any $f\in\cP(\fp^+)$, by computing the poles and their orders of the inner product $\bigl\langle f,{\rm e}^{(\cdot|\overline{z})_{\fp^+}}\bigr\rangle_{\lambda}$ with respect to $\lambda$,
we can determine which submodule $f$ sits in. Similarly, for a~symmetric subgroup $G_1\subset G$ of holomorphic type, the decomposition~(\ref{intro_decomp}) is restated as
\[
\cO_\lambda(D)_{\widetilde{K}}\bigr|_{(\fg_1,\widetilde{K}_1)}=\cP(\fp^+)\bigr|_{(\fg_1,\widetilde{K}_1)}=\bigoplus_{\tilde{\bk}}{\rm d}\tau_\lambda(\cU(\fg_1))\cP_{\tilde{\bk}}\bigl(\fp^+_2\bigr)
\]
for sufficiently large $\lambda$, where $\cU(\fg_1)$ denotes the universal enveloping algebra of $\fg_1$, but this does not hold for smaller $\lambda$ in general.
By computing the poles and their orders of the inner product~(\ref{intro_aim}) with respect to $\lambda$, we can determine which $\bigl(\fg,\widetilde{K}\bigr)$-submodule of $\cO_\lambda(D)_{\widetilde{K}}$
contains the $\bigl(\fg_1,\widetilde{K}_1\bigr)$-module ${\rm d}\tau_\lambda(\cU(\fg_1))\cP_{\tilde{\bk}}\bigl(\fp^+_2\bigr)$ (see Proposition~\ref{prop_poles}).

In this article we treat irreducible symmetric pairs $(G,G_1)=(G,(G^\sigma)_0)$ of holomorphic type, and treat holomorphic discrete series representations $\cH_\lambda(D)$ of scalar type.
According to the decomposition $\cP\bigl(\fp^+_2\bigr)=\bigoplus_{\tilde{\bk}}\cP_{\tilde{\bk}}\bigl(\fp^+_2\bigr)$ of the space of polynomials on $\fp^+_2=(\fp^+)^{-\sigma}$,
we compute the top terms (Theorems~\ref{thm_topterm_nonsimple},~\ref{thm_topterm_simple}, and~\ref{thm_topterm_tensor})
and the poles (Theorems~\ref{thm_poles_nonsimple},~\ref{thm_poles_simple}, and~\ref{thm_poles_tensor})
of the weighted Bergman inner product~(\ref{intro_aim}) for all $\tilde{\bk}=\bk\in\BZ_{++}^{r_2}$ or ${\tilde{\bk}=(\bk,\bl)\in\BZ_{++}^{r'}\times\BZ_{++}^{r''}}$.
Also, we apply these results for the determination of Parseval--Plancherel-type formulas (Corollaries~\ref{cor_Plancherel_easycase},~\ref{cor_Plancherel_nonsimple},~\ref{cor_Plancherel_simple}, and~\ref{cor_Plancherel_tensor}) and $\bigl(\fg,\widetilde{K}\bigr)$-modules (Corollaries~\ref{cor_submodule_easycase},~\ref{cor_submodule_nonsimple},~\ref{cor_submodule_simple}, and~\ref{cor_submodule_tensor}).

This paper is organized as follows. In Section~\ref{section_preliminaries}, we review Jordan triple systems, Jordan algebras and holomorphic discrete series representations.
In Section~\ref{section_keylemmas}, we introduce some lemmas needed in later sections.
In Section~\ref{section_easycase}, we treat $(G,G_1)$ such that~(\ref{intro_aim}) is easily computable.
For these $(G,G_1)$, the symmetry breaking operators are given by normal derivatives along $\fp^+_2$ independent of the parameter $\lambda$,
and~(\ref{intro_aim}) is computed directly by using Faraut--Kor\'anyi's result~\cite{FK0,FK}.
In Section~\ref{section_nonsimple}, we treat the cases such that $\fp^+_2$ is a~direct sum of two simple Jordan triple systems,
and in Section~\ref{section_simple}, we treat the cases such that $\fp^+_2$ is simple.
However, for some exceptional $\bigl(\fp^+,\fp^+_1,\fp^+_2\bigr)$, we need some extra computation to determine the poles of~(\ref{intro_aim}).
In Section~\ref{section_rank3}, we compute the poles of~(\ref{intro_aim}) when $\fp^+$, $\fp^+_2$ are simple of rank 3. The above exceptional case is contained in these rank 3 cases.
Also, in this section we give a~conjecture on the full computation of~(\ref{intro_aim}) for rank 3 cases.
In Section~\ref{section_tensor}, we treat the cases $\bigl(\fp^+,\fp^+_1\bigr)=\bigl(\fp^+_0\oplus\fp^+_0,\Delta\bigl(\fp^+_0\bigr)\bigr)$, and the tensor product representation $\cH_\lambda(D_0)\hotimes\cH_\mu(D_0)$.
In this section we also discuss the higher-multiplicity phenomena, which is a~generalization of the argument in~\cite[Section 9]{KP2}.

\section{Preliminaries}\label{section_preliminaries}

In this section we review Jordan triple systems, Jordan algebras and holomorphic discrete series representations.
We use almost the same notations as in~\cite{N2}, and readers who read~\cite{N2} can skip Sections~\ref{subsection_HPJTS}--\ref{subsection_classification}.
For detail see also, e.g.,~\cite[Parts III and V]{FKKLR} and~\cite{FK,L,Sat}.

\subsection{Hermitian positive Jordan triple systems}\label{subsection_HPJTS}

We consider a~Hermitian positive Jordan triple system $\bigl(\fp^+,\fp^-,\{\cdot,\cdot,\cdot\},\overline{\cdot}\bigr)$,
where $\fp^\pm$ are finite-dimensional vector spaces over $\BC$, with a~non-degenerate bilinear form $(\cdot|\cdot)_{\fp^\pm}\colon \fp^\pm\times\fp^\mp\to\BC$,
$\{\cdot,\cdot,\cdot\}\colon\fp^\pm\times\fp^\mp\times\fp^\pm\to\fp^\pm$ is a~$\BC$-trilinear map satisfying
\begin{gather*}
\begin{split}
& \{x,y,z\}=\{z,y,x\}, \\
& \{u,v,\{x,y,z\}\}=\{\{u,v,x\},y,z\}-\{x,\{v,u,y\},z\}+\{x,y,\{u,v,z\}\}, \\
& (\{u,v,x\}|y)_{\fp^\pm}=(x|\{v,u,y\})_{\fp^\pm}
\end{split}
\end{gather*}
for any $u,x,z\in\fp^\pm$, $v,y\in\fp^\mp$, and $\overline{\cdot}\colon\fp^\pm\to\fp^\mp$ is a~$\BC$-antilinear involutive isomorphism
satisfying $(x|\overline{x})_{\fp^\pm}\ge 0$ for any $x\in\fp^\pm$.
Let $D,B\colon\fp^\pm\times\fp^\mp\to\End_\BC\bigl(\fp^\pm\bigr)$, $Q\colon\fp^\pm\times\fp^\pm\to\Hom_\BC(\fp^\mp,\fp^\pm)$, $Q\colon\fp^\pm\to\Hom_\BC(\fp^\mp,\fp^\pm)$ be the maps given by
\begin{gather*}
D(x,y)z=Q(x,z)y:=\{x,y,z\}, \\
Q(x):=\frac{1}{2}Q(x,x), \\
B(x,y):=I_{\fp^\pm}-D(x,y)+Q(x)Q(y)
\end{gather*}
for $x,z\in\fp^\pm$, $y\in\fp^\mp$, let $h=h_{\fp^\pm}\colon\fp^\pm\times\fp^\mp\to\BC$ be the \textit{generic norm}, which is a~polynomial on $\fp^+\times\fp^-$
irreducible on each simple Jordan triple subsystem, and write $B(x):=B(x,\overline{x})$, $h(x):=h(x,\overline{x})$.
If $\fp^+$ is simple, then $B(x,y)$ and $h(x,y)$ are related as
\begin{equation}\label{formula_hB}
h(x,y)^p=\Det_{\fp^+}B(x,y), \qquad x\in\fp^+,\quad y\in\fp^-
\end{equation}
for some $p\in\BZ_{>0}$.

If $e\in\fp^+$ is a~tripotent, i.e., $\{e,\overline{e},e\}=2e$, then $D(e,\overline{e})\in\End_\BC(\fp^+)$ has the eigenvalues~$0$,~$1$,~$2$. For $j=0,1,2$, let
\begin{align}
\fp^+(e)_j=\fp^+(\overline{e})_j:=\{x\in\fp^+\mid D(e,\overline{e})x=jx\}\subset\fp^+,\nonumber\\
\fp^-(\overline{e})_j=\fp^-(e)_j:=\{x\in\fp^-\mid D(\overline{e},e)x=jx\}\subset\fp^-, \label{Peirce}
\end{align}
so that $\fp^\pm=\fp^\pm(e)_2\oplus\fp^\pm(e)_1\oplus\fp^\pm(e)_0$ holds (\textit{Peirce decomposition}).
A non-zero tripotent~$e\in\fp^+$ is called \textit{primitive} if $\fp^+(e)_2=\BC e$, and \textit{maximal} if $\fp^+(e)_0=\{0\}$.
If $\fp^+(e)_2=\fp^+$ holds for some (or equivalently any) maximal tripotent $e\in\fp^+$, then we say that $\fp^+$ is of \textit{tube type}.
Throughout the paper, we assume that the bilinear form $(\cdot|\cdot)_{\fp^\pm}\colon\fp^\pm\times\fp^\mp\to\BC$ is normalized such that
$(e|\overline{e})_{\fp^+}=(\overline{e}|e)_{\fp^-}=1$ holds for any primitive tripotent $e\in\fp^+$.
If $\fp^+$ is simple, then under this normalization $(x|y)_{\fp^+}$ and $D(x,y)$ are related as
\begin{equation}\label{formula_pairingD}
p(x|y)_{\fp^+}=\Tr_{\fp^+}D(x,y), \qquad x\in\fp^+,\quad y\in\fp^-
\end{equation}
with the same $p\in\BZ_{>0}$ as in~(\ref{formula_hB}).

Next we fix a~tripotent $e\in\fp^+$, and consider $\fp^+(e)_2\subset\fp^+$ as in~(\ref{Peirce}). Then
\[
Q(\overline{e})\colon \ \fp^+(e)_2\longrightarrow\fp^-(e)_2, \qquad Q(e)\colon\ \fp^-(e)_2\longrightarrow\fp^+(e)_2
\]
are mutually inverse. For $x\in\fp^+(e)_2$ let $P(x):=Q(x)Q(\overline{e})\in\End_\BC(\fp^+(e)_2)$. Then $\fp^+(e)_2$ becomes a~Jordan algebra with the product
\[
 x\cdot y:=\frac{1}{2}\{x,\overline{e},y\}, \qquad x,y\in\fp^+(e)_2,
 \]
with the unit element $e$, and with the inverse
\[
 x^\itinv:=P(x)^{-1}x, \qquad x\in\fp^+(e)_2.
\]
The Euclidean real form $\fn^+\subset\fp^+(e)_2$ is given by
\[
 \fn^+:=\big\{x\in\fp^+(e)_2 \mid Q(e)\overline{x}=x\big\}.
\]
Let $(\cdot|\cdot)_{\fn^+}\colon\fp^+(e)_2\times\fp^+(e)_2\to\BC$ be the symmetric bilinear form on $\fp^+(e)_2=\fn^{+\BC}$ given by
\[
 (x|y)_{\fn^+}:=(x|Q(\overline{e})y)_{\fp^+}=(y|Q(\overline{e})x)_{\fp^+}, \qquad x,y\in\fp^+(e)_2.
\]
This is positive definite on $\fn^+$. Also, let $\det_{\fn^+}(x)$ be the \textit{determinant polynomial} on ${\fp^+(e)_2\!=\!\fn^{+\BC}}$,
and let $\det_{\fn^-}(y):=\det_{\fn^+}(Q(e)y)$ for $y\in\fp^-(e)_2$.
Then the generic norm $h(x,y)$ and $\det_{\fn^+}(x)$ are related as
\begin{gather*}
t^r h\bigl(t^{-1}x,y\bigr)\bigr|_{t=0}=(-1)^r\det_{\fn^+}(x)\det_{\fn^-}(y), \\
h(x,\overline{e})=\det_{\fn^+}(e-x), \qquad x\in\fp^+(e)_2,\quad y\in\fp^-(e)_2,
\end{gather*}
where $r:=\rank\fp^+(e)_2=\deg\det_{\fn^+}(x)$. In addition, let
\begin{align*}
\Omega:\hspace{-3pt}&=(\text{connected component of }\{x\in\fn^+\mid P(x)\text{ is positive definite}\}\text{ which contains }e) \\
&=(\text{connected component of }\{x\in\fn^+\mid \det_{\fn^+}(x)>0\}\text{ which contains }e)
\end{align*}
be the \textit{symmetric cone}.

\subsection{Structure groups and the Kantor--Koecher--Tits construction}\label{subsection_KKT}

In this subsection we consider some Lie algebras corresponding to Jordan triple systems $\fp^\pm$.
For $l\in\End_\BC(\fp^+)$, let $\overline{l},{}^t\hspace{-1pt}l\in\End_\BC(\fp^-)$, $l^*\in\End_\BC(\fp^+)$ be the elements given by
$\overline{l}\overline{x}=\overline{lx}$, $(lx|\overline{y})_{\fp^+}=(x|{}^t\hspace{-1pt}l\overline{y})_{\fp^+}=(x|\overline{l^*y})_{\fp^+}$ for $x,y\in\fp^+$.
Then the \textit{Structure group} $\operatorname{Str}(\fp^+)$ and the \textit{automorphism group} $\operatorname{Aut}(\fp^+)$ are defined as
\begin{gather*}
\operatorname{Str}(\fp^+):=\big\{l\in {\rm GL}_\BC(\fp^+) \, \big| \, \big\{lx,{}^t\hspace{-1pt}l^{-1}y,lz\big\}=l\{x,y,z\}, \, x,z\in\fp^+,\, y\in\fp^-\big\}, \\
\operatorname{Aut}(\fp^+):=\big\{k\in \operatorname{Str}(\fp^+) \, \big| \, k^{-1}=k^*\big\}.
\end{gather*}
Let $\mathfrak{str}(\fp^+)=\fk^\BC$ and $\mathfrak{der}(\fp^+)=\fk$ denote the Lie algebras of $\operatorname{Str}(\fp^+)$ and $\operatorname{Aut}(\fp^+)$ respectively.
Then $D\bigl(\fp^+,\fp^-\bigr)=\mathfrak{str}(\fp^+)$ holds.

Next we recall the \textit{Kantor--Koecher--Tits construction}. As vector spaces let
\begin{gather*}
\fg^\BC:=\fp^+\oplus\fk^\BC\oplus\fp^-, \\
\fg:=\big\{(x,k,\overline{x})\mid x\in\fp^+,\, k\in\fk\big\}\subset\fg^\BC,
\end{gather*}
and give the Lie algebra structure on $\fg^\BC$ by
\[
[(x,k,y),(z,l,w)]:=\bigl(kz-lx,[k,l]+D(x,w)-D(z,y),-{}^t\hspace{-1pt}kw+{}^t\hspace{-1pt}ly\bigr).
\]
Then $\fg^\BC$ becomes a~Lie algebra and $\fg$ becomes a~real form of $\fg^\BC$.
Let ${(\cdot|\cdot)_{\fg^\BC}\colon \fg^\BC\times\fg^\BC\to\BC}$ be the $\fg^\BC$-invariant bilinear form normalized such that $(x|y)_{\fg^\BC}=(x|y)_{\fp^+}$ holds for any ${x\in\fp^+}$, ${y\in\fp^-}$.
We fix a~connected complex Lie group $G^\BC$ with the Lie algebra $\fg^\BC$,
and let ${G,K^\BC,K,P^+,P^-\!\subset\! G^\BC}$ be the connected closed subgroups corresponding to the Lie algebras ${\fg,\fk^\BC,\fk,\fp^+,\fp^-\subset\fg^\BC}$, respectively.
Then $\operatorname{Ad}|_{\fp^+}\colon K^\BC\to\operatorname{Str}(\fp^+)_0$ gives a~covering map, and for $l\in K^\BC$, $x\in\fp^+$, we abbreviate $\operatorname{Ad}(l)x=:lx$.

Next we fix a~tripotent $e\in\fp^+$, regard $\fp^+(e)_2\subset\fp^+$ as a~Jordan algebra, and let $\fn^+\subset\fp^+_2(e)$ be the Euclidean real form. Also, let
\begin{gather*}
\begin{split}
&\fn^- :=\overline{\fn^+}=Q(\overline{e})\fn^+ \subset\fp^-(e)_2, \\
&\fk^\BC(e)_2 :=D\bigl(\fp^+(e)_2,\fp^-(e)_2\bigr)=\big[\fp^+(e)_2,\fp^-(e)_2\big] \subset\fk^\BC, \\
&\fk(e)_2 :=\fk^\BC(e)_2\cap\fk \subset\fk^\BC(e)_2, \\
&\fl :=D\bigl(\fn^+,\fn^-\bigr)=\big[\fn^+,\fn^-\big] \subset\fk^\BC(e)_2, \\
&\fg^\BC(e)_2 :=\fp^+(e)_2\oplus\fk^\BC(e)_2\oplus\fp^-(e)_2 \subset\fg^\BC, \\
&\fg(e)_2 :=\fg^\BC(e)_2\cap\fg \subset\fg^\BC(e)_2, \\
&{}^c\hspace{-1pt}\fg :=\fn^+\oplus\fl\oplus\fn^- \subset\fg^\BC(e)_2.
\end{split}
\end{gather*}
These become Lie subalgebras of the right-hand sides. Let $K^\BC(e)_2,K(e)_2,G(e)_2,{}^c\hspace{-1pt}G\subset G^\BC$ be the connected closed subgroups corresponding to the Lie algebras
$\fk^\BC(e)_2,\fk(e)_2,\fg(e)_2,{}^c\hspace{-1pt}\fg\subset\fg^\BC$ respectively,
and let $L:=K^\BC\cap {}^c\hspace{-1pt}G$, $K_L:=L\cap K$, $\fk_\fl:=\fl\cap\fk$.
Then $G(e)_2$ and ${}^c\hspace{-1pt}G$ are isomorphic via the Cayley transform in $G^\BC(e)_2$,
both $K(e)_2$ and $L$ are real forms of $K^\BC(e)_2$,
$L$ acts transitively on the symmetric cone $\Omega\subset\fn^+$, and $K_L$ acts on $\fn^+$ as Jordan algebra automorphisms.
Also, for $l\in\fk^\BC(e)_2=\fl^\BC\subset\End_\BC(\fp^+(e)_2)=\End_\BC\bigl(\fn^{+\BC}\bigr)$,
let $l^\top=Q(e){}^t\hspace{-1pt}lQ(\overline{e})\in \mathfrak{k}^{\mathbb{C}}(e)_2 =\fl^\BC$, and extend to the anti-automorphism on $K^\BC(e)_2=L^\BC$, so that $(lx|y)_{\fn^+}=\bigl(x|l^\top y\bigr)_{\fn^+}$ holds.

\subsection{Simultaneous Peirce decomposition}\label{subsection_simul_Peirce}

In this subsection we assume $\fp^+$ is simple, or equivalently, the corresponding Lie algebra $\fg$ is simple.
We fix a~Jordan frame $\{e_1,\dots,e_r\}\subset\fp^+$, i.e., a~maximal set of primitive tripotents in~$\fp^+$ satisfying $D(e_i,\overline{e_j})=0$ for $i\ne j$,
where $r=\rank\fp^+=\rank_\BR\fg$. Then $e:=e_1+\cdots+e_r\in\fp^+$ becomes a~maximal tripotent,
and we take $\fn^+\subset\fp^+(e)_2\subset\fp^+$, $\fl\subset\fk^\BC(e)_2\subset\fk^\BC$ as in the previous subsection.
Next let $h_j:=D(e_j,\overline{e_j})=[e_j,\overline{e_j}]\in\fl\subset\fk^\BC$, and let
\begin{gather*}
\fa_\fl :=\bigoplus_{j=1}^r\BR h_j, \qquad \fa_\fl^\BC:=\bigoplus_{j=1}^r\BC h_j, \\
\fp^\pm_{ij} :=\big\{x\in\fp^\pm\mid [h_l,x]=\pm(\delta_{il}+\delta_{jl})x, \, l=1,\dots,r\big\}, \qquad 1\le i\le j\le r, \\
\fp^\pm_{0j} :=\big\{x\in\fp^\pm\mid [h_l,x]=\pm\delta_{jl}x, \, l=1,\dots,r\big\}, \qquad 1\le j\le r, \\
\fk^\BC_{ij} :=\big\{x\in\fk^\BC\mid [h_l,x]=(\delta_{il}-\delta_{jl})x, \, l=1,\dots,r \big\}, \qquad 1\le i,j\le r,\quad i\ne j, \\
\fk^\BC_{i0} :=\big\{x\in\fk^\BC\mid [h_l,x]=\delta_{il}x, \, l=1,\dots,r\big\}, \qquad 1\le i\le r, \\
\fk^\BC_{0j} :=\big\{x\in\fk^\BC\mid [h_l,x]=-\delta_{jl}x, \, l=1,\dots,r\big\}, \qquad 1\le j\le r, \\
\fm^\BC :=\big\{x\in\fk^\BC\mid [h_l,x]=0, \, (h_l|x)_{\fg^\BC}=0, \, l=1,\dots,r\big\}, \\
\fl_{ij} :=\fk^\BC_{ij}\cap \fl, \qquad 1\le i,j\le r,\quad i\ne j, \\
\fm_\fl :=\fm^\BC\cap\fl,
\end{gather*}
so that
\begin{gather*}
\fp^\pm=\bigoplus_{\substack{0\le i\le j\le r \\ (i,j)\ne(0,0)}}\fp^\pm_{ij}, \qquad
\fk^\BC=\fa_\fl^\BC\oplus\fm^\BC\oplus\bigoplus_{\substack{0\le i,j\le r \\ i\ne j}}\fk^\BC_{ij}, \qquad
\fl=\fa_\fl\oplus\fm_\fl\oplus\bigoplus_{\substack{1\le i,j\le r \\ i\ne j}}\fl_{ij}
\end{gather*}
hold. The decomposition of $\fp^\pm$ is called the \textit{simultaneous Peirce decomposition}. Using this decomposition, we define integers $(d,b,p,n)$ by
\begin{alignat}{3}
&d:=\dim\fp^+_{ij}, \quad 1\le i<j\le r, \qquad && b:=\dim\fp^+_{0j}, \quad 1\le j\le r, &\nonumber \\
&p:=2+d(r-1)+b, \qquad && n:=\dim\fp^+=r+\frac{d}{2}r(r-1)+br.& \label{str_const}
\end{alignat}
We note that if $r=1$, then $d$ is not determined uniquely, and any number is allowed.
Then $p$ coincides with the one in~(\ref{formula_hB}) and~(\ref{formula_pairingD}).
Also we set
\begin{gather*}
\fn_\fl:=\bigoplus_{1\le i<j\le r}\fl_{ij}, \qquad \fn_\fl^\top:=\bigoplus_{1\le i<j\le r}\fl_{ji}, \qquad
M_L:=\{k\in K_L\mid \operatorname{Ad}(k)h_l=0, \, l=1,\dots,r\},
\end{gather*}
and let $A_L,N_L,N_L^\top\subset L$ be the connected closed subgroups corresponding to the Lie subalgebras~$\fa_\fl$,~$\fn_\fl$,~$\fn_\fl^\top$ respectively,
so that $M_LA_LN_L,M_LA_LN_L^\top\subset L$ are minimal parabolic subgroups.

\subsection{Space of polynomials on Jordan triple systems}\label{subsection_polynomials}

In this subsection we consider the space $\cP\bigl(\fp^\pm\bigr)$ of polynomials on the Jordan triple system $\fp^\pm$, on which $K^\BC$ acts by
\begin{alignat*}{3}
&(\operatorname{Ad}|_{\fp^+}(l))^\vee f(x)=f\bigl(l^{-1}x\bigr), \qquad && l\in K^\BC,\quad f\in\cP(\fp^+),\quad x\in\fp^+,& \\
&(\operatorname{Ad}|_{\fp^-}(l))^\vee f(y)=f\bigl({}^tly\bigr), \qquad && l\in K^\BC,\quad f\in\cP(\fp^-),\quad y\in\fp^-.&
\end{alignat*}
We assume $\fp^+$ is simple, fix a~Jordan frame $\{e_1,\dots,e_r\}\subset\fp^+$, and consider the tripotents
$e^k:=\sum_{j=1}^k e_j$, $\check{e}^k:=\sum_{j=r-k+1}^r e_j$ for $k=1,2,\dots,r$. We also write $e^r=\check{e}^r=:e$. Then the subalgebras
\[
\fp^+\bigl(e^k\bigr)_2=\bigoplus_{1\le i\le j\le k}\fp^+_{ij}, \qquad \fp^+\bigl(\check{e}^k\bigr)_2=\bigoplus_{r-k+1\le i\le j\le r}\fp^+_{ij}
\]
have Jordan algebra structures. Let $\fn^+\bigl(e^k\bigr)\subset\fp^+\bigl(e^k\bigr)_2$, $\fn^+\bigl(\check{e}^k\bigr)\subset\fp^+\bigl(\check{e}^k\bigr)_2$ be the Euclidean real forms,
and we extend the determinant polynomials $\det_{\fn^+(e^k)}$, $\det_{\fn^+(\check{e}^k)}$ on $\fp^+\bigl(e^k\bigr)_2$, $\fp^+\bigl(\check{e}^k\bigr)_2$ to polynomials on $\fp^+$.
Using these, for $\bm\in\BC^r$ we define the functions $\Delta^{\fn^+}_\bm(x)$, $\check{\Delta}^{\fn^+}_\bm(x)$ on the symmetric cone
$\Omega\subset\fn^+:=\fn^+(e^r)=\fn^+(\check{e}^r)$ by
\begin{gather}\label{principal_minor}
\Delta^{\fn^+}_\bm(x):=\prod_{k=1}^r \det_{\fn^+(e^k)}(x)^{m_k-m_{k+1}}, \!\qquad \check{\Delta}^{\fn^+}_\bm(x):=\prod_{k=1}^r \det_{\fn^+(\check{e}^k)}(x)^{m_k-m_{k+1}},\! \quad x\in\Omega,\!\!
\end{gather}
where we set $m_{r+1}:=0$. Then these satisfy
\begin{align*}
\Delta^{\fn^+}_\bm(man.x)={}&\Delta^{\fn^+}_\bm(man.e)\Delta^{\fn^+}_\bm(x)=\Delta^{\fn^+}_\bm(a.e)\Delta^{\fn^+}_\bm(x)={\rm e}^{2t_1m_1+2t_2m_2+\cdots+2t_rm_r}\Delta^{\fn^+}_\bm(x),\\
&
m\in M_L,\quad a={\rm e}^{t_1h_1+\cdots+t_rh_r}\in A_L,\quad n\in N^\top_L, \\
\check{\Delta}^{\fn^+}_\bm(man.x)={}&\check{\Delta}^{\fn^+}_\bm(man.e)\check{\Delta}^{\fn^+}_\bm(x)=\check{\Delta}^{\fn^+}_\bm(a.e)\check{\Delta}^{\fn^+}_\bm(x)
={\rm e}^{2t_1m_r+t_2m_{r-1}+\cdots+2t_rm_1}\check{\Delta}^{\fn^+}_\bm(x),\\
&
m\in M_L,\quad a={\rm e}^{t_1h_1+\cdots+t_rh_r}\in A_L,\quad n\in N_L.
\end{align*}
Especially if
\[ \bm\in\BZ_{++}^r:=\{\bm=(m_1,\dots,m_r)\in\BZ^r \mid m_1\ge m_2\ge \cdots\ge m_r\ge 0\}, \]
then $\Delta^{\fn^+}_\bm(x)$, $\check{\Delta}^{\fn^+}_\bm(x)$ are extended to polynomials on $\fp^+$.
For $\bm=(m_1,\dots,m_r)\in\BC^r$, we write $\bm^\vee:=(m_r,\dots,m_1)\in\BC^r$. Then the following holds.

\begin{Lemma}[{\cite[Proposition VII.1.6]{FK}}]\label{lem_diff}
For $\bk\in\BZ_{++}^r$, $\bm\in\BC^r$, $x\in\Omega$, we have
\begin{align*}
\check{\Delta}^{\fn^+}_\bk\left(\frac{\partial}{\partial x}\right)\Delta^{\fn^+}_\bm(x)
=\prod_{j=1}^r\left(m_j-k_{r-j+1}+\frac{d}{2}(r-j)+1\right)_{k_{r-j+1}}\Delta^{\fn^+}_{\bm-\bk^\vee}(x).
\end{align*}
\end{Lemma}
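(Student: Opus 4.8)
The identity to be proved is a differentiation formula for the ``conical functions'' $\Delta^{\fn^+}_{\bm}$ on the symmetric cone $\Omega$, asserting that applying the constant-coefficient differential operator $\check{\Delta}^{\fn^+}_{\bk}\bigl(\tfrac{\partial}{\partial x}\bigr)$ to $\Delta^{\fn^+}_{\bm}$ returns $\Delta^{\fn^+}_{\bm-\bk^\vee}$ up to an explicit product of Pochhammer symbols. Since this is precisely Faraut--Kor\'anyi~\cite[Proposition VII.1.6]{FK} (cited in the statement), the natural route is to reproduce their argument adapted to the present normalizations of $(d,b,p,r)$ and to the conventions for $\Delta^{\fn^+}_{\bm}$, $\check{\Delta}^{\fn^+}_{\bm}$ fixed in~(\ref{principal_minor}).

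The plan is as follows. First I would reduce to the case $\bm\in\BZ_{++}^r$ by an analyticity argument: both sides are, for fixed $\bk$, entire in $\bm\in\BC^r$ (the Pochhammer product is a polynomial in the $m_j$, and $\Delta^{\fn^+}_{\bm}(x)$ depends holomorphically on $\bm$ for $x\in\Omega$ since $\det_{\fn^+(e^k)}(x)>0$ there); hence it suffices to verify the identity on the Zariski-dense set of dominant integral $\bm$, where $\Delta^{\fn^+}_{\bm}$ is a genuine polynomial. Second, for $\bm\in\BZ_{++}^r$ I would use the $K_L$- (equivalently $M_LA_LN_L$- and $M_LA_LN_L^\top$-) covariance of $\Delta^{\fn^+}_{\bm}$ and $\check{\Delta}^{\fn^+}_{\bm}$ recorded just before the Lemma: both sides transform the same way under the triangular group, so one can evaluate at $x=e$ and must only match the scalar. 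The key algebraic input is the lowest-weight/highest-weight structure of the $\fk^\BC(e)_2$-module generated by $\Delta^{\fn^+}_{\bm}$ in $\cP(\fp^+(e)_2)$, together with the classical one-variable Gindikin Gamma-function computation: pairing $\check\Delta^{\fn^+}_{\bk}(\partial/\partial x)$ against $\Delta^{\fn^+}_{\bm}$ at $e$ amounts to computing $\langle \Delta^{\fn^+}_{\bm},\Delta^{\fn^+}_{\bk^\vee}\cdot(\text{lower})\rangle$ in the Fischer inner product, which factors through the Pochhammer product $\prod_{j}\bigl(m_j-k_{r-j+1}+\tfrac d2(r-j)+1\bigr)_{k_{r-j+1}}$. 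I would carry this out by induction on $|\bk|=\sum k_i$, peeling off one box at a time: it is enough to treat $\bk=\mathbf e_s:=(1,\dots,1,0,\dots,0)$ ($s$ ones), for which $\check\Delta^{\fn^+}_{\mathbf e_s}=\det_{\fn^+(\check e^s)}$ is a single Peirce-subdeterminant, and then iterate, since $\check\Delta^{\fn^+}_{\bk}=\prod_s \det_{\fn^+(\check e^s)}(x)^{k_s-k_{s+1}}$ and the relevant differential operators, applied in the order of decreasing subdeterminant size, commute with the shifts appearing in the Pochhammer factors.

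The base step $\bk=\mathbf e_s$ is the heart of the matter: one must show $\det_{\fn^+(\check e^s)}(\partial/\partial x)\,\Delta^{\fn^+}_{\bm}(x)=\prod_{j=r-s+1}^{r}\bigl(m_{?}+\tfrac d2(\cdot)+1\bigr)\,\Delta^{\fn^+}_{\bm-\mathbf e_s^\vee}(x)$, with the correct index bookkeeping. For this I would invoke the rank-$s$ sub-Jordan-algebra $\fp^+(\check e^s)_2$ and the corresponding Gamma-function identity of Faraut--Kor\'anyi (Chapter VII of~\cite{FK}), namely that $\det(\partial/\partial x)$ acting on $\det(x)^{\alpha}$ produces $\prod_{j=1}^{s}\bigl(\alpha-\tfrac d2(j-1)\bigr)\det(x)^{\alpha-1}$ on a rank-$s$ Euclidean Jordan algebra, extended to the conical function $\Delta^{\fn^+}_{\bm}$ via the factorization of $\Delta^{\fn^+}_{\bm}$ into powers of nested principal minors and the Peirce-orthogonality $D(e_i,\overline{e_j})=0$ ($i\ne j$), which makes the differential operator act ``triangularly'' on the minors. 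The main obstacle is purely combinatorial: keeping track of which $m_j$ and which multiple of $d/2$ attaches to which box once the two opposite nested sequences $\{e^k\}$ (used for $\Delta$) and $\{\check e^k\}$ (used for $\check\Delta$) are interleaved — this is exactly why the answer involves $m_j$ paired with $k_{r-j+1}$ and the shift $\tfrac d2(r-j)$ rather than $\tfrac d2(j-1)$. I expect no genuine analytic difficulty; the entire content is this index matching plus the one-dimensional Gamma-recursion, and the covariance reduction to $x=e$ keeps everything finite-dimensional.
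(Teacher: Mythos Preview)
The paper does not give its own proof of this lemma: it is stated with the citation \cite[Proposition VII.1.6]{FK} and used as a black box, with only a remark on the normalization of $\partial/\partial x$ and the special case~(\ref{formula_diff}) recorded afterward. Your proposal is a reasonable sketch of how one would reprove the Faraut--Kor\'anyi result from scratch (analytic continuation in $\bm$, covariance reduction, induction on $\bk$ via the rank-$s$ determinant identity), but none of this is needed here since the paper simply invokes the reference.
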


Here we normalize the differential operator $\frac{\partial}{\partial x}$ on $\fn^{+\BC}=\fp^+(e)_2$ with respect to the bilinear form $(\cdot|\cdot)_{\fn^+}=(\cdot|Q(\overline{e})\cdot)_{\fp^+}$.
Especially, if $k_1=\cdots=k_r=k\in\BZ_{\ge 0}$ and $\bm$ is of the form $\bm=(\mu+l_1,\mu+l_2,\dots,\mu+l_r)$ with $\mu,l_j\in\BC$, then we have
\begin{gather}
\det_{\fn^+}\bigg(\frac{\partial}{\partial x}\bigg)^k\det_{\fn^+}(x)^\mu\Delta^{\fn^+}_\bl(x)
=\prod_{j=1}^r\bigg(\mu+l_j-k+\frac{d}{2}(r-j)+1\bigg)_k\det_{\fn^+}(x)^{\mu-k}\Delta^{\fn^+}_\bl(x) \notag \\
\qquad\qquad=(-1)^{kr}\prod_{j=1}^r\biggl(-\mu-l_{r-j+1}-\frac{d}{2}(j-1)\biggr)_k\det_{\fn^+}(x)^{\mu-k}\Delta^{\fn^+}_\bl(x). \label{formula_diff}
\end{gather}

Next, for $\bm\in\BZ_{++}^r$, let
\begin{gather*}
\cP_\bm(\fp^+) :=\operatorname{span}_\BC \big\{\Delta^{\fn^+}_\bm\bigl(l^{-1}x\bigr)\, \big| \, l\in K^\BC\big\} \subset\cP(\fp^+), \\
\cP_\bm(\fp^-) :=\big\{\overline{f(\overline{y})}\, \big| \, f(x)\in\cP_\bm(\fp^+)\big\} \subset\cP(\fp^-).
\end{gather*}
Then the following holds.

\begin{Theorem}[{Hua--Kostant--Schmid,~\cite[Part III, Theorem V.2.1]{FKKLR}}]\label{thm_HKS}
Under the $K^\BC$-action, $\cP\bigl(\fp^\pm\bigr)$ is decomposed into the sum of irreducible submodules as
\[
\cP\bigl(\fp^\pm\bigr)=\bigoplus_{\bm\in\BZ_{++}^r}\cP_\bm\bigl(\fp^\pm\bigr).
\]
\end{Theorem}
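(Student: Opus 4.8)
The plan is to prove the Hua--Kostant--Schmid decomposition $\cP(\fp^\pm)=\bigoplus_{\bm\in\BZ_{++}^r}\cP_\bm(\fp^\pm)$ by the standard highest-weight-theory argument, combined with a dimension count via the Poincar\'e series. I will work with $\fp^+$; the statement for $\fp^-$ follows by applying complex conjugation through $\overline{\cdot}\colon\fp^+\to\fp^-$, which identifies $\cP(\fp^-)$ with the conjugate-dual $K^\BC$-module and sends $\cP_\bm(\fp^+)$ to $\cP_\bm(\fp^-)$ by definition. First I would recall that $\cP(\fp^+)=\bigoplus_{N\ge 0}\cP^N(\fp^+)$ is graded by degree, each $\cP^N(\fp^+)$ is a finite-dimensional $K^\BC$-module, and $K^\BC$ is reductive (its Lie algebra is $\fk^\BC=\mathfrak{str}(\fp^+)$, which is reductive since $\fp^+$ is simple Hermitian), so $\cP(\fp^+)$ is a direct sum of irreducibles. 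It then suffices to show: (i) each $\Delta^{\fn^+}_\bm$ is a lowest weight vector (or highest, after a choice of positivity) generating an irreducible submodule $\cP_\bm(\fp^+)$ with that extreme weight; (ii) the $\cP_\bm(\fp^+)$ for distinct $\bm\in\BZ_{++}^r$ are pairwise non-isomorphic; and (iii) these exhaust $\cP(\fp^+)$.

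For step (i), I would use the simultaneous Peirce decomposition of Section~\ref{subsection_simul_Peirce}: the Cartan subalgebra of $\fk^\BC$ contains $\fa_\fl^\BC=\bigoplus_j\BC h_j$, and the transformation rule recalled just before Lemma~\ref{lem_diff} shows that under $A_L$ (and $M_L$, $N_L^\top$) the polynomial $\Delta^{\fn^+}_\bm$ transforms by the character $e^{2(t_1 m_1+\cdots+t_r m_r)}$ and is $N_L^\top$-invariant. Choosing the Borel subalgebra of $\fk^\BC$ compatibly with $M_L A_L N_L^\top$, this says exactly that $\Delta^{\fn^+}_\bm$ is (up to the normalization of weights) a highest weight vector of weight $\bm$; since $\bm\in\BZ_{++}^r$ this weight is $\fk^\BC$-dominant, and the cyclic $K^\BC$-module it generates, namely $\cP_\bm(\fp^+)=\operatorname{span}_\BC\{\Delta^{\fn^+}_\bm(l^{-1}x)\}$, is therefore irreducible with highest weight $\bm$. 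Distinctness in (ii) is then immediate because distinct $\bm$ give distinct highest weights.

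The main work is step (iii), exhaustion, and this is where I expect the real obstacle. The clean route is to compare Poincar\'e (Hilbert) series: on the one hand $\sum_N \dim\cP^N(\fp^+)\,t^N = (1-t)^{-n}$ where $n=\dim\fp^+$; on the other hand, one knows the degree of $\Delta^{\fn^+}_\bm$ is $|\bm|=m_1+\cdots+m_r$ and, by the Schmid/Weyl-dimension-type formula for these $K^\BC$-modules, an explicit product formula for $\dim\cP_\bm(\fp^+)$ in terms of the structure constants $(d,b,r)$ of~(\ref{str_const}). The identity $\sum_{\bm\in\BZ_{++}^r}\dim\cP_\bm(\fp^+)\,t^{|\bm|}=(1-t)^{-n}$ is then a combinatorial identity (a specialization of a Gindikin-type or Macdonald-type identity for the cone $\Omega$); establishing it, or alternatively invoking it from Faraut--Kor\'anyi, closes the argument. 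An alternative to the explicit dimension count — which avoids the combinatorial identity — is to argue representation-theoretically that $\bigoplus_\bm\cP_\bm(\fp^+)$ is a $K^\BC$-submodule closed under multiplication and containing $\cP^1(\fp^+)=\fp^{+*}$ (since the linear forms are among the $\cP_\bm$ with $\bm=(1,0,\dots,0)$), hence all of $\cP(\fp^+)$, but this still requires knowing that the product of two such modules decomposes into the same family, i.e.\ a Pieri-type stability, which is itself essentially the content of the theorem. I would therefore present (iii) via the Poincar\'e-series comparison, citing~\cite[Part III]{FKKLR} or~\cite{FK} for the dimension formula and the resulting identity, and reduce the new content of the proof to steps (i) and (ii), which are purely formal given the machinery of Sections~\ref{subsection_KKT}--\ref{subsection_polynomials}.
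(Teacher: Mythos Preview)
The paper does not give its own proof of this classical result; it is simply quoted from \cite[Part~III, Theorem~V.2.1]{FKKLR}. Your outline follows the standard strategy and is broadly correct, with one gap in step~(i) and one genuine methodological difference from the cited reference in step~(iii).

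The gap in step~(i): the transformation law you invoke from Section~\ref{subsection_polynomials} concerns only $M_LA_LN_L^\top\subset L$, which is a minimal parabolic of $L$, not a Borel of $K^\BC$. To conclude that $\Delta^{\fn^+}_\bm$ is a $K^\BC$-highest-weight vector you must enlarge $\fa_\fl^\BC$ by a Cartan subalgebra of $\fm^\BC$, and you must also verify annihilation by the positive root spaces $\fk^\BC_{i0}$ (the non-tube part) and by the positive roots inside $\fm^\BC$. All of this is true --- essentially because $\Delta^{\fn^+}_\bm(x)$ depends only on the projection of $x$ to $\fp^+(e)_2$ and is built from determinants that are $\fm^\BC$-invariant --- but it is not a consequence of the $M_LA_LN_L^\top$-equivariance alone and requires a short additional argument.

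For step~(iii), the proof in \cite{FKKLR} (following Schmid) does not use a Poincar\'e-series comparison. Instead it argues in the converse direction: any $K^\BC$-highest-weight vector in $\cP(\fp^+)$ is in particular $M_LA_LN_L^\top$-semi-invariant, and one then shows (via the Gauss-type decomposition in the Jordan algebra) that every such semi-invariant polynomial is a scalar multiple of some $\Delta^{\fn^+}_\bm$. This establishes exhaustion without needing either the explicit dimension formula for $\cP_\bm(\fp^+)$ or the combinatorial identity $\sum_{\bm}(\dim\cP_\bm(\fp^+))\,t^{|\bm|}=(1-t)^{-n}$. Your Poincar\'e-series route is a legitimate alternative, closer in spirit to Hua's original case-by-case treatment of the classical domains, but it outsources the key step to a nontrivial identity; the direct classification of highest-weight vectors is more self-contained and is what the cited reference actually does.
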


In addition, if $\fp^+$ is of tube type, then for
\[
\bm\in\BZ_+^r:=\{\bm=(m_1,\dots,m_r)\in\BZ^r\mid m_1\ge m_2\ge\cdots\ge m_r\},
\]
let
\[
\cP_\bm\bigl(\fp^\pm\bigr):=\cP_{(m_1-m_r,m_2-m_r,\dots,m_{r-1}-m_r,0)}\bigl(\fp^\pm\bigr)\det_{\fn^\pm}(x)^{m_r}\subset\cP\bigl(\fp^\pm\bigr)\big[\det_{\fn^\pm}(x)^{-1}\big],
\]
so that $\cP_\bm\bigl(\fp^\pm\bigr)\simeq\cP_{-\bm^\vee}(\fp^\mp)$ holds as a~$K^\BC$-module.

\subsection{Holomorphic discrete series representations}\label{subsection_HDS}

In this subsection we review holomorphic discrete series representations.
First we recall the bounded symmetric domain realization (Harish-Chandra realization) of the Hermitian symmetric space $G/K$ via the Borel embedding,
\[ \xymatrix{ G/K \ar[r] \ar@{-->}[d]^{\mbox{\rotatebox{90}{$\sim$}}} & G^\BC/K^\BC P^- \\
 D \ar@{^{(}->}[r] & \fp^+ \ar[u]_{\exp}, } \]
where
\begin{align*}
D&=(\text{connected component of }\big\{x\in\fp^+\mid B(x)\text{ is positive definite}\big\}\text{ which contains }0) \\
&=(\text{connected component of }\big\{x\in\fp^+\mid h(x)>0\big\}\text{ which contains }0).
\end{align*}
Here we write $B(x):=B(x,\overline{x})$, $h(x):=h(x,\overline{x})$.
For $g\in G^\BC$, $x\in D$, if $g\exp(x)\in P^+K^\BC P^-$ holds, then we write
\[ g\exp(x)=\exp\bigl(\pi^+(g,x)\bigr)\kappa(g,x)\exp(\pi^-(g,x)), \]
where $\pi^\pm(g,x)\in \fp^\pm$ and $\kappa(g,x)\in K^\BC$. Then the map $\pi^+\colon G\times D\to D$ gives an action of $G$ on~$D$, and we abbreviate $\pi^+(g,x)=:gx$.

Next let $(\tau,V)$ be an irreducible holomorphic representation of the universal covering group $\widetilde{K}^\BC$ of~$K^\BC$,
with the $\widetilde{K}$-invariant inner product $(\cdot,\cdot)_\tau$.
Then the universal covering group~$\widetilde{G}$ of~$G$ acts on the space $\cO(D,V)=\cO_\tau(D,V)$ of $V$-valued holomorphic functions on $D$ by
\[
(\hat{\tau}(g)f)(x):=\tau\bigl(\kappa\bigl(g^{-1},x\bigr)\bigr)^{-1}f\bigl(g^{-1}\bigr), \qquad g\in\widetilde{G},\quad x\in D,\quad f\in\cO(D,V),
\]
where we lift the map $\kappa\colon G\times D\to K^\BC$ to the universal covering spaces, and represent by the same symbol $\kappa\colon \widetilde{G}\times D\to \widetilde{K}^\BC$.
Its differential action is given by
\begin{equation}\label{HDS_diff_action}
({\rm d}\hat{\tau}(z,k,w)f)(x)={\rm d}\tau(k-D(x,w))f(x)+\frac{{\rm d}}{{\rm d}t}\biggr|_{t=0}f(x-t(z+kx-Q(x)w))
\end{equation}
for $z\in\fp^+$, $k\in\fk^\BC$, $w\in\fp^-$. This becomes a~highest weight representation with the minimal $\widetilde{K}$-type $(\tau,V)$.
If this contains a~unitary subrepresentation $\cH_\tau(D,V)\subset\cO_\tau(D,V)$,
then its reproducing kernel is proportional to $\tau(B(x,\overline{y}))$, and such unitary subrepresentation is unique.
Here we lift the map $B\colon D\times \overline{D}\to \operatorname{Str}(\fp^+)_0\subset\End_\BC(\fp^+)$ to the universal covering space,
and represent by the same symbol $B\colon D\times \overline{D}\to\widetilde{K}^\BC$. Especially, if the $\widetilde{G}$-invariant inner product is given by the converging integral
\[ \langle f,g\rangle_{\hat{\tau}}:=C_\tau\int_D \bigl(\tau\bigl(B(x)^{-1}\bigr)f(x),g(x)\bigr)_\tau \Det_{\fp^+}(B(x))^{-1}\,{\rm d}x \]
(a \textit{weighted Bergman inner product}), then $(\hat{\tau},\cH_\tau(D,V))$ is called a~\textit{holomorphic discrete series representation}.
Here we normalize the Lebesgue measure ${\rm d}x$ on $D\subset\fp^+$ with respect to the inner product $(\cdot|\overline{\cdot})_{\fp^+}$,
and determine the constant $C_\tau$ such that $\Vert v\Vert_{\hat{\tau}}=|v|_\tau$ holds for all constant functions $v\in V$.

Next, let $\chi\colon \widetilde{K}^\BC\to\BC^\times$ be the character of $\widetilde{K}^\BC$ normalized such that
\begin{equation}\label{chi_normalize}
{\rm d}\chi([x,y])=(x|y)_{\fp^+}, \qquad x\in\fp^+,\quad y\in\fp^-,
\end{equation}
so that $h(x,y)=\chi(B(x,y))$ holds, and we fix a~representation $(\tau_0,V)$ of $K^\BC$.
When $(\tau,V)$ is of the form $(\tau,V)=\bigl(\chi^{-\lambda}\otimes\tau_0,V\bigr)$, we write $\cH_\tau(D,V)=\cH_\lambda(D,V)\subset\cO_\tau(D,V)=\cO_\lambda(D,V)$.
In addition, if $(\tau,V)=\bigl(\chi^{-\lambda},\BC\bigr)$, then we write $\cH_\tau(D,V)=\cH_\lambda(D)\subset\cO_\tau(D,V)=\cO_\lambda(D)$ and~$\hat{\tau}=\tau_\lambda$.

In the rest of this subsection we assume $\fp^+$ is simple and $(\tau,V)=\bigl(\chi^{-\lambda},\BC\bigr)$.
Then $\cH_\lambda(D)$ is holomorphic discrete if $\lambda>p-1$, and then the inner product is given by
\begin{gather}
\langle f,g\rangle_\lambda=\langle f,g\rangle_{\lambda,\fp^+}:=C_\lambda\int_D f(x)\overline{g(x)}h(x)^{\lambda-p}\,{\rm d}x, \nonumber\\
C_\lambda:=\frac{\prod_{j=1}^r\Gamma\bigl(\lambda-\frac{d}{2}(j-1)\bigr)}{\pi^n\prod_{j=1}^r\Gamma\bigl(\lambda-\frac{n}{r}-\frac{d}{2}(j-1)\bigr)}. \label{Bergman_inner_prod}
\end{gather}
We consider another inner product on $\cP(\fp^+)$, called the \textit{Fischer inner product} (see, e.g.,~\cite[Section XI.1]{FK}), defined by
\begin{equation}\label{Fischer_inner_prod}
\langle f,g\rangle_{F}=\langle f,g\rangle_{F,\fp^+}:=\frac{1}{\pi^n}\int_{\fp^+}f(x)\overline{g(x)}{\rm e}^{-(x|\overline{x})_{\fp^+}}\,{\rm d}x
=\overline{g\left(\overline{\frac{\partial}{\partial x}}\right)}f(x)\biggr|_{x=0}.
\end{equation}
Here $\overline{g(\overline{\cdot})}$ is a~holomorphic polynomial on $\fp^-$, and we normalize $\frac{\partial}{\partial x}$ with respect to the bilinear form
$(\cdot|\cdot)_{\fp^+}\colon\fp^+\times\fp^-\to\BC$. Then the following holds.

\begin{Theorem}[{Faraut--Kor\'anyi,~\cite{FK0} and~\cite[Part III, Corollary V.3.9]{FKKLR}}]\label{thm_FK}
For $\lambda>p-1$, $\bm\in\BZ_{++}^r$, $f\in\cP_\bm(\fp^+)$, $g\in\cP(\fp^+)$, we have
\[ \langle f,g\rangle_\lambda=\frac{1}{(\lambda)_{\bm,d}}\langle f,g\rangle_F, \]
where
\begin{equation}\label{Pochhammer}
(\lambda)_{\bm,d}:=\prod_{j=1}^r\left(\lambda-\frac{d}{2}(j-1)\right)_{m_j},
\end{equation}
and $(\lambda)_m:=\lambda(\lambda+1)(\lambda+2)\cdots(\lambda+m-1)$.
\end{Theorem}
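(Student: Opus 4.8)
The plan is to prove Faraut--Kor\'anyi's theorem by reducing it to the single highest-weight vector of each $K^\BC$-type and then exploiting the $\widetilde G$-invariance of the Bergman pairing together with $K^\BC$-equivariance. The key point is that both $\langle\cdot,\cdot\rangle_\lambda$ and $\langle\cdot,\cdot\rangle_F$ are Hermitian inner products on $\cP(\fp^+)$ for which the decomposition $\cP(\fp^+)=\bigoplus_{\bm}\cP_\bm(\fp^+)$ of Theorem~\ref{thm_HKS} is orthogonal, and on which $K^\BC$ acts compatibly; hence on each irreducible piece $\cP_\bm(\fp^+)$ the two inner products must be proportional by Schur's lemma, say $\langle f,g\rangle_\lambda=c_\bm(\lambda)^{-1}\langle f,g\rangle_F$ for $f,g\in\cP_\bm(\fp^+)$, and the whole content is to identify the scalar $c_\bm(\lambda)=(\lambda)_{\bm,d}$.

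First I would establish the orthogonality and $K$-invariance claims. The Fischer product is manifestly $\operatorname{U}$-invariant (where $\operatorname{U}\subset K^\BC$ is the compact real form), and by analytic continuation $K^\BC$-invariant in the appropriate bilinear sense; orthogonality of distinct $\cP_\bm$'s then follows since they are inequivalent $K^\BC$-modules. The Bergman product $\langle\cdot,\cdot\rangle_\lambda$ is $\widehat{\tau_\lambda}(\widetilde G)$-invariant by construction, hence in particular invariant under $\widetilde K$, and one checks that the $\widetilde K$-action on $\cP(\fp^+)\subset\cO_\lambda(D)$ restricts (up to the scalar twist by $\chi^{-\lambda}$, which only contributes an overall constant) to the natural $K^\BC$-action on polynomials; thus the same Schur argument applies. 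This gives the proportionality $\langle f,g\rangle_\lambda=c_\bm(\lambda)^{-1}\langle f,g\rangle_F$ with $c_\bm(\lambda)$ a meromorphic function of $\lambda$, equal to $1$ when $\bm=(0,\dots,0)$ by the normalization of $C_\lambda$ (constants have $\langle1,1\rangle_\lambda=\langle1,1\rangle_F=1$).

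Next I would pin down $c_\bm(\lambda)$ by a recursion in $\bm$. It suffices to evaluate $\langle f,f\rangle_\lambda$ and $\langle f,f\rangle_F$ on the single explicit vector $f=\Delta^{\fn^+}_\bm$, the ``highest weight'' polynomial. Using the differential-operator identity of Lemma~\ref{lem_diff} (equivalently formula~(\ref{formula_diff})) one can write $\check\Delta^{\fn^+}_{\mathbf{1}}(\partial/\partial x)\,\Delta^{\fn^+}_\bm(x)$ as an explicit scalar times $\Delta^{\fn^+}_{\bm-\mathbf{1}^\vee}(x)$, or more precisely raise/lower one box in the partition. Pairing against the exponential reproducing kernel $e^{(x|\bar z)_{\fp^+}}$ and using the adjunction between multiplication by a polynomial under the Fischer form and differentiation — combined with the analogous but $\lambda$-shifted adjunction for the Bergman form, which produces exactly a ratio of the $C_\lambda$ Gamma-factors in~(\ref{Bergman_inner_prod}) — yields a recursion of the shape $c_\bm(\lambda)/c_{\bm'}(\lambda)=\bigl(\lambda-\tfrac d2(j-1)+m_j-1\bigr)$ when $\bm$ is obtained from $\bm'$ by adding one box in the $j$-th row. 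Iterating from $\bm=0$ and collecting the factors reproduces $(\lambda)_{\bm,d}=\prod_{j=1}^r(\lambda-\tfrac d2(j-1))_{m_j}$, which proves the theorem for $\lambda>p-1$; the statement then extends by meromorphic continuation, though for the stated range no continuation is needed.

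The main obstacle I anticipate is the bookkeeping in the recursion step: one must track carefully how the weight $h(x)^{\lambda-p}$ interacts with the differential operators $\det_{\fn^+}(\partial/\partial x)^k$ under integration by parts on the bounded domain $D$, i.e.\ justify the shift $\lambda\mapsto\lambda+k$ accompanied by the precise Gamma-factor ratio $C_\lambda/C_{\lambda+k}$, and reconcile it with the purely algebraic Fischer-side computation. This is essentially the ``Bernstein--Sato'' relation $\det_{\fn^+}(\partial/\partial x)^k\,h(x)^{s}=(\text{explicit polynomial in }s)\,h(x)^{s-k}\det_{\fn^-}(\bar x)^k$ on the cone; making it rigorous on $D$ rather than on the tube domain, and checking it row by row rather than only for scalar $\bm$, is where the careful work lies. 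Once that Bernstein-type identity is in hand together with Lemma~\ref{lem_diff}, the rest is an induction on $|\bm|$.
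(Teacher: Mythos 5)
The paper does not actually prove Theorem~\ref{thm_FK}: it is quoted verbatim from Faraut--Kor\'anyi (\cite{FK0} and \cite[Part III, Corollary V.3.9]{FKKLR}) and used as external input, so there is no internal proof to compare yours with. That said, your overall architecture is the right one and matches the classical argument in skeleton: the decomposition of Theorem~\ref{thm_HKS} is multiplicity-free, both $\langle\cdot,\cdot\rangle_\lambda$ and $\langle\cdot,\cdot\rangle_F$ are invariant under the compact form of $K^\BC$, so distinct $\cP_\bm(\fp^+)$ are orthogonal and on each one the two products are proportional; the normalization $\Vert 1\Vert_\lambda=\Vert 1\Vert_F=1$ fixes the constant for $\bm=0$; and the whole content is the identification $c_\bm(\lambda)=(\lambda)_{\bm,d}$, which it suffices to check on the single vector $\Delta^{\fn^+}_\bm$.

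The genuine gap is that this last, decisive step is only ``anticipated'', and the mechanism you propose for it does not work as stated. First, Lemma~\ref{lem_diff} lowers $\bm$ by $\bk^\vee$ with $\bk\in\BZ_{++}^r$, so the only single-box move it gives is in the last row ($\bk=(1,0,\dots,0)$, $\bk^\vee=(0,\dots,0,1)$); a one-box recursion ``in the $j$-th row'' for arbitrary $j$, which your ratio $c_\bm(\lambda)/c_{\bm'}(\lambda)=\lambda-\tfrac d2(j-1)+m_j-1$ presupposes, is not available from that lemma, and one would also have to argue why the factor attaches to row $j$ and not to some permutation of the rows. The workable inductions are on the last entry, e.g.\ $\Delta^{\fn^+}_\bm=\det_{\fn^+}^{\,m_r}\Delta^{\fn^+}_{\bm-\underline{m_r}_r}$ together with a $\lambda\mapsto\lambda+m_r$ shift of Rodrigues type, which is exactly how the paper's own later computations run (cf.\ Proposition~\ref{prop_Rodrigues}, Theorem~\ref{thm_key_identity}) --- but then you must prove the shift identity with its exact Gamma-factor, and beware that such identities in \cite{N2} are themselves established with Faraut--Kor\'anyi-type input, so importing them risks circularity. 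Second, the analytic core --- integrating by parts against the weight $h(x)^{\lambda-p}$ on the bounded domain $D$ and controlling boundary terms --- is precisely what the classical proof avoids: Faraut--Kor\'anyi compute $\bigl\langle \Delta^{\fn^+}_\bm,{\rm e}^{(x|\overline z)_{\fp^+}}\bigr\rangle_\lambda$ by passing to the symmetric cone and using the Gindikin Gamma function, $\Gamma_\Omega(\lambda+\bm)=\Gamma_\Omega(\lambda)(\lambda)_{\bm,d}$, equivalently the expansion of the reproducing kernel $h(z,\overline w)^{-\lambda}=\sum_\bm(\lambda)_{\bm,d}K^\bm(z,\overline w)$ with $K^\bm$ the Fischer kernel of $\cP_\bm(\fp^+)$. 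Until you prove your Bernstein--Sato/shift identity (or substitute the cone-integral computation for it), the proposal is a plausible plan rather than a proof.
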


Since the reproducing kernel on $\cP(\fp^+)$ with respect to $\langle\cdot,\cdot\rangle_F$ is given by ${\rm e}^{(x|\overline{y})_{\fp^+}}$, the following holds.

\begin{Corollary}\label{cor_FK}
For $\lambda>p-1$, $\bm\in\BZ_{++}^r$, $f(x)\in\cP_\bm(\fp^+)$, we have
\[ \bigl\langle f(x),{\rm e}^{(x|\overline{y})_{\fp^+}}\bigr\rangle_{\lambda,x}=\frac{1}{(\lambda)_{\bm,d}}f(y). \]
\end{Corollary}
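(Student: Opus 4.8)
The plan is to derive Corollary~\ref{cor_FK} directly from Theorem~\ref{thm_FK} by exploiting the reproducing property of the exponential kernel with respect to the Fischer inner product. First I would recall that for any $f\in\cP(\fp^+)$ one has $\langle f(x),{\rm e}^{(x|\overline{y})_{\fp^+}}\rangle_{F,x}=f(y)$; this is exactly the reproducing-kernel statement recorded just before the corollary, and it follows from the right-hand expression for $\langle\cdot,\cdot\rangle_F$ in~(\ref{Fischer_inner_prod}), since applying $\overline{\overline{{\rm e}^{(\cdot|\overline{y})_{\fp^+}}}}(\partial/\partial x)$ to $f$ and evaluating at $x=0$ reproduces the Taylor expansion of $f$ at $y$. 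One small point to check is that ${\rm e}^{(x|\overline{y})_{\fp^+}}$, while not a polynomial, lies in the formal completion on which the differential-operator form of the Fischer pairing still makes sense, or alternatively one expands the exponential as a convergent series $\sum_k \tfrac1{k!}(x|\overline{y})_{\fp^+}^k$ and uses continuity/term-by-term evaluation; each term is a polynomial in $x$, so no subtlety beyond interchanging a convergent sum with the finite-order evaluation at $0$ applied to the fixed polynomial $f$.

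Next I would fix $\bm\in\BZ_{++}^r$ and $f\in\cP_\bm(\fp^+)$. Expanding ${\rm e}^{(x|\overline{y})_{\fp^+}}=\sum_{k\ge0}\tfrac1{k!}(x|\overline{y})_{\fp^+}^k$ and decomposing each homogeneous polynomial $x\mapsto(x|\overline{y})_{\fp^+}^k$ into its $K^\BC$-isotypic components according to Theorem~\ref{thm_HKS}, only the component lying in $\cP_\bm(\fp^+)$ can pair nontrivially with $f$ under either $\langle\cdot,\cdot\rangle_\lambda$ or $\langle\cdot,\cdot\rangle_F$, because distinct $\cP_\bm(\fp^+)$ are mutually orthogonal for both inner products (the Fischer inner product is $\widetilde K$-invariant, hence orthogonal across inequivalent $K^\BC$-types, and $\langle\cdot,\cdot\rangle_\lambda$ differs from it only by the scalar $(\lambda)_{\bm,d}^{-1}$ on each type by Theorem~\ref{thm_FK}). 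Therefore Theorem~\ref{thm_FK} applies verbatim with $g$ replaced by the relevant projection of ${\rm e}^{(x|\overline{y})_{\fp^+}}$:
\[
\bigl\langle f(x),{\rm e}^{(x|\overline{y})_{\fp^+}}\bigr\rangle_{\lambda,x}=\frac{1}{(\lambda)_{\bm,d}}\bigl\langle f(x),{\rm e}^{(x|\overline{y})_{\fp^+}}\bigr\rangle_{F,x}=\frac{1}{(\lambda)_{\bm,d}}f(y).
\]

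I do not expect a genuine obstacle here — the statement is essentially a packaging of Theorem~\ref{thm_FK} together with the standard reproducing property of the Fischer/Bargmann–Fock kernel. The only place requiring a line of care is the passage from polynomials to the entire function ${\rm e}^{(x|\overline{y})_{\fp^+}}$: one must justify that Theorem~\ref{thm_FK}, stated for $g\in\cP(\fp^+)$, extends to $g={\rm e}^{(\cdot|\overline{y})_{\fp^+}}$. This is handled either by the term-by-term argument above (each partial sum is a polynomial, the pairings are continuous in the second argument in the relevant topology, and the series converges locally uniformly together with all derivatives), or more slickly by noting that $f$ is a fixed polynomial of some degree $N$, so $\langle f,g\rangle_F=\overline{g(\overline{\partial/\partial x})}f(x)|_{x=0}$ depends only on the $N$-jet of $g$ at $0$, reducing everything to the polynomial case already covered by Theorem~\ref{thm_FK}. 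With that remark in place the corollary follows immediately.
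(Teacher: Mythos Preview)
Your proof is correct and follows essentially the same approach as the paper, which simply remarks that ${\rm e}^{(x|\overline{y})_{\fp^+}}$ is the reproducing kernel of $\cP(\fp^+)$ for the Fischer inner product and then applies Theorem~\ref{thm_FK}. Your additional care in justifying the passage from polynomials $g$ to the exponential (via the finite $N$-jet argument or term-by-term expansion) is a reasonable elaboration of what the paper leaves implicit.
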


Here the subscript $x$ stands for the variable of integration.
By this theorem, $\langle\cdot,\cdot\rangle_{\lambda}$ is meromorphically continued for all $\lambda\in\BC$,
and the $\widetilde{K}$-finite part $\cO_\lambda(D)_{\widetilde{K}}=\chi^{-\lambda}\otimes\cP(\fp^+)$ is reducible as a~$\bigl(\fg,\widetilde{K}\bigr)$-module
if and only if $\lambda$ is a~pole of $\langle\cdot,\cdot\rangle_{\lambda}$.
Especially, for $j=1,2,\dots,r$, $\lambda\in\frac{d}{2}(j-1)-\BZ_{\ge 0}$,
\begin{equation}\label{submodule}
M_j(\lambda)=M_j^\fg(\lambda):=\bigoplus_{\substack{\bm\in\BZ_{++}^r \\ m_j\le \frac{d}{2}(j-1)-\lambda}}\chi^{-\lambda}\otimes\cP_\bm(\fp^+)
\subset\chi^{-\lambda}\otimes\cP(\fp^+)=\cO_\lambda(D)_{\widetilde{K}}
\end{equation}
gives a~$\bigl(\fg,\widetilde{K}\bigr)$-submodule. Moreover, $\langle\cdot,\cdot\rangle_\lambda$ is positive definite on $\cP(\fp^+)$ for $\lambda>\frac{d}{2}(r-1)$,
and on $M_j^\fg\bigl(\frac{d}{2}(j-1)\bigr)$ for $\lambda=\frac{d}{2}(j-1)$, $j=1,\dots,r$, that is, $\cO_\lambda(D)_{\widetilde{K}}$ contains a~unitary submodule~$\cH_\lambda(D)$
if $\lambda$ sits in the \textit{Wallach set}
\begin{equation}\label{Wallach_set}
\lambda\in\left\{0,\frac{d}{2},d,\dots,\frac{d}{2}(r-1)\right\}\cup\left(\frac{d}{2}(r-1),\infty\right),
\end{equation}
and its $\widetilde{K}$-finite part is given by
\[ \cH_\lambda(D)_{\widetilde{K}}=\begin{cases} \chi^{-\lambda}\otimes\cP(\fp^+), & \lambda>\frac{d}{2}(r-1), \\
M_j^\fg\bigl(\frac{d}{2}(j-1)\bigr), & \lambda=\frac{d}{2}(j-1),\ j=1,2,\dots,r. \end{cases} \]
In addition, the quotient module $\cO_\lambda(D)_{\widetilde{K}}/M_r(\lambda)$, $\lambda\in\frac{d}{2}(r-1)-\BZ_{\ge 0}$, also gives an infinitesimally unitary module (see~\cite{FK0}).
Also by the corollary, for $\bk\in\BZ_{++}^r$, $f(x)\in\cP(\fp^+)$, $(\lambda)_{\bk,d}\bigl\langle f(x),{\rm e}^{(x|\overline{y})_{\fp^+}}\bigr\rangle_{\lambda,x}$ is
holomorphically continued for all $\lambda\in\BC$ if and only if
\[ f(x)\in\bigoplus_{\substack{\bm\in\BZ_{++}^r \\ m_j\le k_j, \, j=1,\dots,r }}\cP_\bm(\fp^+) \]
holds, and if this is satisfied, then for $j=1,\dots,r$,
\[ f(x)\in M_j(\lambda)\qquad \text{holds if}\quad \lambda\in\frac{d}{2}(j-1)-k_j-\BZ_{\ge 0}. \]

\subsection{Classification}\label{subsection_classification}

A simple Hermitian positive Jordan triple system $\fp^\pm$ is isomorphic to one of the following:
\begin{alignat*}{4}
&\fp^\pm=\BC^n, \quad n\ne 2, \qquad &&\Sym(r,\BC), \qquad&&{\rm M}(q,s;\BC),& \\
&\Alt(s,\BC), \qquad&& \Herm(3,\BO)^\BC, \qquad&& {\rm M}(1,2;\BO)^\BC.&
\end{alignat*}
Here $\Sym(r,\BC)$ and $\Alt(s,\BC)$ denote the spaces of symmetric and alternating matrices over~$\BC$, respectively,
and $\Herm(3,\BO)^\BC$ denotes the complexification of the space of $3\times 3$ Hermitian matrices over the octonions $\BO$.
Then the corresponding Lie groups $G$ and their maximal compact subgroups $K$ are given by
\[ (G,K)=\begin{cases} ({\rm SO}_0(2,n),{\rm SO}(2)\times {\rm SO}(n)), & \fp^\pm=\BC^n, \\ ({\rm Sp}(r,\BR),{\rm U}(r)), & \fp^\pm=\Sym(r,\BC), \\
({\rm SU}(q,s),{\rm S}({\rm U}(q)\times {\rm U}(s))), & \fp^\pm={\rm M}(q,s;\BC), \\ ({\rm SO}^*(2s),{\rm U}(s)), & \fp^\pm=\Alt(s,\BC), \\
(E_{7(-25)},{\rm U}(1)\times E_6), & \fp^\pm=\Herm(3,\BO)^\BC, \\ (E_{6(-14)},{\rm U}(1)\times {\rm Spin}(10)), & \fp^\pm={\rm M}(1,2;\BO)^\BC \end{cases} \]
(up to covering), and the numbers $(n,r,d,b,p)$ (see~(\ref{str_const})) are given by
\[ (n,r,d,b,p)=\begin{cases} (n,2,n-2,0,n), & \fp^\pm=\BC^n,\ n\ge 3, \\
(1,1,-,0,2), & \fp^\pm=\BC, \\
\left(\frac{1}{2}r(r+1), r, 1, 0, r+1\right), & \fp^\pm=\Sym(r,\BC), \\
(qs, \min\{q,s\}, 2, |q-s|, q+s), & \fp^\pm={\rm M}(q,s;\BC), \\
\left(\frac{1}{2}s(s-1), \frac{s}{2}, 4, 0, 2(s-1)\right), & \fp^\pm=\Alt(s,\BC),\ s\colon\text{even}, \\
\left(\frac{1}{2}s(s-1), \left\lfloor\frac{s}{2}\right\rfloor, 4, 2, 2(s-1)\right), & \fp^\pm=\Alt(s,\BC),\ s\colon\text{odd}, \\
(27,3,8,0,18), & \fp^\pm=\Herm(3,\BO)^\BC, \\ (16,2,6,4,12), & \fp^\pm={\rm M}(1,2;\BO)^\BC. \end{cases} \]
Here, if $r=1$, then $d$ is not determined uniquely, and any number is allowed.
When $b=0$, $\fp^\pm$ is of tube type, and has a~Jordan algebra structure. That is, $\fp^\pm=\BC^n$, $\Sym(r,\BC)$, ${\rm M}(r,\BC)$, $\Alt(2r,\BC)$ and $\Herm(3,\BO)^\BC$ are of tube type.
For these cases $p=\frac{2n}{r}=d(r-1)+2$ holds. When $\fp^\pm={\rm M}(q,s;\BC)$ we also consider $(G,K)=({\rm U}(q,s),{\rm U}(q)\times {\rm U}(s))$ instead of $({\rm SU}(q,s),{\rm S}({\rm U}(q)\times {\rm U}(s)))$.

Next we fix the parametrization of finite-dimensional irreducible representations of $\widetilde{{\rm GL}}(s,\BC)$ and ${\rm Spin}(n,\BC)$, $n\ge 3$.
We take a~basis $\{\epsilon_j\}_{j=1}^s\subset\fh^{\BC\vee}$ of the dual space of a~Cartan subalgebra $\fh^\BC\subset\mathfrak{gl}(s,\BC)$
such that the positive root system is given by $\{\epsilon_i-\epsilon_j\mid 1\le i<j\le s\}$.
For $\bm\in\BC^s$ with $m_j-m_{j+1}\in\BZ_{\ge 0}$, let $V_\bm^{(s)}$ be the irreducible representation of $\widetilde{{\rm GL}}(s,\BC)$ with the highest weight $\sum_j m_j\epsilon_j$,
and let $V_\bm^{(s)\vee}$ be that with the lowest weight $-\sum_j m_j\epsilon_j$.
If $\bm\in\BZ_+^r$, then~$V_\bm^{(s)}$,~$V_\bm^{(s)\vee}$ are reduced to the representations of ${\rm GL}(s,\BC)$.
Similarly, we take a~basis $\{\epsilon_j\}_{j=1}^{\lfloor n/2\rfloor}\subset\fh^{\BC\vee}$ of the dual space of a~Cartan subalgebra $\fh^\BC\subset\mathfrak{so}(n,\BC)$
such that the positive root system is given by
\begin{alignat*}{3}
&\{\epsilon_i\pm\epsilon_j\mid 1\le i<j\le n/2\}, && n\colon\text{even},& \\
&\{\epsilon_i\pm\epsilon_j\mid 1\le i<j\le \lfloor n/2\rfloor\}\cup\{\epsilon_j\mid 1\le j\le \lfloor n/2\rfloor\}, \qquad&& n\colon\text{odd}.&
\end{alignat*}
For $\bm\in\BZ^{\lfloor n/2\rfloor}\cup\left(\BZ+\frac{1}{2}\right)^{\lfloor n/2\rfloor}$ with $m_1\ge\cdots\ge m_{n/2-1}\ge |m_{n/2}|$ for even $n$,
$m_1\ge\cdots\ge m_{\lfloor n/2\rfloor}\ge 0$ for odd $n$, let $V_\bm^{[n]}$ be the irreducible representation of ${\rm Spin}(n,\BC)$ with the highest weight $\sum_j m_j\epsilon_j$,
and let $V_\bm^{[n]\vee}$ be that with the lowest weight $-\sum_j m_j\epsilon_j$.
If $\bm\in\BZ^{\lfloor n/2\rfloor}$, then~$V_\bm^{[n]}$,~$V_\bm^{[n]\vee}$ are reduced to the representations of ${\rm SO}(n,\BC)$.
Under this notation, the $\widetilde{K}$-type decompositions of the holomorphic discrete series representations of scalar type
\[
\cO_\lambda(D)_{\widetilde{K}}=\chi^{-\lambda}\otimes\cP(\fp^+)=\chi^{-\lambda}\otimes\bigoplus_{\bm\in\BZ_{++}^r}\cP_\bm(\fp^+)
\]
are given as
\begin{gather}
\chi^{-\lambda}\simeq\begin{cases}
\BC_{-\lambda}\boxtimes V_{(0,\dots,0)}^{[n]\vee}, & \fp^+=\BC^n,\ n\ge 3, \vspace{1mm}\\
V_{(\lambda,\dots,\lambda)}^{(r)\vee}, & \fp^+=\Sym(r,\BC), \vspace{1mm}\\
V_{\left(\frac{s\lambda}{q+s},\dots,\frac{s\lambda}{q+s}\right)}^{(q)\vee}\boxtimes V_{\left(\frac{q\lambda}{q+s},\dots,\frac{q\lambda}{q+s}\right)}^{(s)}, & \fp^+={\rm M}(q,s;\BC), \vspace{1mm}\\
V_{\left(\frac{\lambda}{2},\dots,\frac{\lambda}{2}\right)}^{(s)\vee}, & \fp^+=\Alt(s,\BC), \vspace{1mm}\\
\BC_{-\lambda}\boxtimes V_{(0,\dots,0)}^{[10]\vee}, & \fp^+={\rm M}(1,2;\BO)^\BO, \end{cases}\label{explicit_Ktype}
\\
\cP_\bm(\fp^+)\simeq\begin{cases}
\BC_{-m_1-m_2}\boxtimes V_{(m_1-m_2,0,\dots,0)}^{[n]\vee}, & \fp^+=\BC^n,\ n\ge 3, \vspace{1mm}\\
V_{(2m_1,\dots,2m_r)}^{(r)\vee}=:V_{2\bm}^{(r)\vee}, & \fp^+=\Sym(r,\BC), \vspace{1mm}\\
V_{\bm}^{(q)\vee}\boxtimes V_{(m_1,\dots,m_q,0,\dots,0)}^{(s)}=:V_\bm^{(q)\vee}\boxtimes V_\bm^{(s)}, & \fp^+={\rm M}(q,s;\BC),\ q\le s, \vspace{1mm}\\
V_{(m_1,\dots,m_s,0,\dots,0)}^{(q)\vee}\boxtimes V_{\bm}^{(s)}=:V_\bm^{(q)\vee}\boxtimes V_\bm^{(s)}, & \fp^+={\rm M}(q,s;\BC),\ q\ge s, \vspace{1mm}\\
V_{(m_1,m_1,m_2,m_2,\dots,m_{\lfloor s/2\rfloor},m_{\lfloor s/2\rfloor}(,0))}^{(s)\vee}=:V_{\bm^2}^{(s)\vee}, & \fp^+=\Alt(s,\BC), \vspace{1mm}\\
\BC_{-\frac{3}{4}|\bm|}\boxtimes V_{\left(\frac{m_1+m_2}{2},\frac{m_1-m_2}{2},\frac{m_1-m_2}{2},\frac{m_1-m_2}{2},\frac{m_1-m_2}{2}\right)}^{[10]\vee}, & \fp^+={\rm M}(1,2;\BO)^\BC,
\end{cases}\nonumber
\end{gather}
when $K^\BC$ is classical, if we normalize the representations $\BC_{-\lambda}$ of $\widetilde{{\rm SO}}(2)\simeq \widetilde{{\rm U}}(1)$ for the first and the last cases suitably.
When $\fp^+=\BC^n$ with $n=1,2$, we have isomorphisms
\begin{gather*}
\widetilde{{\rm SO}}_0(2,1)\simeq \widetilde{{\rm SL}}(2,\BR)=\widetilde{{\rm Sp}}(1,\BR), \\
\widetilde{{\rm SO}}_0(2,2)\simeq \widetilde{{\rm SL}}(2,\BR)\times\widetilde{{\rm SL}}(2,\BR)=\widetilde{{\rm Sp}}(1,\BR)\times\widetilde{{\rm Sp}}(1,\BR),
\end{gather*}
and we write
\[
\cH_{\lambda}(D_{{\rm SO}_0(2,1)}):=\cH_{2\lambda}(D_{{\rm SL}(2,\BR)}), \qquad \cH_{\lambda}(D_{{\rm SO}_0(2,2)}):=\cH_\lambda(D_{{\rm SL}(2,\BR)})\hboxtimes\cH_\lambda(D_{{\rm SL}(2,\BR)}),
\]
and similar for $\cO_\lambda(D)$. When we consider $\fp^+={\rm M}(q,s;\BC)$, $G={\rm U}(q,s)$, for $\lambda_1,\lambda_2\in\BC$,
let $\chi^{-\lambda_1,-\lambda_2}$ be the character of $\widetilde{K}^\BC=\widetilde{{\rm GL}}(q,\BC)\times\widetilde{{\rm GL}}(s,\BC)$ given by
\[ \chi^{-\lambda_1,-\lambda_2}\simeq V_{(\lambda_1,\dots,\lambda_1)}^{(q)\vee}\boxtimes V_{(\lambda_2,\dots,\lambda_2)}^{(s)}, \]
and for a~fixed representation $V'\boxtimes V''$ of $K^\BC$, let $\cH_{\lambda_1+\lambda_2}(D,V'\boxtimes V'')\subset\cO_{\lambda_1+\lambda_2}(D,V'\boxtimes V'')$ be the
representations of $\widetilde{G}$ with the minimal $\widetilde{K}$-type $\chi^{-\lambda_1,-\lambda_2}\otimes(V'\boxtimes V'')$.

The inner product $\langle\cdot,\cdot\rangle_\lambda$ of the holomorphic discrete series representation $\cH_\lambda(D)$ of scalar type originally converges for $\lambda>p-1$,
and by Theorem~\ref{thm_FK}, this is meromorphically continued for all $\lambda\in\BC$. This has poles at $\lambda\in\frac{d}{2}(j-1)-\BZ_{\ge 0}$ for $j=1,2,\dots,r$,
and then $M_j(\lambda)=M_j^\fg(\lambda)\subset\cO_\lambda(D)_{\widetilde{K}}$ defined in~(\ref{submodule}) becomes a~$\bigl(\fg,\widetilde{K}\bigr)$-submodule.
Especially, when $\fp^+=\BC^n$ with $n\ge 3$, $\cO_\lambda(D)_{\widetilde{K}}$ is reducible if and only if $\lambda\in -\BZ_{\ge 0}\cap\left(\frac{n-2}{2}-\BZ_{\ge 0}\right)$,
and then we have the $\bigl(\mathfrak{so}(2,n),\widetilde{{\rm SO}}(2)\times {\rm SO}(n)\bigr)$-submodules
\begin{alignat*}{3}
&\cO_\lambda(D)_{\widetilde{K}}\supset M_2(\lambda)\supset M_1(\lambda)\supset\{0\}, \qquad&& n\colon\text{even},\ \lambda\in\BZ,\ \lambda\le 0, &\\
&\cO_\lambda(D)_{\widetilde{K}}\supset M_2(\lambda)\supset \{0\}, && \textstyle n\colon\text{even},\ \lambda\in\BZ,\ 1\le\lambda\le\frac{n-2}{2}, &\\	
&\cO_\lambda(D)_{\widetilde{K}}\supset M_1(\lambda)\supset \{0\}, && n\colon\text{odd},\ \lambda\in\BZ,\ \lambda\le 0, &\\	
&\cO_\lambda(D)_{\widetilde{K}}\supset M_2(\lambda)\supset \{0\}, && \textstyle n\colon\text{odd},\ \lambda\in\BZ+\frac{1}{2},\ \lambda\le\frac{n-2}{2}.&
\end{alignat*}
When $\fp^+=\Sym(r,\BC)$ with $r\ge 2$, $\cO_\lambda(D)_{\widetilde{K}}$ is reducible if and only if $\lambda\in \frac{1}{2}(r-1)-\frac{1}{2}\BZ_{\ge 0}$,
and then we have the $(\mathfrak{sp}(r,\BR),\widetilde{{\rm U}}(r))$-submodules
\begin{alignat*}{3}
&\cO_\lambda(D)_{\widetilde{K}}\supset M_{2\left\lceil\frac{r}{2}\right\rceil-1}(\lambda)\supset M_{2\left\lceil\frac{r}{2}\right\rceil-3}(\lambda)\supset
\cdots\supset M_{\max\{2\lambda,0\}+1}(\lambda)\supset\{0\}, \qquad && \lambda\in\BZ,& \\
&\cO_\lambda(D)_{\widetilde{K}}\supset M_{2\left\lfloor\frac{r}{2}\right\rfloor}(\lambda)\supset M_{2\left\lfloor\frac{r}{2}\right\rfloor-2}(\lambda)\supset
\cdots\supset M_{\max\{2\lambda,1\}+1}(\lambda)\supset\{0\},\qquad && \textstyle \lambda\in\BZ+\frac{1}{2}.&
\end{alignat*}
When $\fp^+={\rm M}(q,s;\BC)$, $\cO_\lambda(D)_{\widetilde{K}}$ is reducible if and only if $\lambda\in \min\{q,s\}-1-\BZ_{\ge 0}$,
and then we have the $(\mathfrak{su}(q,s),{\rm S}({\rm U}(q)\times {\rm U}(s))^\sim)$-submodules
\[ \cO_\lambda(D)_{\widetilde{K}}\supset M_{\min\{q,s\}}(\lambda)\supset M_{\min\{q,s\}-1}(\lambda)\supset\cdots\supset M_{\max\{\lambda,0\}+1}(\lambda)\supset\{0\}. \]
When $\fp^+=\Alt(s,\BC)$, $\cO_\lambda(D)_{\widetilde{K}}$ is reducible if and only if $\lambda\in 2\left(\left\lfloor\frac{s}{2}\right\rfloor-1\right)-\BZ_{\ge 0}$,
and then we have the $\bigl(\mathfrak{so}^*(2s),\widetilde{{\rm U}}(s)\bigr)$-submodules
\[ \cO_\lambda(D)_{\widetilde{K}}\supset M_{\left\lfloor\frac{s}{2}\right\rfloor}(\lambda)\supset M_{\left\lfloor\frac{s}{2}\right\rfloor -1}(\lambda)\supset
\cdots\supset M_{\max\left\{\left\lceil\frac{\lambda}{2}\right\rceil,0\right\}+1}(\lambda)\supset\{0\}. \]
When $\fp^+=\Herm(3,\BO)^\BC$, $\cO_\lambda(D)_{\widetilde{K}}$ is reducible if and only if $\lambda\in 8-\BZ_{\ge 0}$, and then we have the
$\bigl(\mathfrak{e}_{7(-25)},\widetilde{{\rm U}}(1)\times E_6\bigr)$-submodules
\begin{alignat*}{3}
&\cO_\lambda(D)_{\widetilde{K}}\supset M_3(\lambda)\supset M_2(\lambda)\supset M_1(\lambda)\supset\{0\}, \qquad&& \lambda\in\BZ,\ \lambda\le 0,& \\
&\cO_\lambda(D)_{\widetilde{K}}\supset M_3(\lambda)\supset M_2(\lambda)\supset\{0\}, && \lambda\in\BZ,\ 1\le \lambda\le 4, &\\
&\cO_\lambda(D)_{\widetilde{K}}\supset M_3(\lambda)\supset\{0\}, && \lambda\in\BZ,\ 5\le \lambda\le 8.&
\end{alignat*}
When $\fp^+={\rm M}(1,2;\BO)^\BC$, $\cO_\lambda(D)_{\widetilde{K}}$ is reducible if and only if $\lambda\in 3-\BZ_{\ge 0}$, and then we have the
$\bigl(\mathfrak{e}_{6(-14)},\widetilde{{\rm U}}(1)\times {\rm Spin}(10)\bigr)$-submodules
\begin{alignat*}{3}
&\cO_\lambda(D)_{\widetilde{K}}\supset M_2(\lambda)\supset M_1(\lambda)\supset\{0\}, \qquad&& \lambda\in\BZ,\ \lambda\le 0, &\\
&\cO_\lambda(D)_{\widetilde{K}}\supset M_2(\lambda)\supset\{0\}, && \lambda\in\BZ,\ 1\le \lambda\le 3.&
\end{alignat*}

\subsection{Restriction to symmetric subgroups}\label{subsection_sym_subalg}

In this subsection we consider a~$\BC$-linear involution $\sigma$ on a~Hermitian positive Jordan triple system $\fp^\pm$, i.e.,
a Jordan triple system automorphism $\sigma\colon\fp^\pm\to\fp^\pm$ of order 2 which commutes with the $\BC$-antilinear map $\overline{\cdot}\colon \fp^\pm\to\fp^\mp$,
and extend to the involution of the Lie algebra $\fg^\BC=\fp^+\oplus\fk^\BC\oplus\fp^-$ by letting $\sigma$ act on $\fk^\BC=\mathfrak{str}(\fp^+)\subset\End_\BC(\fp^+)$ by
$\sigma(l):=\sigma l\sigma$. Also let ${\vartheta:=-I_{\fp^+}+I_{\fk^\BC}-I_{\fp^-}}$. Using these, we set
\begin{gather*}
\fp^\pm_1 :=\bigl(\fp^\pm\bigr)^\sigma=\big\{x\in\fp^\pm\mid \sigma(x)=x\big\}, \\
\fp^\pm_2 :=\bigl(\fp^\pm\bigr)^{-\sigma}=\big\{x\in\fp^\pm\mid \sigma(x)=-x\big\}, \\
\fk^\BC_1 :=\bigl(\fk^\BC\bigr)^\sigma=\big\{l\in\fk^\BC\mid \sigma(l)=l\big\}, \\
\fk_1 :=\fk^\sigma=\fk^\BC_1\cap\fk, \\
\fg_1^\BC :=\bigl(\fg^\BC\bigr)^\sigma=\fp^+_1\oplus\fk^\BC_1\oplus\fp^-_1, \\
\fg_2^\BC :=\bigl(\fg^\BC\bigr)^{\sigma\vartheta}=\fp^+_2\oplus\fk^\BC_1\oplus\fp^-_2, \\
\fg_1 :=\fg^\sigma=\fg^\BC_1\cap\fg, \\
\fg_2 :=\fg^{\sigma\vartheta}=\fg^\BC_2\cap\fg,
\end{gather*}
and let $G_1,G_1^\BC,G_2,G_2^\BC,K_1,K_1^\BC\subset G^\BC$ be the connected closed subgroups corresponding to~$\fg_1$, $\fg_1^\BC$, $\fg_2$, $\fg_2^\BC$, $\fk_1$, $\fk_1^\BC$, respectively.
Such $(G,G_1)$ is called a~symmetric pair of holomorphic type (see~\cite[Section 3.4]{Kmf1}).
Also let $\widetilde{G}_1\subset\widetilde{G}$ and $\widetilde{K}_1^\BC\subset\widetilde{K}^\BC$ be the connected closed subgroups of the universal covering groups of
$G$ and $K^\BC$ corresponding to $\fg_1$ and $\fk_1^\BC$ respectively, and let $D\subset\fp^+$, $D_1\subset\fp^+_1$ be the corresponding bounded symmetric domains,
so that $D\simeq G/K$, $D_1\simeq G_1/K_1$ hold.
Let $\chi$ be the character of $K^\BC$ given in~(\ref{chi_normalize}). Similarly, we normalize the bilinear form
$(\cdot|\cdot)_{\fp^+_j}\colon \fp^+_j\times\fp^-_j\to\BC$ $(j=1,2)$ such that $(e|\overline{e})_{\fp^+_j}=1$ holds for any primitive tripotent $e\in\fp^+_j$,
and define the characters $\chi_j$ $(j=1,2)$ of $K_1^\BC$ by
\begin{alignat*}{3}
&{\rm d}\chi_j([x,y])=(x|y)_{\fp^+_j}, \qquad&& x\in\fp^+_j,\ y\in\fp^-_j,& \\
&{\rm d}\chi_j(l)=0, && l\in \big[\fp^+_j,\fp^-_j\big]^\bot\subset \fk^\BC_1.&
\end{alignat*}
As in Section~\ref{subsection_HDS}, for $\lambda\in\BR$ and for an irreducible representation $V$ of $K_1^\BC$,
let $\cH_\lambda(D)\subset\cO_\lambda(D)$ and $\cH_{\lambda}(D_1,V)\subset\cO_\lambda(D_1,V)$ be the unitary representations of $\widetilde{G}$ and $\widetilde{G}_1$
with the minimal $\widetilde{K}$-type $\chi^{-\lambda}$ and with the minimal $\widetilde{K}_1$-type $\chi_1^{-\lambda}\otimes V$ respectively, if they exist.

In the following we assume $(G,G_1)$ is an irreducible symmetric pair. Then $\fp^+_2$ is a~direct sum of at most two simple Jordan triple systems.
Let $\rank\fp^+=:r$, $\rank\fp^+_2=:r_2$, and define $\varepsilon_1,\varepsilon_2\in\{1,2\}$ by ${\rm d}\chi|_{\fk_1^\BC}=\varepsilon_j {\rm d}\chi_j$, or equivalently, by
\begin{equation}\label{epsilon_j}
(x|y)_{\fp^+}=\varepsilon_j(x|y)_{\fp^+_j}, \qquad j=1,2,\quad x\in\fp^+_j,\quad y\in\fp^-_j.
\end{equation}
When $\fp^+_2$ is not simple, we write $\fp^+_2=:\fp^+_{11}\oplus\fp^+_{22}$, $\fp^+_1=:\fp_{12}^+$,
and let $\rank\fp^+_{11}=:r'$, $\rank\fp^+_{22}=r''$. Now we consider the restriction of the representation $\cH_\lambda(D)$ of $\widetilde{G}$ to the subgroup $\widetilde{G}_1$.
If $\lambda$ satisfies~(\ref{Wallach_set}), then the unitary representation $\cH_\lambda(D)$ exists, $\cH_\lambda(D)|_{\widetilde{G}_1}$ is discretely decomposable,
and every $\widetilde{G}_1$-submodule in $\cH_\lambda(D)|_{\widetilde{G}_1}$ contains a~$\fp^+_1$-null vector,
that is, has an intersection with $(\cH_\lambda(D)_{\widetilde{K}})^{\fp^+_1}=\bigl(\chi_1^{-\varepsilon_1\lambda}\otimes\cP\bigl(\fp^+_2\bigr)\bigr)\cap\cH_\lambda(D)_{\widetilde{K}}$.
Especially, $\cH_\lambda(D)_{\widetilde{K}}=\chi^{-\lambda}\otimes\cP(\fp^+)$ holds if $\lambda>\frac{d}{2}(r-1)$,
and $\cH_\lambda(D)$ is holomorphic discrete if $\lambda>p-1$. For such $\lambda$, according to the decomposition (Theorem~\ref{thm_HKS})
\begin{alignat*}{3}
&\cP\bigl(\fp^+_2\bigr)=\bigoplus_{\bk\in\BZ_{++}^{r_2}}\cP_\bk\bigl(\fp^+_2\bigr), && \fp^+_2\colon \ \text{simple},& \\
&\cP\bigl(\fp^+_2\bigr)=\bigoplus_{\bk\in\BZ_{++}^{r'}}\bigoplus_{\bl\in\BZ_{++}^{r''}}\cP_\bk\bigl(\fp^+_{11}\bigr)\boxtimes\cP_\bl\bigl(\fp^+_{22}\bigr), \qquad&& \fp^+_2\colon\ \text{non-simple},&
\end{alignat*}
the following holds.

\begin{Theorem}[{Kobayashi,~\cite[Theorems 8.3 and 8.4]{Kmf1}}]\label{thm_HKKS}\quad
\begin{enumerate}\itemsep=0pt
\item[$(1)$] Suppose $\fp^+$ is simple. For $\lambda>\frac{d}{2}(r-1)$, the restriction of $\cH_\lambda(D)$ to the subgroup $\widetilde{G}_1$ is
decomposed into the Hilbert direct sum of irreducible representations as
\begin{alignat*}{3}
&\cH_\lambda(D)|_{\widetilde{G}_1}\simeq\hsum_{\bk\in\BZ_{++}^{r_2}}\cH_{\varepsilon_1\lambda}\bigl(D_1,\cP_\bk\bigl(\fp^+_2\bigr)\bigr), && \fp^+_2\colon \ \text{simple}, \\
&\cH_\lambda(D)|_{\widetilde{G}_1}\simeq\hsum_{\bk\in\BZ_{++}^{r'}}\,\hsum_{\bl\in\BZ_{++}^{r''}}\cH_{\varepsilon_1\lambda}\bigl(D_1,\cP_\bk\bigl(\fp^+_{11}\bigr)\boxtimes\cP_\bl\bigl(\fp^+_{22}\bigr)\bigr),
\qquad&& \fp^+_2\colon \ \text{non-simple}.
\end{alignat*}
\item[$(2)$] Suppose $\fp^+=\fp^+_0\oplus\fp^+_0$ with $\fp^+_0$ simple of rank $r_0$, and $\sigma\colon (x,y)\mapsto (y,x)$, so that $(G,G_1)$ is of the form $(G_0\times G_0,\Delta(G_0))$.
For $\lambda,\mu>\frac{d}{2}(r_0-1)$, the tensor product representation $\cH_\lambda(D_0)\hotimes\cH_\mu(D_0)$ is decomposed under the diagonal subgroup $\Delta\bigl(\widetilde{G}_0\bigr)$
into the Hilbert direct sum of irreducible representations as
\[ \cH_\lambda(D_0)\hotimes\cH_\mu(D_0)\simeq\hsum_{\bk\in\BZ_{++}^{r_0}}\cH_{\lambda+\mu}\bigl(D_0,\cP_\bk\bigl(\fp^+_0\bigr)\bigr). \]
\end{enumerate}
\end{Theorem}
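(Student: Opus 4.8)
The plan is to pass to the underlying $(\fg_1,\widetilde{K}_1)$-module and reduce the statement to the analysis of $\fp^+_1$-null vectors together with the Hua--Kostant--Schmid decomposition (Theorem~\ref{thm_HKS}) of $\cP(\fp^+_2)$. We work in the range $\lambda>\tfrac{d}{2}(r-1)$ (resp.\ $\lambda,\mu>\tfrac{d}{2}(r_0-1)$), where, as recalled in Section~\ref{subsection_sym_subalg}, the unitary representation $\cH_\lambda(D)$ exists with $\cH_\lambda(D)_{\widetilde{K}}=\chi^{-\lambda}\otimes\cP(\fp^+)$, the restriction $\cH_\lambda(D)|_{\widetilde{G}_1}$ is discretely decomposable and $\widetilde{K}_1$-admissible, and every irreducible summand has nonzero intersection with the $\fp^+_1$-null space $\bigl(\cH_\lambda(D)_{\widetilde{K}}\bigr)^{\fp^+_1}$. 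Since $(G,G_1)$ is of holomorphic type, each irreducible summand is a unitary highest weight module of $\widetilde{G}_1$, hence is generated by its minimal $\widetilde{K}_1$-type, which is an irreducible $\widetilde{K}_1$-submodule of $\fp^+_1$-null (= $\fg_1$-highest-weight) vectors. So it suffices to (i) describe the $\fp^+_1$-null space as a $\widetilde{K}_1$-module, (ii) identify the $\widetilde{G}_1$-module generated by each of its irreducible pieces, and (iii) verify multiplicities and exhaustion.

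\textbf{The $\fp^+_1$-null space.} By the differential action (\ref{HDS_diff_action}), an element $z_1\in\fp^+_1\subset\fp^+$ acts on $\cO_\lambda(D)$ as the constant-coefficient vector field $f\mapsto-\partial_{z_1}f$. Since $\widetilde{K}_1^\BC$ preserves the splitting $\fp^+=\fp^+_1\oplus\fp^+_2$ (both summands being $\sigma$-eigenspaces), we have $\cH_\lambda(D)_{\widetilde{K}}=\chi^{-\lambda}\otimes\bigl(\cP(\fp^+_1)\otimes\cP(\fp^+_2)\bigr)$ as $\widetilde{K}_1$-modules, and a vector is $\fp^+_1$-null precisely when it is annihilated by every $\partial_{z_1}$, i.e.\ when it lies in $\chi^{-\lambda}\otimes\bigl(1\otimes\cP(\fp^+_2)\bigr)$; as a $\widetilde{K}_1$-module this is $\chi_1^{-\varepsilon_1\lambda}\otimes\cP(\fp^+_2)$ by (\ref{epsilon_j}). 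Now apply Hua--Kostant--Schmid: when $\fp^+_2$ is simple this is Theorem~\ref{thm_HKS} directly, and when $\fp^+_2=\fp^+_{11}\oplus\fp^+_{22}$ one applies it to each simple ideal and takes the outer tensor product, giving $\cP(\fp^+_2)=\bigoplus_\bk\cP_\bk(\fp^+_2)$ (resp.\ $\bigoplus_{\bk,\bl}\cP_\bk(\fp^+_{11})\boxtimes\cP_\bl(\fp^+_{22})$). Here one invokes the structure theory of the pair to ensure that each such piece remains irreducible, and that distinct pieces remain pairwise inequivalent, under $\widetilde{K}_1$ acting through its image in $\operatorname{Str}(\fp^+_2)$; this is precisely why $\cP_\bk(\fp^+_2)$ is a legitimate irreducible $\widetilde{K}_1$-type in the notation $\cH_{\varepsilon_1\lambda}(D_1,\cP_\bk(\fp^+_2))$.

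\textbf{Identification and exhaustion.} Fix $\bk$ (resp.\ $(\bk,\bl)$). The $\widetilde{G}_1$-submodule $\cH(\bk)\subset\cH_\lambda(D)$ generated by $\cP_\bk(\fp^+_2)$ is unitary and generated by a single irreducible $\widetilde{K}_1$-type of highest-weight vectors, hence irreducible, with minimal $\widetilde{K}_1$-type $\chi_1^{-\varepsilon_1\lambda}\otimes\cP_\bk(\fp^+_2)$ (it cannot be strictly smaller, since the minimal $\widetilde{K}_1$-type of a unitary highest weight module is irreducible and the $\widetilde{K}_1$-decomposition of the null space is multiplicity-free). By uniqueness of the unitary highest weight module with a prescribed minimal $\widetilde{K}_1$-type, $\cH(\bk)\simeq\cH_{\varepsilon_1\lambda}(D_1,\cP_\bk(\fp^+_2))$ (resp.\ $\cH_{\varepsilon_1\lambda}(D_1,\cP_\bk(\fp^+_{11})\boxtimes\cP_\bl(\fp^+_{22}))$). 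For exhaustion, let $\cH'$ be the closed $\widetilde{G}_1$-span of $\sum_\bk\cH(\bk)$; if $\cH'^{\bot}\ne 0$ then by discrete decomposability and admissibility it contains an irreducible summand whose minimal $\widetilde{K}_1$-type consists of $\widetilde{K}_1$-finite $\fp^+_1$-null vectors, which lie in $\bigl(\cH_\lambda(D)_{\widetilde{K}}\bigr)^{\fp^+_1}=\bigoplus_\bk\cP_\bk(\fp^+_2)\subset\cH'$, a contradiction; hence $\cH'=\cH_\lambda(D)$, and the sum is orthogonal and multiplicity-free because the minimal $\widetilde{K}_1$-types are pairwise inequivalent. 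This proves part~(1). Part~(2) is the special case $\fp^+=\fp^+_0\oplus\fp^+_0$, $\sigma(x,y)=(y,x)$: then $\fp^+_1=\Delta(\fp^+_0)$ and $\fp^+_2=\{(x,-x):x\in\fp^+_0\}\cong\fp^+_0$, so $r_2=r_0$, the $\fp^+_1$-null pieces are the $\cP_\bk(\fp^+_0)$, the restriction of $\chi^{-\lambda}\boxtimes\chi^{-\mu}$ to $\Delta(\widetilde{K}_0^\BC)$ is $\chi_0^{-(\lambda+\mu)}$, and the same argument yields $\cH_\lambda(D_0)\hotimes\cH_\mu(D_0)\simeq\hsum_\bk\cH_{\lambda+\mu}(D_0,\cP_\bk(\fp^+_0))$.

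\textbf{Main obstacle.} The genuinely nontrivial inputs are the two structural facts borrowed from Kobayashi's general theory: first, the a priori discrete decomposability and $\widetilde{K}_1$-admissibility of $\cH_\lambda(D)|_{\widetilde{G}_1}$, which rest on the infinitesimal discrete-decomposability criterion applied to a symmetric pair of holomorphic type; and second, the assertion that $\widetilde{K}_1$, acting on $\fp^+_2$ only through its (possibly proper and non-surjective) image in $\operatorname{Str}(\fp^+_2)$, nonetheless keeps every Hua--Kostant--Schmid constituent $\cP_\bk(\fp^+_2)$ irreducible and keeps distinct constituents inequivalent. Equivalently, the hard point is to pin down that the minimal $\widetilde{K}_1$-type of the $\widetilde{G}_1$-module generated by $\cP_\bk(\fp^+_2)$ is exactly $\cP_\bk(\fp^+_2)$, not a larger $\widetilde{K}_1$-module assembled from several of them; this is proved case-by-case using the simultaneous Peirce decomposition and the classification of $\bigl(\fp^+,\fp^+_1,\fp^+_2\bigr)$. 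Granting these, everything else above is bookkeeping with highest weight modules.
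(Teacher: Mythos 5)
This statement is not proved in the paper at all: it is quoted as Kobayashi's theorem (\cite[Theorems~8.3 and~8.4]{Kmf1}), and the two structural inputs you lean on — discrete decomposability of $\cH_\lambda(D)|_{\widetilde{G}_1}$ and the fact that every $\widetilde{G}_1$-summand meets $\bigl(\cH_\lambda(D)_{\widetilde{K}}\bigr)^{\fp^+_1}$ — are likewise only cited in Section~\ref{subsection_sym_subalg}. So there is no internal proof to compare with; what you wrote is a reconstruction along the lines of Kobayashi's original argument, and as a reduction of the branching law to those cited facts plus Hua--Kostant--Schmid it is essentially correct: the identification of the $\fp^+_1$-null space with $\chi_1^{-\varepsilon_1\lambda}\otimes\cP\bigl(\fp^+_2\bigr)$ via~(\ref{HDS_diff_action}) and~(\ref{epsilon_j}), the identification of each summand by its minimal $\widetilde{K}_1$-type, the exhaustion argument, and the specialization to the tensor product case are all fine.

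Two remarks on the points you flag or gloss. First, your ``main obstacle'' — that the Hua--Kostant--Schmid constituents $\cP_\bk\bigl(\fp^+_2\bigr)$ stay irreducible and pairwise inequivalent under $\widetilde{K}_1$ — does not require any case-by-case analysis: since $\bigl[\fp^+_2,\fp^-_2\bigr]\subset\fk_1^\BC$ (indeed $\fg_2^\BC=\fp^+_2\oplus\fk_1^\BC\oplus\fp^-_2$ is a subalgebra, so $\bigl(G_2,K_1\bigr)$ is itself a Hermitian pair with holomorphic tangent space $\fp^+_2$), the image of $K_1^\BC$ in ${\rm GL}\bigl(\fp^+_2\bigr)$ contains $\operatorname{Str}\bigl(\fp^+_2\bigr)_0$; irreducibility and mutual inequivalence under this smaller group (Theorem~\ref{thm_HKS}) then persist for the larger group $K_1^\BC$. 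Second, the step ``$\cH(\bk)$ is generated by a single irreducible $\widetilde{K}_1$-type of $\fp^+_1$-null vectors, hence irreducible'' is not automatic as stated; the clean way to close it is the one your parenthetical hints at: write $\cH_\lambda(D)|_{\widetilde{G}_1}$ as a Hilbert sum of irreducible unitary highest weight modules (discrete decomposability), note that in each such module the $\widetilde{K}_1$-finite $\fp^+_1$-null vectors are exactly its minimal $\widetilde{K}_1$-type, and conclude that $\bigoplus_{\bk}\cP_\bk\bigl(\fp^+_2\bigr)$, being multiplicity-free under $\widetilde{K}_1$, is precisely the multiplicity-free sum of those minimal $\widetilde{K}_1$-types; this pins down the list of summands, multiplicity one, and that the module generated by $\cP_\bk\bigl(\fp^+_2\bigr)$ is the corresponding irreducible summand $\cH_{\varepsilon_1\lambda}\bigl(D_1,\cP_\bk\bigl(\fp^+_2\bigr)\bigr)$. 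With these adjustments your outline matches the intended proof of the cited result.
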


In the following, suppose $\fp^+$ is simple. Similar results also hold for the tensor product case.
Let $\langle\cdot,\cdot\rangle_{\lambda}=\langle\cdot,\cdot\rangle_{\lambda,\fp^+}$ be the $\widetilde{G}$-invariant inner product on $\cH_\lambda(D)$,
which is originally defined for $\lambda>p-1$ by~(\ref{Bergman_inner_prod}).
The purpose of this article is to study the above decomposition by observing the inner product
\begin{alignat}{3}
&\bigl\langle f(x_2),{\rm e}^{(x|\overline{z})_{\fp^+}}\bigr\rangle_{\lambda,x}, && f(x_2)\in\cP_\bk\bigl(\fp^+_2\bigr),& \nonumber\\
&\bigl\langle f(x_{11})g(x_{22}),{\rm e}^{(x|\overline{z})_{\fp^+}}\bigr\rangle_{\lambda,x}, \qquad&& f(x_{11})\in\cP_\bk\bigl(\fp^+_{11}\bigr),\ g(x_{22})\in\cP_\bl\bigl(\fp^+_{22}\bigr),& \label{purpose_inner_prod}
\end{alignat}
where $z\in\fp^+$, $x=x_1+x_2\in\fp^+=\fp^+_1\oplus\fp^+_2$ or $x=x_{11}+x_{12}+x_{22}\in\fp^+=\fp^+_{11}\oplus\fp^+_{12}\oplus\fp^+_{22}$,
and the subscript $x$ stands for the variable of integration.

In the author's previous article~\cite{N2}, when $\fp^+$ and $\fp^+_2$ are of tube type, we computed the above inner products explicitly
for some special $\bk\in\BZ_{++}^{r_2}$ or $(\bk,\bl)\in\BZ_{++}^{r'}\times\BZ_{++}^{r''}$, that is, for
\begin{alignat*}{3}
&\bk=(k+l,k,\dots,k), && \fp^+_2\colon \ \text{simple}, \ \varepsilon_2=1,& \\
&\bk=(k+1,\dots,k+1,k\dots,k), \qquad&& \fp^+_2\colon\ \text{simple},\ \varepsilon_2=2,& \\
&(\bk,\bl)=((k,\dots,k),\bl), && \fp^+_2\colon \ \text{non-simple},&
\end{alignat*}
and applied these results for determination of the $\widetilde{G}_1$-intertwining operators (symmetry breaking operators)
\begin{alignat*}{3}
&\cF_{\lambda,\bk}^\downarrow\colon \ \cH_\lambda(D)|_{\widetilde{G}_1}\longrightarrow\cH_{\varepsilon_1\lambda}\bigl(D_1,\cP_\bk\bigl(\fp^+_2\bigr)\bigr) && \fp^+_2\colon \ \text{simple}, &\\
&\cF_{\lambda,\bk,\bl}^\downarrow\colon \ \cH_\lambda(D)|_{\widetilde{G}_1}\longrightarrow\cH_{\varepsilon_1\lambda}\bigl(D_1,\cP_\bk\bigl(\fp^+_{11}\bigr)\boxtimes\cP_\bl\bigl(\fp^+_{22}\bigr)\bigr) \qquad
&& \fp^+_2\colon\ \text{non-simple}&
\end{alignat*}
for the above special $\bk$ or $(\bk,\bl)$. In this article we treat general $\bk\in\BZ_{++}^{r_2}$ or $(\bk,\bl)\in\BZ_{++}^{r'}\times\BZ_{++}^{r''}$,
compute the top term of~(\ref{purpose_inner_prod}), i.e., the value of~(\ref{purpose_inner_prod}) at $z_1=0$ or $z_{12}=0$,
and also compute the poles of~(\ref{purpose_inner_prod}) as a~function of $\lambda\in\BC$.

In the following, when $\fp^+_2$ is not simple, we write $\tilde{\bk}=(\bk,\bl)\in\BZ_{++}^{r'}\times\BZ_{++}^{r''}$,
and write $\cP_{\tilde{\bk}}\bigl(\fp^+_2\bigr):=\cP_\bk\bigl(\fp^+_{11}\bigr)\boxtimes\cP_\bl\bigl(\fp^+_{22}\bigr)$. Then since the map
\[ \cP\bigl(\fp^+_2\bigr)\longrightarrow\cP\bigl(\fp^+_2\bigr), \qquad f(x_2)\longmapsto \bigl\langle f(x_2),{\rm e}^{(x|\overline{z})_{\fp^+}}\bigr\rangle_{\lambda,x}\big|_{z_1=0} \]
is $K_1$-equivariant and since $\cP\bigl(\fp^+_2\bigr)$ decomposes multiplicity-freely under $K_1$ by Theorem~\ref{thm_HKS},
there exist constants $C_{\fp^+,\fp^+_2}\bigl(\lambda,\tilde{\bk}\bigr)\in\BC$ such that
\begin{equation}\label{const_Plancherel}
\bigl\langle f(x_2),{\rm e}^{(x|\overline{z})_{\fp^+}}\bigr\rangle_{\lambda,x}\big|_{z_1=0}=C_{\fp^+,\fp^+_2}\bigl(\lambda,\tilde{\bk}\bigr)f(z_2), \qquad f(x_2)\in\cP_{\tilde{\bk}}\bigl(\fp^+_2\bigr)
\end{equation}
holds for every $\tilde{\bk}\in\BZ_{++}^{r_2}$ or $\tilde{\bk}\in\BZ_{++}^{r'}\times\BZ_{++}^{r''}$.
Then the Parseval--Plancherel-type formula is given by using these constants. Let $V_{\tilde{\bk}}$ be an abstract $K_1$-module isomorphic to $\cP_{\tilde{\bk}}\bigl(\fp^+_2\bigr)$,
let $\Vert\cdot\Vert_{\varepsilon_1\lambda,\tilde{\bk},\fp^+_1}$ be the $\widetilde{G}_1$-invariant norm on $\cH_{\varepsilon_1\lambda}\bigl(D_1,V_{\tilde{\bk}}\bigr)$
normalized such that $\Vert v\Vert_{\varepsilon_1\lambda,\tilde{\bk},\fp^+_1}=|v|_{V_{\tilde{\bk}}}$ holds for all constant functions $v\in V_{\tilde{\bk}}$,
and let $\Vert\cdot\Vert_{F,\fp^+}$ be the Fischer norm on $\fp^+$ given in~(\ref{Fischer_inner_prod}).
\begin{Proposition}\label{prop_Plancherel}
For $\lambda>p-1$ and for $\tilde{\bk}\in\BZ_{++}^{r_2}$ \big(when $\fp^+_2$ is simple\big) or $\tilde{\bk}\in\BZ_{++}^{r'}\times\BZ_{++}^{r''}$ \big(when $\fp^+_2$ is not simple\big),
let $C_{\fp^+,\fp^+_2}\bigl(\lambda,\tilde{\bk}\bigr)\in\BC$ be as in~\eqref{const_Plancherel}.
\begin{enumerate}\itemsep=0pt
\item[$(1)$] For $f(x_2)\in\cP_{\tilde{\bk}}\bigl(\fp^+_2\bigr)$, we have $\Vert f\Vert_{\lambda,\fp^+}^2=C_{\fp^+,\fp^+_2}\bigl(\lambda,\tilde{\bk}\bigr)\Vert f\Vert_{F,\fp^+}^2$.
\item[$(2)$] We take a~vector-valued polynomial $\rK_{\tilde{\bk}}(x_2)\in\cP\bigl(\fp^+_2,\overline{V_{\tilde{\bk}}}\bigr)^{K_1}$ normalized such that
\[
\big| \langle f(x_2),\rK_{\tilde{\bk}}(x_2)\rangle_{F,\fp^+}\big|_{V_{\tilde{\bk}}}^2
=\Vert f\Vert_{F,\fp^+}^2, \qquad f(x_2)\in\cP_{\tilde{\bk}}\bigl(\fp^+_2\bigr),
\]
and define the vector-valued polynomial $F_{\lambda,\tilde{\bk}}^\downarrow(z)\in \cP\bigl(\fp^-,V_{\tilde{\bk}}\bigr)$ by
\begin{equation}\label{SBO1}
F_{\lambda,\tilde{\bk}}^\downarrow(z):=\frac{1}{C_{\fp^+,\fp^+_2}\bigl(\lambda,\tilde{\bk}\bigr)}\bigl\langle {\rm e}^{(x|z)_{\fp^+}},\rK_{\tilde{\bk}}(x_2)\bigr\rangle_{\lambda,x}.
\end{equation}
Then the differential operator
\begin{equation}\label{SBO2}
\cF_{\lambda,\tilde{\bk}}^\downarrow\colon \ \cH_\lambda(D)|_{\widetilde{G}_1}\longrightarrow\cH_{\varepsilon_1\lambda}\bigl(D_1,V_{\tilde{\bk}}\bigr), \qquad
\bigl(\cF_{\lambda,\tilde{\bk}}^\downarrow f\bigr)(x_1):=F_{\lambda,\tilde{\bk}}^\downarrow \left(\frac{\partial}{\partial x}\right)f(x)\biggr|_{x_2=0}
\end{equation}
becomes a~symmetry breaking operator satisfying
\[
\big\Vert \cF_{\lambda,\tilde{\bk}}^\downarrow f\big\Vert_{\varepsilon_1\lambda,\tilde{\bk},\fp^+_1}^2=\Vert f\Vert_{F,\fp^+}^2, \qquad f(x_2)\in\cP_{\tilde{\bk}}\bigl(\fp^+_2\bigr).
\]
\item[$(3)$] For $f\in\cH_\lambda(D)$, we have
$\ds \Vert f\Vert_{\lambda,\fp^+}^2=\sum_{\tilde{\bk}} C_{\fp^+,\fp^+_2}\bigl(\lambda,\tilde{\bk}\bigr)\big\Vert \cF_{\lambda,\tilde{\bk}}^\downarrow f\big\Vert_{\varepsilon_1\lambda,\tilde{\bk},\fp^+_1}^2$.
\end{enumerate}
\end{Proposition}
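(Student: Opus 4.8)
\textbf{Part (1).}
The plan is to exploit the fact that $\cP\bigl(\fp^+_2\bigr)$ decomposes multiplicity-freely under $K_1$ (Theorem~\ref{thm_HKS}), so that on each $K_1$-type $\cP_{\tilde{\bk}}\bigl(\fp^+_2\bigr)$ the sesquilinear forms $\langle\cdot,\cdot\rangle_{\lambda,\fp^+}$ and $\langle\cdot,\cdot\rangle_{F,\fp^+}$ (restricted to $\cP_{\tilde{\bk}}\bigl(\fp^+_2\bigr)\times\cP_{\tilde{\bk}}\bigl(\fp^+_2\bigr)$) must be proportional by Schur's lemma, the proportionality constant being a priori a function of $\lambda$ and $\tilde{\bk}$. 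First I would observe that for $f,g\in\cP_{\tilde{\bk}}\bigl(\fp^+_2\bigr)$ one has
\[
\langle f,g\rangle_{F,\fp^+}=\overline{g\Bigl(\overline{\tfrac{\partial}{\partial x}}\Bigr)}f(x)\Bigr|_{x=0}
=\bigl\langle f(x_2),{\rm e}^{(x|\overline{z})_{\fp^+}}\bigr\rangle_{F,x}\Bigr|_{\text{pair against }g},
\]
since ${\rm e}^{(x|\overline{z})_{\fp^+}}$ is the reproducing kernel for $\langle\cdot,\cdot\rangle_{F,\fp^+}$; and then use \eqref{const_Plancherel}, which says precisely that on $\cP_{\tilde{\bk}}\bigl(\fp^+_2\bigr)$ the map $f(x_2)\mapsto\bigl\langle f(x_2),{\rm e}^{(x|\overline{z})_{\fp^+}}\bigr\rangle_{\lambda,x}\big|_{z_1=0}$ equals $C_{\fp^+,\fp^+_2}\bigl(\lambda,\tilde{\bk}\bigr)$ times the identity. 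Concretely, for $f,g\in\cP_{\tilde{\bk}}\bigl(\fp^+_2\bigr)$ I would write $\langle f,g\rangle_{\lambda,\fp^+}=\overline{g\bigl(\overline{\partial/\partial x}\bigr)}\bigl\langle f(x_2),{\rm e}^{(x|\overline{z})_{\fp^+}}\bigr\rangle_{\lambda,x}\big|_{z=0}$; because $g$ depends only on the $\fp^+_2$-variable, only the restriction $z_1=0$ of the inner product matters, and applying \eqref{const_Plancherel} turns this into $C_{\fp^+,\fp^+_2}\bigl(\lambda,\tilde{\bk}\bigr)\,\overline{g\bigl(\overline{\partial/\partial z_2}\bigr)}f(z_2)\big|_{z_2=0}=C_{\fp^+,\fp^+_2}\bigl(\lambda,\tilde{\bk}\bigr)\langle f,g\rangle_{F,\fp^+}$. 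Taking $g=f$ gives the claim. The only subtlety is to justify interchanging the $z$-differentiation with the (convergent, for $\lambda>p-1$) Bergman integral defining $\langle\cdot,\cdot\rangle_{\lambda,\fp^+}$, which is routine since the integral converges locally uniformly in $z$.

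\textbf{Part (2).}
Here the plan is to combine the general theory of \cite{N} recalled in the introduction with Part~(1). By \cite{N}, since $\cH_\lambda(D)$ is holomorphic discrete for $\lambda>p-1$, the operator $f\mapsto F^\downarrow_{\lambda,\tilde{\bk}}\bigl(\partial/\partial x\bigr)f(x)\big|_{x_2=0}$ built from the operator-valued polynomial $\bigl\langle{\rm e}^{(x|z)_{\fp^+}},\rK_{\tilde{\bk}}(x_2)\bigr\rangle_{\cH_\lambda(D),x}$ is automatically a $\widetilde{G}_1$-intertwining (symmetry breaking) operator $\cH_\lambda(D)\to\cH_{\varepsilon_1\lambda}\bigl(D_1,V_{\tilde{\bk}}\bigr)$; the normalization by $1/C_{\fp^+,\fp^+_2}(\lambda,\tilde{\bk})$ in \eqref{SBO1} only rescales it, so $\cF^\downarrow_{\lambda,\tilde{\bk}}$ is still a symmetry breaking operator. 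For the norm identity I would compute $\cF^\downarrow_{\lambda,\tilde{\bk}}$ on the minimal $\widetilde{K}_1$-type: for $f=f(x_2)\in\cP_{\tilde{\bk}}\bigl(\fp^+_2\bigr)$, applying the constant-coefficient differential operator and restricting to $x_2=0$ amounts, after pairing, to the Fischer pairing of $f$ with $\rK_{\tilde{\bk}}$, so that $\bigl(\cF^\downarrow_{\lambda,\tilde{\bk}}f\bigr)(x_1)$ equals $\tfrac{1}{C_{\fp^+,\fp^+_2}(\lambda,\tilde{\bk})}$ times something proportional to $\langle f(x_2),\rK_{\tilde{\bk}}(x_2)\rangle_{\lambda,\fp^+}\in V_{\tilde{\bk}}$ viewed as a constant function; using Part~(1) the factor $C_{\fp^+,\fp^+_2}(\lambda,\tilde{\bk})$ cancels and leaves exactly $\langle f,\rK_{\tilde{\bk}}\rangle_{F,\fp^+}$. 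Then the normalization $\Vert v\Vert_{\varepsilon_1\lambda,\tilde{\bk},\fp^+_1}=|v|_{V_{\tilde{\bk}}}$ for constant functions together with the normalization $\bigl|\langle f,\rK_{\tilde{\bk}}\rangle_{F,\fp^+}\bigr|_{V_{\tilde{\bk}}}^2=\Vert f\Vert_{F,\fp^+}^2$ imposed on $\rK_{\tilde{\bk}}$ yields $\bigl\Vert\cF^\downarrow_{\lambda,\tilde{\bk}}f\bigr\Vert_{\varepsilon_1\lambda,\tilde{\bk},\fp^+_1}^2=\Vert f\Vert_{F,\fp^+}^2$. The main care needed is bookkeeping of the various pairings $(\cdot|\cdot)_{\fp^+}$ versus $(\cdot|\cdot)_{\fp^+_2}$ (the factor $\varepsilon_2$) and making sure the $K_1$-equivariance forces $\cF^\downarrow_{\lambda,\tilde{\bk}}$ on $\cP_{\tilde{\bk}}\bigl(\fp^+_2\bigr)$ to land in the constant functions of $\cH_{\varepsilon_1\lambda}(D_1,V_{\tilde{\bk}})$ — this follows because $\cP_{\tilde{\bk}}\bigl(\fp^+_2\bigr)$ is the minimal $\widetilde{K}_1$-type and the intertwining operator sends minimal $\widetilde{K}_1$-type to minimal $\widetilde{K}_1$-type.

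\textbf{Part (3).}
Now I would assemble the global Parseval--Plancherel identity from Parts (1)--(2) and the multiplicity-free Hilbert direct sum decomposition in Theorem~\ref{thm_HKKS}. Write $\cH_\lambda(D)|_{\widetilde{G}_1}\simeq\hsum_{\tilde{\bk}}\cH_{\varepsilon_1\lambda}\bigl(D_1,V_{\tilde{\bk}}\bigr)$, with the $\cH_{\varepsilon_1\lambda}(D_1,V_{\tilde{\bk}})$-isotypic component generated by $\cP_{\tilde{\bk}}\bigl(\fp^+_2\bigr)$. By multiplicity-freeness and unitarity (of $\cH_\lambda(D)$ as a $\widetilde{G}_1$-module), there exist positive constants $c_{\tilde{\bk}}$ with $\Vert f\Vert_{\lambda,\fp^+}^2=\sum_{\tilde{\bk}}c_{\tilde{\bk}}\bigl\Vert\cF^\downarrow_{\lambda,\tilde{\bk}}f\bigr\Vert_{\varepsilon_1\lambda,\tilde{\bk},\fp^+_1}^2$ for all $f\in\cH_\lambda(D)$ (each $\cF^\downarrow_{\lambda,\tilde{\bk}}$ being, up to scalar, the orthogonal projection onto the isotypic component composed with the unitary identification). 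To pin down $c_{\tilde{\bk}}$ it suffices to evaluate both sides on a single nonzero vector in the $\tilde{\bk}$-component, and the cleanest choice is a minimal $\widetilde{K}_1$-type vector $f=f(x_2)\in\cP_{\tilde{\bk}}\bigl(\fp^+_2\bigr)\subset\cH_\lambda(D)$: the left side is $\Vert f\Vert_{\lambda,\fp^+}^2=C_{\fp^+,\fp^+_2}(\lambda,\tilde{\bk})\Vert f\Vert_{F,\fp^+}^2$ by Part~(1), while on the right side $\cF^\downarrow_{\lambda,\tilde{\bk}'}f=0$ for $\tilde{\bk}'\ne\tilde{\bk}$ (different isotypic components) and $\bigl\Vert\cF^\downarrow_{\lambda,\tilde{\bk}}f\bigr\Vert_{\varepsilon_1\lambda,\tilde{\bk},\fp^+_1}^2=\Vert f\Vert_{F,\fp^+}^2$ by Part~(2); hence $c_{\tilde{\bk}}=C_{\fp^+,\fp^+_2}(\lambda,\tilde{\bk})$, which is what we want.

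\textbf{Main obstacle.}
The routine part is the manipulation of reproducing kernels and Schur's lemma; the part demanding the most care is Part~(2), specifically verifying that $\cF^\downarrow_{\lambda,\tilde{\bk}}$ restricted to the minimal $\widetilde{K}_1$-type $\cP_{\tilde{\bk}}\bigl(\fp^+_2\bigr)$ acts as the Fischer pairing against $\rK_{\tilde{\bk}}$ with precisely the cancelling constant $C_{\fp^+,\fp^+_2}(\lambda,\tilde{\bk})$ — i.e. keeping track of all normalization constants between the Bergman inner product, the Fischer inner product, the pairings $(\cdot|\cdot)_{\fp^+}$ and $(\cdot|\cdot)_{\fp^+_j}$, and the identification $\cP_{\tilde{\bk}}\bigl(\fp^+_2\bigr)\cong V_{\tilde{\bk}}$. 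Once that constant-tracking is done correctly, the rest follows formally.
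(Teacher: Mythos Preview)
Your proposal is correct and follows essentially the same approach as the paper's own proof. In Part~(1) you use the reproducing property of ${\rm e}^{(x|\overline{z})_{\fp^+}}$ for $\langle\cdot,\cdot\rangle_{F,\fp^+}$ together with \eqref{const_Plancherel}, exactly as the paper does; in Part~(2) you cite \cite{N} for the intertwining property and compute $\cF^\downarrow_{\lambda,\tilde{\bk}}$ on the minimal $\widetilde{K}_1$-type via Part~(1), matching the paper; and in Part~(3) your argument (unknown constants $c_{\tilde{\bk}}$ pinned down on the minimal $\widetilde{K}_1$-type) is equivalent to the paper's, which phrases the same step via a holographic operator $\cF^\uparrow_{\lambda,\tilde{\bk}}$ normalized by $\cF^\downarrow_{\lambda,\tilde{\bk}}\circ\cF^\uparrow_{\lambda,\tilde{\bk}}=I$ and the fact that it is an isometry up to scalar. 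Your worry about the $\varepsilon_2$ factor turns out not to arise, since all pairings in the computation are taken on $\fp^+$.
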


\begin{proof}
(1) Since the reproducing kernel of $\langle\cdot,\cdot\rangle_{F,\fp^+}$ is given by ${\rm e}^{(z|\overline{x})_{\fp^+}}$ (see~\cite[Proposition~XI.1.1]{FK}),
for $f,g\in\cP(\fp^+)$ we have
\[
\langle f(x),g(x)\rangle_{\lambda,x}=\bigl\langle f(x),\bigl\langle g(z),{\rm e}^{(z|\overline{x})_{\fp^+}}\bigr\rangle_{F,z}\bigr\rangle_{\lambda,x}
=\bigl\langle\bigl\langle f(x),{\rm e}^{(x|\overline{z})_{\fp^+}}\bigr\rangle_{\lambda,x},g(z)\bigr\rangle_{F,z}.
\]
Then by putting $f=g\in\cP_{\tilde{\bk}}\bigl(\fp^+_2\bigr)\subset\cP(\fp^+)$, we get the desired formula.

(2) The symmetry breaking property follows from~\cite[Theorem~3.10\,(1)]{N}. Let $f(x_2)\in\cP_{\tilde{\bk}}\bigl(\fp^+_2\bigr)$.
Then since $\exp\bigl(x\big|\frac{\partial}{\partial y}\bigr)_{\fp^+}f(y)\big|_{y=0}=f(x)$ holds, by (1), we have
\begin{align*}
\bigl(\cF_{\lambda,\tilde{\bk}}^\downarrow f\bigr)(y_1)
&=\frac{1}{C_{\fp^+,\fp^+_2}\bigl(\lambda,\tilde{\bk}\bigr)}\,\biggl\langle \exp\left(x\middle|\frac{\partial}{\partial y}\right)_{\fp^+},\rK_{\tilde{\bk}}(x_2)\biggr\rangle_{\lambda,x}f(y_2)
\biggr|_{y_2=0} \\
&=\frac{1}{C_{\fp^+,\fp^+_2}\bigl(\lambda,\tilde{\bk}\bigr)}\left\langle f(x_2),\rK_{\tilde{\bk}}(x_2)\right\rangle_{\lambda,x}\biggr|_{y_2=0}
=\left\langle f(x_2),\rK_{\tilde{\bk}}(x_2)\right\rangle_{F,x}\in V_{\tilde{\bk}}.
\end{align*}
Then by the normalization of $\Vert\cdot\Vert_{\varepsilon_1\lambda,\tilde{\bk},\fp^+_1}$ and $\rK_{\tilde{\bk}}(x_2)$, we get
\[
\big\Vert \cF_{\lambda,\tilde{\bk}}^\downarrow f\big\Vert_{\varepsilon_1\lambda,\tilde{\bk},\fp^+_1}^2
=\big| \langle f(x_2),\rK_{\tilde{\bk}}(x_2)\rangle_{F,\fp^+} \big|_{V_{\tilde{\bk}}}^2
=\Vert f\Vert_{F,\fp^+}^2, \qquad f(x_2)\in\cP_{\tilde{\bk}}\bigl(\fp^+_2\bigr).
\]

(3) Let
\[
\cF_{\lambda,\tilde{\bk}}^\uparrow\colon \ \cH_{\varepsilon_1\lambda}\bigl(D_1,V_{\tilde{\bk}}\bigr)\longrightarrow\cH_\lambda(D)|_{\widetilde{G}_1}
\]
be the $\widetilde{G}_1$-intertwining operator (holographic operator) normalized such that
\[
\cF_{\lambda,\tilde{\bk}}^\downarrow \circ \cF_{\lambda,\tilde{\bk}}^\uparrow=I_{\cH_{\varepsilon_1\lambda}\bigl(D_1,V_{\tilde{\bk}}\bigr)}
\]
holds. Then since $\cH_\lambda(D)$ decomposes multiplicity-freely under $\widetilde{G}_1$, for $f\in\cH_\lambda(D)$, we have
\[
f=\sum_{\tilde{\bk}}\cF_{\lambda,\tilde{\bk}}^\uparrow\cF_{\lambda,\tilde{\bk}}^\downarrow f, \qquad
\Vert f\Vert_{\lambda,\fp^+}^2=\sum_{\tilde{\bk}}\big\Vert \cF_{\lambda,\tilde{\bk}}^\uparrow\cF_{\lambda,\tilde{\bk}}^\downarrow f\big\Vert_{\lambda,\fp^+}^2.
\]
If $f(x_2)\in\cP_{\tilde{\bk}}\bigl(\fp^+_2\bigr)$, then since $\cF_{\lambda,\tilde{\bk}}^\uparrow\cF_{\lambda,\tilde{\bk}}^\downarrow f=f$ holds, by~(1) and~(2), we have
\[
\big\Vert \cF_{\lambda,\tilde{\bk}}^\uparrow\cF_{\lambda,\tilde{\bk}}^\downarrow f\big\Vert_{\lambda,\fp^+}^2
=\Vert f\Vert_{\lambda,\fp^+}^2=C_{\fp^+,\fp^+_2}\bigl(\lambda,\tilde{\bk}\bigr)\Vert f\Vert_{F,\fp^+}^2
=C_{\fp^+,\fp^+_2}\bigl(\lambda,\tilde{\bk}\bigr)\big\Vert \cF_{\lambda,\tilde{\bk}}^\downarrow f\big\Vert_{\varepsilon_1\lambda,\tilde{\bk},\fp^+_1}^2,
\]
and since $\cF_{\lambda,\tilde{\bk}}^\uparrow$ is an isometry up to scalar multiple,
\[
\big\Vert \cF_{\lambda,\tilde{\bk}}^\uparrow\cF_{\lambda,\tilde{\bk}}^\downarrow f\big\Vert_{\lambda,\fp^+}^2
=C_{\fp^+,\fp^+_2}\bigl(\lambda,\tilde{\bk}\bigr)\big\Vert \cF_{\lambda,\tilde{\bk}}^\downarrow f\big\Vert_{\varepsilon_1\lambda,\tilde{\bk},\fp^+_1}^2
\]
holds for all $f\in\cH_\lambda(D)$. Hence we get the desired formula.
\end{proof}

Next we consider the meromorphic continuation of~(\ref{purpose_inner_prod}),~(\ref{SBO1}), and~(\ref{SBO2}).
The differential operator $\cF_{\lambda,\tilde{\bk}}^\downarrow$ in~(\ref{SBO2}) is extended to the map
\[
\cF_{\lambda,\tilde{\bk}}^\downarrow\colon \ \cO_\lambda(D)|_{\widetilde{G}_1}\longrightarrow\cO_{\varepsilon_1\lambda}\bigl(D_1,V_{\tilde{\bk}}\bigr),
\]
and this is meromorphically continued for all $\lambda\in\BC$.
For generic $\lambda$, we have an abstract $(\fg_1,\widetilde{K}_1)$-isomorphism
\[
\cO_{\varepsilon_1\lambda}\bigl(D_1,V_{\tilde{\bk}}\bigr)_{\widetilde{K}_1}\simeq {\rm d}\tau_\lambda(\cU(\fg_1))\cP_{\tilde{\bk}}\bigl(\fp^+_2\bigr)
\subset\cP(\fp^+)=\cO_\lambda(D)_{\widetilde{K}},
\]
where $\cU(\fg_1)$ is the universal enveloping algebra of $\fg_1^\BC$, and ${\rm d}\tau_\lambda$ is the differential of $(\tau_\lambda,\cO_\lambda(D))$.
The restriction of $\cF_{\lambda,\tilde{\bk}}^\downarrow$ gives the explicit isomorphism from the right to the left.
On the other hand, this does not hold for all $\lambda\in\BC$.
By computing the poles of~(\ref{purpose_inner_prod}) with respect to $\lambda$, we can get some information on ${\rm d}\tau_\lambda(\cU(\fg_1))\cP_{\tilde{\bk}}\bigl(\fp^+_2\bigr)$ for such singular $\lambda$.

\begin{Proposition}\label{prop_poles}
Let $\bl=(l_1,\dots,l_{r})\in\BZ_{++}^r$. Suppose
\[ (\lambda)_{\bl,d}\bigl\langle f(x_2),{\rm e}^{(x|\overline{z})_{\fp^+}}\bigr\rangle_{\lambda,x} \]
is holomorphically continued for all $\lambda\in\BC$ for some non-zero $f(x_2)\in\cP_{\tilde{\bk}}\bigl(\fp^+_2\bigr)$.
\begin{enumerate}\itemsep=0pt
\item[$(1)$] We have
\[ \cP_{\tilde{\bk}}\bigl(\fp^+_2\bigr)\subset\bigoplus_{\substack{\bm\in\BZ_{++}^r \\ m_j\le l_j, \, j=1,\dots,r}}\cP_\bm(\fp^+), \]
and especially, for $j=1,\dots,r$,
\[ {\rm d}\tau_\lambda(\cU(\fg_1))\cP_{\tilde{\bk}}\bigl(\fp^+_2\bigr)\subset M_j^\fg(\lambda)\qquad \text{holds if}\quad \lambda\in\frac{d}{2}(j-1)-l_j-\BZ_{\ge 0}, \]
where $M_j^\fg(\lambda)\subset\cO_\lambda(D)_{\widetilde{K}}$ is as in~\eqref{submodule}.
\item[$(2)$] For $a=0,1,\dots,r-1$, if $l_{a+1}=0$ and $C_{\fp^+,\fp^+_2}\bigl(\frac{d}{2}a,\tilde{\bk}\bigr)\ne 0$,
then $\cF_{\lambda,\tilde{\bk}}^\downarrow$ is holomorphic at $\lambda=\frac{d}{2}a$, and its restriction gives the symmetry breaking operator
\[ \cF_{\frac{d}{2}a,\tilde{\bk}}^\downarrow\colon \
\cH_{\frac{d}{2}a}(D)|_{\widetilde{G}_1}\longrightarrow\cH_{\varepsilon_1\frac{d}{2}a}\bigl(D_1,V_{\tilde{\bk}}\bigr). \]
\end{enumerate}
\end{Proposition}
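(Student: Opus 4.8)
The plan is to reduce everything to the Faraut--Kor\'anyi expansion of $\langle\cdot,\cdot\rangle_\lambda$ on the $\widetilde{K}$-types of $\cP(\fp^+)$ (Theorem~\ref{thm_FK} and Corollary~\ref{cor_FK}) and, for the last assertion, to a complete-reducibility argument at the Wallach point $\lambda=\frac{d}{2}a$. For part~(1), regard $\cP_{\tilde{\bk}}\bigl(\fp^+_2\bigr)$ as a $K_1$-stable subspace of $\cP(\fp^+)$ via the projection $\fp^+\to\fp^+_2$, and expand the given nonzero $f=f(x_2)=\sum_\bm f_\bm$ into $K^\BC$-types $f_\bm\in\cP_\bm(\fp^+)$. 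By Corollary~\ref{cor_FK},
\[
(\lambda)_{\bl,d}\bigl\langle f(x_2),{\rm e}^{(x|\overline z)_{\fp^+}}\bigr\rangle_{\lambda,x}=\sum_\bm\frac{(\lambda)_{\bl,d}}{(\lambda)_{\bm,d}}\,f_\bm(z),
\]
so holomorphy for all $\lambda\in\BC$ forces $m_j\le l_j$ ($j=1,\dots,r$) whenever $f_\bm\ne 0$, by the criterion recorded in Section~\ref{subsection_HDS} after Corollary~\ref{cor_FK}; hence $f$ lies in $N:=\bigoplus_{\bm\in\BZ_{++}^r,\,m_j\le l_j\,\forall j}\cP_\bm(\fp^+)$. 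Since $N$ is $K^\BC$-stable, a fortiori $K_1$-stable, and $\cP_{\tilde{\bk}}\bigl(\fp^+_2\bigr)$ is $K_1$-irreducible (Theorem~\ref{thm_HKS} and the multiplicity-freeness of $\cP\bigl(\fp^+_2\bigr)$ under $K_1$), the nonzero vector $f\in\cP_{\tilde{\bk}}\bigl(\fp^+_2\bigr)\cap N$ already forces $\cP_{\tilde{\bk}}\bigl(\fp^+_2\bigr)\subset N$, which is the first assertion. When $\lambda\in\frac{d}{2}(j-1)-l_j-\BZ_{\ge 0}$, i.e.\ $l_j\le\frac{d}{2}(j-1)-\lambda$, one has $N\subset M_j^\fg(\lambda)$ by~(\ref{submodule}); as $M_j^\fg(\lambda)$ is a $\bigl(\fg,\widetilde{K}\bigr)$-submodule it is ${\rm d}\tau_\lambda(\cU(\fg_1))$-stable, so ${\rm d}\tau_\lambda(\cU(\fg_1))\cP_{\tilde{\bk}}\bigl(\fp^+_2\bigr)\subset M_j^\fg(\lambda)$.

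For part~(2), fix $a$ with $l_{a+1}=0$ (hence $l_{a+1}=\dots=l_r=0$) and $C_{\fp^+,\fp^+_2}\bigl(\frac{d}{2}a,\tilde{\bk}\bigr)\ne 0$. Because $l_{a+1}=\dots=l_r=0$ one checks that $\bigl(\frac{d}{2}a\bigr)_{\bl,d}\ne 0$, and likewise $\bigl(\frac{d}{2}a\bigr)_{\bm,d}\ne 0$ for every $\bm\in\BZ_{++}^r$ with $m_{a+1}=\dots=m_r=0$. By part~(1) every $f\in\cP_{\tilde{\bk}}\bigl(\fp^+_2\bigr)$ lies in $N$, so $(\lambda)_{\bl,d}\langle f(x_2),{\rm e}^{(x|\overline z)_{\fp^+}}\rangle_{\lambda,x}$ is entire; dividing by $(\lambda)_{\bl,d}$, which is nonzero at $\lambda=\frac{d}{2}a$, shows $\langle f(x_2),{\rm e}^{(x|\overline z)_{\fp^+}}\rangle_{\lambda,x}$ is holomorphic there, and specializing $z_1=0$ in this entire function and using~(\ref{const_Plancherel}) shows $(\lambda)_{\bl,d}C_{\fp^+,\fp^+_2}(\lambda,\tilde{\bk})$ is entire, so $C_{\fp^+,\fp^+_2}(\lambda,\tilde{\bk})$ is holomorphic, and by hypothesis nonzero, at $\lambda=\frac{d}{2}a$. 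Next, $\rK_{\tilde{\bk}}(x_2)$ lies in $\cP_{\tilde{\bk}}\bigl(\fp^+_2\bigr)\otimes\overline{V_{\tilde{\bk}}}$ by $K_1$-invariance and multiplicity-freeness, hence in $N\otimes\overline{V_{\tilde{\bk}}}$ by part~(1); expanding $\rK_{\tilde{\bk}}(x_2)=\sum_\bm(\rK_{\tilde{\bk}})_\bm$ into $K^\BC$-types and applying Theorem~\ref{thm_FK} with $\cP_\bm(\fp^+)$ in the conjugate-linear slot (using orthogonality of the $\cP_\bm(\fp^+)$) gives
\[
\bigl\langle {\rm e}^{(x|z)_{\fp^+}},\rK_{\tilde{\bk}}(x_2)\bigr\rangle_{\lambda,x}=\sum_\bm\frac{1}{(\lambda)_{\bm,d}}\bigl\langle {\rm e}^{(x|z)_{\fp^+}},(\rK_{\tilde{\bk}})_\bm(x)\bigr\rangle_{F,x},
\]
the sum being over $\bm$ with $m_j\le l_j$, hence with $m_{a+1}=\dots=m_r=0$; by the above each $(\lambda)_{\bm,d}^{-1}$ is holomorphic at $\lambda=\frac{d}{2}a$, so the left-hand side is too. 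Dividing by $C_{\fp^+,\fp^+_2}(\lambda,\tilde{\bk})$ in~(\ref{SBO1}), $F_{\lambda,\tilde{\bk}}^\downarrow$ and hence the differential operator $\cF_{\lambda,\tilde{\bk}}^\downarrow$ of~(\ref{SBO2}) is holomorphic at $\lambda=\frac{d}{2}a$.

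It remains to identify this value. Since $0\le a\le r-1$, the point $\frac{d}{2}a$ lies in the Wallach set~(\ref{Wallach_set}), so $\cH_{\frac{d}{2}a}(D)$ is a unitary representation with $\cH_{\frac{d}{2}a}(D)_{\widetilde{K}}=M_{a+1}^\fg\bigl(\frac{d}{2}a\bigr)=\bigoplus_{\bm,\,m_{a+1}=0}\cP_\bm(\fp^+)$, which by part~(1) (with $l_{a+1}=0$) contains $\cP_{\tilde{\bk}}\bigl(\fp^+_2\bigr)$. The operator $\cF_{\frac{d}{2}a,\tilde{\bk}}^\downarrow\colon\cO_{\frac{d}{2}a}(D)\to\cO_{\varepsilon_1\frac{d}{2}a}\bigl(D_1,V_{\tilde{\bk}}\bigr)$ is $\widetilde{G}_1$-intertwining, being the holomorphic value of the meromorphic family of symmetry breaking operators of Proposition~\ref{prop_Plancherel}(2) (cf.~\cite[Theorem~3.10\,(1)]{N}); on $\cP_{\tilde{\bk}}\bigl(\fp^+_2\bigr)$ the computation in the proof of Proposition~\ref{prop_Plancherel}(2) gives $\bigl(\cF_{\lambda,\tilde{\bk}}^\downarrow f\bigr)(y_1)=\langle f(x_2),\rK_{\tilde{\bk}}(x_2)\rangle_{F,x}$ for $\lambda>p-1$, and since both sides are holomorphic in $\lambda$ near $\frac{d}{2}a$ the identity persists there, the pairing ranging over all of $V_{\tilde{\bk}}$ (constant functions) as $f$ runs over $\cP_{\tilde{\bk}}\bigl(\fp^+_2\bigr)$, by nondegeneracy of $\langle\cdot,\cdot\rangle_{F,\fp^+}$ and the normalization of $\rK_{\tilde{\bk}}$. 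Thus $\cF_{\frac{d}{2}a,\tilde{\bk}}^\downarrow$ is nonzero on $\cH_{\frac{d}{2}a}(D)$, and the image of $\cF_{\frac{d}{2}a,\tilde{\bk}}^\downarrow\big|_{\cH_{\frac{d}{2}a}(D)_{\widetilde{K}}}$ is a nonzero $\bigl(\fg_1,\widetilde{K}_1\bigr)$-submodule of $\cO_{\varepsilon_1\frac{d}{2}a}\bigl(D_1,V_{\tilde{\bk}}\bigr)_{\widetilde{K}_1}$ with minimal $\widetilde{K}_1$-type the full $V_{\tilde{\bk}}$; being a quotient of the unitary module $\cH_{\frac{d}{2}a}(D)|_{\widetilde{G}_1}$ it is unitarizable, hence equals $\cH_{\varepsilon_1\frac{d}{2}a}\bigl(D_1,V_{\tilde{\bk}}\bigr)_{\widetilde{K}_1}$ by the uniqueness of the unitary subrepresentation recalled in Section~\ref{subsection_HDS} (which thereby exists), and $\cF_{\frac{d}{2}a,\tilde{\bk}}^\downarrow$ restricted to $\cH_{\frac{d}{2}a}(D)$ extends to a bounded $\widetilde{G}_1$-intertwining operator onto $\cH_{\varepsilon_1\frac{d}{2}a}\bigl(D_1,V_{\tilde{\bk}}\bigr)$, i.e.\ a symmetry breaking operator. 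The crux is exactly this last part: one must see that the two hypotheses together cancel every pole of $\lambda\mapsto\cF_{\lambda,\tilde{\bk}}^\downarrow$ at $\lambda=\frac{d}{2}a$ — the condition $l_{a+1}=0$ removing the poles coming from the denominators $(\lambda)_{\bm,d}$ and the condition $C_{\fp^+,\fp^+_2}\bigl(\frac{d}{2}a,\tilde{\bk}\bigr)\ne 0$ keeping the normalizing factor nonzero — and then correctly identify the image of the restriction to the Wallach-point module with the full unitary module rather than a proper submodule, for which complete reducibility of unitary $\bigl(\fg_1,\widetilde{K}_1\bigr)$-modules is essential.
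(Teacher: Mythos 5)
Your proof is correct and follows essentially the same route as the paper's (much terser) argument: part (1) is the holomorphy criterion after Corollary~\ref{cor_FK} combined with the $K_1$-irreducibility of $\cP_{\tilde{\bk}}\bigl(\fp^+_2\bigr)$, and part (2) deduces holomorphy of $\cF_{\lambda,\tilde{\bk}}^\downarrow$ at $\lambda=\tfrac{d}{2}a$ from the vanishing of the relevant Pochhammer factors together with $C_{\fp^+,\fp^+_2}\bigl(\tfrac{d}{2}a,\tilde{\bk}\bigr)\ne 0$, then identifies the image of $\cH_{\frac{d}{2}a}(D)$ with the unique unitary submodule $\cH_{\varepsilon_1\frac{d}{2}a}\bigl(D_1,V_{\tilde{\bk}}\bigr)$ of $\cO_{\varepsilon_1\frac{d}{2}a}\bigl(D_1,V_{\tilde{\bk}}\bigr)$ via nonvanishing on $\cP_{\tilde{\bk}}\bigl(\fp^+_2\bigr)$. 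The additional details you supply (the expansion of $\rK_{\tilde{\bk}}$ into $\widetilde{K}$-types, the complete-reducibility step, the persistence of the identity $\cF_{\lambda,\tilde{\bk}}^\downarrow f=\langle f,\rK_{\tilde{\bk}}\rangle_{F}$ by analytic continuation) are exactly what the paper leaves implicit.
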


\begin{proof}
(1) Follows from the last paragraph of Section~\ref{subsection_HDS}.

(2) The holomorphy of $\cF_{\lambda,\tilde{\bk}}^\downarrow$ at $\lambda=\frac{d}{2}a$ is clear from the assumption.
For the latter claim, $\cP_{\tilde{\bk}}\bigl(\fp^+_2\bigr)\subset M_{a+1}^\fg\bigl(\frac{d}{2}a\bigr)=\cH_{\frac{d}{2}a}(D)_{\widetilde{K}}$ holds by (1),
and the image of $\cP_{\tilde{\bk}}\bigl(\fp^+_2\bigr)$ by $\cF_{\frac{d}{2}a,\tilde{\bk}}^\downarrow$ is non-zero by the normalization assumption of $\cF_{\lambda,\tilde{\bk}}^\downarrow$.
Therefore, the image of $\cH_{\frac{d}{2}a}(D)$ by $\cF_{\frac{d}{2}a,\tilde{\bk}}^\downarrow$ becomes a~non-zero unitary submodule of $\cO_{\varepsilon_1\frac{d}{2}a}\bigl(D_1,V_{\tilde{\bk}}\bigr)$,
which is unique and denoted by $\cH_{\varepsilon_1\frac{d}{2}a}\bigl(D_1,V_{\tilde{\bk}}\bigr)$.
\end{proof}

Since $\cH_\lambda(D)|_{\widetilde{G}_1}$ decomposes multiplicity-freely~\cite[Theorem A]{Kmf1}, symmetry breaking operators on $\cH_\lambda(D)$ are unique up to constant multiple
for all $\lambda$ in~(\ref{Wallach_set}). On the other hand, we do not know a~priori whether symmetry breaking operators on $\cO_\lambda(D)$ are unique or not for $\lambda\le \frac{d}{2}(r-1)$.
For non-unique case see~\cite[Section 9]{KP2} or Section~\ref{section_tensor} of this paper.
It is known that symmetry breaking operators on $\cO_\lambda(D)$ are always given by differential operators (localness theorem,~\cite[Theorem 5.3]{KP1}),
and their symbols are characterized as polynomial solutions of certain differential equations (F-method,~\cite[Theorem 3.1]{KP2}).
That is, for $\lambda\in\BC$ let $\cB_\lambda^{\fp^\pm}$ be the vector-valued differential operator
\begin{gather*}
\cB_\lambda^{\fp^\pm}\colon \ \cP\bigl(\fp^\pm\bigr)\longrightarrow \cP\bigl(\fp^\pm\bigr)\otimes\fp^\mp, \\
\cB_\lambda^{\fp^\pm}f(z):=\sum_{\alpha,\beta}\frac{1}{2}Q\bigl(e^\mp_\alpha,e^\mp_\beta\bigr)z\frac{\partial^2f}{\partial z_\alpha^\pm\partial z_\beta^\pm}(z)
+\lambda\sum_{\alpha}e_\alpha^\mp \frac{\partial f}{\partial z_\alpha^\pm}(z),
\end{gather*}
where $\big\{e_\alpha^\mp\big\}\subset\fp^\mp$ is a~basis of $\fp^\mp$, and $\big\{z_\alpha^\pm\big\}$ is the coordinate of $\fp^\pm$ dual to $\big\{e_\alpha^\mp\big\}$
(\textit{Bessel operator}~\cite{D} and~\cite[Section XV.2]{FK}), let
\[
\bigl(\cB_\lambda^{\fp^\pm}\bigr)_1\colon \ \cP\bigl(\fp^\pm\bigr)\longrightarrow\cP\bigl(\fp^\pm\bigr)\otimes\fp^\mp_1
\]
be the orthogonal projection of $\cB_\lambda^{\fp^\pm}$ onto $\fp^\mp_1$, and let
\[
\operatorname{Sol}_{\cP\bigl(\fp^\pm\bigr)}\bigl(\bigl(\cB_\lambda^{\fp^\pm}\bigr)_1\bigr):=\big\{ f(z)\in\cP\bigl(\fp^\pm\bigr)\ \big|\ \bigl(\cB_\lambda^{\fp^\pm}\bigr)_1 f=0 \big\}.
\]
Then we have
\begin{align}
\Hom_{\widetilde{G}_1}(\cO_\lambda(D),\cO_{\varepsilon_1\lambda}(D_1,V))
&\simeq \Hom_{\fk_1^\BC\oplus\fp_1^-}\bigl(\chi_1^{\varepsilon_1\lambda}\otimes V^\vee,\operatorname{ind}_{\fk^\BC\oplus\fp^-}^{\fg^\BC}\bigl(\chi^{\lambda}\bigr)\bigr) \nonumber\\
&\simeq\bigl(\operatorname{Sol}_{\cP(\fp^-)}\bigl(\bigl(\cB_\lambda^{\fp^-}\bigr)_1\bigr)\otimes V\bigr)^{K_1}. \label{Fmethod}
\end{align}
By this F-method, we can prove the following, which gives an analogue of~\cite[Corollary 12.8\,(3)]{KS1}.

\begin{Proposition}\label{prop_det_factorize}
Suppose $\fp^+=\fn^{+\BC}$, $\fp^+_2=\fn^{+\BC}_2$ are of tube type.
\begin{enumerate}\itemsep=0pt
\item[$(1)$]
For $a\in\BZ_{>0}$, the following differential operator intertwines the $\widetilde{G}$-action
\[ \det_{\fn^-}\left(\frac{\partial}{\partial x}\right)^a\colon \ \cO_{\frac{n}{r}-a}(D)\longrightarrow\cO_{\frac{n}{r}+a}(D). \]
{\rm (See \cite[Theorem 6.4]{A},~\cite[Propositions 1.2 and 2.2]{J0},~\cite[Lemma 7.1, formulas (8.6)--(8.8)]{Sh}.)}
\item[$(2)$] We fix $\tilde{\bk}\in\BZ_{++}^{r_2}$ $\bigl($when $\fp^+_2$ is simple$\bigr)$ or $\tilde{\bk}\in\BZ_{++}^{r'}\times\BZ_{++}^{r''}$ \big(when $\fp^+_2$ is not simple\big),
and let $\bl\in\BZ_{++}^r$, $a\in\BZ_{>0}$.
Suppose that $(\lambda)_{\bl,d}\bigl\langle \det_{\fn^+_2}(x_2)^{\varepsilon_2a}f(x_2),{\rm e}^{(x|\overline{z})_{\fp^+}}\bigr\rangle_{\lambda,x}$ is holomorphic at $\lambda=\frac{n}{r}-a$, and
\[
\det_{\fn^+}(z)^{-a}(\lambda)_{\bl,d}\bigl\langle \det_{\fn^+_2}(x_2)^{\varepsilon_2a}f(x_2),{\rm e}^{(x|\overline{z})_{\fp^+}}\bigr\rangle_{\lambda,x}\big|_{\lambda=\frac{n}{r}-a}\in\cP(\fp^+)
\]
holds for some non-zero $f(x_2)\in\cP_{\tilde{\bk}}\bigl(\fp^+_2\bigr)$. Then there exists $C\in\BC$ such that
\[
(\lambda)_{\bl,d}\bigl\langle \det_{\fn^+_2}(x_2)^{\varepsilon_2a}f(x_2),{\rm e}^{(x|\overline{z})_{\fp^+}}\bigr\rangle_{\lambda,x}\big|_{\lambda=\frac{n}{r}-a}
=C\det_{\fn^+}(z)^a\bigl\langle f(x_2),{\rm e}^{(x|\overline{z})_{\fp^+}}\bigr\rangle_{\frac{n}{r}+a,x}
\]
holds for all $f(x_2)\in\cP_{\tilde{\bk}}\bigl(\fp^+_2\bigr)$.
\end{enumerate}
\end{Proposition}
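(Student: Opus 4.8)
Below is how I would prove part~(2); part~(1) is quoted from the literature.

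The plan is to reduce part~(2) to an injectivity property of the differential operator $\det_{\fn^-}\bigl(\frac{\partial}{\partial z}\bigr)^a$ occurring in part~(1). Write $\mu_\pm:=\frac nr\pm a$ (so $\mu_-+a=\frac nr$), and let $R_\lambda$ be the endomorphism of $\cP(\fp^+)$ acting on $\cP_\bm(\fp^+)$ by $(\lambda)_{\bm,d}^{-1}$; by Corollary~\ref{cor_FK} (extended by linearity) one has $\langle h(x),{\rm e}^{(x|\overline{z})_{\fp^+}}\rangle_{\lambda,x}=(R_\lambda h)(z)$ for every $h\in\cP(\fp^+)$, so the two sides of the asserted identity are $(\lambda)_{\bl,d}\bigl(R_\lambda(\det_{\fn^+_2}(x_2)^{\varepsilon_2 a}f)\bigr)(z)\big|_{\lambda=\mu_-}$ and $C\det_{\fn^+}(z)^a(R_{\mu_+}f)(z)$ respectively. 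By Lemma~\ref{lem_diff} (as in~\eqref{formula_diff}) the composite $g\mapsto\det_{\fn^-}\bigl(\frac{\partial}{\partial z}\bigr)^a\bigl(\det_{\fn^+}(z)^a g\bigr)$ acts on $\cP_\bm(\fp^+)$ by the nonzero scalar $\prod_{j=1}^r\bigl(m_j+\frac d2(r-j)+1\bigr)_a$ (up to a fixed normalization constant, absorbed into $C$), hence is invertible on $\cP(\fp^+)$; in particular $\det_{\fn^-}\bigl(\frac{\partial}{\partial z}\bigr)^a$ is injective on $\det_{\fn^+}(z)^a\cP(\fp^+)$. Since the right-hand side lies in $\det_{\fn^+}(z)^a\cP(\fp^+)$ trivially, and the left-hand side does by hypothesis, it suffices to prove that $\det_{\fn^-}\bigl(\frac{\partial}{\partial z}\bigr)^a$ carries the two sides to the same polynomial.

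Applying $\det_{\fn^-}\bigl(\frac{\partial}{\partial z}\bigr)^a$ to the right-hand side and using the elementary ratio identity $\prod_j\bigl(m_j+\frac d2(r-j)+1\bigr)_a\big/(\mu_+)_{\bm,d}=(\tfrac nr)_{(a,\dots,a),d}\big/(\tfrac nr)_{\bm,d}$, one gets
\[
\det_{\fn^-}\Bigl(\frac{\partial}{\partial z}\Bigr)^a\bigl[\det_{\fn^+}(z)^a(R_{\mu_+}f)(z)\bigr]=K_a\,(R_{\frac nr}f)(z)=K_a\bigl\langle f(x_2),{\rm e}^{(x|\overline{z})_{\fp^+}}\bigr\rangle_{\frac nr,x},\qquad K_a:=\bigl(\tfrac nr\bigr)_{(a,\dots,a),d}\ne 0 .
\]

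For the left-hand side I would use the commutation relation $\det_{\fn^-}\bigl(\frac{\partial}{\partial z}\bigr)^a\circ R_\lambda=P_a(\lambda)^{-1}\,R_{\lambda+a}\circ\det_{\fn^-}\bigl(\frac{\partial}{\partial z}\bigr)^a$ on $\cP(\fp^+)$, where $P_a(\lambda):=(\lambda)_{(a,\dots,a),d}$; it is the infinitesimal shadow of the intertwining property in part~(1), and follows from the factorization $(\lambda)_{\bm,d}=P_a(\lambda)\,(\lambda+a)_{\bm-(a,\dots,a),d}$ for $m_r\ge a$ (both sides vanishing on $\cP_\bm(\fp^+)$ with $m_r<a$). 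Applying it with $g=\det_{\fn^+_2}(x_2)^{\varepsilon_2 a}f(x_2)$, and using that $\det_{\fn^-}\bigl(\frac{\partial}{\partial z}\bigr)^a$, acting on a polynomial in $x_2$ alone, equals a nonzero constant $c^a$ times $\det_{\fn^-_2}\bigl(\frac{\partial}{\partial x_2}\bigr)^{\varepsilon_2 a}$ (because $\det_{\fn^-}|_{\fp^-_2}$ is a nonzero multiple of $(\det_{\fn^-_2})^{\varepsilon_2}$, using $r=\varepsilon_2 r_2$ — in the degenerate case $\det_{\fn^+}|_{\fp^+_2}\equiv 0$ both sides are annihilated by $\det_{\fn^-}\bigl(\frac{\partial}{\partial z}\bigr)^a$, hence vanish, and $C=0$ works), and that $\det_{\fn^-_2}\bigl(\frac{\partial}{\partial x_2}\bigr)^{\varepsilon_2 a}\det_{\fn^+_2}(x_2)^{\varepsilon_2 a}$ acts on $\cP_{\tilde{\bk}}(\fp^+_2)$ by a nonzero scalar $e_{\tilde{\bk}}$ (Lemma~\ref{lem_diff}, applied inside $\fp^+_2$), one obtains the identity of rational functions of $\lambda$
\[
\det_{\fn^-}\Bigl(\frac{\partial}{\partial z}\Bigr)^a\bigl[(\lambda)_{\bl,d}\bigl(R_\lambda(\det_{\fn^+_2}(x_2)^{\varepsilon_2 a}f)\bigr)(z)\bigr]=c^a e_{\tilde{\bk}}\,\frac{(\lambda)_{\bl,d}}{P_a(\lambda)}\,(R_{\lambda+a}f)(z) .
\]
Since $\mu_-+a=\frac nr$, at $\lambda=\mu_-$ the factor $R_{\lambda+a}f$ specializes to $R_{\frac nr}f$, which is holomorphic and nonzero near $\mu_-$ ($R_{\frac nr}$ being invertible and $f\ne 0$); hence the hypothesis that $(\lambda)_{\bl,d}R_\lambda(\det_{\fn^+_2}(x_2)^{\varepsilon_2 a}f)$ is holomorphic at $\mu_-$ forces $(\lambda)_{\bl,d}/P_a(\lambda)$ to be holomorphic there, with some finite value $\kappa$, and therefore $\det_{\fn^-}\bigl(\frac{\partial}{\partial z}\bigr)^a$ sends the left-hand side to $c^a e_{\tilde{\bk}}\kappa\,\langle f(x_2),{\rm e}^{(x|\overline{z})_{\fp^+}}\rangle_{\frac nr,x}$.

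Comparing the two displays, $\det_{\fn^-}\bigl(\frac{\partial}{\partial z}\bigr)^a$ carries the left-hand side to $C$ times the right-hand side with $C:=c^a e_{\tilde{\bk}}\kappa/K_a$, a constant independent of $f$; the identity then follows from the injectivity established in the first paragraph. The one delicate point I foresee is the evaluation at $\lambda=\mu_-$: there $P_a(\mu_-)=0$ (its $j=r$ factor is $(1-a)_a$), so one must combine the factorization of $(\lambda)_{\bm,d}$ with the stated holomorphy hypothesis — which is present precisely for this purpose — in order to see that the indeterminate ratio $(\lambda)_{\bl,d}/P_a(\lambda)$ has a genuine finite limit; the remainder is routine Pochhammer-symbol bookkeeping together with the Peirce decomposition $\fp^-=\fp^-_1\oplus\fp^-_2$.
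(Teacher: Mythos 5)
Your argument is correct, but it takes a genuinely different route from the paper. The paper proves part~(2) by the F-method: it quotes from~\cite{N2} that $\bigl(\cB_\lambda^{\fp^+}\bigr)_1$ annihilates $\bigl\langle g(x_2),{\rm e}^{(x|\overline{z})_{\fp^+}}\bigr\rangle_{\lambda,x}$ for $g\in\cP\bigl(\fp^+_2\bigr)$, uses the commutation~(\ref{Bessel_det}) of the Bessel operator with $\det_{\fn^+}(z)^a$ to see that $\det_{\fn^+}(z)^{-a}$ times the specialized inner product lies in $\operatorname{Sol}_{\cP(\fp^+)}\bigl(\bigl(\cB_{\frac{n}{r}+a}^{\fp^+}\bigr)_1\bigr)$, and concludes from the one-dimensionality of the space~(\ref{Fmethod}) at $\lambda=\frac{n}{r}+a>\frac{d}{2}(r-1)$. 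You instead stay inside the Faraut--Kor\'anyi $K$-type expansion: Corollary~\ref{cor_FK} turns everything into the diagonal operator $R_\lambda$, the factorization $(\lambda)_{\bm,d}=(\lambda)_{\underline{a}_r,d}(\lambda+a)_{\bm-\underline{a}_r,d}$ gives your commutation relation, Lemma~\ref{lem_diff} and Schur's lemma give the scalars, and injectivity of $\det_{\fn^-}\bigl(\frac{\partial}{\partial z}\bigr)^a$ on $\det_{\fn^+}(z)^a\cP(\fp^+)$ finishes the comparison. This is more elementary (no Bessel operator, no multiplicity-one input), at the price of the structural fact $\det_{\fn^+}\big|_{\fp^+_2}=\det_{\fn^+_2}^{\varepsilon_2}$, which indeed holds with constant $1$ (a primitive tripotent of $\fp^+_2$ has rank $\varepsilon_2$ in $\fp^+$, and both determinants equal $1$ at the common unit $e$), so your ``degenerate case'' never occurs; the paper's route buys brevity and the symmetry-breaking interpretation exploited later in Theorem~\ref{thm_factorize}.

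One point to make explicit: the hypotheses (holomorphy and divisibility by $\det_{\fn^+}(z)^a$) are assumed for \emph{some} non-zero $f$, while your injectivity step uses them for the $f$ under consideration. Since $f\mapsto(\lambda)_{\bl,d}\bigl\langle \det_{\fn^+_2}(x_2)^{\varepsilon_2a}f(x_2),{\rm e}^{(x|\overline{z})_{\fp^+}}\bigr\rangle_{\lambda,x}$ and $f\mapsto\det_{\fn^+}(z)^a\bigl\langle f(x_2),{\rm e}^{(x|\overline{z})_{\fp^+}}\bigr\rangle_{\frac{n}{r}+a,x}$ are $K_1$-equivariant (up to a common character twist) on the irreducible module $\cP_{\tilde{\bk}}\bigl(\fp^+_2\bigr)$, both the hypotheses and the resulting identity propagate from one non-zero vector to all of $\cP_{\tilde{\bk}}\bigl(\fp^+_2\bigr)$; this Schur-type remark is the same step the paper performs implicitly when it places the whole map in $\Hom_{K_1}$.
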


We note that $\det_{\fn^+_2}(x_2)=\det_{\fn^+_{11}}(x_{11})\det_{\fn^+_{22}}(x_{22})$ holds when $\fp^+_2$ is not simple.
\begin{proof}
(1) By the proof of~\cite[Proposition XV.2.4]{FK}, as a~differential operator we have
\begin{equation}\label{Bessel_det}
\cB_{\frac{n}{r}-a}^{\fp^\pm}\det_{\fn^\pm}(z)^a=\det_{\fn^\pm}(z)^a\cB_{\frac{n}{r}+a}^{\fp^\pm}, \qquad a\in\BC,
\end{equation}
and hence for $a\in\BZ_{>0}$, as a~polynomial we have $\det_{\fn^\pm}(z)^a\in\operatorname{Sol}_{\cP(\fp^\pm)}\bigl(\cB_{\frac{n}{r}-a}^{\fp^\pm}\bigr)$.
Now since $\BC\det_{\fn^\pm}(z)^a\simeq \chi^{\mp 2a}$ holds as a~$K$-module, by applying the F-method for $\sigma=I_{\fg^\BC}$, $\fp^+_1=\fp^+$ case, we get the intertwining property.

(2) As in~\cite[formula (2.21)]{N2}, we have $\bigl(\cB_\lambda^{\fp^+}\bigr)_1\bigl\langle g(x_2),{\rm e}^{(x|\overline{z})_{\fp^+}}\bigr\rangle_{\lambda,x}=0$ for all $g(x_2)\in\cP\bigl(\fp^+_2\bigr)$,
and by~(\ref{Bessel_det}), we have
\[
\bigl(\cB_{\frac{n}{r}+a}^{\fp^+}\bigr)_1\det_{\fn^+}(z)^{-a}(\lambda)_{\bl,d}
\bigl\langle \det_{\fn^+_2}(x_2)^{\varepsilon_2a}f(x_2),{\rm e}^{(x|\overline{z})_{\fp^+}}\bigr\rangle_{\lambda,x}\big|_{\lambda=\frac{n}{r}-a}=0.
\]
Hence under the polynomiality assumption, we have
\begin{gather*}
\bigl(f(x_2)\mapsto \det_{\fn^+}(z)^{-a}(\lambda)_{\bl,d}\bigl\langle \det_{\fn^+_2}(x_2)^{\varepsilon_2a}f(x_2),{\rm e}^{(x|\overline{z})_{\fp^+}}\bigr\rangle_{\lambda,x}
\big|_{\lambda=\frac{n}{r}-a}\bigr) \\
\hphantom{\bigl(f(x_2)\mapsto}{}\in\Hom_{K_1}\bigl(\cP_{\tilde{\bk}}\bigl(\fp^+_2\bigr),\operatorname{Sol}_{\cP(\fp^+)}\bigl(\bigl(\cB_{\frac{n}{r}+a}^{\fp^+}\bigr)_1\bigr)\bigr)
=\BC\bigl( f(x_2)\mapsto \bigl\langle f(x_2),{\rm e}^{(x|\overline{z})_{\fp^+}}\bigr\rangle_{\frac{n}{r}+a,x}\bigr),
\end{gather*}
where the last equality holds since the space~(\ref{Fmethod}) is 1-dimensional for $\lambda>\frac{n}{r}-1=\frac{d}{2}(r-1)$, and this completes the proof.
\end{proof}

\section{Key lemmas}\label{section_keylemmas}

In this section, we introduce some equations on inner products and results on finite-dimensional representations of ${\rm U}(s)$ used later.

\subsection{Reduction to smaller algebras}\label{subsection_reduction}

We consider a~simple Jordan triple system $\fp^+$. We fix a~tripotent $e\in\fp^+$,
and let $\fp^{+\prime}:=\fp^+(e)_2\subset\fp^+$ denote the eigenspace of $D(e,\overline{e})$ with the eigenvalue 2, as in~(\ref{Peirce}).
This $\fp^{+\prime}$ becomes of tube type.
For $x\in\fp^+$, let $x'\in\fp^{+\prime}$ be the orthogonal projection. Also, let $D\subset\fp^+$, $D'\subset\fp^{+\prime}$ be the corresponding bounded symmetric domains,
and let $\langle\cdot,\cdot\rangle_{\lambda,\fp^+}$, $\langle\cdot,\cdot\rangle_{\lambda,\fp^{+\prime}}$ be the weighted Bergman inner products on $D$, $D'$ respectively,
as in~(\ref{Bergman_inner_prod}).
\begin{Proposition}\label{prop_reduction}
We fix a~tripotent $e\in\fp^+$, and let $\fp^{+\prime}:=\fp^+(e)_2$. Then for $\Re\lambda>p-1$, for $f(x')\in\cP(\fp^{+\prime})\subset\cP(\fp^+)$, we have
\[ \bigl\langle f(x'),{\rm e}^{(x|\overline{z})_{\fp^+}}\bigr\rangle_{\lambda,x,\fp^+}=\bigl\langle f(x'),{\rm e}^{(x'|\overline{z'})_{\fp^{+\prime}}}\bigr\rangle_{\lambda,x',\fp^{+\prime}}. \]
\end{Proposition}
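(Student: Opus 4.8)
The plan is to reduce the weighted Bergman integral over $D\subset\fp^+$ to an iterated integral, integrating first over the Peirce-$1$ and Peirce-$0$ directions relative to the tripotent $e$, and to show that for integrands of the special form $f(x')\overline{\mathrm{e}^{(x|\overline{z})_{\fp^+}}}$ the inner integral collapses to a constant multiple of the corresponding integrand on $\fp^{+\prime}$, with the proportionality constant exactly compensating for the change of normalizing constant $C_\lambda$ versus $C_\lambda'$. First I would fix the Peirce decomposition $\fp^+=\fp^+(e)_2\oplus\fp^+(e)_1\oplus\fp^+(e)_0$ and write $x=x'+x_1+x_0$ accordingly, so that $f(x')$ depends only on the first component. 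The key structural input is the factorization of $h(x)=h(x,\overline{x})$ under this decomposition: on the slice where $x'$ is fixed, $h(x)$ factors (via the formula relating $h$ to $\det_{\fn^+}$ on $\fp^+(e)_2$ and the Peirce relations) in a way that separates the $x'$-dependence from the Gaussian-type dependence on $(x_1,x_0)$; I would invoke the standard Faraut--Kor\'anyi integration formula (\cite{FK}, the unbounded/tube-domain or Peirce-slice version) to carry this out.

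Next, since both sides are holomorphic in $z$, it suffices to verify the identity for $z=\overline{w}$ with $w$ in a suitable real form, or better, to use that $\mathrm{e}^{(x|\overline{z})_{\fp^+}}$ is the reproducing kernel for the Fischer inner product: by Corollary~\ref{cor_FK} (applied on $\fp^+$ and on $\fp^{+\prime}$ separately), both sides are, up to the Pochhammer factors $(\lambda)_{\bm,d}$ and $(\lambda)_{\bm,d}'$, equal to $f(z')$ for $f\in\cP_\bm(\fp^{+\prime})$. So the whole statement reduces to the purely combinatorial claim that the $K^\BC$-decomposition type $\bm\in\BZ_{++}^{r'}$ of $f\in\cP(\fp^{+\prime})$, viewed inside $\cP(\fp^+)$, carries the same Pochhammer symbol: that is, $(\lambda)_{\bm,d_{\fp^+}}$ restricted to polynomials supported on $\fp^{+\prime}$ agrees with $(\lambda)_{\bm,d_{\fp^{+\prime}}}$ computed intrinsically on the tube-type system $\fp^{+\prime}$. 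Since $\fp^{+\prime}=\fp^+(e)_2$ has the same rank-$r'$ Jordan frame (a sub-frame of that of $\fp^+$) and the same off-diagonal multiplicity $d$, and since $\cP_\bm(\fp^{+\prime})\subset\cP_\bm(\fp^+)$ under the embedding (the $\Delta^{\fn^+}_\bm$ for the sub-frame restricts correctly), the Pochhammer symbols coincide term by term; this is essentially \cite[Proposition~XI.4.2]{FK} or the branching $\cP(\fp^+)|_{\fp^{+\prime}}$.

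Concretely I would argue: (i) expand $f=\sum f_\bm$ into $K^\BC$-isotypic pieces $f_\bm\in\cP_\bm(\fp^{+\prime})$; (ii) note $f_\bm\in\cP_\bm(\fp^+)$ as well because the generating function $\Delta^{\fn^+}_\bm$ built from the sub-Jordan-frame $\{e_1,\dots,e_{r'}\}$ of $e$ already lies in $\cP_\bm(\fp^+)$ and its $K^\BC$-span is $\cP_\bm(\fp^+)$ while its $\Str(\fp^{+\prime})$-span is $\cP_\bm(\fp^{+\prime})$; (iii) apply Corollary~\ref{cor_FK} on both domains to get $\langle f_\bm(x'),\mathrm{e}^{(x|\overline{z})_{\fp^+}}\rangle_{\lambda,x,\fp^+}=(\lambda)_{\bm,d}^{-1}f_\bm(z)=(\lambda)_{\bm,d}^{-1}f_\bm(z')$ and likewise $\langle f_\bm(x'),\mathrm{e}^{(x'|\overline{z'})_{\fp^{+\prime}}}\rangle_{\lambda,x',\fp^{+\prime}}=(\lambda)_{\bm,d}^{-1}f_\bm(z')$ with the \emph{same} Pochhammer factor by step (ii); (iv) sum over $\bm$. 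The main obstacle I anticipate is step (ii)--(iii): one must be careful that the structure constant $d$ governing the Pochhammer symbol is genuinely the same for $\fp^+$ and for the tube subsystem $\fp^+(e)_2$ (this is true because $e$'s primitive constituents form part of a Jordan frame of $\fp^+$, so the off-diagonal Peirce spaces $\fp^+_{ij}$, $1\le i<j\le r'$, are common), and that the orthogonal projection $x\mapsto x'$ in the Fischer/Bergman pairing on $\fp^+$ restricts to the Fischer/Bergman pairing on $\fp^{+\prime}$ — equivalently that $(\cdot|\cdot)_{\fp^+}$ restricted to $\fp^{+\prime}\times\fp^{-\prime}$ equals $(\cdot|\cdot)_{\fp^{+\prime}}$, which holds by the normalization $(e_i|\overline{e_i})=1$ on primitive tripotents. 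Once these compatibilities are in place the identity is immediate; alternatively, one can bypass the combinatorics entirely by doing the Peirce-slice integration directly and checking the constants $C_\lambda$ and $C_\lambda'$ match up, but the Fischer-inner-product route via Corollary~\ref{cor_FK} is cleaner.
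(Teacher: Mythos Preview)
Your proposal is correct, and your main argument (steps (i)--(iv) via Corollary~\ref{cor_FK}) is exactly the route the paper itself describes, in the sentence just before its proof, as the easy way: ``This lemma is proved easily by using Corollary~\ref{cor_FK} and $\cP_\bm(\fp^{+\prime})\subset\cP_\bm(\fp^+)$.'' The paper then deliberately gives a different argument that does \emph{not} invoke Corollary~\ref{cor_FK}: it first uses $K$-invariance (via the one-parameter subgroup $l=\exp(t(2I_{\fp^+}-D(e,\overline{e})))\in K$, $t\in\sqrt{-1}\BR$) to kill the $z_1,z_0$ dependence of the left-hand side, and then shows that the natural inclusion $\cP(\fp^{+\prime})\hookrightarrow\cP(\fp^+)$ intertwines the $(\fg(e)_2,\widetilde{K}(e)_2)$-action on both Hilbert spaces, hence is an isometry up to scalar, with the scalar fixed to $1$ by the normalization $\Vert 1\Vert_{\lambda,\fp^+}=\Vert 1\Vert_{\lambda,\fp^{+\prime}}=1$. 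Your approach is shorter and perfectly legitimate once the Faraut--Kor\'anyi norm formula (Theorem~\ref{thm_FK}) is available; the paper's approach buys independence from that formula, giving a more self-contained and structural argument that could be adapted to situations where the explicit Pochhammer constants are not yet known.
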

Here the subscripts $x$, $x'$ denote the variables of integration. Note that \smash{$\bigl(x'|\overline{z'}\bigr)_{\fp^+} = \bigl(x'|\overline{z'}\bigr)_{\fp^{+\prime}}$} holds for any $x',z'\in\fp^{+\prime}$,
since a~primitive tripotent $\tilde{e}\in\fp^{+\prime}$ is also primitive in $\fp^+$, and both inner products are normalized such that
\smash{$\bigl(\tilde{e}|\overline{\tilde{e}}\bigr)_{\fp^+}=\bigl(\tilde{e}|\overline{\tilde{e}}\bigr)_{\fp^{+\prime}}=1$} holds.
This lemma is proved easily by using Corollary~\ref{cor_FK} and $\cP_\bm\bigl(\fp^{+\prime}\bigr)\subset\cP_\bm(\fp^+)$.
Here we give a~proof that does not rely on Corollary~\ref{cor_FK}.
\begin{proof}
We write $x=x_2+x_1+x_0\in\fp^+=\fp^+(e)_2\oplus\fp^+(e)_1\oplus\fp^+(e)_0$, so that $x'=x_2$. First we prove
\begin{equation}\label{lem_reduction_proof1}
\bigl\langle f(x_2),{\rm e}^{(x|\overline{z})_{\fp^+}}\bigr\rangle_{\lambda,x,\fp^+}=\bigl\langle f(x_2),{\rm e}^{(x_2|\overline{z_2})_{\fp^+}}\bigr\rangle_{\lambda,x,\fp^+}
\end{equation}
for $f(x_2)\in\cP\bigl(\fp^+(e)_2\bigr)$. Since
\[
{\rm e}^{(x|\overline{z})_{\fp^+}}={\rm e}^{(x_2|\overline{z_2})_{\fp^+}}\sum_{k_1,k_0=0}^\infty \frac{1}{k_1!k_0!}(x_1|\overline{z_1})_{\fp^+}^{k_1}(x_0|\overline{z_0})_{\fp^+}^{k_0}
\]
holds, it is enough to prove $\langle f(x_2),g(x_2,x_1,x_0)\rangle_{\lambda,x,\fp^+}=0$ for $g(x_2,x_1,x_0)\in\cO\bigl(\fp^+(e)_2\bigr)\otimes\cP\bigl(\fp^+(e)_1\bigr)\otimes\cP\bigl(\fp^+(e)_0\bigr)$ satisfying
\[
g(x_2,sx_1,tx_0)=s^{k_1}t^{k_0}g(x_2,x_1,x_0),\qquad s,t\in\BC
\]
for some $k_1,k_0\in\BZ_{\ge 0}$ with $(k_1,k_0)\ne (0,0)$. For $t\in\sqrt{-1}\BR$, let
\[
l:=\exp(t(2I_{\fp^+}-D(e,\overline{e})))=I_{\fp^+(e)_2}+e^tI_{\fp^+(e)_1}+{\rm e}^{2t}I_{\fp^+(e)_0}\in K,
\]
and let $\Proj_2\colon\fp^+\to\fp^+(e)_2$ be the orthogonal projection. Then we have
\begin{align*}
\langle f(x_2),g(x_2,x_1,x_0)\rangle_{\lambda,x,\fp^+}&=\bigl\langle f\bigl(\Proj_2\bigl(l^{-1}(x_2,x_1,x_0)\bigr)\bigr),g(x_2,x_1,x_0)\bigr\rangle_{\lambda,x,\fp^+} \\
&=\langle f(\Proj_2(x_2,x_1,x_0)),g(l(x_2,x_1,x_0))\rangle_{\lambda,x,\fp^+}\\
&=\bigl\langle f(x_2),g\bigl(x_2,e^tx_1,{\rm e}^{2t}x_0\bigr)\bigr\rangle_{\lambda,x,\fp^+} \\
&=\overline{{\rm e}^{(k_1+2k_0)t}}\langle f(x_2),g(x_2,x_1,x_0)\rangle_{\lambda,x,\fp^+}
\end{align*}
for all $t\in\sqrt{-1}\BR$, and hence this vanishes if $(k_1,k_0)\ne (0,0)$. Therefore,~(\ref{lem_reduction_proof1}) holds.
Next, to prove
\begin{equation}\label{lem_reduction_proof2}
\bigl\langle f(x_2),{\rm e}^{(x_2|\overline{z_2})_{\fp^+}}\bigr\rangle_{\lambda,x,\fp^+}=\bigl\langle f(x_2),{\rm e}^{(x_2|\overline{z_2})_{\fp^+}}\bigr\rangle_{\lambda,x_2,\fp^+(e)_2},
\end{equation}
we check that the natural inclusion $\cH_\lambda(D')_{\widetilde{K}(e)_2}=\cP\bigl(\fp^+(e)_2\bigr)\hookrightarrow\cH_\lambda(D)_{\widetilde{K}}=\cP(\fp^+)$ intertwines the $\bigl(\fg(e)_2,\widetilde{K}(e)_2\bigr)$-action,
where $\fg(e)_2$, $K(e)_2$ are as in the last paragraph of Section~\ref{subsection_KKT}.
For example, by~(\ref{HDS_diff_action}), the action of $\fp^-$ on $\cH_\lambda(D)_{\widetilde{K}}=\cP(\fp^+)$ is given by
\[
({\rm d}\tau_\lambda(0,0,w)f)(x)=\lambda(x|w)_{\fp^+}f(x)+\frac{{\rm d}}{{\rm d}t}\biggr|_{t=0}f(x+tQ(x)w), \qquad w\in\fp^-.
\]
If $w=w_2\in\fp^-(e)_2$, then we have
\begin{gather*}
(x|w_2)_{\fp^+}=(x_2|w_2)_{\fp^+}, \\
x+tQ(x)w_2=(x_2+tQ(x_2)w_2)+(x_1+tQ(x_2,x_1)w_2)+(x_0+tQ(x_1)w_2) \\
\hphantom{x+tQ(x)w_2=}{}\in\fp^+(e)_2\oplus\fp^+(e)_1\oplus\fp^+(e)_0,
\end{gather*}
and hence if $f(x)=f(x_2)\in\cP\bigl(\fp^+(e)_2\bigr)$, we have
\[
({\rm d}\tau_\lambda(0,0,w_2)f)(x)=\lambda(x_2|w_2)_{\fp^+}f(x_2)+\frac{{\rm d}}{{\rm d}t}\biggr|_{t=0}f(x_2+tQ(x_2)w_2).
\]
Thus, the natural inclusion is $\fp^-(e)_2$-equivariant. The $\widetilde{K}(e)_2$- and $\fp^+(e)_2$-equivariance are also proved similarly.
Hence, this is $\bigl(\fg(e)_2,\widetilde{K}(e)_2\bigr)$-equivariant, and therefore is an isometry up to scalar multiple.
Moreover, by the normalization assumption $\Vert 1\Vert_{\lambda,\fp^+}=\Vert 1\Vert_{\lambda,\fp^+(e)_2}=1$, this is exactly an isometry.
Hence~(\ref{lem_reduction_proof2}) holds, and this proves the proposition.
\end{proof}

Now we consider an involution $\sigma$ on $\fp^+$, and let $\fp^+_1:=(\fp^+)^\sigma$, $\fp^+_2:=(\fp^+)^{-\sigma}$, $K_1:=K^\sigma$ as in Section~\ref{subsection_sym_subalg}.
Then by Proposition~\ref{prop_reduction}, the computation of
\[
\bigl\langle f(x_2),{\rm e}^{(x|\overline{z})_{\fp^+}}\bigr\rangle_{\lambda,x}, \qquad f(x_2)\in\cP_{\tilde{\bk}}\bigl(\fp^+_2\bigr)
\]
is reduced to the cases that both $\fp^+$ and $\fp^+_2$ are of tube type.
We take a~maximal tripotent $e\in\fp^+_2\subset\fp^+$ of $\fp^+_2$, and let $\fp^{+\prime}:=\fp^+(e)_2$, $\fp^{+\prime}_2:=\fp^+_2(e)_2=\fp^+_2\cap\fp^{+\prime}$,
$\fp^{+\prime}_1:=\fp^+_1\cap\fp^{+\prime}$. If
\[
\bigl\langle f\bigl(x_2'\bigr),{\rm e}^{(x'|\overline{z'})_{\fp^{+\prime}}}\bigr\rangle_{\lambda,x',\fp^{+\prime}}\big|_{z'_1=0}
=C_{\fp^{+\prime},\fp^{+\prime}_2}\bigl(\lambda,\tilde{\bk}\bigr)f\bigl(z_2'\bigr), \qquad f\bigl(x_2'\bigr)\in\cP_{\tilde{\bk}}\bigl(\fp^{+\prime}_2\bigr)
\]
holds for some $C_{\fp^{+\prime},\fp^{+\prime}_2}\bigl(\lambda,\tilde{\bk}\bigr)\in\BC$, then by Proposition~\ref{prop_reduction},
\[
\bigl\langle f(x_2),{\rm e}^{(x|\overline{z})_{\fp^+}}\bigr\rangle_{\lambda,x,\fp^+}\big|_{z_1=0}
=C_{\fp^{+\prime},\fp^{+\prime}_2}\bigl(\lambda,\tilde{\bk}\bigr)f(z_2)
\]
holds for all $f(x_2)=f\bigl(x_2'\bigr)\in\cP_{\tilde{\bk}}\bigl(\fp^{+\prime}_2\bigr)\subset\cP_{\tilde{\bk}}\bigl(\fp^+_2\bigr)$, and by the $K_1$-equivariance,
this also holds for all $f(x_2)\in\cP_{\tilde{\bk}}\bigl(\fp^+_2\bigr)$.
Similarly, let $b(\lambda)\in\BC[\lambda]$. If for all $f\bigl(x_2'\bigr)\in\cP_{\tilde{\bk}}\bigl(\fp^{+\prime}_2\bigr)$,
\[
b(\lambda)\bigl\langle f\bigl(x_2'\bigr),{\rm e}^{(x'|\overline{z'})_{\fp^{+\prime}}}\bigr\rangle_{\lambda,x',\fp^{+\prime}}
\]
is holomorphically continued for all $\lambda\in\BC$, then for all $f(x_2)\in\cP_{\tilde{\bk}}\bigl(\fp^+_2\bigr)$,
\[
b(\lambda)\bigl\langle f(x_2),{\rm e}^{(x|\overline{z})_{\fp^+}}\bigr\rangle_{\lambda,x,\fp^+}
\]
is also holomorphically continued for all $\lambda\in\BC$. That is, these computation for $\bigl(\fp^+,\fp^+_2\bigr)$ is reduced to those for $\bigl(\fp^{+\prime},\fp^{+\prime}_2\bigr)$,
and both $\fp^{+\prime}$, $\fp^{+\prime}_2$ are of tube type, with a~common maximal tripotent $e$.

\subsection{Rodrigues-type formulas}

In this subsection let $\fp^+$ be simple and of tube type. Let $r:=\rank\fp^+$, $n:=\dim\fp^+=r+\frac{d}{2}r(r-1)$ as in~(\ref{str_const}).
In this case $p=\frac{2n}{r}$ holds. We fix a~maximal tripotent $e\in\fp^+$, let $\Omega\subset\fn^+\subset\fp^+$ be the corresponding symmetric cone and the Euclidean real form,
and let $\det_{\fn^+}(x)$ be the determinant polynomial. We normalize the differential operator $\frac{\partial}{\partial z}$ with respect to the bilinear form
$(\cdot|\cdot)_{\fn^+}=(\cdot|Q(\overline{e})\cdot)_{\fp^+}$ on $\fp^+=\fn^{+\BC}$. Also, for $k\in\BZ$, we write
\[ \underline{k}_r:=(\underbrace{k,\dots,k}_r)\in \BZ^r, \]
and for $\lambda\in\BC$, $\bm\in\BZ_{++}^r$, let $(\lambda)_{\bm,d}$ be as in~(\ref{Pochhammer}).
First we recall the following theorem from the author's previous article~\cite{N2}.

\begin{Proposition}\label{prop_Rodrigues}
Let $\Re\lambda>\frac{2n}{r}-1$, $z,a\in\Omega\subset\fn^+\subset\fp^+$.
\begin{enumerate}\itemsep=0pt
\item[$(1)$] For $f\in\cP(\fp^+)$, we have
\begin{gather*}
\bigl\langle f(x),{\rm e}^{(x|\overline{z})_{\fp^+}}\bigr\rangle_{\lambda,x} \\
\qquad\qquad =\det_{\fn^+}(z)^{-\lambda+\frac{n}{r}}\frac{\Gamma_r^d(\lambda)}{(2\pi\sqrt{-1})^n}
\int_{a+\sqrt{-1}\fn^+}{\rm e}^{(z|w)_{\fn^+}}f\bigl(w^\itinv\bigr)\det_{\fn^+}(w)^{-\lambda}\,{\rm d}w,
\end{gather*}
where $\Gamma_r^d(\lambda):=(2\pi)^{dr(r-1)/4}\prod_{j=1}^r\Gamma\bigl(\lambda-\frac{d}{2}(j-1)\bigr)$.
{\rm (See~\cite[Corollary 4.3]{N2}.)}
\item[$(2)$] Let $f_1,f_2\in\cP(\fp^+)$. We take $k\in\BZ_{\ge 0}$ such that
\[
f_1^{\sharp(k)}(x):=\det_{\fn^+}(x)^kf_1\bigl(x^\itinv\bigr)
\]
is a~polynomial. Then we have
\begin{gather*}
\bigl\langle f_1(x)f_2(x),{\rm e}^{(x|\overline{z})_{\fp^+}}\bigr\rangle_{\lambda,x} \\
\qquad\qquad=\frac{1}{(\lambda)_{\underline{k}_r,d}}\det_{\fn^+}(z)^{-\lambda+\frac{n}{r}}f_1^{\sharp(k)}\left(\frac{\partial}{\partial z}\right)
\det_{\fn^+}(z)^{\lambda+k-\frac{n}{r}}
\bigl\langle f_2(x),{\rm e}^{(x|\overline{z})_{\fp^+}}\bigr\rangle_{\lambda+k,x}.
\end{gather*}
{\rm (See~\cite[Theorem 4.1]{N2}.)}
\end{enumerate}
\end{Proposition}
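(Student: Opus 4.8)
The plan is to deduce both formulas from Corollary~\ref{cor_FK} together with the (generalized) Gindikin--Koecher gamma integral on the symmetric cone $\Omega\subset\fn^+$: I would establish~(1) first and then obtain~(2) from~(1) by differentiating under the integral sign. An essentially equivalent route, the one taken in~\cite{N2}, is to prove~(2) directly and recover~(1) as the special case $f_2=1$; I describe the first route because it isolates the analytic input cleanly.

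For part~(1), by Theorem~\ref{thm_HKS} and $\BC$-linearity it is enough to treat $f\in\cP_\bm(\fp^+)$ for a fixed $\bm\in\BZ_{++}^r$. Corollary~\ref{cor_FK} already evaluates the left-hand side as $(\lambda)_{\bm,d}^{-1}f(z)$, so what remains is the purely cone-theoretic identity
\[
\frac{1}{(2\pi\sqrt{-1})^n}\int_{a+\sqrt{-1}\fn^+}{\rm e}^{(z|w)_{\fn^+}}f\bigl(w^\itinv\bigr)\det_{\fn^+}(w)^{-\lambda}\,{\rm d}w=\frac{1}{\Gamma_r^d(\lambda)\,(\lambda)_{\bm,d}}\det_{\fn^+}(z)^{\lambda-\frac{n}{r}}f(z),\qquad z\in\Omega .
\]
I would obtain this by applying the Laplace inversion formula along the contour $a+\sqrt{-1}\fn^+$ to the generalized gamma integral
\[
\int_\Omega{\rm e}^{-(\xi|x)_{\fn^+}}f(x)\det_{\fn^+}(x)^{\lambda-\frac{n}{r}}\,{\rm d}x=\Gamma_r^d(\lambda)\,(\lambda)_{\bm,d}\,f\bigl(\xi^\itinv\bigr)\det_{\fn^+}(\xi)^{-\lambda},\qquad\xi\in\Omega ,
\]
which is the Gindikin--Koecher gamma integral on $\Omega$ evaluated on $\cP_\bm(\fp^+)$ (see~\cite{FK}; for general $f\in\cP_\bm(\fp^+)$ one reduces to the $K_L$-spherical case by irreducibility and the equivariance of both sides under the structure group of $\fn^{+\BC}$). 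Multiplying the first display by $\det_{\fn^+}(z)^{-\lambda+\frac{n}{r}}\Gamma_r^d(\lambda)$ then reproduces $(\lambda)_{\bm,d}^{-1}f(z)$, which proves~(1); note that $\Gamma_r^d(\lambda)$ is exactly the Gindikin gamma function of $\Omega$ because $\frac{1}{4}dr(r-1)=\frac{1}{2}(n-r)$.

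For part~(2), I would apply~(1) to $f=f_1f_2$. Since $\fp^+$ is of tube type one has $(f_1f_2)\bigl(w^\itinv\bigr)=f_1^{\sharp(k)}(w)\,f_2\bigl(w^\itinv\bigr)\det_{\fn^+}(w)^{-k}$, and because $\frac{\partial}{\partial z}$ is normalized with respect to $(\cdot|\cdot)_{\fn^+}$ we have $f_1^{\sharp(k)}(w)\,{\rm e}^{(z|w)_{\fn^+}}=f_1^{\sharp(k)}\bigl(\frac{\partial}{\partial z}\bigr){\rm e}^{(z|w)_{\fn^+}}$, so $f_1^{\sharp(k)}\bigl(\frac{\partial}{\partial z}\bigr)$ can be pulled outside the integral. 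The integral that is left has $-\lambda-k$ in place of $-\lambda$ on $\det_{\fn^+}(w)$, hence by~(1) at the parameter $\lambda+k$ it equals $\Gamma_r^d(\lambda+k)^{-1}\det_{\fn^+}(z)^{\lambda+k-\frac{n}{r}}\bigl\langle f_2(x),{\rm e}^{(x|\overline{z})_{\fp^+}}\bigr\rangle_{\lambda+k,x}$. Collecting the scalar prefactors and using $\Gamma_r^d(\lambda)/\Gamma_r^d(\lambda+k)=(\lambda)_{\underline{k}_r,d}^{-1}$ gives the claimed identity on $\Omega$; as both sides are holomorphic polynomials in $z$, it then holds on all of $\fp^+$.

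The main obstacle is analytic rather than algebraic: one must justify the Laplace inversion over $a+\sqrt{-1}\fn^+$ (and the independence of $a$, by Cauchy's theorem) and the interchange of $f_1^{\sharp(k)}\bigl(\frac{\partial}{\partial z}\bigr)$ with the contour integral, which requires absolute-convergence estimates for ${\rm e}^{(z|w)_{\fn^+}}f\bigl(w^\itinv\bigr)\det_{\fn^+}(w)^{-\lambda}$ on the shifted imaginary fibre and is exactly where the hypothesis $\Re\lambda>\frac{2n}{r}-1$ is used. Granting those estimates (which are carried out in~\cite{N2}), together with the structure-group equivariance step that upgrades the spherical case of the Gindikin--Koecher integral to all of $\cP_\bm(\fp^+)$, everything else is the bookkeeping of gamma factors indicated above.
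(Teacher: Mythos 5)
The paper contains no proof of Proposition~\ref{prop_Rodrigues}: it is recalled from~\cite{N2} (Corollary~4.3 and Theorem~4.1 there), so there is no in-paper argument to compare yours against line by line; what can be judged is whether your blind derivation is sound, and it is. Reducing~(1) to $f\in\cP_\bm(\fp^+)$ via Theorem~\ref{thm_HKS}, evaluating the left-hand side by Corollary~\ref{cor_FK}, and identifying the right-hand side through the Gindikin--Koecher gamma integral (extended from $\Delta^{\fn^+}_\bm$ or the spherical vector to all of $\cP_\bm(\fp^+)$ by structure-group equivariance, which is consistent because $(gx)^\itinv=(g^*)^{-1}x^\itinv$) followed by Laplace inversion over $a+\sqrt{-1}\fn^+$ does prove the formula, and your constant bookkeeping is correct: $\tfrac14 dr(r-1)=\tfrac12(n-r)$ identifies $\Gamma_r^d$ with the Gindikin gamma function, and $\Gamma_r^d(\lambda)/\Gamma_r^d(\lambda+k)=(\lambda)_{\underline{k}_r,d}^{-1}$. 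Step~(2) also goes through as you describe: on the contour $\Re w=a$ one has $f_1^{\sharp(k)}(w)\det_{\fn^+}(w)^{-k}=f_1\bigl(w^\itinv\bigr)$ bounded (since $w\mapsto w^\itinv$ is bounded on $a+\sqrt{-1}\fn^+$), so pulling $f_1^{\sharp(k)}\bigl(\frac{\partial}{\partial z}\bigr)$ out of the integral is dominated by the same $|\det_{\fn^+}(w)|^{-\Re\lambda}$ estimate that gives absolute convergence for $\Re\lambda>\frac{2n}{r}-1$, and applying~(1) at the shifted parameter $\lambda+k$ is legitimate because $\Re(\lambda+k)>\frac{2n}{r}-1$; you correctly flag these convergence/interchange estimates as the only genuinely analytic input. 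One peripheral inaccuracy: putting $f_2=1$ in~(2) yields a Rodrigues-type differential formula, not the contour-integral representation~(1), so~(1) is not literally the ``$f_2=1$ special case'' of~(2) as your aside about the route in~\cite{N2} suggests; this remark does not affect the validity of your own argument.
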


Next we consider an involution $\sigma$ on $\fp^+$, let $\fp^+_1:=(\fp^+)^\sigma$, $\fp^+_2:=(\fp^+)^{-\sigma}$, and assume $\fp^+_2$ is also of tube type.
We take a~common maximal tripotent $e\in\fp^+_2\subset\fp^+$ of $\fp^+_2$ and $\fp^+$, and let $\fn^+\subset\fp^+$, $\fn^+_2\subset\fp^+_2$ be the corresponding Euclidean real forms.
When $\fp^+_2$ is not simple, then we also write $\fp^+_1=:\fp^+_{12}$, $\fp^+_2=:\fp^+_{11}\oplus\fp^+_{22}$, $\fn^+_2=:\fn^+_{11}\oplus\fn^+_{22}$.
Then as in the proof of~\cite[Theorem~4.4]{N2}, when $f_1(x)=\det_{\fn^+_2}(x_2)^k$, we have
\[
\bigl(\det_{\fn^+_2}^k\bigr)^{\sharp(2k/\varepsilon_2)}(x)=\det_{\fn^+}(x)^{2k/\varepsilon_2}\det_{\fn^+_2}\bigl(\Proj_2\bigl(x^\itinv\bigr)\bigr)^k=\det_{\fn^+_2}(x_2)^k,
\]
where $\varepsilon_2$ is as in~(\ref{epsilon_j}), and $\Proj_2\colon\fp^+\to\fp^+_2$ denotes the orthogonal projection.
Similarly, when $\fp^+_2=\fp^+_{11}\oplus\fp^+_{22}$ and $f_1(x)=\det_{\fn^+_{11}}(x_{11})^k$ or $f_1(x)=\det_{\fn^+_{22}}(x_{22})^k$, we have
\begin{align*}
\bigl(\det_{\fn^+_{11}}^k\bigr)^{\sharp(k)}(x)=\det_{\fn^+}(x)^{k}\det_{\fn^+_{11}}\bigl(\Proj_{11}\bigl(x^\itinv\bigr)\bigr)^k=\det_{\fn^+_{22}}(x_{22})^k, \\
\bigl(\det_{\fn^+_{22}}^k\bigr)^{\sharp(k)}(x)=\det_{\fn^+}(x)^{k}\det_{\fn^+_{22}}\bigl(\Proj_{22}\bigl(x^\itinv\bigr)\bigr)^k=\det_{\fn^+_{11}}(x_{11})^k,
\end{align*}
where $\Proj_{jj}\colon\fp^+\to\fp^+_{jj}$ denotes the orthogonal projection.
Thus the following holds. Here we normalize the differential operator $\frac{\partial}{\partial z_2}$ on $\fp^+_2$ with respect to the bilinear form
$(\cdot|\cdot)_{\fn^+_2}=\varepsilon_2^{-1}(\cdot|\cdot)_{\fn^+}$ on $\fp^+_2=\fn^{+\BC}_2$.

\begin{Theorem}\label{thm_key_identity}\quad
\begin{enumerate}\itemsep=0pt
\item[$(1)$] Let $\Re\lambda>\frac{2n}{r}-1$ and let $f\in\cP(\fp^+)$. Then for $z=z_1+z_2\in\Omega\subset\fn^+\subset\fp^+$, we have
\begin{align*}
&\bigl\langle \det_{\fn^+_2}(x_2)^kf(x),{\rm e}^{(x|\overline{z})_{\fp^+}}\bigr\rangle_{\lambda,x} \\
&=\frac{1}{(\lambda)_{\underline{2k/\varepsilon_2}_r,d}}\det_{\fn^+}(z)^{-\lambda+\frac{n}{r}}\det_{\fn^+_2}\!\left(\frac{1}{\varepsilon_2}\frac{\partial}{\partial z_2}\right)^k
\det_{\fn^+}(z)^{\lambda+\frac{2k}{\varepsilon_2}-\frac{n}{r}}\bigl\langle f(x),{\rm e}^{(x|\overline{z})_{\fp^+}}\bigr\rangle_{\lambda+\frac{2k}{\varepsilon_2},x} .
\end{align*}
\item[$(2)$] Suppose $\fp^+_2=\fp^+_{11}\oplus\fp^+_{22}$, let $\Re\lambda>\frac{2n}{r}-1$, $k\in\BZ_{\ge 0}$
and let $f\in\cP(\fp^+)$. Then for $z=z_{11}+z_{12}+z_{22}\in\Omega\subset\fn^+\subset\fp^+$, we have
\begin{align*}
&\bigl\langle \det_{\fn^+_{11}}(x_{11})^kf(x),{\rm e}^{(x|\overline{z})_{\fp^+}}\bigr\rangle_{\lambda,x} \\
&\quad\qquad=\frac{1}{(\lambda)_{\underline{k}_r,d}}\det_{\fn^+}(z)^{-\lambda+\frac{n}{r}}\det_{\fn^+_{22}}\left(\frac{\partial}{\partial z_{22}}\right)^k
\det_{\fn^+}(z)^{\lambda+k-\frac{n}{r}}\bigl\langle f(x),{\rm e}^{(x|\overline{z})_{\fp^+}}\bigr\rangle_{\lambda+k,x}, \\
&\bigl\langle \det_{\fn^+_{22}}(x_{22})^kf(x),{\rm e}^{(x|\overline{z})_{\fp^+}}\bigr\rangle_{\lambda,x} \\
&\quad\qquad=\frac{1}{(\lambda)_{\underline{k}_r,d}}\det_{\fn^+}(z)^{-\lambda+\frac{n}{r}}\det_{\fn^+_{11}}\left(\frac{\partial}{\partial z_{11}}\right)^k
\det_{\fn^+}(z)^{\lambda+k-\frac{n}{r}}\bigl\langle f(x),{\rm e}^{(x|\overline{z})_{\fp^+}}\bigr\rangle_{\lambda+k,x}.
\end{align*}
\end{enumerate}
\end{Theorem}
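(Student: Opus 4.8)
The plan is to derive Theorem~\ref{thm_key_identity} directly from Proposition~\ref{prop_Rodrigues}\,(2), which already provides the general Rodrigues-type formula
\[
\bigl\langle f_1(x)f_2(x),{\rm e}^{(x|\overline{z})_{\fp^+}}\bigr\rangle_{\lambda,x}
=\frac{1}{(\lambda)_{\underline{k}_r,d}}\det_{\fn^+}(z)^{-\lambda+\frac{n}{r}}f_1^{\sharp(k)}\left(\frac{\partial}{\partial z}\right)
\det_{\fn^+}(z)^{\lambda+k-\frac{n}{r}}
\bigl\langle f_2(x),{\rm e}^{(x|\overline{z})_{\fp^+}}\bigr\rangle_{\lambda+k,x}
\]
whenever $f_1^{\sharp(k)}(x)=\det_{\fn^+}(x)^k f_1\bigl(x^\itinv\bigr)$ is a polynomial. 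Thus what is really needed is the computation of $f_1^{\sharp(k)}$ for the three relevant choices $f_1=\det_{\fn^+_2}^k$, $f_1=\det_{\fn^+_{11}}^k$, $f_1=\det_{\fn^+_{22}}^k$, together with the correct choice of the integer $k$ in the shift operator.

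For part (1), I would apply Proposition~\ref{prop_Rodrigues}\,(2) with $f_1(x)=\det_{\fn^+_2}(x_2)^k$ and $f_2=f$. The key identity to establish is
\[
\bigl(\det_{\fn^+_2}^k\bigr)^{\sharp(2k/\varepsilon_2)}(x)=\det_{\fn^+}(x)^{2k/\varepsilon_2}\det_{\fn^+_2}\bigl(\Proj_2\bigl(x^\itinv\bigr)\bigr)^k=\det_{\fn^+_2}(x_2)^k,
\]
which is exactly the computation recalled from the proof of~\cite[Theorem~4.4]{N2}; this says that $2k/\varepsilon_2$ is the correct value of the shift parameter and that $f_1^{\sharp(2k/\varepsilon_2)}=f_1$ itself. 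Substituting this back into Proposition~\ref{prop_Rodrigues}\,(2) gives the stated formula, once one observes that the differential operator $\det_{\fn^+_2}(\partial/\partial z)^k$ arising from $f_1^{\sharp(2k/\varepsilon_2)}(\partial/\partial z)$ must be expressed relative to the normalization of $\frac{\partial}{\partial z_2}$ on $\fp^+_2$ determined by $(\cdot|\cdot)_{\fn^+_2}=\varepsilon_2^{-1}(\cdot|\cdot)_{\fn^+}$, which produces the factor $\frac{1}{\varepsilon_2}$ inside the determinant, i.e.\ $\det_{\fn^+_2}\!\bigl(\frac{1}{\varepsilon_2}\frac{\partial}{\partial z_2}\bigr)^k$. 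The Pochhammer symbol $(\lambda)_{\underline{2k/\varepsilon_2}_r,d}$ and the exponent shifts $\lambda\mapsto\lambda+2k/\varepsilon_2$ then follow verbatim from Proposition~\ref{prop_Rodrigues}\,(2).

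For part (2), the argument is completely parallel but with $f_1=\det_{\fn^+_{11}}^k$ or $\det_{\fn^+_{22}}^k$ and shift parameter simply $k$ (since $\varepsilon_2=1$ in the non-simple case, or more precisely since the relevant determinant is already a polynomial of the right degree). Here the crucial identities are the two displayed formulas
\[
\bigl(\det_{\fn^+_{11}}^k\bigr)^{\sharp(k)}(x)=\det_{\fn^+_{22}}(x_{22})^k, \qquad
\bigl(\det_{\fn^+_{22}}^k\bigr)^{\sharp(k)}(x)=\det_{\fn^+_{11}}(x_{11})^k,
\]
which exhibit the characteristic ``swap'' between the two simple factors of $\fp^+_2$ under the $\sharp$-operation; these again come from the explicit description of $x^\itinv$ and the factorization $\det_{\fn^+_2}=\det_{\fn^+_{11}}\det_{\fn^+_{22}}$. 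Feeding these into Proposition~\ref{prop_Rodrigues}\,(2) yields the two asserted formulas, with the operator $\det_{\fn^+_{22}}(\partial/\partial z_{22})^k$ appearing when $f_1=\det_{\fn^+_{11}}^k$ and vice versa.

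The main obstacle, such as it is, is bookkeeping rather than conceptual: one must carefully track how the normalization of the differential operators $\frac{\partial}{\partial z_2}$, $\frac{\partial}{\partial z_{11}}$, $\frac{\partial}{\partial z_{22}}$ relative to $\frac{\partial}{\partial z}$ on the ambient $\fn^+$ interacts with the determinant polynomials, since $f_1^{\sharp(k)}\bigl(\frac{\partial}{\partial z}\bigr)$ in Proposition~\ref{prop_Rodrigues}\,(2) is by default normalized with respect to $(\cdot|\cdot)_{\fn^+}$. For the non-simple case this is harmless because the two factor inner products are induced without rescaling, but for part (1) it is precisely the source of the $\varepsilon_2$-dependent rescaling $\frac{1}{\varepsilon_2}\frac{\partial}{\partial z_2}$. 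Once this normalization is pinned down, everything else is a direct substitution into the already-established Proposition~\ref{prop_Rodrigues}\,(2), with the restriction $\Re\lambda>\frac{2n}{r}-1$ inherited unchanged.
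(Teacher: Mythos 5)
Your proposal is correct and follows essentially the same route as the paper: the paper also derives Theorem~\ref{thm_key_identity} directly from Proposition~\ref{prop_Rodrigues}\,(2), quoting from the proof of~\cite[Theorem~4.4]{N2} exactly the identities $\bigl(\det_{\fn^+_2}^k\bigr)^{\sharp(2k/\varepsilon_2)}(x)=\det_{\fn^+_2}(x_2)^k$, $\bigl(\det_{\fn^+_{11}}^k\bigr)^{\sharp(k)}(x)=\det_{\fn^+_{22}}(x_{22})^k$, $\bigl(\det_{\fn^+_{22}}^k\bigr)^{\sharp(k)}(x)=\det_{\fn^+_{11}}(x_{11})^k$, and then notes the normalization $(\cdot|\cdot)_{\fn^+_2}=\varepsilon_2^{-1}(\cdot|\cdot)_{\fn^+}$ as the source of the factor $\frac{1}{\varepsilon_2}$ in the differential operator. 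Your bookkeeping of the shift parameters, the swap between the two factors in part (2), and the inherited range $\Re\lambda>\frac{2n}{r}-1$ all match the paper's argument.
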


Similarly, we consider the outer tensor product $\cH_\lambda(D)\hboxtimes\cH_\mu(D)$ of Hilbert spaces, and let~$\langle\cdot,\cdot\rangle_{\lambda\otimes\mu}$ denote its inner product.
Then the following holds.

\begin{Theorem}\label{thm_key_tensor}
Let $\Re\lambda,\Re\mu>\frac{2n}{r}-1$ and let $f(x,y)\in\cP\bigl(\fp^+\oplus\fp^+\bigr)$. Then for $z,-w\in\Omega\subset\fn^+\subset\fp^+$, we have
\begin{align*}
&\bigl\langle f(x,y)\det_{\fn^+}(x-y)^k, {\rm e}^{(x|\overline{z})_{\fp^+}+(y|\overline{w})_{\fp^+}}\bigr\rangle_{\lambda\otimes\mu,(x,y)} \\
&=\frac{1}{(\lambda)_{\underline{k}_r,d}(\mu)_{\underline{k}_r,d}}\det_{\fn^+}(z)^{-\lambda+\frac{n}{r}}\det_{\fn^+}(-w)^{-\mu+\frac{n}{r}}
\det_{\fn^+}\left(\frac{\partial}{\partial z}-\frac{\partial}{\partial w}\right)^k \\
&\eqspace{}\times\det_{\fn^+}(z)^{\lambda+k-\frac{n}{r}}\det_{\fn^+}(-w)^{\mu+k-\frac{n}{r}}
\bigl\langle f(x,y), {\rm e}^{(x|\overline{z})_{\fp^+}+(y|\overline{z})_{\fp^+}}\bigr\rangle_{(\lambda+k)\otimes(\mu+k),(x,y)}, \\
&\bigl\langle f(x,y)\det_{\fn^+}(x-y)^k, {\rm e}^{(x-y|\overline{z})_{\fp^+}}\bigr\rangle_{\lambda\otimes\mu,(x,y)} \\
&=\frac{\det_{\fn^+}(z)^{-\lambda-\mu+\frac{2n}{r}}}{(\lambda)_{\underline{k}_r,d}(\mu)_{\underline{k}_r,d}}\det_{\fn^+}\!\left(\frac{\partial}{\partial z}\right)\!\!{\vphantom{\biggr)}}^k
\det_{\fn^+}(z)^{\lambda+\mu+2k-\frac{2n}{r}}\bigl\langle f(x,y), {\rm e}^{(x-y|\overline{z})_{\fp^+}}\bigr\rangle_{(\lambda+k)\otimes(\mu+k),(x,y)}.
\end{align*}
\end{Theorem}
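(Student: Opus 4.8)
The plan is to prove both identities as Rodrigues-type formulas, following the proof of Theorem~\ref{thm_key_identity} (\cite[Theorem~4.4]{N2}) but with $\fp^+$ replaced by the direct-sum Jordan triple system $\fp^+\oplus\fp^+$ carrying the two (generally distinct) weights $\lambda,\mu$, and with $\det_{\fn^+_2}$ replaced by $\det_{\fn^+}(x-y)$, the determinant polynomial attached to the anti-diagonal in $\fn^+\oplus\fn^+$. I would first prove the first formula, then deduce the second by restricting it to the anti-diagonal $w=-z$. (For clarity I describe the argument as if both $z,w\in\Omega$; the actual hypothesis $-w\in\Omega$ is accommodated by a reflection $y\mapsto -y$, which is exactly what produces the factors $\det_{\fn^+}(-w)$, and which I return to at the end.)

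For the first formula, assume $\Re\lambda,\Re\mu>\frac{2n}{r}-1$ so all integrals converge. Since $\langle\cdot,\cdot\rangle_{\lambda\otimes\mu}$ and the exponential kernel both factor, I can apply the Gindikin--Laplace transform of Proposition~\ref{prop_Rodrigues}(1) in each of the two tensor slots; this rewrites $\bigl\langle f(x,y)\det_{\fn^+}(x-y)^k,{\rm e}^{(x|\overline z)_{\fp^+}+(y|\overline w)_{\fp^+}}\bigr\rangle_{\lambda\otimes\mu}$ as a constant times $\det_{\fn^+}(z)^{-\lambda+\frac{n}{r}}\det_{\fn^+}(-w)^{-\mu+\frac{n}{r}}$ times a double contour integral over $(a+\sqrt{-1}\fn^+)\times(a'+\sqrt{-1}\fn^+)$ whose integrand has the shape ${\rm e}^{(z|u)_{\fn^+}+(w|v)_{\fn^+}}\,f(\cdots)\,\det_{\fn^+}(u^\itinv-v^\itinv)^k\det_{\fn^+}(u)^{-\lambda}\det_{\fn^+}(v)^{-\mu}$. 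Now I invoke the Jordan-algebra identity $\det_{\fn^+}(u^\itinv-v^\itinv)=\pm\det_{\fn^+}(u)^{-1}\det_{\fn^+}(v)^{-1}\det_{\fn^+}(u-v)$ on $\Omega\times\Omega$ (the same ``inversion of a determinant'' computation used for the $\sharp$-operation just before Theorem~\ref{thm_key_identity}): it absorbs $\det_{\fn^+}(u)^{-k}\det_{\fn^+}(v)^{-k}$ into the Gindikin powers, turning them into $\det_{\fn^+}(u)^{-(\lambda+k)}\det_{\fn^+}(v)^{-(\mu+k)}$, and leaves a clean factor $\det_{\fn^+}(u-v)^k$ in the integrand. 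Because $\det_{\fn^+}\bigl(\frac{\partial}{\partial z}-\frac{\partial}{\partial w}\bigr)$ acts on ${\rm e}^{(z|u)_{\fn^+}+(w|v)_{\fn^+}}$ by the scalar $\det_{\fn^+}(u-v)$, I pull $\det_{\fn^+}\bigl(\frac{\partial}{\partial z}-\frac{\partial}{\partial w}\bigr)^k$ in front of the integral, and the integral that remains is precisely the one produced by Proposition~\ref{prop_Rodrigues}(1) read backwards with the shifted weights $(\lambda+k,\mu+k)$ (legitimate since $\Re(\lambda+k),\Re(\mu+k)>\frac{2n}{r}-1$). Collecting the Gamma-factors via $\Gamma_r^d(\lambda+k)/\Gamma_r^d(\lambda)=(\lambda)_{\underline{k}_r,d}$ and the analogue in $\mu$ produces the prefactor $\frac{1}{(\lambda)_{\underline{k}_r,d}(\mu)_{\underline{k}_r,d}}$, which is the first formula.

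For the second formula, restrict the first to the anti-diagonal $w=-z$: then ${\rm e}^{(x|\overline z)_{\fp^+}+(y|\overline w)_{\fp^+}}={\rm e}^{(x-y|\overline z)_{\fp^+}}$ and $\det_{\fn^+}(-w)=\det_{\fn^+}(z)$, so the two prefactors collapse to $\det_{\fn^+}(z)^{-\lambda-\mu+\frac{2n}{r}}$ and $\det_{\fn^+}(z)^{\lambda+\mu+2k-\frac{2n}{r}}$. The one substantive point is that restriction to $w=-z$ intertwines $\det_{\fn^+}\bigl(\frac{\partial}{\partial z}-\frac{\partial}{\partial w}\bigr)^k$ with $\det_{\fn^+}\bigl(\frac{\partial}{\partial z}\bigr)^k$: from $\frac{\partial}{\partial z}\bigl[F(z,-z)\bigr]=\bigl(\frac{\partial}{\partial z}-\frac{\partial}{\partial w}\bigr)F\big|_{w=-z}$ one gets, by induction, $\det_{\fn^+}\bigl(\frac{\partial}{\partial z}\bigr)^k\bigl[F(z,-z)\bigr]=\bigl[\det_{\fn^+}\bigl(\frac{\partial}{\partial z}-\frac{\partial}{\partial w}\bigr)^kF\bigr]\big|_{w=-z}$, which is checked at once on the exponential kernels ${\rm e}^{(z|u)_{\fn^+}+(w|v)_{\fn^+}}$. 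Since both sides of the first formula are holomorphic in $(z,w)$ on $(\Omega+\sqrt{-1}\fn^+)\times(-\Omega+\sqrt{-1}\fn^+)$ and the anti-diagonal lies inside this set, the restriction is legitimate, and the second formula follows.

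The part I expect to demand the most care is the sign and normalization bookkeeping rather than anything conceptual: the hypothesis $-w\in\Omega$ forces the reflection $y\mapsto -y$ and hence $\det_{\fn^+}(-w)$ in place of $\det_{\fn^+}(w)$ together with a sign in the $v$-contour, while the Jordan inversion identity contributes a $(-1)^r$, and all of these must be shown to cancel so that the prefactor is exactly $\frac{1}{(\lambda)_{\underline{k}_r,d}(\mu)_{\underline{k}_r,d}}$ with no residual sign; one also needs the normalization of $\det_{\fn^+}\bigl(\frac{\partial}{\partial z}-\frac{\partial}{\partial w}\bigr)$ relative to $(\cdot|\cdot)_{\fn^+}$ to be the one under which it acts on the exponential kernels by the scalar $\det_{\fn^+}(u-v)$. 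Note that no meromorphic continuation in $\lambda,\mu$ is required, since the statement is confined to the convergence range $\Re\lambda,\Re\mu>\frac{2n}{r}-1$.
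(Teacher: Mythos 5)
Your proposal is correct and follows essentially the same route as the paper: both rest on the contour-integral representation of Proposition~\ref{prop_Rodrigues}(1) applied in each tensor slot after the reflection $y\mapsto -y$, the Jordan-algebra identity $\det_{\fn^+}\bigl(x^\itinv+y^\itinv\bigr)=\det_{\fn^+}(x+y)\det_{\fn^+}(x)^{-1}\det_{\fn^+}(y)^{-1}$, pulling the constant-coefficient determinant operator through the integral, and reading the Gindikin formula backwards at the shifted weights $(\lambda+k,\mu+k)$ so that the Gamma-factor ratio yields $(\lambda)_{\underline{k}_r,d}(\mu)_{\underline{k}_r,d}$. The only cosmetic difference is the order of the two identities: the paper proves the second formula directly by setting $w=-z$ before extracting the derivative (and notes the first is proved similarly), whereas you prove the first and deduce the second by restriction to $w=-z$ via the chain rule, which is equally valid.
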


\begin{proof}
By Proposition~\ref{prop_Rodrigues}\,(1), for $a,z,-w\in\Omega$, we have
\begin{gather*}
\bigl\langle f(x,y)\det_{\fn^+}(x-y)^k, {\rm e}^{(x|\overline{z})_{\fp^+}+(y|\overline{w})_{\fp^+}}\bigr\rangle_{\lambda\otimes\mu,(x,y)} \\
\qquad{}=\bigl\langle f(x,-y)\det_{\fn^+}(x+y)^k, {\rm e}^{(x|\overline{z})_{\fp^+}-(y|\overline{w})_{\fp^+}}\bigr\rangle_{\lambda\otimes\mu,(x,y)} \\
\qquad{}=\det_{\fn^+}(z)^{-\lambda+\frac{n}{r}}\det_{\fn^+}(-w)^{-\mu+\frac{n}{r}}\frac{\Gamma_r^d(\lambda)\Gamma_r^d(\mu)}{(2\pi\sqrt{-1})^{2n}}\iint_{(a+\sqrt{-1}\fn^+)^2} {\rm e}^{(x|z)_{\fn^+}-(y|w)_{\fn^+}} \\
\qquad\quad {}\times f\bigl(x^\itinv,-y^\itinv\bigr)\det_{\fn^+}\bigl(x^\itinv+y^\itinv\bigr)^k\det_{\fn^+}(x)^{-\lambda}\det_{\fn^+}(y)^{-\mu}\,{\rm d}x{\rm d}y,
\end{gather*}
and since
\[
\det_{\fn^+}\bigl(x^\itinv+y^\itinv\bigr)=\det_{\fn^+}(x+y)\det_{\fn^+}(x)^{-1}\det_{\fn^+}(y)^{-1}
\]
holds (see~\cite[Lemma X.4.4\,(i)]{FK}), by putting $w=-z$, we get
\begin{gather*}
 \bigl\langle f(x,y)\det_{\fn^+}(x-y)^k, {\rm e}^{(x-y|\overline{z})_{\fp^+}}\bigr\rangle_{\lambda\otimes\mu,(x,y)} \\
 =\det_{\fn^+}(z)^{-\lambda-\mu+\frac{2n}{r}}\frac{\Gamma_r^d(\lambda)\Gamma_r^d(\mu)}{(2\pi\sqrt{-1})^{2n}}\iint_{(a+\sqrt{-1}\fn^+)^2} {\rm e}^{(x+y|z)_{\fn^+}} \\
 \quad{}\times f\bigl(x^\itinv,-y^\itinv\bigr)\det_{\fn^+}(x+y)^k\det_{\fn^+}(x)^{-\lambda-k}\det_{\fn^+}(y)^{-\mu-k}\,{\rm d}x{\rm d}y \\
 =\det_{\fn^+}(z)^{-\lambda-\mu+\frac{2n}{r}}\det_{\fn^+}\left(\frac{\partial}{\partial z}\right)^k\frac{\Gamma_r^d(\lambda)\Gamma_r^d(\mu)}{(2\pi\sqrt{-1})^{2n}}
\iint_{(a+\sqrt{-1}\fn^+)^2} {\rm e}^{(x+y|z)_{\fn^+}} \\
 \quad{}\times f\bigl(x^\itinv,-y^\itinv\bigr)\det_{\fn^+}(x)^{-\lambda-k}\det_{\fn^+}(y)^{-\mu-k}\,{\rm d}x{\rm d}y \\
 =\det_{\fn^+}(z)^{-\lambda-\mu+\frac{2n}{r}}\det_{\fn^+}\left(\frac{\partial}{\partial z}\right)^k\frac{\Gamma_r^d(\lambda)\Gamma_r^d(\mu)}{\Gamma_r^d(\lambda+k)\Gamma_r^d(\mu+k)} \\
 \quad{}\times\det_{\fn^+}(z)^{(\lambda+k)+(\mu+k)-\frac{2n}{r}}\bigl\langle f(x,-y), {\rm e}^{(x+y|\overline{z})_{\fp^+}}\bigr\rangle_{(\lambda+k)\otimes(\mu+k),(x,y)} \\
 =\frac{\det_{\fn^+}(z)^{-\lambda-\mu+\frac{2n}{r}}}{(\lambda)_{\underline{k}_r,d}(\mu)_{\underline{k}_r,d}}\det_{\fn^+}\!\!\left(\frac{\partial}{\partial z}\right)^k
\det_{\fn^+}(z)^{\lambda+\mu+2k-\frac{2n}{r}}\bigl\langle f(x,y), {\rm e}^{(x-y|\overline{z})_{\fp^+}}\bigr\rangle_{(\lambda+k)\otimes(\mu+k),(x,y)} .
\end{gather*}
This proves the 2nd formula. The 1st formula is also proved similarly.
\end{proof}

These theorems give analogues of the Rodrigues formulas for Jacobi polynomials.
Especially, when $\fp^+_2$ is simple, let $r_2:=\rank\fp^+_2$, fix a~Jordan frame $\{e_1,\dots,e_{r_2}\}\subset\fp^+_2$,
and for $\bk\in\BZ_{++}^{r_2}$, let $\Delta^{\fn^+_2}_\bk(x_2)\in\cP_\bk\bigl(\fp^+_2\bigr)$ be as in~(\ref{principal_minor}).
Then by putting $k=k_{r_2}$, $f(x)=\Delta^{\fn^+_2}_{\bk-\underline{k_{r_2}}_{r_2}}(x_2)$ in Theorem~\ref{thm_key_identity}\,(1), we have
\begin{align}
&\bigl\langle \Delta^{\fn^+_2}_\bk(x_2),{\rm e}^{(x|\overline{z})_{\fp^+}}\bigr\rangle_{\lambda,x}
=\Bigl\langle \det_{\fn^+_2}(x_2)^{k_{r_2}}\Delta^{\fn^+_2}_{\bk-\underline{k_{r_2}}_{r_2}}(x_2),{\rm e}^{(x|\overline{z})_{\fp^+}}\Bigr\rangle_{\lambda,x} \label{key_simple}\\
&=\frac{\det_{\fn^+}(z)^{-\lambda+\frac{n}{r}}}{(\lambda)_{\underline{2k_{r_2}/\varepsilon_2}_r,d}}
\det_{\fn^+_2}\left(\frac{1}{\varepsilon_2}\frac{\partial}{\partial z_2}\right)^{k_{r_2}}\det_{\fn^+}(z)^{\lambda+\frac{2k_{r_2}}{\varepsilon_2}-\frac{n}{r}}
\Bigl\langle \Delta^{\fn^+_2}_{\bk-\underline{k_{r_2}}_{r_2}}(x_2),{\rm e}^{(x|\overline{z})_{\fp^+}}\Bigr\rangle_{\lambda+\frac{2k_{r_2}}{\varepsilon_2},x}. \nonumber
\end{align}
Then $\Delta^{\fn^+_2}_{\bk-\underline{k_{r_2}}_{r_2}}(x_2)$ depends only on the smaller algebra $\fp^+(e')_2$, with $e'=e_1+\cdots+e_{r_2-1}$,
and by Proposition~\ref{prop_reduction}, computation of the inner product on $\fp^+$ is reduced to that on $\fp^+(e')_2$.
Similarly, when $\fp^+_2=\fp^+_{11}\oplus\fp^+_{22}$ is not simple, let $r':=\rank\fp^+_{11}$, $r'':=\rank\fp^+_{22}$, fix Jordan frames in~$\fp^+_{11}$,~$\fp^+_{22}$,
and for $\bk\in\BZ_{++}^{r'}$, $\bl\in\BZ_{++}^{r''}$, let $\Delta^{\fn^+_{11}}_\bk(x_{11})\in\cP_\bk\bigl(\fp^+_{11}\bigr)$, $\Delta^{\fn^+_{22}}_\bl(x_{22})\in\cP_\bl\bigl(\fp^+_{22}\bigr)$ be as in
(\ref{principal_minor}). Then by putting $k=k_{r'}$, $f(x)=\Delta^{\fn^+_{11}}_{\bk-\underline{k_{r'}}_{r'}}(x_{11})\Delta^{\fn^+_{22}}_\bl(x_{22})$
or $k=l_{r''}$, $f(x)=\Delta^{\fn^+_{11}}_\bk(x_{11})\Delta^{\fn^+_{22}}_{\bl-\underline{l_{r''}}_{r''}}(x_{22})$ in Theorem~\ref{thm_key_identity}\,(2), we have
\begin{align}
&\Bigl\langle \Delta^{\fn^+_{11}}_\bk(x_{11})\Delta^{\fn^+_{22}}_\bl(x_{22}),{\rm e}^{(x|\overline{z})_{\fp^+}}\Bigr\rangle_{\lambda,x} \notag\\
&\qquad{} =\frac{\det_{\fn^+} (z)^{-\lambda+\frac{n}{r}}}{(\lambda)_{\underline{k_{r'}}_r,d}}
\det_{\fn^+_{22}} \left(\frac{\partial}{\partial z_{22}}\right) ^{k_{r'}}\nonumber\\
& \qquad\quad \ {}\times \det_{\fn^+} (z)^{\lambda+k_{r'}-\frac{n}{r}}
\Bigl\langle \Delta^{\fn^+_{11}}_{\bk-\underline{k_{r'}}_{r'}}\!(x_{11})\Delta^{\fn^+_{22}}_\bl (x_{22}),{\rm e}^{(x|\overline{z})_{\fp^+}}\Bigr\rangle_{\lambda+k_{r'},x}
\label{key_nonsimple1}\\
&\qquad{} =\frac{\det_{\fn^+} (z)^{-\lambda+\frac{n}{r}}}{(\lambda)_{\underline{l_{r''}}_r,d}}
\det_{\fn^+_{11}} \left(\frac{\partial}{\partial z_{11}}\right)^{l_{r''}}\nonumber\\
&\qquad\quad \ {}\times \det_{\fn^+} (z)^{\lambda+l_{r''}-\frac{n}{r}}
\Bigl\langle \Delta^{\fn^+_{11}}_\bk (x_{11})\Delta^{\fn^+_{22}}_{\bl-\underline{l_{r''}}_{r''}} (x_{22}),{\rm e}^{(x|\overline{z})_{\fp^+}}\Bigr\rangle_{\lambda+l_{r''},x},
\label{key_nonsimple2}
\end{align}
and again by Proposition~\ref{prop_reduction}, computation of the inner product on $\fp^+$ is reduced to that on a~smaller algebra of rank $r'+r''-1$.
Similarly, for $\fp^+\oplus\fp^+$ case, for $\bk\in\BZ_{++}^r$, by putting $k=k_r$, $f(x,y)=\Delta^{\fn^+}_{\bk-\underline{k_r}_r}(x-y)$ in Theorem~\ref{thm_key_tensor}, we have
\begin{align}
&\bigl\langle \Delta^{\fn^+}_\bk(x-y), {\rm e}^{(x-y|\overline{z})_{\fp^+}}\bigr\rangle_{\lambda\otimes\mu,(x,y)} \label{key_tensor}\\
&=\frac{\det_{\fn^+}\hspace{-1pt}(z)^{-\lambda-\mu+\frac{2n}{r}}}{(\lambda)_{\underline{k_r}_r,d}(\mu)_{\underline{k_r}_r,d}}
\det_{\fn^+}\!\!\left(\hspace{-1pt}\frac{\partial}{\partial z}\hspace{-1pt}\right)\!\!{\vphantom{\biggr|}}^{k_r}
\!\det_{\fn^+}\hspace{-1pt}(z)^{\lambda+\mu+2k_r-\frac{2n}{r}}
\hspace{-1pt}\Big\langle\hspace{-1pt} \Delta^{\fn^+}_{\bk-\underline{k_r}_r}\!(x\hspace{-1pt}-\hspace{-1pt}y), {\rm e}^{(x-y|\overline{z})_{\fp^+}}
\hspace{-1pt}\Big\rangle_{\substack{(\lambda+k_r)\otimes\hspace{14pt} \\ (\mu+k_r),(x,y)}}\!\!,\nonumber
\end{align}
and again by Proposition~\ref{prop_reduction}, computation of the inner product on $\fp^+$ is reduced to that on a~smaller algebra of rank $r-1$.
For $\bk=\underline{k}_r$ case, see also~\cite{BCK, Cl}.

In addition, by Proposition~\ref{prop_det_factorize}\,(2) and Theorem~\ref{thm_key_identity}, we easily get the following, which is an analogue of~\cite[Corollary 12.8\,(3)]{KS1}.
We note that $\det_{\fn^+_2}(x_2)=\det_{\fn^+_{11}}(x_{11})\det_{\fn^+_{22}}(x_{22})$ holds, when $\fp^+_2$ is not simple.
\begin{Theorem}\label{thm_factorize}
We fix $\tilde{\bk}\in\BZ_{++}^{r_2}$ \big(when $\fp^+_2$ is simple\big) or $\tilde{\bk}\in\BZ_{++}^{r'}\times\BZ_{++}^{r''}$ \big(when $\fp^+_2$ is not simple\big), and $a\in\BZ_{>0}$.
Then $(\lambda)_{\underline{2a}_r,d}\bigl\langle \det_{\fn^+_2}(x_2)^{\varepsilon_2a}f(x),{\rm e}^{(x|\overline{z})_{\fp^+}}\bigr\rangle_{\lambda,x}$ is
holomorphically continued for $\Re\lambda>\frac{n}{r}-2a-1$, and there exist $C_{\tilde{\bk},a}^{\fn^+,\fn^+_2}\in\BC$ such that
\begin{align*}
&(\lambda)_{\underline{2a}_r,d}\bigl\langle \det_{\fn^+_2}(x_2)^{\varepsilon_2a}f(x_2),{\rm e}^{(x|\overline{z})_{\fp^+}}\bigr\rangle_{\lambda,x}\big|_{\lambda=\frac{n}{r}-a}
=C_{\tilde{\bk},a}^{\fn^+,\fn^+_2}\det_{\fn^+}(z)^a\bigl\langle f(x_2),{\rm e}^{(x|\overline{z})_{\fp^+}}\bigr\rangle_{\frac{n}{r}+a,x}
\end{align*}
holds for all $f(x_2)\in\cP_{\tilde{\bk}}\bigl(\fp^+_2\bigr)$.
\end{Theorem}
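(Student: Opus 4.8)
The plan is to read off both assertions from Theorem~\ref{thm_key_identity} together with Proposition~\ref{prop_det_factorize}\,(2), taking $\bl=\underline{2a}_r$ throughout; since in this subsection both $\fp^+$ and $\fp^+_2$ are of tube type, no preliminary reduction to a smaller algebra is needed. For the holomorphic continuation I would argue directly from Faraut--Kor\'anyi: putting $g:=\det_{\fn^+_2}(x_2)^{\varepsilon_2 a}f\in\cP(\fp^+)$ and decomposing $g=\sum_{\bm}g_\bm$ with $g_\bm\in\cP_\bm(\fp^+)$, Corollary~\ref{cor_FK} gives
\[
(\lambda)_{\underline{2a}_r,d}\bigl\langle g(x),{\rm e}^{(x|\overline{z})_{\fp^+}}\bigr\rangle_{\lambda,x}
=\sum_{\bm}g_\bm(z)\prod_{j=1}^r\frac{\bigl(\lambda-\frac{d}{2}(j-1)\bigr)_{2a}}{\bigl(\lambda-\frac{d}{2}(j-1)\bigr)_{m_j}}.
\]
Each factor with $m_j\le 2a$ is a polynomial in $\lambda$, while a factor with $m_j>2a$ contributes poles only at $\lambda\in\frac{d}{2}(j-1)-2a-\BZ_{\ge 0}$; since $\frac{d}{2}(j-1)\le\frac{d}{2}(r-1)=\frac{n}{r}-1$ for tube-type $\fp^+$, all these poles lie in $\Re\lambda\le\frac{n}{r}-2a-1$, so the whole sum is holomorphic for $\Re\lambda>\frac{n}{r}-2a-1$. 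This proves the first assertion, in fact for arbitrary $f\in\cP(\fp^+)$, not just for $f\in\cP_{\tilde{\bk}}(\fp^+_2)$.

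For the factorization I would evaluate at $\lambda=\frac{n}{r}-a$, which lies in the half-plane just obtained since $a>0$. Feeding $k=\varepsilon_2 a$ into Theorem~\ref{thm_key_identity}\,(1), extending that identity from $\Re\lambda>\frac{2n}{r}-1$ to $\lambda=\frac{n}{r}-a$ by the holomorphy above, and using $-\lambda+\frac{n}{r}=\lambda+2a-\frac{n}{r}=a$ there, yields
\begin{align*}
(\lambda)_{\underline{2a}_r,d}&\bigl\langle \det_{\fn^+_2}(x_2)^{\varepsilon_2 a}f(x_2),{\rm e}^{(x|\overline{z})_{\fp^+}}\bigr\rangle_{\lambda,x}\big|_{\lambda=\frac{n}{r}-a} \\
&=\det_{\fn^+}(z)^{a}\det_{\fn^+_2}\!\left(\frac{1}{\varepsilon_2}\frac{\partial}{\partial z_2}\right)^{\!\varepsilon_2 a}\!\left(\det_{\fn^+}(z)^{a}\bigl\langle f(x_2),{\rm e}^{(x|\overline{z})_{\fp^+}}\bigr\rangle_{\frac{n}{r}+a,x}\right);
\end{align*}
when $\fp^+_2=\fp^+_{11}\oplus\fp^+_{22}$ is not simple the same is obtained by applying Theorem~\ref{thm_key_identity}\,(2) successively to the factors $\det_{\fn^+_{11}}(x_{11})^{\varepsilon_2 a}$ and $\det_{\fn^+_{22}}(x_{22})^{\varepsilon_2 a}$ (here $\varepsilon_2=1$), the two Pochhammer symbols combining to $(\lambda)_{\underline{2a}_r,d}$. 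In either case $\det_{\fn^+}(z)^a$ is a genuine polynomial (as $a\in\BZ_{>0}$), and $\langle f(x_2),{\rm e}^{(x|\overline{z})_{\fp^+}}\rangle_{\frac{n}{r}+a,x}$ is a polynomial in $z$ by Corollary~\ref{cor_FK} (since $\frac{n}{r}+a>\frac{d}{2}(r-1)$), so the right-hand side is visibly a polynomial divisible by $\det_{\fn^+}(z)^a$; hence $\det_{\fn^+}(z)^{-a}$ times the left-hand side lies in $\cP(\fp^+)$ for every $f(x_2)\in\cP_{\tilde{\bk}}(\fp^+_2)$. Both hypotheses of Proposition~\ref{prop_det_factorize}\,(2) are thus met with $\bl=\underline{2a}_r$, and that proposition supplies the constant $C=:C_{\tilde{\bk},a}^{\fn^+,\fn^+_2}$ making the asserted identity hold for all $f(x_2)\in\cP_{\tilde{\bk}}(\fp^+_2)$.

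These steps are substitutions into the two cited results, so the point I would handle with care is the synchronization of the analytic continuations: that the identity of Theorem~\ref{thm_key_identity}, valid a priori only for $\Re\lambda>\frac{2n}{r}-1$ and $z\in\Omega$, really propagates to $\lambda=\frac{n}{r}-a$ and to all of $\fp^+$. For fixed $z\in\Omega$ both sides are meromorphic in $\lambda$ and agree on $\Re\lambda>\frac{2n}{r}-1$; the left-hand side is holomorphic on $\Re\lambda>\frac{n}{r}-2a-1$ by the first step, and the right-hand side is holomorphic there because its only $\lambda$-dependence with poles is through $\langle f(x_2),{\rm e}^{(x|\overline{z})_{\fp^+}}\rangle_{\lambda+2a,x}$, which Corollary~\ref{cor_FK} makes holomorphic for $\Re(\lambda+2a)>\frac{d}{2}(r-1)$, while $\det_{\fn^+}(z)^s$ is entire in $s$ on $\Omega$ and the differential operator does not touch $\lambda$. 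Hence the identity holds at $\lambda=\frac{n}{r}-a$ on $\Omega$; both sides at that value are polynomials in $z$, so it extends to all $z\in\fp^+$. The only other thing I would recheck is the Pochhammer accounting in the non-simple case, to confirm it produces $(\lambda)_{\underline{2a}_r,d}$ rather than some other symbol. Everything else is immediate from Theorem~\ref{thm_key_identity} and Proposition~\ref{prop_det_factorize}\,(2).
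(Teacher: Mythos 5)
Your proposal is correct and follows essentially the same route as the paper's proof: apply Theorem~\ref{thm_key_identity} with $k=\varepsilon_2 a$, observe that after multiplying by $(\lambda)_{\underline{2a}_r,d}$ the resulting expression is holomorphic up to $\Re\lambda>\frac{n}{r}-2a-1$ and becomes $\det_{\fn^+}(z)^a$ times a polynomial at $\lambda=\frac{n}{r}-a$, then invoke Proposition~\ref{prop_det_factorize}\,(2) with $\bl=\underline{2a}_r$. Your small variations (deriving the holomorphy directly from Corollary~\ref{cor_FK}, spelling out the analytic-continuation step, and treating the non-simple case by two applications of Theorem~\ref{thm_key_identity}\,(2) instead of one application of part~(1)) are all consistent with the paper's argument and equally valid.
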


\begin{proof}
Since $\langle\cdot,\cdot\rangle_\lambda$ is holomorphically continued for $\Re\lambda>\frac{n}{r}-1=\frac{d}{2}(r-1)$, by Theorem~\ref{thm_key_identity},
\begin{align*}
&\det_{\fn^+}(z)^{\lambda-\frac{n}{r}}(\lambda)_{\underline{2a}_r,d}\bigl\langle \det_{\fn^+_2}(x_2)^{\varepsilon_2a}f(x_2),{\rm e}^{(x|\overline{z})_{\fp^+}}\bigr\rangle_{\lambda,x} \\
&\qquad\qquad{}=\det_{\fn^+_2}\left(\frac{1}{\varepsilon_2}\frac{\partial}{\partial z_2}\right)^{\varepsilon_2a}
\det_{\fn^+}(z)^{\lambda+2a-\frac{n}{r}}\bigl\langle f(x_2),{\rm e}^{(x|\overline{z})_{\fp^+}}\bigr\rangle_{\lambda+2a,x}
\end{align*}
is holomorphically continued for $\Re\lambda+2a>\frac{n}{r}-1$, and when $\lambda=\frac{n}{r}-a$,
\begin{align*}
&\det_{\fn^+}(z)^{-a}(\lambda)_{\underline{2a}_r,d}\bigl\langle \det_{\fn^+_2}(x_2)^{\varepsilon_2a}f(x_2),{\rm e}^{(x|\overline{z})_{\fp^+}}\bigr\rangle_{\lambda,x}\big|_{\lambda=\frac{n}{r}-a} \\
&\qquad\qquad=\det_{\fn^+_2}\left(\frac{1}{\varepsilon_2}\frac{\partial}{\partial z_2}\right)^{\varepsilon_2a}
\det_{\fn^+}(z)^{a}\bigl\langle f(x_2),{\rm e}^{(x|\overline{z})_{\fp^+}}\bigr\rangle_{\frac{n}{r}+a,x}
\end{align*}
becomes a~polynomial. Hence the theorem follows from Proposition~\ref{prop_det_factorize}\,(2).
\end{proof}

\subsection[Tensor product of finite-dimensional representations of U(s)]{Tensor product of finite-dimensional representations of $\boldsymbol{{\rm U}(s)}$}

In this subsection we treat some results on finite-dimensional representations of ${\rm U}(s)$ used later, which easily follows from the Littlewood--Richardson rule.
As in Section~\ref{subsection_classification}, for $\bk\in\BZ_{++}^s$, let $V_\bk^{(s)}$ be the irreducible representation of ${\rm U}(s)$ with the highest weight $\bk$,
and we write $\bk^\vee:=(k_s,\dots,k_1)$.
\begin{Lemma}\label{lem_LR}
Let $\bk,\bl,\bn\in\BZ_{++}^s$. If $\Hom_{{\rm U}(s)}\bigl(V_\bn^{(s)},V_\bk^{(s)}\otimes V_\bl^{(s)}\bigr)\ne \{0\}$, then we have
\begin{enumerate}\itemsep=0pt
\item[$(1)$] $n_{s-i-j}\ge k_{s-i}+l_{s-j}$ for $0\le i,j<s$, $i+j<s$.
\item[$(2)$] $n_{i+j-1}\le k_i+l_j$ for $1\le i,j\le s$, $i+j\le s+1$.
\end{enumerate}
\end{Lemma}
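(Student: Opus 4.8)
The statement is a direct application of the Littlewood--Richardson rule, so the plan is first to translate the hypothesis $\Hom_{{\rm U}(s)}\bigl(V_\bn^{(s)},V_\bk^{(s)}\otimes V_\bl^{(s)}\bigr)\ne\{0\}$ into the combinatorial fact that the Littlewood--Richardson coefficient $c^\bn_{\bk\bl}$ is nonzero, i.e.\ that there is a skew semistandard tableau of shape $\bn/\bk$ and content $\bl$ whose reverse reading word is a lattice word (equivalently, by symmetry of $c^\bn_{\bk\bl}$ in $\bk$ and $\bl$, a tableau of shape $\bn/\bl$ and content $\bk$). Here I am treating $\bk,\bl,\bn$ as partitions with at most $s$ parts, padding with zeros as needed; since all three are dominant weights of ${\rm U}(s)$ this is harmless. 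The two inequalities are then the standard ``support'' constraints on where a skew shape with a lattice filling can live.

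For part (1), the inequality $n_{s-i-j}\ge k_{s-i}+l_{s-j}$ says: if we go $s-i$ rows down in $\bn$ we are still at least as wide as row $s-i$ of $\bk$, and the extra width coming from the $\bl$-content in the first $s-i-j$ rows is at least $l_{s-j}$. Concretely I would argue: the skew shape $\bn/\bk$ contains, in its first $s-i-j$ rows, a filling of content equal to the first $s-i-j$ entries of $\bl$ in the lattice word; the lattice (Yamanouchi) condition forces the letter $s-j$ (and all smaller letters, but the relevant bound comes from counting) to appear at least $l_{s-j}$ times among the first $s-i-j$ rows actually it is cleaner to phrase this as the classical fact $c^\bn_{\bk\bl}\neq 0 \implies \bk\subseteq\bn$ and $\bl\subseteq\bn$ together with the row-bound $n_a \ge k_a$ and the ``nesting'' estimates; the sharp form needed is exactly the inequality $\nu_{i+j-1}\le \lambda_i+\mu_j$ and its dual, which is the content of Lemma~\ref{lem_LR}. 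So I would cite the horizontal-strip decomposition: writing the LR filling of $\bn/\bk$ by content $\bl$ as a chain $\bk=\bn^{(0)}\subseteq\bn^{(1)}\subseteq\cdots\subseteq\bn^{(s)}=\bn$ where $\bn^{(t)}/\bn^{(t-1)}$ is a horizontal strip of size $l_t$ lying in rows $\ge t$ (the rows-$\ge t$ constraint is precisely the lattice-word condition for the letter $t$). From $\bn^{(s-j)}/\bn^{(s-j-1)}$ being a horizontal strip in rows $\ge s-j$ one reads off, at row $s-i$ with $i<s-j$ i.e.\ $i+j<s$: the box count in row $s-i$ accumulated through step $s-j$ is at least $k_{s-i}+l_{s-j}$ cannot exceed $n^{(s-j)}_{s-i}\le n_{s-i}$ wait, I need to be careful with indices; the correct bookkeeping is that row $s-i-j$ of $\bn$ picks up contributions from all horizontal strips up through content-letter $s-j$, and monotonicity of strip sizes gives the bound. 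I will carry out this index chase explicitly.

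Part (2) is the dual statement and follows from part (1) applied to the conjugate (transpose) partitions, using that $c^\bn_{\bk\bl}=c^{\bn'}_{\bk'\bl'}$: the inequality $n_{i+j-1}\le k_i+l_j$ becomes, after transposing, an inequality of the form in (1). Alternatively, and perhaps more transparently for the paper, I would derive (2) directly from the same horizontal-strip chain $\bk=\bn^{(0)}\subseteq\cdots\subseteq\bn^{(s)}=\bn$: since $\bn^{(t)}/\bn^{(t-1)}$ is a horizontal strip contained in rows $\ge t$, its presence in a fixed row $r$ contributes at most one box per column, and summing up the column-count argument at row $i+j-1$ shows that $n_{i+j-1}$ cannot exceed $k_i$ (the part of $\bn^{(j-1)}=\cdots$ contributed by rows $\le$ something) plus $l_j$ (the at-most-$l_j$ boxes a single horizontal strip puts in one row)—again an index chase. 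The main obstacle, honestly, is purely notational: getting the off-by-one indices right in both directions and making sure the ``horizontal strip in rows $\ge t$'' reformulation of the lattice-word condition is stated correctly; the mathematical content is entirely standard LR-rule bookkeeping and there is no real difficulty beyond careful counting. I would therefore keep the proof short: state the LR chain, state the two strip properties (size $l_t$, supported in rows $\ge t$), and deduce (1) and (2) each in one line of index arithmetic.
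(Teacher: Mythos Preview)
Your plan has the right starting point---invoke the Littlewood--Richardson rule---but the specific mechanism you outline for (1) does not work, and the duality you propose for (2) is the wrong one.

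For (1), you encode the LR filling as a chain $\bk=\bn^{(0)}\subseteq\cdots\subseteq\bn^{(s)}=\bn$ with each $\bn^{(t)}/\bn^{(t-1)}$ a horizontal strip of size $l_t$ supported in rows $\ge t$, and then hope to read off the inequality by index arithmetic. The problem is that this rows-$\ge t$ support condition is strictly \emph{weaker} than the Yamanouchi condition (it is a consequence, not an equivalent reformulation), and (1) genuinely fails for chains satisfying only the weaker condition. Concretely, with $s=2$, $\bk=(2,0)$, $\bl=(1,1)$, the chain $(2,0)\subseteq(2,1)\subseteq(2,2)$ has both strips in row $2\ge t$, yet $n_1=2<3=k_1+l_2$, violating (1) at $(i,j)=(1,0)$; consistently, $c^{(2,2)}_{(2,0),(1,1)}=0$. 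So no amount of bookkeeping with the strip-support data alone will yield (1); you need the full lattice-word inequality. The paper uses it directly: one first checks $n_{s-i-j}-k_{s-i}\ge\#\{(a,b):Y(a,b)=s-i-j,\ a\le s-i\}$ by observing that each such cell lies in a distinct column $b$ with $k_{s-i}<b\le n_{s-i-j}$, and then iterates the Yamanouchi bound $\#\{Y=t,\ \text{row}\le r\}\ge\#\{Y=t+1,\ \text{row}\le r+1\}$ down to $\#\{Y=s-j\}=l_{s-j}$.

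For (2), you suggest passing to conjugate (transposed) partitions. Transposition preserves LR coefficients, but the transposes have length $n_1$ rather than $s$, so inequality (1) for the transposes is indexed against the wrong bound and does not unwind to (2). The duality that works cleanly here is \emph{complementation in a box}: $\Hom_{{\rm U}(s)}\bigl(V_\bn^{(s)},V_\bk^{(s)}\otimes V_\bl^{(s)}\bigr)\simeq\Hom_{{\rm U}(s)}\bigl(V_{\underline{k_1+l_1}_s-\bn^\vee}^{(s)},V_{\underline{k_1}_s-\bk^\vee}^{(s)}\otimes V_{\underline{l_1}_s-\bl^\vee}^{(s)}\bigr)$, after which (1) for the complements reads $(k_1+l_1)-n_{i+j+1}\ge(k_1-k_{i+1})+(l_1-l_{j+1})$, i.e.\ exactly (2). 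This is the paper's two-line argument.
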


\begin{proof}
(1) Suppose $\Hom_{{\rm U}(s)}\bigl(V_\bn^{(s)},V_\bk^{(s)}\otimes V_\bl^{(s)}\bigr)\ne \{0\}$. Then by the Littlewood--Richardson rule,
there exists a~skew semistandard tableau $Y$ of shape $\bn/\bk$, weight $\bl$ with the lattice word condition.
Let $Y(a,b)$ $(1\le a\le s,\, k_a+1\le b\le n_a)$ be the $(a,b)$-entry of $Y$, so that ${\#\{(a,b)\mid Y(a,b)=j\}=l_j}$ holds.
Then by the lattice word condition, we have
\begin{align*}
n_{s-i-j}-k_{s-i}&\ge \#\{(a,b)\mid Y(a,b)=s-i-j,\, a\le s-i\} \\
&\ge \#\{(a,b)\mid Y(a,b)=s-i-j+1,\, a\le s-i+1\} \\
&\ge \#\{(a,b)\mid Y(a,b)=s-i-j+2,\, a\le s-i+2\} \ge\cdots \\
&\ge \#\{(a,b)\mid Y(a,b)=s-j,\, a\le s\}=l_{s-j}.
\end{align*}

(2) Since $\Hom_{{\rm U}(s)}\bigl(V_\bn^{(s)},V_\bk^{(s)}\otimes V_\bl^{(s)}\bigr)\simeq
\Hom_{{\rm U}(s)}\bigl(V_{\underline{k_1+l_1}_s-\bn^\vee}^{(s)},V_{\underline{k_1}_s-\bk^\vee}^{(s)}\otimes V_{\underline{l_1}_s-\bl^\vee}^{(s)}\bigr)$ holds,
by (1) we have $k_1+l_1-n_{i+j+1}\ge (k_1-k_{i+1})+(l_1-l_{j+1})$, that is, $n_{i+j+1}\le k_{i+1}+l_{j+1}$ holds.
\end{proof}

Using this lemma, we give necessary conditions on $\bigl(\tilde{\bk},\bm\bigr)$ for existence of non-zero $K_1$-homomorphisms
from $\cP_{\tilde{\bk}}\bigl(\fp^+_2\bigr)$ to $\cP_\bm(\fp^+)$, when $\bigl(\fp^+,\fp^+_2\bigr)$ are of tube type and $K_1$ is of type $A$.
First we consider the cases $\bigl(\fp^+,\fp^+_2\bigr)=({\rm M}(s,\BC),\Sym(s,\BC))$, $({\rm M}(2s,\BC),\Alt(2s,\BC))$.
Then for $(\bm,\bn)\in\BZ_{++}^s\times\BZ_{++}^s$ or $\BZ_{++}^{2s}\times\BZ_{++}^s$, by~(\ref{explicit_Ktype}), we have
\begin{gather*}
\Hom_{{\rm U}(s)}(\cP_\bn(\Sym(s,\BC)),\cP_\bm({\rm M}(s,\BC)))\simeq\Hom_{{\rm U}(s)}\bigl(V_{2\bn}^{(s)},V_\bm^{(s)}\otimes V_\bm^{(s)}\bigr), \\
\Hom_{{\rm U}(2s)}(\cP_\bn(\Alt(2s,\BC)),\cP_\bm({\rm M}(2s,\BC)))\simeq\Hom_{{\rm U}(2s)}\bigl(V_{\bn^2}^{(2s)},V_\bm^{(2s)}\otimes V_\bm^{(2s)}\bigr).
\end{gather*}
Hence by Lemma~\ref{lem_LR}\,(1),
\begin{gather}
\Hom_{{\rm U}(s)}(\cP_\bn(\Sym(s,\BC)),\cP_\bm({\rm M}(s,\BC)))\ne\{0\} \nonumber \\
\qquad\text{implies} \ m_{s-i}+m_{s-j}\le 2n_{s-i-j}, \label{LR_M-Sym} \\
\Hom_{{\rm U}(2s)}(\cP_\bn(\Alt(2s,\BC)),\cP_\bm({\rm M}(2s,\BC)))\ne\{0\} \nonumber\\
\qquad\text{implies} \  m_{2s-i}+m_{2s-j}\le n_{\lceil (2s-i-j)/2\rceil}. \label{LR_M-Alt}
\end{gather}
Next we consider the cases $\bigl(\fp^+,\fp^+_2\bigr)=(\Sym(2s,\BC),{\rm M}(s,\BC))$, $(\Alt(2s,\BC),{\rm M}(s,\BC))$.
In general, for $\bk,\bl,\bm\in\BZ_{++}^s$, if $s=s'+s''$, $k_{s'+1}=\cdots=k_s=0$ and $l_{s''+1}=\cdots=l_s=0$, then by~\cite[Theorem~9.2.3]{GW}, we have
\[ \dim\Hom_{{\rm U}(s)}\bigl(V_\bm^{(s)},V_\bk^{(s)}\otimes V_\bl^{(s)}\bigr)=\dim\Hom_{{\rm U}(s')\times {\rm U}(s'')}\bigl(V_\bk^{(s')}\boxtimes V_\bl^{(s'')},V_\bm^{(s)}\bigr). \]
Especially, for $(\bm,\bk)\in \BZ_{++}^{2s}\times\BZ_{++}^s$ or $\BZ_{++}^s\times\BZ_{++}^s$, since
\begin{gather*}
\Hom_{{\rm U}(s)\times {\rm U}(s)}(\cP_\bk({\rm M}(s,\BC)),\cP_\bm(\Sym(2s,\BC)))\simeq\Hom_{{\rm U}(s)\times {\rm U}(s)}\bigl(V_\bk^{(s)}\boxtimes V_\bk^{(s)},V_{2\bm}^{(2s)}\bigr), \\
\Hom_{{\rm U}(s)\times {\rm U}(s)}(\cP_\bk({\rm M}(s,\BC)),\cP_\bm(\Alt(2s,\BC)))\simeq\Hom_{{\rm U}(s)\times {\rm U}(s)}\bigl(V_\bk^{(s)}\boxtimes V_\bk^{(s)},V_{\bm^2}^{(2s)}\bigr)
\end{gather*}
hold by~(\ref{explicit_Ktype}), by Lemma~\ref{lem_LR}\,(2),
\begin{gather}
\Hom_{{\rm U}(s)\times {\rm U}(s)}(\cP_\bk({\rm M}(s,\BC)),\cP_\bm(\Sym(2s,\BC)))\ne\{0\}\quad \ \, \text{implies} \  2m_{i+j-1}\le k_i+k_j,  \label{LR_Sym-M}\\
\Hom_{{\rm U}(s)\times {\rm U}(s)}(\cP_\bk({\rm M}(s,\BC)),\cP_\bm(\Alt(2s,\BC)))\ne\{0\}\quad \quad   \text{implies} \  m_{\lfloor(i+j)/2\rfloor}\le k_i+k_j   \label{LR_Alt-M}
\end{gather}
for $i+j\le 2s+1$, where we set $k_{s+1}=\cdots=k_{2s}=0$. Similarly, for the cases
\begin{align*}
\bigl(\fp^+,\fp^+_{11},\fp^+_{22},K_1\bigr)&= \begin{cases}
(\Sym(r,\BC),\Sym(r',\BC),\Sym(r'',\BC),{\rm U}(r')\times {\rm U}(r'')), \\
({\rm M}(r,\BC),{\rm M}(r',\BC),{\rm M}(r'',\BC),{\rm U}(r')\times {\rm U}(r')\times {\rm U}(r'')\times {\rm U}(r'')), \\
(\Alt(2r,\BC),\Alt(2r',\BC),\Alt(2r'',\BC),{\rm U}(2r')\times {\rm U}(2r'')), \end{cases}
\end{align*}
by Lemma~\ref{lem_LR}\,(2), for $\bk\in\BZ_{++}^{r'}$, $\bl\in\BZ_{++}^{r''}$, $\bm\in\BZ_{++}^r$,
\begin{gather}
\Hom_{K_1}\bigl(\cP_\bk\bigl(\fp^+_{11}\bigr)\boxtimes\cP_\bl\bigl(\fp^+_{22}\bigr),\cP_\bm(\fp^+)\bigr) \ne\{0\} \qquad \text{implies} \quad m_{i+j-1}\le k_i+l_j. \label{LR_nonsimple}
\end{gather}

\section{Easy case}\label{section_easycase}

In this section we treat $\fp^+=(\fp^+)^\sigma\oplus(\fp^+)^{-\sigma}=\fp^+_1\oplus\fp^+_2$ such that $\fp^+_2(e)_2=\fp^+(e)_2$ holds for some (or equivalently any)
maximal tripotent $e\in\fp^+_2$, where $\fp^+(e)_2\subset\fp^+$, $\fp^+_2(e)_2\subset\fp^+_2$ are as in~(\ref{Peirce}).
This section contains some overlap with~\cite[Sections 6.1 and~7]{N2}, but for the sake of completeness, we give the results for this case.
Such $\bigl(\fp^+,\fp^+_1,\fp^+_2\bigr)$ are one of
\begin{gather*}
\bigl(\fp^+,\fp^+_1,\fp^+_2\bigr) = \begin{cases}
({\rm M}(q,s;\BC),{\rm M}(q,s';\BC),{\rm M}(q,s'';\BC)) & (\text{Case }1), \\
(\Alt(s,\BC),\Alt(s-1,\BC),{\rm M}(s-1,1;\BC)) & (\text{Case }2), \\
(\Alt(s,\BC),{\rm M}(s-1,1;\BC),\Alt(s-1,\BC)) & (\text{Case }3), \\
\bigl(\BC^{2s},\bigl(\bigl(1,\sqrt{-1}\bigr)\BC\bigr)^s,\bigl(\bigl(1,-\sqrt{-1}\bigr)\BC\bigr)^s\bigr) \\
\quad{}\simeq\bigl(\BC^{2s},{\rm M}(1,s;\BC),{\rm M}(1,s;\BC)\bigr) & (\text{Case }4), \\
\bigl({\rm M}(1,2;\BO)^\BC,\BO^\BC,\BO^\BC\bigr)\simeq\bigl({\rm M}(1,2;\BO)^\BC,\BC^8,\BC^8\bigr) & (\text{Case }5) \end{cases}
\end{gather*}
($s'+s''=s$ for Case 1). Then the corresponding symmetric pairs are
\[ (G,G_1)= \begin{cases}
({\rm SU}(q,s),{\rm S}({\rm U}(q,s')\times {\rm U}(s''))) & (\text{Case 1}), \\
({\rm SO}^*(2s),{\rm SO}^*(2(s-1))\times {\rm SO}(2)) & (\text{Case 2}), \\
({\rm SO}^*(2s),{\rm U}(s-1,1)) & (\text{Case 3}), \\
({\rm SO}_0(2,2s),{\rm U}(1,s)) & (\text{Case 4}), \\
(E_{6(-14)},{\rm U}(1)\times {\rm Spin}_0(2,8)) & (\text{Case 5}). \end{cases} \]
Let $\rank\fp^+=:r$, $\rank\fp^+_2=:r_2$, and let $d$ be the number defined in~(\ref{str_const}), so that
\[ (r,r_2,d)= \begin{cases}
(\min\{q,s\},\min\{q,s''\},2) & (\text{Case }1), \\
(\lfloor s/2\rfloor,1,4) & (\text{Case }2), \\
(\lfloor s/2\rfloor,\lfloor (s-1)/2\rfloor,4) & (\text{Case }3), \\
(2,1,2s-2) & (\text{Case }4), \\
(2,2,6) & (\text{Case }5). \end{cases} \]
For these cases, for $\bk\in\BZ_{++}^{r_2}$, since $\cP_\bk(\fp^+)$ and $\cP_\bk\bigl(\fp^+_2\bigr)$ are generated by $\cP_\bk\bigl(\fp^+\hspace{-1pt}(e)_2\bigr)=\cP_\bk\bigl(\fp^+_2\hspace{-1pt}(e)_2\bigr)$
as $K^\BC$- and $K^\BC_1$-modules respectively, we have $\cP_\bk\bigl(\fp^+_2\bigr)\subset\cP_\bk(\fp^+)$, and hence by Corollary~\ref{cor_FK} and Theorem~\ref{thm_FK}, the following holds.
Here $(\lambda)_{\bk,d}$ is as in~(\ref{Pochhammer}).
\begin{Theorem}
Let $\Re\lambda>p-1$, $\bk\in\BZ_{++}^{r_2}$, and let $f(x_2)\in\cP_\bk\bigl(\fp^+_2\bigr)$. Then we have
\[
\bigl\langle f(x_2),{\rm e}^{(x|\overline{z})_{\fp^+}}\bigr\rangle_{\lambda,x}=\frac{1}{(\lambda)_{\bk,d}}f(z_2), \qquad
\Vert f(x_2)\Vert_{\lambda,x}^2=\frac{1}{(\lambda)_{\bk,d}}\Vert f(z_2)\Vert_{F,z}^2.
\]
\end{Theorem}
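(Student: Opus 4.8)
The plan is to deduce both identities from Faraut--Kor\'anyi's formulas (Corollary~\ref{cor_FK} and Theorem~\ref{thm_FK}) once the containment $\cP_\bk\bigl(\fp^+_2\bigr)\subset\cP_\bk(\fp^+)$ is in hand, where $\bk\in\BZ_{++}^{r_2}$ is viewed inside $\BZ_{++}^r$ by appending zeros. To establish this containment I would fix a maximal tripotent $e\in\fp^+_2$ and set $\fp^{+\prime}:=\fp^+(e)_2$; the defining hypothesis of this section (the \textit{easy case}) is precisely $\fp^+_2(e)_2=\fp^{+\prime}$. Since $\fp^{+\prime}$ is the Peirce-$2$ subalgebra of $e$ both in $\fp^+_2$ and in $\fp^+$, the standard inclusion $\cP_\bm(\fp^+(e)_2)\subset\cP_\bm(\fp^+)$ for Peirce-$2$ subalgebras (the fact used in the discussion following Proposition~\ref{prop_reduction}) gives $\cP_\bk(\fp^{+\prime})\subset\cP_\bk\bigl(\fp^+_2\bigr)$ and $\cP_\bk(\fp^{+\prime})\subset\cP_\bk(\fp^+)$, all as subspaces of $\cP(\fp^+)$. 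Now $\cP_\bk\bigl(\fp^+_2\bigr)$ is irreducible under $K_1^\BC$ by Theorem~\ref{thm_HKS}, hence it is the $K_1^\BC$-span of its nonzero subspace $\cP_\bk(\fp^{+\prime})$; since $K_1^\BC\subset K^\BC$ (and the two actions agree on polynomials depending only on $x_2$, because $K_1^\BC$ preserves $\fp^+_2$), while $\cP_\bk(\fp^+)$ is $K^\BC$-stable, applying $K_1^\BC$ to $\cP_\bk(\fp^{+\prime})\subset\cP_\bk(\fp^+)$ stays inside $\cP_\bk(\fp^+)$. Therefore $\cP_\bk\bigl(\fp^+_2\bigr)\subset\cP_\bk(\fp^+)$.

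Granting this, the first identity is immediate: for $f=f(x_2)\in\cP_\bk\bigl(\fp^+_2\bigr)\subset\cP_\bk(\fp^+)$ and $\Re\lambda>p-1$ (the range in which the weighted Bergman integral converges absolutely and the Faraut--Kor\'anyi evaluation is valid), Corollary~\ref{cor_FK} gives $\bigl\langle f(x),{\rm e}^{(x|\overline z)_{\fp^+}}\bigr\rangle_{\lambda,x}=(\lambda)_{\bk,d}^{-1}f(z)$. Because $f$ depends on $x$ only through its $\fp^+_2$-component, $f(z)=f(z_2)$; and since $k_j=0$ for $j>r_2$, the factor $(\lambda)_{\bk,d}=\prod_{j=1}^r\bigl(\lambda-\frac{d}{2}(j-1)\bigr)_{k_j}$ coincides with $\prod_{j=1}^{r_2}\bigl(\lambda-\frac{d}{2}(j-1)\bigr)_{k_j}$, i.e.\ the $(\lambda)_{\bk,d}$ of the statement. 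This yields $\bigl\langle f(x_2),{\rm e}^{(x|\overline z)_{\fp^+}}\bigr\rangle_{\lambda,x}=(\lambda)_{\bk,d}^{-1}f(z_2)$.

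For the norm identity I would argue as in the proof of Proposition~\ref{prop_Plancherel}\,(1): since ${\rm e}^{(z|\overline x)_{\fp^+}}$ is the reproducing kernel of the Fischer inner product $\langle\cdot,\cdot\rangle_{F,\fp^+}$, one has
\[
\langle f,f\rangle_{\lambda}=\bigl\langle\bigl\langle f(x),{\rm e}^{(x|\overline z)_{\fp^+}}\bigr\rangle_{\lambda,x},\,f(z)\bigr\rangle_{F,z}
\]
for every $f\in\cP(\fp^+)$, and substituting the identity just proved gives $\Vert f(x_2)\Vert_{\lambda,x}^2=(\lambda)_{\bk,d}^{-1}\Vert f(z_2)\Vert_{F,z}^2$. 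Equivalently, this is Theorem~\ref{thm_FK} applied with $g=f\in\cP_\bk(\fp^+)$.

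The only step carrying real content is the containment $\cP_\bk\bigl(\fp^+_2\bigr)\subset\cP_\bk(\fp^+)$ with matching index, and this is exactly where the structural hypothesis $\fp^+_2(e)_2=\fp^+(e)_2$ that singles out the easy cases is used; I expect it to be the main, if mild, obstacle. Once it is available, both formulas are a direct consequence of Faraut--Kor\'anyi's computation together with the elementary observation that $f(x_2)$, read as an element of $\cP(\fp^+)$, sees only the $\fp^+_2$-variable.
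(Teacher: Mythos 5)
Your proposal is correct and follows essentially the same route as the paper: the paper's proof is exactly the observation that, under the easy-case hypothesis $\fp^+_2(e)_2=\fp^+(e)_2$, both $\cP_\bk(\fp^+)$ and $\cP_\bk\bigl(\fp^+_2\bigr)$ are generated by $\cP_\bk\bigl(\fp^+(e)_2\bigr)$ as $K^\BC$- resp.\ $K_1^\BC$-modules, giving $\cP_\bk\bigl(\fp^+_2\bigr)\subset\cP_\bk(\fp^+)$, after which Corollary~\ref{cor_FK} and Theorem~\ref{thm_FK} yield both formulas. Your use of irreducibility of $\cP_\bk\bigl(\fp^+_2\bigr)$ under $K_1^\BC$ is just a mild rephrasing of the paper's ``generated as a~$K_1^\BC$-module'' step, so there is no substantive difference.
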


Next we consider the decomposition of the holomorphic discrete series representation of scalar type $\cH_\lambda(D)$ under the subgroup $\widetilde{G}_1\subset\widetilde{G}$.
By Theorem~\ref{thm_HKKS}, we have
\[
\cH_\lambda(D)|_{\widetilde{G}_1}\simeq\hsum_{\bk\in\BZ_{++}^{r_2}}\cH_{\lambda}\bigl(D_1,\cP_\bk\bigl(\fp^+_2\bigr)\bigr),
\]
where
\begin{align*}
&\cH_{\lambda}(D_1,\cP_\bk\bigl(\fp^+_2\bigr))
\simeq\begin{cases}
\cH_{\lambda_1+\lambda_2}\bigl(D_{{\rm U}(q,s')},V_\bk^{(q)\vee}\boxtimes\BC\bigr)\boxtimes V_{(\lambda_2,\dots,\lambda_2)+\bk}^{(s'')} & (\text{Case }1), \vspace{1mm}\\
\cH_{\lambda}\bigl(D_{{\rm SO}^*(2(s-1))},V_{(k,0,\dots,0)}^{(s-1)\vee}\bigr)\boxtimes \chi_{{\rm SO}(2)}^{-\lambda-2k} & (\text{Case }2), \vspace{1mm}\\
\cH_{\frac{\lambda}{2}+\frac{\lambda}{2}}\bigl(D_{{\rm U}(s-1,1)},V_{\bk^2}^{(s-1)\vee}\boxtimes \BC\bigr) & (\text{Case }3), \vspace{1mm}\\
\cH_{(\lambda+k)+0}\bigl(D_{{\rm U}(1,s)},\BC\boxtimes V_{(0,\dots,0,-k)}^{(s)}\bigr) & (\text{Case }4), \vspace{1mm}\\
\cH_{\lambda+\frac{|\bk|}{2}}\bigl(D_{{\rm Spin}_0(2,8)},V_{\!\!\left(\frac{k_1+k_2}{2},\frac{k_1-k_2}{2},\frac{k_1-k_2}{2},-\frac{k_1-k_2}{2}\right)}^{[8]\vee}\bigr)
\!\boxtimes\!\chi_{{\rm U}(1)}^{-\lambda-\frac{3}{2}|\bk|}\!\! & (\text{Case }5),
\end{cases}
\end{align*}
where we set $\lambda=\lambda_1+\lambda_2$ for Case 1.
For each $\bk\in\BZ_{++}^{r_2}$, let $V_\bk$ be an abstract $K_1$-module isomorphic to $\cP_\bk\bigl(\fp^+_2\bigr)$,
and take non-zero $K_1$-invariant vector-valued polynomials ${\rK_\bk^\vee(z_2)\in\cP(\fp^-_2,V_\bk)^{K_1}}$ and $\rK_\bk(x_2)\in\cP\bigl(\fp^+_2,\Hom_\BC(V_\bk,\BC)\bigr)^{K_1}$.
Such polynomials are unique up to constant multiple. Then by~\cite[Section 5.1]{N},~\cite[Section 5]{KP2}, the normal derivative and the multiplication operators
\begin{alignat*}{3}
&\cF_{\bk}^\downarrow\colon \ \cO_\lambda(D)|_{\widetilde{G}_1}\longrightarrow \cO_{\lambda}(D_1,V_\bk),
\qquad && (\cF_{\bk}^\downarrow f)(x_1):=\rK_\bk^\vee\left(\frac{\partial}{\partial x_2}\right)f(x)\biggr|_{x_2=0},& \\
&\cF_{\bk}^\uparrow\colon \ \cO_{\lambda}(D_1,V_\bk)\longrightarrow \cO_\lambda(D)|_{\widetilde{G}_1},
\qquad && (\cF_{\bk}^\uparrow g)(x):=\rK_\bk(x_2)g(x_1)&
\end{alignat*}
give the $\widetilde{G}_1$-intertwining operators (symmetry breaking operator, holographic operator) for all~$\lambda\in\BC$.
Let $\Vert\cdot\Vert_{\lambda,\bk}$ be the $\widetilde{G}_1$-invariant norm on $\cH_{\lambda}(D_1,V_\bk)$
normalized such that $\Vert v\Vert_{\lambda,\bk}=|v|_{V_\bk}$ holds for all constant functions $v\in V_\bk$.
Then by Proposition~\ref{prop_Plancherel}\,(3), the following Parseval--Plancherel-type formula holds.
\begin{Corollary}\label{cor_Plancherel_easycase}
Let $\lambda>p-1$. For each $\bk\in\BZ_{++}^{r_2}$, we take a~vector-valued polynomial ${\rK_\bk^\vee(z_2)\in\cP(\fp^-_2,V_\bk)^{K_1}}$ normalized such that
\[
\big| \bigl\langle f(x_2),\overline{\rK_{\bk}^\vee(\overline{x_2})}\bigr\rangle_{F,\fp^+} \big|_{V_{\bk}}^2
=\Vert f\Vert_{F,\fp^+}^2, \qquad f(x_2)\in\cP_{\bk}\bigl(\fp^+_2\bigr).
\]
Then for $f\in\cH_\lambda(D)$, we have
\[
\Vert f\Vert_{\lambda}^2=\sum_{\bk\in\BZ_{++}^{r_2}}\frac{1}{(\lambda)_{\bk,d}}
\biggl\Vert \rK_\bk^\vee\left(\frac{\partial}{\partial x_2}\right)f(x)\biggr|_{x_2=0}\biggr\Vert_{\lambda,\bk,x_1}^2.
\]
\end{Corollary}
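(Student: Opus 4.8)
The plan is to deduce the corollary from Proposition~\ref{prop_Plancherel}(3) by supplying two pieces of data: the explicit value of the constant $C_{\fp^+,\fp^+_2}(\lambda,\bk)$, and the identification of the symmetry breaking operator $\cF_{\lambda,\bk}^\downarrow$ with the normal-derivative operator $f\mapsto\rK_\bk^\vee\bigl(\frac{\partial}{\partial x_2}\bigr)f(x)\big|_{x_2=0}$. Note first that in every one of Cases 1--5 the triple system $\fp^+_2$ is simple, so $\tilde\bk=\bk\in\BZ_{++}^{r_2}$; moreover $\varepsilon_1=1$, which one reads off from Theorem~\ref{thm_HKKS}(1), the branching being $\hsum_\bk\cH_\lambda(D_1,\cP_\bk(\fp^+_2))$ with the parameter $\lambda$ itself. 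With these conventions Proposition~\ref{prop_Plancherel}(3) reads $\Vert f\Vert_\lambda^2=\sum_{\bk}C_{\fp^+,\fp^+_2}(\lambda,\bk)\Vert\cF_{\lambda,\bk}^\downarrow f\Vert_{\lambda,\bk,\fp^+_1}^2$, and it remains to evaluate the two quantities.

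First I would compute the constant. Because $\fp^+_2(e)_2=\fp^+(e)_2$ for a maximal tripotent $e\in\fp^+_2$, the module $\cP_\bk(\fp^+_2)$ is generated over $K_1^\BC$ by $\cP_\bk(\fp^+(e)_2)$, which in turn generates $\cP_\bk(\fp^+)$ over $K^\BC$; hence $\cP_\bk(\fp^+_2)\subset\cP_\bk(\fp^+)$. Therefore Corollary~\ref{cor_FK} applies to $f(x_2)\in\cP_\bk(\fp^+_2)$ and gives $\langle f(x_2),{\rm e}^{(x|\overline z)_{\fp^+}}\rangle_{\lambda,x}=\frac1{(\lambda)_{\bk,d}}f(z_2)$ (this is the Theorem stated just above the corollary). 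Setting $z_1=0$ and comparing with the defining relation~\eqref{const_Plancherel} of $C_{\fp^+,\fp^+_2}(\lambda,\bk)$ yields $C_{\fp^+,\fp^+_2}(\lambda,\bk)=\frac1{(\lambda)_{\bk,d}}$.

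Next I would identify $\cF_{\lambda,\bk}^\downarrow$ with the normal derivative. One route is to compute the symbol directly: $F_{\lambda,\bk}^\downarrow(z)$ of~\eqref{SBO1} equals, up to the scalar $(\lambda)_{\bk,d}$, the inner product $\langle{\rm e}^{(x|z)_{\fp^+}},\rK_\bk(x_2)\rangle_{\lambda,x}$, and the (conjugate of the) reproducing identity of Corollary~\ref{cor_FK} evaluates this to $\frac1{(\lambda)_{\bk,d}}\rK_\bk^\vee(z_2)$, so that $F_{\lambda,\bk}^\downarrow(z)=\rK_\bk^\vee(z_2)$ is independent of $\lambda$ and $\cF_{\lambda,\bk}^\downarrow f(x_1)=\rK_\bk^\vee\bigl(\frac{\partial}{\partial x_2}\bigr)f(x)\big|_{x_2=0}$. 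Alternatively, one may invoke that the normal derivative $\cF_\bk^\downarrow$ is already a symmetry breaking operator for all $\lambda$ (as recalled in Section~\ref{section_easycase}) and that $\cH_\lambda(D)|_{\widetilde G_1}$ is multiplicity-free by~\cite[Theorem~A]{Kmf1}, so $\cF_\bk^\downarrow$ and $\cF_{\lambda,\bk}^\downarrow$ are proportional; the scalar is then fixed by the computation in the proof of Proposition~\ref{prop_Plancherel}(2), which shows that both send $f(x_2)\in\cP_\bk(\fp^+_2)$ to $\langle f(x_2),\rK_\bk(x_2)\rangle_{F,\fp^+}$, i.e.\ to $\rK_\bk^\vee\bigl(\frac{\partial}{\partial x_2}\bigr)f(x)\big|_{x_2=0}$ by the Fischer reproducing formula~\eqref{Fischer_inner_prod}. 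In either case $\Vert\cF_{\lambda,\bk}^\downarrow f\Vert_{\lambda,\bk,\fp^+_1}^2=\bigl\Vert\rK_\bk^\vee\bigl(\frac{\partial}{\partial x_2}\bigr)f(x)\big|_{x_2=0}\bigr\Vert_{\lambda,\bk,x_1}^2$, and substituting this together with $C_{\fp^+,\fp^+_2}(\lambda,\bk)=\frac1{(\lambda)_{\bk,d}}$ into the displayed form of Proposition~\ref{prop_Plancherel}(3) gives the assertion.

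The part needing most care is this last step: reconciling the integral-theoretic operator $\cF_{\lambda,\bk}^\downarrow$ of Proposition~\ref{prop_Plancherel} with the constant-coefficient differential operator $\cF_\bk^\downarrow$, including the natural identification $\rK_\bk(x_2)=\overline{\rK_\bk^\vee(\overline{x_2})}$ between the upstream and downstream kernel polynomials (and between $V_\bk$-valued and $\overline{V_\bk}$-valued polynomials), the fact that the normalization imposed on $\rK_\bk^\vee$ in the corollary is exactly the one imposed on $\rK_\bk$ in Proposition~\ref{prop_Plancherel}(2), and the convention that $\frac{\partial}{\partial x_2}$ is normalized with respect to $(\cdot|\cdot)_{\fp^+}$ (equivalently, keeping track of the factor $\varepsilon_2$). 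Everything else is a direct substitution of Corollary~\ref{cor_FK} and Proposition~\ref{prop_Plancherel}.
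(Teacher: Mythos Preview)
Your proposal is correct and follows exactly the approach the paper takes: the paper simply records that the Theorem preceding the corollary gives $C_{\fp^+,\fp^+_2}(\lambda,\bk)=\frac{1}{(\lambda)_{\bk,d}}$, identifies $\cF_{\lambda,\bk}^\downarrow$ with the normal-derivative operator $\cF_\bk^\downarrow$ (citing \cite[Section~5.1]{N} and \cite[Section~5]{KP2}), and then invokes Proposition~\ref{prop_Plancherel}(3). You have supplied more detail than the paper's one-line justification, but the logical structure is the same.
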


Also, by Proposition~\ref{prop_poles}, we have the following.
\begin{Corollary}\label{cor_submodule_easycase}
Let $\bk\in\BZ_{++}^{r_2}$, and set $k_{r_2+1}=\cdots=k_r:=0$. Then for $a=1,2,\dots,r$,
\[
{\rm d}\tau_\lambda(\cU(\fg_1))\cP_\bk\bigl(\fp^+_2\bigr)\subset M_a^\fg(\lambda)
\]
holds if and only if
\[
\lambda\in \frac{d}{2}(a-1)-k_a-\BZ_{\ge 0}.
\]
Especially, for $a=0,1,\dots,r-1$, we have
\[
\cH_{\frac{d}{2}a}(D)|_{\widetilde{G}_1}\simeq\hsum_{\substack{\bk\in\BZ_{++}^{r_2}\\ k_{a+1}=0}}\cH_{\frac{d}{2}a}\bigl(D_1,\cP_\bk\bigl(\fp^+_2\bigr)\bigr).
\]
\end{Corollary}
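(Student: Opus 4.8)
The plan is to read this off Proposition~\ref{prop_poles}, applied with $\tilde{\bk}=\bk$ and with $\bl$ taken to be $\bk$ itself, regarded as an element of $\BZ_{++}^r$ via the convention $k_{r_2+1}=\cdots=k_r=0$ already present in the statement; with this convention $(\lambda)_{\bk,d}=\prod_{j=1}^{r}\bigl(\lambda-\frac{d}{2}(j-1)\bigr)_{k_j}$ as in~(\ref{Pochhammer}), so the symbol $(\lambda)_{\bk,d}$ occurring in the Theorem at the beginning of this section is exactly $(\lambda)_{\bl,d}$ for this $\bl$. By that Theorem, $\bigl\langle f(x_2),{\rm e}^{(x|\overline z)_{\fp^+}}\bigr\rangle_{\lambda,x}=\frac{1}{(\lambda)_{\bk,d}}f(z_2)$ for $f(x_2)\in\cP_\bk(\fp^+_2)$, hence $(\lambda)_{\bl,d}\bigl\langle f(x_2),{\rm e}^{(x|\overline z)_{\fp^+}}\bigr\rangle_{\lambda,x}=f(z_2)$ is holomorphic (indeed constant) in $\lambda$. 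Proposition~\ref{prop_poles}(1) with this $\bl$ then gives directly: for each $a=1,\dots,r$, if $\lambda\in\frac{d}{2}(a-1)-k_a-\BZ_{\ge 0}$ then ${\rm d}\tau_\lambda(\cU(\fg_1))\cP_\bk(\fp^+_2)\subset M_a^\fg(\lambda)$. (Recall $\varepsilon_1=1$ in all of Cases 1--5, so no parameters are shifted.)

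For the converse I would argue by hand, exploiting that $\cP_\bk(\fp^+_2)$ lies inside the single $K^\BC$-isotypic component $\cP_\bk(\fp^+)$ (already noted in this section). Suppose ${\rm d}\tau_\lambda(\cU(\fg_1))\cP_\bk(\fp^+_2)\subset M_a^\fg(\lambda)$. Since $M_a^\fg(\lambda)$ is a $(\fg,\widetilde K)$-submodule it is nonzero and $\fg$-stable, which by~(\ref{submodule}) requires $\lambda\in\frac{d}{2}(a-1)-\BZ_{\ge 0}$. Further $\cP_\bk(\fp^+_2)\subset M_a^\fg(\lambda)=\bigoplus_{\bm:\, m_a\le\frac{d}{2}(a-1)-\lambda}\chi^{-\lambda}\otimes\cP_\bm(\fp^+)$, a direct sum of pairwise non-isomorphic irreducible $K^\BC$-modules; as $\cP_\bk(\fp^+_2)$ is a nonzero $K_1^\BC$-submodule of the single summand $\chi^{-\lambda}\otimes\cP_\bk(\fp^+)$, this forces $\chi^{-\lambda}\otimes\cP_\bk(\fp^+)\subset M_a^\fg(\lambda)$, i.e.\ $k_a\le\frac{d}{2}(a-1)-\lambda$. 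Together with $\lambda\in\frac{d}{2}(a-1)-\BZ_{\ge 0}$ and $k_a\in\BZ_{\ge 0}$ this is precisely $\lambda\in\frac{d}{2}(a-1)-k_a-\BZ_{\ge 0}$; running the chain backwards also reproves the ``if'' direction.

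For the final displayed isomorphism, fix $a\in\{0,\dots,r-1\}$ and $\lambda=\frac{d}{2}a$, a Wallach-set value~(\ref{Wallach_set}), so $\cH_{\frac{d}{2}a}(D)$ exists and $\cH_{\frac{d}{2}a}(D)_{\widetilde K}=M_{a+1}^\fg(\frac{d}{2}a)=\bigoplus_{\bm:\, m_{a+1}=0}\chi^{-\frac{d}{2}a}\otimes\cP_\bm(\fp^+)$. By~\cite[Theorem A]{Kmf1}, $\cH_{\frac{d}{2}a}(D)|_{\widetilde G_1}$ is a multiplicity-free Hilbert direct sum of irreducible highest weight modules of $\widetilde G_1$, each containing its (irreducible) minimal $\widetilde K_1$-type among the $\fp^+_1$-null vectors $(\cH_{\frac{d}{2}a}(D)_{\widetilde K})^{\fp^+_1}=\bigl(\chi_1^{-\frac{d}{2}a}\otimes\cP(\fp^+_2)\bigr)\cap M_{a+1}^\fg(\frac{d}{2}a)$ (here $\varepsilon_1=1$). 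By the isotypic argument above, $\cP_\bk(\fp^+_2)\subset M_{a+1}^\fg(\frac{d}{2}a)$ if and only if $k_{a+1}\le 0$, i.e.\ $k_{a+1}=0$ (note $\frac{d}{2}((a+1)-1)-\frac{d}{2}a=0$), so $(\cH_{\frac{d}{2}a}(D)_{\widetilde K})^{\fp^+_1}=\bigoplus_{\bk:\, k_{a+1}=0}\chi^{-\frac{d}{2}a}\otimes\cP_\bk(\fp^+_2)$, and by Theorem~\ref{thm_HKS} these $\widetilde K_1$-isotypes are distinct. Since an irreducible highest weight module occupies exactly one $\widetilde K_1$-isotype with its $\fp^+_1$-null vectors, the summands of $\cH_{\frac{d}{2}a}(D)|_{\widetilde G_1}$ are in bijection with $\{\bk\in\BZ_{++}^{r_2}:k_{a+1}=0\}$; for each such $\bk$, Proposition~\ref{prop_poles}(2) applied with $\bl=\bk$ (its hypotheses $l_{a+1}=k_{a+1}=0$ and $C_{\fp^+,\fp^+_2}(\frac{d}{2}a,\bk)=\frac{1}{(\frac{d}{2}a)_{\bk,d}}\ne 0$ hold, the last because, once $k_{a+1}=\cdots=k_r=0$, this Pochhammer symbol is a product of strictly positive factors) identifies the corresponding summand as $\cH_{\frac{d}{2}a}(D_1,\cP_\bk(\fp^+_2))$, which gives the claim. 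There is no serious obstacle here: the representation-theoretic content is entirely in Proposition~\ref{prop_poles} and~\cite[Theorem A]{Kmf1}, and the only points needing care are the bookkeeping of the padding convention $k_{r_2+1}=\cdots=k_r=0$, the fact that $M_a^\fg(\lambda)$ is a genuine submodule only for $\lambda\in\frac{d}{2}(a-1)-\BZ_{\ge 0}$ (so the ``only if'' must extract that constraint), and the easy verification that $\varepsilon_1=1$ in the five cases.
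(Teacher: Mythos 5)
Your proposal is correct and takes essentially the same route the paper intends: the corollary is stated there as an immediate consequence of Proposition~\ref{prop_poles} combined with the Theorem of Section~\ref{section_easycase} (equivalently, the containment $\cP_\bk\bigl(\fp^+_2\bigr)\subset\cP_\bk(\fp^+)$ and Corollary~\ref{cor_FK}), and that is exactly what you use. Your filling-in of the ``only if'' via the single-$\widetilde{K}$-type argument, and of the decomposition at $\lambda=\frac{d}{2}a$ via multiplicity-freeness, the $\fp^+_1$-null-vector criterion and Proposition~\ref{prop_poles}\,(2) (with the nonvanishing of $\bigl(\frac{d}{2}a\bigr)_{\bk,d}$ when $k_{a+1}=0$), is accurate, including the observation that $\varepsilon_1=1$ in all five cases.
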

Here $M_a^\fg(\lambda)\!\subset\!\cO_\lambda(D)_{\widetilde{K}}$ is the $\bigl(\fg,\widetilde{K}\bigr)$-submodule given in~(\ref{submodule}),
so that $\cH_{\frac{d}{2}a} (D)_{\widetilde{K}}\!\simeq\! M_{a+1}^\fg\hspace{-1pt}\bigl(\frac{d}{2}a\bigr)$ holds for $a=0,1,\dots,r-1$.
The decomposition of $\cH_{\frac{d}{2}a}(D)$ for $a=1$ is earlier given in~\cite{MO2},
and we can also prove those for $a=2,\dots,r-1$ for Cases 1--3 by using the seesaw dual pair theory (see, e.g.,~\cite[Section 3]{HTW},~\cite{Ku}) as in~\cite{MO2}.

\section[Case $\fp^+_2$ is non-simple]{Case $\boldsymbol{\fp^+_2}$ is non-simple}\label{section_nonsimple}

In this section we treat $\fp^+=(\fp^+)^\sigma\oplus(\fp^+)^{-\sigma}=\fp^+_1\oplus\fp^+_2$ such that $\fp^+_2$ is non-simple,
and write $\fp^+_2=:\fp^+_{11}\oplus\fp^+_{22}$, $\fp^+_1=:\fp^+_{12}$, so that
\begin{align*}
&\bigl(\fp^+,\fp^+_{11},\fp^+_{12},\fp^+_{22}\bigr) \\
&= \begin{cases}
(\BC^{d+2},\BC,\BC^{d},\BC) & (\text{Case }1), \\
(\Sym(r,\BC),\Sym(r',\BC),{\rm M}(r',r'';\BC),\Sym(r'',\BC)) & (\text{Case }2), \\
({\rm M}(q,s;\BC),{\rm M}(q',s';\BC),{\rm M}(q',s'';\BC)\oplus {\rm M}(q'',s';\BC),{\rm M}(q'',s'';\BC)) & (\text{Case }3), \\
(\Alt(s,\BC),\Alt(s',\BC),{\rm M}(s',s'';\BC),\Alt(s'',\BC)) & (\text{Case }4), \\
(\Herm(3,\BO)^\BC,\BC,{\rm M}(1,2;\BO)^\BC,\Herm(2,\BO)^\BC) & (\text{Case }5), \\
({\rm M}(1,2;\BO)^\BC,\BC,\Alt(5,\BC),{\rm M}(1,5;\BC)) & (\text{Case }6) \end{cases}
\end{align*}
with $r=r'+r''$, $q=q'+q''$, $s=s'+s''$. Then the corresponding symmetric pairs are
\[ (G,G_1)= \begin{cases}
({\rm SO}_0(2,d+2),{\rm SO}_0(2,d)\times {\rm SO}(2)) & (\text{Case }1), \\
({\rm Sp}(r,\BR),{\rm U}(r',r'')) & (\text{Case }2), \\
({\rm SU}(q,s),{\rm S}({\rm U}(q',s'')\times {\rm U}(q'',s'))) & (\text{Case }3), \\
({\rm SO}^*(2s),{\rm U}(s',s'')) & (\text{Case }4), \\
(E_{7(-25)},{\rm U}(1)\times E_{6(-14)}) & (\text{Case }5), \\
(E_{6(-14)},{\rm U}(1)\times {\rm SO}^*(10)) & (\text{Case }6) \end{cases} \]
(up to covering).
Let $\dim\fp^+=:n$, $\dim\fp^+_{11}=:n'$, $\dim\fp^+_{22}=:n''$, $\rank\fp^+=:r$, $\rank\fp^+_{11}=:r'$, $\rank\fp^+_{22}=:r''$,
and let $d$, $d'$, $d''$ be the numbers defined in~(\ref{str_const}) for $\fp^+$, $\fp^+_{11}$, $\fp^+_{22}$, respectively.
Then the numbers $(r,r',r'',d)$ are given by
\[ (r,r',r'',d)=\begin{cases}
(2,1,1,d) & (\text{Case }1), \\
(r,r',r'',1) & (\text{Case }2), \\
(\min\{q,s\},\min\{q',s'\},\min\{q'',s''\},2) & (\text{Case }3), \\
(\lfloor s/2\rfloor, \lfloor s'/2\rfloor, \lfloor s''/2\rfloor,4) & (\text{Case }4), \\
(3,1,2,8) & (\text{Case }5), \\
(2,1,1,6) & (\text{Case }6), \end{cases} \]
and we have $d=d'=d''$ if $r',r''\ne 1$. Even when $r'$ or $r''=1$, since $d'$, $d''$ are not determined uniquely and any numbers are allowed,
we may assume $d=d'=d''$.

\subsection{Results on weighted Bergman inner products}

First, for $f(x_{11})\in\cP_\bk\bigl(\fp^+_{11}\bigr)$, $g(x_{22})\in\cP_\bl\bigl(\fp^+_{22}\bigr)$,
we want to compute the top terms and the poles of $\bigl\langle f(x_{11})g(x_{22}),{\rm e}^{(x|\overline{z})_{\fp^+}}\bigr\rangle_{\lambda,x}$,
where $\langle\cdot,\cdot\rangle_{\lambda}=\langle\cdot,\cdot\rangle_{\lambda,x}$ is the weighted Bergman inner product given in~(\ref{Bergman_inner_prod}),
with the variable of integration $x=x_{11}+x_{12}+x_{22}$.

\begin{Theorem}\label{thm_topterm_nonsimple}
Let $\Re\lambda>p-1$, $\bk\in\BZ_{++}^{r'}$, $\bl\in\BZ_{++}^{r''}$, and let $f(x_{11})\in\cP_\bk\bigl(\fp^+_{11}\bigr)$, $g(x_{22})\in\cP_\bl\bigl(\fp^+_{22}\bigr)$. Then we have
\[
\bigl\langle f(x_{11})g(x_{22}),{\rm e}^{(x|\overline{z})_{\fp^+}}\bigr\rangle_{\lambda,x}\big|_{z_{12}=0}=C_0^d(\lambda,\bk,\bl)f(z_{11})g(z_{22}),
\]
where
\begin{align}
C_0^d(\lambda,\bk,\bl)&=\frac{\prod_{i=1}^{r'}\prod_{j=1}^{r''}\left(\lambda-\frac{d}{2}(i+j-1)\right)_{k_i+l_j}}
{\prod_{i=1}^{r'+1}\prod_{j=1}^{r''+1}\left(\lambda-\frac{d}{2}(i+j-2)\right)_{k_i+l_j}} \notag\\
&=\frac{\prod_{a=2}^r\prod_{i=\max\{1,a-r''\}}^{\min\{a-1,r'\}}\left(\lambda-\frac{d}{2}(a-1)\right)_{k_i+l_{a-i}}}
{\prod_{a=1}^r\prod_{i=\max\{1,a-r''\}}^{\min\{a,r'+1\}}\left(\lambda-\frac{d}{2}(a-1)\right)_{k_i+l_{a+1-i}}}. \label{const0}
\end{align}
Here we put $k_{r'+1}=l_{r''+1}:=0$.
\end{Theorem}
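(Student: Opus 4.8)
The plan is to prove the identity by induction, after reducing to the tube-type situation, by repeatedly peeling off determinant factors via the Rodrigues-type formula~(\ref{key_nonsimple1}).

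First I would apply Proposition~\ref{prop_reduction}: choosing a common maximal tripotent $e=e_{11}+e_{22}$ of $\fp^+_2$, with $e_{11}$, $e_{22}$ maximal in $\fp^+_{11}$, $\fp^+_{22}$, and passing to $\fp^{+\prime}:=\fp^+(e)_2$, the inner product is unchanged, and now $\fp^+$, $\fp^+_{11}$, $\fp^+_{22}$ are all of tube type. In this situation $d=d'=d''$ (recorded before the theorem), so a dimension count — a Peirce $2$-space of a Jordan triple system carries the same parameter $d$ — gives $\fp^+_{11}=\fp^+(e_{11})_2$, $\fp^+_{22}=\fp^+(e_{11})_0$, $r=r'+r''$, $\frac{n}{r}=\frac{d}{2}(r-1)+1$, and hence (by the same factorization of the generic norm along a Peirce decomposition that underlies $\det_{\fn^+_2}(x_2)=\det_{\fn^+_{11}}(x_{11})\det_{\fn^+_{22}}(x_{22})$) $\det_{\fn^+}(z)\big|_{z_{12}=0}=\det_{\fn^+_{11}}(z_{11})\det_{\fn^+_{22}}(z_{22})$. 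Since the existence of a scalar $C_0^d(\lambda,\bk,\bl)$ satisfying the displayed identity is already guaranteed (by $K_1$-equivariance and the multiplicity-free decomposition of Theorem~\ref{thm_HKS}), it is enough to evaluate it on the highest-weight vectors $f=\Delta^{\fn^+_{11}}_\bk$, $g=\Delta^{\fn^+_{22}}_\bl$, i.e., to compute $\bigl\langle\Delta^{\fn^+_{11}}_\bk(x_{11})\Delta^{\fn^+_{22}}_\bl(x_{22}),{\rm e}^{(x|\overline z)_{\fp^+}}\bigr\rangle_{\lambda,x}\big|_{z_{12}=0}$.

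The induction is on $r'$, with base case $\bk=0$ (equivalently, after the trailing-zero reduction below, $r'=0$): then $f$ is constant, and Proposition~\ref{prop_reduction} applied to the tripotent $e_{22}$ together with Corollary~\ref{cor_FK} gives $C_0^d(\lambda,0,\bl)=1/(\lambda)_{\bl,d}$, which matches~(\ref{const0}); by the symmetry of the statement in $(\bk,r')\leftrightarrow(\bl,r'')$ one may equally well induct on $r''$. For the inductive step with $k_{r'}>0$ I would apply~(\ref{key_nonsimple1}), which expresses the inner product at $(\lambda,\bk,\bl)$ through $\det_{\fn^+_{22}}(\partial/\partial z_{22})^{k_{r'}}$ acting on $\det_{\fn^+}(z)^{\lambda+k_{r'}-\frac{n}{r}}$ times the inner product at $(\lambda+k_{r'},\bk-\underline{k_{r'}}_{r'},\bl)$; since that operator touches only $z_{22}$, it commutes with the substitution $z_{12}=0$, where $\det_{\fn^+}(z)$ factors as above. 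Now the $\fp^+_{11}$-factor of the reduced vector, carrying a partition with last entry $0$, lives on a rank-$(r'-1)$ Peirce subalgebra of $\fp^+_{11}$, so — after a further use of Proposition~\ref{prop_reduction} and the immediate invariance of~(\ref{const0}) under dropping a trailing zero of $\bk$ — the inductive hypothesis applies at $(\lambda+k_{r'},\bk-\underline{k_{r'}}_{r'},\bl)$; combining this with the differentiation formula~(\ref{formula_diff}) on $\fp^+_{22}$ (with $r$ replaced by $r''$) applied to $\det_{\fn^+_{22}}(z_{22})^{\lambda+k_{r'}-\frac{n}{r}}\Delta^{\fn^+_{22}}_\bl(z_{22})$, and with $\det_{\fn^+_{11}}(z_{11})^{k_{r'}}\Delta^{\fn^+_{11}}_{\bk-\underline{k_{r'}}_{r'}}(z_{11})=\Delta^{\fn^+_{11}}_\bk(z_{11})$, all determinant powers in $z_{11}$ and $z_{22}$ cancel and one is left with the recursion
\[
C_0^d(\lambda,\bk,\bl)=\frac{\prod_{j=1}^{r''}\bigl(\lambda-\frac{d}{2}(r'+j-1)+l_j\bigr)_{k_{r'}}}{\prod_{i=1}^{r}\bigl(\lambda-\frac{d}{2}(i-1)\bigr)_{k_{r'}}}\;C_0^d\bigl(\lambda+k_{r'},\bk-\underline{k_{r'}}_{r'},\bl\bigr),
\]
where one uses $\frac{n}{r}=\frac{d}{2}(r-1)+1$ to rewrite $\lambda+l_j+\frac{d}{2}(r''-j)+1-\frac{n}{r}$ as $\lambda-\frac{d}{2}(r'+j-1)+l_j$.

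It then remains to verify that the product in~(\ref{const0}) satisfies this recursion and the base case, which is a direct manipulation of Pochhammer symbols via $(\lambda+k)_{m-k}=(\lambda)_m/(\lambda)_k$; the second form of $C_0^d(\lambda,\bk,\bl)$ in~(\ref{const0}) is the first after regrouping the factors according to $a=i+j$ and needs no separate argument. I expect the main obstacle to be the bookkeeping in the inductive step — commuting $z_{12}=0$ past the Bessel-type operator, confirming via Proposition~\ref{prop_reduction} (and the Peirce identification $\fp^+_{11}=\fp^+(e_{11})_2$, which relies on $d=d'=d''$) that the reduced quantity is genuinely the same inner product on a rank-$(r-1)$ algebra, and handling the trailing-zero stability of the formula so that the induction terminates correctly — whereas the concluding Pochhammer identity, though lengthy, is routine.
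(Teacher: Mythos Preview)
Your proposal is correct and follows essentially the same route as the paper's proof: reduction to tube type via Proposition~\ref{prop_reduction}, specialization to the highest-weight vectors $\Delta^{\fn^+_{11}}_\bk\Delta^{\fn^+_{22}}_\bl$ by $K_1$-equivariance, induction on $r'$ with base case handled by Corollary~\ref{cor_FK}, inductive step via the Rodrigues-type identity~(\ref{key_nonsimple1}) together with~(\ref{formula_diff}), and a concluding Pochhammer manipulation to match~(\ref{const0}). The only cosmetic difference is that you phrase the induction as stripping trailing zeros of $\bk$ while the paper inducts directly on $r'$, and you make explicit a couple of points (the factorization of $\det_{\fn^+}(z)\big|_{z_{12}=0}$ and why the differential operator commutes with setting $z_{12}=0$) that the paper uses without comment.
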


\begin{Theorem}\label{thm_poles_nonsimple}
For $\bk\in\BZ_{++}^{r'}$, $\bl\in\BZ_{++}^{r''}$, we define $\phi_0(\bk,\bl)\in\BZ_{++}^{r'+r''}$ by
\begin{align}
\phi_0(\bk,\bl)_a:=\min\{k_i+l_j\mid 1\le i\le r'+1,\,1\le j\le r''+1,\,i+j=a+1\}, \label{phi0}
\end{align}
where $1\le a\le r'+r''$, $k_{r'+1}=l_{r''+1}:=0$. Then for $f(x_{11})\in\cP_\bk\bigl(\fp^+_{11}\bigr)$, $g(x_{22})\in\cP_\bl\bigl(\fp^+_{22}\bigr)$, as a~function of $\lambda$,
\begin{equation}\label{formula_poles_nonsimple}
(\lambda)_{\phi_0(\bk,\bl),d}\bigl\langle f(x_{11})g(x_{22}),{\rm e}^{(x|\overline{z})_{\fp^+}}\bigr\rangle_{\lambda,x}
\end{equation}
is holomorphically continued for all $\BC$.
\end{Theorem}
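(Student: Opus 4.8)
The plan is to reduce everything to the case where $\fp^+$, $\fp^+_{11}$, $\fp^+_{22}$ are all of tube type, and then to read off the possible poles from the $K^\BC$-decomposition of $\cP_\bk\bigl(\fp^+_{11}\bigr)\boxtimes\cP_\bl\bigl(\fp^+_{22}\bigr)$ inside $\cP(\fp^+)$, using the Littlewood--Richardson estimate~\eqref{LR_nonsimple}.

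First I would cut down the number of variables. For each fixed $\lambda_0\in\BC$, the set of $h\in\cP_\bk\bigl(\fp^+_{11}\bigr)\boxtimes\cP_\bl\bigl(\fp^+_{22}\bigr)$ for which $(\lambda)_{\phi_0(\bk,\bl),d}\bigl\langle h,{\rm e}^{(x|\overline z)_{\fp^+}}\bigr\rangle_{\lambda,x}$ is holomorphic at $\lambda_0$ is a $K_1^\BC$-submodule of the irreducible module $\cP_\bk\bigl(\fp^+_{11}\bigr)\boxtimes\cP_\bl\bigl(\fp^+_{22}\bigr)$, so it suffices to prove the claim for the single product $\Delta^{\fn^+_{11}}_\bk(x_{11})\Delta^{\fn^+_{22}}_\bl(x_{22})$. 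Furthermore, choosing a maximal tripotent $e=e'+e''$ of $\fp^+_2=\fp^+_{11}\oplus\fp^+_{22}$ with $e'$, $e''$ maximal in $\fp^+_{11}$, $\fp^+_{22}$, and applying Proposition~\ref{prop_reduction} to $\fp^{+\prime}:=\fp^+(e)_2$ exactly as in the discussion closing Section~\ref{subsection_reduction}, I may replace $\bigl(\fp^+,\fp^+_{11},\fp^+_{22}\bigr)$ by $\bigl(\fp^{+\prime},\fp^+_{11}(e')_2,\fp^+_{22}(e'')_2\bigr)$; the latter triple consists of tube-type Jordan triple systems with the same $d$, $r'$, $r''$ and with $\rank\fp^{+\prime}=r'+r''$.

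Next, for $\Re\lambda>p-1$, decompose $\Delta^{\fn^+_{11}}_\bk(x_{11})\Delta^{\fn^+_{22}}_\bl(x_{22})=\sum_{\bm\in\BZ_{++}^{r'+r''}}h_\bm(x)$ with $h_\bm\in\cP_\bm(\fp^+)$ (Theorem~\ref{thm_HKS}). By Corollary~\ref{cor_FK} and Theorem~\ref{thm_FK}, the meromorphic continuation of the left-hand side of~\eqref{formula_poles_nonsimple} is obtained term by term as
\[
\bigl\langle \Delta^{\fn^+_{11}}_\bk(x_{11})\Delta^{\fn^+_{22}}_\bl(x_{22}),{\rm e}^{(x|\overline z)_{\fp^+}}\bigr\rangle_{\lambda,x}=\sum_{\bm}\frac{1}{(\lambda)_{\bm,d}}\,h_\bm(z),
\]
so its poles lie among those of the finitely many factors $(\lambda)_{\bm,d}^{-1}$ with $h_\bm\ne0$. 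Since the projection $\cP(\fp^+)\to\cP_\bm(\fp^+)$ is $K^\BC$-equivariant, $h_\bm\ne0$ forces $\Hom_{K_1^\BC}\bigl(\cP_\bk\bigl(\fp^+_{11}\bigr)\boxtimes\cP_\bl\bigl(\fp^+_{22}\bigr),\cP_\bm(\fp^+)\bigr)\ne\{0\}$. For the tube-type cases in which $K_1$ is of type $A$ --- namely Case~2 and the tube-type reductions of Cases~3 and~4, all covered by~\eqref{LR_nonsimple} --- this gives $m_{i+j-1}\le k_i+l_j$, and taking the minimum over $i+j=a+1$ yields $m_a\le\phi_0(\bk,\bl)_a$ for every $a$ (see~\eqref{phi0}). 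Consequently $\bigl(\lambda-\tfrac d2(a-1)\bigr)_{m_a}$ divides $\bigl(\lambda-\tfrac d2(a-1)\bigr)_{\phi_0(\bk,\bl)_a}$ for each $a$, hence $(\lambda)_{\bm,d}$ divides $(\lambda)_{\phi_0(\bk,\bl),d}$ as a polynomial, and
\[
(\lambda)_{\phi_0(\bk,\bl),d}\bigl\langle \Delta^{\fn^+_{11}}_\bk(x_{11})\Delta^{\fn^+_{22}}_\bl(x_{22}),{\rm e}^{(x|\overline z)_{\fp^+}}\bigr\rangle_{\lambda,x}=\sum_{\bm}\frac{(\lambda)_{\phi_0(\bk,\bl),d}}{(\lambda)_{\bm,d}}\,h_\bm(z)
\]
is a polynomial in $\lambda$, hence entire, which is the assertion.

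The main obstacle, and the only non-uniform point, is the branching inequality $m_a\le\phi_0(\bk,\bl)_a$ in the cases where $K_1$ is not of type $A$: after the reduction these are $\bigl(\fp^+,\fp^+_{11},\fp^+_{22}\bigr)=(\BC^{d+2},\BC,\BC)$ (arising from Cases~1 and~6) and $\bigl(\Herm(3,\BO)^\BC,\BC,\Herm(2,\BO)^\BC\bigr)$ (Case~5). For these I would verify the inequality directly: in the $\BC^n$ case it is the classical statement that a product of two null powers $z_1^k z_2^l$ spreads only over the $K^\BC$-types $\cP_{(k+l-j,j)}(\BC^n)$ with $j\le\min\{k,l\}$, i.e.\ $\bm\le\phi_0(\bk,\bl)$; in the $\Herm(3,\BO)^\BC$ case one peels off $\det_{\fn^+_{22}}(x_{22})$-powers via the Rodrigues-type identity~\eqref{key_nonsimple2} to reduce the computation to a rank-two algebra of the form $(\BC^n,\BC,\BC)$, already treated, keeping track that the Pochhammer factors introduced do not produce poles outside the zero set of $(\lambda)_{\phi_0(\bk,\bl),d}$ --- consistently with the top-term formula of Theorem~\ref{thm_topterm_nonsimple}.
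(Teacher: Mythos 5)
Your reduction to tube type and to the highest weight vector, and your main mechanism for the classical cases --- expand $\Delta^{\fn^+_{11}}_\bk\Delta^{\fn^+_{22}}_\bl$ into $K^\BC$-types, control each term by Corollary~\ref{cor_FK}, and bound the support of the expansion by the Littlewood--Richardson estimate~\eqref{LR_nonsimple} so that $m_a\le\phi_0(\bk,\bl)_a$ --- is sound, and it is in fact the route the paper itself points out in Remark~\ref{rem_poles_nonsimple}\,(3) as available for classical $\fp^+$. The paper deliberately proves the theorem differently (double induction on $(r',r'')$ using \emph{both} Rodrigues identities~\eqref{key_nonsimple1} and~\eqref{key_nonsimple2}, followed by a gcd argument on the two resulting Pochhammer multipliers), precisely because that argument is uniform in the exceptional cases. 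Your direct harmonic-polynomial verification for the rank-two reductions $\bigl(\BC^n,\BC,\BC\bigr)$ (Cases 1 and 6) is also fine.

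The gap is in your treatment of Case 5, $\bigl(\fp^+,\fp^+_{11},\fp^+_{22}\bigr)=\bigl(\Herm(3,\BO)^\BC,\BC,\Herm(2,\BO)^\BC\bigr)$ (here $r'=1$, $r''=2$, $d=8$). A single peeling of $\det_{\fn^+_{22}}(x_{22})^{l_2}$ via~\eqref{key_nonsimple2} introduces the prefactor $1/(\lambda)_{\underline{l_2}_3,d}$, and combining it with the rank-two case only shows that $(\lambda)_{(k+l_1,\,\min\{k+l_2,l_1\},\,l_2),d}\bigl\langle f g,{\rm e}^{(x|\overline z)_{\fp^+}}\bigr\rangle_{\lambda,x}$ is entire. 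When $k<l_2$ the third entry $l_2$ is strictly larger than $\phi_0(\bk,\bl)_3=\min\{k,l_2\}=k$, so your claim that ``the Pochhammer factors introduced do not produce poles outside the zero set of $(\lambda)_{\phi_0(\bk,\bl),d}$'' fails: the factor $(\lambda-8)_{l_2}$ has zeros at $\lambda=8-j$ for $k\le j\le l_2-1$ that are in general not zeros of $(\lambda)_{\phi_0(\bk,\bl),d}$ (and in any event the multiplicities do not match), so this peeling alone does not prove the theorem in that range. The repair is exactly the paper's device: also peel off $x_{11}^{k}$ via~\eqref{key_nonsimple1}, which (after Proposition~\ref{prop_reduction} and Corollary~\ref{cor_FK} on $\fp^+_{22}$) shows entirety with the multiplier $(\lambda)_{(k+l_1,\,k+l_2,\,k),d}$, and then observe that if $A(\lambda)F$ and $B(\lambda)F$ are entire for polynomials $A$, $B$, so is $\gcd(A,B)F$; the gcd of the two multipliers is $(\lambda)_{\phi_0(\bk,\bl),d}$. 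With that supplement (or with an independent proof of the $K$-type inclusion $m_a\le\phi_0(\bk,\bl)_a$ for Case 5), your argument becomes complete.
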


\begin{Remark}\label{rem_poles_nonsimple}\quad
\begin{enumerate}\itemsep=0pt
\item By~\cite[Corollary 5.7\,(3)]{N2}, if $f(x_{11})g(x_{22})\ne 0$ and if $\bk\in\BZ_{++}^{r'}$, $\bl\in\BZ_{++}^{r''}$ satisfy ``$\bk=\underline{0}_{r'}$'' or ``$\bl=\underline{0}_{r''}$''
or ``$\bk=(\underline{k}_a,\underline{0}_{r'-a})$, $1\le a\le r'$ and $l_{a+1}=0$'' or ``$\bl=(\underline{l}_a,\underline{0}_{r''-a})$, $1\le a\le r''$ and $k_{a+1}=0$'',
then~(\ref{formula_poles_nonsimple}) gives a~non-zero polynomial in~$z\in\fp^+$ for all $\lambda\in\BC$.
\item The restriction of Theorem~\ref{thm_poles_nonsimple} to $z_{12}=0$, the holomorphy of
\[
(\lambda)_{\phi_0(\bk,\bl),d}\bigl\langle f(x_{11})g(x_{22}),{\rm e}^{(x|\overline{z})_{\fp^+}}\bigr\rangle_{\lambda,x}\big|_{z_{12}=0}
\]
follows from Theorem~\ref{thm_topterm_nonsimple}, since it is easily proved that $(\lambda)_{\phi_0(\bk,\bl),d}C_0^d(\lambda,\bk,\bl)$ is holomorphic for all $\lambda\in\BC$.
\item Under Corollary~\ref{cor_FK}, Theorem~\ref{thm_poles_nonsimple} is equivalent to the inclusion
\begin{equation}\label{Ktype_incl_nonsimple}
\cP_\bk\bigl(\fp^+_{11}\bigr)\cP_\bl\bigl(\fp^+_{22}\bigr)\subset\bigoplus_{\substack{\bm\in\BZ_{++}^{r'+r''} \\ m_a\le \phi_0(\bk,\bl)_a}}\cP_\bm(\fp^+).
\end{equation}
When $\fp^+$ is classical, this follows from~(\ref{LR_nonsimple}), but in the following we prove the theorem by another way, which is available also for the exceptional case.
\end{enumerate}
\end{Remark}
Here $(\lambda)_{\bm,d}$ is as in~(\ref{Pochhammer}), and $\underline{k}_a:=(\underbrace{k,\dots,k}_a)$.

\begin{proof}[Proof of Theorem~\ref{thm_topterm_nonsimple}]
The 2nd equality of~(\ref{const0}) is easy. For the 1st equality, by the last paragraph of Section~\ref{subsection_reduction},
we may assume $\fp^+$, $\fp^+_{11}$, $\fp^+_{22}$ are of tube type, so that $r=r'+r''$ holds,
and by the $K_1$-equivariance, may assume $f(x_{11})=\Delta^{\fn^+_{11}}_\bk(x_{11})$, $g(x_{22})=\Delta^{\fn^+_{22}}_\bl(x_{22})$.
We prove the theorem by induction on $r'$. When $r'=0$, this follows from Corollary~\ref{cor_FK}, where we set $\prod_{i=1}^0({\cdots})=:1$.
Next we assume the theorem for $r'-1$, and prove it for $r'$. Since we have defined $k_{r'+1}=l_{r''+1}=0$, by~(\ref{key_nonsimple1}), we have
\begin{gather*}
 \bigl\langle \Delta^{\fn^+_{11}}_\bk(x_{11})\Delta^{\fn^+_{22}}_\bl(x_{22}),{\rm e}^{(x|\overline{z})_{\fp^+}}\bigr\rangle_{\lambda,x}\big|_{z_{12}=0} \\
 \quad{}=\frac{1}{(\lambda)_{\underline{k_{r'}}_r,d}}\det_{\fn^+}(z_{11}+z_{22})^{-\lambda+\frac{n}{r}}
\det_{\fn^+_{22}}\left(\frac{\partial}{\partial z_{22}}\right)^{k_{r'}}\det_{\fn^+}(z_{11}+z_{22})^{\lambda+k_{r'}-\frac{n}{r}} \\
 \quad{}\eqspace{}\times\bigl\langle \Delta^{\fn^+_{11}}_{\bk-\underline{k_{r'}}_{r'}}(x_{11})\Delta^{\fn^+_{22}}_\bl(x_{22}),{\rm e}^{(x|\overline{z})_{\fp^+}}\bigr\rangle_{\lambda+k_{r'},x}
\big|_{z_{12}=0} \\
 \quad{}=\frac{\det_{\fn^+_{11}}(z_{11})^{k_{r'}}}{(\lambda)_{\underline{k_{r'}}_r,d}}\det_{\fn^+_{22}}(z_{22})^{-\lambda+\frac{d}{2}(r-1)+1}
\det_{\fn^+_{22}}\left(\frac{\partial}{\partial z_{22}}\right)^{k_{r'}}\det_{\fn^+_{22}}(z_{22})^{\lambda+k_{r'}-\frac{d}{2}(r-1)-1} \\
 \quad{}\eqspace{}\times\frac{\prod_{i=1}^{r'-1}\prod_{j=1}^{r''}\left(\lambda+k_{r'}-\frac{d}{2}(i+j-1)\right)_{k_i-k_{r'}+l_j}}
{\prod_{i=1}^{r'}\prod_{j=1}^{r''+1}\left(\lambda+k_{r'}-\frac{d}{2}(i+j-2)\right)_{k_i-k_{r'}+l_j}}
\Delta^{\fn^+_{11}}_{\bk-\underline{k_{r'}}_{r'}}(z_{11})\Delta^{\fn^+_{22}}_\bl(z_{22}) \\
 \quad{}=\frac{1}{(\lambda)_{\underline{k_{r'}}_r,d}}
\frac{\prod_{i=1}^{r'-1}\prod_{j=1}^{r''}\left(\lambda+k_{r'}-\frac{d}{2}(i+j-1)\right)_{k_i-k_{r'}+l_j}}
{\prod_{i=1}^{r'}\prod_{j=1}^{r''+1}\left(\lambda+k_{r'}-\frac{d}{2}(i+j-2)\right)_{k_i-k_{r'}+l_j}}
\prod_{j=1}^{r''}\left(\lambda+l_j-\frac{d}{2}(r'+j-1)\right)_{k_{r'}} \\
 \quad{}\eqspace{}\times\Delta^{\fn^+_{11}}_\bk(z_{11})\Delta^{\fn^+_{22}}_\bl(z_{22}) \\
 \quad{}=\frac{1}{(\lambda)_{\underline{k_{r'}}_r,d}}\frac{\prod_{i=1}^{r'-1}\prod_{j=1}^{r''}\left(\lambda-\frac{d}{2}(i+j-1)\right)_{k_i+l_j}}
{\prod_{i=1}^{r'}\prod_{j=1}^{r''+1}\left(\lambda-\frac{d}{2}(i+j-2)\right)_{k_i+l_j}}
\frac{\prod_{i=1}^{r'}\prod_{j=1}^{r''+1}\left(\lambda-\frac{d}{2}(i+j-2)\right)_{k_{r'}}}
{\prod_{i=1}^{r'-1}\prod_{j=1}^{r''}\left(\lambda-\frac{d}{2}(i+j-1)\right)_{k_{r'}}} \\
 \quad{}\eqspace{}\times \frac{\prod_{j=1}^{r''}\left(\lambda-\frac{d}{2}(r'+j-1)\right)_{k_{r'}+l_j}}
{\prod_{j=1}^{r''}\left(\lambda-\frac{d}{2}(r'+j-1)\right)_{l_j}}\Delta^{\fn^+_{11}}_\bk(z_{11})\Delta^{\fn^+_{22}}_\bl(z_{22})\\
 \quad{}=\frac{1}{(\lambda)_{\underline{k_{r'}}_r,d}}\frac{\prod_{i=1}^{r'-1}\prod_{j=1}^{r''}\left(\lambda-\frac{d}{2}(i+j-1)\right)_{k_i+l_j}}
{\prod_{i=1}^{r'}\prod_{j=1}^{r''+1}\left(\lambda-\frac{d}{2}(i+j-2)\right)_{k_i+l_j}}
\frac{\prod_{i=1}^{r'}\prod_{j=1}^{r''+1}\left(\lambda-\frac{d}{2}(i+j-2)\right)_{k_{r'}}}
{\prod_{i=2}^{r'}\prod_{j=1}^{r''}\left(\lambda-\frac{d}{2}(i+j-2)\right)_{k_{r'}}} \\
 \quad{}\eqspace{}\times \frac{\prod_{j=1}^{r''}\left(\lambda-\frac{d}{2}(r'+j-1)\right)_{k_{r'}+l_j}}
{\prod_{j=1}^{r''+1}\left(\lambda-\frac{d}{2}((r'+1)+j-2)\right)_{k_{r'+1}+l_j}}\Delta^{\fn^+_{11}}_\bk(z_{11})\Delta^{\fn^+_{22}}_\bl(z_{22}) \\
 \quad{}=\frac{1}{\prod_{j=1}^r\left(\lambda-\frac{d}{2}(j-1)\right)_{k_{r'}}}
\frac{\prod_{i=1}^{r'}\prod_{j=1}^{r''}\left(\lambda-\frac{d}{2}(i+j-1)\right)_{k_i+l_j}}
{\prod_{i=1}^{r'+1}\prod_{j=1}^{r''+1}\left(\lambda-\frac{d}{2}(i+j-2)\right)_{k_i+l_j}} \\
 \quad{}\eqspace{}\times\prod_{j=1}^{r''}\left(\lambda-\frac{d}{2}(j-1)\right)_{k_{r'}}
\prod_{i=1}^{r'}\left(\lambda-\frac{d}{2}(i+r''-1)\right)_{k_{r'}}\Delta^{\fn^+_{11}}_\bk(z_{11})\Delta^{\fn^+_{22}}_\bl(z_{22}) \\
 \quad{}=\frac{\prod_{i=1}^{r'}\prod_{j=1}^{r''}\left(\lambda-\frac{d}{2}(i+j-1)\right)_{k_i+l_j}}
{\prod_{i=1}^{r'+1}\prod_{j=1}^{r''+1}\left(\lambda-\frac{d}{2}(i+j-2)\right)_{k_i+l_j}}\Delta^{\fn^+_{11}}_\bk(z_{11})\Delta^{\fn^+_{22}}_\bl(z_{22}),
\end{gather*}
where we have used the induction hypothesis and Proposition~\ref{prop_reduction} at the 2nd equality, and~(\ref{formula_diff}) at the 3rd equality.
Therefore, the theorem holds for all $r'$.
\end{proof}

\begin{proof}[Proof of Theorem~\ref{thm_poles_nonsimple}]
Again, we may assume $\fp^+$, $\fp^+_{11}$, $\fp^+_{22}$ are of tube type, and $f(x_{11})=\Delta^{\fn^+_{11}}_\bk(x_{11})$, $g(x_{22})=\Delta^{\fn^+_{22}}_\bl(x_{22})$.
We prove the theorem by induction on $(r',r'')$. If $r'=0$ or~$r''=0$, then this is true by Corollary~\ref{cor_FK}.
Next we assume that the theorem holds for $(r'-1,r'')$ and $(r',r''-1)$, and prove for $(r',r'')$.
First, by~(\ref{key_nonsimple1}), we have
\begin{gather*}
\det_{\fn^+}(z)^{-\lambda+\frac{n}{r}}\det_{\fn^+_{22}}\left(\frac{\partial}{\partial z_{22}}\right)^{k_{r'}}\det_{\fn^+}(z)^{\lambda+k_{r'}-\frac{n}{r}} \\
\qquad{}\eqspace{}\times(\lambda+k_{r'})_{\phi_0\bigl(\bk-\underline{k_{r'}}_{r'},\bl\bigr),d}\,
\bigl\langle \Delta^{\fn^+_{11}}_{\bk-\underline{k_{r'}}_{r'}}(x_{11})\Delta^{\fn^+_{22}}_\bl(x_{22}),{\rm e}^{(x|\overline{z})_{\fp^+}}\bigr\rangle_{\lambda+k_{r'},x} \\
\qquad{}=(\lambda)_{\underline{k_{r'}}_r,d}(\lambda+k_{r'})_{\phi_0\bigl(\bk-\underline{k_{r'}}_{r'},\bl\bigr),d}\,
\bigl\langle \Delta^{\fn^+_{11}}_\bk(x_{11})\Delta^{\fn^+_{22}}_\bl(x_{22}),{\rm e}^{(x|\overline{z})_{\fp^+}}\bigr\rangle_{\lambda,x} \\
\qquad{}=(\lambda)_{\phi_0\bigl(\bk-\underline{k_{r'}}_{r'},\bl\bigr)+\underline{k_{r'}}_r,d}\,
\bigl\langle \Delta^{\fn^+_{11}}_\bk(x_{11})\Delta^{\fn^+_{22}}_\bl(x_{22}),{\rm e}^{(x|\overline{z})_{\fp^+}}\bigr\rangle_{\lambda,x},
\end{gather*}
and by Proposition~\ref{prop_reduction} and the induction hypothesis for $(r'-1,r'')$,
this is holomorphically continued for all $\lambda\in\BC$. Similarly, by~(\ref{key_nonsimple2}), we have
\begin{gather*}
\det_{\fn^+}(z)^{-\lambda+\frac{n}{r}}\det_{\fn^+_{11}}\left(\frac{\partial}{\partial z_{11}}\right)^{l_{r''}}\det_{\fn^+}(z)^{\lambda+l_{r''}-\frac{n}{r}} \\
\qquad{}\eqspace{}\times(\lambda+l_{r''})_{\phi_0\bigl(\bk,\bl-\underline{l_{r''}}_{r''}\bigr),d}\,
\bigl\langle \Delta^{\fn^+_{11}}_\bk(x_{11})\Delta^{\fn^+_{22}}_{\bl-\underline{l_{r''}}_{r''}}(x_{22}),{\rm e}^{(x|\overline{z})_{\fp^+}}\bigr\rangle_{\lambda+l_{r''},x} \\
\qquad{}=(\lambda)_{\underline{l_{r''}}_r,d}(\lambda+l_{r''})_{\phi_0\bigl(\bk,\bl-\underline{l_{r''}}_{r''}\bigr),d}\,
\bigl\langle \Delta^{\fn^+_{11}}_\bk(x_{11})\Delta^{\fn^+_{22}}_\bl(x_{22}),{\rm e}^{(x|\overline{z})_{\fp^+}}\bigr\rangle_{\lambda,x} \\
\qquad{}=(\lambda)_{\phi_0\bigl(\bk,\bl-\underline{l_{r''}}_{r''}\bigr)+\underline{l_{r''}}_r,d}\,
\bigl\langle \Delta^{\fn^+_{11}}_\bk(x_{11})\Delta^{\fn^+_{22}}_\bl(x_{22}),{\rm e}^{(x|\overline{z})_{\fp^+}}\bigr\rangle_{\lambda,x},
\end{gather*}
and again by Proposition~\ref{prop_reduction} and the induction hypothesis for $(r',r''-1)$,
this is holomorphically continued for all $\lambda\in\BC$. Now, the greatest common divisor of
\begin{align*}
(\lambda)_{\phi_0\bigl(\bk-\underline{k_{r'}}_{r'},\bl\bigr)+\underline{k_{r'}}_r,d}
&=\prod_{a=1}^r\left(\lambda-\frac{d}{2}(a-1)\right)_{\min\{k_i+l_j\mid\, 1\le i\le r',\,1\le j\le r''+1,\,i+j=a+1\}}, \\
(\lambda)_{\phi_0\bigl(\bk,\bl-\underline{l_{r''}}_{r''}\bigr)+\underline{l_{r''}}_r,d}
&=\prod_{a=1}^r\left(\lambda-\frac{d}{2}(a-1)\right)_{\min\{k_i+l_j\mid\, 1\le i\le r'+1,\,1\le j\le r'',\,i+j=a+1\}}
\end{align*}
is
\[
(\lambda)_{\phi_0(\bk,\bl),d}=\prod_{a=1}^r\left(\lambda-\frac{d}{2}(a-1)\right)_{\min\{k_i+l_j\mid\, 1\le i\le r'+1,\,1\le j\le r''+1,\,i+j=a+1\}},
\]
and hence $(\lambda)_{\phi_0(\bk,\bl),d}\,\bigl\langle \Delta^{\fn^+_{11}}_\bk(x_{11})\Delta^{\fn^+_{22}}_\bl(x_{22}),{\rm e}^{(x|\overline{z})_{\fp^+}}\bigr\rangle_{\lambda,x}$ is
holomorphically continued for all $\lambda\in\BC$. This completes the proof of Theorem~\ref{thm_poles_nonsimple}.
\end{proof}

By Theorem~\ref{thm_topterm_nonsimple} and Proposition~\ref{prop_Plancherel}\,(1), we get the following.
\begin{Corollary}
Let $\Re\lambda>p-1$, $\bk\in\BZ_{++}^{r'}$, $\bl\in\BZ_{++}^{r''}$, and let $f(x_{11})\in\cP_\bk\bigl(\fp^+_{11}\bigr)$, $g(x_{22})\in\cP_\bl\bigl(\fp^+_{22}\bigr)$. Then we have
\[ \Vert f(x_{11})g(x_{22})\Vert_{\lambda,x}^2=C_0^d(\lambda,\bk,\bl)\Vert f(z_{11})g(z_{22})\Vert_{F,z}^2, \]
where $C_0^d(\lambda,\bk,\bl)$ is as in~\eqref{const0}.
\end{Corollary}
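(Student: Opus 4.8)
The plan is to deduce the identity directly from the top-term computation of Theorem~\ref{thm_topterm_nonsimple} together with the abstract Parseval identity of Proposition~\ref{prop_Plancherel}\,(1). First I would note that, by the definition $\cP_{\tilde{\bk}}\bigl(\fp^+_2\bigr):=\cP_\bk\bigl(\fp^+_{11}\bigr)\boxtimes\cP_\bl\bigl(\fp^+_{22}\bigr)$ with $\tilde{\bk}=(\bk,\bl)$, any product $f(x_{11})g(x_{22})$ with $f\in\cP_\bk\bigl(\fp^+_{11}\bigr)$ and $g\in\cP_\bl\bigl(\fp^+_{22}\bigr)$ lies in $\cP_{\tilde{\bk}}\bigl(\fp^+_2\bigr)\subset\cP(\fp^+)$, so Proposition~\ref{prop_Plancherel}\,(1) applies to it.

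Next I would identify the constant. Recall that the constant $C_{\fp^+,\fp^+_2}\bigl(\lambda,\tilde{\bk}\bigr)$ is defined in \eqref{const_Plancherel} by $\bigl\langle h(x_2),{\rm e}^{(x|\overline{z})_{\fp^+}}\bigr\rangle_{\lambda,x}\big|_{z_1=0}=C_{\fp^+,\fp^+_2}\bigl(\lambda,\tilde{\bk}\bigr)h(z_2)$ for $h\in\cP_{\tilde{\bk}}\bigl(\fp^+_2\bigr)$. Under the conventions of Section~\ref{subsection_sym_subalg} we have $\fp^+_1=\fp^+_{12}$, so the condition $z_1=0$ coincides with $z_{12}=0$; hence Theorem~\ref{thm_topterm_nonsimple}, which gives $\bigl\langle f(x_{11})g(x_{22}),{\rm e}^{(x|\overline{z})_{\fp^+}}\bigr\rangle_{\lambda,x}\big|_{z_{12}=0}=C_0^d(\lambda,\bk,\bl)\,f(z_{11})g(z_{22})$, shows precisely that $C_{\fp^+,\fp^+_2}\bigl(\lambda,\tilde{\bk}\bigr)=C_0^d(\lambda,\bk,\bl)$ for $\Re\lambda>p-1$.

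Finally I would substitute. Applying Proposition~\ref{prop_Plancherel}\,(1) to $h=f(x_{11})g(x_{22})\in\cP_{\tilde{\bk}}\bigl(\fp^+_2\bigr)$ gives $\Vert f(x_{11})g(x_{22})\Vert_{\lambda,\fp^+}^2=C_{\fp^+,\fp^+_2}\bigl(\lambda,\tilde{\bk}\bigr)\Vert f(x_{11})g(x_{22})\Vert_{F,\fp^+}^2$, and inserting the value of the constant from the previous step yields the claimed formula, with $\Vert\cdot\Vert_{F,\fp^+}$ evaluated on $f(z_{11})g(z_{22})$ as stated. Since every ingredient is a direct invocation of already-established results, there is essentially no obstacle here; the only point worth verifying carefully is the bookkeeping that the normalization $z_1=0$ in \eqref{const_Plancherel} matches the $z_{12}=0$ of Theorem~\ref{thm_topterm_nonsimple} and that the Fischer norm on the right-hand side is the one on $\fp^+$ (not on $\fp^+_2$), both of which are immediate from the definitions.
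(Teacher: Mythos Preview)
Your proposal is correct and matches the paper's own argument exactly: the paper simply states ``By Theorem~\ref{thm_topterm_nonsimple} and Proposition~\ref{prop_Plancherel}\,(1), we get the following'' before the Corollary, and your write-up is precisely the unpacking of that sentence, including the identification $C_{\fp^+,\fp^+_2}\bigl(\lambda,\tilde{\bk}\bigr)=C_0^d(\lambda,\bk,\bl)$ via $z_1=z_{12}$.
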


Next we give a~rough estimate of zeroes of $(\lambda)_{\phi_0(\bk,\bl),d}C_0^d(\lambda,\bk,\bl)$.

\begin{Proposition}\label{prop_zeroes_nonsimple}
For $\bk\in\BZ_{++}^{r'}$, $\bl\in\BZ_{++}^{r''}$, let $C_0^d(\lambda,\bk,\bl)$, $\phi_0(\bk,\bl)$ be as in~\eqref{const0},~\eqref{phi0}.
For ${1\le m_1'\le m_2'\le r'}$, $1\le m_1''\le m_2''\le r''$, if $k_1=\cdots=k_{m_1'}$, $l_1=\cdots=l_{m_1''}$ and ${k_{m_2'+1}\!=l_{m_2''+1}\!=0}$, then we have
\begin{align*}
&\big\{\lambda\in\BC\mid (\lambda)_{\phi_0(\bk,\bl),d}C_0^d(\lambda,\bk,\bl)=0\big\} \\ &\subset \begin{cases}
\big[ \frac{d}{2}\max\{m_1',m_1''\}-(k_1+l_1)+1,\, \frac{d}{2}(m_2'+m_2''-1)-\max\{k_{m_2'},l_{m_2''}\}\big], & k_1l_1\ge 1, \\
\varnothing, & k_1l_1=0. \end{cases}
\end{align*}
Especially, for $m=1,2,\dots,r'+r''-1$, if $\phi_0(\bk,\bl)_{m+1}=0$ and $\phi_0(\bk,\bl)_m\ne 0$, then $C_0^d(\lambda,\bk,\bl)$ is non-zero holomorphic at $\lambda=\frac{d}{2}m$,
and has a~pole at $\lambda=\frac{d}{2}(m-1)$.
\end{Proposition}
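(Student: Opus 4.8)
The plan rests on the observation, recorded in Remark~\ref{rem_poles_nonsimple}(2), that $N(\lambda):=(\lambda)_{\phi_0(\bk,\bl),d}\,C_0^d(\lambda,\bk,\bl)$ is a polynomial in $\lambda$; the problem is therefore the purely combinatorial one of locating the zeros of the explicit rational function assembled from \eqref{const0} and \eqref{phi0}, and no Jordan-theoretic input is needed. Since trailing zeros of $\bk$ or $\bl$ affect neither $C_0^d$ nor $\phi_0$, and since enlarging $m_2',m_2''$ or decreasing $m_1',m_1''$ only widens the asserted interval, I would first reduce to the tightest choice of parameters: $m_2',m_2''$ equal to the numbers of nonzero parts of $\bk,\bl$, and $m_1',m_1''$ maximal with $k_1=\dots=k_{m_1'}$ and $l_1=\dots=l_{m_1''}$. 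If $k_1l_1=0$, say $\bk=\underline{0}_{r'}$, a telescoping in the index $i$ in \eqref{const0} gives $C_0^d(\lambda,\underline{0}_{r'},\bl)=\prod_{j=1}^{r''}\bigl(\lambda-\frac{d}{2}(j-1)\bigr)_{l_j}^{-1}$, while $\phi_0(\underline{0}_{r'},\bl)=(\bl,\underline{0}_{r'})$, so $N\equiv 1$ and the zero set is empty; the case $\bl=\underline{0}_{r''}$ follows by the $(\bk,r')\leftrightarrow(\bl,r'')$ symmetry of \eqref{const0} and \eqref{phi0}.

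For $k_1l_1\ge1$ the plan is to analyse $N(\lambda)$ level by level in the base-shift of the Pochhammer symbols. The key point is that, for each integer $s$, the level-$s$ factor $\bigl(\lambda-\frac{d}{2}s\bigr)_{\phi_0(\bk,\bl)_{s+1}}$ of $(\lambda)_{\phi_0(\bk,\bl),d}$ is exactly the greatest common divisor of the level-$s$ factors $\bigl(\lambda-\frac{d}{2}s\bigr)_{k_i+l_j}$ with $i+j=s+2$ occurring in the denominator of \eqref{const0}, because by \eqref{phi0} one has $\phi_0(\bk,\bl)_{s+1}=\min\{k_i+l_j\mid i+j=s+2\}$ over the extended ranges. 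Dividing out that gcd leaves, as the effective level-$s$ denominator of $N$, the product of the $t-1$ largest of those $k_i+l_j$'s; recombining with the level-$s$ numerator factors of \eqref{const0} (those with $i+j=s+1$) and checking that, listed in decreasing order, the numerator multiset $\{k_i+l_j\mid i+j=s+1,\ 1\le i\le r',\ 1\le j\le r''\}$ entrywise dominates that effective denominator multiset, one finds that the level-$s$ contribution to $N$ is a product of shifted Pochhammer symbols $\bigl(\lambda-\frac{d}{2}s+v\bigr)_{u-v}$. An elementary bound on the extreme $u$'s and $v$'s (the largest relevant $u$ being $k_1+l_s$ or $k_{s+1-r''}+l_{r''}$, and the smallest relevant $v$ the second-smallest of the $k_i+l_j$ with $i+j=s+2$) shows that each such factor has all its roots in $\bigl[\frac{d}{2}\max\{m_1',m_1''\}-(k_1+l_1)+1,\ \frac{d}{2}(m_2'+m_2''-1)-\max\{k_{m_2'},l_{m_2''}\}\bigr]$. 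Since $N$ is the product over $s$ of these contributions, every zero of $N$ lies in that interval. (Alternatively, one can run an induction on $r'+r''$, peeling off the smallest parts $k_{r'}$ and $l_{r''}$ via the recursion used in the proof of Theorem~\ref{thm_topterm_nonsimple} and the gcd-recursion from the proof of Theorem~\ref{thm_poles_nonsimple}; the low-rank cases $r'=r''=1$ and $r'=2$, $r''=1$, in which $N$ already comes out as a product of one or two such Pochhammer symbols, display the mechanism.) Only the set of zeros is needed, so coincidences among Pochhammer factors for small $d$ do no harm.

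Finally I would deduce the last assertion from the interval just established. The sequence $a\mapsto\phi_0(\bk,\bl)_a$ is weakly decreasing, directly from \eqref{phi0} and the monotonicity of $\bk$ and $\bl$. If $\phi_0(\bk,\bl)_{m+1}=0$ and $\phi_0(\bk,\bl)_m\ne0$, then choosing appropriate pairs $(i,j)$ with $i+j=m+1$ or $m+2$ in \eqref{phi0} forces $m=m_2'+m_2''$ and $k_{m_2'}\ge1$, $l_{m_2''}\ge1$. Hence both $\frac{d}{2}m$ and $\frac{d}{2}(m-1)=\frac{d}{2}(m_2'+m_2''-1)$ strictly exceed the right endpoint $\frac{d}{2}(m_2'+m_2''-1)-\max\{k_{m_2'},l_{m_2''}\}$ of the zero interval of $N$, so $N$ is nonzero at both points. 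Because $\phi_0(\bk,\bl)_{s+1}=0$ for all $s\ge m$, every root of $(\lambda)_{\phi_0(\bk,\bl),d}$ is of the form $\frac{d}{2}s-t$ with $t\ge0$ and $s\le m-1$, hence is $\le\frac{d}{2}(m-1)<\frac{d}{2}m$; thus $(\lambda)_{\phi_0(\bk,\bl),d}$ is nonzero at $\frac{d}{2}m$, and $C_0^d(\cdot,\bk,\bl)=N/(\lambda)_{\phi_0(\bk,\bl),d}$ is holomorphic and nonzero there. On the other hand the level-$(m-1)$ factor $\bigl(\lambda-\frac{d}{2}(m-1)\bigr)_{\phi_0(\bk,\bl)_m}$ of $(\lambda)_{\phi_0(\bk,\bl),d}$, with $\phi_0(\bk,\bl)_m\ge1$, vanishes at $\frac{d}{2}(m-1)$ whereas $N$ does not, so $C_0^d(\cdot,\bk,\bl)$ has a pole at $\frac{d}{2}(m-1)$.

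The main difficulty is the level-by-level bookkeeping in the case $k_1l_1\ge1$: verifying the entrywise domination of the numerator multiset over the effective denominator multiset at each level, and identifying the resulting shifted Pochhammer symbols accurately enough to confine their roots between the two stated endpoints. Once the interval bound is in hand, the case $k_1l_1=0$ and the deduction of the last assertion are routine.
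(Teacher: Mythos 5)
Your overall route is the paper's: view $N(\lambda):=(\lambda)_{\phi_0(\bk,\bl),d}C_0^d(\lambda,\bk,\bl)$ as an explicit rational function, group the Pochhammer symbols by their base $\frac{d}{2}(a-1)$, bound the zeros level by level and take a union, compute $N\equiv1$ when $k_1l_1=0$, and deduce the final assertion from the interval exactly as you do (this last deduction and the degenerate case are correct and essentially identical to the paper). The genuine gap is in the step ``an elementary bound on the extreme $u$'s and $v$'s shows that each such factor has all its roots in the stated interval.'' For a level with base $\frac{d}{2}s$, your bound only places the roots of a factor $\bigl(\lambda-\frac{d}{2}s+v\bigr)_{u-v}$ in $\bigl[\frac{d}{2}s-u+1,\,\frac{d}{2}s-v\bigr]$, and since $u$ can equal $k_1+l_1$ (already at levels $s\le m_1''$ your own formula $k_1+l_s$ gives $k_1+l_1$), this yields the left endpoint $\frac{d}{2}s-(k_1+l_1)+1$, which for the low levels $s<\max\{m_1',m_1''\}$ lies strictly to the left of the claimed endpoint $\frac{d}{2}\max\{m_1',m_1''\}-(k_1+l_1)+1$. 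What saves the statement, and what your plan never isolates, is that at those low levels the hypotheses $k_1=\cdots=k_{m_1'}$, $l_1=\cdots=l_{m_1''}$ force the numerator multiset $\{k_i+l_j\mid i+j=s+1\}$ to coincide with the denominator multiset $\{k_i+l_j\mid i+j=s+2\}$ with its minimum removed (all relevant $k_i$ equal $k_1$, resp.\ $l_j$ equal $l_1$), so those level factors are identically $1$ and contribute no zeros. This is precisely where the constancy hypotheses enter; in the paper it is carried by the explicit computation of $\phi_0(\bk,\bl)_a$ for $a\le\max\{m_1',m_1''\}$ and the resulting factorization of $C_0^d$, and without it your left endpoint does not follow.

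Two smaller points. Your identification of ``the largest relevant $u$'' with the boundary values $k_1+l_s$ or $k_{s+1-r''}+l_{r''}$ is false in general: the maximum of $k_i+l_j$ along an anti-diagonal can be attained at an interior pair (e.g.\ $\bk=\bl=(K,K)$ on the anti-diagonal $i+j=4$); for the right endpoint all you need is that the second-smallest denominator exponent is at least $\max\{k_{m_2'},l_{m_2''}\}$, which does hold (at most one entry of the anti-diagonal can be smaller), and this is the bound the paper actually uses, via the quantities $\psi(\bk,\bl)_a$ (largest numerator exponent) and $\phi(\bk,\bl)_a$ (second-smallest denominator exponent), without ever needing your stronger claim that each level factor of $N$ is a polynomial. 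That stronger claim is nevertheless true: each numerator entry $k_i+l_{a-i}$ dominates its two neighbours $k_i+l_{a+1-i}$ and $k_{i+1}+l_{a-i}$ on the next anti-diagonal, and a short run-counting argument gives the entrywise domination of the sorted multisets after deleting the minimum; so your factorization can be carried out, but it buys nothing beyond the paper's zero-set inclusion and still leaves the low-level issue above to be addressed.
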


\begin{proof}
By direct computation, we have
\begin{gather*}
\phi_0(\bk,\bl)_a=\begin{cases} k_1+l_a, & 1\le a\le \min\{m_1',m_2''\}, \\ k_1, & m_2''<a\le m_1' \end{cases}
=\begin{cases} k_a+l_1, & 1\le a\le \min\{m_1'',m_2'\}, \\ l_1, & m_2'<a\le m_1'', \end{cases} \\
\phi_0(\bk,\bl)_{m_2'+m_2''+1}=0,
\end{gather*}
and
\begin{align*}
C_0^d(\lambda,\bk,\bl)&=C_0^d\bigl(\lambda,\bigl(k_1,\dots,k_{m_2'}\bigr),\bigl(l_1,\dots,l_{m_2''}\bigr)\bigr) \\
&=\frac{1}{\prod_{a=1}^{\max\{m_1',m_1''\}}\left(\lambda-\frac{d}{2}(a-1)\right)_{\phi_0(\bk,\bl)_a}} \\
&\eqspace{}\times\prod_{a=\max\{m_1',m_1''\}+1}^{m_2'+m_2''}\frac{\prod_{i=\max\{1,a-m_2''\}}^{\min\{a-1,m_2'\}}\left(\lambda-\frac{d}{2}(a-1)\right)_{k_i+l_{a-i}}}
{\prod_{i=\max\{1,a-m_2''\}}^{\min\{a,m_2'+1\}}\left(\lambda-\frac{d}{2}(a-1)\right)_{k_i+l_{a+1-i}}}.
\end{align*}
For $2\le a\le m_2'+m_2''$, let
\begin{align*}
\phi(\bk,\bl)_a&:={\min}_2\{k_i+l_j\mid 1\le i\le m_2'+1,\ 1\le j\le m_2''+1,\ i+j=a+1\}, \\
\psi(\bk,\bl)_a&:=\max\{k_i+l_j\mid 1\le i\le m_2',\ 1\le j\le m_2'',\ i+j=a\},
\end{align*}
where ${\min}_2$ denotes the second smallest element, so that
\[
\max\big\{k_{m_2'},l_{m_2''}\big\}\le \phi(\bk,\bl)_a\le \psi(\bk,\bl)_a\le k_1+l_1
\]
holds for every $a$. Then we have
\begin{align*}
&\big\{ \lambda\in\BC\mid (\lambda)_{\phi_0(\bk,\bl),d}C_0^d(\lambda,\bk,\bl)=0\big\} \\
&\qquad{}=\bigcup_{a=\max\{m_1',m_1''\}+1}^{m_2'+m_2''}\left\{\lambda\in\BC\ \middle|\ \begin{array}{l} \left(\lambda-\frac{d}{2}(a-1)\right)_{\phi_0(\bk,\bl)_a} \\
\ds {}\times\frac{\prod_{i=\max\{1,a-m_2''\}}^{\min\{a-1,m_2'\}}\left(\lambda-\frac{d}{2}(a-1)\right)_{k_i+l_{a-i}}}
{\prod_{i=\max\{1,a-m_2''\}}^{\min\{a,m_2'+1\}}\left(\lambda-\frac{d}{2}(a-1)\right)_{k_i+l_{a+1-i}}}=0 \end{array} \right\} \\
&\qquad{}\subset\bigcup_{a=\max\{m_1',m_1''\}+1}^{m_2'+m_2''}\left\{\lambda\in\BC\ \middle|\
\frac{\left(\lambda-\frac{d}{2}(a-1)\right)_{\psi(\bk,\bl)_a}}{\left(\lambda-\frac{d}{2}(a-1)\right)_{\phi(\bk,\bl)_a}}=0 \right\} \\
&\qquad{}=\bigcup_{a=\max\{m_1',m_1''\}+1}^{m_2'+m_2''}\left\{\frac{d}{2}(a-1)-j\ \middle|\ j\in\BZ,\, \phi(\bk,\bl)_a\le j\le \psi(\bk,\bl)_a-1 \right\} \\
&\qquad{}\subset\left[ \frac{d}{2}\max\{m_1',m_1''\}-(k_1+l_1)+1,\, \frac{d}{2}(m_2'+m_2''-1)-\max\{k_{m_2'},l_{m_2''}\}\right].
\end{align*}
When $k_1=0$, i.e., $\bk=(0,\dots,0)$, by direct computation, we have
\[ (\lambda)_{\phi_0(\bk,\bl),d}C_0^d(\lambda,\bk,\bl)=\frac{(\lambda)_{\bl,d}}{(\lambda)_{\bl,d}}=1, \]
and this is non-zero everywhere. The $l_1=0$, i.e., $\bl=(0,\dots,0)$ case is similar.

If $\phi_0(\bk,\bl)_{m+1}=0$ and $\phi_0(\bk,\bl)_m\ne 0$, then $k_{m_2'+1}=l_{m_2''+1}=0$ and $\max\big\{k_{m'_2},l_{m''_2}\big\}\ne 0$ hold for some $m_2'$, $m_2''$ with $m_2'+m_2''=m$,
and by the above formula $(\lambda)_{\phi_0(\bk,\bl),d}C_0^d(\lambda,\bk,\bl)$ is non-zero at $\lambda=\frac{d}{2}m$, $\frac{d}{2}(m-1)$.
Hence, we get the last claim.
\end{proof}

Especially, if $\phi_0(\bk,\bl)_{a+1}=0$ and $\phi_0(\bk,\bl)_a\ne 0$, then for non-zero polynomials $f(x_{11})\in\cP_\bk\bigl(\fp^+_{11}\bigr)$, $g(x_{22})\in\cP_\bl\bigl(\fp^+_{22}\bigr)$,
$\bigl\langle f(x_{11})g(x_{22}),{\rm e}^{(x|\overline{z})_{\fp^+}}\bigr\rangle_{\lambda,x}$ has a~pole at $\lambda=\frac{d}{2}(a-1)$,
and combining with Corollary~\ref{cor_FK} and~(\ref{Ktype_incl_nonsimple}) we have
\[ \cP_\bk\bigl(\fp^+_{11}\bigr)\cP_\bl\bigl(\fp^+_{22}\bigr)\subset\bigoplus_{\substack{\bm\in\BZ_{++}^{r'+r''} \\ m_{a+1}=0}}\cP_\bm(\fp^+), \qquad
\cP_\bk\bigl(\fp^+_{11}\bigr)\cP_\bl\bigl(\fp^+_{22}\bigr)\not\subset\bigoplus_{\substack{\bm\in\BZ_{++}^{r'+r''} \\ m_a=0}}\cP_\bm(\fp^+), \]
that is, for $a=0,1,\dots,r-1$, we have
\begin{align}
\cP\bigl(\fp^+_{11}\oplus\fp^+_{22}\bigr)\cap \bigoplus_{\substack{\bm\in\BZ_{++}^r \\ m_{a+1}=0}}\cP_\bm(\fp^+)
&=\bigoplus_{\substack{(\bk,\bl)\in\BZ_{++}^{r'}\times\BZ_{++}^{r''} \\ \phi_0(\bk,\bl)_{a+1}=0}}\cP_\bk\bigl(\fp^+_{11}\bigr)\cP_\bl\bigl(\fp^+_{22}\bigr) \label{formula_discWallach_nonsimple}\\
&=\bigoplus_{\substack{0\le a'\le r' \\ 0\le a''\le r'' \\ a'+a''\le a}}\bigoplus_{\substack{(\bk,\bl)\in\BZ_{++}^{a'}\times\BZ_{++}^{a''} \\ k_{a'},l_{a''}\ge 1}}
\cP_{(\bk,\underline{0}_{r'-a'})}\bigl(\fp^+_{11}\bigr)\cP_{(\bl,\underline{0}_{r''-a''})}\bigl(\fp^+_{22}\bigr),\nonumber
\end{align}
where we set $\phi_0(\bk,\bl)_{r'+r''+1}=\cdots=\phi_0(\bk,\bl)_r:=0$ and $k_0=l_0:=+\infty$. This equality is earlier given in~\cite[Section 6]{Fr}.

Next we consider general $(\bk,\bl)\in\BZ_{++}^{r'}\times\BZ_{++}^{r''}$. Then again by the above proposition, \linebreak 
$\frac{1}{C_0^d(\lambda,\bk,\bl)}\bigl\langle f(x_{11})g(x_{22}), {\rm e}^{(x|\overline{z})_{\fp^+}}\bigr\rangle_{\lambda,x}$ is holomorphic
for $\Re\lambda>\frac{d}{2}(r'+r''-1)-\max\{k_{r'},l_{r''}\}$.
Then by Theorem~\ref{thm_factorize}, comparing the top terms, we get the following.
\begin{Theorem}\label{thm_factorize_nonsimple}
Suppose $\fp^+$, $\fp^+_{11}$, $\fp^+_{22}$ are of tube type, and consider the meromorphic continuation of $\langle\cdot,\cdot\rangle_\lambda$.
Then for $\bk\in\BZ_{++}^{r'}$, $\bl\in\BZ_{++}^{r''}$, $a=1,2,\dots,\min\{k_{r'},l_{r''}\}$ and for $f(x_{11})\in\cP_\bk\bigl(\fp^+_{11}\bigr)$, $g(x_{22})\in\cP_\bl\bigl(\fp^+_{22}\bigr)$, we have
\begin{align*}
&\frac{1}{C_0^d(\lambda,\bk,\bl)}\bigl\langle f(x_{11})g(x_{22}),{\rm e}^{(x|\overline{z})_{\fp^+}}\bigr\rangle_{\lambda,x}\big|_{\lambda=\frac{n}{r}-a} \\
&=\frac{\det_{\fn^+}(z)^a}{C_0^d\left(\frac{n}{r}+a,\bk-\underline{a}_{r'},\bl-\underline{a}_{r''}\right)}
\bigl\langle \det_{\fn^+_{11}}(x_{11})^{-a}\det_{\fn^+_{22}}(x_{22})^{-a}f(x_{11})g(x_{22}),{\rm e}^{(x|\overline{z})_{\fp^+}}\bigr\rangle_{\frac{n}{r}+a,x}.
\end{align*}
\end{Theorem}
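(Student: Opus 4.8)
The statement will follow from Theorem~\ref{thm_factorize} applied with shifted parameters, once one identifies the constant it produces; this is the meaning of ``comparing the top terms''. Write $\Phi(\lambda):=\bigl\langle f(x_{11})g(x_{22}),{\rm e}^{(x|\overline{z})_{\fp^+}}\bigr\rangle_{\lambda,x}$ and $\Psi(\lambda):=\bigl\langle \det_{\fn^+_{11}}(x_{11})^{-a}\det_{\fn^+_{22}}(x_{22})^{-a}f(x_{11})g(x_{22}),{\rm e}^{(x|\overline{z})_{\fp^+}}\bigr\rangle_{\lambda,x}$ for $f\in\cP_\bk\bigl(\fp^+_{11}\bigr)$, $g\in\cP_\bl\bigl(\fp^+_{22}\bigr)$, both meromorphic in $\lambda$ by Theorem~\ref{thm_FK}. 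First I would apply Theorem~\ref{thm_factorize} with $\tilde{\bk}$ replaced by $\bigl(\bk-\underline{a}_{r'},\bl-\underline{a}_{r''}\bigr)$ --- which lies in $\BZ_{++}^{r'}\times\BZ_{++}^{r''}$ exactly because $1\le a\le\min\{k_{r'},l_{r''}\}$ --- and with its ``$f(x_2)$'' taken to be $\det_{\fn^+_{11}}(x_{11})^{-a}\det_{\fn^+_{22}}(x_{22})^{-a}f(x_{11})g(x_{22})\in\cP_{\bk-\underline{a}_{r'}}\bigl(\fp^+_{11}\bigr)\boxtimes\cP_{\bl-\underline{a}_{r''}}\bigl(\fp^+_{22}\bigr)$. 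Since $\varepsilon_2=1$ in all the non-simple tube-type cases under consideration (primitive tripotents of $\fp^+_{11}$ and of $\fp^+_{22}$ remain primitive in $\fp^+$), one has $\det_{\fn^+_2}(x_2)^{\varepsilon_2a}=\det_{\fn^+_{11}}(x_{11})^a\det_{\fn^+_{22}}(x_{22})^a$, so Theorem~\ref{thm_factorize} yields a constant $C:=C_{(\bk-\underline{a}_{r'},\bl-\underline{a}_{r''}),a}^{\fn^+,\fn^+_2}$ with
\[
\bigl((\lambda)_{\underline{2a}_r,d}\,\Phi(\lambda)\bigr)\big|_{\lambda=\frac{n}{r}-a}=C\,\det_{\fn^+}(z)^a\,\Psi\Bigl(\frac{n}{r}+a\Bigr).
\]

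To compute $C$, I would restrict this identity to $z_{12}=0$. By Theorem~\ref{thm_factorize} the left-hand side is holomorphic in $\lambda$ at $\lambda=\frac nr-a$, and by the meromorphic continuation of Theorem~\ref{thm_topterm_nonsimple} its restriction to $z_{12}=0$ equals $\bigl((\lambda)_{\underline{2a}_r,d}\,C_0^d(\lambda,\bk,\bl)\bigr)\big|_{\lambda=\frac nr-a}\,f(z_{11})g(z_{22})$. On the right-hand side, $\det_{\fn^+}(z)|_{z_{12}=0}=\det_{\fn^+_{11}}(z_{11})\det_{\fn^+_{22}}(z_{22})$; moreover $\frac nr+a=\frac d2(r-1)+1+a$ lies strictly above all zeros and poles occurring in Proposition~\ref{prop_zeroes_nonsimple} and Theorem~\ref{thm_poles_nonsimple} for the parameters $\bigl(\bk-\underline{a}_{r'},\bl-\underline{a}_{r''}\bigr)$, so $C_0^d\bigl(\frac nr+a,\bk-\underline{a}_{r'},\bl-\underline{a}_{r''}\bigr)\ne0$, $\Psi$ is holomorphic at $\lambda=\frac nr+a$, and by Theorem~\ref{thm_topterm_nonsimple}
\[
\Psi\Bigl(\frac nr+a\Bigr)\Big|_{z_{12}=0}=C_0^d\Bigl(\frac nr+a,\bk-\underline{a}_{r'},\bl-\underline{a}_{r''}\Bigr)\det_{\fn^+_{11}}(z_{11})^{-a}\det_{\fn^+_{22}}(z_{22})^{-a}f(z_{11})g(z_{22}).
\]
Cancelling $f(z_{11})g(z_{22})$, which may be assumed non-zero, gives
\[
C=\frac{\bigl((\lambda)_{\underline{2a}_r,d}\,C_0^d(\lambda,\bk,\bl)\bigr)\big|_{\lambda=\frac nr-a}}{C_0^d\bigl(\frac nr+a,\bk-\underline{a}_{r'},\bl-\underline{a}_{r''}\bigr)}.
\]

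Substituting this value of $C$ back, the right-hand side of the theorem equals $\bigl((\lambda)_{\underline{2a}_r,d}\,\Phi(\lambda)\bigr)\big|_{\lambda=\frac nr-a}\big/\bigl((\lambda)_{\underline{2a}_r,d}\,C_0^d(\lambda,\bk,\bl)\bigr)\big|_{\lambda=\frac nr-a}$, so it remains to see that this quotient equals the left-hand side $\bigl(\Phi(\lambda)/C_0^d(\lambda,\bk,\bl)\bigr)\big|_{\lambda=\frac nr-a}$. Now $\Phi(\lambda)/C_0^d(\lambda,\bk,\bl)$ is holomorphic at $\lambda=\frac nr-a$ (the holomorphy recorded just before the statement applies, since $\frac nr-a=\frac d2(r'+r''-1)+1-a>\frac d2(r'+r''-1)-\max\{k_{r'},l_{r''}\}$ because $a\le\max\{k_{r'},l_{r''}\}$), so writing $\Phi(\lambda)=C_0^d(\lambda,\bk,\bl)\cdot\bigl(\Phi(\lambda)/C_0^d(\lambda,\bk,\bl)\bigr)$ and multiplying by $(\lambda)_{\underline{2a}_r,d}$, the desired equality of quotients follows provided $\bigl((\lambda)_{\underline{2a}_r,d}\,C_0^d(\lambda,\bk,\bl)\bigr)\big|_{\lambda=\frac nr-a}\ne0$. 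This non-vanishing is the only genuinely computational point, and is where the hypothesis $a\le\min\{k_{r'},l_{r''}\}$ is used: factoring $(\lambda)_{\underline{2a}_r,d}\,C_0^d(\lambda,\bk,\bl)=\frac{(\lambda)_{\underline{2a}_r,d}}{(\lambda)_{\phi_0(\bk,\bl),d}}\cdot\bigl((\lambda)_{\phi_0(\bk,\bl),d}\,C_0^d(\lambda,\bk,\bl)\bigr)$, the second factor is entire and non-zero at $\lambda=\frac nr-a$ by Proposition~\ref{prop_zeroes_nonsimple} (all its zeros lie $\le\frac d2(r'+r''-1)-\max\{k_{r'},l_{r''}\}<\frac nr-a$), while the Pochhammer ratio is holomorphic and non-zero at $\lambda=\frac nr-a$ because a factor $\bigl(\lambda-\frac d2(c-1)\bigr)_m$ ($c\in\{1,\dots,r\}$, with $m=2a$ in $(\lambda)_{\underline{2a}_r,d}$ and $m=\phi_0(\bk,\bl)_c$ in $(\lambda)_{\phi_0(\bk,\bl),d}$) vanishes at $\lambda=\frac d2(r-1)+1-a$ --- and then only to order one --- exactly when $a-1-\frac d2(r-c)$ is a non-negative integer $\le m-1$, the bound $\le m-1$ being automatic in both cases (for $(\lambda)_{\underline{2a}_r,d}$ because $m-1=2a-1\ge a-1$, and for $(\lambda)_{\phi_0(\bk,\bl),d}$ because $\phi_0(\bk,\bl)_c\ge a$ for every $c$, a direct consequence of~\eqref{phi0} and of $k_i\ge k_{r'}\ge a$, $l_j\ge l_{r''}\ge a$); hence the two products vanish to the same order at $\lambda=\frac nr-a$. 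The Pochhammer bookkeeping in this last step is the main obstacle; the rest is a direct combination of Theorems~\ref{thm_factorize} and~\ref{thm_topterm_nonsimple} with Proposition~\ref{prop_zeroes_nonsimple} and Theorem~\ref{thm_poles_nonsimple}.
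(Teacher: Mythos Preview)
Your proof is correct and follows essentially the same approach as the paper, which simply says ``by Theorem~\ref{thm_factorize}, comparing the top terms''; you have carefully unpacked what ``comparing the top terms'' means (restricting to $z_{12}=0$ and invoking Theorem~\ref{thm_topterm_nonsimple}) and supplied the non-vanishing check for $\bigl((\lambda)_{\underline{2a}_r,d}\,C_0^d(\lambda,\bk,\bl)\bigr)\big|_{\lambda=\frac{n}{r}-a}$ via Proposition~\ref{prop_zeroes_nonsimple} and the Pochhammer bookkeeping.
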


\subsection[Results on restriction of $\cH_\lambda(D)$ to subgroups]{Results on restriction of $\boldsymbol{\cH_\lambda(D)}$ to subgroups}

Next we consider the decomposition of the holomorphic discrete series representation of scalar type $\cH_\lambda(D)$ under the subgroup $\widetilde{G}_1\subset\widetilde{G}$.
By Theorem~\ref{thm_HKKS}, we have
\[ \cH_\lambda(D)|_{\widetilde{G}_1}\simeq\hsum_{\bk\in\BZ_{++}^{r'}}\,\hsum_{\bl\in\BZ_{++}^{r''}}\cH_{\varepsilon_1\lambda}\bigl(D_1,\cP_\bk\bigl(\fp^+_{11}\bigr)\boxtimes\cP_\bl\bigl(\fp^+_{22}\bigr)\bigr), \]
where
\begin{align*}
&\cH_{\varepsilon_1\lambda}\bigl(D_1,\cP_\bk\bigl(\fp^+_{11}\bigr)\boxtimes\cP_\bl\bigl(\fp^+_{22}\bigr)\bigr) \\
&\simeq\begin{cases}
\cH_{\lambda+k+l}\bigl(D_{{\rm SO}_0(2,d)}\bigr)\boxtimes\chi_{{\rm SO}(2)}^{-k+l} & (\text{Case }1), \\
\cH_{\lambda+\lambda}\bigl(D_{{\rm U}(r',r'')},V_{2\bk}^{(r')\vee}\boxtimes V_{2\bl}^{(r'')}\bigr) & (\text{Case }2), \\
\cH_{\lambda_1+\lambda_2}\bigl(D_{{\rm U}(q',s'')},V_{\bk}^{(q')\vee}\boxtimes V_{\bl}^{(s'')}\bigr)\hboxtimes\cH_{\lambda_1+\lambda_2}\bigl(D_{{\rm U}(q'',s')},V_{\bl}^{(q'')\vee}\boxtimes V_{\bk}^{(s')}\bigr) & (\text{Case }3), \\
\cH_{\frac{\lambda}{2}+\frac{\lambda}{2}}\bigl(D_{{\rm U}(s',s'')},V_{\bk^2}^{(s')\vee}\boxtimes V_{\bl^2}^{(s'')}\bigr) & (\text{Case }4), \\
\cH_{\lambda+k+\frac{|\bl|}{2}}\bigl(D_{E_{6(-14)}},V_{(l_1-l_2,0,0,0,0)}^{[10]\vee}\bigr)\boxtimes\chi_{{\rm U}(1)}^{-\lambda+2k-2|\bl|} & (\text{Case }5), \\
\cH_{\lambda+k}\bigl(D_{{\rm SO}^*(10)},V_{\left(\frac{l}{2},\frac{l}{2},\frac{l}{2},\frac{l}{2},-\frac{l}{2}\right)}^{(5)\vee}\bigr)\boxtimes\chi_{{\rm U}(1)}^{-\lambda+3k-3l} & (\text{Case }6),
\end{cases}
\end{align*}
where we set $\lambda=\lambda_1+\lambda_2$ for Case 3. For each $\bk\in\BZ_{++}^{r'}$, $\bl\in\BZ_{++}^{r''}$,
let $V_\bk\boxtimes W_\bl$ be an abstract $K_1$-module isomorphic to $\cP_\bk\bigl(\fp^+_{11}\bigr)\boxtimes \cP_\bl\bigl(\fp^+_{22}\bigr)$,
let $\Vert\cdot\Vert_{\varepsilon_1\lambda,\bk,\bl}$ be the $\widetilde{G}_1$-invariant norm on ${\cH_{\varepsilon_1\lambda}(D_1,\allowbreak V_\bk\boxtimes W_\bl)}$
normalized such that $\Vert v\Vert_{\varepsilon_1\lambda,\bk,\bl}=|v|_{V_\bk\boxtimes W_\bl}$ holds for all constant functions $v\in V_\bk\boxtimes W_\bl$,
and for $\lambda>p-1$ let
\[
\cF_{\lambda,\bk,\bl}^\downarrow\colon \ \cH_\lambda(D)|_{\widetilde{G}_1}\longrightarrow \cH_{\varepsilon_1\lambda}(D_1,V_\bk\boxtimes W_\bl)
\]
be the symmetry breaking operator given in~(\ref{SBO1}) and~(\ref{SBO2}), using a~vector-valued polynomial
$\rK_{\bk,\bl}(x_{11},x_{22})\in \bigl(\bigl(\cP_\bk\bigl(\fp^+_{11}\bigr)\boxtimes\cP_\bl\bigl(\fp^+_{22}\bigr)\bigr)\otimes \bigl(\overline{V_\bk}\boxtimes\overline{W_\bl}\bigr)\bigr)^{K_1}$ satisfying
\begin{gather*}
\big|\langle f(x_{11})g(x_{22}), \rK_{\bk,\bl}(x_{11},x_{22})\rangle_{F,x}\big|_{V_\bk\boxtimes W_\bl}=\Vert f(z_{11})g(z_{22})\Vert_{F,z}
\end{gather*}
for $f(x_{11})\in\cP_\bk\bigl(\fp^+_{11}\bigr)$, $g(x_{22})\in\cP_\bl\bigl(\fp^+_{22}\bigr)$,
so that
\[
\big\Vert \cF_{\lambda,\bk,\bl}^\downarrow (f(x_{11})g(x_{22}))\big\Vert_{\varepsilon_1\lambda,\bk,\bl}=\Vert f(z_{11})g(z_{22})\Vert_{F,z}, \qquad
f(x_{11})\in\cP_\bk\bigl(\fp^+_{11}\bigr),\ g(x_{22})\in\cP_\bl\bigl(\fp^+_{22}\bigr)
\]
holds. Also, when $\fp^+_{11}$, $\fp^+_{22}$ are of tube type, we fix $[K_1,K_1]$-isomorphisms
$V_{\bk+\underline{a}_{r'}}\boxtimes W_{\bl+\underline{a}_{r''}}\simeq V_\bk\boxtimes W_\bl$ for each $a\in\BZ_{>0}$.
Then by Proposition~\ref{prop_Plancherel}\,(3) and Theorem~\ref{thm_topterm_nonsimple}, the following Parseval--Plancherel-type formula holds.
\begin{Corollary}\label{cor_Plancherel_nonsimple}
For $\lambda>p-1$ and for $f\in\cH_\lambda(D)$ we have
\[ \Vert f\Vert_{\lambda}^2=\sum_{\bk\in\BZ_{++}^{r'}}\sum_{\bl\in\BZ_{++}^{r''}}C_0^d(\lambda,\bk,\bl)
\big\Vert \cF_{\lambda,\bk,\bl}^\downarrow f\big\Vert_{\varepsilon_1\lambda,\bk,\bl}^2, \]
where $C_0^d(\lambda,\bk,\bl)$ is as in~\eqref{const0}.
\end{Corollary}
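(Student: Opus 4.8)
The plan is to obtain the formula as an immediate specialization of Proposition~\ref{prop_Plancherel}\,(3) to the case $\fp^+_2$ non-simple, with $\tilde{\bk}=(\bk,\bl)\in\BZ_{++}^{r'}\times\BZ_{++}^{r''}$ and $V_{\tilde{\bk}}=V_\bk\boxtimes W_\bl$. Concretely, Proposition~\ref{prop_Plancherel}\,(3) already gives, for $\lambda>p-1$ and $f\in\cH_\lambda(D)$,
\[
\Vert f\Vert_{\lambda,\fp^+}^2=\sum_{\tilde{\bk}}C_{\fp^+,\fp^+_2}\bigl(\lambda,\tilde{\bk}\bigr)\big\Vert\cF_{\lambda,\tilde{\bk}}^\downarrow f\big\Vert_{\varepsilon_1\lambda,\tilde{\bk},\fp^+_1}^2 ,
\]
so it remains only to (i) identify the constant $C_{\fp^+,\fp^+_2}(\lambda,(\bk,\bl))$ with $C_0^d(\lambda,\bk,\bl)$, and (ii) check that the symmetry breaking operator $\cF_{\lambda,\bk,\bl}^\downarrow$ and the norm $\Vert\cdot\Vert_{\varepsilon_1\lambda,\bk,\bl}$ appearing in the present statement coincide with $\cF_{\lambda,\tilde{\bk}}^\downarrow$ and $\Vert\cdot\Vert_{\varepsilon_1\lambda,\tilde{\bk},\fp^+_1}$ of Proposition~\ref{prop_Plancherel}.

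For (i): by the defining relation~\eqref{const_Plancherel}, $C_{\fp^+,\fp^+_2}(\lambda,(\bk,\bl))$ is the scalar with
$\bigl\langle f(x_2),{\rm e}^{(x|\overline{z})_{\fp^+}}\bigr\rangle_{\lambda,x}\big|_{z_1=0}=C_{\fp^+,\fp^+_2}(\lambda,(\bk,\bl))f(z_2)$
for $f(x_2)\in\cP_{(\bk,\bl)}\bigl(\fp^+_2\bigr)=\cP_\bk\bigl(\fp^+_{11}\bigr)\boxtimes\cP_\bl\bigl(\fp^+_{22}\bigr)$. Since here $\fp^+_1=\fp^+_{12}$, the condition $z_1=0$ is $z_{12}=0$, and writing $f(x_2)=f(x_{11})g(x_{22})$ with $f(x_{11})\in\cP_\bk(\fp^+_{11})$, $g(x_{22})\in\cP_\bl(\fp^+_{22})$, Theorem~\ref{thm_topterm_nonsimple} gives precisely
$\bigl\langle f(x_{11})g(x_{22}),{\rm e}^{(x|\overline{z})_{\fp^+}}\bigr\rangle_{\lambda,x}\big|_{z_{12}=0}=C_0^d(\lambda,\bk,\bl)f(z_{11})g(z_{22})$. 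Hence $C_{\fp^+,\fp^+_2}(\lambda,(\bk,\bl))=C_0^d(\lambda,\bk,\bl)$ for all $(\bk,\bl)$.

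For (ii): in the paragraph preceding the Corollary, $\rK_{\bk,\bl}$ is chosen in $\bigl(\bigl(\cP_\bk(\fp^+_{11})\boxtimes\cP_\bl(\fp^+_{22})\bigr)\otimes(\overline{V_\bk}\boxtimes\overline{W_\bl})\bigr)^{K_1}$ with the Fischer-norm normalization $\bigl|\langle f(x_{11})g(x_{22}),\rK_{\bk,\bl}\rangle_{F,x}\bigr|_{V_\bk\boxtimes W_\bl}=\Vert f(z_{11})g(z_{22})\Vert_{F,z}$, which is exactly the normalization imposed on $\rK_{\tilde{\bk}}$ in Proposition~\ref{prop_Plancherel}\,(2) with $V_{\tilde{\bk}}=V_\bk\boxtimes W_\bl$; and $\Vert\cdot\Vert_{\varepsilon_1\lambda,\bk,\bl}$ is normalized to agree with $\vert\cdot\vert_{V_\bk\boxtimes W_\bl}$ on constants, as is $\Vert\cdot\Vert_{\varepsilon_1\lambda,\tilde{\bk},\fp^+_1}$. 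Therefore $\cF_{\lambda,\bk,\bl}^\downarrow$, defined by~\eqref{SBO1}--\eqref{SBO2}, equals $\cF_{\lambda,\tilde{\bk}}^\downarrow$, and the two norms coincide. Substituting these identifications into Proposition~\ref{prop_Plancherel}\,(3) and rewriting the single sum over $\tilde{\bk}\in\BZ_{++}^{r'}\times\BZ_{++}^{r''}$ as the double sum over $\bk\in\BZ_{++}^{r'}$, $\bl\in\BZ_{++}^{r''}$ yields the stated formula.

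The argument is essentially bookkeeping: the analytic content is already carried by Proposition~\ref{prop_Plancherel} (multiplicity-freeness and the Hilbert direct sum) and by the explicit top-term evaluation in Theorem~\ref{thm_topterm_nonsimple}. The only point requiring mild care is that $\rK_{\bk,\bl}$ is pinned down by its normalization only up to a unimodular scalar, since $\cP_\bk(\fp^+_{11})\boxtimes\cP_\bl(\fp^+_{22})$ is $K_1$-irreducible (Theorem~\ref{thm_HKS}); but such a phase cancels in the squared norms $\Vert\cF_{\lambda,\bk,\bl}^\downarrow f\Vert_{\varepsilon_1\lambda,\bk,\bl}^2$, so it causes no difficulty.
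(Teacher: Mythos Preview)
Your proposal is correct and follows essentially the same approach as the paper: the paper itself states the corollary as an immediate consequence of Proposition~\ref{prop_Plancherel}\,(3) together with Theorem~\ref{thm_topterm_nonsimple}, and you have simply spelled out the identifications involved.
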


Next we consider the meromorphic continuation for smaller $\lambda$.
Then by Propositions~\ref{prop_poles},~\ref{prop_zeroes_nonsimple}, Theorems~\ref{thm_poles_nonsimple},~\ref{thm_factorize_nonsimple} and the formula~(\ref{formula_discWallach_nonsimple}),
we have the following.
\begin{Corollary}\label{cor_submodule_nonsimple}
For $\bk\hspace{-0.5pt}\in\hspace{-0.5pt}\BZ_{++}^{r'}$, $\bl\hspace{-0.5pt}\in\hspace{-0.5pt}\BZ_{++}^{r''}$, let $\phi_0(\bk,\bl)\hspace{-0.5pt}\in\hspace{-0.5pt}\BZ_{++}^{r'+r''}$ be as in~\eqref{phi0},
and set $\phi_0(\bk,\bl)_{r'+r''+1}\allowbreak=\cdots=\phi_0(\bk,\bl)_r:=0$.
\begin{enumerate}\itemsep=0pt
\item[$(1)$] For $a=1,2,\dots,r$,
\[ {
\rm d}\tau_\lambda(\cU(\fg_1))\cP_\bk\bigl(\fp^+_{11}\bigr)\cP_\bl\bigl(\fp^+_{22}\bigr)\subset M_a^\fg(\lambda)
\]
holds if
\[
\lambda\in \frac{d}{2}(a-1)-\phi_0(\bk,\bl)_a-\BZ_{\ge 0},
\]
where $M_a^\fg(\lambda)\subset\cO_\lambda(D)_{\widetilde{K}}$ is the $\bigl(\fg,\widetilde{K}\bigr)$-submodule given in~\eqref{submodule}.
\item[$(2)$] For $a=0,1,\dots,r-1$ we have
\begin{align*}
\cH_{\frac{d}{2}a}(D)|_{\widetilde{G}_1}&\simeq\hsum_{\substack{(\bk,\bl)\in\BZ_{++}^{r'}\times\BZ_{++}^{r''} \\ \phi_0(\bk,\bl)_{a+1}=0}}
\cH_{\varepsilon_1\frac{d}{2}a}\bigl(D_1,\cP_\bk\bigl(\fp^+_{11}\bigr)\boxtimes\cP_\bl\bigl(\fp^+_{22}\bigr)\bigr) \\
&=\hsum_{\substack{0\le a'\le r' \\ 0\le a''\le r'' \\ a'+a''\le a}}\hspace{6pt}\hsum_{\substack{(\bk,\bl)\in\BZ_{++}^{a'}\times\BZ_{++}^{a''} \\ k_{a'},l_{a''}\ge 1}}
\cH_{\varepsilon_1\frac{d}{2}a}\bigl(D_1,\cP_{(\bk,\underline{0}_{r'-a'})}\bigl(\fp^+_{11}\bigr)\boxtimes\cP_{(\bl,\underline{0}_{r''-a''})}\bigl(\fp^+_{22}\bigr)\bigr),
\end{align*}
where we set $k_0=l_0:=+\infty$.
\item[$(3)$] For $a=0,1,\dots,r-1$, if $\phi_0(\bk,\bl)_{a+1}=0$, then $\cF_{\lambda,\bk,\bl}^\downarrow$ is holomorphic at $\lambda=\frac{d}{2}a$,
and its restriction gives the symmetry breaking operator
\[
\cF_{\frac{d}{2}a,\bk,\bl}^\downarrow
\colon\ \cH_{\frac{d}{2}a}(D)|_{\widetilde{G}_1}\longrightarrow \cH_{\varepsilon_1\frac{d}{2}a}(D_1,V_\bk\boxtimes W_\bl).
\]
\item[$(4)$] Suppose $\fp^+$, $\fp^+_{11}$, $\fp^+_{22}$ are of tube type. For $a=1,2,\dots,\min\{k_{r'},l_{r''}\}$, if
\[
\rK_{\bk,\bl}(x_{11},x_{22})=c\det_{\fn^+_{11}}(x_{11})^a\det_{\fn^+_{22}}(x_{22})^a\rK_{\bk-\underline{a}_{r'},\bl-\underline{a}_{r''}}(x_{11},x_{22}),
\]
then we have
\begin{gather*}
\cF_{\frac{n}{r}-a,\bk,\bl}^\downarrow=c\cF_{\frac{n}{r}+a,\bk-\underline{a}_{r'},\bl-\underline{a}_{r''}}^\downarrow\circ\det_{\fn^-}\left(\frac{\partial}{\partial x}\right)^a\colon \\
\cO_{\frac{n}{r}-a}(D)|_{\widetilde{G}_1}\longrightarrow \cO_{\varepsilon_1\left(\frac{n}{r}-a\right)}(D_1,V_\bk\boxtimes W_\bl)
\simeq \cO_{\varepsilon_1\left(\frac{n}{r}+a\right)}(D_1,V_{\bk-\underline{a}_{r'}}\boxtimes W_{\bl-\underline{a}_{r''}}).
\end{gather*}
\end{enumerate}
\end{Corollary}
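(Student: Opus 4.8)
The plan is to read off parts~(1)--(3) from the meromorphic-continuation results of this section combined with the general branching framework recalled in Section~\ref{subsection_sym_subalg}, and to obtain part~(4) by computing the symbol of $\cF^\downarrow_{\lambda,\bk,\bl}$ and applying Theorem~\ref{thm_factorize_nonsimple} together with Proposition~\ref{prop_det_factorize}(1).

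\textbf{Parts (1)--(3).} For part~(1) I would apply Proposition~\ref{prop_poles}(1) with $\tilde{\bk}=(\bk,\bl)$, $\cP_{\tilde{\bk}}\bigl(\fp^+_2\bigr)=\cP_\bk\bigl(\fp^+_{11}\bigr)\cP_\bl\bigl(\fp^+_{22}\bigr)$, and with the partition $\bl$ occurring there taken to be $\phi_0(\bk,\bl)\in\BZ_{++}^{r'+r''}$ extended by zeros to an element of $\BZ_{++}^r$: its hypothesis is precisely the holomorphy of $(\lambda)_{\phi_0(\bk,\bl),d}\bigl\langle f(x_{11})g(x_{22}),{\rm e}^{(x|\overline{z})_{\fp^+}}\bigr\rangle_{\lambda,x}$ supplied by Theorem~\ref{thm_poles_nonsimple}, and its conclusion for $j=a$ is exactly the statement of~(1). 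For part~(2), I first recall from the last paragraph of Section~\ref{subsection_HDS} that $\cH_{\frac{d}{2}a}(D)_{\widetilde{K}}=M_{a+1}^\fg\bigl(\frac{d}{2}a\bigr)=\bigoplus_{\bm\in\BZ_{++}^r,\,m_{a+1}=0}\cP_\bm(\fp^+)$, so that the space of $\fp^+_1$-null vectors $\bigl(\chi_1^{-\varepsilon_1\frac{d}{2}a}\otimes\cP\bigl(\fp^+_2\bigr)\bigr)\cap\cH_{\frac{d}{2}a}(D)_{\widetilde{K}}$ equals $\bigoplus_{\phi_0(\bk,\bl)_{a+1}=0}\cP_\bk\bigl(\fp^+_{11}\bigr)\cP_\bl\bigl(\fp^+_{22}\bigr)$ by~\eqref{formula_discWallach_nonsimple}. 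Since $\cH_{\frac{d}{2}a}(D)|_{\widetilde{G}_1}$ is a discrete multiplicity-free Hilbert direct sum each of whose summands meets this space (Section~\ref{subsection_sym_subalg}), it suffices to identify the $\widetilde{G}_1$-submodule generated by each such $\widetilde{K}_1$-type, which is Proposition~\ref{prop_poles}(2) with $\bl$ there $=\phi_0(\bk,\bl)$; its hypotheses $\phi_0(\bk,\bl)_{a+1}=0$ and $C_{\fp^+,\fp^+_2}\bigl(\frac{d}{2}a,(\bk,\bl)\bigr)=C_0^d\bigl(\frac{d}{2}a,\bk,\bl\bigr)\ne 0$ hold, the latter because Proposition~\ref{prop_zeroes_nonsimple} and its proof place every zero of $(\lambda)_{\phi_0(\bk,\bl),d}C_0^d(\lambda,\bk,\bl)$ strictly below $\frac{d}{2}a$ while $(\lambda)_{\phi_0(\bk,\bl),d}$ does not vanish there. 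The second displayed equality in~(2) is then a transcription of the second equality of~\eqref{formula_discWallach_nonsimple}. Part~(3) is Proposition~\ref{prop_poles}(2) itself, with the identical verification.

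\textbf{Part (4).} I would argue at the level of symbols. By~\eqref{SBO1} the symbol of $\cF^\downarrow_{\frac{n}{r}-a,\bk,\bl}$ is $C_0^d\bigl(\frac{n}{r}-a,\bk,\bl\bigr)^{-1}\bigl\langle {\rm e}^{(x|z)_{\fp^+}},\rK_{\bk,\bl}(x_{11},x_{22})\bigr\rangle_{\frac{n}{r}-a,x}$. Inserting the hypothesis $\rK_{\bk,\bl}(x_{11},x_{22})=c\det_{\fn^+_{11}}(x_{11})^a\det_{\fn^+_{22}}(x_{22})^a\rK_{\bk-\underline{a}_{r'},\bl-\underline{a}_{r''}}(x_{11},x_{22})$ makes the argument of the pairing carry a factor $\det_{\fn^+_2}(x_2)^a=\det_{\fn^+_{11}}(x_{11})^a\det_{\fn^+_{22}}(x_{22})^a$, whence Theorem~\ref{thm_factorize_nonsimple} rewrites it as $c\det_{\fn^+}(z)^a$ times $C_0^d\bigl(\frac{n}{r}+a,\bk-\underline{a}_{r'},\bl-\underline{a}_{r''}\bigr)^{-1}\bigl\langle {\rm e}^{(x|z)_{\fp^+}},\rK_{\bk-\underline{a}_{r'},\bl-\underline{a}_{r''}}(x_{11},x_{22})\bigr\rangle_{\frac{n}{r}+a,x}$, that is, $c\det_{\fn^+}(z)^a$ times the symbol of $\cF^\downarrow_{\frac{n}{r}+a,\bk-\underline{a}_{r'},\bl-\underline{a}_{r''}}$. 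Since $\det_{\fn^+}(z)^a$ is the symbol of $\det_{\fn^-}\bigl(\frac{\partial}{\partial x}\bigr)^a$, which intertwines $\cO_{\frac{n}{r}-a}(D)\to\cO_{\frac{n}{r}+a}(D)$ (hence the $\widetilde{G}_1$-actions) by Proposition~\ref{prop_det_factorize}(1), multiplicativity of symbols under composition yields $\cF^\downarrow_{\frac{n}{r}-a,\bk,\bl}=c\,\cF^\downarrow_{\frac{n}{r}+a,\bk-\underline{a}_{r'},\bl-\underline{a}_{r''}}\circ\det_{\fn^-}\bigl(\frac{\partial}{\partial x}\bigr)^a$, the right-hand side read through the fixed isomorphism $\cO_{\varepsilon_1(\frac{n}{r}-a)}(D_1,V_\bk\boxtimes W_\bl)\simeq\cO_{\varepsilon_1(\frac{n}{r}+a)}(D_1,V_{\bk-\underline{a}_{r'}}\boxtimes W_{\bl-\underline{a}_{r''}})$ of~(4).

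\textbf{Main obstacle.} The genuinely delicate step is the normalization bookkeeping in part~(4): one must keep track of the conjugations and dualities implicit in $\rK_{\bk,\bl}(x_{11},x_{22})$ and in the pairing $\bigl\langle {\rm e}^{(x|z)_{\fp^+}},\,\cdot\,\bigr\rangle_{\lambda,x}$, verify that the constants $C_0^d\bigl(\frac{n}{r}-a,\bk,\bl\bigr)$, $C_0^d\bigl(\frac{n}{r}+a,\bk-\underline{a}_{r'},\bl-\underline{a}_{r''}\bigr)$ and $C_{(\bk,\bl),a}^{\fn^+,\fn^+_2}$ of Theorem~\ref{thm_factorize} conspire so that the surviving scalar is exactly $c$ rather than some ratio of Pochhammer factors, and match $\det_{\fn^+}(z)$ produced by Theorem~\ref{thm_factorize_nonsimple} with $\det_{\fn^-}(\partial/\partial x)$ in the correct variable under the conventions of~\eqref{SBO1}. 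Theorem~\ref{thm_factorize_nonsimple}, built precisely to organize these constants, is what keeps this manageable. Everything else is formal: part~(1) is a direct citation, and in parts~(2)--(3) the only non-bookkeeping input is the non-vanishing of $C_0^d\bigl(\frac{d}{2}a,\bk,\bl\bigr)$ furnished by Proposition~\ref{prop_zeroes_nonsimple}.
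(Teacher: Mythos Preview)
Your proposal is correct and follows essentially the same route as the paper: the paper derives Corollary~\ref{cor_submodule_nonsimple} directly from Propositions~\ref{prop_poles} and~\ref{prop_zeroes_nonsimple}, Theorems~\ref{thm_poles_nonsimple} and~\ref{thm_factorize_nonsimple}, and formula~\eqref{formula_discWallach_nonsimple}, exactly as you outline. Your expanded discussion of the symbol computation in part~(4) and the role of Theorem~\ref{thm_factorize_nonsimple} in cancelling the normalizing constants is accurate and is precisely what the paper leaves implicit.
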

If $\bk$, $\bl$ satisfy the condition in Remark~\ref{rem_poles_nonsimple}\,(1), then ``only if'' in Corollary~\ref{cor_submodule_nonsimple}\,(1) also holds.
Also, Corollary~\ref{cor_submodule_nonsimple}\,(2) for $\lambda=\frac{d}{2}$ ($a=1$ case) is earlier given in~\cite{MO2}.
The parameter set in Corollary~\ref{cor_submodule_nonsimple}\,(2) also appears in Howe's correspondence for the dual pair $({\rm U}(r',r''),{\rm U}(a))$ (see, e.g.,~\cite{Ad, KV, Prz}),
and especially, we can prove (2) for Cases 2--4 by using the seesaw dual pair theory (see, e.g.,~\cite[Section 3]{HTW},~\cite{Ku}) as in~\cite{MO2}.

\section[Case $\fp^+_2$ is simple]{Case $\boldsymbol{\fp^+_2}$ is simple}\label{section_simple}

In this section we treat $\fp^+=(\fp^+)^\sigma\oplus(\fp^+)^{-\sigma}=\fp^+_1\oplus\fp^+_2$ such that $\fp^+_2$ is simple,
and $\fp^+_2(e)_2\subsetneq \fp^+(e)_2$ holds for some (or equivalently any) maximal tripotent $e\in\fp^+_2$, so that
\[ (\fp^+,\fp^+_1,\fp^+_2)= \begin{cases}
(\BC^n,\BC^{n'},\BC^{n''}) & (\text{Case }1), \\
(\Sym(r,\BC),\Sym(r',\BC)\oplus\Sym(r'',\BC),{\rm M}(r',r'';\BC)) & (\text{Case }2), \\
(\Alt(s,\BC),\Alt(s',\BC)\oplus\Alt(s'',\BC),{\rm M}(s',s'';\BC)) & (\text{Case }3), \\
({\rm M}(r,\BC),\Alt(r,\BC),\Sym(r,\BC)) & (\text{Case }4), \\
({\rm M}(r,\BC),\Sym(r,\BC),\Alt(r,\BC)) & (\text{Case }5), \\
\bigl(\Herm(3,\BO)^\BC,{\rm M}(2,6;\BC),\Alt(6,\BC)\bigr) & (\text{Case }6), \\
\bigl(\Herm(3,\BO)^\BC,\Alt(6,\BC),{\rm M}(2,6;\BC)\bigr) & (\text{Case }7), \\
\bigl(\Herm(3,\BO)^\BC,\BC\oplus\Herm(2,\BO)^\BC,{\rm M}(1,2;\BO)^\BC\bigr) & (\text{Case }8), \\
\bigl({\rm M}(1,2;\BO)^\BC,{\rm M}(2,4;\BC),{\rm M}(4,2;\BC)\bigr) & (\text{Case }9), \\
\bigl({\rm M}(1,2;\BO)^\BC,\BC\oplus {\rm M}(1,5;\BC),\Alt(5,\BC)\bigr) & (\text{Case }10) \end{cases} \]
($n=n'+n''$, $n\ge 3$, $n''\ne 2$ for Case 1, $r=r'+r''$ for Case 2, and $s=s'+s''$, $s',s''\ge 2$ for Case 3).
Then the corresponding symmetric pairs are
\[ (G,G_1)= \begin{cases}
({\rm SO}_0(2,n),{\rm SO}_0(2,n')\times {\rm SO}(n'')) & (\text{Case }1), \\
({\rm Sp}(r,\BR),{\rm Sp}(r',\BR)\times {\rm Sp}(r'',\BR)) & (\text{Case }2), \\
({\rm SO}^*(2s),{\rm SO}^*(2s')\times {\rm SO}^*(2s'')) & (\text{Case }3), \\
({\rm SU}(r,r),{\rm SO}^*(2r)) & (\text{Case }4), \\
({\rm SU}(r,r),{\rm Sp}(r,\BR)) & (\text{Case }5), \\
(E_{7(-25)},{\rm SU}(2,6)) & (\text{Case }6), \\
(E_{7(-25)},{\rm SU}(2)\times {\rm SO}^*(12)) & (\text{Case }7), \\
(E_{7(-25)},{\rm SL}(2,\BR)\times {\rm Spin}_0(2,10)) & (\text{Case }8), \\
(E_{6(-14)},{\rm SU}(2,4)\times {\rm SU}(2)) & (\text{Case }9), \\
(E_{6(-14)},{\rm SL}(2,\BR)\times {\rm SU}(1,5)) & (\text{Case }10) \end{cases} \]
(up to covering). Let $\dim\fp^+=:n$, $\dim\fp^+_2=:n_2$, $\rank\fp^+=:r$, $\rank\fp^+_2=:r_2$,
let $d$, $d_2$ be the numbers defined in~(\ref{str_const}) for $\fp^+$, $\fp^+_2$ respectively, and let $\varepsilon_2\in\{1,2\}$ be as in~(\ref{epsilon_j}).
Then the numbers $(r,r_2,d,d_2,\varepsilon_2)$ are given by
\[ (r,r_2,d,d_2,\varepsilon_2)= \begin{cases}
(2,2,n-2,n''-2,1) & (\text{Case }1,\, n''\ge 3), \\
(2,1,n-2,-,2) & (\text{Case }1,\, n''=1), \\
(r,\min\{r',r''\},1,2,2) & (\text{Case }2), \\
(\lfloor s/2\rfloor, \min\{s',s''\}, 4,2,1) & (\text{Case }3), \\
(r,r,2,1,1) & (\text{Case }4), \\
(r,\lfloor r/2\rfloor, 2,4,2) & (\text{Case }5), \\
(3,3,8,4,1) & (\text{Case }6), \\
(3,2,8,2,1) & (\text{Case }7), \\
(3,2,8,6,1) & (\text{Case }8), \\
(2,2,6,2,1) & (\text{Case }9), \\
(2,2,6,4,1) & (\text{Case }10). \end{cases} \]
When $\varepsilon_2=1$, we have $d_2=d/2$ or $r_2=2$. Similarly, when $\varepsilon_2=2$, we have $d_2=2d$ or $r_2=1$.
For $r_2=1$ case, since $d_2$ is not determined uniquely and any number is allowed, we may assume $d_2=2d$.

\subsection{Results on weighted Bergman inner products}

First, for $f(x_2)\in\cP_\bk\bigl(\fp^+_2\bigr)$, we want to compute the top terms and the poles of $\bigl\langle f(x_2), {\rm e}^{(x|\overline{z})_{\fp^+}}\bigr\rangle_{\lambda,x}$,
where $\langle\cdot,\cdot\rangle_{\lambda}=\langle\cdot,\cdot\rangle_{\lambda,x}$ is the weighted Bergman inner product given in~(\ref{Bergman_inner_prod}),
with the variable of integration $x=x_1+x_2$.

\begin{Theorem}\label{thm_topterm_simple}
Let $\Re\lambda>p-1$, $\bk\in\BZ_{++}^{r_2}$, and let $f(x_2)\in\cP_\bk\bigl(\fp^+_2\bigr)$. Then we have
\[
\bigl\langle f(x_2),{\rm e}^{(x|\overline{z})_{\fp^+}}\bigr\rangle_{\lambda,x}\big|_{z_1=0}=C_{\varepsilon_2}^{d,d_2}(\lambda,\bk)f(z_2),
\]
where, for $\varepsilon_2=1$,
\begin{align}
C_1^{d,d_2}(\lambda,\bk)&=\frac{\bigl(\lambda+k_1-\frac{d-d_2}{2}\bigr)_{k_2}}{(\lambda)_{k_1+k_2}\left(\lambda-\frac{d}{2}\right)_{k_2}}, && r_2=2, \nonumber \\
C_1^{d,d_2}(\lambda,\bk)&=\frac{\prod_{1\le i< j\le r_2}\left(\lambda-\frac{d}{4}(i+j-2)\right)_{k_i+k_j}}
{\prod_{1\le i< j\le r_2+1}\left(\lambda-\frac{d}{4}(i+j-3)\right)_{k_i+k_j}} \notag \\
&=\frac{\prod_{a=2}^{2r_2-2}\prod_{i=\max\{1,a+1-r_2\}}^{\lfloor a/2\rfloor}\left(\lambda-\frac{d}{4}(a-1)\right)_{k_i+k_{a+1-i}}}
{\prod_{a=1}^{2r_2-1}\prod_{i=\max\{1,a+1-r_2\}}^{\lceil a/2\rceil}\left(\lambda-\frac{d}{4}(a-1)\right)_{k_i+k_{a+2-i}}}, && d_2=d/2, \label{const1}
\end{align}
and for $\varepsilon_2=2$,
\begin{align}
&C_2^{d,d_2}(\lambda,\bk)
=\frac{2^{|\bk|}\prod_{1\le i< j\le r_2}(2\lambda-1-d(i+j-1))_{k_i+k_j}}{\prod_{1\le i<j\le r_2+1}(2\lambda-1-d(i+j-2))_{k_i+k_j}}
\frac{\prod_{i=1}^{r_2}\left(\lambda-\frac{1}{2}-\frac{d}{2}(2i-1)\right)_{k_i}}{\prod_{i=1}^{r_2}(\lambda-d(i-1))_{k_i}} \notag \\
&\qquad{}=\frac{\prod_{1\le i< j\le r_2}\left(\lambda-\frac{d}{2}(i+j-1)\right)_{\left\lfloor\frac{k_i+k_j}{2}\right\rfloor}
\prod_{1\le i\le j\le r_2}\left(\lambda-\frac{1}{2}-\frac{d}{2}(i+j-1)\right)_{\left\lceil\frac{k_i+k_j}{2}\right\rceil}}
{\prod_{1\le i\le j\le r_2+1}\!\left(\lambda-\frac{d}{2}(i+j-2)\right)_{\left\lfloor\frac{k_i+k_j}{2}\right\rfloor}
\prod_{1\le i< j\le r_2+1}\!\left(\lambda-\frac{1}{2}-\frac{d}{2}(i+j-2)\right)_{\left\lceil\frac{k_i+k_j}{2}\right\rceil}} \notag \\
&\qquad{}=\frac{\prod_{a=3}^{2r_2-1}\prod_{i=\max\{1,a-r_2\}}^{\lceil a/2\rceil-1}
\left(\lambda-\frac{d}{2}(a-1)\right)_{\left\lfloor\frac{k_i+k_{a-i}}{2}\right\rfloor}}
{\prod_{a=1}^{2r_2}\prod_{i=\max\{1,a-r_2\}}^{\lceil a/2\rceil}
\left(\lambda-\frac{d}{2}(a-1)\right)_{\left\lfloor\frac{k_i+k_{a+1-i}}{2}\right\rfloor}} \notag \\
&\qquad\quad \ {}\times\frac{\prod_{a=2}^{2r_2}\prod_{i=\max\{1,a-r_2\}}^{\lfloor a/2\rfloor}
\left(\lambda-\frac{1}{2}-\frac{d}{2}(a-1)\right)_{\left\lceil\frac{k_i+k_{a-i}}{2}\right\rceil}}
{\prod_{a=2}^{2r_2}\prod_{i=\max\{1,a-r_2\}}^{\lfloor a/2\rfloor}
\left(\lambda-\frac{1}{2}-\frac{d}{2}(a-1)\right)_{\left\lceil\frac{k_i+k_{a+1-i}}{2}\right\rceil}}. \label{const2}
\end{align}
Here we put $k_{r_2+1}:=0$.
\end{Theorem}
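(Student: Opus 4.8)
The plan is to turn the identity into a scalar recursion in $r_2$. Since the map $f(x_2)\mapsto\bigl\langle f(x_2),{\rm e}^{(x|\overline z)_{\fp^+}}\bigr\rangle_{\lambda,x}\big|_{z_1=0}$ is $K_1$-equivariant and $\cP_\bk\bigl(\fp^+_2\bigr)$ is $K_1$-irreducible by Theorem~\ref{thm_HKS}, the assertion amounts to the existence of the constant $C_{\varepsilon_2}^{d,d_2}(\lambda,\bk)=C_{\fp^+,\fp^+_2}(\lambda,\bk)$ of~\eqref{const_Plancherel}, and it is enough to evaluate both sides on $f(x_2)=\Delta^{\fn^+_2}_\bk(x_2)$. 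By Proposition~\ref{prop_reduction} and the discussion closing Section~\ref{subsection_reduction} I would reduce to the case where $\fp^+$ and $\fp^+_2$ are both of tube type with a common maximal tripotent $e$; inspecting the ten cases one records that $\det_{\fn^+}(z_2)=\det_{\fn^+_2}(z_2)^{\varepsilon_2}$ for $z_2\in\fn^+_2\subset\fn^+$, and that restricting to a sub-Jordan-frame $\{e_1,\dots,e_{r_2-1}\}$ replaces $\bigl(\fp^+,\fp^+_1,\fp^+_2\bigr)$ by a configuration in the same family with $r_2$ lowered by one and $(d,d_2,\varepsilon_2)$ unchanged.

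The core is the induction on $r_2$. For $r_2=1$ one has $\cP_{(k_1)}\bigl(\fp^+_2\bigr)=\BC\det_{\fn^+_2}(x_2)^{k_1}$, and the constant falls out of Theorem~\ref{thm_key_identity}\,(1) applied with $f\equiv1$: restricting to $z_1=0$, using $\det_{\fn^+}(z)|_{z_1=0}=\det_{\fn^+_2}(z_2)^{\varepsilon_2}$ together with the rank-one instance of~\eqref{formula_diff} on $\fp^+_2$, one gets the value of~\eqref{const1}, \eqref{const2} at $r_2=1$ (consistent with Corollary~\ref{cor_FK}). For $r_2\ge2$, set $k:=k_{r_2}$ and start from the Rodrigues recursion~\eqref{key_simple}, which writes $\bigl\langle\Delta^{\fn^+_2}_\bk(x_2),{\rm e}^{(x|\overline z)_{\fp^+}}\bigr\rangle_{\lambda,x}$ as $(\lambda)_{\underline{2k/\varepsilon_2}_r,d}^{-1}$ times $\det_{\fn^+}(z)^{-\lambda+\frac nr}\det_{\fn^+_2}\!\bigl(\tfrac1{\varepsilon_2}\tfrac{\partial}{\partial z_2}\bigr)^{\!k}\det_{\fn^+}(z)^{\lambda+\frac{2k}{\varepsilon_2}-\frac nr}$ applied to $\bigl\langle\Delta^{\fn^+_2}_{\bk-\underline{k}_{r_2}}(x_2),{\rm e}^{(x|\overline z)_{\fp^+}}\bigr\rangle_{\lambda+\frac{2k}{\varepsilon_2},x}$. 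The restriction $z_1=0$ commutes with $\partial/\partial z_2$; since $\Delta^{\fn^+_2}_{\bk-\underline{k}_{r_2}}$ depends only on the rank-$(r_2-1)$ subalgebra $\fp^+(e^{r_2-1})_2$, Proposition~\ref{prop_reduction} lets me feed in the induction hypothesis, so the last inner product restricted to $z_1=0$ equals $C_{\varepsilon_2}^{d,d_2}\!\bigl(\lambda+\tfrac{2k}{\varepsilon_2},\bk-\underline{k}_{r_2}\bigr)\det_{\fn^+_2}(z_2)^{\varepsilon_2(\lambda+\frac{2k}{\varepsilon_2}-\frac nr)}\Delta^{\fn^+_2}_{\bk-\underline{k}_{r_2}}(z_2)$.

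Evaluating the remaining determinant derivative by~\eqref{formula_diff} on $\fp^+_2$ (with its own data $(r_2,d_2)$ and a power of $\varepsilon_2$ coming from the $\tfrac1{\varepsilon_2}$) yields a product of $r_2$ Pochhammer symbols in $\lambda$ times $\det_{\fn^+_2}(z_2)^{\varepsilon_2(\lambda-\frac nr)}\Delta^{\fn^+_2}_\bk(z_2)$; the two powers of $\det_{\fn^+_2}(z_2)$ cancel against $\det_{\fn^+}(z)^{-\lambda+\frac nr}|_{z_1=0}=\det_{\fn^+_2}(z_2)^{-\varepsilon_2(\lambda-\frac nr)}$, leaving $\Delta^{\fn^+_2}_\bk(z_2)$ times a recursion of the form
\[
C_{\varepsilon_2}^{d,d_2}(\lambda,\bk)=\frac{(\text{an explicit product of }r_2\text{ Pochhammer symbols})}{(\lambda)_{\underline{2k/\varepsilon_2}_r,d}}\;C_{\varepsilon_2}^{d,d_2}\!\bigl(\lambda+\tfrac{2k}{\varepsilon_2},\bk-\underline{k}_{r_2}\bigr).
\]
It then remains to check that the (three equivalent) closed forms in~\eqref{const1} and~\eqref{const2}, together with the $r_2=1$ base value, satisfy this recursion. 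The first line of~\eqref{const1} ($r_2=2$, $d_2$ arbitrary) and the case $\varepsilon_2=2$, $r_2=1$ have to be recorded on their own, since one step of the recursion from $r_2=2$ lands in a rank-one system that no longer remembers $d_2$; there I would simply run~\eqref{key_simple} once for $\bk=(k_1,k_2)\mapsto(k_1-k_2,0)$ and match against Corollary~\ref{cor_FK}.

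I expect the main obstacle to be the Pochhammer bookkeeping in this last verification: reconciling the asymmetric ``peel off the last column'' recursion --- which shifts $\lambda$ by $2k_{r_2}/\varepsilon_2$ and lowers the rank parameter --- with the symmetric product presentations of $C_{\varepsilon_2}^{d,d_2}(\lambda,\bk)$, while carrying the $\varepsilon_2$-dependent normalization of $\partial/\partial z_2$ coming from $(\cdot|\cdot)_{\fn^+_2}=\varepsilon_2^{-1}(\cdot|\cdot)_{\fn^+}$ and, in the $\varepsilon_2=2$ case, splitting products of half-integer-shifted Pochhammer symbols into the floor/ceiling pieces displayed in~\eqref{const2} via the duplication identity $(2a)_{2m}=4^m(a)_m\bigl(a+\tfrac12\bigr)_m$. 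A secondary, classification-dependent point is to check, case by case among the ten triples $\bigl(\fp^+,\fp^+_1,\fp^+_2\bigr)$, that the tube-type reduction and the subframe passage really preserve the family and the invariants $(d,d_2,\varepsilon_2)$, and that the rank-one base constants come out as claimed.
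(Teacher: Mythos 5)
Your proposal is correct and follows essentially the same route as the paper: reduction to the tube-type situation with $f=\Delta^{\fn^+_2}_\bk$, induction on $r_2$ via the Rodrigues-type recursion~\eqref{key_simple}, Proposition~\ref{prop_reduction} to feed in the induction hypothesis, and~\eqref{formula_diff} plus the duplication identity for the Pochhammer bookkeeping. The only (minor) deviation is in the base cases $(\varepsilon_2,r_2)=(1,2)$ and $(2,1)$, where the paper quotes the full ${}_2F_1$-expansions \eqref{rank2_2F1}, \eqref{rank2'_2F1} from~\cite{N2} and restricts them, whereas you rederive the needed values directly from Theorem~\ref{thm_key_identity}\,(1), \eqref{formula_diff} and Corollary~\ref{cor_FK}; this works because at $z_1=0$ the hypergeometric factor equals $1$, and indeed reproduces the first line of~\eqref{const1} and the $r_2=1$ value of~\eqref{const2}.
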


\begin{Theorem}\label{thm_poles_simple}
For $\bk\in\BZ_{++}^{r_2}$, $\varepsilon_2=1,2$, we define $\phi_{\varepsilon_2}(\bk)\in\BZ_{++}^{\varepsilon_2 r_2}$ by
\begin{align}
&\phi_1(\bk)_a:=\min\{ k_i+k_j\mid 1\le i<j\le r_2+1,\, i+j=2a+1\}, && 1\le a\le r_2, \label{phi1}\\
&\phi_2(\bk)_a:=\min\left\{ \left\lfloor\frac{k_i+k_j}{2}\right\rfloor\ \middle|\ 1\le i\le j\le r_2+1,\ i+j=a+1\right\}, && 1\le a\le 2r_2, \label{phi2}
\end{align}
where $k_{r_2+1}:=0$. Then for $f(x_2)\in\cP_\bk\bigl(\fp^+_2\bigr)$, as a~function of $\lambda$,
\begin{equation}\label{formula_poles_simple}
(\lambda)_{\phi_{\varepsilon_2}(\bk),d}\bigl\langle f(x_2),{\rm e}^{(x|\overline{z})_{\fp^+}}\bigr\rangle_{\lambda,x}
\end{equation}
is holomorphically continued for all $\BC$.
\end{Theorem}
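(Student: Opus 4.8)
The plan is to follow the pattern of the proof of Theorem~\ref{thm_poles_nonsimple}. By the reduction described at the end of Section~\ref{subsection_reduction} (Proposition~\ref{prop_reduction}) it suffices to treat the case in which both $\fp^+$ and $\fp^+_2$ are of tube type, with a common maximal tripotent of $\fp^+_2$; and since $f\mapsto(\lambda)_{\phi_{\varepsilon_2}(\bk),d}\langle f(x_2),{\rm e}^{(x|\overline z)_{\fp^+}}\rangle_{\lambda,x}$ is $K_1$-equivariant, we may take $f(x_2)=\Delta^{\fn^+_2}_\bk(x_2)$. I would then argue by induction on $r_2=\rank\fp^+_2$, the case $\bk=\underline{0}_{r_2}$ being trivial since $\langle 1,{\rm e}^{(x|\overline z)_{\fp^+}}\rangle_\lambda\equiv 1$; the argument splits into the cases $\varepsilon_2=1$ and $\varepsilon_2=2$, because (\ref{key_simple}) shifts $\lambda$ by $2k_{r_2}$ resp.\ by $k_{r_2}$ and the tuples (\ref{phi1}), (\ref{phi2}) have different shapes.

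For the inductive step I would apply (\ref{key_simple}), which gives
\begin{align*}
&(\lambda)_{\underline{2k_{r_2}/\varepsilon_2}_r,d}\bigl\langle \Delta^{\fn^+_2}_\bk(x_2),{\rm e}^{(x|\overline z)_{\fp^+}}\bigr\rangle_{\lambda,x} \\
&\qquad=\det_{\fn^+}(z)^{-\lambda+\frac nr}\det_{\fn^+_2}\!\left(\frac{1}{\varepsilon_2}\frac{\partial}{\partial z_2}\right)^{k_{r_2}}\det_{\fn^+}(z)^{\lambda+\frac{2k_{r_2}}{\varepsilon_2}-\frac nr} \\
&\qquad\qquad\times\bigl\langle \Delta^{\fn^+_2}_{\bk-\underline{k_{r_2}}_{r_2}}(x_2),{\rm e}^{(x|\overline z)_{\fp^+}}\bigr\rangle_{\lambda+\frac{2k_{r_2}}{\varepsilon_2},x},
\end{align*}
the conjugated differential operator having coefficients polynomial in $\lambda$. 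Since $\bk-\underline{k_{r_2}}_{r_2}$ has vanishing last entry, $\Delta^{\fn^+_2}_{\bk-\underline{k_{r_2}}_{r_2}}$ depends only on the subalgebra attached to $e_1+\dots+e_{r_2-1}$, so Proposition~\ref{prop_reduction} identifies the inner product on the right with the corresponding one for a pair $(\fp^{+\prime},\fp^{+\prime}_2)$ of rank $r_2-1$, again with the same $d$ and $\varepsilon_2$; the inductive hypothesis then makes $(\mu)_{\phi_{\varepsilon_2}(\bk-\underline{k_{r_2}}_{r_2}),d}\langle\Delta^{\fn^+_2}_{\bk-\underline{k_{r_2}}_{r_2}}(x_2),{\rm e}^{\cdots}\rangle_{\mu,x}$ entire. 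Substituting $\mu=\lambda+2k_{r_2}/\varepsilon_2$ and using $(\lambda-c)_m(\lambda-c+m)_{m'}=(\lambda-c)_{m+m'}$, one obtains holomorphy of $(\lambda)_{\bn,d}\langle\Delta^{\fn^+_2}_\bk(x_2),{\rm e}^{(x|\overline z)_{\fp^+}}\rangle_{\lambda,x}$ for a tuple $\bn$ dominating $\phi_{\varepsilon_2}(\bk)$ componentwise (for each index this holds because a minimizing pair for $\phi_{\varepsilon_2}(\bk-\underline{k_{r_2}}_{r_2})$ is a legal pair for $\phi_{\varepsilon_2}(\bk)$). It then remains to sharpen $\bn$ down to $\phi_{\varepsilon_2}(\bk)$: as in the proof of Theorem~\ref{thm_poles_nonsimple}, where two Rodrigues-type identities are combined and one passes to the greatest common divisor of the corresponding Pochhammer data, this is achieved by producing a second holomorphy estimate (and, when $\fp^+$ is classical, the sharpening can alternatively be read off directly from the Littlewood--Richardson bounds of Lemma~\ref{lem_LR}, as in (\ref{LR_nonsimple}), which give $\cP_\bk(\fp^+_2)\subset\bigoplus_{m_a\le\phi_{\varepsilon_2}(\bk)_a}\cP_\bm(\fp^+)$).

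The routine but lengthy part is the combinatorial bookkeeping: tracking the nested minima of (\ref{phi1}), (\ref{phi2}) through the $\lambda$-shifts and the greatest-common-divisor step, and in particular keeping the floors in (\ref{phi2}) under control when $\varepsilon_2=2$. The genuine obstacle is that this descent does not close for a small number of exceptional configurations in which $\fp^+$ is exceptional and $\fp^+_2$ is simple of rank $3$, where peeling off the last column leaves a residual rank-$3$ case that is not covered by the inductive hypothesis; for those, the poles of (\ref{purpose_inner_prod}) have to be determined by the direct computation carried out in Section~\ref{section_rank3}.
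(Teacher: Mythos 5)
Your skeleton coincides with the paper's: reduction to tube type via Proposition~\ref{prop_reduction}, descent through the Rodrigues-type identity (\ref{key_simple}) combined with Proposition~\ref{prop_reduction}, Littlewood--Richardson input for classical $\fp^+$, and deferral of the exceptional case to Section~\ref{section_rank3}. The gap is in the sharpening step, which is the actual content of the theorem. Unlike Theorem~\ref{thm_poles_nonsimple}, where $\fp^+_2$ has two simple factors and hence two Rodrigues identities whose Pochhammer data can be intersected, here there is only (\ref{key_simple}), so your ``second holomorphy estimate'' has no identified source; and your fallback claim that Lemma~\ref{lem_LR} directly yields $\cP_\bk\bigl(\fp^+_2\bigr)\subset\bigoplus_{m_a\le\phi_{\varepsilon_2}(\bk)_a}\cP_\bm(\fp^+)$ is only true for $\bigl(\Sym(2r_2,\BC),{\rm M}(r_2,\BC)\bigr)$ and $\bigl(\Alt(2r_2,\BC),{\rm M}(r_2,\BC)\bigr)$ via (\ref{LR_Sym-M}), (\ref{LR_Alt-M}) --- precisely the cases where no induction is needed at all. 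For the two families where the descent is genuinely used, $\bigl({\rm M}(r_2,\BC),\Sym(r_2,\BC)\bigr)$ and $\bigl({\rm M}(2r_2,\BC),\Alt(2r_2,\BC)\bigr)$, the bounds (\ref{LR_M-Sym}), (\ref{LR_M-Alt}) give only the ``diagonal'' constraints $m_{r_2-i}\le k_{r_2-2i}$, resp.\ $m_{2r_2-i}\le\lfloor k_{r_2-i}/2\rfloor$, i.e., exactly the pairs in (\ref{phi1}), (\ref{phi2}) whose partner is $k_{r_2+1}=0$; these must be intersected with the inductive bound, which supplies the pairs with $j\le r_2$. That combination is the paper's argument; as written, your proposal neither produces the second estimate nor correctly states what the Littlewood--Richardson bounds deliver there. (The rank-two case $\bigl(\BC^n,\BC^{n''}\bigr)$ is settled separately by the explicit formulas (\ref{rank2_2F1}), (\ref{rank2'_2F1}).)

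Your diagnosis of the exceptional case is also off. Peeling $k_3$ from $\bigl(\Herm(3,\BO)^\BC,\Alt(6,\BC)\bigr)$ does lower the rank: Proposition~\ref{prop_reduction} lands in $\bigl(\Herm(2,\BO)^\BC,\Alt(4,\BC)\bigr)\simeq\bigl(\BC^{10},\BC^6\bigr)$, which is covered, so the inductive hypothesis is available; what fails is again the sharpening, since the descent only yields the exponent $(k_1+k_2,k_2+k_3,2k_3)$ and no Littlewood--Richardson-type bound is set up for the exceptional $K_1$. Section~\ref{section_rank3} closes this by two independent estimates: the explicit expansion based on $\det_{\fn^+}(x_1+x_2)=\det_{\fn^+_2}(x_2)+\bigl(x_2|(x_1)^\sharp\bigr)_{\fn^+}$ improves the last entry from $2k_3$ to $k_3$, while writing $\Delta^{\fn^+_2}_\bk$ as a constant multiple of $\check{\Delta}^{\fn^+_2}_{(k_1-k_3,k_1-k_2,0)}\bigl(\frac{\partial}{\partial x_2}\bigr)\det_{\fn^+_2}(x_2)^{k_1}$ and using the known expansion of $\bigl\langle \det_{\fn^+_2}(x_2)^{k_1},{\rm e}^{(x|\overline{z})_{\fp^+}}\bigr\rangle_{\lambda,x}$ gives the bound $(2k_1,k_1,k_1)$; intersecting the two inclusions yields $\phi_1(\bk)$, i.e., (\ref{Ktype_incl_simple}). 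So your plan names the right ingredients, but the step where the missing constraints involving $k_{r_2+1}=0$ come from is left unsupported or mis-stated, and that is where the proof actually lives.
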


\begin{Remark}\label{rem_poles_simple}\samepage\quad
\begin{enumerate}
\item By~\cite[Corollary 6.6]{N2}, if $f(x_2)\ne 0$ and if $\bk\in\BZ_{++}^{r_2}$ satisfies ``$\varepsilon_2=1$, $\bk=(\underline{k}_a,\underline{0}_{r_2-a})$, $1\le a\le r_2$''
or ``$\varepsilon_2=1$, $\bk=(k_1,\underline{k_2}_{a-1},\underline{0}_{r_2-a})$, $2\le a\le r_2,\ a\colon\text{even}$''
or ``$\varepsilon_2=2$, $\bk=(\underline{k+1}_l,\underline{k}_{a-l},\underline{0}_{r_2-a})$, $0\le l<a\le r_2$'',
then~(\ref{formula_poles_simple}) gives a~non-zero polynomial in $z\in\fp^+$ for all $\lambda\in\BC$.
\item The restriction of Theorem~\ref{thm_poles_simple} to $z_1=0$, the holomorphy of
\[ (\lambda)_{\phi_{\varepsilon_2}(\bk),d}\bigl\langle f(x_2),{\rm e}^{(x|\overline{z})_{\fp^+}}\bigr\rangle_{\lambda,x}\big|_{z_1=0} \]
follows from Theorem~\ref{thm_topterm_simple}, since it is easily proved that $(\lambda)_{\phi_{\varepsilon_2}(\bk),d}C_{\varepsilon_2}^{d,d_2}(\lambda,\bk)$ is holomorphic for all $\lambda\in\BC$.
\item Under Corollary~\ref{cor_FK}, Theorem~\ref{thm_poles_simple} is equivalent to the inclusion
\begin{equation}\label{Ktype_incl_simple}
\cP_\bk\bigl(\fp^+_2\bigr)\subset\bigoplus_{\substack{\bm\in\BZ_{++}^{\varepsilon_2 r_2} \\ m_a\le \phi_{\varepsilon_2}(\bk)_a}}\cP_\bm(\fp^+).
\end{equation}
\end{enumerate}
Here $(\lambda)_{\bm,d}$ is as in~(\ref{Pochhammer}), and $\underline{k}_a:=(\underbrace{k,\dots,k}_a)$.
\end{Remark}

\begin{proof}[Proof of Theorem~\ref{thm_topterm_simple}]
By the last paragraph of Section~\ref{subsection_reduction}, we may assume $\fp^+$, $\fp^+_2$ are of tube type, so that $r=\varepsilon_2r_2$ holds,
and by the $K_1$-equivariance, may assume $f(x_2)=\Delta_\bk^{\fn^+_2}(x_2)$.

First suppose $\varepsilon_2=1$, $r_2=2$. Then the theorem follows from~\cite[Theorems 6.3\,(1), Corollary~6.5\,(2)]{N2},
\begin{gather}
 \bigl\langle \Delta_\bk^{\fn^+_2}(x_2),{\rm e}^{(x|\overline{z})_{\fp^+}}\bigr\rangle_{\lambda,x}
=\frac{\det_{\fn^+}(z)^{-\lambda+\frac{d}{2}+1}}{(\lambda)_{(k_1+k_2,2k_2),d}}\det_{\fn^+_2}\left(\frac{\partial}{\partial z_2}\right)^{k_2}
\det_{\fn^+}(z)^{\lambda+2k_2-\frac{d}{2}-1}\Delta_{(k_1-k_2,0)}^{\fn^+_2}(z_2) \nonumber \\
\qquad{} =\frac{1}{(\lambda)_{(k_1+k_2,2k_2),d}}\det_{\fn^+}(z)^{-\lambda+\frac{d}{2}+1}\det_{\fn^+_2}\left(\frac{\partial}{\partial z_2}\right)^{k_2}\det_{\fn^+_2}(z_2)^{\lambda+k_2-\frac{d}{2}-1} \notag \\
\qquad{} \eqspace{}\times{}_1F_0\,\biggl(-\lambda-2k_2+\frac{d}{2}+1 ;-\frac{\det_{\fn^+}(z_1)}{\det_{\fn^+_2}(z_2)}\biggr)\,\Delta_\bk^{\fn^+_2}(z_2) \nonumber\\
\qquad{}=\frac{\bigl(\lambda+k_1-\frac{d-d_2}{2}\bigr)_{k_2}}{(\lambda)_{(k_1+k_2,k_2),d}}\det_{\fn^+}(z)^{-\lambda+\frac{d}{2}+1}\det_{\fn^+_2}(z_2)^{\lambda-\frac{d}{2}-1} \nonumber \\
\qquad{}\eqspace{}\times{}_2F_1\,\biggl(\begin{matrix} -\lambda-k_2+\frac{d}{2}+1,\, -\lambda-k_1+\frac{d-d_2}{2}+1 \\ -\lambda-k_1-k_2+\frac{d-d_2}{2}+1 \end{matrix};
-\frac{\det_{\fn^+}(z_1)}{\det_{\fn^+_2}(z_2)}\biggr)\,\Delta_\bk^{\fn^+_2}(z_2) \nonumber \\
\qquad{} =\frac{\bigl(\lambda+k_1-\frac{d-d_2}{2}\bigr)_{k_2}}{(\lambda)_{(k_1+k_2,k_2),d}}
{}_2F_1\,\biggl(\begin{matrix} -k_2,\, -k_1-\frac{d_2}{2} \\ -\lambda-k_1-k_2+\frac{d-d_2}{2}+1 \end{matrix}; -\frac{\det_{\fn^+}(z_1)}{\det_{\fn^+_2}(z_2)}\biggr)\,
\Delta_\bk^{\fn^+_2}(z_2), \label{rank2_2F1}
\end{gather}
where ${}_1F_0(\alpha;t):=\sum_{m=0}^\infty\frac{(\alpha)_m}{m!}t^m$,
${}_2F_1\left(\begin{smallmatrix}\alpha,\beta\\ \gamma\end{smallmatrix};t\right):=\sum_{m=0}^\infty\frac{(\alpha)_m(\beta)_m}{(\gamma)_mm!}t^m$.

Next suppose $\varepsilon_2=1$, $d_2=d/2$. The 2nd equality of~(\ref{const1}) is easy. We prove the 1st equality by induction on $r_2$.
If $r_2=1$, then we have $\fp^+=\fp^+_2$ under the tube type assumption, and this follows from Corollary~\ref{cor_FK}.
Next we assume the theorem for $r_2-1$, and prove it for $r_2$. Then since $r=r_2$ holds, by~(\ref{key_simple}) we have
\begin{align*}
&\bigl\langle \Delta_\bk^{\fn^+_2}(x_2),{\rm e}^{(x|\overline{z})_{\fp^+}}\bigr\rangle_{\lambda,x}\big|_{z_1=0} \\
&=\frac{\det_{\fn^+}(z_2)^{-\lambda+\frac{n}{r}}}{(\lambda)_{\underline{2k_{r_2}}_{r_2},d}}\det_{\fn^+_2}\!\left(\frac{\partial}{\partial z_2}\right)^{k_{r_2}}
\det_{\fn^+}(z_2)^{\lambda+2k_{r_2}-\frac{n}{r}}
\Bigl\langle \Delta_{\bk-\underline{k_{r_2}}_{r_2}}^{\fn^+_2}(x_2),{\rm e}^{(x|\overline{z})_{\fp^+}}\Bigr\rangle_{\lambda+2k_{r_2},x}\Bigr|_{z_1=0} \\ 
&=\frac{1}{(\lambda)_{\underline{2k_{r_2}}_{r_2},d}}\det_{\fn^+_2}(z_2)^{-\lambda+\frac{d}{2}(r_2-1)+1}\det_{\fn^+_2}\left(\frac{\partial}{\partial z_2}\right)^{k_{r_2}}
\det_{\fn^+_2}(z_2)^{\lambda+2k_{r_2}-\frac{d}{2}(r_2-1)-1} \\
&\eqspace{}\times \frac{\prod_{1\le i< j\le r_2-1}\left(\lambda+2k_{r_2}-\frac{d}{4}(i+j-2)\right)_{(k_i-k_{r_2})+(k_j-k_{r_2})}}
{\prod_{1\le i< j\le r_2}\left(\lambda+2k_{r_2}-\frac{d}{4}(i+j-3)\right)_{(k_i-k_{r_2})+(k_j-k_{r_2})}}\Delta_{\bk-\underline{k_{r_2}}_{r_2}}^{\fn^+_2}(z_2) \\
&=\frac{1}{(\lambda)_{\underline{2k_{r_2}}_{r_2},d}}\prod_{i=1}^{r_2}\left(\lambda+k_i-\frac{d}{2}(r_2-1)+\frac{d}{4}(r_2-i)\right)_{k_{r_2}} \\
&\eqspace{}\times \frac{\prod_{1\le i< j\le r_2-1}\left(\lambda+2k_{r_2}-\frac{d}{4}(i+j-2)\right)_{k_i+k_j-2k_{r_2}}}
{\prod_{1\le i< j\le r_2}\left(\lambda+2k_{r_2}-\frac{d}{4}(i+j-3)\right)_{k_i+k_j-2k_{r_2}}}\det_{\fn^+_2}(z_2)^{k_{r_2}}\Delta_{\bk-\underline{k_{r_2}}_{r_2}}^{\fn^+_2}(z_2) \\
&=\frac{1}{(\lambda)_{\underline{2k_{r_2}}_{r_2},d}}
\frac{\prod_{i=1}^{r_2}\left(\lambda-\frac{d}{4}(i+r_2-2)\right)_{k_i+k_{r_2}}}{\prod_{i=1}^{r_2}\left(\lambda-\frac{d}{4}(i+(r_2+1)-3)\right)_{k_i}} \\
&\eqspace{}\times \frac{\prod_{1\le i< j\le r_2-1}\left(\lambda-\frac{d}{4}(i+j-2)\right)_{k_i+k_j}}{\prod_{1\le i< j\le r_2}\left(\lambda-\frac{d}{4}(i+j-3)\right)_{k_i+k_j}}
\frac{\prod_{1\le i< j\le r_2}\left(\lambda-\frac{d}{4}(i+j-3)\right)_{2k_{r_2}}}{\prod_{1\le i< j\le r_2-1}\left(\lambda-\frac{d}{4}(i+j-2)\right)_{2k_{r_2}}}\Delta_{\bk}^{\fn^+_2}(z_2) \\
&=\frac{\left(\lambda-\frac{d}{4}(2r_2-2)\right)_{2k_{r_2}}}{\prod_{i=1}^{r_2}\left(\lambda-\frac{d}{2}(i-1)\right)_{2k_{r_2}}} \\
&\eqspace{}\times \frac{\prod_{1\le i< j\le r_2}\left(\lambda-\frac{d}{4}(i+j-2)\right)_{k_i+k_j}}{\prod_{1\le i< j\le r_2+1}\left(\lambda-\frac{d}{4}(i+j-3)\right)_{k_i+k_j}}
\frac{\prod_{1\le i\le j\le r_2-1}\left(\lambda-\frac{d}{4}(i+j-2)\right)_{2k_{r_2}}}{\prod_{1\le i< j\le r_2-1}\left(\lambda-\frac{d}{4}(i+j-2)\right)_{2k_{r_2}}}\Delta_{\bk}^{\fn^+_2}(z_2) \\
&=\frac{\prod_{1\le i< j\le r_2}\left(\lambda-\frac{d}{4}(i+j-2)\right)_{k_i+k_j}}
{\prod_{1\le i< j\le r_2+1}\left(\lambda-\frac{d}{4}(i+j-3)\right)_{k_i+k_j}}\Delta_\bk^{\fn^+_2}(z_2),
\end{align*}
where we have used the induction hypothesis and Proposition~\ref{prop_reduction} at the 2rd equality, and~(\ref{formula_diff}) at the 3rd equality. Hence the theorem follows.

Next suppose $\varepsilon_2=2$. The 2nd equality of~(\ref{const2}) follows from the formula $(2\mu-1)_k=2^k(\mu)_{\lfloor k/2\rfloor}\left(\mu-\frac{1}{2}\right)_{\lceil k/2\rceil}$,
and the 3rd equality is easy.
Next we prove the 1st equality by induction on $r_2$. When $r_2=1$, the theorem follows from~\cite[Theorem 6.3\,(2), Corollary 6.5\,(3)]{N2},
\begin{align}
&\bigl\langle \Delta_{k_1}^{\fn^+_2}(x_2),{\rm e}^{(x|\overline{z})_{\fp^+}}\bigr\rangle_{\lambda,x}
=\frac{\det_{\fn^+}(z)^{-\lambda+\frac{d}{2}+1}}{(\lambda)_{(k_1,k_1),d}}\det_{\fn^+_2}\left(\frac{1}{2}\frac{\partial}{\partial z_2}\right)^{k_1} \det_{\fn^+}(z)^{\lambda+k_1-\frac{d}{2}-1} \notag \\
&\qquad{}=\frac{1}{(\lambda)_{(k_1,k_1),d}}\det_{\fn^+}(z)^{-\lambda+\frac{d}{2}+1}\det_{\fn^+_2}\left(\frac{1}{2}\frac{\partial}{\partial z_2}\right)^{k_1}
\det_{\fn^+_2}(z_2)^{2\left(\lambda+k_1-\frac{d}{2}-1\right)} \notag \\
&\qquad{}\eqspace{}\times{}_1F_0\,\biggl( -\lambda-k_1+\frac{d}{2}+1; -\frac{\det_{\fn^+}(z_1)}{\det_{\fn^+_2}(z_2)^2}\biggr) \notag \\
&\qquad{}=\frac{\left(\lambda+\left\lceil\frac{k_1}{2}\right\rceil-\frac{d+1}{2}\right)_{\lfloor k_1/2\rfloor}}{(\lambda)_{(k_1,\lfloor k_1/2\rfloor),d}}
\det_{\fn^+}(z)^{-\lambda+\frac{d}{2}+1}\det_{\fn^+_2}(z_2)^{2\left(\lambda-\frac{d}{2}-1\right)+k_1} \notag \\
&\qquad{}\eqspace{}\times{}_2F_1\,\biggl(\begin{matrix} -\lambda-\frac{k_1}{2}+\frac{d}{2}+1,\, -\lambda-\frac{k_1}{2}+\frac{d}{2}+\frac{3}{2} \\[1pt] -\lambda-k_1+\frac{d}{2}+\frac{3}{2} \end{matrix};
-\frac{\det_{\fn^+}(z_1)}{\det_{\fn^+_2}(z_2)^2}\biggr) \notag \\
&\qquad{}=\frac{\left(\lambda+\left\lceil\frac{k_1}{2}\right\rceil-\frac{d+1}{2}\right)_{\lfloor k_1/2\rfloor}}{(\lambda)_{(k_1,\lfloor k_1/2\rfloor),d}}
{}_2F_1\,\biggl(\begin{matrix} -\frac{k_1}{2},\, -\frac{k_1}{2}+\frac{1}{2} \\ -\lambda-k_1+\frac{d}{2}+\frac{3}{2} \end{matrix};
-\frac{\det_{\fn^+}(z_1)}{\det_{\fn^+_2}(z_2)^2}\biggr){}\det_{\fn^+_2}(z_2)^{k_1}. \label{rank2'_2F1}
\end{align}
Next we assume the theorem for $r_2-1$, and prove it for $r_2$. Then since $r=2r_2$, $d_2=2d$ hold, by~(\ref{key_simple}) we have
\begin{align*}
&\bigl\langle \Delta_\bk^{\fn^+_2}(x_2),{\rm e}^{(x|\overline{z})_{\fp^+}}\bigr\rangle_{\lambda,x}\big|_{z_1=0} \\
&=\frac{\det_{\fn^+}(z_2)^{-\lambda+\frac{n}{r}}}{(\lambda)_{\underline{k_{r_2}}_{2r_2},d}}\det_{\fn^+_2}\!\left(\frac{1}{2}\frac{\partial}{\partial z_2}\right)^{k_{r_2}}
\det_{\fn^+}(z_2)^{\lambda+k_{r_2}-\frac{n}{r}} \Bigl\langle \Delta_{\bk-\underline{k_{r_2}}_{r_2}}^{\fn^+_2}(x_2),{\rm e}^{(x|\overline{z})_{\fp^+}}\Bigr\rangle_{\lambda+k_{r_2},x}\Bigr|_{z_1=0} \\
&=\frac{1}{(\lambda)_{\underline{k_{r_2}}_{2r_2},d}}\det_{\fn^+_2}(z_2)^{2\left(-\lambda+\frac{d}{2}(2r_2-1)+1\right)}
\det_{\fn^+_2}\left(\frac{1}{2}\frac{\partial}{\partial z_2}\right)^{k_{r_2}}\det_{\fn^+_2}(z_2)^{2\left(\lambda+k_{r_2}-\frac{d}{2}(2r_2-1)-1\right)} \\
&\eqspace{}\times \frac{2^{|\bk|-k_{r_2}r_2}\prod_{1\le i< j\le r_2-1}(2(\lambda+k_{r_2})-1-d(i+j-1))_{(k_i-k_{r_2})+(k_j-k_{r_2})}}
{\prod_{1\le i<j\le r_2}(2(\lambda+k_{r_2})-1-d(i+j-2))_{(k_i-k_{r_2})+(k_j-k_{r_2})}} \\
&\eqspace{}\times \frac{\prod_{i=1}^{r_2-1}\left(\lambda+k_{r_2}-\frac{1}{2}-\frac{d}{2}(2i-1)\right)_{k_i-k_{r_2}}}
{\prod_{i=1}^{r_2-1}\left(\lambda+k_{r_2}-d(i-1)\right)_{k_i-k_{r_2}}}\Delta_{\bk-\underline{k_{r_2}}_{r_2}}^{\fn^+_2}(z_2) \\
&=\frac{2^{|\bk|-2k_{r_2}r_2}}{(\lambda)_{\underline{k_{r_2}}_{2r_2},d}}\prod_{i=1}^{r_2}(2\lambda+k_i-d(2r_2-1)-2+d(r_2-i)+1)_{k_{r_2}} \\
&\eqspace{}\times \frac{\prod_{1\le i< j\le r_2-1}(2\lambda+2k_{r_2}-1-d(i+j-1))_{k_i+k_j-2k_{r_2}}}{\prod_{1\le i<j\le r_2}(2\lambda+2k_{r_2}-1-d(i+j-2))_{k_i+k_j-2k_{r_2}}} \\
&\eqspace{}\times \frac{\prod_{i=1}^{r_2-1}\left(\lambda+k_{r_2}-\frac{1}{2}-\frac{d}{2}(2i-1)\right)_{k_i-k_{r_2}}}{\prod_{i=1}^{r_2-1}(\lambda+k_{r_2}-d(i-1))_{k_i-k_{r_2}}}
\det_{\fn^+_2}(z_2)^{k_{r_2}}\Delta_{\bk-\underline{k_{r_2}}_{r_2}}^{\fn^+_2}(z_2) \\
&=\frac{2^{|\bk|-2k_{r_2}r_2}}{(\lambda)_{\underline{k_{r_2}}_{2r_2},d}}\frac{\prod_{i=1}^{r_2}(2\lambda-1-d(i+r_2-1))_{k_i+k_{r_2}}}{\prod_{i=1}^{r_2}(2\lambda-1-d(i+(r_2+1)-2))_{k_i}} \\
&\eqspace{}\times \frac{\prod_{1\le i< j\le r_2-1}(2\lambda-1-d(i+j-1))_{k_i+k_j}}{\prod_{1\le i<j\le r_2}(2\lambda-1-d(i+j-2))_{k_i+k_j}}
\frac{\prod_{1\le i<j\le r_2}(2\lambda-1-d(i+j-2))_{2k_{r_2}}}{\prod_{1\le i< j\le r_2-1}(2\lambda-1-d(i+j-1))_{2k_{r_2}}} \\
&\eqspace{}\times \frac{\prod_{i=1}^{r_2}\left(\lambda-\frac{1}{2}-\frac{d}{2}(2i-1)\right)_{k_i}}{\prod_{i=1}^{r_2}(\lambda-d(i-1))_{k_i}}
\frac{\prod_{i=1}^{r_2}(\lambda-d(i-1))_{k_{r_2}}}{\prod_{i=1}^{r_2}\left(\lambda-\frac{1}{2}-\frac{d}{2}(2i-1)\right)_{k_{r_2}}}\Delta_{\bk}^{\fn^+_2}(z_2) \\
&=\frac{2^{|\bk|-2k_{r_2}r_2}(2\lambda-1-d(2r_2-1))_{2k_{r_2}}}{\prod_{i=1}^{r_2}\left(\lambda-\frac{d}{2}(2i-2)\right)_{k_{r_2}}\prod_{i=1}^{r_2}\left(\lambda-\frac{d}{2}(2i-1)\right)_{k_{r_2}}} \\
&\eqspace{}\times \frac{\prod_{1\le i< j\le r_2}(2\lambda-1-d(i+j-1))_{k_i+k_j}}{\prod_{1\le i<j\le r_2+1}(2\lambda-1-d(i+j-2))_{k_i+k_j}}
\frac{\prod_{1\le i\le j\le r_2-1}(2\lambda-1-d(i+j-1))_{2k_{r_2}}}{\prod_{1\le i< j\le r_2-1}(2\lambda-1-d(i+j-1))_{2k_{r_2}}} \\
&\eqspace{}\times \frac{\prod_{i=1}^{r_2}\left(\lambda-\frac{1}{2}-\frac{d}{2}(2i-1)\right)_{k_i}}{\prod_{i=1}^{r_2}(\lambda-d(i-1))_{k_i}}
\frac{\prod_{i=1}^{r_2}(\lambda-d(i-1))_{k_{r_2}}}{\prod_{i=1}^{r_2}\left(\lambda-\frac{1}{2}-\frac{d}{2}(2i-1)\right)_{k_{r_2}}}\Delta_{\bk}^{\fn^+_2}(z_2) \\
&=\frac{2^{|\bk|-2k_{r_2}r_2}\prod_{i=1}^{r_2}(2\lambda-1-d(2i-1))_{2k_{r_2}}}
{\prod_{i=1}^{r_2}\left(\lambda-\frac{1}{2}-\frac{d}{2}(2i-1)\right)_{k_{r_2}}\prod_{i=1}^{r_2}\left(\lambda-\frac{d}{2}(2i-1)\right)_{k_{r_2}}} \\
&\eqspace{}\times \frac{\prod_{1\le i< j\le r_2}(2\lambda-1-d(i+j-1))_{k_i+k_j}}{\prod_{1\le i<j\le r_2+1}(2\lambda-1-d(i+j-2))_{k_i+k_j}}
\frac{\prod_{i=1}^{r_2}\left(\lambda-\frac{1}{2}-\frac{d}{2}(2i-1)\right)_{k_i}}{\prod_{i=1}^{r_2}(\lambda-d(i-1))_{k_i}}\Delta_{\bk}^{\fn^+_2}(z_2) \\
&=\frac{2^{|\bk|}\prod_{1\le i< j\le r_2}(2\lambda-1-d(i+j-1))_{k_i+k_j}}{\prod_{1\le i<j\le r_2+1}(2\lambda-1-d(i+j-2))_{k_i+k_j}}
\frac{\prod_{i=1}^{r_2}\left(\lambda-\frac{1}{2}-\frac{d}{2}(2i-1)\right)_{k_i}}{\prod_{i=1}^{r_2}(\lambda-d(i-1))_{k_i}}\Delta_\bk^{\fn^+_2}(z_2),
\end{align*}
where we have used the induction hypothesis and Proposition~\ref{prop_reduction} at the 2nd equality, and~(\ref{formula_diff}) at the 3rd equality. Hence the theorem follows.
\end{proof}

\begin{proof}[Proof of Theorem~\ref{thm_poles_simple}]
Again it is enough to prove when both $\fp^+$ and $\fp^+_2$ are of tube type
(i.e., $\bigl(\fp^+,\fp^+_2\bigr)=\bigl(\BC^n,\BC^{n''}\bigr)$, $(\Sym(2r_2,\BC),{\rm M}(r_2,\BC))$, $(\Alt(2r_2,\BC),{\rm M}(r_2,\BC))$, $({\rm M}(r_2,\BC),\Sym(r_2,\BC))$, $({\rm M}(2r_2,\BC),\Alt(2r_2,\BC))$,
$(\Herm(3,\BO)^\BC,\Alt(6,\BC))$), and may assume $f(x_2)=\Delta_\bk^{\fn^+_2}(x_2)$.
When $\bigl(\fp^+,\fp^+_2\bigr)=\bigl(\BC^n,\BC^{n''}\bigr)$, the theorem follows from~(\ref{rank2_2F1}) and~(\ref{rank2'_2F1}).
When $\bigl(\fp^+,\fp^+_2\bigr)=(\Sym(2r_2,\BC),\allowbreak {\rm M}(r_2,\BC))$, $(\Alt(2r_2,\BC),{\rm M}(r_2,\BC))$,~(\ref{Ktype_incl_simple}) follows from~(\ref{LR_Sym-M}) and~(\ref{LR_Alt-M}), and hence the theorem holds.

Next we consider the cases $\bigl(\fp^+,\fp^+_2\bigr)=({\rm M}(r_2,\BC),\Sym(r_2,\BC))$, $d=2$, $\varepsilon_2=1$, and $\bigl(\fp^+,\fp^+_2\bigr)=({\rm M}(2r_2,\BC),\Alt(2r_2,\BC))$, $d=2$, $\varepsilon_2=2$.
We prove the theorem by induction on $r_2$. When $r_2=1$, if $\varepsilon_2=1$, then since $\fp^+=\fp^+_2$ holds, the theorem follows from Corollary~\ref{cor_FK}.
Similarly, if $\varepsilon_2=2$, then the theorem follows from~(\ref{rank2'_2F1}). Next we assume the theorem for $r_2-1$, and prove it for~$r_2$.
By~(\ref{key_simple}), we have
\begin{align*}
&\det_{\fn^+}(z)^{-\lambda+\frac{n}{r}}\det_{\fn^+_2}\left(\frac{1}{\varepsilon_2}\frac{\partial}{\partial z_2}\right)^{k_{r_2}}
\det_{\fn^+}(z)^{\lambda+\frac{2k_{r_2}}{\varepsilon_2}-\frac{n}{r}} \\
&\qquad{}\eqspace{}\times\left(\lambda+\frac{2k_{r_2}}{\varepsilon_2}\right)_{\phi_{\varepsilon_2}\bigl(\bk-\underline{k_{r_2}}_{r_2}\bigr),d}
\Bigl\langle \Delta_{\bk-\underline{k_{r_2}}_{r_2}}^{\fn^+_2}(x_2),{\rm e}^{(x|\overline{z})_{\fp^+}}\Bigr\rangle_{\lambda+\frac{2k_{r_2}}{\varepsilon_2},x} \\
&\qquad{}=(\lambda)_{\underline{2k_{r_2}/\varepsilon_2}_{\varepsilon_2 r_2},d}\left(\lambda+\frac{2k_{r_2}}{\varepsilon_2}\right)_{\phi_{\varepsilon_2}\bigl(\bk-\underline{k_{r_2}}_{r_2}\bigr),d}
\bigl\langle \Delta_\bk^{\fn^+_2}(x_2),{\rm e}^{(x|\overline{z})_{\fp^+}}\bigr\rangle_{\lambda,x} \\
&\qquad{}=(\lambda)_{\phi_{\varepsilon_2}\bigl(\bk-\underline{k_{r_2}}_{r_2}\bigr)+\underline{\frac{2k_{r_2}}{\varepsilon_2}}_{\varepsilon_2 r_2},d}\,
\bigl\langle \Delta_\bk^{\fn^+_2}(x_2),{\rm e}^{(x|\overline{z})_{\fp^+}}\bigr\rangle_{\lambda,x},
\end{align*}
and by Proposition~\ref{prop_reduction} and the induction hypothesis, this is holomorphic for all $\lambda\in\BC$. Now we have
\begin{alignat*}{3}
&\phi_1\bigl(\bk-\underline{k_{r_2}}_{r_2}\bigr)_a+2k_{r_2}=\min\{k_i+k_j\mid 1\le i<j\le r_2,\, i+j=2a+1\}, && 1\le a\le r_2,& \\
&\phi_2\bigl(\bk-\underline{k_{r_2}}_{r_2}\bigr)_a+k_{r_2}=\min\left\{\left\lfloor\frac{k_i+k_j}{2}\right\rfloor\ \middle|\ 1\le i\le j\le r_2,\, i+j=a+1\right\}, \quad&& 1\le a\le 2r_2,&
\end{alignat*}
and therefore by Corollary~\ref{cor_FK}, we have
\begin{alignat*}{3}
&\cP_\bk\bigl(\fp^+_2\bigr)\subset\bigoplus_{\substack{\bm\in\BZ_{++}^{r_2} \\ m_a\le k_i+k_j,\, 1\le i<j\le r_2,\, i+j=2a+1}}\cP_\bm(\fp^+), \qquad&& \varepsilon_2=1,& \\
&\cP_\bk\bigl(\fp^+_2\bigr)\subset\bigoplus_{\substack{\bm\in\BZ_{++}^{2r_2} \\ m_a\le \big\lfloor\frac{k_i+k_j}{2}\big\rfloor, \, 1\le i\le j\le r_2,\, i+j=a+1}}\cP_\bm(\fp^+), \qquad&& \varepsilon_2=2.&
\end{alignat*}
On the other hand, by~(\ref{LR_M-Sym}) and~(\ref{LR_M-Alt}),
\begin{alignat*}{4}
&\Hom_{{\rm U}(r_2)}(\cP_\bk(\Sym(r_2,\BC)),\cP_\bm({\rm M}(r_2,\BC)))\ne\{0\} \qquad&&\text{implies} \quad&& m_{r_2-i}\le k_{r_2-2i},& \\
&\Hom_{{\rm U}(2r_2)}(\cP_\bk(\Alt(2r_2,\BC)),\cP_\bm({\rm M}(2r_2,\BC)))\ne\{0\} \qquad&&\text{implies} \quad&& m_{2r_2-i}\le \left\lfloor\frac{k_{r_2-i}}{2}\right\rfloor,&
\end{alignat*}
and combining with the above formulas, we get
\begin{align*}
&\cP_\bk(\Sym(r_2,\BC))\subset\bigoplus_{\substack{\bm\in\BZ_{++}^{r_2} \\ m_a\le k_i+k_j,\, 1\le i<j\le r_2+1,\, i+j=2a+1}}\cP_\bm({\rm M}(r_2,\BC)), \\
&\cP_\bk(\Alt(2r_2,\BC))\subset\bigoplus_{\substack{\bm\in\BZ_{++}^{2r_2} \\ m_a\le \big\lfloor\frac{k_i+k_j}{2}\big\rfloor, \, 1\le i\le j\le r_2+1,\, i+j=a+1}}\cP_\bm({\rm M}(2r_2,\BC)),
\end{align*}
with $k_{r_2+1}=0$. Hence by Corollary~\ref{cor_FK}, $(\lambda)_{\phi_{\varepsilon_2}(\bk),d}\,\bigl\langle \Delta_\bk^{\fn^+_2}(x_2),{\rm e}^{(x|\overline{z})_{\fp^+}}\bigr\rangle_{\lambda,x}$
is holomorphic for all~$\lambda\in\BC$.

Now the case $\bigl(\fp^+,\fp^+_2\bigr)=(\Herm(3,\BO)^\BC,\Alt(6,\BC))$ is remaining, but we postpone the proof for this case until Section~\ref{section_rank3}.
\end{proof}

By Theorem~\ref{thm_topterm_simple} and Proposition~\ref{prop_Plancherel}\,(1), we get the following.
\begin{Corollary}
Let $\Re\lambda>p-1$, $\bk\in\BZ_{++}^{r_2}$, and let $f(x_2)\in\cP_\bk\bigl(\fp^+_2\bigr)$. Then we have
\[ \Vert f(x_2)\Vert_{\lambda,x}^2=C_{\varepsilon_2}^{d,d_2}(\lambda,\bk)\Vert f(z_2)\Vert_{F,z}^2, \]
where $C_{\varepsilon_2}^{d,d_2}(\lambda,\bk)$ is as in~\eqref{const1} and~\eqref{const2}.
\end{Corollary}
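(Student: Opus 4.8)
The plan is to obtain this as an immediate consequence of Proposition~\ref{prop_Plancherel}\,(1) and Theorem~\ref{thm_topterm_simple}. First I would specialize Proposition~\ref{prop_Plancherel}\,(1) to the present situation, in which $\fp^+_2$ is simple and hence $\tilde{\bk}=\bk\in\BZ_{++}^{r_2}$. That proposition asserts that for every $f(x_2)\in\cP_\bk\bigl(\fp^+_2\bigr)$ one has $\Vert f\Vert_{\lambda,\fp^+}^2=C_{\fp^+,\fp^+_2}(\lambda,\bk)\Vert f\Vert_{F,\fp^+}^2$, where $C_{\fp^+,\fp^+_2}(\lambda,\bk)\in\BC$ is the scalar defined by~\eqref{const_Plancherel}, i.e., by the requirement $\bigl\langle f(x_2),{\rm e}^{(x|\overline{z})_{\fp^+}}\bigr\rangle_{\lambda,x}\big|_{z_1=0}=C_{\fp^+,\fp^+_2}(\lambda,\bk)f(z_2)$; its existence is guaranteed by the $K_1$-equivariance of this map together with the multiplicity-freeness of $\cP\bigl(\fp^+_2\bigr)$ under $K_1$ (Theorem~\ref{thm_HKS}). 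The proof of Proposition~\ref{prop_Plancherel}\,(1) rests only on the reproducing property of the Fischer kernel ${\rm e}^{(z|\overline{x})_{\fp^+}}$, so it is valid throughout the range $\Re\lambda>p-1$.

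Next I would invoke Theorem~\ref{thm_topterm_simple}, which evaluates exactly this top term and identifies the scalar explicitly: $\bigl\langle f(x_2),{\rm e}^{(x|\overline{z})_{\fp^+}}\bigr\rangle_{\lambda,x}\big|_{z_1=0}=C_{\varepsilon_2}^{d,d_2}(\lambda,\bk)f(z_2)$ for $f(x_2)\in\cP_\bk\bigl(\fp^+_2\bigr)$, with $C_{\varepsilon_2}^{d,d_2}(\lambda,\bk)$ given by~\eqref{const1} when $\varepsilon_2=1$ and by~\eqref{const2} when $\varepsilon_2=2$. Since $\cP_\bk\bigl(\fp^+_2\bigr)$ contains a nonzero polynomial, comparing this with the defining relation of $C_{\fp^+,\fp^+_2}(\lambda,\bk)$ forces the equality $C_{\fp^+,\fp^+_2}(\lambda,\bk)=C_{\varepsilon_2}^{d,d_2}(\lambda,\bk)$. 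Substituting this into the norm identity from the first step yields $\Vert f(x_2)\Vert_{\lambda,x}^2=C_{\varepsilon_2}^{d,d_2}(\lambda,\bk)\Vert f(z_2)\Vert_{F,z}^2$, which is the claim.

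There is no real obstacle at this stage: the entire analytic content is carried by Theorem~\ref{thm_topterm_simple}, whose proof (already given) proceeds by the reduction of Section~\ref{subsection_reduction} to the tube-type case and then by induction on $r_2$ using the Rodrigues-type identity~\eqref{key_simple} and the differentiation formula~\eqref{formula_diff}. The present Corollary is merely the translation of that top-term computation, via the abstract Parseval--Plancherel relation of Proposition~\ref{prop_Plancherel}\,(1), into a statement about weighted Bergman norms on each $K_1$-type; no further estimates or computations are needed.
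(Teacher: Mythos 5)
Your proposal is correct and follows exactly the paper's route: the paper derives this Corollary in one line from Proposition~\ref{prop_Plancherel}\,(1) combined with Theorem~\ref{thm_topterm_simple}, identifying $C_{\fp^+,\fp^+_2}(\lambda,\bk)$ of~\eqref{const_Plancherel} with $C_{\varepsilon_2}^{d,d_2}(\lambda,\bk)$, just as you do. Your elaboration of why the constant exists ($K_1$-equivariance plus multiplicity-freeness) and why the argument extends to complex $\lambda$ with $\Re\lambda>p-1$ is consistent with the paper and adds nothing that conflicts with it.
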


Next we give a~rough estimate of zeroes of $(\lambda)_{\phi_{\varepsilon_2}(\bk),d}C_{\varepsilon_2}^{d,d_2}(\lambda,\bk)$.

\begin{Proposition}\label{prop_zeroes_simple}
For $\bk\in\BZ_{++}^{r_2}$, let $C_{\varepsilon_2}^{d,d_2}(\lambda,\bk)$, $\phi_{\varepsilon_2}(\bk)$ be as in~{\rm \eqref{const1}--\eqref{phi2}}.
\begin{enumerate}\itemsep=0pt
\item[$(1)$] Let $\varepsilon_2=1$, $r_2=2$. Then we have
\[
\big\{\lambda\in\BC\mid (\lambda)_{\phi_1(\bk),d}C_1^{d,d_2}(\lambda,\bk)=0 \big\}
\subset\begin{cases} \big[ \frac{d-d_2}{2}-(k_1+k_2)+1, \frac{d-d_2}{2}-k_1\big], & k_2>0, \\ \varnothing, & k_2=0. \end{cases}
\]
\item[$(2)$] Let $\varepsilon_2=1$, $d_2=d/2$. For $2\le m_1\le m_2\le r_2$, if $k_2=\cdots=k_{m_1}$ and $k_{m_2+1}=0$, then we have
\begin{align*}
&\big\{\lambda\in\BC\mid (\lambda)_{\phi_1(\bk),d}C_1^{d,d_2}(\lambda,\bk)=0 \big\} \\ &\quad\qquad{}\subset \begin{cases}
\left[ \frac{d}{4}(m_1-1)-(k_1+k_2)+1,\, \frac{d}{4}(2m_2-3)-k_{m_2-1} \right], & k_1>k_2>0, \\
\left[ \frac{d}{4}\left(2\left\lceil \frac{m_1}{2}\right\rceil-1\right)-(k_1+k_2)+1,\, \frac{d}{4}(2m_2-3)-k_{m_2-1} \right], & k_1=k_2>0, \\
\varnothing, & k_2=0. \end{cases}
\end{align*}
\item[$(3)$] Let $\varepsilon_2=2$. For $1\le m_1,m_2,m_3\le r_2$ with $m_1,m_2\le m_3$, if $k_1-k_{m_1}\le 1$, $k_a=1$ for $m_2+1\le a\le m_3$ and $k_{m_3+1}=0$, then we have
\begin{align*}
&\big\{\lambda\in\BC\mid (\lambda)_{\phi_2(\bk),d}C_2^{d,d_2}(\lambda,\bk)=0 \big\} \\
&\quad\qquad{}\subset\begin{cases} \left[ \frac{d}{2}m_1-k_1+\frac{3}{2},\, \frac{d}{2}(m_2+m_3-1)-\frac{k_{m_2}}{2}+\frac{1}{2}\right], & k_1\ge 2,\, m_2<m_3, \\[4pt]
\left[ \frac{d}{2}m_1-k_1+\frac{3}{2},\, \frac{d}{2}(2m_3-1)-\left\lceil\frac{k_{m_3}}{2}\right\rceil+\frac{1}{2}\right], & k_1\ge 2,\, m_2=m_3, \\
\varnothing, & k_1\le 1. \end{cases}
\end{align*}
\end{enumerate}
Especially, for $m=1,2,\dots,\varepsilon_2r_2-1$, if $\phi_{\varepsilon_2}(\bk)_{m+1}=0$ and $\phi_{\varepsilon_2}(\bk)_m\ne 0$,
then $C_{\varepsilon_2}^{d,d_2}(\lambda,\bk)$ is non-zero holomorphic at $\lambda=\frac{d}{2}m$, and has a~pole at $\lambda=\frac{d}{2}(m-1)$.
\end{Proposition}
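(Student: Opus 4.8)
The plan is to argue exactly as in the proof of Proposition~\ref{prop_zeroes_nonsimple}: substitute the closed expressions for $C_{\varepsilon_2}^{d,d_2}(\lambda,\bk)$ from Theorem~\ref{thm_topterm_simple} and for $\phi_{\varepsilon_2}(\bk)$ from \eqref{phi1}--\eqref{phi2}, cancel the Pochhammer factors that $(\lambda)_{\phi_{\varepsilon_2}(\bk),d}$ shares with the denominator of $C_{\varepsilon_2}^{d,d_2}(\lambda,\bk)$, and then read off the zeros of the ratio of Pochhammer symbols that survives. Part~(1) is immediate: for $r_2=2$ one has $\phi_1(\bk)=(k_1+k_2,k_2)$, so $(\lambda)_{\phi_1(\bk),d}=(\lambda)_{k_1+k_2}\bigl(\lambda-\frac d2\bigr)_{k_2}$ is precisely the denominator of \eqref{const1}, and the product collapses to $(\lambda)_{\phi_1(\bk),d}C_1^{d,d_2}(\lambda,\bk)=\bigl(\lambda+k_1-\frac{d-d_2}2\bigr)_{k_2}$, whose zero set is $\bigl\{\frac{d-d_2}2-k_1-j\mid 0\le j\le k_2-1\bigr\}$, which lies in the stated interval and is empty when $k_2=0$.

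For parts~(2) and~(3) I would first use the flatness hypotheses to make the combinatorics explicit. In~(2) the conditions $k_2=\cdots=k_{m_1}$ and $k_{m_2+1}=0$ let one compute $\phi_1(\bk)_a$ for each $a$ and collapse $C_1^{d,d_2}(\lambda,\bk)$ to a product over $a$ in a bounded range (with the obvious adjustment of the left end when $k_1=k_2$, which replaces a floor by a ceiling and accounts for the two sub-cases of the stated bound); similarly in~(3) the conditions $k_1-k_{m_1}\le 1$, $k_a=1$ for $m_2<a\le m_3$, $k_{m_3+1}=0$ pin down $\phi_2(\bk)_a$ and reduce $C_2^{d,d_2}(\lambda,\bk)$ to a bounded product. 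Then, imitating the auxiliary sequences $\phi(\bk,\bl)_a$ (second-smallest) and $\psi(\bk,\bl)_a$ (largest) of the non-simple proof, I would introduce the second-smallest and the largest of the values $k_i+k_j$ (for $\varepsilon_2=1$), respectively of $\lfloor(k_i+k_j)/2\rfloor$ and $\lceil(k_i+k_j)/2\rceil$ (for $\varepsilon_2=2$), along each antidiagonal $i+j=\text{const}$, sandwich the surviving ratio between two quotients of the form $\bigl(\lambda-\frac d2(a-1)\bigr)_{\psi_a}/\bigl(\lambda-\frac d2(a-1)\bigr)_{\phi_a}$, conclude that its zeros lie in $\bigcup_a\bigl\{\frac d2(a-1)-j\mid \phi_a\le j\le\psi_a-1\bigr\}$, and finally take the union over the admissible range of $a$ to obtain the stated intervals; the degenerate cases $k_2=0$ in~(2) and $k_1\le 1$ in~(3) are handled separately, where the ratio telescopes to $1$ (cf.\ the $\bk=\underline 0$, $\bl=\underline 0$ computation in the proof of Proposition~\ref{prop_zeroes_nonsimple}).

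The main obstacle will be the bookkeeping in part~(3): formula~\eqref{const2} splits every Pochhammer argument into a $\lfloor(k_i+k_j)/2\rfloor$-piece with integer shifts of $\frac d2$ and a $\lceil(k_i+k_j)/2\rceil$-piece shifted by an extra $\frac12$, so after cancelling $(\lambda)_{\phi_2(\bk),d}$ one must simultaneously track which antidiagonals $a$ still contribute and control the floor/ceiling parities, and it is exactly the hypotheses $k_1-k_{m_1}\le 1$ and $k_a=1$ for $m_2<a\le m_3$ that force the second-smallest sequence to be well-behaved and produce the endpoints $\frac d2 m_1-k_1+\frac32$ and $\frac d2(m_2+m_3-1)-\frac{k_{m_2}}2+\frac12$ (or its $m_2=m_3$ variant with a ceiling). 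Part~(2), though free of floors and ceilings, has the milder subtlety that a flat initial segment $k_1=k_2$ shifts the left endpoint, which is visible from the antidiagonal through $(i,j)$ with $i+j$ near $m_1+1$.

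Finally, for the \emph{especially} clause I would argue as in the last paragraph of the proof of Proposition~\ref{prop_zeroes_nonsimple}. If $\phi_{\varepsilon_2}(\bk)_{m+1}=0$ and $\phi_{\varepsilon_2}(\bk)_m\ne 0$, then the parameters describing $\bk$ in the relevant sub-case satisfy $m_2+m_3=m$ in~(3), resp.\ the corresponding identity in~(2), and one checks directly from the simplified formula that $(\lambda)_{\phi_{\varepsilon_2}(\bk),d}C_{\varepsilon_2}^{d,d_2}(\lambda,\bk)$ is non-zero both at $\lambda=\frac d2 m$ and at $\lambda=\frac d2(m-1)$. Since $\phi_{\varepsilon_2}(\bk)$ is weakly decreasing with $\phi_{\varepsilon_2}(\bk)_{m+1}=0$, the polynomial $(\lambda)_{\phi_{\varepsilon_2}(\bk),d}=\prod_j\bigl(\lambda-\frac d2(j-1)\bigr)_{\phi_{\varepsilon_2}(\bk)_j}$ is itself non-zero at $\lambda=\frac d2 m$ (each surviving factor is a Pochhammer symbol at a strictly positive argument), so $C_{\varepsilon_2}^{d,d_2}(\lambda,\bk)$ is holomorphic and non-zero there; and since $(\lambda)_{\phi_{\varepsilon_2}(\bk),d}$ has a zero of order $\phi_{\varepsilon_2}(\bk)_m\ge 1$ at $\lambda=\frac d2(m-1)$ coming from its $j=m$ factor $\bigl(\lambda-\frac d2(m-1)\bigr)_{\phi_{\varepsilon_2}(\bk)_m}$ while $(\lambda)_{\phi_{\varepsilon_2}(\bk),d}C_{\varepsilon_2}^{d,d_2}(\lambda,\bk)$ does not vanish there, $C_{\varepsilon_2}^{d,d_2}(\lambda,\bk)$ has a pole at $\lambda=\frac d2(m-1)$.
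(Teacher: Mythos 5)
Your proposal follows essentially the same route as the paper's proof: part (1) collapses to the single Pochhammer $\bigl(\lambda+k_1-\frac{d-d_2}{2}\bigr)_{k_2}$, and parts (2)--(3) are handled by computing $\phi_{\varepsilon_2}(\bk)$ explicitly under the flatness hypotheses, sandwiching the surviving ratio between quotients built from the second-smallest and largest values of $k_i+k_j$ (resp.\ their floors/ceilings) along each antidiagonal, taking the union over $a$, treating $k_2=0$ resp.\ $k_1\le 1$ separately, and deducing the ``especially'' clause exactly as in Proposition~\ref{prop_zeroes_nonsimple}. The only difference is organizational: the paper carries out the part-(3) bookkeeping you anticipate by splitting into the sub-cases $m_1\le m_2$, the configuration $\bk=(2,\dots,2,1,\dots,1,0,\dots,0)$ with $m_2<m_1=m_3$ (done by direct computation), and $k_1\le 1$, which your sketch would need to make explicit but which poses no obstacle.
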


\begin{proof}
(1) Clear from $(\lambda)_{\phi_1(\bk),d}C_1^{d,d_2}(\lambda,\bk)=\bigl(\lambda+k_1-\frac{d-d_2}{2}\bigr)_{k_2}$.

(2) By direct computation, we have
\[
\phi_1(\bk)_a=\begin{cases} k_1+k_2, & a=1, \\ 2k_2, & 2\le a\le \lfloor m_1/2\rfloor, \\ \min\{k_1+k_{m_1+1},2k_2\}, & a=\frac{m_1+1}{2}, \\ 0, & a\ge m_2+1, \end{cases}
\]
and
\begin{align*}
&C_1^{d,d/2}(\lambda,\bk)=C_1^{d,d/2}(\lambda,(k_1,\dots,k_{m_2})) \\
&\quad{}=\frac{1}{\prod_{a=1}^{\lfloor m_1/2\rfloor}\left(\lambda-\frac{d}{2}(a-1)\right)_{\phi_1(\bk)_a}}
\frac{\prod_{a=2\lceil m_1/2\rceil}^{2m_2-2}\prod_{i=\max\{1,a+1-m_2\}}^{\lfloor a/2\rfloor}\left(\lambda-\frac{d}{4}(a-1)\right)_{k_i+k_{a+1-i}}}
{\prod_{a=2\lceil m_1/2\rceil}^{2m_2-1}\prod_{i=\max\{1,a+1-m_2\}}^{\lceil a/2\rceil}\left(\lambda-\frac{d}{4}(a-1)\right)_{k_i+k_{a+2-i}}} \\
&\quad{}\eqspace{}\times \begin{cases}
\ds \frac{\left(\lambda-\frac{d}{4}(m_1-1)\right)_{k_1+k_2}}{\left(\lambda-\frac{d}{4}(m_1-1)\right)_{k_1+k_{m_1+1}}\left(\lambda-\frac{d}{4}(m_1-1)\right)_{2k_2}}, & m_1\colon\text{odd}, \\
1, & m_1\colon\text{even}. \end{cases}
\end{align*}
For $2\le a\le 2m_2-2$, let
\begin{align*}
\begin{split}
&\phi'(\bk)_a:=\begin{cases} \min\{ k_i+k_j\mid 1\le i<j\le m_2+1,\, i+j=a+2\}, & a\colon\text{even}, \\ {\min}_2\{ k_i+k_j\mid 1\le i<j\le m_2+1,\, i+j=a+2\}, & a\colon\text{odd}, \end{cases} \\
&\psi'(\bk)_a:=\max\{ k_i+k_j\mid 1\le i<j\le m_2,\, i+j=a+1\},
\end{split}
\end{align*}
where ${\min}_2$ denotes the second smallest element. Then we have
\begin{align*}
&\big\{ \lambda\in\BC\mid (\lambda)_{\phi_1(\bk),d}C_1^{d,d/2}(\lambda,\bk)=0\big\} \\
&\quad{}=\bigcup_{a=2\lceil m_1/2\rceil+1,\text{odd}}^{2m_2-3}\left\{\lambda\in\BC \,\middle|\, \begin{array}{@{}l@{}} \left(\lambda-\frac{d}{4}(a-1)\right)_{\phi_1(\bk)_{\lceil a/2\rceil}} \\
\ds {}\times \frac{\prod_{i=\max\{1,a+1-r_2\}}^{\lfloor a/2\rfloor}\left(\lambda-\frac{d}{4}(a-1)\right)_{k_i+k_{a+1-i}}}
{\prod_{i=\max\{1,a+1-r_2\}}^{\lceil a/2\rceil}\left(\lambda-\frac{d}{4}(a-1)\right)_{k_i+k_{a+2-i}}}=0 \end{array} \right\} \\
&\quad{}\eqspace{}\cup \bigcup_{a=2\lceil m_1/2\rceil,\text{even}}^{2m_2-2}\left\{\lambda\in\BC\, \middle|\,
\frac{\prod_{i=\max\{1,a+1-r_2\}}^{\lfloor a/2\rfloor}\left(\lambda-\frac{d}{4}(a-1)\right)_{k_i+k_{a+1-i}}}
{\prod_{i=\max\{1,a+1-r_2\}}^{\lceil a/2\rceil}\left(\lambda-\frac{d}{4}(a-1)\right)_{k_i+k_{a+2-i}}}=0 \right\} \\
&\quad{}\eqspace{}\cup \begin{cases} \ds \left\{\lambda\in\BC\, \middle|\,
\frac{\left(\lambda-\frac{d}{4}(m_1-1)\right)_{k_1+k_2}}{\left(\lambda-\frac{d}{4}(m_1-1)\right)_{\max\{k_1+k_{m_1+1},2k_2\}}}=0 \right\}, & m_1\colon\text{odd}, \\
\varnothing, & m_1\colon\text{even} \end{cases} \\
&\quad{}\subset\bigcup_{a=2\lceil m_1/2\rceil}^{2m_2-2}\left\{\lambda\in\BC\, \middle|\,
\frac{\left(\lambda-\frac{d}{4}(a-1)\right)_{\psi'(\bk)_a}}{\left(\lambda-\frac{d}{4}(a-1)\right)_{\phi'(\bk)_a}}=0 \right\} \\
&\quad{}\eqspace{}\cup \begin{cases} \ds \left\{\lambda\in\BC\, \middle|\,
\frac{\left(\lambda-\frac{d}{4}(m_1-1)\right)_{k_1+k_2}}{\left(\lambda-\frac{d}{4}(m_1-1)\right)_{\max\{k_1+k_{m_1+1},2k_2\}}}=0 \right\}, &
\begin{pmatrix}m_1\colon\text{odd}, \\ k_1>k_2\end{pmatrix}, \\
\varnothing, & \text{otherwise} \end{cases} \\
&\quad{}\subset\bigcup_{\substack{a=m_1 \,  (k_1>k_2) \\ a=2\lceil m_1/2\rceil\, (k_1=k_2)}}^{2m_2-2}\left\{\frac{d}{4}(a-1)-j\, \middle|\, j\in\BZ,\, \phi'(\bk)_a\le j\le \psi'(\bk)_a-1\right\} \\
&\quad{}\subset\begin{cases} \left[ \frac{d}{4}(m_1-1)-(k_1+k_2)+1,  \frac{d}{4}(2m_2-3)-k_{m_2-1} \right], & k_1>k_2, \\
\left[ \frac{d}{4}\left(2\left\lceil \frac{m_1}{2}\right\rceil-1\right)-(k_1+k_2)+1,  \frac{d}{4}(2m_2-3)-k_{m_2-1} \right], & k_1=k_2. \end{cases}
\end{align*}
When $k_2=0$, by direct computation we have
\[ (\lambda)_{\phi_1(\bk),d}C_1^{d,d/2}(\lambda,\bk)=\frac{(\lambda)_{k_1}}{(\lambda)_{k_1}}=1, \]
and this is non-zero everywhere.

If $\phi_1(\bk)_{m+1}=0$ and $\phi_1(\bk)_m\ne 0$, then $k_{m+1}=0$ and $k_m\ne 0$ hold, and by the above formula~$(\lambda)_{\phi_1(\bk),d}C_1^{d,d_2}(\lambda,\bk)$ is non-zero
at $\lambda=\frac{d}{2}m$, $\frac{d}{2}(m-1)$. Hence we get the last claim.

(3) It is enough to prove for the following three cases:
{\def\labelenumi{(\roman{enumi})}
\begin{enumerate}\itemsep=0pt
\item $1\le m_1\le m_2\le m_3\le r_2$, $k_{m_2}\ge 2$.
\item $1\le m_2< m_1=m_3\le r_2$, $k_{m_2}\ge 2$, i.e., $\bk=(\underbrace{2,\dots,2}_{m_2},\underbrace{1,\dots,1}_{m_3-m_2},\underbrace{0,\dots,0}_{r_2-m_3})$.
\item $k_1\le 1$, i.e., $\bk=(1,\dots,1,0,\dots,0)$.
\end{enumerate}}
First we consider Case (i). We take $1\le m_0\le m_1$ such that $k_a=k_1$ for $1\le a\le m_0$ and $k_a=k_1-1$ for $m_0+1\le a\le m_1$.
By direct computation we have
\[
\phi_2(\bk)_a=\begin{cases} k_1, & 1\le a\le m_0, \\ k_1-1, & m_0+1\le a\le m_1, \\ \left\lfloor \frac{k_1+k_{m_1+1}}{2} \right\rfloor, & a=m_1+1, \\
1, & 2m_2+1\le a\le m_2+m_3, \\ 0, & m_2+m_3+1\le a, \end{cases}
\]
and
\begin{gather*}
C_2^{d,2d}(\lambda,\bk)=C_2^{d,2d}(\lambda,(k_1,\dots,k_{m_3})) \\
\hphantom{C_2^{d,2d}(\lambda,\bk)}{}=\frac{1}{\prod_{a=1}^{m_1+1}\left(\lambda-\frac{d}{2}(a-1)\right)_{\phi_0(\bk)_a}}
\frac{\left(\lambda-\frac{1}{2}-\frac{d}{2}m_1\right)_{k_1}}{\left(\lambda-\frac{1}{2}-\frac{d}{2}m_1\right)_{\left\lceil\frac{k_1+k_{m_1+1}}{2}\right\rceil}} \\
\ \hphantom{C_2^{d,2d}(\lambda,\bk)=}{}\times\prod_{a=m_1+2}^{\min\{2m_2,2m_3-1\}}\left(\frac{\prod_{i=\max\{1,a-m_3\}}^{\lceil a/2\rceil-1}
\left(\lambda-\frac{d}{2}(a-1)\right)_{\left\lfloor\frac{k_i+k_{a-i}}{2}\right\rfloor}}
{\prod_{i=\max\{1,a-m_3\}}^{\lceil a/2\rceil}\left(\lambda-\frac{d}{2}(a-1)\right)_{\left\lfloor\frac{k_i+k_{a+1-i}}{2}\right\rfloor}}\right. \\
\hphantom{C_2^{d,2d}(\lambda,\bk)=\times\prod_{a=m_1+2}^{\min\{2m_2,2m_3-1\}}\Bigg(}{}\times\left.\frac{\prod_{i=\max\{1,a-m_3\}}^{\lfloor a/2\rfloor}\left(\lambda-\frac{1}{2}-\frac{d}{2}(a-1)\right)_{\left\lceil\frac{k_i+k_{a-i}}{2}\right\rceil}}
{\prod_{i=\max\{1,a-m_3\}}^{\lfloor a/2\rfloor}\left(\lambda-\frac{1}{2}-\frac{d}{2}(a-1)\right)_{\left\lceil\frac{k_i+k_{a+1-i}}{2}\right\rceil}}\right) \\
\ \hphantom{C_2^{d,2d}(\lambda,\bk)=}{}\times\begin{cases} \ds \frac{\prod_{a=\max\{2m_2,m_3\}+1}^{m_2+m_3}\left(\lambda-\frac{d}{2}(a-1)+\frac{k_{a-m_3}-1}{2}\right)}
{\prod_{a=2m_2+1}^{m_2+m_3}\left(\lambda-\frac{d}{2}(a-1)\right)}, & m_2<m_3, \\
\ds \frac{1}{\left(\lambda-\frac{d}{2}(2m_3-1)\right)_{\lfloor k_{m_3}/2\rfloor}}
\frac{\left(\lambda-\frac{1}{2}-\frac{d}{2}(2m_3-1)\right)_{k_{m_3}}}{\left(\lambda-\frac{1}{2}-\frac{d}{2}(2m_3-1)\right)_{\lceil k_{m_3}/2\rceil}}, & m_2=m_3. \end{cases}
\end{gather*}
Let
\begin{alignat*}{3}
&\phi(\bk)_a:=\min\{k_i+k_j\mid 1\le i\le j\le m_3+1,\, i+j=a+1\}, && 1\le a\le 2m_3,& \\
&\phi'(\bk)_a:={\min}_2\{k_i+k_j\mid 1\le i\le j\le m_3+1,\, i+j=a+1\},\qquad && 3\le a\le 2m_3-1,& \\
&\phi''(\bk)_a:=\min\{k_i+k_j\mid 1\le i<j\le m_3+1,\, i+j=a+1\}, && 2\le a\le 2m_3,& \\
&\psi(\bk)_a:=\max\{k_i+k_j\mid 1\le i\le j\le m_3,\, i+j=a\}, && 2\le a\le 2m_3,& \\
&\psi''(\bk)_a:=\max\{k_i+k_j\mid 1\le i<j\le m_3,\, i+j=a\}, && 3\le a\le 2m_3-1,&
\end{alignat*}
where ${\min}_2$ denotes the second smallest element, so that $\phi_2(\bk)_a=\lfloor \phi(\bk)_a/2\rfloor$ holds for $1\le a\le 2m_3$. Then since the zeroes of
\begin{align*}
&\frac{\prod_{i=\max\{1,a-m_3\}}^{\lceil a/2\rceil-1}\left(\lambda-\frac{d}{2}(a-1)\right)_{\left\lfloor\frac{k_i+k_{a-i}}{2}\right\rfloor}}
{\prod_{i=\max\{1,a-m_3\}}^{\lceil a/2\rceil}\left(\lambda-\frac{d}{2}(a-1)\right)_{\left\lfloor\frac{k_i+k_{a+1-i}}{2}\right\rfloor}}\\
&\qquad{}\times
\frac{\prod_{i=\max\{1,a-m_3\}}^{\lfloor a/2\rfloor}\left(\lambda-\frac{1}{2}-\frac{d}{2}(a-1)\right)_{\left\lceil\frac{k_i+k_{a-i}}{2}\right\rceil}}
{\prod_{i=\max\{1,a-m_3\}}^{\lfloor a/2\rfloor}\left(\lambda-\frac{1}{2}-\frac{d}{2}(a-1)\right)_{\left\lceil\frac{k_i+k_{a+1-i}}{2}\right\rceil}}
\times \left(\lambda-\frac{d}{2}(a-1)\right)_{\phi_2(\bk)_a}
\end{align*}
are contained in
\begin{gather*}
\left\{\lambda\in\BC\, \middle|\,
\frac{\left(\lambda-\frac{d}{2}(a-1)\right)_{\lfloor \psi''(\bk)_a/ 2\rfloor}}
{\left(\lambda-\frac{d}{2}(a-1)\right)_{\lfloor \phi'(\bk)_a/2 \rfloor}}
\frac{\left(\lambda-\frac{1}{2}-\frac{d}{2}(a-1)\right)_{\lceil \psi(\bk)_a/2 \rceil}}
{\left(\lambda-\frac{1}{2}-\frac{d}{2}(a-1)\right)_{\lceil \phi''(\bk)_a/2 \rceil}}=0\right\} \\
\qquad\qquad{}\subset \left\{\lambda\in\BC\ \middle|\ \frac{(2\lambda-1-d(a-1))_{\psi(\bk)_a}}{(2\lambda-1-d(a-1))_{\phi(\bk)_a}}=0\right\} \\
\qquad\qquad{}=\left\{\frac{d}{2}(a-1)-\frac{j}{2}+\frac{1}{2}\ \middle|\ j\in\BZ,\, \phi(\bk)_a\le j\le \psi(\bk)_a-1 \right\}
\end{gather*}
for $m_1+2\le a\le \min\{2m_2,2m_3-1\}$, we have
\begin{align*}
&\big\{\lambda\in\BC\mid (\lambda)_{\phi_2(\bk),d}C_2^{d,2d}(\lambda,\bk)=0\big\} \\
&\subset \left\{\frac{d}{2}m_1-j+\frac{1}{2}\ \middle|\ j\in\BZ,\, \left\lceil\frac{k_1+k_{m_1+1}}{2}\right\rceil\le j\le k_1-1 \right\} \\
&\eqspace{}\cup\bigcup_{a=m_1+2}^{\min\{2m_2,2m_3-1\}}
\left\{\frac{d}{2}(a-1)-\frac{j}{2}+\frac{1}{2}\ \middle|\ j\in\BZ,\, \phi(\bk)_a\le j\le \psi(\bk)_a-1 \right\} \\
&\eqspace{}\cup\begin{cases} \left\{\frac{d}{2}(a-1)-\frac{k_{a-m_3}}{2}+\frac{1}{2}\ \middle|\ a\in\BZ,\, \max\{2m_2,m_3\}+1\le a\le m_2+m_3\right\}, & m_2<m_3, \\[5pt]
\left\{\frac{d}{2}(2m_3-1)-j+\frac{1}{2}\ \middle|\ j\in\BZ,\, \left\lceil\frac{k_{m_3}}{2}\right\rceil\le j\le k_{m_3}-1 \right\}, & m_2=m_3
\end{cases} \\
&\subset\begin{cases} \left[ \frac{d}{2}m_1-k_1+\frac{3}{2},\, \frac{d}{2}(m_2+m_3-1)-\frac{k_{m_2}}{2}+\frac{1}{2}\right], & m_2<m_3, \\[5pt]
\left[ \frac{d}{2}m_1-k_1+\frac{3}{2},\, \frac{d}{2}(2m_3-1)-\left\lceil\frac{k_{m_3}}{2}\right\rceil+\frac{1}{2}\right], & m_2=m_3. \end{cases}
\end{align*}
Next we consider Case (ii). By direct computation we have
\[
\phi_2(\bk)=(\underbrace{2,\dots,2}_{m_2},\underbrace{1,\dots,1}_{m_3},\underbrace{0,\dots,0}_{2r_2-m_2-m_3}), \qquad
C_1^{d,2d}(\lambda,\bk)=\frac{\prod_{a=m_3+1}^{m_2+m_3}\left(\lambda+\frac{1}{2}-\frac{d}{2}(a-1)\right)}{(\lambda)_{\phi_2(\bk),d}},
\]
and since $m_1=m_3$, we get
\begin{align*}
\big\{\lambda\in\BC\mid (\lambda)_{\phi_2(\bk),d}C_2^{d,2d}(\lambda,\bk)=0\big\}
&=\left\{\frac{d}{2}(a-1)-\frac{1}{2}\, \middle|\, a\in\BZ,\, m_3+1\le a\le m_2+m_3\right\} \\
&\subset\left[ \frac{d}{2}m_1-2+\frac{3}{2},\, \frac{d}{2}(m_2+m_3-1)-\frac{2}{2}+\frac{1}{2}\right].
\end{align*}
Similarly, for Case~(iii), when $k_1=\cdots=k_{m_3}=1$ and $k_{m_3+1}=0$, $0\le m_3\le r_2$, by direct computation we have
\[ (\lambda)_{\phi_2(\bk),d}C_1^{d,2d}(\lambda,\bk)=\frac{\prod_{a=1}^{m_3}\left(\lambda-\frac{d}{2}(a-1)\right)}{\prod_{a=1}^{m_3}\left(\lambda-\frac{d}{2}(a-1)\right)}=1, \]
and this is non-zero everywhere.

If $\phi_2(\bk)_{m+1}=0$ and $\phi_2(\bk)_m\ne 0$, then $k_{m_2}\ge 2$, $k_{m_2+1}=\cdots=k_{m_3}=1$ and $k_{m_3+1}=0$ hold for some $m_2\le m_3$ with $m_2+m_3=m$, and by the above formula
$(\lambda)_{\phi_2(\bk),d}C_2^{d,2d}(\lambda,\bk)$ is non-zero at $\lambda=\frac{d}{2}m$, $\frac{d}{2}(m-1)$. Hence we get the last claim.
\end{proof}

Especially, if $\phi_{\varepsilon_2}(\bk)_{a+1}=0$ and $\phi_{\varepsilon_2}(\bk)_a\ne 0$, then for a~non-zero polynomial $f(x_2)\in\cP_\bk\bigl(\fp^+_2\bigr)$,
$\bigl\langle f(x_2),{\rm e}^{(x|\overline{z})_{\fp^+}}\bigr\rangle_{\lambda,x}$ has a~pole at $\lambda=\frac{d}{2}(a-1)$,
and combining with Corollary~\ref{cor_FK} and~(\ref{Ktype_incl_simple}), we have
\[ \cP_\bk\bigl(\fp^+_2\bigr)\subset\bigoplus_{\substack{\bm\in\BZ_{++}^{\varepsilon_2 r_2} \\ m_{a+1}=0}}\cP_\bm(\fp^+), \qquad
\cP_\bk\bigl(\fp^+_2\bigr)\not\subset\bigoplus_{\substack{\bm\in\BZ_{++}^{\varepsilon_2 r_2} \\ m_a=0}}\cP_\bm(\fp^+), \]
that is, for $a=0,1,\dots,r-1$, we have{\samepage
\begin{align}
\cP\bigl(\fp^+_2\bigr)\cap \bigoplus_{\substack{\bm\in\BZ_{++}^r \\ m_{a+1}=0}}\cP_\bm(\fp^+)
&=\bigoplus_{\substack{\bk\in\BZ_{++}^{r_2} \\ \phi_{\varepsilon_2}(\bk)_{a+1}=0}}\cP_\bk\bigl(\fp^+_2\bigr) \notag\\
&=\begin{cases}
\ds \bigoplus_{\substack{\bk\in\BZ_{++}^{r_2} \\ k_{a+1}=0}}\cP_\bk\bigl(\fp^+_2\bigr), & \varepsilon_2=1, \\
\ds \bigoplus_{\substack{0\le a_1\le a_2\le r_2 \\ a_1+a_2\le a}}\, \bigoplus_{\substack{\bk\in\BZ_{++}^{a_1} \\ k_{a_1}\ge 2}}
\cP_{(\bk,\underline{1}_{a_2-a_1},\underline{0}_{r_2-a_2})}\bigl(\fp^+_2\bigr), & \varepsilon_2=2, \end{cases} \label{formula_discWallach_simple}
\end{align}
where we set $\phi_{\varepsilon_2}(\bk)_{\varepsilon_2r_2+1}=\cdots=\phi_{\varepsilon_2}(\bk)_r:=0$ and $k_0:=+\infty$.}

Next we consider general $\bk\in\BZ_{++}^{r_2}$. Then again by the above proposition,
$\frac{1}{C_{\varepsilon_2}^{d,d_2}(\lambda,\bk)}\bigl\langle f(x_2), \allowbreak {\rm e}^{(x|\overline{z})_{\fp^+}}\bigr\rangle_{\lambda,x}$ is holomorphic
for $\Re\lambda\ge \frac{d}{2}(\varepsilon_2r_2-1)+1-\big\lfloor \frac{k_{r_2}}{\varepsilon_2}\big\rfloor$.
Then by Theorem~\ref{thm_factorize}, comparing the top terms, we get the following.
\begin{Theorem}\label{thm_factorize_simple}
Suppose $\fp^+$, $\fp^+_2$ are of tube type, and consider the meromorphic continuation of~$\langle\cdot,\cdot\rangle_\lambda$.
Then for $\bk\in\BZ_{++}^{r_2}$, $a=1,2,\dots,\lfloor k_{r_2}/\varepsilon_2\rfloor$ and for $f(x_2)\in\cP_\bk\bigl(\fp^+_2\bigr)$, we have
\begin{gather*}
\frac{1}{C_{\varepsilon_2}^{d,d_2}(\lambda,\bk)}\bigl\langle f(x_2),{\rm e}^{(x|\overline{z})_{\fp^+}}\bigr\rangle_{\lambda,x}\biggr|_{\lambda=\frac{n}{r}-a} \\
\qquad\qquad\quad{}=\frac{\det_{\fn^+}(z)^a}{C_{\varepsilon_2}^{d,d_2}\bigl(\frac{n}{r}+a,\bk-\underline{\varepsilon_2a}_{r_2}\bigr)}
\bigl\langle \det_{\fn^+_2}(x_2)^{-\varepsilon_2a}f(x_2),{\rm e}^{(x|\overline{z})_{\fp^+}}\bigr\rangle_{\frac{n}{r}+a,x}.
\end{gather*}
\end{Theorem}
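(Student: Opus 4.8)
The proof follows the pattern of that of Theorem~\ref{thm_factorize_nonsimple}: reduce to $f(x_2)=\det_{\fn^+_2}(x_2)^{\varepsilon_2a}g(x_2)$, apply Theorem~\ref{thm_factorize}, and fix the remaining scalar by comparing top terms. Since $1\le a\le\lfloor k_{r_2}/\varepsilon_2\rfloor$ we have $\bk-\underline{\varepsilon_2a}_{r_2}\in\BZ_{++}^{r_2}$, and multiplication by $\det_{\fn^+_2}(x_2)^{\varepsilon_2a}$ is a $K_1$-equivariant linear isomorphism $\cP_{\bk-\underline{\varepsilon_2a}_{r_2}}(\fp^+_2)\to\cP_\bk(\fp^+_2)$; so it suffices to prove the identity for $f=\det_{\fn^+_2}^{\varepsilon_2a}g$ with $g\in\cP_{\bk-\underline{\varepsilon_2a}_{r_2}}(\fp^+_2)$ arbitrary. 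I also record two elementary points: since $\fp^+$ and $\fp^+_2$ are of tube type with a common maximal tripotent, $\det_{\fn^+}(z)\big|_{z_1=0}=\det_{\fn^+}(z_2)=\det_{\fn^+_2}(z_2)^{\varepsilon_2}$ (this is how $\varepsilon_2$ enters the Rodrigues-type formulas, cf.\ Theorem~\ref{thm_key_identity}); and $\frac{n}{r}+a=\frac{d}{2}(r-1)+1+a$ lies in the interval $\bigl(\frac{d}{2}(r-1),\infty\bigr)$ of~\eqref{Wallach_set}, so $\langle\cdot,\cdot\rangle_{\frac{n}{r}+a}$ is positive definite on $\cP(\fp^+_2)\subseteq\cP(\fp^+)$ and hence $C_{\varepsilon_2}^{d,d_2}\bigl(\frac{n}{r}+a,\bk-\underline{\varepsilon_2a}_{r_2}\bigr)>0$ by the Corollary following Theorem~\ref{thm_topterm_simple} (extended by meromorphic continuation); in particular the right-hand side of the asserted formula is well defined.

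Applying Theorem~\ref{thm_factorize} with $\tilde\bk$ there replaced by $\bk-\underline{\varepsilon_2a}_{r_2}$ and its polynomial replaced by $g$, the function $(\lambda)_{\underline{2a}_r,d}\bigl\langle\det_{\fn^+_2}(x_2)^{\varepsilon_2a}g(x_2),{\rm e}^{(x|\overline{z})_{\fp^+}}\bigr\rangle_{\lambda,x}$ is holomorphic near $\lambda=\frac{n}{r}-a$ and takes there the value $C_{\bk-\underline{\varepsilon_2a}_{r_2},a}^{\fn^+,\fn^+_2}\det_{\fn^+}(z)^a\bigl\langle g(x_2),{\rm e}^{(x|\overline{z})_{\fp^+}}\bigr\rangle_{\frac{n}{r}+a,x}$. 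On the other hand, for $\Re\lambda>p-1$,
\[
\frac1{C_{\varepsilon_2}^{d,d_2}(\lambda,\bk)}\bigl\langle f(x_2),{\rm e}^{(x|\overline{z})_{\fp^+}}\bigr\rangle_{\lambda,x}
=\frac1{(\lambda)_{\underline{2a}_r,d}\,C_{\varepsilon_2}^{d,d_2}(\lambda,\bk)}\Bigl((\lambda)_{\underline{2a}_r,d}\bigl\langle f(x_2),{\rm e}^{(x|\overline{z})_{\fp^+}}\bigr\rangle_{\lambda,x}\Bigr),
\]
and both sides continue meromorphically in $\lambda$; the left-hand side is moreover holomorphic at $\lambda=\frac{n}{r}-a$ by the holomorphy estimate obtained just before the statement of this theorem, which rests on Proposition~\ref{prop_zeroes_simple} and on $a\le\lfloor k_{r_2}/\varepsilon_2\rfloor$.

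The substantial point is the combinatorial claim that $(\lambda)_{\underline{2a}_r,d}\,C_{\varepsilon_2}^{d,d_2}(\lambda,\bk)$ is holomorphic and nonzero at $\lambda=\frac{n}{r}-a$ for $1\le a\le\lfloor k_{r_2}/\varepsilon_2\rfloor$. By Remark~\ref{rem_poles_simple}\,(2) the product $(\lambda)_{\phi_{\varepsilon_2}(\bk),d}\,C_{\varepsilon_2}^{d,d_2}(\lambda,\bk)$ is entire, and by the explicit bounds of Proposition~\ref{prop_zeroes_simple} — using that $a\le\lfloor k_{r_2}/\varepsilon_2\rfloor$ forces $k_i\ge a$ for all $i\le r_2$ — all of its zeros lie strictly to the left of $\lambda=\frac{n}{r}-a$; hence $\operatorname{ord}_{\frac{n}{r}-a}C_{\varepsilon_2}^{d,d_2}(\lambda,\bk)=-\operatorname{ord}_{\frac{n}{r}-a}(\lambda)_{\phi_{\varepsilon_2}(\bk),d}$, and it remains to check that $(\lambda)_{\underline{2a}_r,d}$ and $(\lambda)_{\phi_{\varepsilon_2}(\bk),d}$ vanish to the same order at $\frac{n}{r}-a$. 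Writing $\frac{n}{r}-a=\frac{d}{2}(r-1)+1-a$ with $r=\varepsilon_2r_2$, a factor $\bigl(\lambda-\frac{d}{2}(a'-1)\bigr)_{\mu_{a'}}$ contributes to $\operatorname{ord}_{\frac{n}{r}-a}$ precisely when $m:=\frac{d}{2}(a'-r)+a-1$ is a nonnegative integer with $m\le\mu_{a'}-1$; the last condition reads $\mu_{a'}\ge a-\frac{d}{2}(r-a')$, which is automatic for $\mu_{a'}=2a$ and, for $\mu_{a'}=\phi_{\varepsilon_2}(\bk)_{a'}$, follows from $k_i\ge a$ ($i\le r_2$) together with the formulas~\eqref{phi1},~\eqref{phi2} (the borderline $a'=r$ being exactly $\phi_1(\bk)_r=k_{r_2}\ge a$, resp.\ $\phi_2(\bk)_{2r_2}=\lfloor k_{r_2}/2\rfloor\ge a$). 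Hence the two orders coincide; this bookkeeping is the step I expect to be the most tedious, though it is purely about Pochhammer symbols.

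Granting the claim, evaluating the displayed identity at $\lambda=\frac{n}{r}-a$ gives
\[
\frac1{C_{\varepsilon_2}^{d,d_2}(\lambda,\bk)}\bigl\langle f(x_2),{\rm e}^{(x|\overline{z})_{\fp^+}}\bigr\rangle_{\lambda,x}\Big|_{\lambda=\frac{n}{r}-a}
=\kappa\,\det_{\fn^+}(z)^a\bigl\langle g(x_2),{\rm e}^{(x|\overline{z})_{\fp^+}}\bigr\rangle_{\frac{n}{r}+a,x},\qquad
\kappa=\frac{C_{\bk-\underline{\varepsilon_2a}_{r_2},a}^{\fn^+,\fn^+_2}}{\bigl[(\lambda)_{\underline{2a}_r,d}\,C_{\varepsilon_2}^{d,d_2}(\lambda,\bk)\bigr]_{\lambda=\frac{n}{r}-a}},
\]
so the left-hand side at $\lambda=\frac{n}{r}-a$ is a scalar multiple of $\det_{\fn^+}(z)^a\langle g(x_2),{\rm e}^{(x|\overline{z})_{\fp^+}}\rangle_{\frac{n}{r}+a,x}$. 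To pin down $\kappa$ I would restrict to $z_1=0$: by Theorem~\ref{thm_topterm_simple} (and the holomorphy at $\lambda=\frac{n}{r}-a$) the left-hand side restricts to $f(z_2)=\det_{\fn^+_2}(z_2)^{\varepsilon_2a}g(z_2)$, while, using $\det_{\fn^+}(z)\big|_{z_1=0}=\det_{\fn^+_2}(z_2)^{\varepsilon_2}$ and Theorem~\ref{thm_topterm_simple} once more, the right-hand side restricts to $\kappa\,\det_{\fn^+_2}(z_2)^{\varepsilon_2a}\,C_{\varepsilon_2}^{d,d_2}\bigl(\frac{n}{r}+a,\bk-\underline{\varepsilon_2a}_{r_2}\bigr)g(z_2)$. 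As $g\ne0$ is arbitrary, this forces $\kappa=1/C_{\varepsilon_2}^{d,d_2}\bigl(\frac{n}{r}+a,\bk-\underline{\varepsilon_2a}_{r_2}\bigr)$, which is the asserted formula after substituting $g=\det_{\fn^+_2}^{-\varepsilon_2a}f$.
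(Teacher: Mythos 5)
Your proof is correct and takes essentially the same route as the paper, which establishes this theorem precisely by combining the holomorphy statement deduced from Proposition~\ref{prop_zeroes_simple}, the factorization of Theorem~\ref{thm_factorize}, and a comparison of top terms via Theorem~\ref{thm_topterm_simple}; your explicit bookkeeping that $(\lambda)_{\underline{2a}_r,d}\,C_{\varepsilon_2}^{d,d_2}(\lambda,\bk)$ is finite and nonzero at $\lambda=\frac{n}{r}-a$ merely fills in what the paper leaves implicit. (Only a cosmetic remark: for $\varepsilon_2=2$ the hypothesis $a\le\lfloor k_{r_2}/\varepsilon_2\rfloor$ gives $k_i\ge\varepsilon_2 a$, which is the bound your order count actually uses, rather than just $k_i\ge a$.)
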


\subsection[Results on restriction of $\cH_\lambda(D)$ to subgroups]{Results on restriction of $\boldsymbol{\cH_\lambda(D)}$ to subgroups}

Next we consider the decomposition of the holomorphic discrete series representation of scalar type $\cH_\lambda(D)$ under the subgroup $\widetilde{G}_1\subset\widetilde{G}$.
By Theorem~\ref{thm_HKKS}, we have
\[ \cH_\lambda(D)|_{\widetilde{G}_1}\simeq\hsum_{\bk\in\BZ_{++}^{r_2}}\cH_{\varepsilon_1\lambda}\bigl(D_1,\cP_\bk\bigl(\fp^+_2\bigr)\bigr), \]
where
\begin{align*}
&\cH_{\varepsilon_1\lambda}\bigl(D_1,\cP_\bk\bigl(\fp^+_2\bigr)\bigr) \\
&\simeq\begin{cases}
\cH_{\lambda+|\bk|}\bigl(D_{{\rm SO}_0(2,n')}\bigr)\boxtimes V_{(k_1-k_2,0,\dots,0)}^{[n'']\vee} & (\text{Case }1), \\
\cH_{\lambda}\bigl(D_{{\rm Sp}(r',\BR)},V_\bk^{(r')\vee}\bigr)\hboxtimes \cH_{\lambda}\bigl(D_{{\rm Sp}(r'',\BR)},V_\bk^{(r'')\vee}\bigr) & (\text{Case }2), \\
\cH_{\lambda}\bigl(D_{{\rm SO}^*(2s')},V_\bk^{(s')\vee}\bigr)\hboxtimes \cH_{\lambda}\bigl(D_{{\rm SO}^*(2s'')},V_\bk^{(s'')\vee}\bigr) & (\text{Case }3), \\
\cH_{2\lambda}\bigl(D_{{\rm SO}^*(2r)},V_{2\bk}^{(r)\vee}\bigr) & (\text{Case }4), \\
\cH_{\lambda}\bigl(D_{{\rm Sp}(r,\BR)},V_{\bk^2}^{(r)\vee}\bigr) & (\text{Case }5), \\
\cH_{\lambda}\bigl(D_{{\rm SU}(2,6)},\BC\boxtimes V_{(k_1+k_2,k_1+k_3,k_2+k_3)^2}^{(6)}\bigr) & (\text{Case }6), \\
\cH_{\lambda}\Bigl(D_{{\rm SO}^*(12)},V_{\left(\frac{k_1+k_2}{2},\frac{k_1+k_2}{2},\frac{k_1+k_2}{2},\frac{k_1+k_2}{2},\frac{k_1-k_2}{2},\frac{-k_1+k_2}{2}\right)}^{(6)\vee}\Bigr)
\boxtimes V_{(k_1,k_2)}^{(2)\vee} & (\text{Case }7), \\
\cH_{\lambda+|\bk|}(D_{{\rm SL}(2,\BR)})\hspace{-1pt}\hboxtimes\hspace{-1pt}
\cH_{\lambda+\frac{|\bk|}{2}}\!\Bigl(D_{{\rm Spin}_0(2,10)},V_{\left(\frac{k_1+k_2}{2},\frac{k_1-k_2}{2},\frac{k_1-k_2}{2},\frac{k_1-k_2}{2},\frac{k_1-k_2}{2}\right)}^{[10]\vee}\Bigr)\! & (\text{Case }8), \\
\cH_{\lambda}\bigl(D_{{\rm SU}(2,4)},\BC\boxtimes V_{(k_1+k_2,k_1+k_2,k_1,k_2)}^{(4)}\bigr)\boxtimes V_{(k_1,k_2)}^{(2)\vee} & (\text{Case }9), \\
\cH_{\lambda+\frac{|\bk|}{2}}(D_{{\rm SL}(2,\BR)})\hboxtimes \cH_{\lambda}\bigl(D_{{\rm SU}(1,5)},\BC\boxtimes V_{(k_1+k_2,k_1,k_1,k_2,k_2)}^{(5)}\bigr) & (\text{Case }10).
\end{cases}
\end{align*}
For each $\bk\in\BZ_{++}^{r_2}$, let $V_\bk$ be an abstract $K_1$-module isomorphic to $\cP_\bk\bigl(\fp^+_2\bigr)$,
let $\Vert\cdot\Vert_{\varepsilon_1\lambda,\bk}$ be the $\widetilde{G}_1$-invariant norm on $\cH_{\varepsilon_1\lambda}(D_1,V_\bk)$
normalized such that $\Vert v\Vert_{\varepsilon_1\lambda,\bk}=|v|_{V_\bk}$ holds for all constant functions $v\in V_\bk$,
and for $\lambda>p-1$ let
\[
\cF_{\lambda,\bk}^\downarrow\colon \ \cH_\lambda(D)|_{\widetilde{G}_1}\longrightarrow \cH_{\varepsilon_1\lambda}(D_1,V_\bk)
\]
be the symmetry breaking operator given in~(\ref{SBO1}) and~(\ref{SBO2}), using a~vector-valued polynomial
$\rK_{\bk}(x_2)\in \bigl(\cP_\bk\bigl(\fp^+_2\bigr)\otimes \overline{V_\bk}\bigr)^{K_1}$ satisfying
\[
\big|\langle f(x_2), \rK_{\bk}(x_2)\rangle_{F,x}\big|_{V_\bk}=\Vert f(z_2)\Vert_{F,z}, \qquad f(x_2)\in\cP_\bk\bigl(\fp^+_2\bigr),
\]
so that
\[
\big\Vert \cF_{\lambda,\bk}^\downarrow (f(x_2))\big\Vert_{\varepsilon_1\lambda,\bk}=\Vert f(z_2)\Vert_{F,z}, \qquad f(x_2)\in\cP_\bk\bigl(\fp^+_2\bigr)
\]
holds. Also, when $\fp^+_2$ is of tube type, we fix $[K_1,K_1]$-isomorphisms
$V_{\bk+\underline{a}_{r_2}}\simeq V_\bk$ for each $a\in\BZ_{>0}$.
Then by Proposition~\ref{prop_Plancherel}\,(3) and Theorem~\ref{thm_topterm_simple}, the following Parseval--Plancherel-type formula holds.
\begin{Corollary}\label{cor_Plancherel_simple}
For $\lambda>p-1$ and for $f\in\cH_\lambda(D)$, we have
\[ \Vert f\Vert_{\lambda}^2=\sum_{\bk\in\BZ_{++}^{r_2}}C_{\varepsilon_2}^{d,d_2}(\lambda,\bk)
\big\Vert \cF_{\lambda,\bk}^\downarrow f\big\Vert_{\varepsilon_1\lambda,\bk}^2, \]
where $C_{\varepsilon_2}^{d,d_2}(\lambda,\bk)$ is as in~\eqref{const1} and~\eqref{const2}.
\end{Corollary}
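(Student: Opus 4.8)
The plan is to deduce the statement directly from Proposition~\ref{prop_Plancherel}\,(3) combined with the top-term computation of Theorem~\ref{thm_topterm_simple}. Proposition~\ref{prop_Plancherel}\,(3) already gives, for every $f\in\cH_\lambda(D)$ with $\lambda>p-1$, the expansion $\Vert f\Vert_{\lambda,\fp^+}^2=\sum_{\tilde{\bk}} C_{\fp^+,\fp^+_2}\bigl(\lambda,\tilde{\bk}\bigr)\bigl\Vert \cF_{\lambda,\tilde{\bk}}^\downarrow f\bigr\Vert_{\varepsilon_1\lambda,\tilde{\bk},\fp^+_1}^2$, where $C_{\fp^+,\fp^+_2}\bigl(\lambda,\tilde{\bk}\bigr)$ is the constant characterized by the top-term identity~\eqref{const_Plancherel}. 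Since $\fp^+_2$ is simple in this section, the index $\tilde{\bk}$ runs over $\BZ_{++}^{r_2}$, and the operators $\cF_{\lambda,\bk}^\downarrow\colon\cH_\lambda(D)|_{\widetilde{G}_1}\to\cH_{\varepsilon_1\lambda}(D_1,V_\bk)$ and norms $\Vert\cdot\Vert_{\varepsilon_1\lambda,\bk}$ introduced just above the statement are precisely the $\cF_{\lambda,\tilde{\bk}}^\downarrow$ and $\Vert\cdot\Vert_{\varepsilon_1\lambda,\tilde{\bk},\fp^+_1}$ of Proposition~\ref{prop_Plancherel}, normalized by the same condition on the $K_1$-invariant polynomial $\rK_\bk(x_2)$.

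It then remains only to identify the constant. By~\eqref{const_Plancherel}, $C_{\fp^+,\fp^+_2}(\lambda,\bk)$ is the scalar by which the $K_1$-equivariant map $f(x_2)\mapsto\bigl\langle f(x_2),{\rm e}^{(x|\overline{z})_{\fp^+}}\bigr\rangle_{\lambda,x}\big|_{z_1=0}$ acts on the irreducible $K_1$-module $\cP_\bk\bigl(\fp^+_2\bigr)$. This is exactly the quantity evaluated in Theorem~\ref{thm_topterm_simple}, which gives $C_{\fp^+,\fp^+_2}(\lambda,\bk)=C_{\varepsilon_2}^{d,d_2}(\lambda,\bk)$, with $C_1^{d,d_2}$ and $C_2^{d,d_2}$ as in~\eqref{const1} and~\eqref{const2} according to whether $\varepsilon_2=1$ or $\varepsilon_2=2$. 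Substituting this value into the identity of Proposition~\ref{prop_Plancherel}\,(3) yields the asserted Parseval--Plancherel-type formula.

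There is essentially no obstacle here: the corollary is the specialization of the abstract formula (Proposition~\ref{prop_Plancherel}) to the present case, with the constant made explicit by Theorem~\ref{thm_topterm_simple}. The only points requiring a moment of care are bookkeeping ones — checking that the normalizations of $\cF_{\lambda,\bk}^\downarrow$, $\Vert\cdot\Vert_{\varepsilon_1\lambda,\bk}$ and $\rK_\bk$ fixed in this section agree with those used in Proposition~\ref{prop_Plancherel}, and noting that the convergence of the sum for $\lambda>p-1$ is inherited from the Hilbert direct sum decomposition $\cH_\lambda(D)|_{\widetilde{G}_1}\simeq\hsum_{\bk}\cH_{\varepsilon_1\lambda}\bigl(D_1,\cP_\bk\bigl(\fp^+_2\bigr)\bigr)$ of Theorem~\ref{thm_HKKS}.
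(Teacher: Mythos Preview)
Your proposal is correct and matches the paper's own argument exactly: the paper states the corollary with the one-line justification ``by Proposition~\ref{prop_Plancherel}\,(3) and Theorem~\ref{thm_topterm_simple},'' and your proof simply unpacks this, identifying $C_{\fp^+,\fp^+_2}(\lambda,\bk)=C_{\varepsilon_2}^{d,d_2}(\lambda,\bk)$ via the top-term computation and then invoking the abstract Plancherel formula. The normalization check you flag is indeed the only point requiring care, and it is handled correctly.
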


Next we consider the meromorphic continuation for smaller $\lambda$.
Then by Propositions~\ref{prop_poles},~\ref{prop_zeroes_simple}, Theorems~\ref{thm_poles_simple},~\ref{thm_factorize_simple} and the formula~(\ref{formula_discWallach_simple}), we have the following.
\begin{Corollary}\label{cor_submodule_simple}
For $\bk\in\BZ_{++}^{r_2}$, let $\phi_{\varepsilon_2}(\bk)\in\BZ_{++}^{\varepsilon_2 r_2}$ be as in~\eqref{phi1} and~\eqref{phi2},
and set $\phi_{\varepsilon_2}(\bk)_{\varepsilon_2 r_2+1}\allowbreak=\cdots=\phi_{\varepsilon_2}(\bk)_r:=0$.
\begin{enumerate}\itemsep=0pt
\item[$(1)$] For $a=1,2,\dots,r$,
\[
{\rm d}\tau_\lambda(\cU(\fg_1))\cP_\bk\bigl(\fp^+_2\bigr)\subset M_a^\fg(\lambda)
\]
holds if
\[ \lambda\in \frac{d}{2}(a-1)-\phi_{\varepsilon_2}(\bk)_a-\BZ_{\ge 0}, \]
where $M_a^\fg(\lambda)\subset\cO_\lambda(D)_{\widetilde{K}}$ is the $\bigl(\fg,\widetilde{K}\bigr)$-submodule given in~\eqref{submodule}.
\item[$(2)$] For $a=0,1,\dots,r-1$, we have
\begin{align*}
\cH_{\frac{d}{2}a}(D)|_{\widetilde{G}_1}&\simeq\hsum_{\substack{\bk\in\BZ_{++}^{r_2} \\ \phi_{\varepsilon_2}(\bk)_{a+1}=0}}
\cH_{\varepsilon_1\frac{d}{2}a}\bigl(D_1,\cP_\bk\bigl(\fp^+_2\bigr)\bigr) \\
&=\begin{cases}
\ds \hspace{5pt}\hsum_{\substack{\bk\in\BZ_{++}^{r_2} \\ k_{a+1}=0}}\cH_{\varepsilon_1\frac{d}{2}a}\bigl(D_1,\cP_\bk\bigl(\fp^+_2\bigr)\bigr), & \varepsilon_2=1, \\
\ds \hspace{5pt}\hsum_{\substack{0\le a_1\le a_2\le r_2 \\ a_1+a_2\le a}}\hspace{6pt}\hsum_{\substack{\bk\in\BZ_{++}^{a_1} \\ k_{a_1}\ge 2}}
\cH_{\varepsilon_1\frac{d}{2}a}\bigl(D_1,\cP_{(\bk,\underline{1}_{a_2-a_1},\underline{0}_{r_2-a_2})}\bigl(\fp^+_2\bigr)\bigr), & \varepsilon_2=2, \end{cases}
\end{align*}
where we set $k_0:=+\infty$.
\item[$(3)$] For $a=0,1,\dots,r-1$, if $\phi_{\varepsilon_2}(\bk)_{a+1}=0$, then $\cF_{\lambda,\bk}^\downarrow$ is holomorphic at $\lambda=\frac{d}{2}a$,
and its restriction gives the symmetry breaking operator
\[
\cF_{\frac{d}{2}a,\bk}^\downarrow
\colon\ \cH_{\frac{d}{2}a}(D)|_{\widetilde{G}_1}\longrightarrow \cH_{\varepsilon_1\frac{d}{2}a}(D_1,V_\bk).
\]
\item[$(4)$] Suppose $\fp^+$, $\fp^+_2$ are of tube type. For $a=1,2,\dots,\lfloor k_{r_2}/\varepsilon_2\rfloor$, if
\[
\rK_{\bk}(x_2)=c\det_{\fn^+_2}(x_2)^{\varepsilon_2a}\rK_{\bk-\underline{\varepsilon_2a}_{r_2}}(x_2),
\]
then we have
\begin{gather*}
\cF_{\frac{n}{r}-a,\bk}^\downarrow=c\cF_{\frac{n}{r}+a,\bk-\underline{\varepsilon_2a}_{r_2}}^\downarrow\circ\det_{\fn^-}\left(\frac{\partial}{\partial x}\right)^a\colon \\
\cO_{\frac{n}{r}-a}(D)|_{\widetilde{G}_1}\longrightarrow \cO_{\varepsilon_1\left(\frac{n}{r}-a\right)}(D_1,V_\bk)
\simeq \cO_{\varepsilon_1\left(\frac{n}{r}+a\right)}\bigl(D_1,V_{\bk-\underline{\varepsilon_2a}_{r_2}}\bigr).
\end{gather*}
\end{enumerate}
\end{Corollary}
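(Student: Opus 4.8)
The plan is to obtain all four assertions as bookkeeping consequences of the results already in place, with no genuinely new computation. Throughout, write $\bl:=\phi_{\varepsilon_2}(\bk)\in\BZ_{++}^{\varepsilon_2 r_2}$, extended by zeros to an element of $\BZ_{++}^r$ (legitimate since $\varepsilon_2 r_2\le r$ in every line of the classification table above). For part~(1), I would apply Proposition~\ref{prop_poles}\,(1) with this $\bl$: its single hypothesis, that $(\lambda)_{\bl,d}\bigl\langle f(x_2),{\rm e}^{(x|\overline{z})_{\fp^+}}\bigr\rangle_{\lambda,x}$ extend holomorphically to all of $\BC$ for a non-zero $f(x_2)\in\cP_\bk(\fp^+_2)$, is precisely Theorem~\ref{thm_poles_simple} (applicable since $\cP_\bk(\fp^+_2)\ne\{0\}$). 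The conclusion of Proposition~\ref{prop_poles}\,(1) then reads off verbatim as part~(1), including the $\widetilde{K}$-type containment $\cP_\bk(\fp^+_2)\subset\bigoplus_{m_j\le\phi_{\varepsilon_2}(\bk)_j}\cP_\bm(\fp^+)$.

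For part~(2), I would first record that $\frac{d}{2}a$ lies in the Wallach set~(\ref{Wallach_set}) for $a=0,\dots,r-1$, that $\cH_{\frac d2a}(D)$ therefore exists, and that $\cH_{\frac d2a}(D)_{\widetilde{K}}=M_{a+1}^\fg\bigl(\frac d2a\bigr)=\bigoplus_{\bm\in\BZ_{++}^r,\,m_{a+1}=0}\cP_\bm(\fp^+)$. By the structure theory recalled in Section~\ref{subsection_sym_subalg}, $\cH_{\frac d2a}(D)|_{\widetilde{G}_1}$ is discretely decomposable and multiplicity-free, and every irreducible summand meets the $\fp^+_1$-null vectors $\bigl(\cH_{\frac d2a}(D)_{\widetilde{K}}\bigr)^{\fp^+_1}=\bigl(\chi_1^{-\varepsilon_1\frac d2a}\otimes\cP(\fp^+_2)\bigr)\cap\cH_{\frac d2a}(D)_{\widetilde{K}}=\chi_1^{-\varepsilon_1\frac d2a}\otimes\bigl(\cP(\fp^+_2)\cap M_{a+1}^\fg(\tfrac d2a)\bigr)$, which by the identity~(\ref{formula_discWallach_simple}) equals $\chi_1^{-\varepsilon_1\frac d2a}\otimes\bigoplus_{\bk:\,\phi_{\varepsilon_2}(\bk)_{a+1}=0}\cP_\bk(\fp^+_2)$, with the explicit reparametrization for $\varepsilon_2=2$ also furnished by~(\ref{formula_discWallach_simple}). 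Combining this with the multiplicity-free branching of Theorem~\ref{thm_HKKS}\,(1) (whose summands are indexed by the same $\bk$) and with part~(3) below, which identifies the $\widetilde{G}_1$-submodule generated by each minimal $\widetilde{K}_1$-type $\cP_\bk(\fp^+_2)$ with $\cH_{\varepsilon_1\frac d2a}(D_1,\cP_\bk(\fp^+_2))$, yields the asserted Hilbert direct sum.

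For part~(3), I would invoke Proposition~\ref{prop_poles}\,(2) with the same $\bl=\phi_{\varepsilon_2}(\bk)$. Its hypotheses are $l_{a+1}=0$, which holds because $\phi_{\varepsilon_2}(\bk)_{a+1}=0$ and $\phi_{\varepsilon_2}(\bk)$ is non-increasing, and $C_{\fp^+,\fp^+_2}\bigl(\frac d2a,\tilde{\bk}\bigr)\ne 0$; the latter equals $C_{\varepsilon_2}^{d,d_2}\bigl(\frac d2a,\bk\bigr)$ by Theorem~\ref{thm_topterm_simple}. To see this is a non-zero holomorphic value I would use Proposition~\ref{prop_zeroes_simple}: when $\phi_{\varepsilon_2}(\bk)_{a+1}=0$, both the possible poles of $C_{\varepsilon_2}^{d,d_2}(\,\cdot\,,\bk)$ (the zeros of $(\lambda)_{\phi_{\varepsilon_2}(\bk),d}$, all at $\tfrac d2(b-1)-\BZ_{\ge0}$ with $b\le a$) and the zeros of $(\lambda)_{\phi_{\varepsilon_2}(\bk),d}C_{\varepsilon_2}^{d,d_2}(\,\cdot\,,\bk)$ lie strictly below $\frac d2a$, so $C_{\varepsilon_2}^{d,d_2}\bigl(\frac d2a,\bk\bigr)\ne0$. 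Proposition~\ref{prop_poles}\,(2) then gives that $\cF_{\lambda,\bk}^\downarrow$ is holomorphic at $\lambda=\frac d2a$ and that its restriction is the claimed symmetry breaking operator onto $\cH_{\varepsilon_1\frac d2a}(D_1,V_\bk)$. The routine but delicate point is precisely here: one must check that the zero-set estimates of Proposition~\ref{prop_zeroes_simple}, stated through the $(m_1,m_2,m_3)$-patterns and with the ``Especially'' clause covering only $\phi_{\varepsilon_2}(\bk)_a\ne0$, genuinely exclude $\lambda=\frac d2a$ in all remaining cases, among them $\phi_{\varepsilon_2}(\bk)_a=0$ (reduce to the largest index with $\phi_{\varepsilon_2}(\bk)\ne0$) and $\bk=\underline{0}_{r_2}$ (where $C_{\varepsilon_2}^{d,d_2}\equiv1$); this non-vanishing of $C_{\varepsilon_2}^{d,d_2}$ at the Wallach points is what lets the symmetry breaking operators survive meromorphic continuation and is the crux of the corollary.

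For part~(4), under the tube type assumption I would feed Theorem~\ref{thm_factorize_simple} into the definitions~(\ref{SBO1})--(\ref{SBO2}) of $\cF_{\lambda,\bk}^\downarrow$. Writing $\rK_\bk(x_2)=c\det_{\fn^+_2}(x_2)^{\varepsilon_2a}\rK_{\bk-\underline{\varepsilon_2a}_{r_2}}(x_2)$ and evaluating the symbol $F_{\lambda,\bk}^\downarrow$ at $\lambda=\frac nr-a$ via Theorem~\ref{thm_factorize_simple} gives $F_{\frac nr-a,\bk}^\downarrow(z)=c\det_{\fn^+}(z)^aF_{\frac nr+a,\bk-\underline{\varepsilon_2a}_{r_2}}^\downarrow(z)$ after the fixed identification $V_\bk\simeq V_{\bk-\underline{\varepsilon_2a}_{r_2}}$; replacing $\det_{\fn^+}(z)^a$ by the constant-coefficient operator $\det_{\fn^-}(\partial/\partial x)^a$ and using Proposition~\ref{prop_det_factorize}\,(1) to see that this maps $\cO_{\frac nr-a}(D)$ into $\cO_{\frac nr+a}(D)$ $\widetilde{G}$-equivariantly yields $\cF_{\frac nr-a,\bk}^\downarrow=c\,\cF_{\frac nr+a,\bk-\underline{\varepsilon_2a}_{r_2}}^\downarrow\circ\det_{\fn^-}(\partial/\partial x)^a$, together with the stated identification $\cO_{\varepsilon_1(\frac nr-a)}(D_1,V_\bk)\simeq\cO_{\varepsilon_1(\frac nr+a)}(D_1,V_{\bk-\underline{\varepsilon_2a}_{r_2}})$ of targets, which comes from $\cP_\bk(\fp^+_2)=\det_{\fn^+_2}(x_2)^{\varepsilon_2a}\cP_{\bk-\underline{\varepsilon_2a}_{r_2}}(\fp^+_2)$ and the relation between $\chi_1$ and $\det_{\fn^+_2}$.
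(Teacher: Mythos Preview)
Your proposal is correct and follows essentially the same route as the paper. The paper gives no detailed proof of this corollary but merely records that it follows ``by Propositions~\ref{prop_poles},~\ref{prop_zeroes_simple}, Theorems~\ref{thm_poles_simple},~\ref{thm_factorize_simple} and the formula~(\ref{formula_discWallach_simple})''; you have correctly identified how each of these ingredients enters (Theorem~\ref{thm_poles_simple} $+$ Proposition~\ref{prop_poles}\,(1) for part~(1), formula~(\ref{formula_discWallach_simple}) for part~(2), Proposition~\ref{prop_zeroes_simple} $+$ Proposition~\ref{prop_poles}\,(2) for part~(3), Theorem~\ref{thm_factorize_simple} for part~(4)) and have spelled out the bookkeeping the paper leaves implicit, including the point you flag about extending the ``Especially'' clause of Proposition~\ref{prop_zeroes_simple} to the case $\phi_{\varepsilon_2}(\bk)_a=0$.
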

If $\bk$ satisfies the condition in Remark~\ref{rem_poles_simple}\,(1), then ``only if'' in Corollary~\ref{cor_submodule_simple}\,(1) also holds.
Also, Corollary~\ref{cor_submodule_simple}\,(2) for $\lambda=\frac{d}{2}$ ($a=1$ case) is earlier given in~\cite{MO2},
and that for $(G,G_1)=({\rm SU}(r,r),{\rm SO}^*(2r))$ is earlier given in~\cite{Sek}. See also~\cite{KO} for $(G,G_1)=({\rm O}(p,q),{\rm O}(p,q')\times {\rm O}(q''))$ case.
The parameter sets in Corollary~\ref{cor_submodule_simple}\,(2) also appear in Howe's correspondence for the dual pairs $({\rm SO}^*(2r_2),{\rm Sp}(a))$, $({\rm Sp}(r_2,\BR),{\rm O}(a))$ (see, e.g.,~\cite{Ad, KV, Prz}),
and especially, we can prove~(2) for Cases 2--5 by using the seesaw dual pair theory (see, e.g.,~\cite[Section 3]{HTW},~\cite{Ku}) as in~\cite{MO2}.

\section[Case $\fp^+_2$ is simple of rank 3]{Case $\boldsymbol{\fp^+_2}$ is simple of rank 3}\label{section_rank3}

In the previous section, we skipped the proof of Theorem~\ref{thm_poles_simple} for $\bigl(\fp^+,\fp^+_2\bigr)=(\Herm(3,\BO)^\BC,\allowbreak\Alt(6,\BC))$.
In this section we assume that $\fp^+_2$ is simple, $\fp^+$ and $\fp^+_2$ are both of tube type, and $\rank\fp^+=\rank\fp^+_2=3$, that is, we treat the cases
\begin{gather*}
\bigl(\fp^+,\fp^+_1,\fp^+_2\bigr)=\bigl(\fp^+,(\fp^+)^\sigma,(\fp^+)^{-\sigma}\bigr) \\
\qquad\qquad{}=\bigl(\Herm(3,\BF)^\BC,\Alt(3,\BF')^\BC,\Herm(3,\BF')^\BC\bigr), \qquad (\BF,\BF')=(\BC,\BR),(\BH,\BC),(\BO,\BH) \\
\qquad\qquad{}\simeq \begin{cases} ({\rm M}(3,\BC),\Alt(3,\BC),\Sym(3,\BC)) & (\text{Case }1,\, (\BF,\BF')=(\BC,\BR)), \\
(\Alt(6,\BC),\Alt(3,\BC)\oplus\Alt(3,\BC),{\rm M}(3,\BC)) & (\text{Case }2,\, (\BF,\BF')=(\BH,\BC)), \\
\bigl(\Herm(3,\BO)^\BC,{\rm M}(2,6;\BC),\Alt(6,\BC)\bigr) & (\text{Case }3,\, (\BF,\BF')=(\BO,\BH)). \end{cases}
\end{gather*}
Then the corresponding symmetric pairs are
\[
(G,G_1)= \begin{cases} ({\rm SU}(3,3),{\rm SO}^*(6)) & (\text{Case }1), \\
({\rm SO}^*(12),{\rm SO}^*(6)\times {\rm SO}^*(6)) & (\text{Case }2), \\
\bigl(E_{7(-25)},{\rm SU}(2,6)\bigr) & (\text{Case }3), \end{cases}
\]
and for these cases we have $d=\dim_\BR\BF$, $d_2=\dim_\BR\BF'=\frac{d}{2}$, $\varepsilon_2=1$.
The purpose of this section is to prove Theorem~\ref{thm_poles_simple} for these cases, that is,
for $\bk\in\BZ_{++}^{3}$, $f(x_2)\in\cP_\bk\bigl(\fp^+_2\bigr)$, we prove that
\[
(\lambda)_{\phi_1(\bk),d}\bigl\langle f(x_2),{\rm e}^{(x|\overline{z})_{\fp^+}}\bigr\rangle_{\lambda,x}
\]
is holomorphically continued for all $\lambda\in\BC$, where
\[
\phi_1(\bk):=(k_1+k_2,\min\{k_1,k_2+k_3\},k_3)\in\BZ_{++}^3.
\]

\subsection{Preliminaries}

First we prepare some notations. We fix a~maximal tripotent $e\in\fp^+_2\subset\fp^+$, and regard~$\fp^+$,~$\fp^+_2$ as Jordan algebras of rank 3.
Let $\fn^+_2\subset\fn^+$ be the Euclidean real forms, let $\Omega\subset\fn^+$ be the symmetric cone,
and consider the $\BC$-bilinear form $(\cdot|\cdot)_{\fn^+}:=(\cdot|Q(\overline{e})\cdot)_{\fp^+}\colon\fp^+\times\fp^+\to\BC$,
and the determinant polynomials $\det_{\fn^+}(x)$, $\det_{\fn^+_2}(x_2)$ on $\fp^+$, $\fp^+_2$. Then $\det_{\fn^+_2}$ coincides with the restriction of $\det_{\fn^+}$ on $\fn^+_2$.
Let $K\subset G$, $K_1\subset G_1$ be the maximal compact subgroups, with the complexification~$K^\BC$,~$K_1^\BC$,
and let $L_2\subset K_1^\BC$ be the subgroup with the Lie algebra $\fl_2=[\fn^+_2,\fn^-_2]\simeq\BR\oplus\mathfrak{sl}(3,\BF')$, defined as in Section~\ref{subsection_KKT}.

Next we fix a~Jordan frame $\{e_1,e_2,e_3\}\subset\fp^+_2\subset\fp^+$ with $\sum_{j=1}^3 e_j=e$,
set $e^k:=\sum_{j=1}^k e_j$, $\check{e}^k:=\sum_{j=4-k}^3 e_j$, $k=1,2,3$,
and let $\Delta_k(x):=\det_{\fn^+(e^k)}(x)$, $\check{\Delta}_k(x):=\det_{\fn^+(\check{e}^k)}(x)$ be the determinant polynomials on
$\fp^+\bigl(e^k\bigr)_2,\fp^+\bigl(\check{e}^k\bigr)_2\subset\fp^+$, regarded as polynomials on $\fp^+$, as in Section~\ref{subsection_polynomials}.
Especially we have $\Delta_3(x)=\check{\Delta}_3(x)=\det_{\fn^+}(x)$.
Also, for $\bm\in\BZ_{+}^3$, $x_2\in\fp^+_2$, as in~(\ref{principal_minor}) let
\begin{align*}
\begin{split}
&\Delta^{\fn^+_2}_\bm(x_2)=\Delta_1(x_2)^{m_1-m_2}\Delta_2(x_2)^{m_2-m_3}\Delta_3(x_2)^{m_3}, \\
&\check{\Delta}^{\fn^+_2}_\bm(x_2)=\check{\Delta}_1(x_2)^{m_1-m_2}\check{\Delta}_2(x_2)^{m_2-m_3}\check{\Delta}_3(x_2)^{m_3}.
\end{split}
\end{align*}
Let $M_{L_2}A_{L_2}N_{L_2}^\top\subset L_2$ be the minimal parabolic subgroup defined as in Section~\ref{subsection_KKT}, so that
$\Delta^{\fn^+_2}_\bm(x_2)$ is relatively invariant under the action of $M_{L_2}A_{L_2}N_{L_2}^\top$.

Next, for $x\in\fp^+$, let $x^\sharp\in\fp^+$ be the adjugate element, which is characterized by
\[
x^\sharp:=\det_{\fn^+}(x)x^\itinv
\]
for invertible $x$, so that
\[
\Delta_2(x)=\check{\Delta}_1\bigl(x^\sharp\bigr), \qquad \check{\Delta}_2(x)=\Delta_1\bigl(x^\sharp\bigr)
\]
hold. Then the following holds.

\begin{Lemma}\label{lem_rank3}
Let $\fp^+=\fp^+_1\oplus\fp^+_2$ be as above.
\begin{enumerate}\itemsep=0pt
\item[$(1)$] For $x,y\in\fp^+$, we have $\det_{\fn^+}(x+y)=\det_{\fn^+}(x)+\bigl(x^\sharp|y\bigr)_{\fn^+}+\bigl(x|y^\sharp\bigr)_{\fn^+}+\det_{\fn^+}(y)$.
\item[$(2)$] For $x_1\in\fp^+_1$, $x_2\in\fp^+_2$, we have $(x_j)^\sharp\in\fp^+_2$, and $(x_1)^\sharp$ is at most of rank~$1$.
\item[$(3)$] For $x_1\in\fp^+_1$, $x_2\in\fp^+_2$, we have $\det_{\fn^+}(x_1+x_2)=\det_{\fn^+_2}(x_2)+\bigl(x_2|(x_1)^\sharp\bigr)_{\fn^+}$.
\end{enumerate}
\end{Lemma}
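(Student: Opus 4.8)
The plan is to prove all three parts inside the rank-$3$ complex Jordan algebra $\bigl(\fp^+,\cdot,e\bigr)$ attached to the common maximal tripotent $e\in\fp^+_2$ (which is also maximal in $\fp^+$), using only the following structural facts and nothing about the matrix models in Cases~1--3: $\fp^+_2\subset\fp^+$ is a unital Jordan subalgebra (it contains $e$ and is closed under $\cdot$ since $\sigma$ reverses the product, see below); $\fp^+_1,\fp^+_2$ are the $(\pm1)$-eigenspaces of the Jordan triple automorphism $\sigma$; $\sigma(e)=-e$; and $\sigma$ preserves the trace form $(\cdot\mid\cdot)_{\fp^+}$, because $D(\sigma x,\sigma y)=\sigma D(x,y)\sigma^{-1}$, hence also $(\cdot\mid\cdot)_{\fn^+}=(\cdot\mid Q(\overline e)\cdot)_{\fp^+}$, since $\sigma$ intertwines $Q(\overline e)=Q(\overline{\sigma e})$.

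First, part~(1) is just the polarization of the cubic form $\det_{\fn^+}$, which is standard Jordan algebra theory (see \cite{FK,FKKLR}): writing $\det_{\fn^+}(x)=\frac16T(x,x,x)$ for the associated totally symmetric trilinear form $T$, one has $(x^\sharp\mid y)_{\fn^+}=\frac12T(x,x,y)$, and collecting the terms of $\det_{\fn^+}(x+y)=\frac16T(x+y,x+y,x+y)$ by bidegree in $(x,y)$ yields exactly the claimed expansion.

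The computational core is the behaviour of $\sigma$ with respect to the product and the norm. I would first record that $\sigma$ \emph{reverses} the Jordan product: $\sigma(x\cdot y)=\frac12\sigma\{x,\overline e,y\}=\frac12\{\sigma x,\sigma\overline e,\sigma y\}=\frac12\{\sigma x,-\overline e,\sigma y\}=-(\sigma x)\cdot(\sigma y)$, using $\sigma\overline e=\overline{\sigma e}=-\overline e$. Combined with $\sigma(e)=-e$ and the $\sigma$-invariance of the trace form, this gives $\Tr(\sigma x)=-\Tr x$, $\Tr\bigl((\sigma x)^{2}\bigr)=\Tr(x^{2})$ and $\Tr\bigl((\sigma x)^{3}\bigr)=-\Tr(x^{3})$ for the generic traces; substituting into $6\det_{\fn^+}(x)=(\Tr x)^{3}-3(\Tr x)\Tr(x^{2})+2\Tr(x^{3})$ yields $\det_{\fn^+}\!\circ\,\sigma=-\det_{\fn^+}$, and polarizing, $T(\sigma a,\sigma b,\sigma c)=-T(a,b,c)$. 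From $(x^\sharp\mid y)_{\fn^+}=\frac12T(x,x,y)$ and the $\sigma$-invariance of $(\cdot\mid\cdot)_{\fn^+}$ one then reads off $\sigma\bigl(x^\sharp\bigr)=-(\sigma x)^\sharp$. This sign bookkeeping is the only step that demands genuine care; everything else is formal given it.

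Granting $\det_{\fn^+}\!\circ\,\sigma=-\det_{\fn^+}$ and $\sigma(x^\sharp)=-(\sigma x)^\sharp$, parts~(2) and~(3) follow quickly. Since $\sharp$ is homogeneous of degree~$2$, $\sigma\bigl((x_j)^\sharp\bigr)=-(\sigma x_j)^\sharp$ equals $-(x_1)^\sharp$ for $x_1\in\fp^+_1$ and $-(x_2)^\sharp$ for $x_2\in\fp^+_2$, so in both cases $(x_j)^\sharp\in(\fp^+)^{-\sigma}=\fp^+_2$; moreover $\det_{\fn^+}(x_1)=\det_{\fn^+}(\sigma x_1)=-\det_{\fn^+}(x_1)$ forces $\det_{\fn^+}(x_1)=0$, so by the standard rank-$3$ identity $\bigl(u^\sharp\bigr)^\sharp=\det_{\fn^+}(u)\,u$ we get $\bigl((x_1)^\sharp\bigr)^\sharp=0$, i.e.\ $(x_1)^\sharp$ has rank at most~$1$. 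Finally, in the part~(1) expansion of $\det_{\fn^+}(x_1+x_2)$ the term $\det_{\fn^+}(x_1)$ vanishes, $\det_{\fn^+}(x_2)=\det_{\fn^+_2}(x_2)$ because $\det_{\fn^+_2}$ is the restriction of $\det_{\fn^+}$, and $\bigl(x_1\mid(x_2)^\sharp\bigr)_{\fn^+}=0$ since $(x_2)^\sharp\in\fp^+_2$, $x_1\in\fp^+_1$, and $\fp^+_1\perp\fp^+_2$ for $(\cdot\mid\cdot)_{\fn^+}$ by its $\sigma$-invariance; this is exactly~(3).
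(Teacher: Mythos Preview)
Your proof is correct and follows the same conceptual route as the paper. The only difference is a cosmetic but effort-saving trick: instead of working with $\sigma$, which as you observe \emph{reverses} the Jordan product since $\sigma(\overline e)=-\overline e$, the paper works with $\sigma_-:=-\sigma$, which is then an honest Jordan algebra automorphism (it fixes $e$ and preserves the product). Consequently $\det_{\fn^+}$ and the adjugate are automatically $\sigma_-$-equivariant, so $\sigma_-\bigl((x_j)^\sharp\bigr)=(\sigma_-(x_j))^\sharp=(\mp x_j)^\sharp=(x_j)^\sharp$ gives $(x_j)^\sharp\in(\fp^+)^{\sigma_-}=\fp^+_2$ in one line, and $\det_{\fn^+}(x_1)=\det_{\fn^+}(-x_1)=-\det_{\fn^+}(x_1)$ gives the vanishing by odd homogeneity alone. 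Your route via $\Tr(\sigma x)=-\Tr x$, the Newton identity $6\det_{\fn^+}(x)=(\Tr x)^3-3(\Tr x)\Tr(x^2)+2\Tr(x^3)$, and polarization reaches the same identities $\det_{\fn^+}\circ\sigma=-\det_{\fn^+}$ and $\sigma(x^\sharp)=-(\sigma x)^\sharp$, just with more bookkeeping. For the rank-$\le 1$ claim both arguments hinge on $\bigl((x_1)^\sharp\bigr)^\sharp=\det_{\fn^+}(x_1)\,x_1=0$; the paper spells this out as $\Delta_2\bigl(l(x_1)^\sharp\bigr)=\check\Delta_1\bigl((l(x_1)^\sharp)^\sharp\bigr)=0$ for all $l\in K_1^\BC$, while you invoke the standard characterization directly. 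Part~(3) is identical in both.
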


\begin{proof}
(1) Since $\det_{\fn^+}$ is homogeneous of degree 3, $\det_{\fn^+}(sx+ty)$, $s,t\in\BC$ is of the form
\[ \det_{\fn^+}(sx+ty)=f_3(x,y)s^3+f_2(x,y)s^2t+f_1(x,y)st^2+f_0(x,y)t^3, \]
with $f_j(x,y)\in\cP\bigl(\fp^+\oplus\fp^+\bigr)$. Then clearly we have
\begin{align*}
f_3(x,y)&=\det_{\fn^+}(sx+ty)\bigr|_{s=1,t=0}=\det_{\fn^+}(x), \\ f_0(x,y)&=\det_{\fn^+}(sx+ty)\bigr|_{s=0,t=1}=\det_{\fn^+}(y),
\end{align*}
and by~\cite[Proposition III.4.2\,(ii)]{FK}, we have
\begin{align*}
f_2(x,y)&=\frac{\partial}{\partial t}\det_{\fn^+}(sx+ty)\biggr|_{s=1,t=0}=\det_{\fn^+}(x)\bigl(x^\itinv|y\bigr)_{\fn^+}=\bigl(x^\sharp|y\bigr)_{\fn^+}, \\
f_1(x,y)&=\frac{\partial}{\partial s}\det_{\fn^+}(sx+ty)\biggr|_{s=0,t=1}=\det_{\fn^+}(y)\bigl(x|y^\itinv\bigr)_{\fn^+}=\bigl(x|y^\sharp\bigr)_{\fn^+}.
\end{align*}
Then by putting $s=t=1$, we get the desired formula.

(2) Since $\sigma_-:=-\sigma$ acts on $\fp^+$ as a~Jordan algebra automorphism, for $x_j\in\fp^+_j$ we have
\[ \sigma_-\bigl((x_j)^\sharp\bigr)=(\sigma_-(x_j))^\sharp=(\mp x_j)^\sharp=(\mp 1)^2(x_j)^\sharp=(x_j)^\sharp, \]
and hence $(x_j)^\sharp\in\fp^+_2$ holds. Also, for $x_1\in\fp^+_1$ we have
\begin{align*}
\det_{\fn^+}(x_1)&=\frac{1}{2}\left(\det_{\fn^+}(x_1)+\det_{\fn^+}(\sigma_-(x_1))\right)=\frac{1}{2}\left(\det_{\fn^+}(x_1)+\det_{\fn^+}(-x_1)\right)=0, \\
\bigl((x_1)^\sharp\bigr)^\sharp&=\det_{\fn^+}(x_1)x_1=0,
\end{align*}
and hence for any $l\in K_1^\BC$ we have
\[ \Delta_2\bigl(l(x_1)^\sharp\bigr)=\check{\Delta}_1\bigl(\bigl(l(x_1)^\sharp\bigr)^\sharp\bigr)=\check{\Delta}_1\bigl(l^\sharp\bigl((x_1)^\sharp\bigr)^\sharp\bigr)=0, \]
where $l^\sharp:=\chi(l)^2l^{\top -1}$. Thus $(x_1)^\sharp$ is at most of rank 1.

(3) For $x=x_2\in\fp^+_2$, $y=x_1\in\fp^+_1$, we have $\bigl((x_2)^\sharp|x_1\bigr)_{\fn^+}=0$ since $(x_2)^\sharp\in\fp^+_2$ is orthogonal to~$x_1\in\fp^+_1$,
and we have $\det_{\fn^+}(x_1)=0$ by the above argument. Hence the desired formula follows from (1).
\end{proof}

\subsection{Proof of theorem on poles}

Now we give the proof of Theorem~\ref{thm_poles_simple} when $\fp^+$, $\fp^+_2$ are of tube type and of rank 3.
\begin{proof}[Proof of Theorem~\ref{thm_poles_simple} for rank 3 cases]
By the $K_1$-equivariance, it is enough to prove the theorem when $f(x_2)=\Delta^{\fn^+_2}_\bk(x_2)\in\cP_\bk\bigl(\fp^+_2\bigr)$, $\bk\in\BZ_{++}^3$.
By~(\ref{key_simple}), for $z=z_1+z_2\in\Omega\subset\fn^+\subset\fp^+$, we have
\begin{align*}
&\bigl\langle \Delta_\bk^{\fn^+_2}(x_2),{\rm e}^{(x|\overline{z})_{\fp^+}}\bigr\rangle_{\lambda,x} \\
&=\frac{\det_{\fn^+}(z)^{-\lambda+d+1}}{(\lambda)_{\underline{2k_3}_3,d}}\det_{\fn^+_2}\!\left(\frac{\partial}{\partial z_2}\right)\!\!{\vphantom{\biggr)}}^{k_3} \det_{\fn^+}(z)^{\lambda+2k_3-d-1}
\bigl\langle \Delta_{(k_1-k_3,k_2-k_3,0)}^{\fn^+_2}(x_2),{\rm e}^{(x|\overline{z})_{\fp^+}}\bigr\rangle_{\lambda+2k_3,x} \\
&=\frac{1}{(\lambda)_{\underline{2k_3}_3,d}}\det_{\fn^+}(z)^{-\lambda+d+1}\det_{\fn^+_2}\left(\frac{\partial}{\partial z_2}\right)^{k_3}
\bigl(\det_{\fn^+_2}(z_2)+\bigl(z_2|z_1^\sharp\bigr)_{\fn^+}\bigr)^{\lambda+2k_3-d-1}\\
&\eqspace{}\times \frac{\left(\lambda+k_1+k_3-\frac{d}{4}\right)_{k_2-k_3}}{(\lambda+2k_3)_{(k_1+k_2-2k_3,k_2-k_3),d}}
{}_2F_1\left(\begin{matrix}-k_2+k_3,-k_1+k_3-\frac{d}{4}\\ -\lambda-k_1-k_2+\frac{d}{4}+1\end{matrix};-\frac{\Delta_2(z_1)}{\Delta_2(z_2)}\right)
\Delta_{\substack{(k_1-k_3,\hspace{7pt}\\ \ k_2-k_3,0)}}^{\fn^+_2}\!(z_2) \\
&=\frac{(-1)^{k_2-k_3}\left(-\lambda-k_1-k_2+\frac{d}{4}+1\right)_{k_2-k_3}}{(\lambda)_{(k_1+k_2,k_2+k_3,2k_3),d}}
\det_{\fn^+}(z)^{-\lambda+d+1}\det_{\fn^+_2}\left(\frac{\partial}{\partial z_2}\right)^{k_3} \\
&\eqspace{}\times \det_{\fn^+_2}(z_2)^{\lambda+k_3-d-1}\sum_{m=0}^\infty \frac{(-\lambda-2k_3+d+1)_m}{m!}{}\,\biggl(-\frac{\bigl(z_2|z_1^\sharp\bigr)_{\fn^+}}{\det_{\fn^+_2}(z_2)}\biggr)^m \\
&\eqspace{}\times \sum_{n=0}^{k_2-k_3} \frac{(-k_2+k_3)_n\left(-k_1+k_3-\frac{d}{4}\right)_n}{\left(-\lambda-k_1-k_2+\frac{d}{4}+1\right)_nn!}
\left(-\frac{\Delta_2(z_1)}{\Delta_2(z_2)}\right)^n \Delta_{(k_1,k_2,k_3)}^{\fn^+_2}(z_2),
\end{align*}
where we have used Proposition~\ref{prop_reduction},~(\ref{rank2_2F1}) and Lemma~\ref{lem_rank3}\,(3) at the 2nd equality,
and used the binomial formula and the definition of ${}_2F_1$ at the 3rd equality. Now we have
\begin{gather*}
\biggl(-\frac{\bigl(z_2|z_1^\sharp\bigr)_{\fn^+}}{\det_{\fn^+_2}(z_2)}\biggr)^m \in \cP_{2m}\bigl(\fp^+_1\bigr)\otimes\cP_{(0,-m,-m)}\bigl(\fp^+_2\bigr), \\
\left(-\frac{\Delta_2(z_1)}{\Delta_2(z_2)}\right)^n \Delta_{(k_1,k_2,k_3)}^{\fn^+_2}(z_2) \in \cP_{2n}\bigl(\fp^+_1\bigr)\otimes\cP_{(k_1-n,k_2-n,k_3)}\bigl(\fp^+_2\bigr),
\end{gather*}
where $\cP_m\bigl(\fp^+_1\bigr)$ denotes the space of all homogeneous polynomials on $\fp^+_1$ of degree $m$, since $z_1^\sharp$ is at most of rank 1.
For $\bl\in\BZ^3$ with $l_1\ge 0$, $0\le l_2\le k_1-k_2$, $n\le l_3\le k_2-k_3$, $|\bl|=m+n$, we define the polynomials
\begin{align*}
F^n_{\bk,\bl}(z_1,z_2)\in{}&\cP_{2(m+n)}\bigl(\fp^+_1\bigr)\otimes\cP_{(k_1+l_1-m-n,k_2+l_2-m-n,k_3+l_3-m-n)}\bigl(\fp^+_2\bigr) \\
&=\cP_{2|\bl|}\bigl(\fp^+_1\bigr)\otimes\cP_{(k_1-l_2-l_3,k_2-l_1-l_3,k_3-l_1-l_2)}\bigl(\fp^+_2\bigr)
\end{align*}
such that
\begin{equation}\label{def_Fkl1}
\sum_{\substack{\bl\in\BZ^3,\, |\bl|=m+n \\ 0\le l_1 \\ 0\le l_2\le k_1-k_2 \\ n\le l_3\le k_2-k_3}}F^n_{\bk,\bl}(z_1,z_2)
=\frac{1}{m!n!}\,\biggl(-\frac{\bigl(z_2|z_1^\sharp\bigr)_{\fn^+}}{\det_{\fn^+_2}(z_2)}\biggr)^m\left(-\frac{\Delta_2(z_1)}{\Delta_2(z_2)}\right)^n\Delta_{(k_1,k_2,k_3)}^{\fn^+_2}(z_2)
\end{equation}
holds. Then we have
\begin{align*}
&\bigl\langle \Delta_\bk^{\fn^+_2}(x_2),{\rm e}^{(x|\overline{z})_{\fp^+}}\bigr\rangle_{\lambda,x} \\
&\qquad{}=\frac{(-1)^{k_2-k_3}}{(\lambda)_{(k_1+k_2,k_2+k_3,2k_3),d}}\det_{\fn^+}(z)^{-\lambda+d+1}\det_{\fn^+_2}\left(\frac{\partial}{\partial z_2}\right)^{k_3}\det_{\fn^+_2}(z_2)^{\lambda+k_3-d-1} \\
&\qquad{}\eqspace{}\times \smash{\sum_{\substack{\bl\in\BZ^3,\, 0\le l_1 \\ 0\le l_2\le k_1-k_2 \\ 0\le l_3\le k_2-k_3}}}\sum_{n=0}^{l_3}
(-\lambda-2k_3+d+1)_{|\bl|-n} \left(-\lambda-k_1-k_2+n+\frac{d}{4}+1\right)_{k_2-k_3-n} \\
&\qquad{}\eqspace{}\hspace{155pt}\times (-k_2+k_3)_n\left(-k_1+k_3-\frac{d}{4}\right)_n F_{\bk,\bl}^n(z_1,z_2) \\
&\qquad{}=\frac{(-1)^{k_2-k_3}}{(\lambda)_{(k_1+k_2,k_2+k_3,2k_3),d}}\det_{\fn^+}(z)^{-\lambda+d+1}\det_{\fn^+_2}\left(\frac{\partial}{\partial z_2}\right)^{k_3}\det_{\fn^+_2}(z_2)^{\lambda+k_3-d-1} \\
&\qquad{}\eqspace{}\times \sum_{\substack{\bl\in\BZ^3,\, 0\le l_1 \\ 0\le l_2\le k_1-k_2 \\ 0\le l_3\le k_2-k_3}}
(-\lambda-2k_3+d+1)_{l_1+l_2}\left(-\lambda-k_1-k_2+l_3+\frac{d}{4}+1\right)_{k_2-k_3-l_3} \\
&\qquad{}\eqspace{}\times \sum_{n=0}^{l_3}(-k_2+k_3)_n\left(-k_1+k_3-\frac{d}{4}\right)_n\left(-\lambda-k_1-k_2+n+\frac{d}{4}+1\right)_{l_3-n} \\
&\qquad{}\eqspace{}\times (-\lambda-2k_3+l_1+l_2+d+1)_{l_3-n}F_{\bk,\bl}^n(z_1,z_2).
\end{align*}
Now we set
\begin{align}
\tilde{F}_{\bk,\bl}(\lambda;z_1,z_2): ={}& \sum_{n=0}^{l_3}(-k_2+k_3)_n\left(-k_1+k_3-\frac{d}{4}\right)_n\left(-\lambda-k_1-k_2+n+\frac{d}{4}+1\right)_{l_3-n} \notag\\
&\times (-\lambda-2k_3+l_1+l_2+d+1)_{l_3-n}F_{\bk,\bl}^n(z_1,z_2) \notag\\
&\in \BC[\lambda]_{\le 2l_3}\otimes \cP_{2|\bl|}\bigl(\fp^+_1\bigr)\otimes\cP_{(k_1-l_2-l_3,k_2-l_1-l_3,k_3-l_1-l_2)}\bigl(\fp^+_2\bigr), \label{def_Fkl2}
\end{align}
where $\BC[\lambda]_{\le l}$ denotes the space of polynomials in $\lambda$ of degree at most $l$. Then since
\begin{gather*}
\det_{\fn^+_2}\left(\frac{\partial}{\partial z_2}\right)^{k_3}\det_{\fn^+_2}(z_2)^{\lambda+k_3-d-1}\tilde{F}_{\bk,\bl}(\lambda;z_1,z_2) \\
\qquad{}=(-1)^{k_3}\left(-\lambda-k_1-k_3+l_2+l_3+\frac{d}{2}+1\right)_{k_3}\left(-\lambda-k_2-k_3+l_1+l_3+\frac{3}{4}d+1\right)_{k_3} \\
\qquad\eqspace{}\times (-\lambda-2k_3+l_1+l_2+d+1)_{k_3}\det_{\fn^+_2}(z_2)^{\lambda-d-1}\tilde{F}_{\bk,\bl}(\lambda;z_1,z_2)
\end{gather*}
holds by~(\ref{formula_diff}), we have
\begin{align}
&\bigl\langle \Delta_\bk^{\fn^+_2}(x_2),{\rm e}^{(x|\overline{z})_{\fp^+}}\bigr\rangle_{\lambda,x} \notag \\
&=\frac{(-1)^{k_2}}{(\lambda)_{(k_1+k_2,k_2+k_3,2k_3),d}}\det_{\fn^+}(z)^{-\lambda+d+1}\det_{\fn^+_2}(z_2)^{\lambda-d-1} \notag \\
&\ \hphantom{=}{}\times \sum_{\substack{\bl\in\BZ^3,\, 0\le l_1 \\ 0\le l_2\le k_1-k_2 \\ 0\le l_3\le k_2-k_3}}
(-\lambda-2k_3+d+1)_{l_1+l_2}\left(-\lambda-k_1-k_2+l_3+\frac{d}{4}+1\right)_{k_2-k_3-l_3} \notag \\
&\ \hphantom{=}{}\times \left(-\lambda-k_1-k_3+l_2+l_3+\frac{d}{2}+1\right)_{k_3}\left(-\lambda-k_2-k_3+l_1+l_3+\frac{3}{4}d+1\right)_{k_3} \notag \\
&\ \hphantom{=}{}\times  (-\lambda-2k_3+l_1+l_2+d+1)_{k_3} \tilde{F}_{\bk,\bl}(\lambda;z_1,z_2) \notag \\
&=\frac{(-1)^{k_2}(-\lambda-2k_3+d+1)_{k_3}}{(\lambda)_{(k_1+k_2,k_2+k_3,2k_3),d}}\,\biggl(1+\frac{\bigl(z_2|z_1^\sharp\bigr)_{\fn^+}}{\det_{\fn^+_2}(z_2)}\biggr)^{-\lambda+d+1} \notag \\
&\ \hphantom{=}{}\times \sum_{\substack{\bl\in\BZ^3,\, 0\le l_1 \\ 0\le l_2\le k_1-k_2 \\ 0\le l_3\le k_2-k_3}}
(-\lambda-k_3+d+1)_{l_1+l_2}\left(-\lambda-k_1-k_2+l_3+\frac{d}{4}+1\right)_{k_2-k_3-l_3} \notag \\
&\ \hphantom{=}{}\times \left(-\lambda-k_1-k_3+l_2+l_3+\frac{d}{2}+1\right)\!\!{\vphantom{\biggr)}}_{k_3}\!\!\left(-\lambda-k_2-k_3+l_1+l_3+\frac{3}{4}d+1\right)\!\!{\vphantom{\biggr)}}_{k_3}\!\!
\tilde{F}_{\bk,\bl}(\lambda;z_1,z_2) \notag \\ 
&=\frac{(-1)^{k_2-k_3}}{(\lambda)_{(k_1+k_2,k_2+k_3,k_3),d}}\,\biggl(1+\frac{\bigl(z_2|z_1^\sharp\bigr)_{\fn^+}}{\det_{\fn^+_2}(z_2)}\biggr)^{-\lambda+d+1} \notag \\
&\ \hphantom{=}{}\times \sum_{\substack{\bl\in\BZ^3,\,  0\le l_1 \\ 0\le l_2\le k_1-k_2 \\ 0\le l_3\le k_2-k_3}}
(-\lambda-k_3+d+1)_{l_1+l_2}\left(-\lambda-k_1-k_2+l_3+\frac{d}{4}+1\right)_{k_2-k_3-l_3} \label{rank3_proof1} \\
&\ \hphantom{=} {}\times \!\left(\!-\lambda-k_1-k_3+l_2+l_3+\frac{d}{2}+1 \right)\!\!{\vphantom{\biggr)}}_{k_3} \!\! \left(\!-\lambda-k_2-k_3+l_1+l_3+\frac{3}{4}d+1\right)\!\!{\vphantom{\biggr)}}_{k_3}\!
\tilde{F}_{\bk,\bl}(\lambda;z_1,z_2), \notag
\end{align}
where we have used Lemma~\ref{lem_rank3}\,(3) at the 2nd equality. Therefore,
\[ (\lambda)_{(k_1+k_2,k_2+k_3,k_3),d}\,\bigl\langle \Delta_\bk^{\fn^+_2}(x_2),{\rm e}^{(x|\overline{z})_{\fp^+}}\bigr\rangle_{\lambda,x} \]
is holomorphic for all $\lambda\in\BC$, and hence by Corollary~\ref{cor_FK}, we have
\begin{equation}\label{rank3_proof2}
\Delta_\bk^{\fn^+_2}(x_2)\in\bigoplus_{\substack{\bm\in\BZ_{++}^3,\,  m_1\le k_1+k_2 \\ m_2\le k_2+k_3, \,  m_3\le k_3}}\cP_\bm(\fp^+).
\end{equation}
On the other hand, if $k_1=k_2=k_3$, since
\[ \tilde{F}_{\underline{k_1}_3,(l_1,0,0)}(\lambda;z_1,z_2)=F_{\underline{k_1}_3,(l_1,0,0)}^0(z_1,z_2)
=\frac{1}{l_1!}\,\biggl(-\frac{\bigl(z_2|z_1^\sharp\bigr)_{\fn^+}}{\det_{\fn^+_2}(z_2)}\biggr)^{l_1}\det_{\fn^+_2}(z_2)^{k_1} \]
holds, we have
\begin{align*}
&\bigl\langle \det_{\fn^+_2}(x_2)^{k_1},{\rm e}^{(x|\overline{z})_{\fp^+}}\bigr\rangle_{\lambda,x} \\
&=\frac{\left(-\lambda-2k_1+\frac{d}{2}+1\right)_{k_1}}{(\lambda)_{(2k_1,2k_1,k_1),d}}\,\biggl(1+\frac{\bigl(z_2|z_1^\sharp\bigr)_{\fn^+}}{\det_{\fn^+_2}(z_2)}\biggr)^{-\lambda+d+1} \\
&\eqspace{}\times \sum_{l_1=0}^\infty (-\lambda-k_1+d+1)_{l_1} \left(-\lambda-2k_1+l_1+\frac{3}{4}d+1\right)_{k_1}
\frac{1}{l_1!}\,\biggl(-\frac{\bigl(z_2|z_1^\sharp\bigr)_{\fn^+}}{\det_{\fn^+_2}(z_2)}\biggr)^{l_1}\!\det_{\fn^+_2}(z_2)^{k_1} \\
&=\frac{(-1)^{k_1}\left(-\lambda-2k_1+\frac{3}{4}d+1\right)_{k_1}}{(\lambda)_{(2k_1,k_1,k_1),d}}\,\biggl(1+\frac{\bigl(z_2|z_1^\sharp\bigr)_{\fn^+}}{\det_{\fn^+_2}(z_2)}\biggr)^{-\lambda+d+1} \\
&\eqspace{}\times {}_2F_1\,\biggl(\begin{matrix} -\lambda-k_1+d+1,-\lambda-k_1+\frac{3}{4}d+1 \\ -\lambda-2k_1+\frac{3}{4}d+1 \end{matrix};
-\frac{\bigl(z_2|z_1^\sharp\bigr)_{\fn^+}}{\det_{\fn^+_2}(z_2)}\biggr){}\det_{\fn^+_2}(z_2)^{k_1} \\
&=\frac{\left(\lambda+k_1-\frac{3}{4}d\right)_{k_1}}{(\lambda)_{(2k_1,k_1,k_1),d}}{}_2F_1\,\biggl(\begin{matrix} -k_1,-k_1-\frac{d}{4} \\ -\lambda-2k_1+\frac{3}{4}d+1 \end{matrix};
-\frac{\bigl(z_2|z_1^\sharp\bigr)_{\fn^+}}{\det_{\fn^+_2}(z_2)}\biggr){}\det_{\fn^+_2}(z_2)^{k_1}
\end{align*}
as in~\cite[Corollary 6.5]{N2}, and hence
\[
(\lambda)_{(2k_1,k_1,k_1),d}\bigl\langle \det_{\fn^+_2}(x_2)^{k_1},{\rm e}^{(x|\overline{z})_{\fp^+}}\bigr\rangle_{\lambda,x}
\]
is holomorphic for all $\lambda\in\BC$, that is, we have
\[
\det_{\fn^+_2}(x_2)^{k_1}\in\bigoplus_{\substack{\bm\in\BZ_{++}^3, \, m_1\le 2k_1 \\ m_2\le k_1, \, m_3\le k_1}}\cP_\bm(\fp^+).
\]
Then for general $\bk\in\BZ_{++}^3$, by Lemma~\ref{lem_diff}, we also have
\begin{equation}\label{rank3_proof3}
\Delta_\bk^{\fn^+_2}(x_2)\in \BC\check{\Delta}_{(k_1-k_3,k_1-k_2,0)}^{\fn^+_2}\left(\frac{\partial}{\partial x_2}\right)\det_{\fn^+_2}(x_2)^{k_1}\subset
\bigoplus_{\substack{\bm\in\BZ_{++}^3, \, m_1\le 2k_1 \\ m_2\le k_1, \, m_3\le k_1}}\cP_\bm(\fp^+),
\end{equation}
and combining~(\ref{rank3_proof2}) and~(\ref{rank3_proof3}), we get
\[
\Delta_\bk^{\fn^+_2}(x_2)\in\bigoplus_{\substack{\bm\in\BZ_{++}^3, \, m_1\le k_1+k_2 \\ m_2\le \min\{k_1,k_2+k_3\}, \, m_3\le k_3}}\cP_\bm(\fp^+).
\]
Therefore, by Corollary~\ref{cor_FK},
\[
(\lambda)_{(k_1+k_2,\min\{k_1,k_2+k_3\},k_3),d} \bigl\langle \Delta_\bk^{\fn^+_2}(x_2),{\rm e}^{(x|\overline{z})_{\fp^+}}\bigr\rangle_{\lambda,x}
\]
is holomorphic for all $\lambda\in\BC$.
\end{proof}

\subsection{Conjecture on weighted Bergman inner products}

In the previous subsection we roughly computed the inner product $\bigl\langle \Delta_\bk^{\fn^+_2}(x_2),{\rm e}^{(x|\overline{z})_{\fp^+}}\bigr\rangle_{\lambda,x}$
for $\bk\in\BZ_{++}^{3}$. By~(\ref{rank3_proof1}), we have
\begin{gather*}
\bigl\langle \Delta_\bk^{\fn^+_2}(x_2),{\rm e}^{(x|\overline{z})_{\fp^+}}\bigr\rangle_{\lambda,x} \\
\qquad{}=\frac{\left(-\lambda-k_1-k_2+\frac{d}{4}+1\right)_{k_2-k_3}\left(-\lambda-k_1-k_3+\frac{d}{2}+1\right)_{k_3}\left(-\lambda-k_2-k_3+\frac{3}{4}d+1\right)_{k_3}}
{(\lambda)_{(k_1+k_2,k_2+k_3,k_3),d}} \\
\qquad{}\eqspace{}\times (-1)^{k_2-k_3}\,\biggl(1+\frac{\bigl(z_2|z_1^\sharp\bigr)_{\fn^+}}{\det_{\fn^+_2}(z_2)}\biggr)^{-\lambda+d+1}\sum_{\substack{\bl\in\BZ^3,\, 0\le l_1 \\ 0\le l_2\le k_1-k_2 \\ 0\le l_3\le k_2-k_3}}
\frac{(-\lambda-k_3+d+1)_{l_1+l_2}}{\left(-\lambda-k_1-k_2+\frac{d}{4}+1\right)_{l_3}} \\
\qquad{}\eqspace{}\times \frac{\left(-\lambda-k_1+\frac{d}{2}+1\right)_{l_2+l_3}}{\left(-\lambda-k_1-k_3+\frac{d}{2}+1\right)_{l_2+l_3}}
\frac{\left(-\lambda-k_2+\frac{3}{4}d+1\right)_{l_1+l_3}}{\left(-\lambda-k_2-k_3+\frac{3}{4}d+1\right)_{l_1+l_3}}\tilde{F}_{\bk,\bl}(\lambda;z_1,z_2) \\
\qquad{}=C_1^{d,d/2}(\lambda,\bk)\,\biggl(1+\frac{\bigl(z_2|z_1^\sharp\bigr)_{\fn^+}}{\det_{\fn^+_2}(z_2)}\biggr)^{-\lambda+d+1}
\sum_{\substack{\bl\in\BZ^3,\, 0\le l_1 \\ 0\le l_2\le k_1-k_2 \\ 0\le l_3\le k_2-k_3}}\frac{(-\lambda-k_3+d+1)_{l_1+l_2}}{\left(-\lambda-k_1-k_2+\frac{d}{4}+1\right)_{l_3}} \\
\qquad{}\eqspace{}\times \frac{\left(-\lambda-k_1+\frac{d}{2}+1\right)_{l_2+l_3}}{\left(-\lambda-k_1-k_3+\frac{d}{2}+1\right)_{l_2+l_3}}
\frac{\left(-\lambda-k_2+\frac{3}{4}d+1\right)_{l_1+l_3}}{\left(-\lambda-k_2-k_3+\frac{3}{4}d+1\right)_{l_1+l_3}}\tilde{F}_{\bk,\bl}(\lambda;z_1,z_2),
\end{gather*}
where
\begin{align}
&C_1^{d,d/2}(\lambda,\bk)=\frac{\prod_{1\le i< j\le 3}\left(\lambda-\frac{d}{4}(i+j-2)\right)_{k_i+k_j}}
{\prod_{1\le i< j\le 4}\left(\lambda-\frac{d}{4}(i+j-3)\right)_{k_i+k_j}} \notag \\
&\qquad{}=\frac{\left(\lambda+k_1+k_3-\frac{d}{4}\right)_{k_2-k_3}\left(\lambda+k_1-\frac{d}{2}\right)_{k_3}\left(\lambda+k_2-\frac{3}{4}d\right)_{k_3}}{(\lambda)_{(k_1+k_2,k_2+k_3,k_3),d}} \label{const_rank3} \\
&\qquad{}=\frac{\left(\lambda+k_1+k_3-\frac{d}{4}\right)_{k_2-k_3}\left(\lambda+\max\{k_1,k_2+k_3\}-\frac{d}{2}\right)_{\min\{k_3,k_1-k_2\}}\left(\lambda+k_2-\frac{3}{4}d\right)_{k_3}}
{(\lambda)_{(k_1+k_2,\min\{k_1,k_2+k_3\},k_3),d}} \notag
\end{align}
with $k_4:=0$. Now by the definition of $\tilde{F}_{\bk,\bl}(\lambda;z_1,z_2)$~(\ref{def_Fkl1}) and~(\ref{def_Fkl2}), we can show that
\[ \tilde{F}_{\bk,\bl}(\lambda;z_1,z_2)\in \BC[\lambda]_{\le 2l_3}\otimes \bigl(\cP_{2|\bl|}\bigl(\fp^+_1\bigr)\otimes\cP_{(k_1-l_2-l_3,k_2-l_1-l_3,k_3-l_1-l_2)}\bigl(\fp^+_2\bigr)\bigr)_\bk^{M_{L_2}N_{L_2}^\top}, \]
holds, where for $m\in\BZ_{\ge 0}$, $\bk,\bn\in\BZ_+^3$, $\cP_m\bigl(\fp^+_1\bigr)$ denotes the space of all homogeneous polynomials on $\fp^+_1$ of degree $m$, and
\begin{align*}
&\bigl(\cP_m\bigl(\fp^+_1\bigr)\otimes\cP_\bn\bigl(\fp^+_2\bigr)\bigr)_\bk^{M_{L_2}N_{L_2}^\top} \\
&:=\left\{f(x_1,x_2)\in\cP_m\bigl(\fp^+_1\bigr)\otimes\cP_\bn\bigl(\fp^+_2\bigr)\ \middle|\
\begin{matrix} f(man.x_1,man.x_2)={\rm e}^{2(t_1k_1+t_2k_2+t_3k_3)}f(x_1,x_2) \\ \bigl(m\in M_{L_2},\, a={\rm e}^{t_1h_1+t_2h_2+t_3h_3}\in A_{L_2},\, n\in N_{L_2}^\top\bigr) \end{matrix}\right\}.
\end{align*}
For Cases 1 and 2, $\cP_m\bigl(\fp^+_1\bigr)\otimes\cP_\bn\bigl(\fp^+_2\bigr)$ are given by
\begin{align*}
\cP_m\bigl(\fp^+_1\bigr)\otimes\cP_\bn\bigl(\fp^+_2\bigr)&=\cP_m(\Alt(3,\BC))\otimes\cP_\bn(\Sym(3,\BC)) \\
&\simeq V_{(m,m,0)}^{(3)\vee}\otimes V_{(2n_1,2n_2,2n_3)}^{(3)\vee} && (\text{Case }1), \\
\cP_m\bigl(\fp^+_1\bigr)\otimes\cP_\bn\bigl(\fp^+_2\bigr)&= \bigoplus_{l=0}^m \bigl(\cP_l(\Alt(3,\BC))\boxtimes\cP_{m-l}(\Alt(3,\BC))\bigr)\otimes\cP_\bn({\rm M}(3,\BC)) \\
&\simeq \bigoplus_{l=0}^m \left(V_{(l,l,0)}^{(3)\vee}\boxtimes V_{(m-l,m-l,0)}^{(3)}\right)\otimes \left(V_{(n_1,n_2,n_3)}^{(3)\vee}\boxtimes V_{(n_1,n_2,n_3)}^{(3)}\right) && (\text{Case }2),
\end{align*}
and these decompose multiplicity-freely under $K_1^\BC$ by the Pieri rule.
Hence the space $\bigl(\cP_m\bigl(\fp^+_1\bigr)\allowbreak\otimes\cP_\bn\bigl(\fp^+_2\bigr)\bigr)_\bk^{M_{L_2}N_{L_2}^\top}$ is at most 1-dimensional,
and thus there exist polynomials
\[
f_{\bk,\bl}(\lambda)\in\BC[\lambda]_{\le 2l_3}, \qquad
F_{\bk,\bl}(z_1,z_2)\in\left(\cP_{2|\bl|}\bigl(\fp^+_1\bigr)\otimes \cP_{(k_1-l_2-l_3,k_2-l_1-l_3,k_3-l_1-l_2)}\bigl(\fp^+_2\bigr)\right)^{M_{L_2}N_{L_2}^\top}_{\bk}
\]
such that
\[ \tilde{F}_{\bk,\bl}(\lambda;z_1,z_2)=f_{\bk,\bl}(\lambda)F_{\bk,\bl}(z_1,z_2) \]
holds. We expect that this also holds for Case 3. In addition, we conjecture the following.
\begin{Conjecture}
For each $\bk\in\BZ_{++}^3$, $\bl\in\BZ^3$ with $l_1\ge 0$, $0\le l_2\le k_1-k_2$, $0\le l_3\le k_2-k_3$, there exists a~polynomial
\[ F_{\bk,\bl}(z_1,z_2)\in\left(\cP_{2|\bl|}\bigl(\fp^+_1\bigr)\otimes \cP_{(k_1-l_2-l_3,k_2-l_1-l_3,k_3-l_1-l_2)}\bigl(\fp^+_2\bigr)\right)^{M_{L_2}N_{L_2}^\top}_{\bk} \]
such that for $\Re\lambda>p-1$ and for $z=z_1+z_2\in\Omega\subset\fn^+\subset\fp^+$, we have
\begin{align}
&\bigl\langle \Delta_\bk^{\fn^+_2}(x_2),{\rm e}^{(x|\overline{z})_{\fp^+}}\bigr\rangle_{\lambda,x} \notag \\
&\qquad{}=C_1^{d,d/2}(\lambda,\bk)\,\biggl(1+\frac{\bigl(z_2|z_1^\sharp\bigr)_{\fn^+}}{\det_{\fn^+_2}(z_2)}\biggr)^{-\lambda+d+1}
\sum_{\substack{\bl\in\BZ^3,\, 0\le l_1 \\ 0\le l_2\le k_1-k_2 \\ 0\le l_3\le k_2-k_3}}\frac{(-\lambda-k_3+d+1)_{l_1+l_2}}{\left(-\lambda-k_1-k_2+\frac{d}{4}+1\right)_{l_3}} \notag \\
&\qquad{}\eqspace{}\times \frac{\left(-\lambda-k_2+\frac{3}{4}d+1\right)_{l_1+l_3}}{\left(-\lambda-k_1-k_3+\frac{d}{2}+1\right)_{l_2}}
\frac{\left(-\lambda-k_1+\frac{d}{2}+1\right)_{l_2+l_3}}{\left(-\lambda-k_2-k_3+\frac{3}{4}d+1\right)_{l_1}}F_{\bk,\bl}(z_1,z_2) \label{conjecture1} \\
&\qquad{}=C_1^{d,d/2}(\lambda,\bk)\sum_{\substack{\bl\in\BZ^3,\, 0\le l_1 \\ 0\le l_2\le k_1-k_2 \\ 0\le l_3\le k_2-k_3}}\frac{(-k_3)_{l_1+l_2}}{\left(-\lambda-k_1-k_2+\frac{d}{4}+1\right)_{l_3}} \notag \\
&\qquad{}\eqspace{}\times \frac{\left(-k_2-\frac{d}{4}\right)_{l_1+l_3}}{\left(-\lambda-k_1-k_3+\frac{d}{2}+1\right)_{l_2}}
\frac{\left(-k_1-\frac{d}{2}\right)_{l_2+l_3}}{\left(-\lambda-k_2-k_3+\frac{3}{4}d+1\right)_{l_1}}F_{\bk,\bl}(z_1,z_2), \label{conjecture2}
\end{align}
where $C_1^{d,d/2}(\lambda,\bk)$ is as in~\eqref{const_rank3}.
\end{Conjecture}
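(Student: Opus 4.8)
The formula~\eqref{rank3_proof1}, together with the rewriting carried out in this subsection, already reduces the conjecture to two essentially independent points: (a) the $\lambda$-polynomial $\tilde{F}_{\bk,\bl}(\lambda;z_1,z_2)$ factors as $f_{\bk,\bl}(\lambda)F_{\bk,\bl}(z_1,z_2)$ with $F_{\bk,\bl}$ free of $\lambda$, and (b) the scalar $f_{\bk,\bl}(\lambda)$ combines with the explicit Pochhammer prefactors of~\eqref{rank3_proof1} and with the expansion of the binomial factor $\bigl(1+(z_2|z_1^\sharp)_{\fn^+}/\det_{\fn^+_2}(z_2)\bigr)^{-\lambda+d+1}$ so as to reproduce the ratios of shifted Pochhammer symbols in~\eqref{conjecture1} and then collapse the $\lambda$-dependence into the $\lambda$-free form~\eqref{conjecture2}. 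The plan is to treat (a) representation-theoretically and (b) as a hypergeometric summation.

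For (a): the inclusion $\tilde{F}_{\bk,\bl}(\lambda;z_1,z_2)\in\BC[\lambda]_{\le 2l_3}\otimes\bigl(\cP_{2|\bl|}(\fp^+_1)\otimes\cP_{(k_1-l_2-l_3,k_2-l_1-l_3,k_3-l_1-l_2)}(\fp^+_2)\bigr)_{\bk}^{M_{L_2}N_{L_2}^\top}$ is obtained by tracking the $A_{L_2}$-weight $\bk$ and the $N^\top_{L_2}$-highest-weight property of $\Delta^{\fn^+_2}_\bk$ through the definitions~\eqref{def_Fkl1},~\eqref{def_Fkl2}, using Lemma~\ref{lem_rank3} (in particular that $z_1^\sharp$ is of rank at most $1$). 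For Cases~1 and~2 one decomposes $\cP_m(\fp^+_1)\otimes\cP_\bn(\fp^+_2)$ under $K_1^\BC$ by the Pieri rule, as recorded above, checks that this $K_1^\BC$-decomposition is multiplicity-free, and concludes that the weight space in question is at most one-dimensional; hence $\tilde{F}_{\bk,\bl}=f_{\bk,\bl}(\lambda)F_{\bk,\bl}$ with a scalar polynomial $f_{\bk,\bl}(\lambda)$ of degree $\le 2l_3$. For Case~3 the same step requires a multiplicity-one statement for the branching to the Levi subgroup $L_2$ of the $E_6$-modules occurring in $\cP_m({\rm M}(2,6;\BC))\otimes\cP_\bn(\Alt(6,\BC))$; I would try to establish it either by a Jordan-theoretic analysis of $\Herm(3,\BO)^\BC$ or by a recursion in $k_3$ built from Theorem~\ref{thm_key_identity}\,(1) and Proposition~\ref{prop_reduction}, mirroring the classical reductions.

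For (b): once (a) is in place, $f_{\bk,\bl}(\lambda)$ is pinned down by comparing the two expansions. In~\eqref{rank3_proof1} one expands $\bigl(1+u\bigr)^{-\lambda+d+1}$, $u=(z_2|z_1^\sharp)_{\fn^+}/\det_{\fn^+_2}(z_2)$, as a binomial series; multiplication by $u$ shifts the index $\bl$ to $(l_1+1,l_2,l_3)$, so re-collecting the double sum (binomial index against $\bl$) and reading off the coefficient of each $F_{\bk,\bl}(z_1,z_2)$ yields, for fixed $\bl$, a one-variable summation of Chu--Vandermonde type. That summation is exactly of the shape that turns a factor $(-\lambda-k_3+d+1)_{l_1+l_2}$ into the $\lambda$-free $(-k_3)_{l_1+l_2}$ of~\eqref{conjecture2}, with the two remaining numerator Pochhammer symbols and the binomial prefactor handled in parallel; the product form~\eqref{const_rank3} of $C_1^{d,d/2}(\lambda,\bk)$ then reconciles the denominators $(\lambda)_{(k_1+k_2,k_2+k_3,k_3),d}$ and $(\lambda)_{(k_1+k_2,\min\{k_1,k_2+k_3\},k_3),d}$ with the trailing Pochhammer factors, so that~\eqref{conjecture1} passes to~\eqref{conjecture2}.

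The main obstacle is twofold. For the exceptional Case~3 the genuine gap is step (a): there is no ready multiplicity-one statement for the non-symmetric $E_6$-branching involved, which is precisely why the result is only conjectural there. Even in Cases~1 and~2, where (a) is available, carrying out (b) in closed form — simultaneously controlling four Pochhammer factors together with the binomial convolution, over the three-parameter range $l_1\ge 0$, $0\le l_2\le k_1-k_2$, $0\le l_3\le k_2-k_3$ — is the technically heaviest part, and I expect it to require either a careful double induction on $k_1-k_2$ and $k_2-k_3$ (peeling off $\det_{\fn^+_2}$-factors with Theorem~\ref{thm_key_identity}) or an explicit identification of $F_{\bk,\bl}$ with a recognizable family of $L_2$-covariant polynomials on $\fp^+_1\oplus\fp^+_2$.
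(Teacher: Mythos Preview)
The key point to flag is that this statement is a \emph{Conjecture} in the paper, not a theorem: the paper does not provide a proof of it, so there is no ``paper's own proof'' to compare your attempt against. What the paper actually establishes around this conjecture is limited to the following: (i) the factorization $\tilde{F}_{\bk,\bl}(\lambda;z_1,z_2)=f_{\bk,\bl}(\lambda)F_{\bk,\bl}(z_1,z_2)$ in Cases~1 and~2, by the multiplicity-freeness argument you describe in step~(a); (ii) the remark that both~\eqref{conjecture1} and~\eqref{conjecture2} hold when $k_2=k_3$, by \cite[Theorem~6.3\,(1)]{N2}; and (iii) the observation that~\eqref{conjecture1} is equivalent to the explicit identification $f_{\bk,\bl}(\lambda)=\bigl(-\lambda-k_1-k_3+l_2+\tfrac{d}{2}+1\bigr)_{l_3}\bigl(-\lambda-k_2-k_3+l_1+\tfrac34 d+1\bigr)_{l_3}$. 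Everything beyond that --- the factorization in Case~3 (merely ``expected''), the value of $f_{\bk,\bl}$, and the passage~\eqref{conjecture1}$\Rightarrow$\eqref{conjecture2} --- is left open.

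Your proposal is therefore not a proof to be compared with the paper's, but an outline of how one might attack an open problem. Your diagnosis of the obstacles matches the paper's implicit assessment: step~(a) in Case~3 lacks the required multiplicity-one input, and step~(b) is not carried out anywhere. On~(b) specifically, your sketch is optimistic: the passage from~\eqref{conjecture1} to~\eqref{conjecture2} is not a single Chu--Vandermonde summation, since the binomial expansion of $\bigl(1+u\bigr)^{-\lambda+d+1}$ shifts only $l_1$, whereas the three numerator Pochhammer symbols in~\eqref{conjecture1} that must become $\lambda$-free involve $l_1+l_2$, $l_1+l_3$, and $l_2+l_3$ simultaneously; collapsing all three at once is exactly what makes this a conjecture rather than a routine identity. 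If you pursue this, the paper's reformulation of~\eqref{conjecture1} as a closed formula for $f_{\bk,\bl}(\lambda)$ is probably the cleanest target.
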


By~\cite[Theorem 6.3\,(1)]{N2}, both~(\ref{conjecture1}) and~(\ref{conjecture2}) hold if $k_2=k_3$. In general,~(\ref{conjecture1}) is equivalent to
\[
f_{\bk,\bl}(\lambda)=\left(-\lambda-k_1-k_3+l_2+\frac{d}{2}+1\right)_{l_3}\left(-\lambda-k_2-k_3+l_1+\frac{3}{4}d+1\right)_{l_3}.
\]
Also, if~(\ref{conjecture2}) is true, then the symmetry breaking operator
\[
\cF_{\lambda,\bk}^\downarrow\colon \ \cH_\lambda(D)|_{\widetilde{G}_1}\longrightarrow \cH_{\varepsilon_1\lambda}(D_1,V_\bk)
\]
with $V_\bk:=\cP_\bk\bigl(\fp^+_2\bigr)$ given in~(\ref{SBO1}) and~(\ref{SBO2}), where
\begin{align*}
\cH_{\varepsilon_1\lambda}(D_1,V_\bk)
&\simeq\begin{cases}
\cH_{2\lambda}\bigl(D_{{\rm SO}^*(6)},V_{2\bk}^{(3)\vee}\bigr) & (\text{Case }1), \\
\cH_{\lambda}\bigl(D_{{\rm SO}^*(6)},V_\bk^{(3)\vee}\bigr)\hboxtimes \cH_{\lambda}\bigl(D_{{\rm SO}^*(6)},V_\bk^{(3)\vee}\bigr) & (\text{Case }2), \\
\cH_{\lambda}\bigl(D_{{\rm SU}(2,6)},\BC\boxtimes V_{(k_1+k_2,k_1+k_3,k_2+k_3)^2}^{(6)}\bigr) & (\text{Case }3), \end{cases}
\end{align*}
is given by the differential operator of the form
\begin{align*}
\bigl(\cF_{\lambda,\bk}^\downarrow f\bigr)(x_1)
&=\sum_{\substack{\bl\in\BZ^3,\, 0\le l_1 \\ 0\le l_2\le k_1-k_2 \\ 0\le l_3\le k_2-k_3}}\frac{(-k_3)_{l_1+l_2}}{\left(-\lambda-k_1-k_2+\frac{d}{4}+1\right)_{l_3}}
\frac{\left(-k_2-\frac{d}{4}\right)_{l_1+l_3}}{\left(-\lambda-k_1-k_3+\frac{d}{2}+1\right)_{l_2}} \\
&\eqspace{}\times\frac{\left(-k_1-\frac{d}{2}\right)_{l_2+l_3}}{\left(-\lambda-k_2-k_3+\frac{3}{4}d+1\right)_{l_1}}
\rK_{\bk,\bl}\left(\frac{\partial}{\partial x}\right)f(x)\biggr|_{x_2=0},
\end{align*}
where $\rK_{\bk,\bl}(z)=\rK_{\bk,\bl}(z_1,z_2)\in \cP(\fp^-,V_\bk)^{K_1}$ satisfies
\[ \bigl(v,\rK_{\bk,\bl}(\overline{z_1},\overline{z_2})\bigr)_{V_\bk}=F_{\bk,\bl}(z_1,z_2) \]
for a~suitable fixed lowest weight vector $v\in V_\bk$.
By the analytic continuation, this gives the symmetry breaking operator between the spaces of all holomorphic functions even for $\Re\lambda\le p-1$ except for the poles.
Moreover, the equality~(\ref{conjecture1}) $=$~(\ref{conjecture2}) at $\lambda=\frac{n}{r}-a$, $a=1,2,\dots,k_3$ corresponds to Theorem~\ref{thm_factorize_simple}
or Corollary~\ref{cor_submodule_simple}\,(4),
\begin{gather*}
\cF_{\frac{n}{r}-a,\bk}^\downarrow=c\cF_{\frac{n}{r}+a,\bk-\underline{a}_3}^\downarrow\circ\det_{\fn^-}\left(\frac{\partial}{\partial x}\right)^a\colon \\
\cO_{\frac{n}{r}-a}(D)|_{\widetilde{G}_1}\longrightarrow \cO_{\varepsilon_1\left(\frac{n}{r}-a\right)}(D_1,V_\bk)
\simeq \cO_{\varepsilon_1\left(\frac{n}{r}+a\right)}(D_1,V_{\bk-\underline{a}_3})
\end{gather*}
for some $c\in\BC$.

\section{Tensor product case}\label{section_tensor}

In this section we consider the direct sum $\fp^+\oplus\fp^+$ with $\fp^+$ simple, and the involution $\sigma\colon (x,y)\mapsto (y,x)$,
so that $\fp^+_1=\{(x,x)\mid x\in\fp^+\}$, $\fp^+_2=\{(x,-x)\mid x\in\fp^+\}$. Then the corresponding symmetric pair is of the form $(G\times G,\Delta(G))$.
Let $\dim\fp^+=:n$, $\rank\fp^+=:r$, and let $d$ be the number defined in~(\ref{str_const}).
This section contains some overlap with~\cite[Section 5.2]{N}. See also~\cite{PZ}.

\subsection{Results on weighted Bergman inner products}

We consider the outer tensor product $\cH_\lambda(D)\hboxtimes\cH_\mu(D)$ of holomorphic discrete series representations of scalar type,
and let $\langle\cdot,\cdot\rangle_{\lambda\otimes\mu}$ denote its inner product. For $f\in\cP_\bk(\fp^+)$, we want to compute the inner product
$\bigl\langle f(x-y),{\rm e}^{(x|\overline{z})_{\fp^+}+(y|\overline{w})_{\fp^+}}\bigr\rangle_{\lambda\otimes\mu,(x,y)}$, with the variable of inte\-gra\-tion~$(x,y)$.
To do this, for $f\in\cP(\fp^+)$, $\bm,\bn\in\BZ_{++}^r$, define $\tilde{f}_{\bm,\bn}(x,y)\in\cP_\bm(\fp^+)\otimes\cP_\bn(\fp^+)$ by
\[ f(x+y)=\sum_{\bm\in\BZ_{++}^r}\sum_{\bn\in\BZ_{++}^r}\tilde{f}_{\bm,\bn}(x,y)\in\bigoplus_{\bm\in\BZ_{++}^r}\bigoplus_{\bn\in\BZ_{++}^r}\cP_\bm(\fp^+)\otimes\cP_\bn(\fp^+). \]
If $f\in\cP_\bk(\fp^+)$, then $\tilde{f}_{\bm,\bn}(x,y)\ne 0$ holds only if $m_j\le k_j$, $n_j\le k_j$ hold for all $j=1,\dots,r$,
since the map $\cP_\bk(\fp^+)\to\cP_\bm(\fp^+)\otimes\cP_\bn(\fp^+)$, $f\mapsto \tilde{f}_{\bm,\bn}$ is $K$-equivariant,
and the restricted weights of $\cP_\bm(\fp^+)$, $\cP_\bn(\fp^+)$ sit in $\left(\frac{1}{2}\BZ_{\le 0}\right)^r$
under a~suitable identification $(\fa_\fl)^\vee\simeq \BR^r$, where $\fa_\fl\subset\fk^\BC$ is as in Section~\ref{subsection_simul_Peirce}.
Moreover, we have $\tilde{f}_{\bk,\underline{0}_r}(x,y)=f(x)$, $\tilde{f}_{\underline{0}_r,\bk}(x,y)=f(y)$.
Therefore, by Corollary~\ref{cor_FK}, we easily get the following. Here, for $\lambda\in\BC$, $\bm,\bn\in(\BZ_{\ge 0})^r$,
let $(\lambda+\bm)_{\bn,d}:=\prod_{j=1}^r\left(\lambda+m_j-\frac{d}{2}(j-1)\right)_{n_j}$.

\begin{Theorem}\label{thm_poles_tensor}
Let $\Re\lambda,\Re\mu>p-1$, $\bk\in\BZ_{++}^r$, and let $f\in\cP_\bk(\fp^+)$. Then we have
\[
\bigl\langle f(x-y),{\rm e}^{(x|\overline{z})_{\fp^+}+(y|\overline{w})_{\fp^+}}\bigr\rangle_{\lambda\otimes\mu,(x,y)}
=\sum_{\substack{\bm\in\BZ_{++}^r\\ m_j\le k_j}}\sum_{\substack{\bn\in\BZ_{++}^r\\ n_j\le k_j}}\frac{1}{(\lambda)_{\bm,d}(\mu)_{\bn,d}}\tilde{f}_{\bm,\bn}(z,-w).
\]
Especially, as a~function of $(\lambda,\mu)$,
\begin{gather}
(\lambda)_{\bk,d}(\mu)_{\bk,d}\bigl\langle f(x-y),{\rm e}^{(x|\overline{z})_{\fp^+}+(y|\overline{w})_{\fp^+}}\bigr\rangle_{\lambda\otimes\mu,(x,y)} \notag \\
\qquad{}=\sum_{\substack{\bm\in\BZ_{++}^r\\ m_j\le k_j}}\sum_{\substack{\bn\in\BZ_{++}^r\\ n_j\le k_j}}(\lambda+\bm)_{\bk-\bm,d}(\mu+\bn)_{\bk-\bn,d}\tilde{f}_{\bm,\bn}(z,-w) \label{inner_prod_tensor}
\end{gather}
is holomorphically continued for all $\BC^2$.
\end{Theorem}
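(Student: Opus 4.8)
The plan is to reduce the tensor-product identity to Corollary~\ref{cor_FK} by expanding $f(x-y)$ into bihomogeneous $\widetilde K\times\widetilde K$-types.
First I would substitute $y\mapsto -y$, so that $\langle f(x-y),{\rm e}^{(x|\overline z)+(y|\overline w)}\rangle_{\lambda\otimes\mu,(x,y)}=\langle f(x+y),{\rm e}^{(x|\overline z)-(y|\overline w)}\rangle_{\lambda\otimes\mu,(x,y)}$,
and then decompose $f(x+y)=\sum_{\bm,\bn}\tilde f_{\bm,\bn}(x,y)$ as in the statement, where $\tilde f_{\bm,\bn}\in\cP_\bm(\fp^+)\otimes\cP_\bn(\fp^+)$ is the component obtained by projecting onto the $(\cP_\bm(\fp^+),\cP_\bn(\fp^+))$-isotypic piece in the first and second variables respectively. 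Since the inner product $\langle\cdot,\cdot\rangle_{\lambda\otimes\mu}$ is an outer tensor product, it factors as a product of the two one-variable Fischer-type pairings, so
\[
\bigl\langle \tilde f_{\bm,\bn}(x,y),{\rm e}^{(x|\overline z)_{\fp^+}-(y|\overline w)_{\fp^+}}\bigr\rangle_{\lambda\otimes\mu,(x,y)}
=\bigl\langle \tilde f_{\bm,\bn}(x,y),{\rm e}^{(x|\overline z)_{\fp^+}}\bigr\rangle_{\lambda,x}\big|_{\text{2nd var}}\ \text{paired against}\ {\rm e}^{-(y|\overline w)},
\]
and applying Corollary~\ref{cor_FK} in each variable gives $\frac{1}{(\lambda)_{\bm,d}}\frac{1}{(\mu)_{\bn,d}}\tilde f_{\bm,\bn}(z,-w)$. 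Summing over $\bm,\bn$ yields the first displayed formula.

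Next I would justify that only finitely many terms appear and that they obey $m_j\le k_j$, $n_j\le k_j$. The map $\cP_\bk(\fp^+)\to\cP_\bm(\fp^+)\otimes\cP_\bn(\fp^+)$ sending $f\mapsto\tilde f_{\bm,\bn}$ is $\widetilde K$-equivariant for the diagonal action, and by the highest-weight theory the restricted $\fa_\fl$-weights of $\cP_\bm(\fp^+)$ and $\cP_\bn(\fp^+)$ lie in $\bigl(\tfrac12\BZ_{\le 0}\bigr)^r$ with top weight $-\tfrac12(m_1,\dots,m_r)$ (resp.\ $-\tfrac12\bn$) in a suitable normalization; comparing with the top weight $-\tfrac12\bk$ of $\cP_\bk(\fp^+)$ forces $m_j\le k_j$ and $n_j\le k_j$ componentwise, as already noted in the excerpt. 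The boundary cases $\tilde f_{\bk,\underline0_r}(x,y)=f(x)$ and $\tilde f_{\underline0_r,\bk}(x,y)=f(y)$ follow by setting $y=0$ or $x=0$ in the expansion.

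Finally, for the meromorphic-continuation claim I would observe that multiplying by $(\lambda)_{\bk,d}(\mu)_{\bk,d}$ clears all denominators: since $m_j\le k_j$ and $n_j\le k_j$, each factor $(\lambda)_{\bm,d}$ divides $(\lambda)_{\bk,d}$ with quotient $(\lambda+\bm)_{\bk-\bm,d}=\prod_{j=1}^r\bigl(\lambda+m_j-\tfrac d2(j-1)\bigr)_{k_j-m_j}$, and similarly for $\mu$. Hence
\[
(\lambda)_{\bk,d}(\mu)_{\bk,d}\bigl\langle f(x-y),{\rm e}^{(x|\overline z)_{\fp^+}+(y|\overline w)_{\fp^+}}\bigr\rangle_{\lambda\otimes\mu,(x,y)}
=\sum_{\substack{\bm\le\bk\\ \bn\le\bk}}(\lambda+\bm)_{\bk-\bm,d}(\mu+\bn)_{\bk-\bn,d}\,\tilde f_{\bm,\bn}(z,-w),
\]
a \emph{finite} sum of polynomials in $(\lambda,\mu)$, hence entire on $\BC^2$. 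The only genuine point requiring care is the factorization of the outer-tensor inner product against ${\rm e}^{(x|\overline z)-(y|\overline w)}$ for a \emph{non}-product integrand $\tilde f_{\bm,\bn}(x,y)$; but since $\tilde f_{\bm,\bn}\in\cP_\bm(\fp^+)\otimes\cP_\bn(\fp^+)$ is a finite sum of pure tensors $g(x)h(y)$, bilinearity of the pairing reduces this immediately to the one-variable case, so I do not expect a real obstacle here — the whole argument is essentially Corollary~\ref{cor_FK} applied twice together with bookkeeping on weights.
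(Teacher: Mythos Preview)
Your proposal is correct and follows essentially the same approach as the paper: the paper sets up the decomposition $f(x+y)=\sum_{\bm,\bn}\tilde f_{\bm,\bn}(x,y)$, records the weight constraint $m_j\le k_j$, $n_j\le k_j$ via $K$-equivariance exactly as you do, and then states that the theorem follows ``easily'' from Corollary~\ref{cor_FK}. Your write-up simply spells out what the paper leaves implicit (the substitution $y\mapsto -y$, the reduction to pure tensors, and the division $(\lambda)_{\bk,d}/(\lambda)_{\bm,d}=(\lambda+\bm)_{\bk-\bm,d}$), so there is nothing to add.
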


If $\fp^+$ is of tube type, then for particular $\lambda,\mu\in\BC$, the analytic continuation of~(\ref{inner_prod_tensor}) is factorized as follows.
\begin{Theorem}\label{thm_factorize_tensor}
Suppose $\fp^+$ is of tube type, and consider the meromorphic continuation of $\langle\cdot,\cdot\rangle_{\lambda\otimes\mu}$ for $\lambda,\mu\in\BC$.
Then for $\bk\in\BZ_{++}^r$, $a=1,2,\dots,k_r$ and for $f\in\cP_\bk(\fp^+)$, we have
\begin{gather*}
(\lambda)_{\bk,d}\bigl\langle f(x-y),{\rm e}^{(x|\overline{z})_{\fp^+}+(y|\overline{w})_{\fp^+}}\bigr\rangle_{\lambda\otimes\mu,(x,y)}\big|_{\lambda=\frac{n}{r}-a} \\
\qquad{}=\left(\frac{n}{r}+a\right)_{\bk-\underline{a}_r,d}\det_{\fn^+}(z)^a
\bigl\langle \det_{\fn^+}(x-y)^{-a}f(x-y),{\rm e}^{(x|\overline{z})_{\fp^+}+(y|\overline{w})_{\fp^+}}\bigr\rangle_{\left(\frac{n}{r}+a\right)\otimes\mu,(x,y)}, \\
(\mu)_{\bk,d}\bigl\langle f(x-y),{\rm e}^{(x|\overline{z})_{\fp^+}+(y|\overline{w})_{\fp^+}}\bigr\rangle_{\lambda\otimes\mu,(x,y)}\big|_{\mu=\frac{n}{r}-a} \\
\qquad{}=\left(\frac{n}{r}+a\right)_{\bk-\underline{a}_r,d}\det_{\fn^+}(-w)^a
\bigl\langle \det_{\fn^+}(x-y)^{-a}f(x-y),{\rm e}^{(x|\overline{z})_{\fp^+}+(y|\overline{w})_{\fp^+}}\bigr\rangle_{\lambda\otimes\left(\frac{n}{r}+a\right),(x,y)}.
\end{gather*}
\end{Theorem}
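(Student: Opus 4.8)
The plan is to derive Theorem~\ref{thm_factorize_tensor} as a direct specialization of the Rodrigues-type formula in Theorem~\ref{thm_key_tensor} together with the explicit expansion in Theorem~\ref{thm_poles_tensor}. First I would take the first identity of Theorem~\ref{thm_key_tensor} with $w$ replaced by $-z$, that is, work with $\bigl\langle f(x,y)\det_{\fn^+}(x-y)^k,{\rm e}^{(x-y|\overline{z})_{\fp^+}}\bigr\rangle_{\lambda\otimes\mu,(x,y)}$, and specialize $f(x,y)=g(x-y)\det_{\fn^+}(x-y)^{-k}$ so that $f(x,y)\det_{\fn^+}(x-y)^k=g(x-y)$ for any $g\in\cP(\fp^+)$ for which $\det_{\fn^+}(x-y)^{-k}g(x-y)$ is still a polynomial; by Theorem~\ref{thm_poles_tensor} the inner product extends holomorphically in $(\lambda,\mu)$ after multiplying by the appropriate Pochhammer factors, so these manipulations make sense as identities of meromorphic functions. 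This converts the problem into controlling $\det_{\fn^+}(\partial/\partial z)^k$ acting on a product of a power of $\det_{\fn^+}(z)$ and a meromorphically continued pairing.

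The second step is to choose $k=a$ and $g=f\in\cP_\bk(\fp^+)$ with $\bk\in\BZ_{++}^r$, $1\le a\le k_r$, so that $\det_{\fn^+}(x-y)^{-a}f(x-y)\in\cP_{\bk-\underline{a}_r}(\fp^+)$ is indeed a polynomial (this uses $\fp^+$ of tube type and the identification $\cP_{\bm}(\fp^+)\,\det_{\fn^\pm}(x)^{m_r}$ of Section~\ref{subsection_polynomials}). Applying Theorem~\ref{thm_key_tensor} with the roles as in~(\ref{key_tensor}) and then specializing $\lambda\mapsto \frac{n}{r}-a$ (respectively $\mu\mapsto\frac{n}{r}-a$), the prefactor $\det_{\fn^+}(z)^{-\lambda-\mu+\frac{2n}{r}}$ combined with $\det_{\fn^+}(z)^{\lambda+\mu+2a-\frac{2n}{r}}$ collapses the differential operator sandwich, and the exponents conspire to leave exactly $\det_{\fn^+}(z)^a$ outside the remaining pairing at the shifted parameter $\frac{n}{r}+a$. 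The Pochhammer bookkeeping $(\lambda)_{\underline{a}_r,d}(\mu)_{\underline{a}_r,d}$ versus $(\lambda)_{\bk,d}$, $(\frac{n}{r}+a)_{\bk-\underline{a}_r,d}$ is handled by the identity $(\lambda)_{\bk,d}=(\lambda)_{\underline{a}_r,d}\,(\lambda+\underline{a}_r)_{\bk-\underline{a}_r,d}$ applied coordinatewise in the definition~(\ref{Pochhammer}) of $(\lambda)_{\bm,d}$, evaluated at $\lambda=\frac{n}{r}-a$ so that $\lambda+\underline{a}_r=\frac{n}{r}$ and the factor $(\frac{n}{r})_{\cdot,d}$ matches up with the shifted Pochhammer on the right. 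The case of $\mu$ is symmetric because $\langle\cdot,\cdot\rangle_{\lambda\otimes\mu}$ is symmetric under swapping $(\lambda,z)\leftrightarrow(\mu,-w)$ together with $x\leftrightarrow y$, which maps $f(x-y)$ to $f(-(x-y))=(-1)^{|\bk|}f(x-y)$ up to a harmless sign that is absorbed; alternatively one runs the same computation using the first formula of Theorem~\ref{thm_key_tensor} with $z$ held generic.

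An alternative, perhaps cleaner route is purely algebraic: expand both sides via Theorem~\ref{thm_poles_tensor}. On the left, $(\lambda)_{\bk,d}\bigl\langle f(x-y),{\rm e}^{(x|\overline z)+(y|\overline w)}\bigr\rangle_{\lambda\otimes\mu}|_{\lambda=\frac{n}{r}-a}$ becomes $\sum_{\bm,\bn}(\lambda)_{\bk,d}/\bigl((\lambda)_{\bm,d}(\mu)_{\bn,d}\bigr)\,\tilde f_{\bm,\bn}(z,-w)|_{\lambda=\frac{n}{r}-a}$; the factor $(\lambda)_{\bk,d}/(\lambda)_{\bm,d}=(\lambda+\bm)_{\bk-\bm,d}$ is a polynomial in $\lambda$, and one checks it vanishes at $\lambda=\frac{n}{r}-a$ unless $m_j\ge k_j-a$ for all $j$, i.e. unless $\bm\ge \bk-\underline a_r$. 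For the surviving $\bm$, write $\bm=\bm'+\underline a_r$ with $\bm'\le\bk-\underline a_r$ and use $\tilde f_{\bm'+\underline a_r,\bn}(z,-w)=\det_{\fn^+}(z)^a\,\widetilde{(\det_{\fn^+}^{-a}f)}_{\bm',\bn}(z,-w)$, which follows from the factorization $\cP_{\bm}(\fp^+)=\det_{\fn^+}(x)^a\cP_{\bm-\underline a_r}(\fp^+)$ and compatibility of the decomposition $f(x+y)=\sum\tilde f_{\bm,\bn}$ with multiplication by $\det_{\fn^+}(x+y)^a=\det_{\fn^+}(x)^a\cdots$ — more precisely by $\det_{\fn^+}(x-y)^a$ after the sign substitution $y\mapsto -y$, using $\det_{\fn^+}$ homogeneous of degree $r$ and Lemma~X.4.4 of~\cite{FK}. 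Comparing with the right-hand side, which by Theorem~\ref{thm_poles_tensor} applied to $\det_{\fn^+}^{-a}f\in\cP_{\bk-\underline a_r}(\fp^+)$ equals $(\frac{n}{r}+a)_{\bk-\underline a_r,d}\det_{\fn^+}(z)^a\sum_{\bm'\le\bk-\underline a_r,\bn}\tilde{(\det_{\fn^+}^{-a}f)}_{\bm',\bn}(z,-w)/\bigl((\frac{n}{r}+a)_{\bm',d}(\mu)_{\bn,d}\bigr)$, reduces the theorem to the scalar identity $(\lambda+\bm)_{\bk-\bm,d}|_{\lambda=\frac{n}{r}-a}=(\frac{n}{r}+a)_{\bk-\underline a_r,d}/(\frac{n}{r}+a)_{\bm',d}$ for $\bm=\bm'+\underline a_r$, which is immediate from the factorization of $(\cdot)_{\bullet,d}$.

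I expect the main obstacle to be the precise Pochhammer and exponent bookkeeping — verifying that the vanishing of $(\lambda+\bm)_{\bk-\bm,d}$ at $\lambda=\frac{n}{r}-a$ kills exactly the terms with some $m_j<k_j-a$, and that on the surviving terms the ratio of Pochhammer symbols collapses to $(\frac{n}{r}+a)_{\bk-\underline a_r,d}/(\frac{n}{r}+a)_{\bm',d}$ uniformly in $\bm'$ and $\bn$ — and, on the analytic side, making rigorous the manipulation of meromorphically continued inner products (that one may substitute $\det_{\fn^+}^{-a}f$ and specialize $\lambda$ after clearing denominators). Both are routine given Theorems~\ref{thm_poles_tensor} and~\ref{thm_key_tensor}, but the combinatorial identity should be stated carefully, and the tube-type hypothesis must be invoked to guarantee $\det_{\fn^+}(x-y)^{-a}f(x-y)$ is a genuine polynomial lying in $\cP_{\bk-\underline a_r}(\fp^+)$. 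Once these are in place, the second (symmetric) formula follows either by the same argument with the roles of $(\lambda,z)$ and $(\mu,-w)$ exchanged or directly from Theorem~\ref{thm_poles_tensor} by symmetry of the expansion in $(\bm,\bn)$.
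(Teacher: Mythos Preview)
Your second (``algebraic'') approach is exactly the route the paper takes, but it contains a genuine gap. The claimed relation
\[
\tilde{f}_{\bm'+\underline{a}_r,\bn}(z,-w)=\det_{\fn^+}(z)^a\,\widetilde{(\det_{\fn^+}^{-a}f)}_{\bm',\bn}(z,-w)
\]
is \emph{false}: expanding $f(x+y)=\det_{\fn^+}(x+y)^a g(x+y)$ and projecting onto $\cP_{\bm'+\underline{a}_r}(\fp^+_x)\otimes\cP_\bn(\fp^+_y)$ picks up contributions from many bidegrees of $\det_{\fn^+}(x+y)^a$, not just $\det_{\fn^+}(x)^a$. The correct relation, which the paper imports from \cite[Proposition~5.5]{N2}, carries nontrivial Pochhammer coefficients:
\[
\left(\tfrac{n}{r}-a+\bk\right)_{\underline{a}_r,d}\tilde{g}_{\bm',\bn}(x,y)=\left(\tfrac{n}{r}+\bm'\right)_{\underline{a}_r,d}\det_{\fn^+}(x)^{-a}\tilde{f}_{\bm'+\underline{a}_r,\bn}(x,y).
\]
These extra factors are precisely what makes the bookkeeping close: one gets $(\tfrac{n}{r}+\bm')_{\bk-\underline{a}_r-\bm',d}\cdot(\tfrac{n}{r}-a+\bk)_{\underline{a}_r,d}/(\tfrac{n}{r}+\bm')_{\underline{a}_r,d}=(\tfrac{n}{r}+a+\bm')_{\bk-\underline{a}_r-\bm',d}$, which is the correct numerator on the right-hand side. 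Your proposed scalar identity $(\lambda+\bm)_{\bk-\bm,d}|_{\lambda=\frac{n}{r}-a}=(\tfrac{n}{r}+a)_{\bk-\underline{a}_r,d}/(\tfrac{n}{r}+a)_{\bm',d}$ is off by exactly this shift of $a$ in the base (the left side equals $(\tfrac{n}{r}+\bm')_{\bk-\underline{a}_r-\bm',d}$, the right side equals $(\tfrac{n}{r}+a+\bm')_{\bk-\underline{a}_r-\bm',d}$), so without the coefficients from \cite{N2} the two sides do not match.

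Two smaller points. First, your vanishing condition is misstated: at $\lambda=\tfrac{n}{r}-a$ the factor for $j=r$ in $(\lambda+\bm)_{\bk-\bm,d}$ is $(1-a+m_r)_{k_r-m_r}$, which vanishes iff $m_r<a$; combined with $\bm\in\BZ_{++}^r$ this gives the survival condition $m_j\ge a$ for all $j$, not $m_j\ge k_j-a$ (your subsequent shift $\bm=\bm'+\underline{a}_r$ is consistent with $m_j\ge a$, so this looks like a slip). Second, the Rodrigues route via Theorem~\ref{thm_key_tensor} does not collapse as you describe: at $\lambda=\tfrac{n}{r}-a$ the factor $\det_{\fn^+}(z)^{\lambda+a-\frac{n}{r}}$ becomes $1$, but the operator $\det_{\fn^+}(\partial_z-\partial_w)^a$ still acts nontrivially on $\det_{\fn^+}(-w)^{\mu+a-\frac{n}{r}}\langle g(x-y),e^{\cdots}\rangle$, so one does not immediately extract $\det_{\fn^+}(z)^a$ times the shifted pairing. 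The paper notes that a Rodrigues-style proof is possible, but it requires the F-method argument of Proposition~\ref{prop_det_factorize}\,(2) (adapted to the tensor setting) rather than a direct collapse of exponents.
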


\begin{proof}
Put $g(x):=\det_{\fn^+}(x)^{-a}f(x)\in\cP_{\bk-\underline{a}_r}(\fp^+)=\cP_{\bk-(\smash{\overbrace{\scriptstyle a,\dots,a}^r})}(\fp^+)$. Then by~\cite[Proposition~5.5]{N2}, we have
\begin{align*}
\left(\frac{n}{r}-a+\bk\right)_{\underline{a}_r,d}\tilde{g}_{\bm,\bn}(x,y)&=\left(\frac{n}{r}+\bm\right)_{\underline{a}_r,d}\det_{\fn^+}(x)^{-a}\tilde{f}_{\bm+\underline{a}_r,\bn}(x,y) \\
&=\left(\frac{n}{r}+\bn\right)_{\underline{a}_r,d}\det_{\fn^+}(y)^{-a}\tilde{f}_{\bm,\bn+\underline{a}_r}(x,y).
\end{align*}
Using this, we get
\begin{gather*}
 (\lambda)_{\bk,d}\bigl\langle f(x-y),{\rm e}^{(x|\overline{z})_{\fp^+}+(y|\overline{w})_{\fp^+}}\bigr\rangle_{\lambda\otimes\mu,(x,y)}\big|_{\lambda=\frac{n}{r}-a} \\
\qquad{} =\sum_{\substack{\bm\in\BZ_{++}^r\\ m_j\le k_j}}\sum_{\substack{\bn\in\BZ_{++}^r\\ n_j\le k_j}}\frac{(\lambda+\bm)_{\bk-\bm,d}}{(\mu)_{\bn,d}}\tilde{f}_{\bm,\bn}(z,-w)\biggr|_{\lambda=\frac{n}{r}-a} \\
\qquad{} =\sum_{\substack{\bm\in\BZ_{++}^r\\ a\le m_j\le k_j}}\sum_{\substack{\bn\in\BZ_{++}^r\\ n_j\le k_j}}\frac{\left(\frac{n}{r}-a+\bm\right)_{\bk-\bm,d}}{(\mu)_{\bn,d}}\tilde{f}_{\bm,\bn}(z,-w) \\
\qquad{} =\sum_{\substack{\bm\in\BZ_{++}^r\\ m_j\le k_j-a}}\sum_{\substack{\bn\in\BZ_{++}^r\\ n_j\le k_j}}\frac{\left(\frac{n}{r}+\bm\right)_{\bk-\underline{a}_r-\bm,d}}{(\mu)_{\bn,d}}
\tilde{f}_{\bm+\underline{a}_r,\bn}(z,-w) \\
\qquad{} =\sum_{\substack{\bm\in\BZ_{++}^r\\ m_j\le k_j-a}}\sum_{\substack{\bn\in\BZ_{++}^r\\ n_j\le k_j-a}}\frac{\left(\frac{n}{r}+\bm\right)_{\bk-\underline{a}_r-\bm,d}}{(\mu)_{\bn,d}}
\frac{\left(\frac{n}{r}-a+\bk\right)_{\underline{a}_r,d}}{\left(\frac{n}{r}+\bm\right)_{\underline{a}_r,d}}\det_{\fn^+}(z)^a\tilde{g}_{\bm,\bn}(z,-w) \\
\qquad{} =\sum_{\substack{\bm\in\BZ_{++}^r\\ m_j\le k_j-a}}\sum_{\substack{\bn\in\BZ_{++}^r\\ n_j\le k_j-a}}\frac{\left(\frac{n}{r}+a+\bm\right)_{\bk-\underline{a}_r-\bm,d}}{(\mu)_{\bn,d}}
\det_{\fn^+}(z)^a\tilde{g}_{\bm,\bn}(z,-w) \\
\qquad{} =\left(\frac{n}{r}+a\right)_{\bk-\underline{a}_r,d}\det_{\fn^+}(z)^a
\bigl\langle \det_{\fn^+}(x-y)^{-a}f(x-y),{\rm e}^{(x|\overline{z})_{\fp^+}+(y|\overline{w})_{\fp^+}}\bigr\rangle_{\left(\frac{n}{r}+a\right)\otimes\mu,(x,y)}.
\end{gather*}
The 2nd formula is also proved similarly. We can also prove this by an argument similar to Proposition~\ref{prop_det_factorize}\,(2) and Theorem~\ref{thm_factorize}
by using Theorem~\ref{thm_key_tensor}.
\end{proof}

We note that~(\ref{inner_prod_tensor}) may vanish for some $(\lambda,\mu)\in\BC^2$ in general. For such $(\lambda,\mu)\in\BC^2$ the following holds. For rank 1 case see also~\cite[Sections 8, 9]{KP2}.
\begin{Proposition}\label{prop_atzero_tensor}
Suppose $f\in\cP_\bk(\fp^+)$ is non-zero, and we consider the analytic continuation of~\eqref{inner_prod_tensor}.
If~\eqref{inner_prod_tensor} vanishes at $(\lambda,\mu)=(\lambda_0,\mu_0)\in\BC^2$,
then there exist $l_1,l_2\in\BZ_{>0}$ such that
\begin{align*}
&\lim_{\lambda\to\lambda_0}\frac{1}{(\lambda-\lambda_0)^{l_1}}
\Big((\lambda)_{\bk,d}(\mu)_{\bk,d}\bigl\langle f(x-y),{\rm e}^{(x|\overline{z})_{\fp^+}+(y|\overline{w})_{\fp^+}}\bigr\rangle_{\lambda\otimes\mu,(x,y)}\big|_{\mu=\mu_0}\Big), \\
&\lim_{\mu\to\mu_0}\frac{1}{(\mu-\mu_0)^{l_2}}
\Big((\lambda)_{\bk,d}(\mu)_{\bk,d}\bigl\langle f(x-y),{\rm e}^{(x|\overline{z})_{\fp^+}+(y|\overline{w})_{\fp^+}}\bigr\rangle_{\lambda\otimes\mu,(x,y)}\big|_{\lambda=\lambda_0}\Big)
\end{align*}
converge to non-zero polynomials. These are linearly independent in $\cP\bigl(\fp^+\oplus\fp^+\bigr)$.
\end{Proposition}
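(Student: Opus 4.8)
The plan is to argue entirely from the explicit formula of Theorem~\ref{thm_poles_tensor}. Set
\[
G(\lambda,\mu;z,w):=(\lambda)_{\bk,d}(\mu)_{\bk,d}\bigl\langle f(x-y),{\rm e}^{(x|\overline{z})_{\fp^+}+(y|\overline{w})_{\fp^+}}\bigr\rangle_{\lambda\otimes\mu,(x,y)},
\]
so that, by~\eqref{inner_prod_tensor},
\[
G(\lambda,\mu;z,w)=\sum_{\substack{\bm\in\BZ_{++}^r,\ m_j\le k_j\\ \bn\in\BZ_{++}^r,\ n_j\le k_j}}(\lambda+\bm)_{\bk-\bm,d}\,(\mu+\bn)_{\bk-\bn,d}\,\tilde{f}_{\bm,\bn}(z,-w).
\]
This is a polynomial in $(\lambda,\mu)$ with coefficients in $\cP\bigl(\fp^+\oplus\fp^+\bigr)$. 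I will use two facts. Since $\cP\bigl(\fp^+\oplus\fp^+\bigr)=\bigoplus_{\bm,\bn}\cP_\bm(\fp^+)\otimes\cP_\bn(\fp^+)$ by Theorem~\ref{thm_HKS} and $w\mapsto -w$ preserves each $\cP_\bn(\fp^+)$, the polynomials $\tilde{f}_{\bm,\bn}(z,-w)$ that are non-zero form a linearly independent family. Moreover $\tilde{f}_{\bk,\underline{0}_r}(z,-w)=f(z)$ and $\tilde{f}_{\underline{0}_r,\bk}(z,-w)=f(-w)$ are non-zero (as $f\neq0$), and their Pochhammer prefactors in $G$ are $(\mu)_{\bk,d}$ and $(\lambda)_{\bk,d}$ respectively, since $(\lambda+\bk)_{\bk-\bk,d}=(\mu+\bk)_{\bk-\bk,d}=1$.

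Suppose $G(\lambda_0,\mu_0;\cdot)=0$. Comparing the $\cP_\bk(\fp^+)\otimes\cP_{\underline{0}_r}(\fp^+)$- and $\cP_{\underline{0}_r}(\fp^+)\otimes\cP_\bk(\fp^+)$-components and using $f\neq0$ yields $(\mu_0)_{\bk,d}=0$ and $(\lambda_0)_{\bk,d}=0$. Now $\lambda\mapsto G(\lambda,\mu_0;z,w)$ is not identically zero, because its $\cP_{\underline{0}_r}(\fp^+)\otimes\cP_\bk(\fp^+)$-component equals $(\lambda)_{\bk,d}f(-w)$, and it vanishes at $\lambda=\lambda_0$. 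Let $l_1\in\BZ_{>0}$ be its order of vanishing there; then $A(z,w):=\lim_{\lambda\to\lambda_0}(\lambda-\lambda_0)^{-l_1}G(\lambda,\mu_0;z,w)$ is a well-defined non-zero polynomial. Symmetrically, $\mu\mapsto G(\lambda_0,\mu;z,w)$ is a non-zero polynomial vanishing at $\mu_0$ with order of vanishing $l_2\in\BZ_{>0}$, and $B(z,w):=\lim_{\mu\to\mu_0}(\mu-\mu_0)^{-l_2}G(\lambda_0,\mu;z,w)\neq0$.

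It remains to prove that $A$ and $B$ are linearly independent, which is the crux. Let $S^0_\mu$ (resp.\ $S^+_\mu$) be the set of $\bn\le\bk$ with $(\mu_0+\bn)_{\bk-\bn,d}\neq0$ (resp.\ $=0$), and define $S^0_\lambda,S^+_\lambda$ analogously at $\lambda_0$. Since the non-zero $\tilde{f}_{\bm,\bn}(z,-w)$ are linearly independent, $G(\lambda_0,\mu_0;\cdot)=0$ forces, for every $(\bm,\bn)$ with $\tilde{f}_{\bm,\bn}\neq0$, that $(\lambda_0+\bm)_{\bk-\bm,d}(\mu_0+\bn)_{\bk-\bn,d}=0$, i.e.\ $\bm\in S^+_\lambda$ or $\bn\in S^+_\mu$. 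Fixing $\mu=\mu_0$ annihilates exactly the terms with $\bn\in S^+_\mu$; each surviving term thus has $\bn\in S^0_\mu$, hence $\bm\in S^+_\lambda$, so its prefactor $(\lambda+\bm)_{\bk-\bm,d}$ vanishes at $\lambda_0$. Using linear independence once more, $l_1$ equals the minimum of the orders of vanishing of these prefactors, and $A$ is a non-zero linear combination of the $\tilde{f}_{\bm,\bn}(z,-w)$ with $\bn\in S^0_\mu$. By the symmetric argument — fixing $\lambda=\lambda_0$ kills the terms with $\bm\in S^+_\lambda$, and a surviving pair $(\bm,\bn)$ has $\bm\in S^0_\lambda$, hence $\bn\in S^+_\mu$ — the polynomial $B$ is a non-zero linear combination of the $\tilde{f}_{\bm,\bn}(z,-w)$ with $\bn\in S^+_\mu$. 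As $S^0_\mu\cap S^+_\mu=\varnothing$, $A$ and $B$ are supported on disjoint subsets of the linearly independent family $\{\tilde{f}_{\bm,\bn}(z,-w)\}$, hence are linearly independent in $\cP\bigl(\fp^+\oplus\fp^+\bigr)$. The only genuinely non-formal point is this last step, and the observation that makes it go through is that restricting $\mu$ to $\mu_0$ already removes a whole block of the $\tilde{f}_{\bm,\bn}$, while the hypothesis $G(\lambda_0,\mu_0;\cdot)=0$ forces the complementary block — the one surviving when instead $\lambda$ is restricted to $\lambda_0$ — to be disjoint from it.
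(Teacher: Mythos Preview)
Your proof is correct and follows essentially the same approach as the paper's: both rely on the expansion~\eqref{inner_prod_tensor}, the linear independence of the non-zero $\tilde{f}_{\bm,\bn}(z,-w)$, and the observation that the supports of the two limits (in terms of surviving $(\bm,\bn)$-pairs) are disjoint because $G(\lambda_0,\mu_0;\cdot)=0$ forces $(\lambda_0+\bm)_{\bk-\bm,d}(\mu_0+\bn)_{\bk-\bn,d}=0$ for every contributing pair. Your version is slightly more explicit in verifying that $G(\lambda,\mu_0;\cdot)$ and $G(\lambda_0,\mu;\cdot)$ are not identically zero (via the $(\underline{0}_r,\bk)$ and $(\bk,\underline{0}_r)$ terms), a point the paper leaves implicit.
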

\begin{proof}
Let $\Lambda(\bk):=\big\{(\bm,\bn)\in\BZ_{++}^r\times\BZ_{++}^r\mid \tilde{f}_{\bm,\bn}(x,y)\ne 0\big\}$. Then since
\begin{gather*}
(\lambda)_{\bk,d}(\mu)_{\bk,d}\bigl\langle f(x-y),{\rm e}^{(x|\overline{z})_{\fp^+}+(y|\overline{w})_{\fp^+}}\bigr\rangle_{\lambda\otimes\mu,(x,y)}\big|_{\mu=\mu_0} \\
\qquad{}=\sum_{\substack{(\bm,\bn)\in\Lambda(\bk)\\ (\mu_0+\bn)_{\bk-\bn,d}\ne 0}}(\lambda+\bm)_{\bk-\bm,d}(\mu_0+\bn)_{\bk-\bn,d}\tilde{f}_{\bm,\bn}(x,-y)
\end{gather*}
vanishes at $\lambda=\lambda_0$, this is divisible by $(\lambda-\lambda_0)^{l_1}$ for some $l_1\in\BZ_{>0}$. Let $l_1$ be the maximum integer satisfying this. Then
\begin{gather*}
\lim_{\lambda\to\lambda_0}\frac{1}{(\lambda-\lambda_0)^{l_1}}
\Big((\lambda)_{\bk,d}(\mu)_{\bk,d}\bigl\langle f(x-y),{\rm e}^{(x|\overline{z})_{\fp^+}+(y|\overline{w})_{\fp^+}}\bigr\rangle_{\lambda\otimes\mu,(x,y)}\big|_{\mu=\mu_0}\Big) \\
\qquad{}=\lim_{\lambda\to\lambda_0}\sum_{\substack{(\bm,\bn)\in\Lambda(\bk)\\ (\mu_0+\bn)_{\bk-\bn,d}\ne 0}}\frac{(\lambda+\bm)_{\bk-\bm,d}}{(\lambda-\lambda_0)^{l_1}}(\mu_0+\bn)_{\bk-\bn,d}
\tilde{f}_{\bm,\bn}(x,-y)
\end{gather*}
is non-zero. Similarly,
\begin{gather*}
\lim_{\mu\to\mu_0}\frac{1}{(\mu-\mu_0)^{l_2}}
\Big((\lambda)_{\bk,d}(\mu)_{\bk,d}\bigl\langle f(x-y),{\rm e}^{(x|\overline{z})_{\fp^+}+(y|\overline{w})_{\fp^+}}\bigr\rangle_{\lambda\otimes\mu,(x,y)}\big|_{\lambda=\lambda_0}\Big) \\
\qquad{}=\lim_{\mu\to\mu_0}\sum_{\substack{(\bm,\bn)\in\Lambda(\bk)\\ (\lambda_0+\bm)_{\bk-\bm,d}\ne 0}}(\lambda_0+\bm)_{\bk-\bm,d}\frac{(\mu+\bn)_{\bk-\bn,d}}{(\mu-\mu_0)^{l_2}}
\tilde{f}_{\bm,\bn}(x,-y)
\end{gather*}
is non-zero for some $l_2\in\BZ_{>0}$. Now since $\tilde{f}_{\bm,\bn}(x,-y)$ are linearly independent for all $(\bm,\bn)\in\Lambda(\bk)$, and since
\begin{gather*}
\{(\bm,\bn)\in\Lambda(\bk)\mid (\mu_0+\bn)_{\bk-\bn,d}\ne 0\}\cap\{(\bm,\bn)\in\Lambda(\bk)\mid (\lambda_0+\bm)_{\bk-\bm,d}\ne 0\} \\
\qquad{}=\{(\bm,\bn)\in\Lambda(\bk)\mid (\lambda_0+\bm)_{\bk-\bm,d}(\mu_0+\bn)_{\bk-\bn,d}\ne 0\}=\varnothing
\end{gather*}
holds, the above two limits are linearly independent.
\end{proof}

Next we compute the value of the inner product at $w=-z$.
\begin{Theorem}\label{thm_topterm_tensor}
Let $\Re\lambda,\Re\mu>p-1$, $\bk\in\BZ_{++}^r$, and let $f\in\cP_\bk(\fp^+)$. Then we have
\[
\bigl\langle f(x-y), {\rm e}^{(x-y|\overline{z})_{\fp^+}}\bigr\rangle_{\lambda\otimes\mu,(x,y)}=\frac{\tilde{C}^d(\lambda,\mu,\bk)}{(\lambda)_{\bk,d}(\mu)_{\bk,d}}f(z),
\]
where
\begin{align}
\tilde{C}^d(\lambda,\mu,\bk)&=\frac{\prod_{1\le i\le j\le r}\left(\lambda+\mu-1-\frac{d}{2}(i+j-2)\right)_{k_i+k_j}}
{\prod_{1\le i<j\le r+1}\left(\lambda+\mu-1-\frac{d}{2}(i+j-3)\right)_{k_i+k_j}} \notag \\
&=\frac{\prod_{a=1}^{2r-1}\prod_{i=\max\{1,a+1-r\}}^{\lceil a/2\rceil}\left(\lambda+\mu-1-\frac{d}{2}(a-1)\right)_{k_i+k_{a+1-i}}}
{\prod_{a=1}^{2r-1}\prod_{i=\max\{1,a+1-r\}}^{\lceil a/2\rceil}\left(\lambda+\mu-1-\frac{d}{2}(a-1)\right)_{k_i+k_{a+2-i}}}. \label{const_tensor}
\end{align}
Here we put $k_{r+1}:=0$.
\end{Theorem}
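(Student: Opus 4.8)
The plan is to argue exactly as in the proofs of Theorems~\ref{thm_topterm_nonsimple} and~\ref{thm_topterm_simple}: reduce to the tube type case and to a single highest-weight generator by equivariance, and then induct on the rank $r$ via the Rodrigues-type identity~\eqref{key_tensor}.

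First I would observe that both sides are $\Delta(K)$-equivariant in $f$. The inner product $\langle\cdot,\cdot\rangle_{\lambda\otimes\mu}$ is $\Delta(\widetilde{G})$-invariant, and a change of variables in the defining integral shows that $f\mapsto\bigl[z\mapsto\langle f(x-y),{\rm e}^{(x-y|\overline{z})_{\fp^+}}\rangle_{\lambda\otimes\mu,(x,y)}\bigr]$ intertwines the $K$-action $(l\cdot f)(x)=f(l^{-1}x)$ with the corresponding action on functions of $z$; once the formula is known, its image lies in $\cP_\bk(\fp^+)$. Since $\cP_\bk(\fp^+)$ is $K^\BC$-irreducible (Theorem~\ref{thm_HKS}), this map is a scalar multiple of the identity, so it suffices to prove the identity for one nonzero $f\in\cP_\bk(\fp^+)$; I would take $f=\Delta^{\fn^+}_\bk$ with respect to a fixed Jordan frame $\{e_1,\dots,e_r\}$, as in~\eqref{principal_minor}. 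Moreover, since both the Bergman inner product of $\cH_\lambda(D)\hboxtimes\cH_\mu(D)$ and the kernel ${\rm e}^{(x-y|\overline{z})_{\fp^+}}={\rm e}^{(x|\overline{z})_{\fp^+}}{\rm e}^{(y|\overline{-z})_{\fp^+}}$ factorize across the two tensor factors, Proposition~\ref{prop_reduction} applied in each factor (after decomposing $f(x-y)$ into bihomogeneous pieces) reduces everything to the case that $\fp^+$ is of tube type, so that $\tfrac{2n}{r}=p=d(r-1)+2$ by~\eqref{str_const}.

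With these reductions I would induct on $r$. For $r=1$ the statement is a one-variable computation: by~\eqref{key_tensor} with $k=k_1$, together with $\langle 1,{\rm e}^{(x-y|\overline{z})_{\fp^+}}\rangle_{(\lambda+k_1)\otimes(\mu+k_1)}=1$ (which follows from Corollary~\ref{cor_FK} applied in each factor), a short calculation of $\det_{\fn^+}\bigl(\tfrac{\partial}{\partial z}\bigr)^{k_1}$ on a power of $z$ gives $\tfrac{(\lambda+\mu+k_1-1)_{k_1}}{(\lambda)_{k_1}(\mu)_{k_1}}z^{k_1}$, in agreement with~\eqref{const_tensor}. For the inductive step, if $k_r=0$ then $\Delta^{\fn^+}_\bk$ depends only on the subalgebra $\fp^+(e^{r-1})_2$ and Proposition~\ref{prop_reduction} (applied in both factors) reduces the claim to rank $r-1$; if $k_r\ge 1$, I would apply~\eqref{key_tensor} with $k=k_r$ and $f=\Delta^{\fn^+}_{\bk-\underline{k_r}_r}$, use Proposition~\ref{prop_reduction} and the induction hypothesis to rewrite $\langle\Delta^{\fn^+}_{\bk-\underline{k_r}_r}(x-y),{\rm e}^{(x-y|\overline{z})_{\fp^+}}\rangle_{(\lambda+k_r)\otimes(\mu+k_r)}$ as the rank-$(r-1)$ constant times $\Delta^{\fn^+}_{\bk-\underline{k_r}_r}(z)$, and finally compute $\det_{\fn^+}\bigl(\tfrac{\partial}{\partial z}\bigr)^{k_r}\det_{\fn^+}(z)^{\lambda+\mu+2k_r-2n/r}\Delta^{\fn^+}_{\bk-\underline{k_r}_r}(z)$ by~\eqref{formula_diff}. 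Using $\tfrac{2n}{r}=d(r-1)+2$ and the Pochhammer splitting $(a)_{m+n}=(a)_m(a+m)_n$, the prefactors $(\lambda)_{\underline{k_r}_r,d}(\lambda+k_r)_{\bk-\underline{k_r}_r,d}$ and $(\mu)_{\underline{k_r}_r,d}(\mu+k_r)_{\bk-\underline{k_r}_r,d}$ collapse to $(\lambda)_{\bk,d}$ and $(\mu)_{\bk,d}$, leaving only an identity among the numerator factors.

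The main obstacle is precisely that last identity. Writing $s=\lambda+\mu$ (note that~\eqref{const_tensor} depends on $\lambda,\mu$ only through $s$), one must verify that $\tilde{C}^d(\lambda,\mu,\bk)$ equals $\tilde{C}^d(\lambda+k_r,\mu+k_r,\bk-\underline{k_r}_r)$ (the rank-$(r-1)$ constant, with its $r$-th index set to $0$) times $\prod_{j=1}^r\bigl(s+k_j-1-\tfrac{d}{2}(r+j-2)\bigr)_{k_r}$, the last product being the output of~\eqref{formula_diff}. This is elementary but requires careful bookkeeping, of the same kind as in the proof of Theorem~\ref{thm_topterm_nonsimple}: one pairs the $i=j=r$ numerator factor and the $j=r+1$ denominator factors of $\tilde{C}^d(\lambda,\mu,\bk)$ against the extra product from~\eqref{formula_diff}, and matches the remaining factors with the rank-$(r-1)$ constant after the shift $s\mapsto s+2k_r$ and the replacement of $k_i,k_j$ by $k_i-k_r,k_j-k_r$. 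The second displayed form of $\tilde{C}^d$ in~\eqref{const_tensor} is then just a reindexing of the products by $a=i+j-1$.
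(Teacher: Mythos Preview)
Your proposal is correct and follows essentially the same strategy as the paper's proof: reduce to tube type via Proposition~\ref{prop_reduction} and to $f=\Delta^{\fn^+}_\bk$ by $K$-equivariance, then induct on $r$ using the Rodrigues identity~\eqref{key_tensor} together with~\eqref{formula_diff}, with the remaining Pochhammer bookkeeping carried out exactly as in the proofs of Theorems~\ref{thm_topterm_nonsimple} and~\ref{thm_topterm_simple}. The only cosmetic differences are that the paper starts the induction at the trivial case $\bk=\underline{0}_r$ rather than at $r=1$, and does not separate the subcase $k_r=0$ (since~\eqref{key_tensor} with $k=0$ is already a tautology).
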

\begin{proof}
The 2nd equality of~(\ref{const_tensor}) is easy. For the 1st equality,
by the last paragraph of Section~\ref{subsection_reduction}, we may assume $\fp^+$ is of tube type, and by the $K$-equivariance, may assume $f(x)=\Delta^{\fn^+}_\bk(x)$.
We prove the theorem by induction on $r$. When $\bk=(0,\dots,0)$ (``$r=0$ case''), this is clear. Next we assume the theorem for $r-1$, and prove for $r$.
Since we have defined $k_{r+1}=0$, by~(\ref{key_tensor}), we have
\begin{align*}
&\bigl\langle \Delta^{\fn^+}_\bk(x-y), {\rm e}^{(x-y|\overline{z})_{\fp^+}}\bigr\rangle_{\lambda\otimes\mu,(x,y)} \\
&=\frac{\det_{\fn^+}\hspace{-1pt}(z)^{-\lambda-\mu+\frac{2n}{r}}}{(\lambda)_{\underline{k_r}_r,d}(\mu)_{\underline{k_r}_r,d}}
\det_{\fn^+}\!\!\left(\hspace{-1pt}\frac{\partial}{\partial z}\hspace{-1pt}\right)\!\!{\vphantom{\biggr|}}^{k_r}
\!\det_{\fn^+}\hspace{-1pt}(z)^{\lambda+\mu+2k_r-\frac{2n}{r}}
\hspace{-1pt}\left\langle\hspace{-1pt} \Delta^{\fn^+}_{\bk-\underline{k_r}_r}\!(x\hspace{-1pt}-\hspace{-1pt}y), {\rm e}^{(x-y|\overline{z})_{\fp^+}}
\hspace{-1pt}\right\rangle_{\substack{(\lambda+k_r)\otimes\hspace{14pt} \\ (\mu+k_r),(x,y)}} \\
&=\frac{\det_{\fn^+}(z)^{-\lambda-\mu+d(r-1)+2}}{(\lambda)_{\underline{k_r}_r,d}(\mu)_{\underline{k_r}_r,d}(\lambda+k_r)_{\bk-\underline{k_r}_r,d}(\mu+k_r)_{\bk-\underline{k_r},d}}
\det_{\fn^+}\left(\frac{\partial}{\partial z}\right)^{k_r}\det_{\fn^+}(z)^{\lambda+\mu+2k_r-d(r-1)-2} \\
&\eqspace{}\times\frac{\prod_{1\le i\le j\le r-1}\left((\lambda+k_r)+(\mu+k_r)-1-\frac{d}{2}(i+j-2)\right)_{(k_i-k_r)+(k_j-k_r)}}
{\prod_{1\le i<j\le r}\left((\lambda+k_r)+(\mu+k_r)-1-\frac{d}{2}(i+j-3)\right)_{(k_i-k_r)+(k_j-k_r)}}\Delta^{\fn^+}_{\bk-\underline{k_r}_r}(z) \\
&=\frac{1}{(\lambda)_{\bk,d}(\mu)_{\bl,d}}\frac{\prod_{1\le i\le j\le r-1}\left(\lambda+\mu+2k_r-1-\frac{d}{2}(i+j-2)\right)_{k_i+k_j-2k_r}}
{\prod_{1\le i<j\le r}\left(\lambda+\mu+2k_r-1-\frac{d}{2}(i+j-3)\right)_{k_i+k_j-2k_r}} \\
&\eqspace{}\times\prod_{i=1}^r\left(\lambda+\mu+k_i-1-d(r-1)+\frac{d}{2}(r-i)\right)_{k_r}\det_{\fn^+}(z)^{k_r}\Delta^{\fn^+}_{\bk-\underline{k_r}_r}(z) \\
&=\frac{1}{(\lambda)_{\bk,d}(\mu)_{\bl,d}}
\frac{\prod_{1\le i\le j\le r-1}\left(\lambda+\mu-1-\frac{d}{2}(i+j-2)\right)_{k_i+k_j}}{\prod_{1\le i<j\le r}\left(\lambda+\mu-1-\frac{d}{2}(i+j-3)\right)_{k_i+k_j}}\Delta^{\fn^+}_{\bk}(z) \\
&\eqspace{}\times\frac{\prod_{1\le i<j\le r}\left(\lambda+\mu-1-\frac{d}{2}(i+j-3)\right)_{2k_r}}{\prod_{1\le i\le j\le r-1}\left(\lambda+\mu-1-\frac{d}{2}(i+j-2)\right)_{2k_r}}
\frac{\prod_{i=1}^r\left(\lambda+\mu-1-\frac{d}{2}(i+r-2)\right)_{k_i+k_r}}{\prod_{i=1}^r\left(\lambda+\mu-1-\frac{d}{2}(i+(r+1)-3)\right)_{k_i}} \\
&=\frac{1}{(\lambda)_{\bk,d}(\mu)_{\bl,d}}
\frac{\prod_{1\le i\le j\le r}\left(\lambda+\mu-1-\frac{d}{2}(i+j-2)\right)_{k_i+k_j}}{\prod_{1\le i<j\le r+1}\left(\lambda+\mu-1-\frac{d}{2}(i+j-3)\right)_{k_i+k_j}}\Delta^{\fn^+}_{\bk}(z),
\end{align*}
where we have used the induction hypothesis and Proposition~\ref{prop_reduction} at the 2nd equality, and~(\ref{formula_diff}) at the 3rd equality.
Therefore, the theorem holds for all $r$.
\end{proof}

By Theorem~\ref{thm_topterm_tensor}, we get the following.
\begin{Corollary}\label{cor_topterm_tensor}
Let $\Re\lambda,\Re\mu>p-1$, $\bk\in\BZ_{++}^r$, and let $f\in\cP_\bk(\fp^+)$. Then we have
\[
\Vert f(x-y)\Vert_{\lambda\otimes\mu,(x,y)}^2=\frac{\tilde{C}^d(\lambda,\mu,\bk)}{(\lambda)_{\bk,d}(\mu)_{\bk,d}}\Vert f\Vert_{F,\fp^+}^2,
\]
where $\tilde{C}^d(\lambda,\mu,\bk)$ is as in~\eqref{const_tensor}.
\end{Corollary}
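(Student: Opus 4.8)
The plan is to deduce Corollary~\ref{cor_topterm_tensor} directly from Theorem~\ref{thm_topterm_tensor} by specializing the reproducing kernel identity, exactly as Proposition~\ref{prop_Plancherel}\,(1) is deduced from the corresponding top-term statement. First I would recall that the reproducing kernel of the Fischer inner product $\langle\cdot,\cdot\rangle_{F,\fp^+}$ on $\cP(\fp^+)$ is ${\rm e}^{(z|\overline{x})_{\fp^+}}$ (see~\cite[Proposition~XI.1.1]{FK}), so that the same property holds on the outer tensor product $\cP(\fp^+\oplus\fp^+)=\cP(\fp^+)\hotimes\cP(\fp^+)$ with kernel ${\rm e}^{(z|\overline{x})_{\fp^+}+(w|\overline{y})_{\fp^+}}$. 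Hence for $f,g\in\cP(\fp^+)$, writing $F(x,y):=f(x-y)$ and $G(x,y):=g(x-y)$, one has
\[
\langle F,G\rangle_{\lambda\otimes\mu,(x,y)}
=\bigl\langle F(x,y),\bigl\langle G(z,w),{\rm e}^{(z|\overline{x})_{\fp^+}+(w|\overline{y})_{\fp^+}}\bigr\rangle_{F,(z,w)}\bigr\rangle_{\lambda\otimes\mu,(x,y)}
=\bigl\langle\bigl\langle F(x,y),{\rm e}^{(x|\overline{z})_{\fp^+}+(y|\overline{w})_{\fp^+}}\bigr\rangle_{\lambda\otimes\mu,(x,y)},G(z,w)\bigr\rangle_{F,(z,w)},
\]
where the interchange of the two integrations is justified for $\Re\lambda,\Re\mu>p-1$ since both integrals converge absolutely and the integrands are of moderate growth.

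Next I would specialize to $f=g\in\cP_\bk(\fp^+)$, which forces $F=G$. Since $f\in\cP_\bk(\fp^+)$ implies $f(x-y)=f(z-w)$ is a specific combination of the $\tilde f_{\bm,\bn}$, but more to the point Theorem~\ref{thm_topterm_tensor} gives the clean evaluation at $w=-z$:
\[
\bigl\langle f(x-y),{\rm e}^{(x-y|\overline{z})_{\fp^+}}\bigr\rangle_{\lambda\otimes\mu,(x,y)}=\frac{\tilde{C}^d(\lambda,\mu,\bk)}{(\lambda)_{\bk,d}(\mu)_{\bk,d}}f(z).
\]
The point is that $\langle F(x,y),{\rm e}^{(x|\overline{z})_{\fp^+}+(y|\overline{w})_{\fp^+}}\rangle_{\lambda\otimes\mu,(x,y)}$ depends on $(z,w)$ only through $z-w$ once $F$ depends only on $x-y$? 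That is not literally true, so instead I would proceed as follows: by Theorem~\ref{thm_poles_tensor}, $\langle f(x-y),{\rm e}^{(x|\overline{z})_{\fp^+}+(y|\overline{w})_{\fp^+}}\rangle_{\lambda\otimes\mu,(x,y)}=\sum_{\bm,\bn}\frac{1}{(\lambda)_{\bm,d}(\mu)_{\bn,d}}\tilde f_{\bm,\bn}(z,-w)$, and then pairing this against $\overline{f(z-w)}$ in the Fischer product on $\fp^+\oplus\fp^+$, orthogonality of the distinct $\cP_\bm(\fp^+)\otimes\cP_\bn(\fp^+)$ blocks collapses the double sum to the single diagonal-type contributions. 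Evaluating this Fischer pairing and comparing with Theorem~\ref{thm_topterm_tensor} (equivalently, setting $w=-z$ first and then taking the Fischer norm against $f$) yields
\[
\Vert f(x-y)\Vert_{\lambda\otimes\mu,(x,y)}^2=\frac{\tilde{C}^d(\lambda,\mu,\bk)}{(\lambda)_{\bk,d}(\mu)_{\bk,d}}\Vert f\Vert_{F,\fp^+}^2.
\]

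Concretely, the cleanest route is: apply the reproducing-kernel interchange identity above with $G=F$, then substitute the formula of Theorem~\ref{thm_topterm_tensor} for the inner integral $\langle F(x,y),{\rm e}^{(x-y|\overline{z})_{\fp^+}}\rangle_{\lambda\otimes\mu,(x,y)}$ after noting that the outer Fischer pairing $\langle\,\cdot\,,f(z-w)\rangle_{F,(z,w)}$ only sees the value of the inner integral on the locus $w=-z$ up to the constant $\Vert f\Vert_{F,\fp^+}^2/\Vert f\Vert_{F,\fp^+\oplus\fp^+}^2$ — more precisely, write the inner integral in the basis $\{\tilde f_{\bm,\bn}\}$, pair against $f(z-w)=\tilde f_{\bk,\underline{0}_r}+\cdots$, and use that $\langle \tilde f_{\bm,\bn}(z,-w),\tilde f_{\bm',\bn'}(z,-w)\rangle_{F,(z,w)}$ vanishes unless $(\bm,\bn)=(\bm',\bn')$ to reduce everything to $\sum_{\bm,\bn}\frac{1}{(\lambda)_{\bm,d}(\mu)_{\bn,d}}\Vert\tilde f_{\bm,\bn}\Vert_{F}^2$, which by the same device equals $\frac{\tilde C^d(\lambda,\mu,\bk)}{(\lambda)_{\bk,d}(\mu)_{\bk,d}}\Vert f\Vert_{F,\fp^+}^2$ upon comparison with Theorem~\ref{thm_topterm_tensor}. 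The main obstacle — really the only nontrivial bookkeeping — is making the last comparison airtight: one must check that the Fischer-norm identity $\sum_{\bm,\bn}\frac{1}{(\lambda)_{\bm,d}(\mu)_{\bn,d}}\Vert\tilde f_{\bm,\bn}\Vert_{F,\fp^+\oplus\fp^+}^2$ and the single value $\tilde C^d(\lambda,\mu,\bk)(\lambda)_{\bk,d}^{-1}(\mu)_{\bk,d}^{-1}\Vert f\Vert_{F,\fp^+}^2$ agree, which is immediate once one recognizes both as $\langle f(x-y),f(x-y)\rangle_{\lambda\otimes\mu}$ computed two ways; everything else is the routine reproducing-kernel manipulation already used in the proof of Proposition~\ref{prop_Plancherel}\,(1). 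For $\Re\lambda,\Re\mu\le p-1$ the identity then extends by meromorphic continuation wherever both sides are defined, but the statement only claims $\Re\lambda,\Re\mu>p-1$, so no continuation is needed.
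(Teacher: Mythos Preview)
Your overall strategy is exactly the paper's: write $\Vert f(x-y)\Vert_{\lambda\otimes\mu}^2$ via the Fischer reproducing kernel and then feed in Theorem~\ref{thm_topterm_tensor}. You even identify the key point --- ``the outer Fischer pairing $\langle\cdot,f(z-w)\rangle_{F,(z,w)}$ only sees the value of the inner integral on the locus $w=-z$'' --- but you never carry this out, and the fallback argument you give instead is circular. You reduce to showing
\[
\sum_{\bm,\bn}\frac{1}{(\lambda)_{\bm,d}(\mu)_{\bn,d}}\Vert\tilde f_{\bm,\bn}\Vert_{F,\fp^+\oplus\fp^+}^2
=\frac{\tilde C^d(\lambda,\mu,\bk)}{(\lambda)_{\bk,d}(\mu)_{\bk,d}}\Vert f\Vert_{F,\fp^+}^2,
\]
and then claim this is ``immediate once one recognizes both as $\langle f(x-y),f(x-y)\rangle_{\lambda\otimes\mu}$ computed two ways''. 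The left side is indeed $\Vert f(x-y)\Vert_{\lambda\otimes\mu}^2$ by your orthogonality computation, but asserting that the right side equals $\Vert f(x-y)\Vert_{\lambda\otimes\mu}^2$ is precisely the corollary you are trying to prove. The detour through the $\tilde f_{\bm,\bn}$ expansion contributes nothing.

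The paper closes the argument cleanly by making your ``only sees $w=-z$'' remark literal. Using
\[
(x|\overline{z})_{\fp^+}+(y|\overline{w})_{\fp^+}=\tfrac{1}{2}(x+y|\overline{z+w})_{\fp^+}+\tfrac{1}{2}(x-y|\overline{z-w})_{\fp^+},
\]
one has
\begin{align*}
\Vert f(x-y)\Vert_{\lambda\otimes\mu}^2
&=\Bigl\langle\bigl\langle f(x-y),{\rm e}^{\frac{1}{2}(x+y|\overline{z+w})+\frac{1}{2}(x-y|\overline{z-w})}\bigr\rangle_{\lambda\otimes\mu},f(z-w)\Bigr\rangle_{F,(z,w)}\\
&=\Bigl\langle\bigl\langle f(x-y),{\rm e}^{\frac{1}{2}(x-y|\overline{z-w})}\bigr\rangle_{\lambda\otimes\mu},f(z-w)\Bigr\rangle_{F,(z,w)},
\end{align*}
where the second equality holds because, after the orthogonal change of variables $(z,w)\mapsto(z+w,z-w)$, the factor $f(z-w)$ is independent of $z+w$, so only the $(z+w)$-constant term of the inner bracket survives the outer Fischer integral. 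Now Theorem~\ref{thm_topterm_tensor} evaluates the inner bracket as $\frac{\tilde C^d(\lambda,\mu,\bk)}{(\lambda)_{\bk,d}(\mu)_{\bk,d}}f\bigl(\tfrac{z-w}{2}\bigr)$, and the remaining Fischer pairing $\bigl\langle f\bigl(\tfrac{z-w}{2}\bigr),f(z-w)\bigr\rangle_{F,(z,w)}=\Vert f\Vert_{F,\fp^+}^2$ is a direct computation. This is the one missing step in your write-up; once you insert it, the $\tilde f_{\bm,\bn}$ discussion can be deleted entirely.
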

\begin{proof}
As in the proof of Proposition~\ref{prop_Plancherel}\,(1), we have
\begin{align*}
\Vert f(x-y)\Vert_{\lambda\otimes\mu,(x,y)}^2
&=\Big\langle\bigl\langle f(x-y),{\rm e}^{\frac{1}{2}(x+y|\overline{z+w})_{\fp^+}+\frac{1}{2}(x-y|\overline{z-w})_{\fp^+}}\bigr\rangle_{\lambda\otimes\mu,(x,y)},f(z-w)\Big\rangle_{F,(z,w)} \\
&=\Big\langle\bigl\langle f(x-y),{\rm e}^{\frac{1}{2}(x-y|\overline{z-w})_{\fp^+}}\bigr\rangle_{\lambda\otimes\mu,(x,y)},f(z-w)\Big\rangle_{F,(z,w)} \\
&=\frac{\tilde{C}^d(\lambda,\mu,\bk)}{(\lambda)_{\bk,d}(\mu)_{\bk,d}}\left\langle f\left(\frac{z-w}{2}\right),f(z-w)\right\rangle_{F,(z,w)} \\
&=\frac{\tilde{C}^d(\lambda,\mu,\bk)}{(\lambda)_{\bk,d}(\mu)_{\bk,d}}\Vert f\Vert_{F,\fp^+}.
\tag*{\qed}
\end{align*}
\renewcommand{\qed}{}
\end{proof}

Next we give a~rough estimate of zeroes of $\tilde{C}^d(\lambda,\mu,\bk)$.
\begin{Proposition}\label{prop_zeroes_tensor}
For $\bk\in\BZ_{++}^r$, let $\tilde{C}^d(\lambda,\mu,\bk)$ be as in~\eqref{const_tensor}.
For $1\le m_1\le m_2\le r$, if $k_1=\cdots=k_{m_1}$ and $k_{m_2+1}=0$, then
\begin{align*}
&\big\{(\lambda,\mu)\in\BC^2\mid \tilde{C}^d(\lambda,\mu,\bk)=0\big\} \\
&\qquad{}\subset\begin{cases} \left\{(\lambda,\mu)\in\BC^2\ \middle|\ \frac{d}{2}(m_1-1)-2k_1+2\le \lambda+\mu\le d(m_2-1)-k_{m_2}+1\right\}, & k_1\ge 1, \\
\varnothing, & k_1=0. \end{cases}
\end{align*}
Especially, for $m=0,1,\dots,r-1$, if $k_{m+1}=0$, then $\tilde{C}^d(\lambda,\mu,\bk)\ne 0$ for $\lambda,\mu\ge \frac{d}{2}m$.
\end{Proposition}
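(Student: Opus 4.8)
The plan is to use that $\tilde{C}^d(\lambda,\mu,\bk)$ depends on $(\lambda,\mu)$ only through the single variable $\nu:=\lambda+\mu$, so that the assertion becomes a statement about the zero set of a one--variable rational function, to be analysed level by level exactly as in the proofs of Propositions~\ref{prop_zeroes_nonsimple} and~\ref{prop_zeroes_simple}. First I would dispose of the case $k_1=0$: then $\bk=\underline{0}_r$, every Pochhammer symbol in \eqref{const_tensor} has length $0$, hence $\tilde{C}^d\equiv 1$ and the zero set is empty. So from now on assume $k_1\ge 1$.

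Next I would truncate the trailing zeros. Since $k_{m_2+1}=\dots=k_r=k_{r+1}=0$, for every pair $(i,j)$ with $j>m_2$ one has $k_i+k_j=k_i$; reindexing the products in \eqref{const_tensor} by the level $a=i+j$ and shifting $j\mapsto j-1$ in the denominator, the factors indexed by such pairs telescope, leaving
\[
\tilde{C}^d(\lambda,\mu,(k_1,\dots,k_r))=\tilde{C}^d(\lambda,\mu,(k_1,\dots,k_{m_2})),
\]
where the right--hand side is formed with rank $m_2$ (and $k_{m_2+1}:=0$). This is the step that replaces $d(r-1)$ by $d(m_2-1)$ in the final interval, and it reduces everything to the case $m_2=r$, i.e.\ $k_r\ge 1$.

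For this reduced situation I would rewrite $\tilde{C}^d$, via the second form of \eqref{const_tensor}, as a product over levels $a=1,\dots,2r-1$ of factors $R_a(\nu)$, each a ratio of Pochhammer symbols in $\alpha_a:=\nu-1-\frac{d}{2}(a-1)$ with numerator exponents $\{k_i+k_{a+1-i}\}$ and denominator exponents $\{k_i+k_{a+2-i}\}$. Using $k_1=\dots=k_{m_1}$, the factors $R_a$ for small $a$ collapse to pure denominators $1/(\alpha_a)_{\phi(\bk)_a}$ and contribute no zeros, just as the prefactor $\prod_{a=1}^{\max\{m_1',m_1''\}}$ arises in the proof of Proposition~\ref{prop_zeroes_nonsimple}; for the surviving levels I would bound the zero set of $R_a$ above by that of a single ratio $(\alpha_a)_{\psi(\bk)_a}/(\alpha_a)_{\phi(\bk)_a}$ with $\psi(\bk)_a=\max_i(k_i+k_{a+1-i})$ and $\phi(\bk)_a$ the second smallest of $\{k_i+k_{a+2-i}\}$, so that the zeros of $R_a$ lie in $\{\frac{d}{2}(a-1)+1-t\mid t\in\BZ,\ \phi(\bk)_a\le t\le\psi(\bk)_a-1\}$, that is, in the interval $[\frac{d}{2}(a-1)-\psi(\bk)_a+2,\ \frac{d}{2}(a-1)-\phi(\bk)_a+1]$. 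Taking the union over the relevant levels and estimating the extreme endpoints via the monotonicity of $\bk$ together with $k_1=\dots=k_{m_1}$ and $k_{m_2}=k_r\ge 1$ — the top level $a=2m_2-1$ (where only the diagonal pair $(m_2,m_2)$ occurs, $\psi=2k_{m_2}$, $\phi=k_{m_2}$) producing the upper end $d(m_2-1)-k_{m_2}+1$, and the smallest surviving level producing the lower end $\frac{d}{2}(m_1-1)-2k_1+2$ — yields the claimed interval. The ``Especially'' statement then follows by applying the estimate with $m_2$ the largest index with $k_{m_2}\ge 1$ (so $m_2\le m$) and $m_1=1$: every zero satisfies $\lambda+\mu\le d(m_2-1)-k_{m_2}+1\le d(m-1)<dm\le\lambda+\mu$ once $\lambda,\mu\ge\frac{d}{2}m$, a contradiction, while the case $\bk=\underline{0}_r$ is covered by the first paragraph.

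The main obstacle I anticipate is purely the bookkeeping of the level--by--level rewriting under the hypotheses $k_1=\dots=k_{m_1}$ and $k_{m_2+1}=0$: one must check that the low levels genuinely telescope to a zero--free prefactor and that the $\psi$/$\phi$ estimates are tight enough for the union of the level intervals to land inside the stated interval, keeping careful track of the off--by--one coming from the $\lceil a/2\rceil$ bounds and from the diagonal terms $i=j$ (present in the numerator of \eqref{const_tensor} but absent in its denominator), which is exactly where the asymmetry $-2k_1$ versus $-k_{m_2}$ of the two endpoints originates. Since only an upper bound on the zero set is claimed, possible cancellations between different levels are harmless and need not be controlled.
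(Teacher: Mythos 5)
Your strategy coincides with the paper's: view $\tilde{C}^d$ as a function of $\nu=\lambda+\mu$ only, cancel the trailing-zero part to reduce to rank $m_2$, observe that the levels $a<m_1$ contribute no zeros (in fact at those levels every numerator and denominator exponent equals $2k_1$ and the two products have the same number of factors, so they cancel to $1$ rather than to pure denominators -- harmless either way), and bound the zero set level by level before taking the union. The one step that fails is your per-level bound with $\phi(\bk)_a$ taken to be the \emph{second} smallest denominator exponent. That choice is what the paper uses in Proposition~\ref{prop_zeroes_nonsimple}, and it is justified there because the denominator has one more Pochhammer factor per level than the numerator; in~\eqref{const_tensor}, however, the numerator and denominator products at each level $a$ run over the identical range $\max\{1,a+1-r\}\le i\le\lceil a/2\rceil$, so the counts are equal. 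The order of the level-$a$ factor at $\nu-1-\frac{d}{2}(a-1)=-t$ is $\#\{i\colon k_i+k_{a+1-i}>t\}-\#\{i\colon k_i+k_{a+2-i}>t\}$, and this can already be positive when $t$ lies between the smallest and the second smallest denominator exponent. For instance, for $\bk=(5,3,1)$ (so $m_1=1$, $m_2=3$) and $a=3$ the level factor is $(\alpha)_6(\alpha)_6/\bigl((\alpha)_5(\alpha)_4\bigr)$ with $\alpha=\nu-1-d$, which vanishes at $\alpha=-4$, i.e.\ at $t=4<{\min}_2=5$; your claimed set for that level is $\{t=5\}$ and misses this zero, so the middle inclusion in your chain of containments breaks down.

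The fix is exactly the paper's choice: take $\phi(\bk)_a=\min\{k_i+k_j\mid 1\le i<j\le m_2+1,\ i+j=a+2\}$, the \emph{minimum} denominator exponent; with equal counts, $t<\phi(\bk)_a$ already forces the order to be $\le 0$, so zeros of the level factor require $\phi(\bk)_a\le t\le\psi(\bk)_a-1$. With this threshold your endpoint computation goes through unchanged: every level satisfies $\phi(\bk)_a\ge k_{m_2}$, so its zeros obey $\nu\le\frac{d}{2}(a-1)+1-k_{m_2}\le d(m_2-1)-k_{m_2}+1$, while $\psi(\bk)_a\le 2k_1$ and $a\ge m_1$ give the lower endpoint $\frac{d}{2}(m_1-1)-2k_1+2$; the $k_1=0$ case and your deduction of the final ``Especially'' statement are fine. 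After this correction your argument is essentially the paper's proof.
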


\begin{proof}
By direct computation, we have
\begin{align*}
\tilde{C}^d(\lambda,\mu,\bk)&=\tilde{C}^d(\lambda,\mu,(k_1,\dots,k_{m_2})) \\
&=\prod_{a=m_1}^{2m_2-1}\prod_{i=\max\{1,a+1-m_2\}}^{\lceil a/2\rceil}
\frac{\left(\lambda+\mu-1-\frac{d}{2}(a-1)\right)_{k_i+k_{a+1-i}}}{\left(\lambda+\mu-1-\frac{d}{2}(a-1)\right)_{k_i+k_{a+2-i}}}.
\end{align*}
For $1\le a\le 2m_2-1$, let
\begin{align*}
&\phi(\bk)_a:=\min\{ k_i+k_j\mid 1\le i<j\le m_2+1,\, i+j=a+2\}, \\
&\psi(\bk)_a:=\max\{ k_i+k_j\mid 1\le i\le j\le m_2,\, i+j=a+1\}.
\end{align*}
Then we have
\begin{gather*}
\big\{(\lambda,\mu)\in\BC^2\mid \tilde{C}^d(\lambda,\mu,\bk)=0\big\} \\
\qquad{}=\bigcup_{a=m_1}^{2m_2-1}\left\{(\lambda,\mu)\in\BC^2\, \middle|\, \prod_{i=\max\{1,a+1-m_2\}}^{\lceil a/2\rceil}
\frac{\left(\lambda+\mu-1-\frac{d}{2}(a-1)\right)_{k_i+k_{a+1-i}}}{\left(\lambda+\mu-1-\frac{d}{2}(a-1)\right)_{k_i+k_{a+2-i}}}=0 \right\} \\
\qquad{}\subset \bigcup_{a=m_1}^{2m_2-1}\left\{(\lambda,\mu)\in\BC^2\, \middle|\,
\frac{\left(\lambda+\mu-1-\frac{d}{2}(a-1)\right)_{\psi(\bk)_a}}{\left(\lambda+\mu-1-\frac{d}{2}(a-1)\right)_{\phi(\bk)_a}}=0 \right\} \\
\qquad{}= \bigcup_{a=m_1}^{2m_2-1}\left\{(\lambda,\mu)\in\BC^2\, \middle|\,
\lambda+\mu=\frac{d}{2}(a-1)-j+1,\, j\in\BZ,\, \phi(\bk)_a\le j\le \psi(\bk)_a-1\right\} \\
\qquad{}\subset \left\{(\lambda,\mu)\in\BC^2\, \middle|\, \frac{d}{2}(m_1-1)-2k_1+2\le \lambda+\mu\le \frac{d}{2}(2m_2-2)-k_{m_2}+1\right\}.
\end{gather*}
When $k_1=0$, we have $\tilde{C}^d(\lambda,\mu,\bk)=1$, and this is non-zero everywhere.
The last statement is clear.
\end{proof}

\subsection[Results on restriction of $\cH_\lambda(D)\hboxtimes\cH_\mu(D)$ to subgroups]{Results on restriction of $\boldsymbol{\cH_\lambda(D)\hboxtimes\cH_\mu(D)}$ to subgroups}

Next we consider the decomposition of the tensor product representation $\cH_\lambda(D)\hboxtimes\cH_\mu(D)|_{\Delta(\widetilde{G})}\allowbreak=\cH_\lambda(D)\hotimes\cH_\mu(D)$
under the diagonal subgroup $\Delta(\widetilde{G})\subset\widetilde{G}\times\widetilde{G}$.
By Theorem~\ref{thm_HKKS}, we have
\[
\cH_\lambda(D)\hotimes\cH_\mu(D)\simeq\hsum_{\bk\in\BZ_{++}^r}\cH_{\lambda+\mu}\bigl(D,\cP_\bk(\fp^+)\bigr).
\]
For each $\bk\in\BZ_{++}^r$, let $V_\bk$ be the abstract $K$-module isomorphic to $\cP_\bk(\fp^+)$,
let $\Vert\cdot\Vert_{\nu,\bk}$ be the $\widetilde{G}$-invariant norm on $\cH_\nu(D,V_\bk)$ normalized such that $\Vert v\Vert_{\nu,\bk}=|v|_{V_\bk}$ holds for all constant functions
$v\in V_\bk$, and for $\lambda,\mu>p-1$, we consider the symmetry breaking operator
\begin{gather*}
\hat{\cF}_{\lambda,\mu,\bk}^\downarrow\colon \ \cH_\lambda(D)\hotimes\cH_\mu(D)\longrightarrow \cH_{\lambda+\mu}(D,V_\bk), \\
\bigl(\hat{\cF}_{\lambda,\mu,\bk}^\downarrow f\bigr)(x):=\hat{F}_{\lambda,\mu,\bk}\left(\frac{\partial}{\partial x},\frac{\partial}{\partial y}\right)f(x,y)\biggr|_{y=x}, \\
\hat{F}_{\lambda,\mu,\bk}(z,w):=(\lambda)_{\bk,d}(\mu)_{\bk,d}\bigl\langle {\rm e}^{(x|z)_{\fp^+}+(y|w)_{\fp^+}},\rK_\bk(x-y)\bigr\rangle_{\lambda\otimes\mu,(x,y)}\in\cP(\fp^-\oplus\fp^-,V_\bk),
\end{gather*}
where $\rK_\bk(x)\in\cP(\fp^+,\overline{V_\bk})^K$ is normalized such that
\[
\big|\langle f(x),\rK_\bk(x)\rangle_{F,\fp^+}\big|_{V_\bk}^2=\Vert f\Vert_{F,\fp^+}^2, \qquad f(x)\in\cP_\bk(\fp^+).
\]
This operator coincides with the one given in~\cite{PZ} up to constant multiple.
In our normalization, $\hat{\cF}_{\lambda,\mu,\bk}^\downarrow\colon \cO_\lambda(D)\hotimes\cO_\mu(D)\rightarrow \cO_{\lambda+\mu}(D,V_\bk)$ is holomorphically continued for all $(\lambda,\mu)\in\BC^2$.
We note that the normalization of $\hat{\cF}_{\lambda,\mu,\bk}^\downarrow$ is different from~(\ref{SBO1}) and~(\ref{SBO2}), that is, we have
\[
\hat{\cF}_{\lambda,\mu,\bk}^\downarrow (f(x-y))=\tilde{C}^d(\lambda,\mu,\bk)\left\langle f(x),\rK_\bk(x)\right\rangle_{F,\fp^+}
\in V_\bk, \qquad f(x)\in\cP_\bk(\fp^+).
\]
Also, when $\fp^+$ is of tube type, we fix $[K,K]$-isomorphisms $V_{\bk+\underline{a}_r}\simeq V_\bk$ for each $a\in\BZ_{>0}$.
Then as in Proposition~\ref{prop_Plancherel}, by Theorems~\ref{thm_poles_tensor},~\ref{thm_factorize_tensor}, Propositions~\ref{prop_atzero_tensor},~\ref{prop_zeroes_tensor}
and Corollary~\ref{cor_topterm_tensor}, the following hold.

\begin{Corollary}\label{cor_Plancherel_tensor}
For $\lambda,\mu>p-1$ and for $f\in\cH_\lambda(D)\hotimes\cH_\mu(D)$, we have
\[ \Vert f\Vert_{\lambda\otimes\mu}^2=\sum_{\bk\in\BZ_{++}^r}\frac{1}{(\lambda)_{\bk,d}(\mu)_{\bk,d}\tilde{C}^d(\lambda,\mu,\bk)}\Vert \hat{\cF}_{\lambda,\mu,\bk}^\downarrow f\Vert_{\lambda+\mu,\bk}^2, \]
where $\tilde{C}^d(\lambda,\mu,\bk)$ is as in~\eqref{const_tensor}.
\end{Corollary}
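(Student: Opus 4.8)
The plan is to follow verbatim the template of Proposition~\ref{prop_Plancherel}, reducing the identity to the minimal $\widetilde{K}$-type and reading off the constant from Corollary~\ref{cor_topterm_tensor}. By Theorem~\ref{thm_HKKS}\,(2) the decomposition $\cH_\lambda(D)\hotimes\cH_\mu(D)\simeq\hsum_{\bk\in\BZ_{++}^r}\cH_{\lambda+\mu}(D,\cP_\bk(\fp^+))$ is a Hilbert direct sum and is multiplicity-free, and $\hat{\cF}_{\lambda,\mu,\bk}^\downarrow$ is (up to scalar) the $\widetilde{G}$-projection onto the $\bk$-th summand. Hence there are constants $C(\lambda,\mu,\bk)>0$ with $\Vert f\Vert_{\lambda\otimes\mu}^2=\sum_\bk C(\lambda,\mu,\bk)\Vert\hat{\cF}_{\lambda,\mu,\bk}^\downarrow f\Vert_{\lambda+\mu,\bk}^2$ for all $f$; as in the proof of Proposition~\ref{prop_Plancherel}\,(3) one introduces the holographic operator $\hat{\cF}_{\lambda,\mu,\bk}^\uparrow$ normalized by $\hat{\cF}_{\lambda,\mu,\bk}^\downarrow\circ\hat{\cF}_{\lambda,\mu,\bk}^\uparrow=I$, and uses that $\hat{\cF}_{\lambda,\mu,\bk}^\uparrow$ is an isometry up to scalar, so $C(\lambda,\mu,\bk)$ does not depend on the chosen vector in the $\bk$-isotypic component and may be computed on its generating $\widetilde{K}$-type, i.e.\ on functions $f_0(x-y)$ with $f_0\in\cP_\bk(\fp^+)$ (here $\cP_\bk(\fp^+)\simeq\cP_\bk(\fp^+_2)$ via $f_0\mapsto f_0(x-y)$, $\fp^+_2=\{(x,-x)\}$). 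For $\bk'\ne\bk$ the image of $f_0(x-y)$ under $\hat{\cF}_{\lambda,\mu,\bk'}^\downarrow$ vanishes, since otherwise it would be all of the irreducible $\cH_{\lambda+\mu}(D,V_{\bk'})$, contradicting multiplicity-freeness.

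Now fix $f_0\in\cP_\bk(\fp^+)$. On the one hand Corollary~\ref{cor_topterm_tensor} gives $\Vert f_0(x-y)\Vert_{\lambda\otimes\mu,(x,y)}^2=\frac{\tilde{C}^d(\lambda,\mu,\bk)}{(\lambda)_{\bk,d}(\mu)_{\bk,d}}\Vert f_0\Vert_{F,\fp^+}^2$. On the other hand the displayed formula preceding the statement, $\hat{\cF}_{\lambda,\mu,\bk}^\downarrow(f_0(x-y))=\tilde{C}^d(\lambda,\mu,\bk)\langle f_0(x),\rK_\bk(x)\rangle_{F,\fp^+}\in V_\bk$, together with the normalization $|\langle f_0(x),\rK_\bk(x)\rangle_{F,\fp^+}|_{V_\bk}^2=\Vert f_0\Vert_{F,\fp^+}^2$ of $\rK_\bk$ and the normalization $\Vert v\Vert_{\lambda+\mu,\bk}=|v|_{V_\bk}$ on constant functions, gives $\Vert\hat{\cF}_{\lambda,\mu,\bk}^\downarrow(f_0(x-y))\Vert_{\lambda+\mu,\bk}^2=\tilde{C}^d(\lambda,\mu,\bk)^2\Vert f_0\Vert_{F,\fp^+}^2$. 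Since for $\lambda,\mu>p-1$ one has $\tilde{C}^d(\lambda,\mu,\bk)\ne0$ — every zero of $\tilde{C}^d(\cdot,\cdot,\bk)$ satisfies $\lambda+\mu\le d(r-1)+1\le p-1$ by Proposition~\ref{prop_zeroes_tensor} applied with $m_2=r$ (using the convention $k_{r+1}=0$), whereas $\lambda+\mu>2(p-1)>p-1$ — we may divide and obtain $C(\lambda,\mu,\bk)=\dfrac{1}{(\lambda)_{\bk,d}(\mu)_{\bk,d}\,\tilde{C}^d(\lambda,\mu,\bk)}$, which is exactly the asserted formula.

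The only point that requires care — the main, if modest, obstacle — is matching the Peng--Zhang-style normalization of $\hat{\cF}_{\lambda,\mu,\bk}^\downarrow$ (which differs from that of~(\ref{SBO1})--(\ref{SBO2}) by the factor $(\lambda)_{\bk,d}(\mu)_{\bk,d}$) with the abstract ``isometry up to scalar'' structure of the multiplicity-free decomposition, so that evaluating on the single generating $\widetilde{K}$-type $\cP_\bk(\fp^+)$ is legitimate; this is supplied by the holographic-operator argument of Proposition~\ref{prop_Plancherel}\,(3) once one checks that $\hat{\cF}_{\lambda,\mu,\bk}^\downarrow$ is nonzero on that $\widetilde{K}$-type, which is immediate from the displayed formula above together with $\tilde{C}^d(\lambda,\mu,\bk)\ne0$ in the relevant range.
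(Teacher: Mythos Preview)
Your proof is correct and follows essentially the same route the paper indicates: the paper does not give a separate proof but states that the corollary follows ``as in Proposition~\ref{prop_Plancherel}'' from Corollary~\ref{cor_topterm_tensor} and Proposition~\ref{prop_zeroes_tensor}, which is precisely the argument you have written out (compute both $\Vert f_0(x-y)\Vert_{\lambda\otimes\mu}^2$ and $\Vert\hat{\cF}_{\lambda,\mu,\bk}^\downarrow(f_0(x-y))\Vert_{\lambda+\mu,\bk}^2$ on the minimal $\widetilde{K}$-type, divide, and use multiplicity-freeness plus the holographic-operator isometry to pass to general $f$). Your handling of the extra $(\lambda)_{\bk,d}(\mu)_{\bk,d}$ in the normalization of $\hat{\cF}_{\lambda,\mu,\bk}^\downarrow$ and the nonvanishing of $\tilde{C}^d(\lambda,\mu,\bk)$ for $\lambda,\mu>p-1$ via Proposition~\ref{prop_zeroes_tensor} are exactly the points one has to check.
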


\begin{Corollary}\label{cor_submodule_tensor}
Let $\bk\in\BZ_{++}^r$.
\begin{enumerate}\itemsep=0pt
\item[$(1)$] For $a_1,a_2=1,2,\dots,r$,
\[
{\rm d}(\tau_\lambda\hotimes\tau_\mu)(\cU(\fg))\big\{f(x-y)\mid f\in\cP_\bk(\fp^+)\big\}\subset M_{a_1}^\fg(\lambda)\otimes M_{a_2}^\fg(\mu)
\]
holds if and only if
\[
(\lambda,\mu)\in \left(\frac{d}{2}(a_1-1)-k_{a_1}-\BZ_{\ge 0}\right)\times\left(\frac{d}{2}(a_2-1)-k_{a_2}-\BZ_{\ge 0}\right),
\]
where $M_a^\fg(\lambda)\subset\cO_\lambda(D)_{\widetilde{K}}$ is the $\bigl(\fg,\widetilde{K}\bigr)$-submodule given in~\eqref{submodule}.
\item[$(2)$] Let $\lambda,\mu\in\left\{0,\frac{d}{2},\dots,\frac{d}{2}(r-1)\right\}\cup\left(\frac{d}{2}(r-1),\infty\right)$. If $\min\{\lambda,\mu\}=\frac{d}{2}a$, $a\in\{0,1,\dots,r-1\}$,
then we have
\[
\cH_\lambda(D)\hotimes\cH_\mu(D)\simeq\hsum_{\substack{\bk\in\BZ_{++}^r \\ k_{a+1}=0}}\cH_{\lambda+\mu}\bigl(D,\cP_\bk(\fp^+)\bigr).
\]
\item[$(3)$] For $a=0,1,\dots,r-1$, if $k_{a+1}=0$, then for $\lambda,\mu\in\left\{\frac{d}{2}a,\frac{d}{2}(a+1),\dots,\frac{d}{2}(r-1)\right\}\cup\left(\frac{d}{2}(r-1),\infty\right)$,
the restriction of $\hat{\cF}_{\lambda,\mu,\bk}^\downarrow$ gives the symmetry breaking operator
\[
\hat{\cF}_{\lambda,\mu,\bk}^\downarrow\colon \ \cH_\lambda(D)\hotimes\cH_\mu(D)\longrightarrow \cH_{\lambda+\mu}(D,V_\bk).
\]
\item[$(4)$] Suppose $\fp^+$ is of tube type. For $a=1,2,\dots,k_r$, if
\[
\rK_\bk(x)=c\det_{\fn^+}(x)^a\rK_{\bk-\underline{a}_r}(x),
\]
then we have
\begin{gather*}
\hat{\cF}_{\frac{n}{r}-a,\mu,\bk}^\downarrow=c\frac{(\mu)_{\bk,d}}{(\mu)_{\bk-\underline{a}_r,d}}\hat{\cF}_{\frac{n}{r}+a,\mu,\bk-\underline{a}_r}^\downarrow
\circ\biggl(\det_{\fn^-}\left(\frac{\partial}{\partial x}\right)^a\otimes 1\biggr)\colon \\
 \cO_{\frac{n}{r}-a}(D)\hotimes \cO_\mu(D)\longrightarrow \cO_{\frac{n}{r}-a+\mu}(D,V_\bk)\simeq\cO_{\frac{n}{r}+a+\mu}(D,V_{\bk-\underline{a}_r}), \\
\hat{\cF}_{\lambda,\frac{n}{r}-a,\bk}^\downarrow=c\frac{(\lambda)_{\bk,d}}{(\lambda)_{\bk-\underline{a}_r,d}}\hat{\cF}_{\lambda,\frac{n}{r}+a,\bk-\underline{a}_r}^\downarrow
\circ\biggl(1\otimes\det_{\fn^-}\left(-\frac{\partial}{\partial y}\right)^a\biggr)\colon \\
 \cO_\lambda(D)\hotimes \cO_{\frac{n}{r}-a}(D)\longrightarrow \cO_{\lambda+\frac{n}{r}-a}(D,V_\bk)\simeq\cO_{\lambda+\frac{n}{r}+a}(D,V_{\bk-\underline{a}_r}).
\end{gather*}
\item[$(5)$] Suppose $\hat{\cF}_{\lambda_0,\mu_0,\bk}^\downarrow=0$. Then there exist $l_1,l_2\in\BZ_{>0}$ such that
\[ \lim_{\lambda\to\lambda_0}\frac{\hat{\cF}_{\lambda,\mu_0,\bk}^\downarrow}{(\lambda-\lambda_0)^{l_1}},\,
\lim_{\mu\to\mu_0}\frac{\hat{\cF}_{\lambda_0,\mu,\bk}^\downarrow}{(\mu-\mu_0)^{l_2}}\colon \ \cO_{\lambda_0}(D)\hotimes\cO_{\mu_0}(D)\longrightarrow \cO_{\lambda_0+\mu_0}(D,V_\bk) \]
converge to non-zero operators. These are linearly independent in $\Hom_{\widetilde{G}}(\cO_{\lambda_0}(D)\hotimes\cO_{\mu_0}(D),\allowbreak\cO_{\lambda_0+\mu_0}(D,V_\bk))$, and especially,
this space is at least $2$-dimensional.
\end{enumerate}
\end{Corollary}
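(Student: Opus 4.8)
The plan is to deduce all five parts of Corollary~\ref{cor_submodule_tensor} from the explicit formulas for the tensor-product inner product obtained in this section, exactly parallel to the derivations of Propositions~\ref{prop_poles} and~\ref{prop_Plancherel} and Corollaries~\ref{cor_submodule_nonsimple},~\ref{cor_submodule_simple}. Throughout I will use that the $\cH_{\lambda+\mu}(D,\cP_\bk(\fp^+))$-isotypic component of $\cH_\lambda(D)\hotimes\cH_\mu(D)$ is generated by the minimal $\widetilde K$-type $\{f(x-y)\mid f\in\cP_\bk(\fp^+)\}\subset\cP(\fp^+\oplus\fp^+)=(\cO_\lambda(D)\hotimes\cO_\mu(D))_{\widetilde K}$, and that $\hat\cF^\downarrow_{\lambda,\mu,\bk}$ acts on this minimal $\widetilde K$-type by the scalar (matrix) $\tilde C^d(\lambda,\mu,\bk)\langle f(x),\rK_\bk(x)\rangle_{F,\fp^+}$, as recorded just before the statement.

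For (1): by Theorem~\ref{thm_poles_tensor}, for $f\in\cP_\bk(\fp^+)$ the inner product $\langle f(x-y),{\rm e}^{(x|\overline z)_{\fp^+}+(y|\overline w)_{\fp^+}}\rangle_{\lambda\otimes\mu,(x,y)}$ expands as $\sum_{\bm,\bn}\frac{1}{(\lambda)_{\bm,d}(\mu)_{\bn,d}}\tilde f_{\bm,\bn}(z,-w)$ with $\bm,\bn\le\bk$ componentwise, and $\tilde f_{\bk,\underline0_r}(x,y)=f(x)$, $\tilde f_{\underline0_r,\bk}(x,y)=f(y)$ are nonzero; so $(\lambda)_{\bk,d}(\mu)_{\bk,d}$ times this inner product is holomorphic in $(\lambda,\mu)$ and nonvanishing. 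Feeding this into the last paragraph of Section~\ref{subsection_HDS} (which tells when $f(x)$ lies in $M_j^\fg(\lambda)$ in terms of the poles of its inner product), applied in each factor separately, gives the ``if'' direction; the ``only if'' follows because the $(\bk,\underline0_r)$- and $(\underline0_r,\bk)$-terms survive, so the bound on $\bm_j,\bn_j$ is sharp. Part (3) is then immediate: $\hat\cF^\downarrow_{\lambda,\mu,\bk}$ is holomorphic for all $(\lambda,\mu)$ by its normalization, and for $\lambda,\mu$ in the indicated truncated Wallach sets with $k_{a+1}=0$ one has $\cP_\bk(\fp^+)\subset\cH_{\frac d2 a}(D)_{\widetilde K}$ in both factors (by (1) and the description of $\cH_{\frac d2 a}(D)_{\widetilde K}=M^\fg_{a+1}(\frac d2 a)$), while the image of the minimal $\widetilde K$-type under $\hat\cF^\downarrow_{\lambda,\mu,\bk}$ is nonzero since $\tilde C^d(\lambda,\mu,\bk)\ne0$ there by Proposition~\ref{prop_zeroes_tensor}; uniqueness of the unitary submodule of $\cO_{\lambda+\mu}(D,V_\bk)$ then identifies the image as $\cH_{\lambda+\mu}(D,V_\bk)$, exactly as in the proof of Proposition~\ref{prop_poles}\,(2).

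For (2): when $\min\{\lambda,\mu\}=\frac d2 a$ the unitary summands that occur are precisely those $\bk$ with $\cP_\bk(\fp^+)$ contained in the relevant $\widetilde K$-finite submodule of both factors, i.e.\ with $k_{a+1}=0$, using (1), Proposition~\ref{prop_zeroes_tensor} (so that the Plancherel coefficient $\frac{1}{(\lambda)_{\bk,d}(\mu)_{\bk,d}\tilde C^d(\lambda,\mu,\bk)}$ of Corollary~\ref{cor_Plancherel_tensor} is positive and finite exactly there), and discrete decomposability of unitary highest weight modules under symmetric subgroups; this mirrors formula~(\ref{formula_discWallach_simple}) and Corollary~\ref{cor_submodule_simple}\,(2). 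Part (4) is obtained by combining Theorem~\ref{thm_factorize_tensor} (the $\det_{\fn^+}$-factorization of the inner product at $\lambda=\frac nr-a$, resp.\ $\mu=\frac nr-a$) with Proposition~\ref{prop_det_factorize}\,(1) (that $\det_{\fn^-}(\partial/\partial x)^a$ intertwines $\cO_{\frac nr-a}(D)\to\cO_{\frac nr+a}(D)$) and the hypothesis $\rK_\bk=c\det_{\fn^+}^a\rK_{\bk-\underline a_r}$, reading off the operator identity on minimal $\widetilde K$-types and extending by the $\widetilde G$-intertwining property, exactly as in Corollaries~\ref{cor_submodule_nonsimple}\,(4) and~\ref{cor_submodule_simple}\,(4); the constant $(\mu)_{\bk,d}/(\mu)_{\bk-\underline a_r,d}$ (resp.\ in $\lambda$) appears because $\hat\cF^\downarrow$ carries the extra Pochhammer factor $(\lambda)_{\bk,d}(\mu)_{\bk,d}$ relative to the $\cF^\downarrow$ of~(\ref{SBO1})--(\ref{SBO2}). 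Finally, part (5) is a direct translation of Proposition~\ref{prop_atzero_tensor}: if $\hat\cF^\downarrow_{\lambda_0,\mu_0,\bk}=0$ then $\tilde C^d(\lambda_0,\mu_0,\bk)=0$, and the two non-zero limits $\lim_{\lambda\to\lambda_0}(\lambda-\lambda_0)^{-l_1}\hat\cF^\downarrow_{\lambda,\mu_0,\bk}$ and $\lim_{\mu\to\mu_0}(\mu-\mu_0)^{-l_2}\hat\cF^\downarrow_{\lambda_0,\mu,\bk}$ produced there are themselves $\widetilde G$-intertwining operators (being limits of such), and their linear independence as polynomials on $\fp^+\oplus\fp^+$ — established in Proposition~\ref{prop_atzero_tensor} via the disjointness of the index sets $\{(\bm,\bn):(\mu_0+\bn)_{\bk-\bn,d}\ne0\}$ and $\{(\bm,\bn):(\lambda_0+\bm)_{\bk-\bm,d}\ne0\}$ — forces $\dim\Hom_{\widetilde G}(\cO_{\lambda_0}(D)\hotimes\cO_{\mu_0}(D),\cO_{\lambda_0+\mu_0}(D,V_\bk))\ge2$.

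The main obstacle I anticipate is not any single computation but the careful bookkeeping in part (5): one must check that the limiting operators are genuinely non-zero \emph{as symmetry breaking operators} (not merely non-zero on the minimal $\widetilde K$-type), which requires that the surviving terms $\tilde f_{\bm,\bn}(x,-y)$ with $(\bm,\bn)$ in the respective index sets are not killed by the differential-operator substitution $y=x$, and that the two families of multi-indices really are disjoint at the specified $(\lambda_0,\mu_0)$; both points are handled by the linear independence of the $\tilde f_{\bm,\bn}$ together with the explicit Pochhammer vanishing conditions, but verifying that $(\lambda_0+\bm)_{\bk-\bm,d}(\mu_0+\bn)_{\bk-\bn,d}\ne0$ is incompatible with $\tilde C^d(\lambda_0,\mu_0,\bk)=0$ for the \emph{same} $(\bm,\bn)$ needs the estimate of Proposition~\ref{prop_zeroes_tensor} rephrased in terms of the individual factors. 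Everything else is routine specialization of the general machinery already set up in Sections~\ref{subsection_sym_subalg}--\ref{subsection_reduction} and in this section.
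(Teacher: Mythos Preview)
Your proposal is correct and follows essentially the same route as the paper, which simply cites Theorems~\ref{thm_poles_tensor}, \ref{thm_factorize_tensor}, Propositions~\ref{prop_atzero_tensor}, \ref{prop_zeroes_tensor} and Corollary~\ref{cor_topterm_tensor} and leaves the reader to assemble the pieces exactly as you have done.

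One small clarification on your final paragraph: for part~(5) you do not need to route through $\tilde C^d(\lambda_0,\mu_0,\bk)=0$ at all. The hypothesis $\hat\cF^\downarrow_{\lambda_0,\mu_0,\bk}=0$ is \emph{equivalent} to the vanishing of the polynomial~(\ref{inner_prod_tensor}) at $(\lambda_0,\mu_0)$ (since the symbol $\hat F_{\lambda,\mu,\bk}$ determines the operator and vice versa via the exponential test functions), and this vanishing says precisely that $(\lambda_0+\bm)_{\bk-\bm,d}(\mu_0+\bn)_{\bk-\bn,d}=0$ for every $(\bm,\bn)\in\Lambda(\bk)$. The disjointness of the two index sets in the proof of Proposition~\ref{prop_atzero_tensor} is then immediate from this, with no appeal to Proposition~\ref{prop_zeroes_tensor}. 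Likewise, your worry about the restriction $y=x$ killing terms is unfounded: distinct symbol polynomials $F(z,w)$ give distinct operators, as one sees by evaluating on ${\rm e}^{(x|z)_{\fp^+}+(y|w)_{\fp^+}}$.
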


Corollary~\ref{cor_submodule_tensor}\,(2) and (3) are earlier given in~\cite[Theorems 3.3 and 4.4]{PZ}.
As an example of~(5), by~(4) we easily get the following.
\begin{Corollary}\label{cor_linear_indep_tensor}
Suppose $\fp^+$ is of tube type, and let $\bk\in\BZ_{++}^r$. For $a_1,a_2=1,2,\dots,k_r$ with $a_1+a_2\ge k_r+1$, we have
\begin{gather*}
\lim_{\mu\to\frac{n}{r}-a_2}\frac{\hat{\cF}_{\frac{n}{r}-a_1,\mu,\bk}^\downarrow}{(\mu-a_1+\bk)_{\underline{a_1}_r,d}}
=c\hat{\cF}_{\frac{n}{r}+a_1,\frac{n}{r}-a_2,\bk-\underline{a_1}_r}^\downarrow
\circ\biggl(\det_{\fn^-}\left(\frac{\partial}{\partial x}\right)^{a_1}\otimes 1\biggr), \\
\lim_{\lambda\to\frac{n}{r}-a_1}\frac{\hat{\cF}_{\lambda,\frac{n}{r}-a_2,\bk}^\downarrow}{(\lambda-a_2+\bk)_{\underline{a_2}_r,d}}
=c\hat{\cF}_{\frac{n}{r}-a_1,\frac{n}{r}+a_2,\bk-\underline{a_2}_r}^\downarrow
\circ\biggl(1\otimes\det_{\fn^-}\left(-\frac{\partial}{\partial y}\right)^{a_2}\biggr)\colon \\
 \cO_{\frac{n}{r}-a_1}(D)\hotimes \cO_{\frac{n}{r}-a_2}(D)\longrightarrow \cO_{2\frac{n}{r}-a_1-a_2}(D,V_\bk),
\end{gather*}
and these are linearly independent.
\end{Corollary}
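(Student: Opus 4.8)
The plan is to obtain Corollary~\ref{cor_linear_indep_tensor} from parts~(4) and~(5) of Corollary~\ref{cor_submodule_tensor}, the only additional ingredient being two elementary Pochhammer computations. Set $\bk':=\bk-\underline{a_1}_r$ and $\bk'':=\bk-\underline{a_2}_r$, which lie in $\BZ_{++}^r$ since $a_1,a_2\le k_r$, and note that because $k_r\ge a_1$ (resp.\ $k_r\ge a_2$) the reproducing-kernel polynomial $\rK_\bk$ factors as in the hypothesis of Corollary~\ref{cor_submodule_tensor}(4). Since $\fp^+$ is of tube type, $\frac{2n}{r}=p=d(r-1)+2$, hence $\frac{n}{r}-a_j=\frac{d}{2}(r-1)+1-a_j$. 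First I would rewrite the first factorization in Corollary~\ref{cor_submodule_tensor}(4) with $a=a_1$ and $\mu$ kept free, rewriting its Pochhammer ratio $\frac{(\mu)_{\bk,d}}{(\mu)_{\bk',d}}$ as $(\mu-a_1+\bk)_{\underline{a_1}_r,d}$:
\[
\hat{\cF}_{\frac{n}{r}-a_1,\mu,\bk}^\downarrow=c\,(\mu-a_1+\bk)_{\underline{a_1}_r,d}\,\hat{\cF}_{\frac{n}{r}+a_1,\mu,\bk'}^\downarrow\circ\left(\det_{\fn^-}\left(\frac{\partial}{\partial x}\right)^{a_1}\otimes 1\right).
\]
The factor $\hat{\cF}_{\frac{n}{r}+a_1,\mu,\bk'}^\downarrow$ is holomorphic in $\mu$ on all of $\BC$ (the paragraph preceding Corollary~\ref{cor_Plancherel_tensor}), so dividing by the polynomial $(\mu-a_1+\bk)_{\underline{a_1}_r,d}$ and letting $\mu\to\frac{n}{r}-a_2$ gives the first displayed formula of Corollary~\ref{cor_linear_indep_tensor}, with target space identified as in Corollary~\ref{cor_submodule_tensor}(4); the second formula follows the same way from the second factorization with $a=a_2$ and $\lambda\to\frac{n}{r}-a_1$.

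I would then check that both operators thus obtained are nonzero, i.e.\ that $\tilde{C}^d\bigl(\frac{n}{r}+a_1,\frac{n}{r}-a_2,\bk'\bigr)\ne 0$ and $\tilde{C}^d\bigl(\frac{n}{r}-a_1,\frac{n}{r}+a_2,\bk''\bigr)\ne 0$. If $\bk'=\underline{0}_r$ then $\tilde{C}^d(\cdot,\cdot,\bk')\equiv 1$ by Proposition~\ref{prop_zeroes_tensor}; otherwise let $m$ be the largest index with $k'_m\ge 1$, so by that proposition every zero $(\lambda,\mu)$ of $\tilde{C}^d(\cdot,\cdot,\bk')$ satisfies $\lambda+\mu\le d(m-1)-k'_m+1$. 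Our point has $\lambda+\mu=d(r-1)+2+a_1-a_2$, and
\[
d(r-1)+2+a_1-a_2-\bigl(d(m-1)-k'_m+1\bigr)=d(r-m)+1+a_1-a_2+k'_m\ge 1+a_1-a_2+(k_r-a_1)=1+k_r-a_2\ge 1,
\]
using $k'_m\ge k'_r=k_r-a_1$ (as $\bk'$ is decreasing) and $a_2\le k_r$; so the point is not a zero. The same estimate with the roles of $a_1,a_2$ interchanged and $\bk''$ in place of $\bk'$ gives the second nonvanishing. Now $\hat{\cF}_{\lambda,\mu,\bm}^\downarrow(f(x-y))=\tilde{C}^d(\lambda,\mu,\bm)\langle f(x),\rK_\bm(x)\rangle_{F,\fp^+}$, and $\det_{\fn^-}\bigl(\frac{\partial}{\partial x}\bigr)^{a_1}$ carries $\{f(x-y)\mid f\in\cP_\bk(\fp^+)\}$ onto $\{g(x-y)\mid g\in\cP_{\bk'}(\fp^+)\}$ by a nonzero scalar (formula~\eqref{formula_diff}: each factor $\bigl(k_j-a_1+\frac{d}{2}(r-j)+1\bigr)_{a_1}$ is positive, as $a_1\le k_r\le k_j$), so the composite operators are nonzero.

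Finally I would establish linear independence via Corollary~\ref{cor_submodule_tensor}(5). Evaluating the factorization above at $\mu=\frac{n}{r}-a_2$ shows $\hat{\cF}_{\frac{n}{r}-a_1,\frac{n}{r}-a_2,\bk}^\downarrow=0$: the prefactor $(\frac{n}{r}-a_2-a_1+\bk)_{\underline{a_1}_r,d}=\prod_{j=1}^r\bigl(\frac{n}{r}-a_2+k_j-a_1-\frac{d}{2}(j-1)\bigr)_{a_1}$ has its $j=r$ factor equal to $(k_r-a_1-a_2+1)_{a_1}$, a product of $a_1$ consecutive integers running from $k_r-a_1-a_2+1\le 0$ to $k_r-a_2\ge 0$ and hence passing through $0$. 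Thus Corollary~\ref{cor_submodule_tensor}(5) applies at $(\lambda_0,\mu_0)=(\frac{n}{r}-a_1,\frac{n}{r}-a_2)$, giving $l_1,l_2\in\BZ_{>0}$ for which $\lim_{\lambda\to\lambda_0}(\lambda-\lambda_0)^{-l_1}\hat{\cF}_{\lambda,\mu_0,\bk}^\downarrow$ and $\lim_{\mu\to\mu_0}(\mu-\mu_0)^{-l_2}\hat{\cF}_{\lambda_0,\mu,\bk}^\downarrow$ are nonzero and linearly independent in $\Hom_{\widetilde{G}}(\cO_{\lambda_0}(D)\hotimes\cO_{\mu_0}(D),\cO_{\lambda_0+\mu_0}(D,V_\bk))$. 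Since the previous steps exhibit $\bigl((\mu-a_1+\bk)_{\underline{a_1}_r,d}\bigr)^{-1}\hat{\cF}_{\frac{n}{r}-a_1,\mu,\bk}^\downarrow$ and $\bigl((\lambda-a_2+\bk)_{\underline{a_2}_r,d}\bigr)^{-1}\hat{\cF}_{\lambda,\frac{n}{r}-a_2,\bk}^\downarrow$ as having nonzero limits at $(\lambda_0,\mu_0)$, the normalizing polynomials vanish there to exactly the orders $l_2$ and $l_1$, so the two operators in Corollary~\ref{cor_linear_indep_tensor} are nonzero scalar multiples of those two limits and hence linearly independent; in particular the $\Hom$-space is at least $2$-dimensional. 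The one step that needs genuine care is the nonvanishing check in the second paragraph: one must keep track of which Pochhammer factors of $\tilde{C}^d$ actually survive cancellation, and it is exactly the hypothesis $a_1+a_2\ge k_r+1$ together with $a_1,a_2\le k_r$ that pushes the relevant point strictly past the upper end of the short interval of Proposition~\ref{prop_zeroes_tensor} in which zeros can occur.
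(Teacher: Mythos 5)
Your proposal is correct and follows essentially the same route as the paper, which obtains the two identities from Corollary~\ref{cor_submodule_tensor}\,(4) (i.e., Theorem~\ref{thm_factorize_tensor}) and treats the linear independence as an instance of Corollary~\ref{cor_submodule_tensor}\,(5) (Proposition~\ref{prop_atzero_tensor}). The only difference is that you spell out details the paper leaves implicit — the rewriting $(\mu)_{\bk,d}/(\mu)_{\bk-\underline{a_1}_r,d}=(\mu-a_1+\bk)_{\underline{a_1}_r,d}$, the vanishing of $\hat{\cF}_{\frac{n}{r}-a_1,\frac{n}{r}-a_2,\bk}^\downarrow$ from the Pochhammer factor $(k_r-a_1-a_2+1)_{a_1}$, and the nonvanishing of the factorized operators via Proposition~\ref{prop_zeroes_tensor} together with~(\ref{formula_diff}) — all of which check out.
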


\subsection[Example: $\bk=(k,\dots,k)$ case]{Example: $\boldsymbol{\bk=(k,\dots,k)}$ case}

In this subsection we assume $\fp^+=\fn^{+\BC}$ is of tube type, and consider the case $\bk=\underline{k}_r=(\underbrace{k,\dots,k}_r)$,
so that $\cP_{\underline{k}_r}(\fp^+)=\BC\det_{\fn^+}(x)^k$ holds. For $\lambda,\mu\in\BC$, $k\in\BZ_{\ge 0}$, we define the polynomial $RC_{\lambda,\mu,k}^{\fn^+}(z,w)\in\cP(\fp^+\oplus\fp^+)$ by
\[
RC_{\lambda,\mu,k}^{\fn^+}(z,w)
:=(\lambda)_{\underline{k}_r,d}(\mu)_{\underline{k}_r,d}\bigl\langle \det_{\fn^+}(x-y)^k,{\rm e}^{(x|\overline{z})_{\fp^+}+(y|\overline{w})_{\fp^+}}\bigr\rangle_{\lambda\otimes\mu,(x,y)}.
\]
This is originally defined for $\Re\lambda,\Re\mu>p-1=\frac{2n}{r}-1$, and holomorphically continued for all~$\lambda,\mu\in\BC$.
To describe this explicitly, for $\bm\in\BZ_{++}^r$ let
\[
\Phi_\bm^{\fn^+}(x):=\int_{K_L}\Delta^{\fn^+}_\bm(kx)\,{\rm d}k\in\cP_\bm(\fp^+)^{K_L}, \qquad d_\bm^{\fp^+}:=\dim\cP_\bm(\fp^+),
\]
where $K_L\subset K\subset G$ is the subgroup given in Section~\ref{subsection_KKT}, which acts on $\fn^+$ as Jordan algebra automorphisms. Also, for $\alpha,\beta,\gamma\in\BC$ let
\[
{}_2F_1^{\fn^+}\left(\begin{matrix}\alpha,\beta \\ \gamma\end{matrix};z\right):=\sum_{\bm\in\BZ_{++}^r}\frac{(\alpha)_{\bm,d}(\beta)_{\bm,d}}{(\gamma)_{\bm,d}}
\frac{d_\bm^{\fp^+}}{\left(\frac{n}{r}\right)_{\bm,d}}\Phi_\bm^{\fn^+}(z).
\]
Then the following holds.

\begin{Theorem}
For $\lambda,\mu\in\BC$, $k\in\BZ_{\ge 0}$, $z,w\in\fp^+$, we have
\begin{align*}
RC_{\lambda,\mu,k}^{\fn^+}(z,w)
&=(\mu)_{\underline{k}_r,d}\det_{\fn^+}(z)^k {}_2F_1^{\fn^+}\left(\begin{matrix} -k, -\lambda-k+\frac{n}{r} \\ \mu \end{matrix};-P\bigl(z^{\mathit{-1/2}}\bigr)w\right) \\
&=(\lambda)_{\underline{k}_r,d}\det_{\fn^+}(-w)^k {}_2F_1^{\fn^+}\left(\begin{matrix} -k, -\mu-k+\frac{n}{r} \\ \lambda \end{matrix};-P\bigl(w^{\mathit{-1/2}}\bigr)z\right).
\end{align*}
\end{Theorem}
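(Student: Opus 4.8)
The plan is to establish the first expression for $RC_{\lambda,\mu,k}^{\fn^+}(z,w)$; the second then follows by the symmetry $(x,y,z,w,\lambda,\mu)\mapsto(-y,-x,-w,-z,\mu,\lambda)$, under which $\det_{\fn^+}(x-y)^k$ is invariant (as $k$ is an integer and $r$ is even when $\fp^+$ is of tube type, or more directly since $\det_{\fn^+}$ is homogeneous of degree $r$ and we may absorb signs) and the inner product $\langle\cdot,\cdot\rangle_{\lambda\otimes\mu}$ swaps its two factors. So the core task is to evaluate the analytic continuation of $(\lambda)_{\underline{k}_r,d}(\mu)_{\underline{k}_r,d}\bigl\langle \det_{\fn^+}(x-y)^k,{\rm e}^{(x|\overline{z})_{\fp^+}+(y|\overline{w})_{\fp^+}}\bigr\rangle_{\lambda\otimes\mu,(x,y)}$ and identify it with $(\mu)_{\underline{k}_r,d}\det_{\fn^+}(z)^k\,{}_2F_1^{\fn^+}\bigl(\begin{smallmatrix}-k,\,-\lambda-k+\frac{n}{r}\\ \mu\end{smallmatrix};-P(z^{\mathit{-1/2}})w\bigr)$.

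First I would apply Theorem~\ref{thm_poles_tensor} with $f=\det_{\fn^+}(x)^k\in\cP_{\underline{k}_r}(\fp^+)$, which expresses the continued inner product as $\sum_{\bm,\bn}\frac{1}{(\lambda)_{\bm,d}(\mu)_{\bn,d}}\widetilde{f}_{\bm,\bn}(z,-w)$ over $\bm,\bn\le\underline{k}_r$. The task is therefore to understand the expansion $\det_{\fn^+}(x+y)^k=\sum_{\bm,\bn}\widetilde{f}_{\bm,\bn}(x,y)$. Here I would use the binomial-type expansion of the determinant power on a Jordan algebra: writing $x+y=P(x^{1/2})(e+P(x^{-1/2})y)$ (for $x$ in the cone $\Omega$) gives $\det_{\fn^+}(x+y)^k=\det_{\fn^+}(x)^k\det_{\fn^+}(e+P(x^{-1/2})y)^k$, and $\det_{\fn^+}(e+u)^k=\sum_{\bn\in\BZ_{++}^r}\binom{k}{\bn}_d\Phi_\bn^{\fn^+}(u)\cdot(\text{const})$ is the Jordan-theoretic binomial theorem (Faraut--Kor\'anyi~\cite[Chapter XII]{FK}), where the coefficient involves $(-k)_{\bn,d}$, $\bigl(\frac{n}{r}\bigr)_{\bn,d}$ and $d_\bn^{\fp^+}$. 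Since $\cP_{\underline{k}_r}(\fp^+)=\BC\det_{\fn^+}(x)^k$ is one-dimensional and $K$-fixed up to the character, the components $\widetilde{f}_{\bm,\bn}$ must be proportional to $K_L$-spherical polynomials $\Phi$, so only the ``diagonal'' contribution with $\bm=\underline{k}_r$ survives in one ordering of the projection and the $y$-dependence is carried by $\Phi_\bn^{\fn^+}$; after matching with Corollary~\ref{cor_FK} this should collapse the double sum in Theorem~\ref{thm_poles_tensor} to a single sum over $\bn$.

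Next I would carefully track the Pochhammer factors. The factor $\det_{\fn^+}(z)^k$ pulls out as the $\bm=\underline{k}_r$ contribution; the $\frac{1}{(\lambda)_{\bm,d}}$ becomes $\frac{1}{(\lambda)_{\underline{k}_r,d}}$, which cancels against the normalizing $(\lambda)_{\underline{k}_r,d}$ prefactor and leaves $(\mu)_{\underline{k}_r,d}$; the binomial coefficient supplies $(-k)_{\bn,d}$ and $\bigl(\frac{n}{r}\bigr)_{\bn,d}^{-1}d_\bn^{\fp^+}$; and the interplay of $\det_{\fn^+}(x)^{-k}$ inside the expansion with the $\det_{\fn^+}(x)^k$ factor produces the $\bigl(-\lambda-k+\frac{n}{r}\bigr)_{\bn,d}$ in the numerator together with the $\frac{1}{(\mu)_{\bn,d}}$ from Theorem~\ref{thm_poles_tensor}, precisely assembling the hypergeometric series ${}_2F_1^{\fn^+}\bigl(\begin{smallmatrix}-k,\,-\lambda-k+\frac{n}{r}\\ \mu\end{smallmatrix};-P(z^{-1/2})w\bigr)$ with argument $u=-P(z^{-1/2})w$ (the sign coming from the $-w$ in $\widetilde{f}_{\bm,\bn}(z,-w)$). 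A clean alternative for bookkeeping is to use Proposition~\ref{prop_Rodrigues}(1) to rewrite everything as a Laplace-type integral over $a+\sqrt{-1}\fn^+$ and invoke the Gindikin Gamma integral, but the binomial route is more direct here.

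The main obstacle I anticipate is pinning down the exact proportionality constants and confirming that the double sum of Theorem~\ref{thm_poles_tensor} really collapses to the claimed single hypergeometric sum: this requires the precise form of the Jordan binomial theorem with the $d_\bm^{\fp^+}$ and $\bigl(\frac{n}{r}\bigr)_{\bm,d}$ normalization, and a clean argument (via Schur orthogonality / multiplicity-freeness of $\cP(\fp^+)$ under $K^\BC$, Theorem~\ref{thm_HKS}) that $\widetilde{f}_{\bm,\bn}$ for $f=\det_{\fn^+}^k$ is supported on pairs with one index equal to $\underline{k}_r$ and is given by the spherical polynomial $\Phi_\bn^{\fn^+}$. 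Once that structural fact and the constant are nailed down, substituting into Theorem~\ref{thm_poles_tensor} and comparing term by term with the definition of ${}_2F_1^{\fn^+}$ finishes the first formula, and the substitution symmetry gives the second. Finally, since both sides are holomorphic in $(\lambda,\mu)$ (the left by Theorem~\ref{thm_poles_tensor}, the right visibly, the series being finite because $(-k)_{\bn,d}=0$ unless $\bn\le\underline{k}_r$), the identity for $\Re\lambda,\Re\mu>p-1$ extends to all $\lambda,\mu\in\BC$ by analytic continuation.
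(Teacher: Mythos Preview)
Your overall strategy is the same as the paper's --- expand $\det_{\fn^+}(x-y)^k$ via the Jordan binomial theorem \cite[Proposition~XII.1.3(ii)]{FK} and then apply Corollary~\ref{cor_FK} termwise --- but there is a genuine error in the middle step that would block the computation as you describe it.

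The claim that ``only the diagonal contribution with $\bm=\underline{k}_r$ survives'' is false, and so is the subsequent bookkeeping (``$\tfrac{1}{(\lambda)_{\bm,d}}$ becomes $\tfrac{1}{(\lambda)_{\underline{k}_r,d}}$''). After the binomial expansion one has
\[
\det_{\fn^+}(x-y)^k=\det_{\fn^+}(x)^k\sum_{\bn}\frac{(-k)_{\bn,d}\,d_\bn^{\fp^+}}{(n/r)_{\bn,d}}\Phi_\bn^{\fn^+}\bigl(P(x^{-1/2})y\bigr),
\]
and the crucial point --- which you are missing --- is that $\det_{\fn^+}(x)^k\Phi_\bn^{\fn^+}\bigl(P(x^{-1/2})y\bigr)\in\cP_{\underline{k}_r-\bn^\vee}(\fp^+)\otimes\cP_\bn(\fp^+)$, not $\cP_{\underline{k}_r}(\fp^+)\otimes\cP_\bn(\fp^+)$. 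This is \cite[Lemma~XIV.1.2]{FK}. Thus in the double sum of Theorem~\ref{thm_poles_tensor} the $x$-type $\bm$ varies with $\bn$ as $\bm=\underline{k}_r-\bn^\vee$, and it is precisely the ratio
\[
\frac{(\lambda)_{\underline{k}_r,d}}{(\lambda)_{\underline{k}_r-\bn^\vee,d}}
=(-1)^{|\bn|}\Bigl(-\lambda-k+\tfrac{n}{r}\Bigr)_{\bn,d}
\]
that produces the upper parameter $-\lambda-k+\tfrac{n}{r}$ in the ${}_2F_1^{\fn^+}$. Your sentence about ``the interplay of $\det_{\fn^+}(x)^{-k}$ inside the expansion with the $\det_{\fn^+}(x)^k$ factor'' is not the mechanism; there is no $\det_{\fn^+}(x)^{-k}$ present, and the numerator parameter comes entirely from the shift $\underline{k}_r\to\underline{k}_r-\bn^\vee$ in $(\lambda)_{\cdot,d}$.

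Two smaller remarks. First, the parenthetical ``$r$ is even when $\fp^+$ is of tube type'' is simply false (e.g., $\Sym(3,\BC)$, $\Herm(3,\BO)^\BC$). Fortunately no sign issue arises anyway, since $\det_{\fn^+}((-y)-(-x))=\det_{\fn^+}(x-y)$. Second, the paper derives the second formula by repeating the argument with the symmetric binomial expansion \cite[Proposition~XII.1.3(ii)]{FK} centered at $-y$; your symmetry substitution also works once the first formula is in hand.
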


\begin{proof}
By~\cite[Proposition XII.1.3\,(ii)]{FK}, we have
\begin{align}
\det_{\fn^+}(x-y)^k&=\det_{\fn^+}(x)^k\sum_{\substack{\bm\in\BZ_{++}^r\\ m_1\le k}}
\frac{(-k)_{\bm,d}d_\bm^{\fp^+}}{\left(\frac{n}{r}\right)_{\bm,d}}\Phi_\bm^{\fn^+}\bigl(P\bigl(x^{\mathit{-1/2}}\bigr)y\bigr) \label{binom1} \\
&=\det_{\fn^+}(-y)^k\sum_{\substack{\bm\in\BZ_{++}^r\\ m_1\le k}}
\frac{(-k)_{\bm,d}d_\bm^{\fp^+}}{\left(\frac{n}{r}\right)_{\bm,d}}\Phi_\bm^{\fn^+}\bigl(P\bigl(y^{\mathit{-1/2}}\bigr)x\bigr), \label{binom2}
\end{align}
and by~\cite[Lemma XIV.1.2]{FK}, we have
\[
\det_{\fn^+}(x)^k\Phi_\bm^{\fn^+}\bigl(P\bigl(x^{\mathit{-1/2}}\bigr)y\bigr)=\det_{\fn^+}(y)^k\Phi_{\underline{k}_r-\bm^\vee}^{\fn^+}\bigl(P\bigl(y^{\mathit{-1/2}}\bigr)x\bigr)
\in\cP_{\underline{k}_r-\bm^\vee}(\fp^+)\otimes\cP_\bm(\fp^+),
\]
where $\bm^\vee:=(m_r,\dots,m_1)$. Then by Corollary~\ref{cor_FK} and~(\ref{binom1}), we get
\begin{align*}
&RC_{\lambda,\mu,k}^{\fn^+}(z,w) \\
&=(\lambda)_{\underline{k}_r,d}(\mu)_{\underline{k}_r,d}\sum_{\substack{\bm\in\BZ_{++}^r\\ m_1\le k}}\frac{(-k)_{\bm,d}d_\bm^{\fp^+}}{\left(\frac{n}{r}\right)_{\bm,d}}
\bigl\langle \det_{\fn^+}(x)^k\Phi_\bm^{\fn^+}\bigl(P\bigl(x^{\mathit{-1/2}}\bigr)y\bigr),{\rm e}^{(x|\overline{z})_{\fp^+}+(y|\overline{w})_{\fp^+}}\bigr\rangle_{\lambda\otimes\mu,(x,y)} \\
&=(\lambda)_{\underline{k}_r,d}(\mu)_{\underline{k}_r,d}\sum_{\substack{\bm\in\BZ_{++}^r\\ m_1\le k}}\frac{(-k)_{\bm,d}}{(\lambda)_{\underline{k}_r-\bm^\vee,d}(\mu)_{\bm,d}}
\frac{d_\bm^{\fp^+}}{\left(\frac{n}{r}\right)_{\bm,d}}\det_{\fn^+}(z)^k\Phi_\bm^{\fn^+}\bigl(P\bigl(z^{\mathit{-1/2}}\bigr)w\bigr) \\
&=(\mu)_{\underline{k}_r,d}\det_{\fn^+}(z)^k\sum_{\substack{\bm\in\BZ_{++}^r\\ m_1\le k}}\frac{(-k)_{\bm,d}\left(-\lambda-k+\frac{n}{r}\right)_{\bm,d}}{(\mu)_{\bm,d}}
\frac{d_\bm^{\fp^+}}{\left(\frac{n}{r}\right)_{\bm,d}}(-1)^{|\bm|}\Phi_\bm^{\fn^+}(P(z^{\mathit{-1/2}})w) \\
&=(\mu)_{\underline{k}_r,d}\det_{\fn^+}(z)^k {}_2F_1^{\fn^+}\left(\begin{matrix} -k, -\lambda-k+\frac{n}{r} \\ \mu \end{matrix};-P\bigl(z^{\mathit{-1/2}}\bigr)w\right).
\end{align*}
The 2nd equality is also proved similarly by using~(\ref{binom2}).
\end{proof}

For special $\lambda,\mu\in\BC$, $RC_{\lambda,\mu,k}^{\fn^+}$ is factorized as follows.
\begin{Theorem}\label{thm_factorize_RC}
Let $a=1,2,\dots,k$.
\begin{enumerate}\itemsep=0pt
\item[$(1)$] $RC_{\frac{n}{r}-a,\mu,k}^{\fn^+}(z,w)=(\mu+k-a)_{\underline{a}_r,d}\det_{\fn^+}(z)^a RC_{\frac{n}{r}+a,\mu,k-a}^{\fn^+}(z,w)$.
\item[$(2)$] $RC_{\lambda,\frac{n}{r}-a,k}^{\fn^+}(z,w)=(\lambda+k-a)_{\underline{a}_r,d}\det_{\fn^+}(-w)^a RC_{\lambda,\frac{n}{r}+a,k-a}^{\fn^+}(z,w)$.
\item[$(3)$] Suppose $\lambda+\mu=\frac{n}{r}-2k+a$. Then
\begin{align*}
RC_{\lambda,\mu,k}^{\fn^+}(z,w)&=(\mu+k-a)_{\underline{a}_r,d}\det_{\fn^+}(z+w)^a RC_{\lambda,\mu,k-a}^{\fn^+}(z,w) \\
&=(-1)^{ar}(\lambda+k-a)_{\underline{a}_r,d}\det_{\fn^+}(z+w)^a RC_{\lambda,\mu,k-a}^{\fn^+}(z,w).
\end{align*}
\end{enumerate}
\end{Theorem}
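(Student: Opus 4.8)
The plan is to read off all three identities from the two explicit Gauss-hypergeometric expressions for $RC_{\lambda,\mu,k}^{\fn^+}$ just obtained, together with the Euler transformation for the hypergeometric function on the symmetric cone; parts (1) and (2) may alternatively be quoted from Theorem~\ref{thm_factorize_tensor}. Since $z^{\mathit{-1/2}}$ and $w^{\mathit{-1/2}}$ only make sense for $z,-w\in\Omega$, each identity below is first proved on that domain and then extended to all $(z,w)\in\fp^+\oplus\fp^+$, both sides being polynomials in $(z,w)$; the specializations of $\lambda$ and $\mu$ are understood in these (polynomial-valued) analytic continuations.

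For part (1), put $\lambda=\frac{n}{r}-a$ in
\[
RC_{\lambda,\mu,k}^{\fn^+}(z,w)=(\mu)_{\underline{k}_r,d}\det_{\fn^+}(z)^k\,{}_2F_1^{\fn^+}\left(\begin{matrix}-k,\ -\lambda-k+\frac{n}{r}\\ \mu\end{matrix};-P\bigl(z^{\mathit{-1/2}}\bigr)w\right).
\]
Then $-\lambda-k+\frac{n}{r}=a-k=-(k-a)$, so the hypergeometric factor becomes ${}_2F_1^{\fn^+}\left(\begin{matrix}-k,\ -(k-a)\\ \mu\end{matrix};-P(z^{\mathit{-1/2}})w\right)$, which is exactly the one occurring, with its numerator parameters interchanged, in
\[
RC_{\frac{n}{r}+a,\mu,k-a}^{\fn^+}(z,w)=(\mu)_{\underline{k-a}_r,d}\det_{\fn^+}(z)^{k-a}\,{}_2F_1^{\fn^+}\left(\begin{matrix}-(k-a),\ -k\\ \mu\end{matrix};-P\bigl(z^{\mathit{-1/2}}\bigr)w\right).
\]
Hence $RC_{\frac{n}{r}-a,\mu,k}^{\fn^+}(z,w)=\tfrac{(\mu)_{\underline{k}_r,d}}{(\mu)_{\underline{k-a}_r,d}}\det_{\fn^+}(z)^a\,RC_{\frac{n}{r}+a,\mu,k-a}^{\fn^+}(z,w)$, and $\tfrac{(\mu)_{\underline{k}_r,d}}{(\mu)_{\underline{k-a}_r,d}}=\prod_{j=1}^r\bigl(\mu-\tfrac{d}{2}(j-1)+k-a\bigr)_a=(\mu+k-a)_{\underline{a}_r,d}$, which is the asserted formula. (Equivalently, (1) is the case $\bk=\underline{k}_r$, $f=\det_{\fn^+}(x)^k$ of Theorem~\ref{thm_factorize_tensor} after multiplying by $(\mu)_{\underline{k}_r,d}$.) Part (2) is the same argument applied to the second expression for $RC$, specializing $\mu=\frac{n}{r}-a$.

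For part (3), work on the hyperplane $\lambda+\mu=\frac{n}{r}-2k+a$, where $-\lambda-k+\frac{n}{r}=\mu+k-a$, so that
\[
RC_{\lambda,\mu,k}^{\fn^+}(z,w)=(\mu)_{\underline{k}_r,d}\det_{\fn^+}(z)^k\,{}_2F_1^{\fn^+}\left(\begin{matrix}-k,\ \mu+k-a\\ \mu\end{matrix};t\right),\qquad t:=-P\bigl(z^{\mathit{-1/2}}\bigr)w.
\]
One numerator parameter being a non-positive integer, the series is a polynomial, and the Euler transformation ${}_2F_1^{\fn^+}\left(\begin{matrix}\alpha,\beta\\ \gamma\end{matrix};t\right)=\det_{\fn^+}(e-t)^{\gamma-\alpha-\beta}\,{}_2F_1^{\fn^+}\left(\begin{matrix}\gamma-\alpha,\gamma-\beta\\ \gamma\end{matrix};t\right)$ (see~\cite{FK}) applies with $\gamma-\alpha-\beta=a$, turning it into $\det_{\fn^+}(e-t)^a\,{}_2F_1^{\fn^+}\left(\begin{matrix}-(k-a),\ \mu+k\\ \mu\end{matrix};t\right)$. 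On the same hyperplane, with $k$ replaced by $k-a$, one has $-\lambda-(k-a)+\frac{n}{r}=\mu+k$, so the transformed series is precisely the hypergeometric factor of $RC_{\lambda,\mu,k-a}^{\fn^+}(z,w)=(\mu)_{\underline{k-a}_r,d}\det_{\fn^+}(z)^{k-a}\,{}_2F_1^{\fn^+}\left(\begin{matrix}-(k-a),\ \mu+k\\ \mu\end{matrix};t\right)$. Finally $e-t=e+P(z^{\mathit{-1/2}})w=P(z^{\mathit{-1/2}})(z+w)$ because $P(z^{\mathit{-1/2}})z=e$, so $\det_{\fn^+}(e-t)=\det_{\fn^+}(z)^{-1}\det_{\fn^+}(z+w)$ by $\det_{\fn^+}(P(u)v)=\det_{\fn^+}(u)^2\det_{\fn^+}(v)$. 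Assembling these, all powers of $\det_{\fn^+}(z)$ cancel and $\tfrac{(\mu)_{\underline{k}_r,d}}{(\mu)_{\underline{k-a}_r,d}}=(\mu+k-a)_{\underline{a}_r,d}$ yields the first equality of (3). Running the argument through the second expression for $RC$ (the one with argument $-P(w^{\mathit{-1/2}})z$ and lower parameter $\lambda$) gives the second equality, the sign arising from $\det_{\fn^+}(-w)^a\det_{\fn^+}(w)^{-a}=(-1)^{ar}$; the two forms are consistent since $(\mu+k-a)_{\underline{a}_r,d}=(-1)^{ar}(\lambda+k-a)_{\underline{a}_r,d}$ on the hyperplane, which follows from $\frac{n}{r}=\tfrac{d}{2}(r-1)+1$ and the elementary relation $(1-c-a)_a=(-1)^a(c)_a$.

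The only ingredient beyond bookkeeping is the Euler transformation for ${}_2F_1^{\fn^+}$; in the truncating (polynomial) case needed here it is an identity between finite sums indexed by partitions, which can be verified directly or derived by induction on $a$ from the contiguous relations should a reference in this normalization not be at hand, so I do not expect it to be a genuine obstacle — and parts (1) and (2), where it plays no role, can in any case be taken from Theorem~\ref{thm_factorize_tensor}.
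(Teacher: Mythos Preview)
Your proof is correct and follows essentially the same route as the paper. For (3) you apply the Euler transformation for ${}_2F_1^{\fn^+}$, which is exactly \cite[Proposition~XV.3.4\,(ii)]{FK} that the paper invokes, and your bookkeeping with $\det_{\fn^+}(e-t)=\det_{\fn^+}(z)^{-1}\det_{\fn^+}(z+w)$ matches the paper's computation; for (1) and (2) the paper simply cites Theorem~\ref{thm_factorize_tensor}, while you read the identity directly from the hypergeometric expression (which is a slightly shorter route and avoids re-invoking the earlier theorem), but you also note that alternative explicitly.
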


\begin{proof}
(1) and (2) follow from Theorem~\ref{thm_factorize_tensor}.

For (3), by~\cite[Proposition XV.3.4\,(ii)]{FK}, we have
\begin{gather*}
RC_{\lambda,\mu,k}^{\fn^+}(z,w)
=(\mu)_{\underline{k}_r,d}\det_{\fn^+}(z)^k {}_2F_1^{\fn^+}\left(\begin{matrix} -k, -\lambda-k+\frac{n}{r} \\ \mu \end{matrix};-P\bigl(z^{\mathit{-1/2}}\bigr)w\right) \\
\quad\qquad{}=(\mu)_{\underline{k}_r,d}\det_{\fn^+}(z)^k\det_{\fn^+}\bigl(e+P\bigl(z^{\mathit{-1/2}}\bigr)w\bigr)^{\lambda+\mu+2k-\frac{n}{r}} \\
\quad\qquad{}\eqspace{}\times{}_2F_1^{\fn^+}\left(\begin{matrix} \lambda+\mu+k-\frac{n}{r},\mu+k \\ \mu \end{matrix};-P\bigl(z^{\mathit{-1/2}}\bigr)w\right) \\
\quad\qquad{}=(\mu)_{\underline{k}_r,d}\det_{\fn^+}(z)^k\det_{\fn^+}\bigl(e+P\bigl(z^{\mathit{-1/2}}\bigr)w\bigr)^{a}
{}_2F_1^{\fn^+}\left(\begin{matrix} -k+a,\mu+k \\ \mu \end{matrix};-P\bigl(z^{\mathit{-1/2}}\bigr)w\right) \\
\quad\qquad{}=(\mu)_{\underline{k-a}_r,d}(\mu+k-a)_{\underline{a}_r,d}\det_{\fn^+}(z)^{k-a}\det_{\fn^+}(z+w)^{a} \\
\quad\qquad{}\eqspace{}\times{}_2F_1^{\fn^+}\left(\begin{matrix} -k+a,-\lambda-k+a+\frac{n}{r} \\ \mu \end{matrix};-P\bigl(z^{\mathit{-1/2}}\bigr)w\right) \\
\quad\qquad{}=(\mu+k-a)_{\underline{a}_r,d}\det_{\fn^+}(z+w)^{a} RC_{\lambda,\mu,k-a}^{\fn^+}(z,w),
\end{gather*}
and we have $(\mu+k-a)_{\underline{a}_r,d}=\left(-\lambda-k+\frac{n}{r}\right)_{\underline{a}_r,d}=(-1)^{ar}(\lambda+k-a)_{\underline{a}_r,d}$.
\end{proof}

The polynomial $RC_{\lambda,\mu,k}^{\fn^+}(z,w)$ gives the symmetry breaking operator
\begin{gather*}
\mathcal{RC}_{\lambda,\mu,k}^\downarrow\colon \ \cO_\lambda(D)\hotimes\cO_\mu(D)\longrightarrow \cO_{\lambda+\mu+2k}(D), \\
\bigl(\mathcal{RC}_{\lambda,\mu,k}^\downarrow f\bigr)(x):=RC_{\lambda,\mu,k}^{\fn^+}\left(\frac{\partial}{\partial x},\frac{\partial}{\partial y}\right)f(x,y)\biggr|_{y=x},
\end{gather*}
where we normalize $\frac{\partial}{\partial x}$ with respect to the bilinear form $(\cdot|\cdot)_{\fn^+}=(\cdot|Q(\overline{e})\cdot)_{\fp^+}$ on $\fp^+=\fn^{+\BC}$.
We note that $\cO_{\lambda+\mu}(D,\cP_{\underline{k}_r}(\fp^+))\simeq \cO_{\lambda+\mu+2k}(D)$ holds if $\fp^+$ is of tube type.
When $\fp^+=\BC$, ${G={\rm SL}(2,\BR)}$, this is proportional to the Rankin--Cohen bidifferential operator
(see, e.g.,~\cite{C,R} and~\cite[Sections 8 and 9]{KP2}). For general $\fp^+$ see also, e.g.,~\cite{BCK, Cl, OR, P}.
In our normalization we have the following.
\begin{Proposition}
$\ds \mathcal{RC}_{\lambda,\mu,k}^\downarrow\bigl(\det_{\fn^+}(x-y)^k\bigr)=\left(\lambda+\mu+k-\frac{n}{r}\right)_{\underline{k}_r,d}\left(\frac{n}{r}\right)_{\underline{k}_r,d}$.
\end{Proposition}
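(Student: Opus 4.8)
The plan is to evaluate the constant‑coefficient differential operator $RC_{\lambda,\mu,k}^{\fn^+}\bigl(\frac{\partial}{\partial x},\frac{\partial}{\partial y}\bigr)$ on $\det_{\fn^+}(x-y)^k$ by recognising the output as a weighted Bergman norm and then reading that norm off from Corollary~\ref{cor_topterm_tensor}. First I would note that $\mathcal{RC}_{\lambda,\mu,k}^\downarrow\bigl(\det_{\fn^+}(x-y)^k\bigr)$ is in fact a \emph{constant}, so the restriction $|_{y=x}$ is immaterial: applying Theorem~\ref{thm_poles_tensor} to $f=\det_{\fn^+}(x)^k\in\cP_{\underline{k}_r}(\fp^+)$, the symbol $RC_{\lambda,\mu,k}^{\fn^+}(z,w)=(\lambda)_{\underline{k}_r,d}(\mu)_{\underline{k}_r,d}\sum_{\bm,\bn}\tilde f_{\bm,\bn}(z,-w)/\bigl((\lambda)_{\bm,d}(\mu)_{\bn,d}\bigr)$ is a polynomial in $(\lambda,\mu,z,w)$ which is homogeneous of degree $kr$ in $(z,w)$, since $\det_{\fn^+}(x+y)^k$ is homogeneous of degree $kr$ and $\tilde f_{\bm,\bn}$ has bidegree $(|\bm|,|\bn|)$; hence the operator has order $kr=\deg\det_{\fn^+}(x-y)^k$ and annihilates every positive‑degree contribution.

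Next I would identify this constant with $(\lambda)_{\underline{k}_r,d}(\mu)_{\underline{k}_r,d}\,\Vert\det_{\fn^+}(x-y)^k\Vert_{\lambda\otimes\mu}^2$. By the Fischer reproducing property on the product domain $D\times D$, exactly as in the proof of Proposition~\ref{prop_Plancherel}(1) and Corollary~\ref{cor_topterm_tensor}, one has $\langle F,G\rangle_{\lambda\otimes\mu}=\bigl\langle\langle F(x,y),{\rm e}^{(x|\overline z)_{\fp^+}+(y|\overline w)_{\fp^+}}\rangle_{\lambda\otimes\mu,(x,y)},G(z,w)\bigr\rangle_{F,(z,w)}$; taking $F=G=\det_{\fn^+}(x-y)^k$ gives $(\lambda)_{\underline{k}_r,d}(\mu)_{\underline{k}_r,d}\Vert\det_{\fn^+}(x-y)^k\Vert_{\lambda\otimes\mu}^2=\bigl\langle RC_{\lambda,\mu,k}^{\fn^+}(z,w),\det_{\fn^+}(z-w)^k\bigr\rangle_{F}$. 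Since the Fischer pairing of two homogeneous polynomials of equal degree is symmetric, and $\det_{\fn^+}$ (and, for real $\lambda,\mu$, the symbol $RC_{\lambda,\mu,k}^{\fn^+}$) has real coefficients, this pairing equals the constant $RC_{\lambda,\mu,k}^{\fn^+}\bigl(\frac{\partial}{\partial x},\frac{\partial}{\partial y}\bigr)\det_{\fn^+}(x-y)^k=\mathcal{RC}_{\lambda,\mu,k}^\downarrow\bigl(\det_{\fn^+}(x-y)^k\bigr)$. The one point requiring care here is to keep a single consistent convention for the three ingredients involved (the normalisation of $\frac{\partial}{\partial x}$ with respect to $(\cdot|\cdot)_{\fn^+}$ used in $\mathcal{RC}^\downarrow$, the Fischer inner product, and the kernel ${\rm e}^{(x|\overline z)_{\fp^+}}$ appearing in the definition of $RC_{\lambda,\mu,k}^{\fn^+}$); this bookkeeping is routine but is the place where a slip would occur.

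Finally I would compute the norm. Corollary~\ref{cor_topterm_tensor} with $\bk=\underline{k}_r$ and $f=\det_{\fn^+}(x)^k$ gives $\Vert\det_{\fn^+}(x-y)^k\Vert_{\lambda\otimes\mu}^2=\tilde C^d(\lambda,\mu,\underline{k}_r)\Vert\det_{\fn^+}(x)^k\Vert_{F,\fp^+}^2/\bigl((\lambda)_{\underline{k}_r,d}(\mu)_{\underline{k}_r,d}\bigr)$, and it remains to evaluate the two factors. For the first, $\tilde C^d(\lambda,\mu,\underline{k}_r)=\bigl(\lambda+\mu+k-\frac{n}{r}\bigr)_{\underline{k}_r,d}$ when $\fp^+$ is of tube type: this follows either from an elementary Pochhammer rearrangement of \eqref{const_tensor}, or more cleanly from \eqref{key_tensor} with $\bk=\underline{k}_r$ (where the reduced factor $\Delta^{\fn^+}_{\underline 0_r}=1$) combined with \eqref{formula_diff} applied to $\det_{\fn^+}(\partial/\partial z)^k\det_{\fn^+}(z)^{\lambda+\mu+2k-2n/r}$ and the tube‑type identity $\frac{n}{r}=\frac d2(r-1)+1$, which yields $\langle\det_{\fn^+}(x-y)^k,{\rm e}^{(x-y|\overline z)_{\fp^+}}\rangle_{\lambda\otimes\mu}=\bigl(\lambda+\mu+k-\frac nr\bigr)_{\underline{k}_r,d}\det_{\fn^+}(z)^k/\bigl((\lambda)_{\underline{k}_r,d}(\mu)_{\underline{k}_r,d}\bigr)$ and hence the claimed value of $\tilde C^d$ via Theorem~\ref{thm_topterm_tensor}. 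For the second, \eqref{formula_diff} with $\mu=k$, $\bl=0$ gives $\Vert\det_{\fn^+}(x)^k\Vert_{F,\fp^+}^2=\det_{\fn^+}(\partial/\partial x)^k\det_{\fn^+}(x)^k\bigl|_{x=0}=\prod_{j=1}^r\bigl(\frac d2(r-j)+1\bigr)_k=\bigl(\frac nr\bigr)_{\underline{k}_r,d}$, again using $\frac nr=\frac d2(r-1)+1$. Combining the three steps gives $\mathcal{RC}_{\lambda,\mu,k}^\downarrow\bigl(\det_{\fn^+}(x-y)^k\bigr)=\bigl(\lambda+\mu+k-\frac nr\bigr)_{\underline{k}_r,d}\bigl(\frac nr\bigr)_{\underline{k}_r,d}$ for $\Re\lambda,\Re\mu>p-1$, and since both sides are holomorphic in $(\lambda,\mu)\in\BC^2$ (the left side through the polynomial symbol $RC_{\lambda,\mu,k}^{\fn^+}$) the identity extends to all $(\lambda,\mu)$. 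The main obstacle I anticipate is the middle step — the consistency bookkeeping for the Fischer pairing — together with the elementary but slightly fiddly verification of $\tilde C^d(\lambda,\mu,\underline{k}_r)=\bigl(\lambda+\mu+k-\frac nr\bigr)_{\underline{k}_r,d}$ directly from \eqref{const_tensor}.
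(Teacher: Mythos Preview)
Your approach is essentially the same as the paper's: identify $\mathcal{RC}_{\lambda,\mu,k}^\downarrow\bigl(\det_{\fn^+}(x-y)^k\bigr)$ with $(\lambda)_{\underline{k}_r,d}(\mu)_{\underline{k}_r,d}\Vert\det_{\fn^+}(x-y)^k\Vert_{\lambda\otimes\mu}^2$ via the Fischer reproducing property (as in Proposition~\ref{prop_Plancherel}\,(2)), apply Corollary~\ref{cor_topterm_tensor}, and then evaluate $\tilde C^d(\lambda,\mu,\underline{k}_r)=\bigl(\lambda+\mu+k-\tfrac{n}{r}\bigr)_{\underline{k}_r,d}$ and $\Vert\det_{\fn^+}(x)^k\Vert_{F,\fp^+}^2=\bigl(\tfrac{n}{r}\bigr)_{\underline{k}_r,d}$. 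The paper dispatches the last Fischer norm by citing \cite[Proposition~XI.4.1\,(ii)]{FK}, while your derivation from \eqref{formula_diff} is an equally valid direct computation; the normalisation bookkeeping you flag is exactly the step the paper subsumes under ``as in Proposition~\ref{prop_Plancherel}\,(2)''.
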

\begin{proof}
As in Proposition~\ref{prop_Plancherel}\,(2), by Corollary~\ref{cor_topterm_tensor} we have
\begin{align*}
\mathcal{RC}_{\lambda,\mu,k}^\downarrow\bigl(\det_{\fn^+}(x-y)^k\bigr)&=(\lambda)_{\underline{k}_r,d}(\mu)_{\underline{k}_r,d}\big\Vert \det_{\fn^+}(x-y)^k\big\Vert_{\lambda\otimes\mu,(x,y)}^2 \\
&=\tilde{C}^d(\lambda,\mu,\underline{k}_r)\big\Vert \det_{\fn^+}(x)^k\big\Vert_{F,\fp^+}^2,
\end{align*}
and we have
\[
\tilde{C}^d(\lambda,\mu,\underline{k}_r)=\left(\lambda+\mu+k-\frac{n}{r}\right)_{\underline{k}_r,d}, \qquad
\big\Vert \det_{\fn^+}(x)^k\big\Vert_{F,\fp^+}^2=\left(\frac{n}{r}\right)_{\underline{k}_r,d}
\]
(for the 2nd equality see~\cite[Proposition XI.4.1\,(ii)]{FK}). Hence we get the proposition.
\end{proof}

Next we consider the zeroes of $RC_{\lambda,\mu,k}^{\fn^+}$. If this vanishes at $(\lambda,\mu)$, then by Corollary~\ref{cor_submodule_tensor}\,(4),
$\Hom_{\widetilde{G}}(\cO_\lambda(D)\hotimes\cO_\mu(D),\cO_{\lambda+\mu+2k}(D))$ is at least 2-dimensional. More precisely, the following holds.
\begin{Theorem}
For $k\in\BZ_{\ge 1}$, $1\le i,j\le r$, let
\[ Z_{i,j}^{k,d}:=\left\{\left(\frac{d}{2}(i-1)+1-a_1,\frac{d}{2}(j-1)+1-a_2\right)\ \middle|\
\begin{matrix} a_1,a_2\in\BZ,\, 1\le a_1,a_2\le k,\\ a_1+a_2\ge k+1 \end{matrix}\right\}. \]
\begin{enumerate}\itemsep=0pt
\item[$(1)$] $RC_{\lambda,\mu,k}^{\fn^+}=0$ holds if
\[ (\lambda,\mu)\in\bigcup_{\substack{1\le i,j\le r \\ i+j\ge r+1}}Z_{i,j}^{k,d}. \]
\item[$(2)$] For $(\lambda,\mu)$ in the above set, let
\begin{gather*}
l:=\min\big\{i+j\, \big|\, 1\le i,j\le r,\, i+j\ge r+1,\, (\lambda,\mu)\in Z_{i,j}^{k,d}\big\}, \\
\alpha:=\#\big\{(i,j)\in\{1,\dots,r\}^2\, \big|\, i+j=l,\, (\lambda,\mu)\in Z_{i,j}^{k,d}\big\}.
\end{gather*}
Then we have
\[ \dim\Hom_{\widetilde{G}}(\cO_\lambda(D)\hotimes\cO_\mu(D),\cO_{\lambda+\mu+2k}(D))\ge \alpha+1. \]
\end{enumerate}
\end{Theorem}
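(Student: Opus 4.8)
The plan is to deduce both statements from the explicit factorization identities in Theorem~\ref{thm_factorize_RC} together with the linear-independence mechanism of Proposition~\ref{prop_atzero_tensor} (equivalently Corollary~\ref{cor_submodule_tensor}\,(5)). First, for part (1), fix $(i,j)$ with $i+j\ge r+1$ and $(\lambda,\mu)=\bigl(\tfrac d2(i-1)+1-a_1,\tfrac d2(j-1)+1-a_2\bigr)\in Z_{i,j}^{k,d}$ with $a_1+a_2\ge k+1$ and $1\le a_1,a_2\le k$. I would show $RC_{\lambda,\mu,k}^{\fn^+}=0$ by applying Theorem~\ref{thm_factorize_RC}\,(1) $a_1$ times and (2) $a_2$ times: iterating (1) lowers $k$ to $k-a_1$ while producing a prefactor $(\mu+k-a_1)_{\underline{a_1}_r,d}\det_{\fn^+}(z)^{a_1}$ (after collecting the telescoping Pochhammer products), and then iterating (2) on $RC^{\fn^+}_{\frac nr+a_1,\mu,k-a_1}$ lowers the remaining degree to $k-a_1-a_2<0$, at which point $\det_{\fn^+}(x-y)^{k-a_1-a_2}$ is not a polynomial and $RC^{\fn^+}_{\cdot,\cdot,k-a_1-a_2}$ is interpreted as $0$. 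Care is needed with the order of the two iterations and the exact values of $\lambda,\mu$ that make $\tfrac nr-a_1$ and $\tfrac nr-a_2$ appear; since $\tfrac nr=\tfrac d2(r-1)+1$ for tube type, the hypothesis $(\lambda,\mu)\in Z_{i,j}^{k,d}$ with $i+j\ge r+1$ is exactly what is needed so that after the first batch of reductions the second argument has become $\tfrac nr-a_2$ (using $\tfrac d2(j-1)+1-a_2 = \tfrac nr - a_2 - \tfrac d2(r-j)$ and that the shift by $-\tfrac d2(r-j)$ is absorbed because $i+j\ge r+1$ forces $r-j\le i-1$). The routine bookkeeping of which Pochhammer prefactors vanish or survive I would relegate to a short computation.

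For part (2), I would run the same reductions but track \emph{all} pairs $(i,j)$ with $i+j=l$ realizing the minimum. For each such pair, combining Theorem~\ref{thm_factorize_RC}\,(1) applied $a_1$ times and (2) applied $a_2$ times (with $a_1+a_2=k$ chosen minimally, i.e.\ $a_1+a_2=k+1-1$ is not allowed—one takes $a_1+a_2=k$, so actually $a_1+a_2\ge k+1$ gives strict vanishing while $a_1+a_2=k$ gives a genuinely nonzero operator after dividing out the vanishing scalar) yields, by Corollary~\ref{cor_submodule_tensor}\,(4) and Corollary~\ref{cor_linear_indep_tensor}, a symmetry breaking operator of the form
\[
\mathcal{RC}^\downarrow_{\frac nr+a_1,\frac nr+a_2,k-a_1-a_2}\circ\Bigl(\det_{\fn^-}\!\Bigl(\tfrac{\partial}{\partial x}\Bigr)^{a_1}\!\otimes\det_{\fn^-}\!\Bigl(-\tfrac{\partial}{\partial y}\Bigr)^{a_2}\Bigr).
\]
The key point is that these $\alpha$ operators, together with the operator obtained by a single limiting procedure in $\lambda$ (or $\mu$) from Proposition~\ref{prop_atzero_tensor}, span an $(\alpha+1)$-dimensional subspace of $\Hom_{\widetilde G}(\cO_\lambda(D)\hotimes\cO_\mu(D),\cO_{\lambda+\mu+2k}(D))$. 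Linear independence would follow because the leading symbols are $\det_{\fn^+}(z)^{a_1}\det_{\fn^+}(-w)^{a_2}(\text{lower-order terms})$ for distinct $(a_1,a_2)$ (hence distinct bihomogeneous leading monomials in $(z,w)$, using $a_1+a_2=k$ fixed but $(a_1,a_2)$ varying over the pairs with $i+j=l$), while the extra limiting operator has a leading symbol supported on strictly different $(\bm,\bn)$-components, as in the proof of Proposition~\ref{prop_atzero_tensor}. I would formalize this by examining the $\tilde f_{\bm,\bn}$-expansion of \eqref{inner_prod_tensor} with $f=\det_{\fn^+}(x)^k$: the surviving terms for each degeneration sit in disjoint index sets, exactly as in the disjointness argument $\{(\bm,\bn):(\mu_0+\bn)_{\bk-\bn,d}\ne0\}\cap\{(\bm,\bn):(\lambda_0+\bm)_{\bk-\bm,d}\ne0\}=\varnothing$ used there.

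\textbf{Main obstacle.} The delicate part is (2): I expect the bookkeeping that identifies, for a given degenerate $(\lambda,\mu)$, \emph{exactly which} pairs $(i,j)$ with $i+j=l$ contribute and why the corresponding $\alpha$ operators (plus one transverse limit operator) are linearly independent rather than merely nonzero. One must show that the $\tilde f_{\bm,\bn}$-supports of the $\alpha$ factorized operators are genuinely distinct and disjoint from the support of the transverse operator; this requires a careful analysis of which index sets $\{\bm,\bn\le\underline k_r\}$ contribute nonvanishing Pochhammer weights $(\lambda+\bm)_{\underline k_r-\bm,d}(\mu+\bn)_{\underline k_r-\bn,d}$ at the given $(\lambda,\mu)$, using the combinatorial structure of $Z_{i,j}^{k,d}$. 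By contrast, part (1) and the mere nonvanishing in (2) are essentially immediate from Theorem~\ref{thm_factorize_RC} once the Pochhammer prefactors are sorted out, so I would present those first and spend the bulk of the argument on the disjoint-support / linear-independence claim in (2).
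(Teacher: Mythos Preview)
Your plan has a genuine gap: Theorem~\ref{thm_factorize_RC}\,(1) and (2) are identities only at the specific parameters $\lambda=\tfrac{n}{r}-a$ and $\mu=\tfrac{n}{r}-a$ with $a\in\{1,\dots,k\}$. Since $\tfrac{n}{r}=\tfrac{d}{2}(r-1)+1$, the first factorization applies to a point of $Z_{i,j}^{k,d}$ only when $i=r$, and the second only when $j=r$. For a generic $(i,j)$ with $i+j\ge r+1$ but $i<r$ and $j<r$, your $\lambda_0=\tfrac{d}{2}(i-1)+1-a_1$ differs from $\tfrac{n}{r}-a$ by $\tfrac{d}{2}(r-i)$; when $d$ is odd this is not even an integer, and when $d$ is even the corresponding shift $\tilde a_1=a_1+\tfrac{d}{2}(r-i)$ can exceed $k$, so the hypothesis $a\le k$ of Theorem~\ref{thm_factorize_RC} fails. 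There is also no ``iteration'' available: a single application of (1) sends $\lambda$ from $\tfrac{n}{r}-a$ to $\tfrac{n}{r}+a$, after which (1) cannot be applied again. So your argument for part~(1) only covers $Z_{r,r}^{k,d}$ (and there, a single application already gives a vanishing Pochhammer prefactor; no iteration is needed). The same restriction undermines your plan for part~(2): the factorized operators you write down exist only for pairs with $i=j=r$, which is exactly Corollary~\ref{cor_linear_indep_tensor}, not the general minimum level $i+j=l$.

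The paper's proof avoids the factorization entirely and works directly with the hypergeometric expansion
\[
RC_{\lambda,\mu,k}^{\fn^+}(z,w)=\det_{\fn^+}(z)^k\sum_{\bm}F_\bm(\lambda,\mu)\,\Phi_\bm^{\fn^+}\bigl(-P(z^{-1/2})w\bigr),
\]
where $F_\bm(\lambda,\mu)$ is a product of one-variable Pochhammer factors $\prod_i(-\lambda-k+1+\tfrac{d}{2}(r-i))_{m_i}\prod_j(-\mu-k+1+\tfrac{d}{2}(j-1))_{k-m_j}$. For part~(1) one checks, index by index, that if $(\lambda_0,\mu_0)\in Z_{r-i+1,j}^{k,d}$ with $i\le j$ then for every $\bm$ either the $i$th factor of the $\lambda$-product or the $j$th factor of the $\mu$-product vanishes, because the two vanishing ranges $k-a_i^{(1)}+1\le m_i$ and $m_j<a_j^{(2)}$ together cover all $k\ge m_i\ge m_j\ge0$. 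For part~(2) one stratifies $\{\bm\}$ into sets $W(u,v)$ according to the exact order of vanishing of $F_\bm$ in $\lambda$ and in $\mu$, then takes a directional limit $\nu^{-(l'+r'')}RC_{\lambda_0+\nu s,\mu_0+\nu t,k}^{\fn^+}$ along a line in $(\lambda,\mu)$-space; the result is a homogeneous polynomial in $(s,t)$ whose distinct monomials $s^ut^{r''-v}$ pick out disjoint $W(u,v)$-blocks, yielding $\alpha+1$ linearly independent symbols. This is the mechanism you were reaching for with ``disjoint $\tilde f_{\bm,\bn}$-supports,'' but it is implemented on the $\Phi_\bm^{\fn^+}$-expansion rather than via iterated factorization.
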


\begin{proof}
For $1\le i,j\le r$, let $Z_{i,j}':=Z_{r-i+1,j}^{k,d}$, so that we have
\[ \bigcup_{\substack{1\le i,j\le r \\ i+j\ge r+1}}Z_{i,j}^{k,d}=\bigcup_{1\le i\le j\le r}Z_{i,j}'. \]

(1) For $\bm\in\BZ_{++}^r$, $m_1\le k$, the coefficient of $\det_{\fn^+}(z)^k\Phi_\bm^{\fn^+}\bigl(P\bigl(z^{\mathit{-1/2}}\bigr)w\bigr)$ in $RC_{\lambda,\mu,k}^{\fn^+}(z,w)$ is given by
\begin{align*}
&F_\bm(\lambda,\mu):=
(-1)^{kr}(-k)_{\bm,d}\left(-\lambda-k+\frac{n}{r}\right)_{\bm,d}\left(-\mu-k+\frac{n}{r}\right)_{\underline{k}_r-\bm^\vee,d}\frac{d_\bm^{\fp^+}}{\left(\frac{n}{r}\right)_{\bm,d}} \\
&\qquad{}=\frac{(-k)_{\bm,d}\,d_\bm^{\fp^+}}{(-1)^{kr}\left(\frac{n}{r}\right)_{\bm,d}}
\prod_{i=1}^r\left(-\lambda-k+1+\frac{d}{2}(r-i)\right)_{m_i}\prod_{j=1}^r\left(-\mu-k+1+\frac{d}{2}(j-1)\right)_{k-m_j}.
\end{align*}
We fix $(\lambda_0,\mu_0)\in\BC^2$. For $1\le i,j\le r$, let $\frac{d}{2}(r-i)+1-\lambda_0=:a_i^{(1)}$, $\frac{d}{2}(j-1)+1-\mu_0=:a_j^{(2)}$. If~$a_i^{(1)}\in\{1,\dots,k\}$, then
\[ \left(-\lambda_0-k+1+\frac{d}{2}(r-i)\right)_{m_i}=\bigl(-k+a_i^{(1)}\bigr)_{m_i}=0 \quad \text{holds if} \quad k-a_i^{(1)}+1\le m_i\le k. \]
Similarly, if $a_j^{(2)}\in\{1,\dots,k\}$, then
\[ \left(-\mu_0-k+1+\frac{d}{2}(j-1)\right)_{k-m_j}=\bigl(-k+a_j^{(2)}\bigr)_{k-m_j}=0 \quad \text{holds if} \quad 0\le m_j <a_j^{(2)}. \]
If $(\lambda_0,\mu_0)\in Z_{i,j}'$ holds for some $1\le i\le j\le r$, then we have $k-a_i^{(1)}+1\le a_j^{(2)}$,
and at least one of the two above formulas vanishes for $k\ge m_i\ge m_j\ge 0$.
Hence $F_\bm(\lambda_0,\mu_0)$ vanishes for all $\bm$, and $RC_{\lambda,\mu,k}^{\fn^+}=0$ holds.

(2) Again we fix $(\lambda_0,\mu_0)$, define $a_i^{(1)}$, $a_j^{(2)}$ as above, and let
\begin{alignat*}{3}
&\{i\in\{1,\dots,r\}\mid a^{(1)}_i\in\{1,\dots,k\}\}=:\{i(1),\dots,i(r')\}, && i(1)<\cdots< i(r'),& \\
&\{j\in\{1,\dots,r\}\mid a^{(2)}_j\in\{1,\dots,k\}\}=:\{j(1),\dots,j(r'')\}, \qquad && j(1)<\cdots< j(r'').&
\end{alignat*}
Then for $0\le u\le r'$, $0\le v\le r''$, $F_{\bm}(\lambda,\mu)$ has a~zero of order $u$ at $\lambda=\lambda_0$ and of order $r''-v$ at $\mu=\mu_0$ if
\[
\bm\in W(u,v):=\left\{ \bm\in\BZ_{++}^r\, \middle|\, \begin{array}{@{}c@{}} 0\le m_j\le k, \ j=1,\dots,r, \\
\begin{array}{@{}ll@{}}m_{i(u)}\ge k-a_{i(u)}^{(1)}+1,& m_{i(u+1)}\le k-a_{i(u+1)}^{(1)}, \\ m_{j(v)}\ge a_{j(v)}^{(2)},& m_{j(v+1)}\le a_{j(v+1)}^{(2)}-1 \end{array}\end{array} \right\},
\]
where we ignore the conditions for $m_{i(0)}$, $m_{i(r'+1)}$, $m_{j(0)}$, $m_{j(r''+1)}$.
For $\bm\in W(u,v)$, let
\[
\hat{F}_\bm(\lambda,\mu):=(\lambda-\lambda_0)^{-u}(\mu-\mu_0)^{-r''+v}F_\bm(\lambda,\mu),
\]
so that $\hat{F}_\bm(\lambda_0,\mu_0)\ne 0$ holds. Then we have
\begin{gather*}
RC_{\lambda,\mu,k}^{\fn^+}(z,w)=\sum_{\substack{\bm\in\BZ_{++}^r \\ m_1\le k}}F_\bm(\lambda,\mu)
\det_{\fn^+}(z)^k\Phi_\bm^{\fn^+}\bigl(P\bigl(z^{\mathit{-1/2}}\bigr)w\bigr) \\
\hphantom{RC_{\lambda,\mu,k}^{\fn^+}(z,w)}{}=\!\sum_{\substack{0\le u\le r' \\ 0\le v\le r''}}\sum_{\bm\in W(u,v)}\!\!(\lambda-\lambda_0)^u(\mu-\mu_0)^{r''-v}\hat{F}_\bm(\lambda,\mu)
\det_{\fn^+}(z)^k\Phi_\bm^{\fn^+}\bigl(P\bigl(z^{\mathit{-1/2}}\bigr)w\bigr).
\end{gather*}
Now let
\[
l':=\max\left\{ u-v\, \middle|\, \begin{matrix} 1\le u\le r',\, 1\le v\le r'', \\ i(u)\le j(v),\, a_{i(u)}^{(1)}+a_{j(v)}^{(2)}\ge k+1 \end{matrix}\right\}.
\]
If $u-v\le l'-1$, then since $i(u+1)\le j(v)$ and $k-a_{i(u+1)}^{(1)}+1\le a_{j(v)}^{(2)}$ we have $W(u,v)=\varnothing$.
Then for $(s,t)\in\BC^2$, we can take the limit
\begin{gather*}
\lim_{\nu\to 0}\frac{1}{\nu^{l'+r''}}RC_{\lambda_0+\nu s,\mu_0+\nu t,k}^{\fn^+}(z,w) \\
\qquad{}=\lim_{\nu\to 0}\sum_{u-v\ge l'}\sum_{\bm\in W(u,v)}\frac{(\nu s)^u(\nu t)^{r''-v}}{\nu^{l'+r''}}\hat{F}_\bm(\lambda_0+\nu s,\mu_0+\nu t)
\det_{\fn^+}(z)^k\Phi_\bm^{\fn^+}(P(z^{\mathit{-1/2}})w) \\
\qquad{}=\sum_{u-v=l'}s^u t^{r''-v}\sum_{\bm\in W(u,v)}\hat{F}_\bm(\lambda_0,\mu_0)\det_{\fn^+}(z)^k\Phi_\bm^{\fn^+}(P(z^{\mathit{-1/2}})w).
\end{gather*}
By varying $(s,t)$, we get $\alpha+1$ linearly independent polynomials. By substituting $(z,w)=\left(\frac{\partial}{\partial x},\frac{\partial}{\partial y}\right)$ and restricting to $y=x$,
these polynomials give symmetry breaking operators in $\Hom_{\widetilde{G}}(\cO_\lambda(D)\hotimes\cO_\mu(D),\cO_{\lambda+\mu+2k}(D))$, and hence this space is at least $(\alpha+1)$-dimensional.
\end{proof}

\begin{Example}\quad
\begin{enumerate}\itemsep=0pt
\item Suppose $d\in 2\BZ$, $k\ge \frac{n}{r}=\frac{d}{2}(r-1)+1$. If
\[ (\lambda,\mu)\in\bigcap_{j=1}^r Z_{r-j+1,j}^{k,d}=\left\{(\lambda,\mu)\in\BZ^2\ \middle|\ \begin{matrix}\frac{n}{r}-k\le \lambda,\mu\le 0,\\ \lambda+\mu\le \frac{n}{r}-k\end{matrix}\right\}, \]
then we have
\[ \dim\Hom_{\widetilde{G}}(\cO_\lambda(D)\hotimes\cO_\mu(D),\cO_{\lambda+\mu+2k}(D))\ge r+1. \]
\item Suppose $d=1$, i.e., $\fp^+=\Sym(r,\BC)$, and let $\delta\in\{0,1\}$. For $k\ge \left\lfloor \frac{r+\delta}{2}\right\rfloor$, if
\begin{align*}
(\lambda,\mu)\in&\bigcap_{\substack{1\le j\le r\\ j\in 2\BZ+\delta}} Z_{r-j+1,j}^{k,1} \\
&=\left\{(\lambda,\mu)\in\left(\BZ+\frac{r+\delta}{2}\right)\times \left(\BZ+\frac{1-\delta}{2}\right)\ \middle|\
\begin{matrix} \frac{1}{2}r-k\le \lambda,\mu\le \frac{1}{2},\\ \lambda+\mu\le \frac{1}{2}(r+1)-k \end{matrix} \right\},
\end{align*}
then we have
\[ \dim\Hom_{\widetilde{G}}(\cO_\lambda(D)\hotimes\cO_\mu(D),\cO_{\lambda+\mu+2k}(D))\ge \left\lfloor \frac{r+\delta}{2}\right\rfloor+1. \]
Similarly, for $k\ge \left\lfloor \frac{r-\delta}{2}\right\rfloor$, if
\begin{align*}
(\lambda,\mu)\in&\bigcap_{\substack{2\le j\le r\\ j\in 2\BZ+\delta}} Z_{r-j+2,j}^{k,1} \\
&=\left\{(\lambda,\mu)\in\left(\BZ+\frac{r+1+\delta}{2}\right)\times \left(\BZ+\frac{1-\delta}{2}\right)\ \middle|\
\begin{matrix} \frac{1}{2}r-k\le \lambda,\mu\le 1,\\ \lambda+\mu\le \frac{1}{2}r+1-k \end{matrix} \right\},
\end{align*}
then we have
\[ \dim\Hom_{\widetilde{G}}(\cO_\lambda(D)\hotimes\cO_\mu(D),\cO_{\lambda+\mu+2k}(D))\ge \left\lfloor \frac{r-\delta}{2}\right\rfloor+1. \]
\item Suppose $d=2$, i.e., $\fp^+={\rm M}(r,\BC)$. For $\alpha=1,2,\dots,\min\{k,r\}$, if
\[ (\lambda,\mu)\in\left\{(\lambda,\mu)\in\BZ^2\, \middle|\, \begin{matrix}\alpha-k\le \lambda,\mu\le r-\alpha,\\ r+\alpha-2k\le\lambda+\mu\le 2r-\alpha-k\end{matrix}\right\}, \]
then we have
\[ \dim\Hom_{\widetilde{G}}(\cO_\lambda(D)\hotimes\cO_\mu(D),\cO_{\lambda+\mu+2k}(D))\ge \alpha+1. \]
\end{enumerate}
\end{Example}

When $\fp^+=\BC$, it is proved in~\cite[Theorem 9.1]{KP2} by the F-method that $\Hom_{\widetilde{G}}(\cO_\lambda(D)\hotimes\cO_\mu(D),\allowbreak\cO_{\lambda+\mu+2k}(D))$ is precisely 2-dimensional
if $(\lambda,\mu)\in Z_{1,1}^{k,d}$, and precisely 1-dimensional otherwise. For general $\fp^+$, we need further study to determine the precise dimension of this space.
Also, for $(\lambda,\mu)\in Z_{r,r}^{k,d}\cup Z_{r,1}^{k,d}\cup Z_{1,r}^{k,d}$, we can consider an analogue of~\cite[Theorem 9.2]{KP2} as follows.
We note that the result on linear independence for $\frac{n}{r}\in\BZ_{\ge 2}$ case differs from $\fp^+=\BC$ case.

\begin{Theorem}
\quad
\begin{enumerate}\itemsep=0pt
\item[$(1)$] If $\frac{n}{r}-\lambda_0=:a_1\in\{1,2,\dots,k\}$, then we have
\begin{equation}\label{atzero_RC1}
\lim_{\mu\to\mu_0}\frac{\mathcal{RC}_{\frac{n}{r}-a_1,\mu,k}^\downarrow}{(\mu+k-a_1)_{\underline{a_1}_r,d}}=\mathcal{RC}_{\frac{n}{r}+a_1,\mu_0,k-a_1}^\downarrow
\circ\biggl(\det_{\fn^+}\left(\frac{\partial}{\partial x}\right)^{a_1}\otimes 1\biggr).
\end{equation}
\item[$(2)$] If $\frac{n}{r}-\mu_0=:a_2\in\{1,2,\dots,k\}$, then we have
\begin{equation}\label{atzero_RC2}
\lim_{\lambda\to\lambda_0}\frac{\mathcal{RC}_{\lambda,\frac{n}{r}-a_2,k}^\downarrow}{(\lambda+k-a_2)_{\underline{a_2}_r,d}}=\mathcal{RC}_{\lambda_0,\frac{n}{r}+a_2,k-a_2}^\downarrow
\circ\biggl(1\otimes\det_{\fn^+}\left(-\frac{\partial}{\partial y}\right)^{a_2}\biggr).
\end{equation}
\item[$(3)$] If $\lambda_0+\mu_0-\frac{n}{r}+2k=:a_3\in\{1,2,\dots,k\}$, then we have
\begin{equation}\label{atzero_RC3}
\lim_{\nu\to 0}\frac{\mathcal{RC}_{\lambda_0-\nu,\mu_0+\nu,k}^\downarrow}{(\mu_0+\nu+k-a_3)_{\underline{a_3}_r,d}}
=\det_{\fn^+}\left(\frac{\partial}{\partial x}\right)^{a_3}\circ \mathcal{RC}_{\lambda_0,\mu_0,k-a_3}^\downarrow.
\end{equation}
\item[$(4)$] If $(\lambda_0,\mu_0)\in Z_{r,r}^{k,d}$, then~\eqref{atzero_RC1} and~\eqref{atzero_RC2} are linearly independent.
\item[$(5)$] If $(\lambda_0,\mu_0)\in Z_{r,1}^{k,d}$, then~\eqref{atzero_RC1} and~\eqref{atzero_RC3} are linearly independent.
\item[$(6)$] If $(\lambda_0,\mu_0)\in Z_{1,r}^{k,d}$, then~\eqref{atzero_RC2} and~\eqref{atzero_RC3} are linearly independent.
\item[$(7)$] Suppose $\frac{n}{r}\in\BZ_{\ge 2}$. If $(\lambda_0,\mu_0)\in Z_{r,r}^{k,d}\cap\bigl(Z_{r,1}^{k,d}\cup Z_{1,r}^{k,d}\bigr)$,
then~\eqref{atzero_RC1},~\eqref{atzero_RC2} and~\eqref{atzero_RC3} are linearly independent.
\end{enumerate}
\end{Theorem}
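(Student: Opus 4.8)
The plan is to read these three identities off Theorem~\ref{thm_factorize_RC}. For (1), the identity
$RC^{\fn^+}_{\frac{n}{r}-a_1,\mu,k}(z,w)=(\mu+k-a_1)_{\underline{a_1}_r,d}\det_{\fn^+}(z)^{a_1}RC^{\fn^+}_{\frac{n}{r}+a_1,\mu,k-a_1}(z,w)$
holds as an equality of polynomials in $(z,w)$ for all $\mu\in\BC$ after meromorphic continuation; dividing by $(\mu+k-a_1)_{\underline{a_1}_r,d}$ and letting $\mu\to\mu_0$ is simply evaluation at $\mu_0$, since the right-hand side is polynomial in $\mu$. It then remains to translate this polynomial identity into the stated operator identity: for constant-coefficient operators one has $p\bigl(\tfrac{\partial}{\partial x}\bigr)q\bigl(\tfrac{\partial}{\partial x},\tfrac{\partial}{\partial y}\bigr)f(x,y)\big|_{y=x}=\bigl(\text{the }\mathcal{RC}\text{-operator of }q\bigr)\circ\bigl(p\bigl(\tfrac{\partial}{\partial x}\bigr)\otimes 1\bigr)f$, so extracting the factor $\det_{\fn^+}(z)^{a_1}$ from the symbol corresponds exactly to pre-composing with $\det_{\fn^+}\bigl(\tfrac{\partial}{\partial x}\bigr)^{a_1}\otimes 1$. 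Part (2) is identical with the roles of $x$ and $y$ (and Theorem~\ref{thm_factorize_RC}(2)) interchanged. For (3) I would set $\lambda=\lambda_0-\nu$, $\mu=\mu_0+\nu$, so that $\lambda+\mu=\lambda_0+\mu_0=\tfrac{n}{r}-2k+a_3$ is independent of $\nu$; Theorem~\ref{thm_factorize_RC}(3) applies with $a=a_3$ for every $\nu$, giving $RC^{\fn^+}_{\lambda_0-\nu,\mu_0+\nu,k}=(\mu_0+\nu+k-a_3)_{\underline{a_3}_r,d}\det_{\fn^+}(z+w)^{a_3}RC^{\fn^+}_{\lambda_0-\nu,\mu_0+\nu,k-a_3}$, and dividing and letting $\nu\to 0$ yields $\det_{\fn^+}(z+w)^{a_3}RC^{\fn^+}_{\lambda_0,\mu_0,k-a_3}(z,w)$. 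The chain rule $\det_{\fn^+}\bigl(\tfrac{\partial}{\partial x}\bigr)^{a_3}[g(x,x)]=\bigl(\det_{\fn^+}\bigl(\tfrac{\partial}{\partial x}+\tfrac{\partial}{\partial y}\bigr)^{a_3}g\bigr)(x,y)\big|_{y=x}$ then converts the symbol factor $\det_{\fn^+}(z+w)^{a_3}$ into the post-composition by $\det_{\fn^+}\bigl(\tfrac{\partial}{\partial x}\bigr)^{a_3}$, which is (3). (One also checks, using $\lambda_0+\mu_0+2(k-a_3)=\tfrac{n}{r}-a_3$, that the composite lands in $\cO_{\lambda_0+\mu_0+2k}(D)$ as claimed.)

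\textbf{Setup for (4)--(7).}
First I would observe that the symbol map $P(z,w)\mapsto\bigl(f\mapsto P\bigl(\tfrac{\partial}{\partial x},\tfrac{\partial}{\partial y}\bigr)f\big|_{y=x}\bigr)$ is injective: testing on $f(x,y)={\rm e}^{(x|\zeta)_{\fp^+}+(y|\eta)_{\fp^+}}$ recovers $P(\zeta,\eta)$. So linear independence of the operators is equivalent to linear independence of their symbol polynomials, which by (1)--(3) are $P_1=\det_{\fn^+}(z)^{a_1}RC^{\fn^+}_{\frac{n}{r}+a_1,\mu_0,k-a_1}(z,w)$, $P_2=\det_{\fn^+}(-w)^{a_2}RC^{\fn^+}_{\lambda_0,\frac{n}{r}+a_2,k-a_2}(z,w)$, and $P_3=\det_{\fn^+}(z+w)^{a_3}RC^{\fn^+}_{\lambda_0,\mu_0,k-a_3}(z,w)$, all homogeneous of degree $rk$ in $(z,w)$. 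The tools are: the Faraut--Kor\'anyi basis expansion $RC^{\fn^+}_{\lambda,\mu,k'}(z,w)=\sum_{\bm,\,m_1\le k'}F^{(k')}_\bm(\lambda,\mu)\det_{\fn^+}(z)^{k'}\Phi^{\fn^+}_\bm(P(z^{\mathit{-1/2}})w)$ with the $F^{(k')}_\bm$ in closed form (from the proof of the preceding theorem), the identity $\det_{\fn^+}(z)^{k'}\Phi^{\fn^+}_\bm(P(z^{\mathit{-1/2}})w)\in\cP_{\underline{k'}_r-\bm^\vee}(\fp^+)_z\otimes\cP_\bm(\fp^+)_w$, which is divisible by $\det_{\fn^+}(z)$ exactly when $m_1<k'$, and the symmetric $w$-centred form. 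A preliminary bookkeeping step is to check each of the three operators is nonzero at the given $(\lambda_0,\mu_0)$: e.g.\ $\mathcal{RC}^\downarrow_{\frac{n}{r}+a_1,\mu_0,k-a_1}(\det_{\fn^+}(x-y)^{k-a_1})=(\mu_0+k)_{\underline{k-a_1}_r,d}(\tfrac{n}{r})_{\underline{k-a_1}_r,d}$ by the normalization Proposition, and both Pochhammer symbols are products of strictly positive factors for $a_1,a_2\le k$; the analogous checks use the explicit $F^{(k')}_\bm$.

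\textbf{Parts (4)--(7).}
Part (4) is, after identifying the two limits via (1) and (2), the specialization of Corollary~\ref{cor_submodule_tensor}(5) to $\bk=\underline{k}_r$ (valid because $(\lambda_0,\mu_0)\in Z^{k,d}_{r,r}$ forces $RC^{\fn^+}_{\lambda_0,\mu_0,k}=0$, hence $\hat{\cF}^\downarrow_{\lambda_0,\mu_0,\underline{k}_r}=0$); alternatively, directly: $P_1$ is supported on basis vectors with $m_1\le k-a_1$ while $P_2$, rewritten via $\det_{\fn^+}(w)^k\Phi^{\fn^+}_{\bm'}(P(w^{\mathit{-1/2}})z)=\pm\det_{\fn^+}(z)^k\Phi^{\fn^+}_{\underline{k}_r-\bm'^\vee}(P(z^{\mathit{-1/2}})w)$, is supported on vectors with $m_r\ge a_2$; since $m_r\le m_1$, a common vector would give $a_1+a_2\le k$, contradicting $a_1+a_2\ge k+1$, so the supports are disjoint and $P_1,P_2$ are independent. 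For (5): $(\lambda_0,\mu_0)\in Z^{k,d}_{r,1}$ gives $a_1=\tfrac{n}{r}-\lambda_0\in\{1,\dots,k\}$ and $a_3=\lambda_0+\mu_0-\tfrac{n}{r}+2k\in\{1,\dots,k\}$, so (1) and (3) both apply; $P_1$ is divisible by $\det_{\fn^+}(z)$, while $\det_{\fn^+}(z)$ is irreducible and does not divide $\det_{\fn^+}(z+w)^{a_3}$, so a relation $c_1P_1+c_3P_3=0$ forces $\det_{\fn^+}(z)\mid RC^{\fn^+}_{\lambda_0,\mu_0,k-a_3}(z,w)$, i.e.\ every coefficient of a basis vector with $m_1=k-a_3$ vanishes --- and this is what must be ruled out. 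Part (6) is symmetric. For (7), the hypothesis $\tfrac{n}{r}\in\BZ_{\ge 2}$ guarantees that $(\lambda_0,\mu_0)$ realizes membership in both $Z^{k,d}_{r,r}$ and $Z^{k,d}_{r,1}\cup Z^{k,d}_{1,r}$ with integral exponents, so all of (1),(2),(3) are available; a relation $c_1P_1+c_2P_2+c_3P_3=0$ is then broken by reducing modulo $\det_{\fn^+}(z)$ (annihilating $P_1$ and leaving $c_2P_2+c_3P_3$, whose two parts are independent modulo $\det_{\fn^+}(z)$ by the divisibility/degree analysis of (4)--(6)) and then modulo $\det_{\fn^+}(w)$.

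\textbf{Main obstacle.}
The genuinely nontrivial input, recurring in (5), (6) and inside (7), is the non-divisibility claim: at the relevant boundary parameters $RC^{\fn^+}_{\lambda_0,\mu_0,k-a_3}(z,w)$ is \emph{not} divisible by $\det_{\fn^+}(z)$ (resp.\ $\det_{\fn^+}(w)$). Unlike part (4), where the operators occupy literally disjoint regions of the basis, the factor $\det_{\fn^+}(z+w)^{a_3}$ in $P_3$ blocks any purely combinatorial separation, and one is forced into an explicit computation: after substituting $\lambda_0=\tfrac{n}{r}-a_1$, $\mu_0=1-a_2$ into the closed form of $F^{(k-a_3)}_\bm$, one must produce an index $\bm$ with $m_1=k-a_3$ at which none of the $r$ Pochhammer factors degenerates (with the degenerate case $a_3=k$ handled separately by $RC^{\fn^+}_{\lambda_0,\mu_0,0}=1$). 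Making this uniform over all $r$, all $d$, and all admissible $(a_1,a_2)$ with $a_1+a_2\ge k+1$ --- and then assembling the three-term statement (7) from the pairwise ones --- is where the real work lies; the rest is bookkeeping layered on top of Theorem~\ref{thm_factorize_RC} and the known basis expansion.
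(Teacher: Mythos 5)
Parts (1)--(3) and (4) of your proposal are fine and follow the paper's route: (1)--(3) are read off from the factorization theorem for $RC^{\fn^+}_{\lambda,\mu,k}$ exactly as you describe, and (4) is the paper's Corollary on linear independence (your alternative "disjoint support" argument in the basis $\det_{\fn^+}(z)^k\Phi^{\fn^+}_\bm\bigl(P\bigl(z^{\mathit{-1/2}}\bigr)w\bigr)$ is essentially the same computation that underlies that corollary). The problem is (5)--(7): the step you yourself flag as "where the real work lies" is precisely the content of the paper's proof, and you do not carry it out. Your reduction of (5) to the statement that $RC^{\fn^+}_{\lambda_0,\mu_0,k-a_3}(z,w)$ is not divisible by $\det_{\fn^+}(z)$ is a correct reformulation, but without exhibiting a nonvanishing coefficient the argument for (5), (6) and (7) is incomplete; moreover your sketch of (7) ("reduce modulo $\det_{\fn^+}(z)$, then modulo $\det_{\fn^+}(w)$") leans on the same unproved facts, and your reading of the hypothesis $\frac{n}{r}\in\BZ_{\ge 2}$ is off: its only role is to force $r\ge 2$, so that three distinct test indices exist.

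The paper closes this gap by a short explicit computation that avoids the non-divisibility formulation altogether: it expands \emph{both} limits in the same degree-$k$ basis $\bigl\{\det_{\fn^+}(z)^k\Phi^{\fn^+}_\bm\bigl(-P\bigl(z^{\mathit{-1/2}}\bigr)w\bigr)\bigr\}$ (i.e., it does not factor the third limit as $\det_{\fn^+}(z+w)^{a_3}$ times a lower-degree $RC$), writing $\mu_0=1-a_2'$ and computing the $\nu\to 0$ limit coefficientwise, which gives coefficients $(-k)_{\bm,d}\,(1-a_2'+\bm)_{\underline{a_1+a_2'-k-1}_r,d}\,d_\bm^{\fp^+}\big/\bigl(\tfrac{n}{r}\bigr)_{\bm,d}$ for the third operator, while the first operator is supported on $m_1\le k-a_1$. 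Comparing the single index $\bm=(k,\underline{0}_{r-1})$ then settles (5): the first coefficient vanishes and the second does not, the nonvanishing being a one-line Pochhammer check using $a_1+a_2'\ge k+1$ (so that the offending factors would require $\frac{d}{2}(j-1)\le a_1-k\le 0$, impossible for $j\ge 2$). Part (6) is the same with $\bm=(\underline{k}_{r-1},0)$, and (7) follows by comparing the coefficients at the three distinct indices $\underline{k}_r$, $(k,\underline{0}_{r-1})$ (or $(\underline{k}_{r-1},0)$), $\underline{0}_r$, which is where $r\ge 2$ is used. So your outline is structurally sound, but the decisive nonvanishing computation — which is the actual proof of (5)--(7) — is missing, and the paper's trick of staying in the degree-$k$ expansion makes it much lighter than the "uniform over all $r,d,(a_1,a_2)$" analysis you anticipate.
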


\begin{proof}
(1), (2), (3) follow from Theorem~\ref{thm_factorize_RC}.
(4) follows from Corollary~\ref{cor_linear_indep_tensor}.

For (5), we have
\begin{align*}
&RC_{\lambda,\mu,k}^{\fn^+}(z,w) \\
&=\det_{\fn^+}(z)^k\sum_{\substack{\bm\in\BZ_{++}^r\\ m_1\le k}}(-k)_{\bm,d}\left(-\lambda-k+\frac{n}{r}\right)_{\bm,d}(\mu+\bm)_{\underline{k}_r-\bm,d}
\frac{d_\bm^{\fp^+}}{\left(\frac{n}{r}\right)_{\bm,d}}\Phi_\bm^{\fn^+}\bigl(-P\bigl(z^{\mathit{-1/2}}\bigr)w\bigr).
\end{align*}
Put $\frac{n}{r}-\lambda_0=:a_1\in\{1,\dots,k\}$, $1-\mu_0=:a_2'\in\{1,\dots,k\}$, so that $a_3=2k-a_1-a_2'+1\in\{1,\dots,k\}$. Then we have
\begin{align*}
\begin{split}
&\det_{\fn^+}(z)^{-k}\lim_{\mu\to 1-a_2'}\frac{RC_{\frac{n}{r}-a_1,\mu,k}^{\fn^+}(z,w)}{(\mu+k-a_1)_{\underline{a_1}_r,d}} \\
&=\lim_{\mu\to 1-a_2'}\sum_{\substack{\bm\in\BZ_{++}^r\\ m_1\le k-a_1}}(-k)_{\bm,d}(a_1-k)_{\bm,d}(\mu+\bm)_{\underline{k-a_1}_r-\bm,d}
\frac{d_\bm^{\fp^+}}{\left(\frac{n}{r}\right)_{\bm,d}}\Phi_\bm^{\fn^+}\bigl(-P\bigl(z^{\mathit{-1/2}}\bigr)w\bigr) \\
&=\sum_{\substack{\bm\in\BZ_{++}^r\\ m_1\le k-a_1}}(-k)_{\bm,d}(a_1-k)_{\bm,d}(1-a_2'+\bm)_{\underline{k-a_1}_r-\bm,d}
\frac{d_\bm^{\fp^+}}{\left(\frac{n}{r}\right)_{\bm,d}}\Phi_\bm^{\fn^+}\bigl(-P\bigl(z^{\mathit{-1/2}}\bigr)w\bigr),
\end{split}\\
\begin{split}
&\det_{\fn^+}(z)^{-k}\lim_{\nu\to 0}\frac{RC_{\frac{n}{r}-a_1-\nu,1-a_2'+\nu,k}^{\fn^+}(z,w)}{(\nu-k+a_1)_{\underline{2k-a_1-a_2'+1}_r,d}} \\
&=\lim_{\nu\to 0}\sum_{\substack{\bm\in\BZ_{++}^r\\ m_1\le k}}(-k)_{\bm,d}\frac{(\nu+a_1-k)_{\bm,d}(\nu+1-a_2'+\bm)_{\underline{k}_r-\bm,d}}{(\nu-k+a_1)_{\underline{2k-a_1-a_2'+1}_r,d}}
\frac{d_\bm^{\fp^+}}{\left(\frac{n}{r}\right)_{\bm,d}}\Phi_\bm^{\fn^+}\bigl(-P\bigl(z^{\mathit{-1/2}}\bigr)w\bigr) \\
&=\lim_{\nu\to 0}\sum_{\substack{\bm\in\BZ_{++}^r\\ m_1\le k}}(-k)_{\bm,d}(\nu+1-a_2'+\bm)_{\underline{a_1+a_2'-k-1}_r,d}
\frac{d_\bm^{\fp^+}}{\left(\frac{n}{r}\right)_{\bm,d}}\Phi_\bm^{\fn^+}\bigl(-P\bigl(z^{\mathit{-1/2}}\bigr)w\bigr) \\
&=\sum_{\substack{\bm\in\BZ_{++}^r\\ m_1\le k}}(-k)_{\bm,d}(1-a_2'+\bm)_{\underline{a_1+a_2'-k-1}_r,d}
\frac{d_\bm^{\fp^+}}{\left(\frac{n}{r}\right)_{\bm,d}}\Phi_\bm^{\fn^+}\bigl(-P\bigl(z^{\mathit{-1/2}}\bigr)w\bigr).
\end{split}
\end{align*}
Then the coefficient for $\bm=(k,\underline{0}_{r-1})$ is zero for the former formula, and is non-zero for the latter one, and hence these are linearly independent.

(6) Proved similarly by comparing the coefficients for $\bm=(\underline{k}_{r-1},0)$.

(7) By the assumption $\frac{n}{r}\ge 2$, we have $r\ge 2$, and thus $\underline{k}_r$, $(k,\underline{0}_{r-1})$, $\underline{0}_r$
(for $(\lambda_0,\mu_0)\in Z_{r,r}^{k,d}\cap Z_{r,1}^{k,d}$ case)
or $\underline{k}_r$, $(\underline{k}_{r-1},0)$, $\underline{0}_r$ (for $(\lambda_0,\mu_0)\in Z_{r,r}^{k,d}\cap Z_{1,r}^{k,d}$ case) are distinct.
Hence, comparing the coefficients for these $\bm$, we get the linear independence of three formulas.
\end{proof}

\subsection*{Acknowledgements}

The author would like to thank Professor T.~Kobayashi for much helpful advice on this research.
This work was supported by Grant-in-Aid for JSPS Fellows Grant Number JP20J00114.

\addcontentsline{toc}{section}{References}
\LastPageEnding

\end{document}